\DeclareMathAlphabet{\mathbbm}{U}{bbm}{m}{n}
\newcommand{\executeiffilenewer}[3]{%
\ifnum\pdfs

trcmp{\pdffilemoddate{#1}}%
{\pdffilemoddate{#2}}>0%
{\immediate\write18{#3}}\fi%
}
\newcommand{%
\executeiffilenewer{.svg}{.pdf}%
{inkscape -z -D --file=.svg %
--export-pdf=.pdf --export-latex}%
\input{.tex}%
}[1]{%
\executeiffilenewer{#1.svg}{#1.pdf}%
{inkscape -z -D --file=#1.svg %
--export-pdf=#1.pdf --export-latex}%
\input{#1.tex}%
}
\newcommand{\subjclass}[2][1991]{%
  \let\@oldtitle\@title%
  \gdef\@title{\@oldtitle\footnotetext{#1 \emph{Mathematics subject classification.} #2.}}%
}
\newcommand{\keywords}[1]{%
  \let\@@oldtitle\@title%
  \gdef\@title{\@@oldtitle\footnotetext{\emph{Key words and phrases.} #1.}}%
}
\newtheorem{introthm}{Theorem}
\newtheorem{thm}{Theorem}[section]
\newtheorem{lem}[thm]{Lemma}
\newtheorem{cor}[thm]{Corollary}
\newtheorem{prop}[thm]{Proposition}
\newtheorem{quest}[thm]{Question}
\theoremstyle{definition}
\newtheorem{rmk}[thm]{Remark}
\newtheorem{defi}[thm]{Definition}
\newtheorem{ex}[thm]{Example}
\newtheorem*{Not}{Notation}
\theoremstyle{definition}
\newcommand{\fps}[1]{{\left\llbracket {#1} \right\rrbracket}}
\newcommand{\Jacdiv}[1]{{R_{#1}}}
\newcommand{\kkk}[3]{{k_{{#1}\stackrel{#3}{\to}{#2}}}}
\newcommand{\kk}[2]{{k_{{#1}{\to}{#2}}}}
\newcommand{\eee}[3]{{e_{{#1}\stackrel{#3}{\to}{#2}}}}
\newcommand{\ee}[2]{{e_{{#1}{\to}{#2}}}}
\newcommand{\divi}{\mathrm{div}}
\newcommand{\inte}{\mathrm{int}}
\newcommand{\subrelcpct}{\Subset}
\newcommand{\segment}[3]{[{#1},{#2}]_{#3}}
\newcommand{\eskel}[1]{\mc{S}_{\hat{#1}}}
\newcommand{\skel}[1]{\mc{S}_{{#1}}}
\newcommand{\Div}{\on{Div}}
\newcommand{\Ediv}{\mc{E}}
\newcommand{\Weil}{\mc{D}}
\newcommand{\Exc}{\mc{E}}
\newcommand{\Rest}{\mc{R}}
\newcommand{\CC}[1]{\textrm{Cc}_{#1}}
\newcommand{\VC}[1]{\mc{C}_{#1}}
\newcommand{\rees}{\mc{R}}
\renewcommand{\pr}{\on{pr}}
\title{Local dynamics of non-invertible maps near normal surface singularities\keywords{aofin}}
\author{William Gignac \and Matteo Ruggiero\thanks{Institut de Math\'{e}matiques de Jussieu, Universit\'e Paris Diderot \hfill \href{mailto:matteo.ruggiero@imj-prg.fr}{matteo.ruggiero@imj-prg.fr}}}
\date{}
\newcommand{\refprop}[1]{\hyperref[prop:#1]{Proposition~\ref*{prop:#1}}}
\newcommand{\refthm}[1]{\hyperref[thm:#1]{Theorem~\ref*{thm:#1}}}
\newcommand{\refcor}[1]{\hyperref[cor:#1]{Corollary~\ref*{cor:#1}}}
\newcommand{\refdef}[1]{\hyperref[def:#1]{Definition~\ref*{def:#1}}}
\newcommand{\refrmk}[1]{\hyperref[rmk:#1]{Remark~\ref*{rmk:#1}}}
\newcommand{\reflem}[1]{\hyperref[lem:#1]{Lemma~\ref*{lem:#1}}}
\newcommand{\refsec}[1]{\hyperref[sec:#1]{Section~\ref*{sec:#1}}}
\newcommand{\reffig}[1]{\hyperref[fig:#1]{Figure~\ref*{fig:#1}}}
\newcommand{\refex}[1]{\hyperref[ex:#1]{Example~\ref*{ex:#1}}}
\newcommand{\refsssec}[1]{\hyperref[sssec:#1]{Subsection~\ref*{sssec:#1}}}
\newenvironment{change}{}{}
\begin{document}

\maketitle
\thispagestyle{empty}

\begin{abstract}\noindent  We study the problem of finding algebraically stable models for non-invertible holomorphic fixed point germs $f\colon (X,x_0)\to (X,x_0)$, where $X$ is a complex surface having $x_0$ as a normal singularity. We prove that as long as $x_0$ is not a cusp singularity of $X$, then it is possible to find arbitrarily high modifications $\pi\colon X_\pi\to (X,x_0)$ such that the dynamics of $f$ (or more precisely of $f^N$ for $N$ big enough) on $X_\pi$ is algebraically stable. This result is proved by understanding the dynamics induced by $f$ on a space of valuations associated to $X$; in fact, we are able to give a strong classification of all the possible dynamical behaviors of $f$ on this valuation space. We also deduce a precise description of the behavior of the sequence of attraction rates for the iterates of $f$. Finally, we prove that in this setting the first dynamical degree is always a quadratic integer.
\end{abstract}

\tableofcontents



\section*{Introduction}
\addcontentsline{toc}{section}{Introduction}

Let $(X,x_0)$ be an irreducible germ of a complex surface $X$ at a point $x_0\in X$. 
In this article, we study the local dynamics of dominant non-invertible germs $f\colon (X,x_0)\to (X,x_0)$ of holomorphic maps that have $x_0$ as a fixed point. 
We are specifically interested in the case when $x_0$ is a singular point of the surface, though in this case we will always assume $X$ is normal at $x_0$.
Normality in particular implies that $x_0$ is an isolated singularity.

Understanding local dynamics around fixed points is often essential to understanding the behavior of global holomorphic dynamical systems, so a great deal of research has been devoted to the topic, with most of it focusing on the smooth case of holomorphic  germs $f\colon (\C^n,0)\to (\C^n,0)$. In dimension $n = 1$, classical arguments have given an essentially complete understanding of local dynamics; in particular, for \emph{non-invertible} germs $f\colon (\C,0)\to (\C,0)$, it is always possible to make a local holomorphic change of coordinates  so that $f$ takes the simple form $f(z) = z^c$ for some integer $c>1$ \cite{carleson-gamelin:cplxdyn, milnor:dyn1cplxvar}. An analogous statement holds if we replace $\C$ by any field of characteristic $0$. In positive characteristic normal forms are more involved (see \cite{ruggiero:superattrdim1charp}), but still computable.
The situation in dimension $n \geq 2$ is far more complicated.
A non-invertible \emph{planar} holomorphic germ $f\colon(\C^2,0)\to (\C^2,0)$ comes in two flavors: either the differential $d\!f_0$ of $f$ at $0$ is nilpotent, in which case $0$ is said to be a \emph{superattracting} fixed point of $f$, or else $d\!f_0$ has exactly one non-zero eigenvalue, and $0$ is said to be \emph{semi-superattracting}. 
In these cases, one might hope to analyze the local dynamics as was done in dimension $1$, by finding suitable local coordinates in which $f$ has a simple form, called a \emph{normal form}, making it easier to compute and study the iterates of $f$. 
Work in this direction for \emph{planar} semi-superattracting germs has been done in \cite{ruggiero:rigidification}, but in the superattracting case this strategy has been successfully carried out for only special classes of germs, see e.g. \cite{favre:rigidgerms, abate-raissy:renormalization} for results in dimension $2$, and \cite{ruggiero:rigidgerms, buff-epstein-koch:bottchercoordinates} for results in higher dimensions.
Unfortunately, it seems as if a complete and useful description of normal forms for general superattracting germs is infeasible (see \cite{hubbard-papadopol:supfixpnt}), owing largely to the richness of analytic geometry in dimensions $2$ or higher, a central obstruction being that it is unclear how to deal with the growth of the geometric complexity of the critical set of the iterates of $f$. 

Recently, another approach to studying the dynamics of planar non-invertible germs $f\colon (\C^2,0)\to (\C^2,0)$ has been successfully employed. In this approach, one investigates the dynamics of $f$ on birationally equivalent models of $(\C^2,0)$. Given a modification $\pi\colon X_\pi\to (\C^2,0)$, i.e., a proper holomorphic map which is an isomorphism over $\C^2\setminus\{0\}$, one can lift $f$ to a meromorphic map $f_\pi\colon X_\pi\dashrightarrow X_\pi$ and study the dynamics of $f_\pi$ on the exceptional set $\pi^{-1}(0)$ of $\pi$. 
Understanding the orbits of the \emph{exceptional divisors} of $\pi$, that is, the divisors of $X_\pi$ supported in $\pi^{-1}(0)$, gives very concrete information about the local dynamics of $f$, see \cite{favre-jonsson:eigenval, gignac-ruggiero:attractionrates, ruggiero:rigidification, gignac:conjarnold}. 
Unfortunately, for any given modification $\pi$, the action of $f_\pi$ on exceptional divisors can be difficult to control due to the presence of indeterminacy points of $f_\pi$. 
One can imagine, for instance, a situation where $f_\pi^n$ contracts an exceptional prime divisor $E$ of $\pi$ to an indeterminacy point of $f_\pi$ for infinitely many $n$. 
If this were to occur, the best hope would be that one could avoid such behavior by passing to a higher modification. 
One of our main results is that this is almost always possible. 

\begin{introthm}\label{thm:geometricstability}
Let $(X,x_0)$ be an irreducible germ of a normal complex surface at a point $x_0\in X$, and let $f\colon (X,x_0)\to (X,x_0)$ be a dominant non-invertible holomorphic map.
Assume we are not in the situation of a finite germ at a cusp singularity.
Then for any modification $\pi\colon X_{\pi}\to (X,x_0)$, one can find a modification $\pi'\colon X_{\pi'}\to (X,x_0)$ dominating $\pi$ with the following property: if $E$ is any exceptional prime divisor of $\pi'$ and $f_{\pi'}\colon X_{\pi'}\dashrightarrow X_{\pi'}$ is the (meromorphic) lift of $f$, then $f_{\pi'}^n(E)$ is an indeterminacy point of $f_{\pi'}$ for at most finitely many $n$. 
Moreover the space $X_{\pi'}$ may be taken to have at most cyclic quotient singularities.
\end{introthm}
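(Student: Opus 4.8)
The plan is to transfer the question to the dynamics induced by $f$ on the space $\mathcal{V}_X$ of normalized valuations centered at $x_0$: $f$ induces a map $f_\bullet\colon\mathcal{V}_X\to\mathcal{V}_X$ that is compatible with the meromorphic lifts $f_\pi$ to all modifications $\pi$. The technical engine is a complete classification of the behaviors of $f_\bullet$: away from the excluded case of a finite germ at a cusp, $f_\bullet$ always admits an attracting fixed point — or, after replacing $f$ by an iterate, an attracting periodic cycle — $v_\star$ which is quasimonomial (or, when $f$ has a totally invariant curve germ, a curve valuation along it), the set of $f_\bullet$-periodic divisorial valuations is finite, and every divisorial valuation which is not eventually periodic lies in the basin of attraction of $v_\star$. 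This is exactly where the cusp hypothesis enters: the resolution dual graph of a cusp singularity is a cycle, so $\mathcal{V}_X$ contains a loop on which a finite germ can act by an irrational rotation, leaving no attracting point.

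Granting the classification, first I would choose a modification $\pi_0$ dominating $\pi$ that \emph{realizes} the attracting cycle together with all $f_\bullet$-periodic divisorial valuations: each such divisorial valuation becomes an exceptional prime divisor; an irrational quasimonomial member of the cycle becomes the intersection point of two exceptional primes after a weighted (toric) blow-up, which is precisely what forces the cyclic quotient singularities permitted in the statement; and a curve valuation is dealt with by first resolving its underlying invariant curve germ. After one further blow-up one arranges that $f_{\pi_0}$ carries the union $Z$ of the exceptional primes in the attracting cycle into itself, cyclically and without contraction. Using that the cycle is attracting, I would then build a connected open neighborhood $U\supseteq Z$ in $X_{\pi_0}$ with $f_{\pi_0}(U)\subseteq U$, so that the corresponding set of valuations with center on $X_{\pi_0}$ inside $U$ is forward $f_\bullet$-invariant and is entered, after finitely many steps, by every divisorial valuation in the basin of the attracting cycle. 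Finally I would resolve the indeterminacy of $f_{\pi_0}$ inside $U$: blow up the finitely many indeterminacy points of $f_{\pi_0}$ lying in $U$, pass to the preimage of $U$, and repeat. Here the fact that $v_\star$ is \emph{attracting}, not merely fixed, yields a local \emph{rigid germ}-type normal form for $f$ along $Z$ guaranteeing that this procedure terminates, producing $\pi'$ (dominating $\pi$, with at most cyclic quotient singularities) together with a forward-invariant neighborhood $U'$ of the total transform of $Z$ that contains no indeterminacy point of $f_{\pi'}$; one may moreover shrink $U'$ so that the only divisorial valuations realized on $X_{\pi'}$ with center inside $U'$ are those of the exceptional primes in the cycle.

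To conclude, let $E$ be an arbitrary exceptional prime of $\pi'$ and write $E^{(n)}=f_{\pi'}^n(E)$. If $\mathrm{ord}_E$ is eventually periodic under $f_\bullet$, its orbit lands after finitely many steps on periodic divisorial valuations, all of which are realized on $X_{\pi'}$, so $E^{(n)}$ is a prime divisor — in particular not a point of indeterminacy — for $n\gg 0$. Otherwise $\mathrm{ord}_E$ lies in the basin of the attracting cycle, so $f_\bullet^n(\mathrm{ord}_E)$ converges to it; by compatibility of $f_\bullet$ with $f_{\pi'}$, the center of $f_\bullet^n(\mathrm{ord}_E)$ on $X_{\pi'}$ — which is $E^{(n)}$, or its generic point when $E^{(n)}$ is a curve — eventually enters $U'$. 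Thus for $n\gg 0$ either $E^{(n)}$ is one of the exceptional primes in the cycle, hence a divisor and not a point of indeterminacy, or $E^{(n)}$ is a point of $U'$, which contains no indeterminacy point of $f_{\pi'}$. In either case $E^{(n)}$ is not an indeterminacy point of $f_{\pi'}$, so it can be one for at most finitely many $n$, as claimed.

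The main obstacle is the valuative classification itself: understanding $f_\bullet$ on $\mathcal{V}_X$ for an arbitrary normal surface singularity, exhibiting the attracting cycle and identifying its type, proving finiteness of the periodic divisorial set and the description of basins, and showing that the cusp case is genuinely the only exception. Granting that, the residual difficulty is the terminating resolution of indeterminacy in a neighborhood of the attracting cycle — essentially a local rigidification statement for the attracting (periodic) valuation, in which one must prevent the indefinite regeneration of indeterminacy points under repeated blow-up, once again using attraction rather than mere invariance.
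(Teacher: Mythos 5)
Your high-level strategy — pass to $f_\bullet$ on $\mc{V}_X$, locate the attractor, realize it and a forward-invariant neighborhood on a model with at worst cyclic quotient singularities, then resolve indeterminacy there — is the right one and matches the paper's broad strategy. However, the intermediate classification you assert is materially wrong, and this breaks a load-bearing step of your argument.

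You claim that, outside the excluded cusp case, $f_\bullet$ has an attracting fixed point or a finite attracting periodic cycle, that the eigenvaluation is quasimonomial or a curve valuation, and that \emph{the set of $f_\bullet$-periodic divisorial valuations is finite}. None of these holds in general. The paper's actual classification (\refthm{classification}) has three cases, and in two of them the attractor is not a point or a finite cycle but a nontrivial \emph{segment} or \emph{circle} fixed pointwise by an iterate of $f_\bullet$: when $f$ is finite and $(X,x_0)$ is a quotient singularity, the eigenvaluations form a totally invariant interval $I$ fixed pointwise by $f_\bullet^2$; when $(X,x_0)$ is lc but not lt, the set $S=A^{-1}(0)$ is a totally invariant segment (quotient-cusps) or circle (cusps) on which $f_\bullet$ acts as an isometry. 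Whenever that isometry has finite order — including rational rotations on cusps, which the proof does cover — there are \emph{infinitely many} fixed divisorial valuations. Consequently your plan to ``choose a modification $\pi_0$ dominating $\pi$ that realizes the attracting cycle together with all $f_\bullet$-periodic divisorial valuations'' is impossible as stated, and the subsequent steps are built on this impossibility. Additionally, in the non-finite case the eigenvaluation can be infinitely singular (neither quasimonomial nor a curve valuation), so even your description of $\nu_\star$ is incomplete.

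The second, more minor, divergence is your appeal to a ``rigid germ''-type normal form to terminate the resolution of indeterminacy near the attractor. The paper does not use any rigidification. The termination is obtained combinatorially: the key input is \reflem{noperiodic_cyclicquot}, which shows that for a periodic divisorial valuation $\nu_{E_0}$ in the attractor and a periodic point $p_0\in E_0$ whose tangent direction is not parallel to $S$, one can make the lift regular along that orbit by blowing up and then contracting chains of rational curves between nearby divisorial valuations — precisely the operation that introduces the cyclic quotient singularities — after which \refprop{detecting_holomorphicity} gives regularity. This works without any contraction estimate in the transverse direction, whereas your argument explicitly invokes attractivity to control regeneration of indeterminacy; for a rational rotation on the circle $S$ of a cusp there is no transverse attraction at all along $S$ itself, so a rigidification-based termination argument would not directly apply. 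To repair your proof you would need to (i) correct the classification to accommodate segments/circles of fixed points, (ii) replace ``realize all periodic divisorial valuations'' with ``realize a finite $f_\bullet$-invariant set of divisorial valuations generating $S$ up to chains to be contracted,'' and (iii) substitute the combinatorial argument of \reflem{noperiodic_cyclicquot} for the rigidification step.
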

Recall that $f\colon (X,x_0)\to (X,x_0)$ is \emph{finite} near $x_0$ when $f$ does not contract any curve $C\subset X$ passing through $x_0$ to $x_0$.
In the smooth case when the surface germ is $(\C^2,0)$,  \refthm{geometricstability} was essentially proved in \cite{gignac-ruggiero:attractionrates} for superattracting fixed point germs $f\colon (\C^2,0)\to (\C^2,0)$. 
To be precise, it was shown that there exists \emph{some} non-trivial model $\pi\colon X_\pi\to (\C^2,0)$ where the orbits of exceptional prime divisors are well behaved, rather than arbitrarily high models, but the full theorem follows from a minor modification of the argument given there, see \S\ref{sec:algebraic_stability}. 
In the semi-superattracting case, \refthm{geometricstability} follows similarly from the work in \cite{ruggiero:rigidification}.
The results of these two articles will play a prominent role in this paper.

In both of these papers, the broad strategy to prove \refthm{geometricstability} is the same: instead of studying the dynamics of $f_\pi\colon X_\pi\dashrightarrow X_\pi$ on any model $\pi\colon  X_\pi\to (X,x_0)$ of $(X,x_0)$ individually, one analyzes a dynamical system $f_\bullet\colon \mc{V}_X\to \mc{V}_X$ on a space $\mc{V}_X$ that encodes all bimeromorphic models $\pi\colon X_\pi\to (X,x_0)$ simultaneously. 
Results about the dynamics of $f_\bullet$ can then often be translated into statements about the dynamics of $f$ on bimeromorphically equivalent models of $(X,x_0)$. 
This $\mc{V}_X$ is the space of (suitably normalized) centered, rank one semivaluations on the local ring $\mc{O}_{X, x_0}$ of $X$ at $x_0$ which are trivial on $\C$, see \S\ref{sec:valuation_spaces} for definitions. 
If $X$ is algebraic, $\mc{V}_X$ can be viewed as a subset of the Berkovich analytification $X^\an$ of $X$, where the ground field $\C$ is equipped with the trivial absolute value. 
This broad strategy was first employed by Favre-Jonsson \cite{favre-jonsson:eigenval} in the case when $X = \C^2$, and they have since successfully used it in a variety of settings \cite{favre-jonsson:valanalplanarplurisubharfunct, favre-jonsson:dynamicalcompactifications, favre:holoselfmapssingratsurf, favre-jonsson:valmultideals, boucksom-favre-jonsson:valuationsplurisubharsing}.

In this article, we follow the same strategy, and our central result (from which \refthm{geometricstability} will follow) is a classification of the possible dynamics of $f_\bullet\colon \mc{V}_X\to \mc{V}_X$, the simplest possible summary of which is given below. A finer classification can (and will) be given if one has more information about the nature of the singularity $(X, x_0)$ and the map $f$, but to avoid technicalities in the introduction the following less precise statement will suffice.

\begin{introthm}\label{thm:valdynamics} Let $(X,x_0)$ be an irreducible germ of a normal complex surface at a point $x_0\in X$, and let $f\colon (X,x_0)\to (X,x_0)$ be a dominant non-invertible holomorphic map. Denote by $f_\bullet\colon \mc{V}_X\to \mc{V}_X$ the induced dynamical system on the space $\mc{V}_X$ of normalized, centered, rank one semivaluations on $\mc{O}_{X, x_0}$. Then there is a subgraph $S \subset \mc{V}_X$, homeomorphic to either a point, a closed interval, or a circle, such that $f_\bullet (S) = S$ and for any quasimonomial valuation $\nu\in \mc{V}_X$ one has that $f^n_\bullet \nu\to S$ as $n\to \infty$.
\end{introthm}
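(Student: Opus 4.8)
The plan is to study the map $f_\bullet$ through its action on the retraction structure of $\mc{V}_X$. Recall that $\mc{V}_X$ is a profinite $\R$-tree (or rather carries a natural tree-like structure when $(X,x_0)$ is not a cusp), with the finite subtrees being the dual graphs $\Gamma_\pi$ of good resolutions $\pi$, and $\mc{V}_X = \varprojlim_\pi \Gamma_\pi$ via the retractions $r_\pi\colon \mc{V}_X\to\Gamma_\pi$. The key local invariant is the \emph{multiplicity} (or log-discrepancy) function, which lets one speak of distances and of the \emph{skewness} metric on segments; $f_\bullet$ acts on this structure with controlled local behavior, stretching or contracting edges by the local degrees of $f$ along the relevant divisors. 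First I would establish the basic functoriality: $f_\bullet$ is continuous, sends quasimonomial valuations to quasimonomial valuations, and for any model $\pi$ there is a model $\varpi$ with $f_\bullet\circ r_\varpi = r_\pi\circ f_\bullet$ off the indeterminacy locus, so that $f_\bullet$ is, up to finitely many points, a tree map in the sense of being piecewise monotone and locally expanding/contracting with respect to an appropriate metric.

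Next I would produce the invariant subgraph $S$. The natural candidate is built from an \emph{eigenvaluation}: a semivaluation $\nu_\star$ with $f_\bullet\nu_\star=\nu_\star$. Its existence should follow from a fixed-point argument — either a direct tree fixed-point theorem (a continuous self-map of an $\R$-tree fixing a nonempty compact subtree, applied to nested retracts $r_\pi(\mc{V}_X)$ that $f_\bullet$ eventually maps into themselves), or, when such naive compactness fails, by tracking the "attracting'' behavior of orbits and extracting a limit. The trichotomy point / interval / circle then reflects the three possibilities for the minimal closed $f_\bullet$-invariant subgraph containing the (set of) eigenvaluations: a single fixed point; a segment between two fixed points (or a fixed segment on which $f_\bullet$ acts, e.g. when $f$ is finite but the eigenvaluation is divisorial with a nontrivial local structure); or a cycle of eigenvaluations permuted by $f_\bullet$, which is exactly the degenerate scenario that in the smooth case does not arise but here corresponds precisely to the excluded cusp phenomenon being on the boundary — so part of the work is to show that, \emph{outside} the cusp case, $S$ exists and is one of these three shapes, with the circle occurring only in tightly constrained situations.

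Then comes the convergence statement, $f_\bullet^n\nu\to S$ for every quasimonomial $\nu$. Here the engine is a Lyapunov-type argument using skewness: off a neighborhood of $S$, the map $f_\bullet$ strictly contracts distance to $S$, or equivalently the sequence of local degrees forces the retracted orbits $r_\pi(f_\bullet^n\nu)$ to march monotonically toward $r_\pi(S)$. One separates cases according to whether $f$ is finite or contracts a curve: in the contracting case the attraction rates $c(f^n)$ grow, the "center'' of $f_\bullet^n\nu$ eventually stabilizes, and monotonicity of the image of a segment under a tree map does the rest; in the finite case one uses that $f_\bullet$ is then a homeomorphism onto its image and analyzes the induced interval/circle dynamics directly, invoking the classification of self-maps of $\R$-trees. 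I expect the \textbf{main obstacle} to be exactly this last point: controlling orbits that are \emph{not} eventually captured by a single finite model — i.e., ruling out that $r_\pi(f_\bullet^n\nu)$ oscillates or escapes to the "ends'' of $\mc{V}_X$ for every $\pi$ — which is where one must genuinely use the hypotheses (normality, non-cusp, non-invertibility) together with the finer structure theory of $\mc{V}_X$ (the behavior of multiplicities under blowup, finiteness of the relevant local degree data, and the fact established elsewhere in the paper that the relevant dual-graph data is eventually periodic). The inputs from \cite{favre-jonsson:eigenval} and \cite{ruggiero:rigidification} handle the smooth semi-superattracting and superattracting ingredients; the new content is assembling these over an arbitrary non-cusp normal singularity, and that assembly is where the care is needed.
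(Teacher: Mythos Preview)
Your outline has the right skeleton---a fixed-point argument on retracts, a finite/non-finite case split, and a contraction mechanism---but it is missing the two load-bearing ideas that actually make the proof go. First, the paper does not use ``skewness'' or ``log-discrepancy'' as the contracting metric; it constructs a new \emph{angular distance} $\rho(\nu,\mu) = \log[\beta(\nu\mid\mu)\beta(\mu\mid\nu)]$ from relative skewness and proves (\refthm{non-expansion}, \refthm{strong_contraction}) that $f_\bullet$ is always $\rho$-nonexpanding, and is \emph{strictly} contracting whenever $f$ is non-finite. This is what feeds Edelstein's fixed-point theorem on each skeleton $\mc{S}_\pi$ and produces the unique eigenvaluation in the non-finite case. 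Your ``Lyapunov via skewness'' idea does not quite work: skewness itself is not contracted in any uniform way, and the vague ``local degrees force monotonicity'' claim is exactly the kind of statement that fails without the angular metric.

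Second, and more seriously, your treatment of the finite case has a real gap. In the finite case $f_\bullet$ is \emph{not} strictly contracting, so no fixed-point argument of the sort you describe will produce $S$. The paper instead invokes Wahl's theorem (\refthm{wahl}): a non-invertible finite self-map forces $(X,x_0)$ to be log canonical, and the Jacobian formula then shows $R_f=0$ exactly when $(X,x_0)$ is lc but not lt. One then uses the explicit classification of lc singularities: for quotient singularities you lift to $(\C^2,0)$ and cite the smooth case; for simple elliptic, quotient-cusp, and cusp singularities the set $S=A^{-1}(0)$ is totally invariant (point, segment, circle respectively), $f_\bullet|_S$ is a $\rho$-isometry, and the Jacobian formula $A(f_\bullet\nu)=A(\nu)/c(f,\nu)$ drives convergence to $S$. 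Your proposal also misreads the trichotomy: the circle case is \emph{not} excluded from Theorem~B---it is one of the three allowed shapes for $S$, arising precisely for cusps with finite $f$; what is excluded (in the later Theorems~A, C, D) is only the sub-case where $f_\bullet|_S$ is an irrational rotation.
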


A special situation in the previous theorem is given when $S$ is a circle, $f$ is finite, and $f_\bullet$ acts on $S$ as a rotation of infinite order (irrational rotation).
In this situation, the conclusion of \refthm{geometricstability} is simply false, see \S\ref{ssec:finitecusp}.

As applications of \hyperref[thm:geometricstability]{Theorems~\ref*{thm:geometricstability}} and \ref{thm:valdynamics} we will derive three additional interesting results, stated below. The first is related to the notion of \emph{algebraic stability} of holomorphic dynamical systems, an important and actively studied topic in complex dynamics, see for instance \cite{diller-favre:dynbimeromapsurf, favre:monomial, bedford-diller:birationalsurfacemaps, dinh-sibony:regbiratmaps, dinh-sibony:superpotentials, favre-jonsson:dynamicalcompactifications, jonsson-wulcan:monomial, lin:monomialmaps}. In our local setting, the classical  notion of algebraic stability has meaning only when the germ $f\colon (X,x_0)\to (X,x_0)$ is \emph{finite} near $x_0$.
In this case, given any modification $\pi\colon X_\pi\to (X,x_0)$, the lift $f_\pi\colon X_\pi\dashrightarrow X_\pi$ induces a $\Z$-linear pull-back operation $f^*\colon \Ediv(\pi)\to \Ediv(\pi)$ on the group $\Ediv(\pi)$ of exceptional divisors of $\pi$, see \S\ref{ssec:action_dual_divisors}. However, due to the presence of indeterminacy points of $f_\pi$, it may happen  that the pull-back operation is not functorial, that is, possibly $(f_\pi^n)^*\neq (f_\pi^*)^n$ for some or even all $n > 1$. On the other hand, the control on the orbits of exceptional prime divisors given by \refthm{geometricstability} allows us to obtain a sort of ``asymptotic'' functoriality.


\begin{introthm}\label{thm:algebraicstability} Let $(X,x_0)$ be an irreducible germ of a normal complex surface at a point $x_0\in X$, and let $f\colon (X,x_0)\to (X,x_0)$ be a finite non-invertible holomorphic map. Assume we are not in the case of $f$ inducing an irrational rotation at a cusp singularity $x_0$ of $X$. Then for any modification $\pi\colon X_{\pi}\to (X,x_0)$ there exists a dominating modification $\pi'\colon X_{\pi'}\to (X,x_0)$ and an integer $N = N(\pi')\geq 1$ such that $(f_{\pi'}^{N+n})^* = (f_{\pi'}^N)^*(f_{\pi'}^*)^n$ for all $n\geq 0$. Moreover, $X_{\pi'}$ may be taken to have at worst cyclic quotient singularities. 
\end{introthm}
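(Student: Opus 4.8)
The plan is to deduce \refthm{algebraicstability} from the geometric stability statement \refthm{geometricstability} by a short formal argument, in the same spirit as the classical fact that a bimeromorphic surface map is algebraically stable as soon as no curve it contracts is iterated into its indeterminacy locus. First I would fix the given modification $\pi$ and apply \refthm{geometricstability} to produce a dominating modification $\pi'\colon X_{\pi'}\to (X,x_0)$ such that $X_{\pi'}$ has at worst cyclic quotient singularities and such that, writing $I(f_{\pi'})$ for the indeterminacy locus of the meromorphic lift $f_{\pi'}\colon X_{\pi'}\dashrightarrow X_{\pi'}$, every exceptional prime divisor $E$ of $\pi'$ has $B_E:=\{\,n\ge 0:f_{\pi'}^n(E)\in I(f_{\pi'})\,\}$ finite. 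Under the hypotheses of \refthm{algebraicstability} we are either outside the cusp case, where \refthm{geometricstability} applies verbatim, or at a cusp singularity with $f$ finite but not inducing an irrational rotation on the invariant circle $S$; in the latter situation one uses instead the refinement of geometric stability supplied by the classification \refthm{valdynamics}, the obstruction to geometric stability being present \emph{only} in the irrational-rotation configuration. Since $\pi'$ has finitely many exceptional prime divisors $E_1,\dots,E_r$, the set $B:=B_{E_1}\cup\cdots\cup B_{E_r}$ is finite, and we set $N=N(\pi'):=1+\max B$ (with $N=1$ when $B=\varnothing$).

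I would then recall the functoriality criterion for the pull-back on $\Ediv(\pi')$ established in \S\ref{ssec:action_dual_divisors}: for meromorphic lifts $g,h\colon X_{\pi'}\dashrightarrow X_{\pi'}$ of iterates of $f$ one has $(g\circ h)^*=h^*\circ g^*$ on $\Ediv(\pi')$ provided no curve contracted by $h$ is mapped by $h$ into $I(g)$. Because $f$ is finite near $x_0$, no iterate of a lift of $f$ contracts a curve other than an exceptional prime divisor of $\pi'$ (curves through $x_0$ not contracted to $x_0$ by $f$ stay curves under all iterates, and nothing over $X\setminus\{x_0\}$ can map onto the point $x_0$), so the criterion only needs to be tested on $E_1,\dots,E_r$.

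Applying this with $g=f_{\pi'}$ and $h=f_{\pi'}^{N+n}$ for arbitrary $n\ge 0$: if $E_i$ is contracted by $f_{\pi'}^{N+n}$ then its image is the point $f_{\pi'}^{N+n}(E_i)$, and since $N+n\ge N>\max B\ge\max B_{E_i}$ we get $f_{\pi'}^{N+n}(E_i)\notin I(f_{\pi'})$; thus $(f_{\pi'}^{N+n+1})^*=(f_{\pi'}\circ f_{\pi'}^{N+n})^*=(f_{\pi'}^{N+n})^*\circ f_{\pi'}^*$ for every $n\ge 0$. An induction on $n$, starting from the trivial case $n=0$, then yields $(f_{\pi'}^{N+n})^*=(f_{\pi'}^N)^*(f_{\pi'}^*)^n$ for all $n\ge 0$, which is the assertion; the statement on the singularities of $X_{\pi'}$ is part of the conclusion of \refthm{geometricstability}.

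The step I expect to be the real obstacle is securing the geometric stability input in exactly this form, in particular handling the cusp singularities that \refthm{algebraicstability} allows but \refthm{geometricstability} excludes: this requires going through the precise description of the action of $f_\bullet$ on the invariant circle $S$ provided by \refthm{valdynamics} and checking that, as long as $f_\bullet|_S$ is not an irrational rotation, one can blow up enough so that every exceptional orbit eventually avoids indeterminacy. A secondary point needing verification is the functoriality criterion in the generality used here: it is classical for bimeromorphic maps, but the lifts $f_{\pi'}^n$ are genuinely non-invertible, so one should confirm that the standard computation on a common resolution $X_{\pi'}\xleftarrow{p}Z\xrightarrow{q}X_{\pi'}$ of indeterminacies (comparing $p_*q^*$ with the composite pull-back) still goes through — which it does, precisely because finiteness of $f$ keeps the contracted locus inside the finite set of exceptional primes.
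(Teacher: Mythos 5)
Your proposal takes the same route as the paper: reduce to geometric stability (Theorem~A), invoke the functoriality criterion for $\Ediv(\pi')$, and set $N$ past the last indeterminacy time of each of the finitely many exceptional primes. The paper packages this in \reflem{pullback_is_functorial} (the criterion, tested only on exceptional primes, exactly as you observe finiteness of $f$ allows) and \refprop{geometrical_and_algebraic_stability}, then concludes immediately.

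One small remark on what you flagged as the ``real obstacle.'' You are right that the stated hypothesis of \refthm{geometricstability} (``not a finite germ at a cusp singularity'') is literally narrower than that of \refthm{algebraicstability} (``not an irrational rotation at a cusp''), so a verbatim citation of the introduction's Theorem~A would leave the rational-rotation-at-cusp case uncovered. But the gap is illusory: when Theorem~A is reformulated in \S\ref{sec:algebraic_stability} as \refthm{dimensional_stability}, the hypothesis there already matches Theorem~C's, and the case analysis in its proof (Case~3, with $f_\bullet^k|_S=\mathrm{id}$) handles finite germs at cusps whose circle action has finite order. So you do not need a fresh argument via \refthm{valdynamics} for that case; you need only to cite the reformulated geometric-stability statement rather than the one in the introduction. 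Your ``secondary point'' about the functoriality criterion in the non-invertible setting is precisely what \reflem{pullback_is_functorial} asserts and what its proof (adapted from the cited smooth-case lemma) delivers, so that concern is also resolved.
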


The second application is related to the sequence of so-called \emph{attraction rates} $c(f^n, \nu)$ of the iterates of $f^n$ along a given valuation $\nu\in \mc{V}_X$. The attraction rate $c(f^n, \nu)$ is by definition \[
c(f^n, \nu) := \min_{\phi\in f^{n*}\mf{m}} \nu(\phi),\] where $\mf{m}$ is the maximal ideal of $\mc{O}_{X,x_0}$. The dynamical relevance of these attraction rates can be seen in the fact that understanding the values of $c(f^n, \nu)$ as $\nu\in \mc{V}_X$ varies allows one to recover the ideals $f^{n*}\mf{m}$. Our last theorem describes the structure of the sequence $\{c(f^n,\nu)\}_{n\geq 1}$ for some $\nu\in \mc{V}_X$.

\begin{introthm}\label{thm:recursion}
Let $(X,x_0)$ be an irreducible germ of a normal complex surface at a point $x_0\in X$, and let $f\colon (X,x_0)\to (X,x_0)$ be a dominant non-invertible holomorphic function.
Assume we are not in the case of a finite germ $f$ at a cusp singularity $(X,x_0)$ inducing an irrational rotation. Then for any quasimonomial valuation $\nu\in \mc{V}_X$, the sequence of attraction rates $c_n := c(f^n, \nu)$ eventually satisfies an integral linear recursion relation. More precisely, there exist integers $a$, $b$, $N$, and $m$ with $N,m\geq 1$ such that $c_{n+2m} = ac_{n+m} + bc_n$ for all $n\geq N$.
\end{introthm}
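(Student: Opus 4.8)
The plan is to translate the statement into a question of linear algebra on a single, well-chosen model, using the cocycle property of attraction rates together with the classification of \refthm{valdynamics}. First I would record the multiplicativity identity
\[
c\big(f^{\,n+k},\nu\big)=c\big(f^{\,k},f_\bullet^{\,n}\nu\big)\cdot c\big(f^{\,n},\nu\big)\qquad(n,k\ge 0),
\]
immediate from $f^{(n+k)*}\mf{m}=f^{n*}\!\big(f^{k*}\mf{m}\big)$ and from the fact that the functional $\phi\mapsto\nu(f^{n*}\phi)$ equals $c(f^{n},\nu)\cdot f_\bullet^{n}\nu$ (the normalized push-forward). In particular $c_n=\prod_{j=0}^{n-1}c(f,f_\bullet^{j}\nu)$, so the asymptotics of $\{c_n\}$ are controlled by the orbit $\{f_\bullet^{j}\nu\}_{j\ge 0}$, which by \refthm{valdynamics} converges to the subgraph $S$.

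Next I would pass to a good model. By \refthm{geometricstability} (and the refinement for dominant, not necessarily finite, germs discussed in \S\ref{sec:algebraic_stability}) there is a modification $\pi\colon X_\pi\to(X,x_0)$, dominating any prescribed one and with at worst cyclic quotient singularities, which is a log resolution of $\mf{m}$ and on which the orbits of exceptional primes under $f_\pi$ are well behaved; when $f$ is finite, \refthm{algebraicstability} then furnishes $N\ge 1$ with $(f_\pi^{N+n})^{*}=(f_\pi^{N})^{*}(f_\pi^{*})^{n}$ on $\Ediv(\pi)$ for all $n\ge 0$, and the general case follows similarly from the cocycle formula. Writing the divisorial part of $f^{n*}\mf{m}\cdot\mc{O}_{X_\pi}$ as an effective exceptional divisor $D_n\in\Ediv(\pi)$ (discarding, in the non-finite case, the $f$-invariant non-exceptional part, which only shifts the sequence by a bounded amount), eventual functoriality gives $D_{N+n}=(f_\pi^{*})^{n}D_N$. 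Since $\nu$ is quasimonomial, $\nu(f^{n*}\mf{m})=L(D_n)$ for a fixed linear functional $L$ on $\Ediv(\pi)$ (for $\nu=\operatorname{ord}_{E_i}$ it reads off the coefficient of $E_i$; for a monomial valuation at a crossing $E_i\cap E_j$ it is the corresponding positive combination of the two coefficients). Hence $c_{N+n}=L\big((f_\pi^{*})^{n}D_N\big)$ is of the form $L(M^{n}v)$ for a $\Z$-linear endomorphism $M=f_\pi^{*}$ of a finitely generated free abelian group and a fixed vector $v=D_N$; such a sequence automatically satisfies the integral linear recursion whose characteristic polynomial is that of $M$.

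It remains to pin down the special shape $c_{n+2m}=ac_{n+m}+bc_n$, and here I would use the fine structure behind \refthm{valdynamics}. The graph $S$ carries an $f_\bullet$-periodic cycle $\{\mu_0,\dots,\mu_{m-1}\}$ of some period $m\ge 1$ ($m=1$ when $S$ is a point or a segment, $m$ equal to a rotation period when $S$ is a circle, the infinite-order rotation case being excluded by hypothesis), onto which $f_\bullet^{j}\nu$ accumulates. Replacing $f$ by $f^{m}$ reduces to the case of an $f_\bullet$-fixed quasimonomial valuation $\nu_\star\in S$; the crucial point --- obtained by analyzing the action of $f_\bullet$ on the (at most two) local branches of $\mc{V}_X$ at $\nu_\star$, in the spirit of \cite{favre-jonsson:eigenval,gignac-ruggiero:attractionrates,ruggiero:rigidification} --- is that the long-run growth of $D_n$ is governed by a rank-$2$ integral companion block of $f_\pi^{*}$ whose characteristic polynomial is a monic integer quadratic $t^{2}-at-b$; equivalently the first dynamical degree $\lambda_1(f)$ is a root of $t^{2}-at-b$, which is also the source of the quadratic-integer statement. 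Undoing the passage to $f^{m}$ --- the constants $a,b$ being common to all residue classes modulo $m$ since the cycle is a single $f_\bullet$-orbit --- and combining with the previous paragraph yields $c_{n+2m}=ac_{n+m}+bc_n$ for all $n$ sufficiently large.

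I expect the main obstacle to be precisely this last step: showing that the part of $f_\pi^{*}$ controlling the long-run growth of $D_n$ is two-dimensional, uniformly over the trichotomy point/segment/circle of \refthm{valdynamics}, over the various types of the singularity $(X,x_0)$, and over the possible natures of the fixed valuation $\nu_\star$ (divisorial, or quasimonomial of rational or irrational ``slope''), and that the remaining smaller eigenvalues of $f_\pi^{*}$ do not disturb the form of the eventual recursion. This is where the exclusion of an irrational rotation at a cusp is genuinely used, and where the refined classification underlying \refthm{valdynamics} does the real work.
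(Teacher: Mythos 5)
Your first two paragraphs correctly reproduce the paper's route to the ``weak'' form of the theorem: use \refthm{geometricstability}, the eventual functoriality of pull-back on a geometrically stable model (\refprop{geometrical_and_algebraic_stability}), the intersection formula $c(f^n,\nu)=-Z_\pi(\nu)\cdot \Exc_\pi\circ(f_\pi^n)^*Z_\pi(\mf{m}_X)$, and Cayley--Hamilton applied to the $\nZ$-linear operator $\Exc_\pi\circ f_\pi^*$ on $\Ediv(\pi)$. One small correction: the operator that satisfies the integral polynomial and makes the eventual functoriality statement correct in the non-finite case is $\Exc_\pi\circ f_\pi^*$, not $f_\pi^*$ itself; it is not accurate to say the non-exceptional part is ``$f$-invariant'' or merely shifts the sequence, since the contracted curves are genuinely contracted and their contribution must be projected off. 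This does not affect the conclusion but the bookkeeping matters.

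The gap is in your third paragraph. You propose to obtain the order-$2m$ recursion by locating a ``rank-$2$ integral companion block of $f_\pi^*$'' governing the long-run growth of $D_n$. The paper does not do this, and I do not think it can be done in the form you state: there is no reason for the initial vector $D_N=\Exc_\pi\circ(f_\pi^N)^*Z_\pi(\mf{m}_X)$ to lie in, or asymptotically align with, a $2$-dimensional $\Exc_\pi\circ f_\pi^*$-invariant subspace of $\Ediv(\pi)$, and the exactness of the eventual recursion (for all $n\geq N$, not just asymptotically) is a fragile statement that a spectral-block argument would not deliver directly. What the paper actually does is abandon the linear-algebra-on-$\Ediv(\pi)$ framing at this stage and work with the cocycle $c_{n+1}=c(f,f_\bullet^n\nu)\,c_n$, analyzing the single factor $c(f,f_\bullet^n\nu)$ near the eigenvaluation $\nu_\star$ (after replacing $f$ by $f^k$). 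Three cases arise: if $\nu_\star$ is not quasimonomial, or if the tangent directions $\vect{v_n}$ eventually leave the critical skeleton $\mc{S}_f$, then $c(f,f_\bullet^n\nu)$ is eventually \emph{constant} (equal to $c_\infty(f)\in\nN^*$), giving an order-$1$ recursion; if $\nu_\star$ is irrational, or if the tangent orbit at a divisorial $\nu_\star$ is eventually periodic along $\mc{S}_f$, then one uses the monomial parameterization near $\nu_\star$: by \reflem{selfmap_monomialweights}, $f_\bullet$ acts on the monomial weight pair $(r,s)$ at a suitable crossing point by a $2\times 2$ integer matrix $M$, and since $c(f,w(t))$ is affine along the invariant edge, Cayley--Hamilton for this $M$ gives the exact relation $c_{n+2}=\operatorname{tr}(M)c_{n+1}-\det(M)c_n$. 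Crucially, this $M$ lives on the $2$-dimensional space of monomial weights at $\nu_\star$ (with respect to the dual divisors $\check{E},\check{F}$ at an intersection point), not on any invariant subspace of $\Ediv(\pi)$; when $\nu_\star$ is irrational it is not even a vertex of any $\Gamma_\pi$. So the order-$2$ bound is a local dynamical statement at $\nu_\star$, obtained through the cocycle, rather than a global statement about the spectrum of $f_\pi^*$. You would need to replace your ``companion block'' heuristic by this case analysis (non-quasimonomial / irrational / divisorial $\nu_\star$ with escaping vs.\ preperiodic tangent orbit) to complete the argument.
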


Again, the conclusions of \hyperref[thm:geometricstability]{Theorems~\ref*{thm:algebraicstability}} and \ref{thm:recursion} do not hold when $f$ is a finite germ at a cusp singularity inducing an irrational rotation.
We prove that this situation may happen for every cusp singularity, and in this case, no linear recursion relations are satisfied for the sequence of attraction rates. We deduce the non-existence of algebraically stable models in this case.

A coarser invariant than the sequence of attraction rates is given by the \emph{first dynamical degree}, which measures the exponential growth rate of this sequence.
It is defined by $c_\infty(f, \nu)=\lim_n (c(f^n,\nu))^{1/n}$ for any quasimonomial valuation $\nu$.
In fact it can be easily shown that this limit exists, and it does not depend on $\nu$, so we simply denote it by $c_\infty(f)$.
The arithmetical properties of the first dynamical degrees have been explored in several contexts, see e.g. \cite{diller-favre:dynbimeromapsurf, favre-jonsson:eigenval, blanc-cantat:dyndegbiratprojsurf}; in the global setting, its logarithm gives an upper bound to the topological entropy, see \cite{dinh-sibony:bornesupentropietop}.
For superattracting germs at a smooth point $(\nC^2,0)$, Favre and Jonsson proved (see \cite[Theorem A]{favre-jonsson:eigenval}) that the first dynamical degree is always a quadratic integer. This can also be deduced directly from the study of the recursion relation of the sequence of attraction rates, see \cite{favre-jonsson:dynamicalcompactifications, gignac-ruggiero:attractionrates}.
We have seen that in the singular setting, such recursion relation is not always present. Nevertheless, the first dynamical degree still shares the same arithmetic properties.

\begin{introthm}\label{thm:quadratic_integer}
Let $(X,x_0)$ be an irreducible germ of a normal complex surface at a point $x_0\in X$, and let $f\colon (X,x_0)\to (X,x_0)$ be a dominant non-invertible holomorphic function.
Then the first dynamical degree $c_\infty(f)$ is a quadratic integer.
\end{introthm}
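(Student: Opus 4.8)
The plan is to reduce the statement to an analysis of the attraction rate at a fixed point of $f_\bullet$, and then to read off the quadratic-integer property from the value group of that valuation. Recall the cocycle relation $c(f^{n+m},\nu)=c(f^m,\nu)\cdot c(f^n,f^m_\bullet\nu)$: if $\nu_*\in\mc V_X$ is fixed by $f_\bullet$ then $c(f^n,\nu_*)=c(f,\nu_*)^n$, and moreover $\nu_*(f^*\phi)=c(f,\nu_*)\,\nu_*(\phi)$ for every $\phi\in\mc O_{X,x_0}$. By \refthm{valdynamics} and its refinement in the body, the invariant subgraph $S$ is a point, a segment, or a circle, with every quasimonomial valuation attracted to $S$; moreover a circle can occur only when $x_0$ is a cusp. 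Whenever $f_\bullet|_S$ has a fixed point — automatically so when $S$ is a point or a segment (Brouwer), and also when $S$ is a circle on which $f_\bullet$ acts with topological degree $\neq 1$ or as the identity — one obtains a fixed point $\nu_*\in S$, and the local expansion behaviour of $f_\bullet$ near $\nu_*$ (from the refined classification) gives $c(f^n,\nu)^{1/n}\to c(f,\nu_*)$ for every quasimonomial $\nu$, so that $c_\infty(f)=c(f,\nu_*)$. The one remaining possibility — $S$ a circle and $f_\bullet|_S$ a fixed-point-free rotation, which by the classification forces $f$ finite at a cusp — is Step 3 below.

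Suppose then $f_\bullet\nu_*=\nu_*$, and set $c:=c_\infty(f)=c(f,\nu_*)$. Let $\Gamma\subset(\mathbb R,+)$ be the value group of $\nu_*$. From the eigenvaluation equation, $c\,\nu_*(\phi)=\nu_*(f^*\phi)\in\Gamma$ for all $\phi$, hence $c\,\Gamma\subseteq\Gamma$, so multiplication by $c$ is a group endomorphism of $\Gamma$. I would now split according to the type of $\nu_*$ (using the description of $\mc V_X$ from \refsec{valuation_spaces}): if $\nu_*$ is divisorial or a curve valuation, $\Gamma$ is infinite cyclic, so $c\in\Z$; if $\nu_*$ is quasimonomial of irrational type, $\Gamma$ is free of rank $2$ and multiplication by $c$ is a matrix $M\in M_2(\Z)$, so $c$ is a root of the monic polynomial $\det(tI-M)\in\Z[t]$ and is a quadratic integer; and if $\nu_*$ is infinitely singular, $\Gamma$ has rational rank $1$, so $\Gamma\otimes\mathbb Q\cong\mathbb Q$ and $c\in\mathbb Q$, and one needs only the additional fact that $c$ is an algebraic integer. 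For this last point, when $f$ is finite I would invoke \refthm{algebraicstability}, which produces a model $\pi'$ with $(f_{\pi'}^{N+n})^*=(f_{\pi'}^N)^*(f_{\pi'}^*)^n$, so that $c_\infty(f)$ is the spectral radius of the $\Z$-linear operator $f_{\pi'}^*$ on the free abelian group $\Ediv(\pi')$, hence an algebraic integer; when $f$ is not finite, a contracted branch forces the eigenvaluation to be the corresponding curve valuation (already handled), or one may simply invoke \refthm{recursion}, which shows a power of $c_\infty(f)$ to be a root of a monic integer quadratic.

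For Step 3, $f$ is finite, $x_0$ is a cusp, and $f_\bullet$ acts as a fixed-point-free rotation of the circle $S$, which on the minimal good resolution corresponds to the anticanonical cycle of exceptional curves. Here there is no eigenvaluation; instead I would use the cocycle relation to write $\tfrac1n\log c(f^n,\nu)=\tfrac1n\sum_{k<n}\log c(f,f^k_\bullet\nu)$ and average: when the rotation is irrational, Weyl equidistribution of the orbit gives $\log c_\infty(f)=\int_S\log c(f,\cdot)\,d\lambda$ for the natural length measure $\lambda$ on $S$, while when it is rational of order $d$ one gets $c_\infty(f)^d=c(f^d,\nu)$ with $f^d$ fixing all of $S$. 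In both cases I would evaluate the relevant quantity by intersection theory on the resolution, using the projection formula $\langle f_{\pi}^*\alpha,f_{\pi}^*\beta\rangle=e(f)\langle\alpha,\beta\rangle$ for the finite map together with the fact that the cusp cycle is anticanonical, and conclude that $c_\infty(f)$ is a quadratic integer — in fact that $c_\infty(f)^2=e(f)$, the local degree $e(f):=\dim_{\C}\mc O_{X,x_0}/f^*\mf m\,\mc O_{X,x_0}$. This is carried out in \S\ref{ssec:finitecusp}.

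The step I expect to be most delicate is not Step 2, which is short once the setup is in place, but the identification $c_\infty(f)=c(f,\nu_*)$ in Step 1: it rests on the full strength of the classification \refthm{valdynamics}, and one must take care that the fixed point furnished on the segment or circle is of a type the value-group argument can handle. The cusp case is the other genuinely separate point, since there is no eigenvaluation and the identity $c_\infty(f)^2=e(f)$ relies on the anticanonical geometry of the cusp cycle and on the rotation being, up to finite order, measure preserving — neither of which is formal.
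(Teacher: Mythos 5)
Your route is genuinely different from the paper's, and most of it is sound, so let me first compare and then flag two concrete problems.

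\textbf{Comparison.} The paper's proof splits by the type of the eigenvaluation and obtains, for each type, an \emph{explicit integer matrix} via monomialization: $c(f,\nu_\star)=\kk{E_\star}{E_\star}\in\nN^*$ in the divisorial case, and the characteristic polynomial of the local monomial matrix in the irrational case; for the cusp it uses the arithmetic construction of \S\ref{sec:cusps} directly, showing $c_\infty(f)=\sqrt{Q(\alpha)}$ with $Q(\alpha)=e(f)\in\nN^*$. Your eigenvaluation-case argument instead reads off everything from the containment $c\,\Gamma\subseteq\Gamma$ for the value group $\Gamma$ of the fixed valuation, which is uniform across the divisorial, curve, and irrational types and is arguably cleaner than the paper's case-by-case geometric computation. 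Your cusp argument replaces the arithmetic cocycle computation by a Weyl-equidistribution average of $\log c(f,\cdot)$ over $S$; this also works, and the identity $c_\infty(f)^2=e(f)$ that you assert is in fact correct (it follows from the corrected form of \eqref{eqn:c_and_e}, namely $c(f,\nu)^2\alpha(f_\bullet\nu)=e(f)\alpha(\nu)$ for $\nu\in S$, integrated against the $f_\bullet$-invariant measure on $S$ so that the $\alpha$-terms cancel).

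\textbf{Gap 1: the contracted-branch claim is false.} In the infinitely singular case you write that, when $f$ is non-finite, ``a contracted branch forces the eigenvaluation to be the corresponding curve valuation.'' This is not true: non-finite germs may have divisorial, irrational, or infinitely singular eigenvaluations despite having contracted curves. See \S\ref{ssec:example_smooth_nonfinite}, where $f(x,y)=(x(y+x^3),x^2y)$ contracts $\{x=0\}$ and yet has a unique \emph{irrational} eigenvaluation $\nu_\star=\nu_{y,\sqrt 2}$, and \S\ref{ssec:example_quotient1}, \S\ref{ssec:example_elliptic}, where the eigenvaluation is divisorial. The correct supplement, which you do offer as a fallback, is to combine the value-group conclusion $c_\infty(f)\in\nQ$ with the algebraic-integrality of $c_\infty(f)$ coming from \refthm{geometricstability} via \refprop{geometrical_and_algebraic_stability} (not \refthm{algebraicstability}, which is stated only for finite germs): the eventual functoriality $\Exc_\pi\circ(f_\pi^n)^*=\Exc_\pi\circ(f_\pi^N)^*(\Exc_\pi\circ f_\pi^*)^{n-N}$ exhibits $c_\infty(f)$ as the Perron root of the non-negative integer matrix $\Exc_\pi\circ f_\pi^*$ on $\Ediv(\pi)$. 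A rational algebraic integer is an integer, so $c_\infty(f)\in\nZ$.

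\textbf{Gap 2: the cusp computation is asserted, not proved.} You invoke ``the projection formula together with the fact that the cusp cycle is anticanonical'' to conclude $c_\infty(f)^2=e(f)$, but no computation is given, and the phrasing $\langle f_\pi^*\alpha,f_\pi^*\beta\rangle=e(f)\langle\alpha,\beta\rangle$ by itself does not yet yield the identity. What one actually needs is the push-forward/pull-back relation for $\nu\in S$ (which is totally invariant with a unique preimage), namely $c(f,\nu)^2\alpha(f_\bullet\nu)=e(f)\alpha(\nu)$, followed by the observation that the angular metric is $f_\bullet$-invariant on $S$ (\reflem{thinness_isometry}) so that the measure $\lambda$ is preserved and $\int_S\log\alpha\circ f_\bullet\,d\lambda=\int_S\log\alpha\,d\lambda$; then $2\int_S\log c(f,\cdot)\,d\lambda=\log e(f)$. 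This \emph{can} be done, and it is a nice alternative to the paper's explicit identity $c(f^n,v)=q(\alpha)^n b(v)/b(L^nv)$ with $b$ bounded, but as written the step is a gap, and it is exactly the point you yourself identified as ``neither of which is formal.''

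Finally, a small remark on Step 1: for the segment case, the fixed point of $f_\bullet$ on $I$ furnished by Brouwer may be an endpoint, which could be infinitely singular; your value-group argument handles it only because you have the Gap-1 supplement in place. It is worth stating this explicitly rather than relying on the (false) claim about contracted branches.
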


Each of the theorems stated above were shown for polynomial endomorphisms in \cite{favre-jonsson:eigenval, favre-jonsson:dynamicalcompactifications}, and for superattracting germs $f\colon (\nC^2,0)\to (\nC^2,0)$ in \cite{favre-jonsson:eigenval, gignac-ruggiero:attractionrates}.
The new content in the theorems is in the case when $x_0$ is a singular point of $X$. It is remarkable that the results from the smooth setting carry over so seamlessly to the singular setting. Moreover, the generality of these results is especially noteworthy given that very little is known about local dynamics near singularities.
Because the techniques used in this article are valuative rather than complex analytic, at no point will we use in an essential way that we are working over the complex numbers, and in fact all of the theorems stated above hold equally well if $\nC$ is replaced by any field of characteristic $0$.
Some of the results 
carries over fields of positive characteristic as well (see \hyperref[rmk:weak_convergence_charp]{Remarks~\ref*{rmk:weak_convergence_charp}}, \ref{rmk:jacobian_charp} and \S\ref{ssec:positive_char}).
Nonetheless, we will continue to work exclusively over $\nC$, as this is probably the most familiar setting to the audience.

The core of the paper is the proof of \refthm{valdynamics}.
The main technical tool is the construction of a suitable distance $\rho$ on the set of normalized valuations $\mc{V}_X$ associated to a normal surface singularity, that we call \emph{angular distance}.
We study the non-expanding properties of the action $f_\bullet$ induced by non-invertible germs $f\colon (X,x_0) \to (Y,y_0)$ between normal surface singularities with respect to the angular distance (see \refthm{strong_contraction}).
Understanding the behavior of the angular distance and related objects helps also to describe the geometry of surface singularities, see \cite{garciabarroso-gonzalezperez-popescupampu:ultrametricspacesarborescentsing}.
When $f$ is not finite, we show that the action of $f_\bullet$ strictly decreases the angular distance.
We deduce the existence and uniqueness of a fixed point for $f_\bullet$ in this case.
When $f$ is finite, a theorem of Wahl \cite{wahl:charnumlinksurfsing} tells us that $(X,x_0)$ is necessarily log canonical.
We use the classification of log canonical surface singularities, and continue the analysis of finite germs developed in \cite{favre:holoselfmapssingratsurf}.
In both cases, we need to control the behavior of the attraction rates of the iterates of $f$. We deduce some restrictions on the dynamics of $f_\bullet$ for semi-superattracting germs (see \refprop{semisuper_nonfinite} and \refthm{superattracting_finite}).

The structure of this article is as follows.
In \S\ref{sec:resolutions} we recall a few facts on resolution of singularities and intersection theory.
In \S\ref{sec:valuation_spaces} we describe several properties of the valuation spaces associated to normal surface singularities, and construct the angular distance.
In \S\ref{sec:log_disc} we recall the definition of log discrepancy of a valuation, study its properties on the valuation spaces, and recall the classification of log canonical surface singularities.
In \S\ref{sec:dynamics_valspaces} we define the action $f_\bullet$ induced by $f$ on valuative spaces, and we study several properties. In particular we establish the contracting properties with respect to the angular distance, and state a more precise version of \refthm{valdynamics}, namely \refthm{classification}.
Sections \S\ref{sec:dynamics_nonfinite} and \S\ref{sec:dynamics_finite} are devoted to the proof of \refthm{valdynamics} in the non-finite and finite cases.
In \S\ref{sec:algebraic_stability} we prove \hyperref[thm:geometricstability]{Theorems~\ref*{thm:geometricstability}} and \ref{thm:algebraicstability}.
In \S\ref{sec:attraction_rates} we prove \hyperref[thm:recursion]{Theorems~\ref*{thm:recursion}} and \ref{thm:quadratic_integer}.
Finally, \S\ref{sec:examples} contains worked-out examples and remarks.
We conclude with an appendix \S\ref{sec:cusps} where we recall the arithmetical construction of cusp singularities, and the construction of finite endomorphisms on them given by \cite{favre:holoselfmapssingratsurf}.
\bigskip

\noindent\textbf{Acknowledgements.}
The authors would like to thank Charles Favre, Mattias Jonsson, and Matt Baker for interesting and fruitful discussions about valuative dynamics and its applications, and Patrick Popescu-Pampu for interesting discussions on the geometry of cusp singularities and useful remarks on a preliminary version of the manuscript. They would also like to thank Hussein Mourtada for discussions about resolution of singularities, and Nguyen-Bac Dang and Alberto Branciari for remarks on fixed point theorems.
This work was supported by the ERC-Starting grant "Nonarcomp" no. 307856.

\section{Normal surface singularities, resolutions, and intersection theory}\label{sec:resolutions}

In this section we recall some ideas and constructions from the theory of surface singularities and, in particular, the resolution of such singularities. These ideas will be essential to understanding the spaces of valuations we begin studying in the next section, the central objects of this article.
Throughout this paper, we let $(X,x_0)$ be an irreducible germ of a normal complex surface at a point $x_0\in X$.
We refer to $(X,x_0)$ as a \emph{normal surface singularity}.
Let $R=R_X$ be the completed local ring $\hat{\mc{O}}_{X,x_0}$ and let $\mf{m}=\mf{m}_X$ be its maximal ideal.

\begin{defi} A \emph{modification} of $(X,x_0)$ is a proper holomorphic map $\pi\colon X_\pi\to (X,x_0)$, where $X_\pi$ is a normal complex surface  and $\pi$ is a biholomorphism over $X \setminus \{x_0\}$. A modification is a \emph{resolution} of $(X,x_0)$ if in addition $X_\pi$ is regular. Such a resolution is \emph{good} if the exceptional locus $\pi^{-1}(x_0)$ is a divisor whose support has simple normal crossings.
\end{defi}

It is a fundamental fact that good resolutions of $(X,x_0)$ always  exist, and moreover for any $\mf{m}$-primary ideal $\mf{a}\subset R$, one can find good resolutions of $(X,x_0)$ which resolve $\mf{a}$ in the following sense.

\begin{defi}
If $\mf{a}\subset R$ is an $\mf{m}$-primary ideal, that is, a proper ideal containing some power of $\mf{m}$, then a \emph{log resolution} of $\mf{a}$ is a good resolution $\pi$ of $(X,x_0)$ such that the ideal sheaf $\pi^*\mf{a}$ is locally principal.
\end{defi}
%

In the following, we will also need to solve the singularities of a curve inside $(X,x_0)$

\begin{defi}
Let $(C,x_0) \subset (X,x_0)$ be the germ of a (reduced) curve at $x_0$ sitting in a normal surface singularity $(X,x_0)$. Then a \emph{embedded resolution} of $(C,x_0)$ in $(X,x_0)$ is a good resolution $\pi\colon X_\pi \to (X,x_0)$ so that $\pi^{-1}(C)$ has simple normal crossings.
\end{defi}

A modification $\pi'$ of $(X,x_0)$ is said to \emph{dominate} another modification $\pi$ if there is a holomorphic map $\eta\colon X_{\pi'}\to X_\pi$ such that $\pi' = \pi\circ \eta$. Because this situation will appear frequently, it will be convenient to set once and for all the following notation.

\begin{Not} If $\pi$ and $\pi'$ are two modifications of $(X,x_0)$, we write $\pi'\geq \pi$ to mean that $\pi'$ dominates $\pi$, and in this case we will always denote by $\eta_{\pi\pi'}\colon X_{\pi'}\to X_\pi$ the holomorphic map $\eta_{\pi\pi'} = \pi^{-1}\circ \pi'$. 
\end{Not}

Modifications of $(X,x_0)$ form a category, the morphisms being the maps $\eta_{\pi\pi'}$. Any two modifications of $(X,x_0)$ are dominated by some good resolution, making good resolutions an inverse system within this category. A good resolution $\pi_0$ is said to be \emph{minimal} if $\eta_{\pi\pi_0}$ is a biholomorphism whenever $\pi_0$ dominates another good resolution $\pi$. It is an important theorem that minimal good resolutions of $(X,x_0)$ always exist, and that every good resolution of $(X,x_0)$ dominates some minimal good resolution. More precisely, every good resolution of $(X,x_0)$ is obtained from a minimal good resolution by a composition of point blowups \cite[Chapter 5]{laufer:normal2dimsing}.

\begin{rmk}\label{rmk:mingoodres_notunique}
For normal surface singularities, there is a unique minimal good resolution, see \cite[Theorem 5.12]{laufer:normal2dimsing}.
Notice that two exceptional primes in a minimal good resolution may intersect in more than one point.
This may happen for example for cusp singularities like $X=\{x^2+y^4+z^6=xyz\}$, where the exceptional divisor of the minimal good resolution consists of a cycle of two rational curves (which hence intersect transversely in two points $p$ and $q$), see \S\ref{ssec:example_cusp_42}.
A classical object associated to a good resolution $\pi \colon X_\pi \to (X,x_0)$ is the dual graph $\varGamma$, which is a simplicial graph that has as vertices the irreducible components of $\pi^{-1}(x_0)$ (also called exceptional primes), and an edge between two exceptional primes $E \neq F$ for each point in $E \cap F$.
In particular, there could be several edges connecting two vertices $E$ and $F$.
By further blow-ups, we may also assume that any two (different) exceptional primes in a good resolution intersect in at most one point.
In the cusp example above, this can be obtained by either blowing-up $p$ or $q$. In particular, minimal good resolutions satisfying this stronger property are not unique in general.
For such good resolutions, the edge between two vertices is uniquely determined by its endpoints, and notations can be eased.
This stronger concept will be also useful to construct the dual graph as embedded in the vector space of exceptional $\nR$-divisors, see \S \ref{ssec:dualgraphs}.

One could equivalently work with good resolutions in this stronger sense throughout all the paper, since the uniqueness of the minimal good resolution will not play any role.
\end{rmk}




\begin{rmk}
The modifications considered in this paper will nearly always be good resolutions. On the other hand, in order to prove \refthm{geometricstability} we will be forced to consider some modifications $\pi$ for which $X_\pi$ has some mild singularities, namely cyclic quotient singularities, see \S\ref{ssec:lc_and_lt} for definitions. These spaces $X_\pi$ still have very controlled geometry; for instance, they are $\Q$-factorial \cite{artalbartolo-martinmorales-ortigasgalindo:cartierweilonquotientsingularities}.
They can also be interpreted V-manifolds, first introduced by Satake \cite{satake:gaussbonnetVmflds}, and then renamed \emph{orbifolds} by Thurston \cite{thurston:geomtopo3mfld}.
\end{rmk}

\subsection{The intersection theory of good resolutions} \label{ssec:intersectiontheory}

Let $\pi\colon X_\pi\to (X,x_0)$ be a good resolution. We now turn our attention to the intersection theory of $X_\pi$. Throughout this article, we will denote by $\Ediv(\pi)$ the group of \emph{exceptional divisors} of $X_\pi$, that is, the group of Weil divisors with support lying in the exceptional locus $\pi^{-1}(x_0)$. The prime divisors with support in $\pi^{-1}(x_0)$ are called the \emph{exceptional primes} of $\pi$; they form a $\Z$-basis of $\Ediv(\pi)$. It will also be convenient to consider the vector spaces of exceptional $\R$- and $\Q$-divisors of $\pi$, which by definition are the elements of $\Ediv(\pi)_\R := \R\otimes_\Z \Ediv(\pi)$ and $\Ediv(\pi)_\Q := \Q\otimes_\Z \Ediv(\pi)$, respectively.

The intersection product on $X_\pi$ induces a symmetric bilinear form $\Ediv(\pi)\times \Ediv(\pi)\to \Z$ that, crucially, is negative definite \cite{grauert:ubermodifikationen}, thus giving rise to a  canonical identification of $\Ediv(\pi)_\R$ with its dual  $\Ediv(\pi)_\R^*$. If $E_1,\ldots, E_n$ are the exceptional primes of $\pi$, then we will always denote by $\check{E}_1,\ldots, \check{E}_n\in \Ediv(\pi)_\R$ the basis dual to $E_1,\ldots, E_n$. Concretely, $\check{E}_i$ is the unique $\R$-divisor with the property that $\check{E}_i \cdot E_j = \delta_{i\!j}$ for each $j$. This dual basis will appear prominently in later sections. One of its useful properties, an easy consequence of the projection formula, is that if $\pi'$ is a good resolution dominating $\pi$ and if $F_i$ is the strict transform in $X_{\pi'}$ of $E_i$, then $\eta_{\pi\pi'}^*\check{E}_i = \check{F}_i$. Another important property is the following.

\begin{prop}\label{prop:anti-effective}
The divisors $\check{E}_i$ are (strictly) anti-effective, that is, $\check{E}_i\cdot \check{E}_j < 0$ for each $i$ and $j$.
\end{prop}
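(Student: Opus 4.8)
The plan is to work with a fixed good resolution $\pi\colon X_\pi\to (X,x_0)$ with exceptional primes $E_1,\ldots,E_n$, and to prove the stronger statement that $\check E_i\cdot\check E_j<0$ for all $i,j$ by combining negative-definiteness of the intersection form with connectedness of the exceptional locus $\pi^{-1}(x_0)$ (Zariski's main theorem / Grauert). First I would write each dual divisor as $\check E_i=\sum_k a_{ik}E_k$ and record two basic facts: the matrix $(a_{ik})$ is (minus) the inverse of the intersection matrix $M=(E_k\cdot E_l)$, and $\check E_i\cdot\check E_j=a_{ij}$ by the defining relation $\check E_i\cdot E_j=\delta_{ij}$. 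So the claim is exactly that every entry of $-M^{-1}$ is strictly negative; equivalently, every entry of $M^{-1}$ is strictly negative.

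The key step is to show each $\check E_i$ is anti-effective, i.e. $a_{ik}\le 0$ for all $k$, with $a_{ii}<0$. For this I would argue by contradiction: suppose some coefficients of $\check E_i$ are $\ge 0$. Split $\check E_i = P - N$ into its positive and negative parts with $P,N\ge 0$ effective and with disjoint supports, and assume $P\neq 0$; note $E_i$ lies in the support of $N$ (if it appeared in $P$ or with coefficient $0$ one gets a quick contradiction with $\check E_i\cdot E_i=1$ and $E_i^2<0$, after a short computation). Then $0>P^2=\check E_i\cdot P+N\cdot P = \sum_{E_k\subseteq\mathrm{supp}(P)} a_{ik}(\check E_i\cdot E_k)+N\cdot P$. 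Each term $\check E_i\cdot E_k$ for $E_k$ in the support of $P$ is $\delta_{ik}=0$ since $E_i$ is not in $\mathrm{supp}(P)$, so this reduces to $0>P^2=N\cdot P\ge 0$ (as $N,P$ are effective with disjoint support, so $N\cdot P\ge 0$), a contradiction. Hence $P=0$ and $\check E_i$ is effective with a minus sign, i.e. anti-effective; that $a_{ii}<0$ (in fact $\check E_i\neq 0$) is clear since $\check E_i\cdot E_i=1\ne 0$, and strict negativity of the $E_i$-coefficient follows because $\check E_i\cdot E_i = a_{ii}(E_i^2)+\sum_{k\ne i}a_{ik}(E_i\cdot E_k)=1$ with $E_i^2<0$, $a_{ik}\le 0$, $E_i\cdot E_k\ge 0$ forces $a_{ii}<0$.

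With anti-effectivity of each $\check E_i$ in hand, I would then upgrade to strict negativity of all mixed intersections $\check E_i\cdot\check E_j=a_{ij}<0$ using connectedness of $\pi^{-1}(x_0)$. The $E_i$-coefficient $a_{ii}$ of $\check E_i$ is $<0$ by the previous step. For $j\ne i$, consider a path $E_i=E_{k_0},E_{k_1},\ldots,E_{k_r}=E_j$ in the dual graph and propagate negativity along it: from $\check E_i\cdot E_{k_1}=\sum_k a_{ik}(E_k\cdot E_{k_1})=0$ and $a_{i k_0}<0$, $E_{k_0}\cdot E_{k_1}>0$, $a_{ik}\le 0$ and $E_k\cdot E_{k_1}\ge 0$ for the other terms, one deduces $a_{i k_1}<0$; iterating along the path gives $a_{ij}<0$. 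Thus $\check E_i\cdot\check E_j=a_{ij}<0$ for all $i,j$.

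The main obstacle is the book-keeping in the contradiction step: one must be careful that $E_i$ genuinely lies in the negative part $N$ and not in $P$, and handle the borderline case where $E_i$ appears with coefficient zero — this is where $\check E_i\cdot E_i=1$ together with $E_i^2<0$ and $E_i\cdot E_k\ge 0$ is used to force $E_i$ into $\mathrm{supp}(N)$. Once that sign analysis is pinned down, the rest is a clean induction along paths in the (connected) dual graph.
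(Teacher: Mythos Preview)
Your argument is correct. The positive/negative part decomposition together with negative definiteness cleanly gives anti-effectivity, and the path propagation using $\check E_i\cdot E_{k_1}=0$ gives strictness. One cosmetic point: your parenthetical about the case $a_{ii}\ge 0$ giving a ``quick contradiction with $\check E_i\cdot E_i=1$ and $E_i^2<0$'' is unnecessary and slightly misleading for the coefficient-zero case; in fact your main computation already handles both cases at once, since if $E_i\in\mathrm{supp}(P)$ then $P\cdot\check E_i=a_{ii}>0$ and one still gets $0>P^2=a_{ii}+N\cdot P>0$. After Step~1, your derivation of $a_{ii}<0$ from $1=a_{ii}E_i^2+\sum_{k\ne i}a_{ik}(E_i\cdot E_k)$ is where $E_i^2<0$ is actually used.

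This is a genuinely different route from the paper. The paper isolates an abstract linear-algebra lemma: in any inner product space, a basis with pairwise nonpositive products has a dual basis with pairwise nonnegative products, strictly positive exactly on connected components of the associated graph; the proof goes through Gram--Schmidt and tracks signs of the transition coefficients. The proposition then follows by applying the lemma to $\Ediv(\pi)_\R$ with the negative intersection form and using connectedness of the dual graph. Your approach is more elementary and stays inside the geometry of $\pi$, using only negative definiteness and the $P/N$ split (essentially the classical Zariski--Mumford argument). The advantage of the paper's abstraction is reusability: the same lemma is invoked again later (for instance in the analysis of $\check E\cdot\check F$ versus $\check E\cdot\check H$), whereas your method proves exactly what is needed here.
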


\begin{rmk}\label{rmk:checkcoordinates}
In fact, we can easily write $\check{E}_i$ in coordinates with respect to the basis $\{E_j\}$, as
$$
\check{E}_i = \sum_j (\check{E}_i \cdot \check{E}_j) E_j.
$$
This formula can be easily proved by intersecting each side of the formula with $\check{E_j}$ for all $j$.
By duality, we also get
$$
E_i = \sum_j (E_i \cdot E_j) \check{E}_j.
$$
\end{rmk}

\refprop{anti-effective} is a consequence of a rather useful linear algebra lemma.

\begin{lem}\label{lem:linear-algebra} Let $V$ be a finite dimensional real inner product space and suppose that $e_1,\ldots, e_n$ is a basis of $V$ with the property that $(e_i\cdot e_j) \leq 0$ for each $i\neq j$. Let $e_1^*,\ldots, e_n^*\in V$ be the corresponding dual basis with respect to the inner product. Then the following statements hold. \begin{enumerate}[itemsep=-1ex]
\item For all $i$ and $j$, one has $(e_i^*\cdot e_j^*)\geq 0$.
\item Let $\varGamma$ be the graph with vertices $e_1,\ldots, e_n$ and an edge connecting two vertices $e_i$ and $e_j$ if and only if $(e_i\cdot e_j) < 0$. Then $(e_i^*\cdot e_j^*) > 0$ if and only if the vertices $e_i$ and $e_j$ belong to the same connected component in $\varGamma$.
\end{enumerate}
\end{lem}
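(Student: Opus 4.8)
The plan is to prove both statements simultaneously by exploiting the structure of the Gram matrix. Let $M = (m_{ij})$ be the matrix with entries $m_{ij} = e_i \cdot e_j$, so that $M$ is symmetric positive definite with non-positive off-diagonal entries — in other words, $M$ is a symmetric $M$-matrix (a Stieltjes matrix). The dual basis vectors satisfy $e_i^* = \sum_k (M^{-1})_{ik} e_k$, and a direct computation gives $e_i^* \cdot e_j^* = (M^{-1})_{ij}$. So the entire lemma reduces to the following classical fact: \emph{the inverse of a symmetric positive-definite matrix with non-positive off-diagonal entries has non-negative entries}, together with a sharpening identifying exactly which entries are strictly positive. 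I would state this reduction first, since it isolates the combinatorics cleanly.

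**Proving $(M^{-1})_{ij} \geq 0$.** For the first part, I would scale $M$ so that $M = cI - B$ where $B \geq 0$ entrywise (possible since the diagonal of $M$ is positive — indeed $m_{ii} = e_i\cdot e_i > 0$ — and taking $c$ larger than all diagonal entries makes $cI - M = B$ entrywise non-negative). Since $M$ is positive definite, its eigenvalues are positive, hence the spectral radius of $B = cI - M$ is strictly less than $c$. Therefore the Neumann series $M^{-1} = c^{-1}(I - c^{-1}B)^{-1} = c^{-1}\sum_{k \geq 0} c^{-k} B^k$ converges, and every term is entrywise non-negative, so $M^{-1} \geq 0$ entrywise. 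This proves statement (1). (Alternatively one can argue via Cramer's rule and induction, but the Neumann series argument is cleanest and immediately feeds into the next step.)

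**Pinning down strict positivity.** For statement (2), I would read off from the Neumann series that $(M^{-1})_{ij} > 0$ precisely when $(B^k)_{ij} > 0$ for some $k \geq 0$, i.e., precisely when there is a path $i = i_0, i_1, \ldots, i_k = j$ with every consecutive product $b_{i_\ell i_{\ell+1}} > 0$. Now note $b_{ij} > 0$ for $i \neq j$ exactly when $m_{ij} < 0$, which is exactly the condition for an edge between $e_i$ and $e_j$ in $\varGamma$; and $b_{ii} = c - m_{ii} > 0$ always (enlarge $c$ if necessary), so the diagonal of $B$ is positive and paths may ``wait'' at a vertex. Hence a path with all products positive exists if and only if $e_i$ and $e_j$ lie in the same connected component of $\varGamma$. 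This gives both directions of the equivalence. One small point to handle: the case $i = j$, where $(M^{-1})_{ii} > 0$ holds automatically because $M^{-1}$ is itself positive definite (diagonal entries of a positive-definite matrix are positive), consistent with $e_i$ being in its own component.

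**Expected main obstacle.** The conceptual content is entirely in recognizing that $M$ is a Stieltjes matrix and that its inverse is governed by connectivity of the associated graph; once that is in place the argument is routine. The only mild subtlety is making the bookkeeping around the diagonal shift precise — one must choose $c$ strictly larger than $\max_i m_{ii}$ so that $B$ has strictly positive diagonal, which is what allows paths in the "path-existence'' characterization to idle at a vertex and makes the connected-component statement come out exactly right rather than something about paths of a fixed parity or length. I would write that choice of $c$ explicitly and verify $\rho(B) < c$ from positive-definiteness of $M$, and the rest follows.
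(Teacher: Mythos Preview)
Your proof is correct and takes a genuinely different route from the paper. The paper argues via Gram--Schmidt: it first reduces to the case where $\varGamma$ is connected (the disconnected case following from the block-diagonal structure of $M$), re-indexes so that $e_1,\ldots,e_i$ is a path in $\varGamma$, orthogonalizes $e_1,\ldots,e_n$ to $u_1,\ldots,u_n$, and shows by induction that the transition coefficients $\lambda_{jk}$ in $u_j = \sum_{k\le j}\lambda_{jk}e_k$ are non-negative, and strictly positive whenever $k\le j\le i$. The positivity of $(e_i^*\cdot e_1^*)$ is then read off from the expansion of the $e_k^*$ in the orthogonal basis. Your Neumann-series argument is the standard $M$-matrix/Stieltjes-matrix proof; it is slicker and delivers the connectivity characterization in one stroke as a walk-counting statement on powers of $B$, whereas the paper obtains it by tracking which $\lambda_{jk}$ are strictly positive along a chosen path. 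The paper's approach, by contrast, is entirely elementary and self-contained, avoiding any spectral or Perron--Frobenius-type input.

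One small point worth tightening: the implication ``$M$ positive definite $\Rightarrow \rho(B) < c$'' needs a word. The eigenvalues of $B = cI - M$ are $c - \lambda_i(M) < c$, but bounding their \emph{moduli} by $c$ requires either choosing $c > \lambda_{\max}(M)$ from the start (which you can do, and which you implicitly allow later when you write ``enlarge $c$ if necessary'') or the observation that for a symmetric entrywise non-negative matrix the spectral radius equals the largest eigenvalue (e.g.\ because $|x^T B x| \le |x|^T B\,|x|$). Either fix is immediate.
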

\begin{proof} To start, we assume that $\varGamma$ is connected. We wish to prove that $(e_i^*\cdot e_j^*)>0$ for arbitrary $i$ and $j$, but after re-indexing, we may assume without loss of generality that $j = 1$, and that the sequence of vertices $e_1,\ldots, e_i$ is a path in $\varGamma$ from $e_1$ to $e_i$. If we perform the Gram-Schmidt orthogonalization process on the basis $e_1,\ldots, e_n$, we obtain an orthogonal basis $u_1,\ldots, u_n$ defined recursively by $u_1 = e_1$ and \[
u_j = e_j -\sum_{k < j}\frac{(e_j\cdot u_k)}{|u_k|^2}u_k\] for $j>1$. Writing the $u_j$ out in terms of the basis $e_k$ then gives $u_j =  \sum_{k\leq j}\lambda_{jk}e_k$ for some $\lambda_{jk}\in \R$. By a straightforward induction on $j$, which we leave to the reader, we in fact have (a) that $\lambda_{jk}\geq 0$ for all $k\leq j$, and (b) that if $j\leq i$, then $\lambda_{jk} > 0$ for all $k\leq j$. Note (a) is a consequence of the assumption that $(e_k\cdot e_l)\leq 0$ whenever $k\neq l$, and (b) needs the assumption that $e_1,\ldots, e_i$ is a path in $\varGamma$. Continuing the proof, if we now write the $e_k^*$ in terms of the orthogonal basis $u_j$, we obtain\[
e_k^* = \sum_j \frac{(e_k^*\cdot u_j)}{|u_j|^2}u_j = \sum_{j\geq k} \frac{\lambda_{jk}}{|u_j|^2}u_j.\] Using this and statements (a) and (b) above, one computes that \[
(e_i^*\cdot e_1^*) = \sum_{j\geq i}\frac{\lambda_{ji}\lambda_{j1}}{|u_j|^2}\geq \frac{\lambda_{ii}\lambda_{i1}}{|u_i|^2} > 0.\] This completes the proof in the case when $\varGamma$ is connected.

 Now, let us consider general $\varGamma$. Up to re-indexing, assume that the basis $e_1,\ldots, e_n$ is ordered in blocks corresponding to the connected components of $\varGamma$. Then the matrix $M$ whose $i\!j$-th entry is $(e_i\cdot e_j)$ is a block diagonal matrix, with each diagonal block corresponding to a connected component of $\varGamma$. The inverse $M^{-1}$ is exactly the matrix whose $i\!j$-th entry is $(e_i^*\cdot e_j^*)$. Since $M$ was block diagonal, $M^{-1}$ will also be, and the diagonal blocks of $M^{-1}$ will be the inverses of the diagonal blocks of $M$. By the connected case, each of the diagonal blocks of $M^{-1}$ will have strictly positive entries, completing the proof.
\end{proof}

\begin{proof}[Proof of {\refprop{anti-effective}}] We apply \reflem{linear-algebra} in the case when $V = \Ediv(\pi)_\R$, the given basis  $e_1,\ldots, e_n$ is the set of exceptional primes $E_1,\ldots, E_n$, and the inner product is the negative of the intersection product. In this case, the graph $\varGamma=\varGamma_\pi$ is what is typically referred to as the \emph{dual graph} of $\pi$; it is the graph with vertices $E_1,\ldots, E_n$ and an edge connecting $E_i$ and $E_j$ if and only if the exceptional primes $E_i$ and $E_j$ intersect in $X_\pi$ (see also \S\ref{ssec:dualgraphs}). Because the dual graph of any good resolution $\pi$ is connected, we conclude that $\check{E}_i\cdot \check{E}_j < 0$ for each $i$ and $j$.
\end{proof}

Important for us are notions of (relative) positivity for exceptional divisors, particularly the notions of relative ampleness and nefness, a good reference for which is \cite[\S1.7]{lazarsfeld:positivityalggeo1}. The relative ampleness of a divisor $D\in \Ediv(\pi)$ is defined in terms of projective embeddings, but can be characterized numerically by having $D\cdot E_i > 0$ for each of the exceptional primes $E_i$, a characterization which extends to exceptional $\R$-divisors \cite[Theorem B]{felgueiras:amplecone}. Similarly, an $\R$-divisor $D\in \Ediv(\pi)_\R$ is \emph{relatively nef} if $D\cdot E_i \geq 0$ for each $i$. Evidently, an exceptional $\R$-divisor $D$ is relatively nef (resp.\ relatively ample) if and only if $D=\sum_i \lambda_i \check{E}_i$ with $\lambda_i\geq0$ (resp.\ $>0$). The sets $\mathrm{Nef}(\pi)$ and $\mathrm{Amp}(\pi)$ of relatively nef and ample $\R$-divisors are therefore cones; $\mathrm{Nef}(\pi)$ is a strict convex polyhedral cone, $\mathrm{Amp}(\pi)$ is its (nonempty) interior, 
and $\mathrm{Nef}(\pi)$ is the closure $\overline{\mathrm{Amp}(\pi)}$.  It is then easy to construct bases of $\Ediv(\pi)_\R$ consisting of integral relatively ample divisors, which after scaling can even be taken to be relatively very ample. 

As an immediate consequence of \refprop{anti-effective}, we obtain the following.

\begin{cor}\label{cor:anti-effective2}
If $D_1$ and $D_2$ are nonzero relatively nef divisors, then $D_1\cdot D_2  < 0$. In particular, nonzero relatively nef divisors are strictly anti-effective.
\end{cor}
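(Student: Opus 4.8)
The plan is to deduce everything directly from \refprop{anti-effective} together with the coordinate formula in \refrmk{checkcoordinates}, using only the elementary fact that a relatively nef $\R$-divisor is a non-negative combination of the dual basis elements $\check{E}_i$.

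First I would recall that, by the numerical characterization of nefness established just above, any relatively nef $D\in\Ediv(\pi)_\R$ can be written $D=\sum_i\lambda_i\check{E}_i$ with $\lambda_i\geq 0$, and that $D$ is nonzero precisely when at least one $\lambda_i$ is strictly positive (the $\check{E}_i$ being a basis). So write $D_1=\sum_i\lambda_i\check{E}_i$ and $D_2=\sum_j\mu_j\check{E}_j$ with $\lambda_i,\mu_j\geq 0$, some $\lambda_{i_0}>0$ and some $\mu_{j_0}>0$. Then $D_1\cdot D_2=\sum_{i,j}\lambda_i\mu_j(\check{E}_i\cdot\check{E}_j)$; every term is $\leq 0$ because $\lambda_i\mu_j\geq 0$ and $\check{E}_i\cdot\check{E}_j<0$ by \refprop{anti-effective}, while the single term indexed by $(i_0,j_0)$ is strictly negative. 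Hence $D_1\cdot D_2<0$.

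For the ``in particular'' clause, I would invoke \refrmk{checkcoordinates}: $\check{E}_i=\sum_j(\check{E}_i\cdot\check{E}_j)E_j$, whose every coefficient is negative, so each $\check{E}_i$ is strictly anti-effective. Consequently a nonzero relatively nef $D=\sum_i\lambda_i\check{E}_i$ has $E_j$-coefficient $\sum_i\lambda_i(\check{E}_i\cdot\check{E}_j)$, which is strictly negative for every $j$ since all $\lambda_i\geq 0$ with at least one positive and all $\check{E}_i\cdot\check{E}_j<0$; thus $D$ is strictly anti-effective.

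There is essentially no obstacle here: the corollary is a formal consequence of \refprop{anti-effective}, and the only thing one must be careful about is bookkeeping the sign conventions (recalling that, in \refprop{anti-effective}, the inner product on $\Ediv(\pi)_\R$ is the \emph{negative} of the intersection form, so that ``$(e_i^*\cdot e_j^*)>0$'' in \reflem{linear-algebra} translates to ``$\check{E}_i\cdot\check{E}_j<0$'' for the intersection product). No compactness, convexity, or further geometric input is needed beyond what is already in place.
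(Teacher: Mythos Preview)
Your proof is correct and follows exactly the approach the paper indicates: the corollary is stated as an immediate consequence of \refprop{anti-effective}, and your argument simply spells out the straightforward bilinear expansion in the dual basis together with the coordinate formula from \refrmk{checkcoordinates}. There is nothing to add.
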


We conclude this section with another extremely useful consequence of \reflem{linear-algebra}.
\begin{prop}\label{prop:positivity_EFH}
Let $\pi\colon X_\pi \to (X,x_0)$ be a good resolution, and let $E$, $F$ and $H$ be exceptional prime divisors of $\pi$. Then one has the inequality
\begin{equation}\label{eqn:positivity_EFH}
(\check{E} \cdot \check{F})(\check{E} \cdot \check{H}) \leq (\check{E} \cdot \check{E})(\check{F} \cdot \check{H}),
\end{equation}
with equality if and only if $E=H$, $E = F$ or $F$ and $H$ do not belong to the same connected component of $\varGamma_\pi \setminus \{E\}$, where $\varGamma_\pi$ is the dual graph of $\pi$.
\end{prop}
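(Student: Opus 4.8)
The plan is to reduce the inequality \eqref{eqn:positivity_EFH} to a statement purely about the inverse of the intersection matrix and then invoke \reflem{linear-algebra}. First I would set $V=\Ediv(\pi)_\R$ equipped with the negative of the intersection form, so that the hypothesis $(E_i\cdot E_j)\le 0$ for $i\ne j$ holds and the $\check E_i$ are, up to sign, the dual basis. Writing $a=-(\check E\cdot\check E)>0$, $b=-(\check E\cdot\check F)\le 0$, $c=-(\check E\cdot\check H)\le 0$, $d=-(\check F\cdot\check H)\le 0$ (all nonpositive by part (1) of \reflem{linear-algebra}, i.e.\ by \refprop{anti-effective}, since all these are negatives of inner products of dual-basis vectors), the claimed inequality becomes $bc\le ad$, i.e.\ $ad-bc\ge 0$. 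So it suffices to show that for any three basis vectors $e=\check E$, $f=\check F$, $h=\check H$ in such a configuration, the Gram-type quantity $(\check E\cdot\check E)(\check F\cdot\check H)-(\check E\cdot\check F)(\check E\cdot\check H)$ is $\le 0$.

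The key idea is to project onto the orthogonal complement of $\check E$. Let $W=(\check E)^\perp$ inside $V$, and let $p\colon V\to W$ be the orthogonal projection, $p(v)=v-\frac{(v\cdot\check E)}{(\check E\cdot\check E)}\check E$. A direct computation gives
\[
(p(\check F)\cdot p(\check H)) \;=\; (\check F\cdot\check H)-\frac{(\check E\cdot\check F)(\check E\cdot\check H)}{(\check E\cdot\check E)},
\]
so multiplying by $(\check E\cdot\check E)<0$ shows that \eqref{eqn:positivity_EFH} is equivalent to $(p(\check F)\cdot p(\check H))\le 0$, with equality iff $(p(\check F)\cdot p(\check H))=0$. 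Thus the whole proposition reduces to: the projections of $\check F$ and $\check H$ to $(\check E)^\perp$ have nonpositive inner product, and this is zero exactly in the stated degenerate cases. The natural way to see this is to identify $W$ and the projected vectors intrinsically. Here I would use the geometry of the dual graph: $(\check E)^\perp$ should be identified with $\Ediv(\pi')_\R$ where $\pi'$ is obtained from $\pi$ by contracting... more precisely, one expects $p(\check F)$ to be a nonnegative combination of the "new" dual-basis vectors of the graph $\varGamma_\pi\setminus\{E\}$, which typically decomposes into several connected components. Concretely, I would show $p(\check F)=\sum_{G\ne E}\mu_G\,\check G$ with $\mu_G\ge 0$, where the sum runs over the exceptional primes other than $E$ and $\mu_G>0$ precisely when $G$ lies in the same connected component of $\varGamma_\pi\setminus\{E\}$ as $F$ (and $\mu_F>0$ always). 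Granting this, and the same for $\check H$, part (1) of \reflem{linear-algebra} applied to the (possibly disconnected) graph $\varGamma_\pi\setminus\{E\}$ gives $(p(\check F)\cdot p(\check H))\le 0$ immediately, and the equality analysis falls out of part (2): the product vanishes iff no index $G$ has both $\mu_G>0$ (from $F$'s side) and $\nu_G>0$ (from $H$'s side) and $G,G'$ connected — which, after unwinding, is exactly "$E=F$, or $E=H$, or $F$ and $H$ are in different components of $\varGamma_\pi\setminus\{E\}$."

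To prove the claim $p(\check F)=\sum_{G\ne E}\mu_G\check G$ with the stated sign pattern, I would argue as in the connected case of \reflem{linear-algebra}: expand $\check F$ in a Gram–Schmidt basis adapted to an ordering in which $E$ comes last among the relevant indices, or — cleaner — observe that $\{p(\check G):G\ne E\}$ is (up to the negative-definite form on $W$) the dual basis to $\{E_G:G\ne E\}$ inside $W\cong\Ediv(\pi)_\R/\R E$... Actually the slickest route is: the restriction of the intersection form to the span of $\{E_G: G\ne E\}$ is again negative definite with off-diagonal entries $\le 0$, its dual basis vectors in that subspace are exactly $p(\check G)$ for $G\ne E$ (one checks $p(\check G)\cdot E_{G'}=\delta_{GG'}$ for $G,G'\ne E$), and the dual graph of that subspace is precisely $\varGamma_\pi\setminus\{E\}$. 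Then part (1) and (2) of \reflem{linear-algebra}, applied to this sub-configuration, directly give both the sign of $(p(\check F)\cdot p(\check H))$ and the equality criterion. The main obstacle I anticipate is the bookkeeping in this last identification — verifying that $p(\check G)$ genuinely is the dual basis inside $W$ and that the relevant graph is $\varGamma_\pi\setminus\{E\}$ — together with checking that the equality cases produced by \reflem{linear-algebra}(2) match the three cases stated in the proposition, in particular handling correctly the boundary possibilities $E=F$ and $E=H$, where $p(\check F)$ or $p(\check H)$ may behave specially.
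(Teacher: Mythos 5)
Your proposal is correct and follows the paper's core strategy, but executes it more cleanly. Both arguments reduce \eqref{eqn:positivity_EFH} to the assertion $Q\cdot\check H\leq 0$ with $Q=p(\check F)$ the orthogonal projection of $\check F$ onto $(\check E)^\perp$, and both pass through \reflem{linear-algebra} applied to a configuration whose graph is $\varGamma_\pi\setminus\{E\}$. The paper realizes this by expanding $Q$ in the auxiliary basis $u_1,\dots,u_n = D_1,\dots,D_{n-2},F,-\check E$ of $\Ediv(\pi)_\R$, solving the linear system \eqref{eqn:positivity_linsys} to identify the coefficients $a_i$ as $u_i^*\cdot u_{n-1}^*$, invoking \reflem{linear-algebra}, and then separately verifying $a_n=0$ so that the equality analysis reduces to the intersection matrix of $\overline{\pi^{-1}(x_0)\setminus E}$. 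Your ``slickest route'' bypasses the auxiliary basis entirely: $W:=(\check E)^\perp$ equals $\mathrm{span}\{E_G:G\neq E\}$ (since $\check E\cdot E_G=0$ for $G\neq E$), the projections $\{p(\check G)\}_{G\neq E}$ form the dual basis of $\{E_G\}_{G\neq E}$ in $W$ (as $p(\check G)\cdot E_{G'}=\delta_{GG'}$), and the Gram matrix of $\{E_G\}_{G\neq E}$ is exactly the intersection matrix of $\overline{\pi^{-1}(x_0)\setminus E}$, so a single application of \reflem{linear-algebra} to that sub-configuration delivers both the inequality and the equality criterion. This is a genuine streamlining --- the paper's vector $-\check E$ and the side argument $a_n=0$ both disappear.

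A few sign slips in your setup should be corrected, though none touches the final argument. By \refprop{anti-effective}, $\check E\cdot\check F<0$, so $b=-(\check E\cdot\check F)$ (and likewise $c$, $d$) is \emph{positive}, not $\leq 0$; accordingly the reduced claim is that $(\check E\cdot\check E)(\check F\cdot\check H)-(\check E\cdot\check F)(\check E\cdot\check H)\geq 0$, not $\leq 0$. Your subsequent equivalence with $p(\check F)\cdot p(\check H)\leq 0$ is nonetheless correct, since you divide by $(\check E\cdot\check E)<0$. Likewise, the restriction of the intersection form to $W$ has off-diagonal entries $E_G\cdot E_{G'}\geq 0$, so \reflem{linear-algebra} must be applied to its \emph{negative}. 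Finally, the intermediate plan $p(\check F)=\sum_{G\neq E}\mu_G\check G$ is not coherent as written (for $F\neq E$, $p(\check F)$ has a nonzero $\check E$-component, hence does not lie in $\mathrm{span}\{\check G:G\neq E\}$); you correctly discard it in favor of the dual-basis-in-$W$ argument, and that version is the one that should be retained.
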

\begin{proof}
If $E=F$, then \eqref{eqn:positivity_EFH} trivially holds as an equality, so we may assume that $E$ and $F$ are distinct.
Let $D_1, \ldots, D_{n-2}$ be the exceptional primes of $\pi$ that are not equal to $E$ or $F$, and set
$$
Q=\check{F} - \frac{\check{E} \cdot \check{F}}{\check{E} \cdot \check{E}}\check{E}.
$$
Notice that \eqref{eqn:positivity_EFH} is equivalent to $Q \cdot \check{H} \leq 0$.

Consider the basis $D_1, \ldots, D_{n-2}, F, -\check{E}$ of $\Ediv(\pi)_\nR$, which for simplicity we denote by $u_1, \ldots, u_n$. This basis has the property that $u_i \cdot u_j \geq 0$ for all $i \neq j$ (while $u_i \cdot u_i < 0$ for all $i$).
Moreover we have $Q \cdot u_i = 0$ for all $i \neq n-1$, and $Q \cdot u_{n-1}= 1$.

If we write $Q=a_1 u_1 + \cdots + a_n u_n$, then
\begin{equation}\label{eqn:positivity_linsys}
\left(\begin{array}{ccc}
u_1\cdot u_1 & \cdots & u_n\cdot u_1\\
\vdots & \ddots & \vdots\\
u_1\cdot u_n & \cdots & u_n\cdot u_n
\end{array}\right)
\left(\begin{array}{c} a_1\\\vdots\\a_n\end{array}\right)
= \left(\begin{array}{c} Q\cdot u_1\\ \vdots \\ Q\cdot u_n\end{array}\right).
\end{equation}
By solving this linear system, we get $a_i = (u_{i}^* \cdot u_{n-1}^*)$.
By \reflem{linear-algebra} applied to the basis $u_1, \ldots, u_n$, and the inner product given by the negative of the intersection product, the matrix $(u_{i}^* \cdot u_j^*)_{ij}$ has non-positive entries.
We conclude that $a_i\leq 0$ for each $i$. 
Because each of the divisors $u_i$ is effective, it follows that $Q = \sum a_i u_i$ is anti-effective. That is to say, $Q\cdot \check{H}\leq 0$ for each exceptional prime $H$ of $\pi$, and the inequality \eqref{eqn:positivity_EFH} is proved.

To determine when we have an equality, we need to be more careful.
First notice that $a_n = 0$.
In fact $u_j \cdot u_n = 0$ for all $j = 1, \ldots, n-1$, while $u_n \cdot u_n = \check{E} \cdot \check{E} < 0$. We deduce that $a_n=0$ from the last line of \eqref{eqn:positivity_linsys}, and the fact that $Q \cdot u_n = 0$.

If we consider the first $n-1$ equations of the linear system \eqref{eqn:positivity_linsys} (for the variables $a_i$, $i=1, \ldots, n-1$) the associated matrix $U_E$ is the intersection matrix of $\ol{\pi^{-1}(x_0) \setminus E}$.
By \reflem{linear-algebra}, $a_i = (u_{i}^* \cdot u_{n-1}^*) < 0$ for $i=1, \ldots, n-1$ if and only if $u_i$ and $F$ belong to the same connected component of $\ol{\pi^{-1}(x_0) \setminus E}$.
We conclude by noticing that $Q \cdot \check{H} = a_i$ if $H=u_i$. 
\end{proof}

\subsection{Log resolutions and divisors}\label{ssec:logresolutions}

Suppose now that $\mf{a}\subset R$ is an $\mf{m}$-primary ideal, and that $\pi\colon X_\pi\to (X, x_0)$ is a log resolution of $\mf{a}$.  Then $\pi^*\mf{a}$ is the ideal sheaf in $\mc{O}_{X_\pi}$ of some exceptional divisor $D\in\Ediv(\pi)$, that is, $\pi^*\mf{a} = \mc{O}_{X_\pi}(-D)$. If, as before, we let $E_1,\ldots, E_n$ be the exceptional primes of $\pi$, the divisor $D$ is given explicitly as $D = \sum_i b_iE_i$, where $b_i$ is the minimum order of vanishing along $E_i$ of all the functions $\phi\circ \pi$ as $\phi$ ranges over $\mf{a}$, i.e., \begin{equation}\label{generic_multiplicities}
b_i = \min_{\phi\in \mf{a}}\,\,\ord_{E_i}(\phi\circ \pi).\end{equation} Necessarily $b_i\geq 1$ for each $i$, since $\mf{a}\subseteq \mf{m}$. 

The divisor $-D$ has positivity properties owing to the fact that, essentially by the definition of being a log resolution, the line bundle $L := \mc{O}_{X_\pi}(-D)$ is \emph{relatively base point free}, meaning that $L$ has sections which do not vanish at any prescribed point $x\in \pi^{-1}(x_0)$. In particular, the restrictions $L|_{E_i}$  are base point free for each $i$. With $E_i$ being a smooth curve, this implies $\deg L|_{E_i}\geq 0$ for each $i$, or equivalently in the language of intersections $-D\cdot E_i\geq 0$ for each $i$. In other words, the divisor $-D$ is relatively nef. We will use the following notation for the divisor $-D$, in keeping with \cite{jonsson:berkovich}.

\begin{defi} If $\pi$ is a log resolution of an $\mf{m}$-primary ideal $\mf{a}$, then $Z_\pi(\mf{a})\in \Ediv(\pi)$ will denote the divisor for which $\pi^*\mf{a} = \mc{O}_{X_\pi}(Z_\pi(\mf{a}))$.
\end{defi}

Conversely, if $\pi$ is a good resolution of $(X,x_0)$ and $Z\in \Ediv(\pi)$ is a divisor for which the line bundle $\mc{O}_{X_\pi}(Z)$ is relatively base point free, then $\pi$ will  be  a log resolution of the ideal $\mf{a} := \pi_*\mc{O}_{X_\pi}(Z)$, moreover with $Z = Z_\pi(\mf{a})$. Note, the condition that $\mc{O}_{X_\pi}(Z)$ be relatively base point free is satisfied by definition when $Z$ is relatively very ample, and thus every relatively very ample divisor $Z\in \Ediv(\pi)$ is of the form $Z = Z_\pi(\mf{a})$ for some $\mf{m}$-primary ideal $\mf{a}$. As we noted at the end of \S\ref{ssec:intersectiontheory}, one can find a basis of $\Ediv(\pi)_\R$ consisting of relatively very ample divisors, thus proving the following.

\begin{prop}\label{prop:basis-of-ideals} For any good resolution $\pi$ of $(X,x_0)$, there exist $\mf{m}$-primary ideals $\mf{a}_1,\ldots, \mf{a}_n$ for which $\pi$ is a log resolution and for which the divisors $Z_\pi(\mf{a}_i)$ form a basis of $\Ediv(\pi)_\R$.
\end{prop}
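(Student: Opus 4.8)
The plan is to obtain the ideals by reverse-engineering them from a conveniently chosen basis of exceptional divisors. As was observed at the end of \S\ref{ssec:intersectiontheory}, one can pick a basis $Z_1,\dots,Z_n$ of $\Ediv(\pi)_\R$ consisting of integral, relatively very ample exceptional divisors --- starting from any basis of integral relatively ample divisors (such bases exist because $\mathrm{Amp}(\pi)$ is a nonempty open cone in $\Ediv(\pi)_\R$) and replacing each element by a sufficiently large positive integer multiple. The only feature of such divisors I would use is that a relatively very ample line bundle is in particular relatively globally generated, so that each $\mc{O}_{X_\pi}(Z_i)$ is relatively base point free.

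Given such a basis, I would set $\mf{a}_i := \pi_*\mc{O}_{X_\pi}(Z_i)$ for each $i$. By the discussion immediately preceding the definition of $Z_\pi(\mf{a})$, relative base point freeness of $\mc{O}_{X_\pi}(Z_i)$ guarantees that $\pi^*\mf{a}_i = \mc{O}_{X_\pi}(Z_i)$, so $\pi$ is a log resolution of $\mf{a}_i$ and $Z_\pi(\mf{a}_i) = Z_i$. The only point that needs to be spelled out is that $\mf{a}_i$ is $\mf{m}$-primary. Writing $Z_i = \sum_k \lambda_k \check{E}_k$ with all $\lambda_k > 0$ and invoking \refrmk{checkcoordinates} together with \refprop{anti-effective}, one sees that every coefficient of $Z_i$ in the basis $\{E_j\}$ equals $\sum_k\lambda_k(\check{E}_k\cdot\check{E}_j)<0$; since $Z_i$ is integral, $-Z_i$ is an effective exceptional divisor all of whose coefficients are at least $1$. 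By the formula \eqref{generic_multiplicities} for the coefficients of $Z_\pi(\mf{a}_i) = Z_i$, every $\phi\in\mf{a}_i$ pulls back under $\pi$ to a function vanishing along each exceptional prime, hence vanishing at $x_0$, so $\mf{a}_i$ is a proper ideal of $R$. Away from $x_0$ the map $\pi$ is an isomorphism and $Z_i$ is exceptional, so $\mf{a}_i$ cuts out the empty set there; as $R$ is a Noetherian local ring, $\mf{a}_i$ therefore contains a power of $\mf{m}$, and thus $\mf{a}_i$ is $\mf{m}$-primary.

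Finally, since $Z_\pi(\mf{a}_i)=Z_i$ for each $i$ and the $Z_i$ were chosen to form a basis of $\Ediv(\pi)_\R$, the divisors $Z_\pi(\mf{a}_1),\dots,Z_\pi(\mf{a}_n)$ form a basis, which is exactly the assertion. I do not expect any real obstacle: the proposition is essentially a repackaging of facts already recorded in \S\ref{ssec:intersectiontheory} and \S\ref{ssec:logresolutions}, and the only mildly substantive input --- that a relatively very ample divisor is anti-effective with full support and corresponds to a base point free line bundle --- has in effect already been used; the $\mf{m}$-primary verification is the one computation worth writing down.
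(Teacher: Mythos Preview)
Your proposal is correct and follows exactly the same route as the paper: choose a basis of $\Ediv(\pi)_\R$ consisting of integral relatively very ample divisors (as noted at the end of \S\ref{ssec:intersectiontheory}), and then use the converse discussed in \S\ref{ssec:logresolutions} to realize each as $Z_\pi(\mf{a}_i)$ for an $\mf{m}$-primary ideal $\mf{a}_i := \pi_*\mc{O}_{X_\pi}(Z_i)$. The paper's proof is in fact just a one-line reference to those two facts; your additional verification that $\mf{a}_i$ is $\mf{m}$-primary spells out a detail the paper leaves implicit.
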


Finally, let us point out that if $\pi$ is a log resolution of $\mf{a}$ and  $\pi'$ is  any good resolution dominating $\pi$, then $\pi'$ is again a log resolution of $\mf{a}$, and moreover $Z_{\pi'}(\mf{a}) = \eta_{\pi\pi'}^* Z_\pi(\mf{a})$.

\section{Normal surface singularities and their valuation spaces}\label{sec:valuation_spaces}

Let $(X,x_0)$ be a normal surface singularity. In this section we will  introduce and discuss the basic structure of an associated space $\mc{V}_X$ of (semi-)valuations; it is by studying dynamics on this space that we will be able to deduce the main theorems of the paper. In the case when $X = \C^2$, the space $\mc{V}_{\C^2}=\mc{V}$ was introduced and described in detail by Favre-Jonsson, who called it the \emph{valuative tree} \cite{favre-jonsson:valtree, favre-jonsson:eigenval, jonsson:berkovich}. Unfortunately in our singular setting the literature is less extensive. The spaces $\mc{V}_X$ were briefly analyzed by Favre in \cite{favre:holoselfmapssingratsurf}, and they have appeared in a somewhat different vein in the works \cite{fantini:normalizedlinks, thuillier:homotopy, defelipe:topspacesvalgeomsing}. Our aim in this section is to give a fairly detailed treatment of the anatomy of $\mc{V}_X$, in the spirit of \cite[\S7]{jonsson:berkovich}. We will also introduce in \S\ref{angular_metric} a new tool we 
call the \emph{angular metric}, which will prove to be quite useful in the context of dynamics.
As in the previous section, we denote by $R$ the completed local ring $\hat{\mc{O}}_{X,x_0}$ and by $\mf{m}$ its unique maximal ideal.

\begin{defi} A \emph{rank one semivaluation} on $R$ is a function $\nu\colon R\to \R\cup\{+\infty\}$ satisfying $\nu(\phi\psi) = \nu(\phi) + \nu(\psi)$ and $\nu(\phi + \psi)\geq \min\{\nu(\phi), \nu(\psi)\}$ for all $\phi, \psi\in R$. Such a semivaluation is said to be \emph{centered} if in addition $\nu(\phi)\geq 0$ for all $\phi\in R$ and $\nu(\phi)>0$ if and only if $\phi\in \mf{m}$. The collection of all rank one centered semivaluations on $R$ will be denoted $\hat{\mc{V}}_X$.
\end{defi}


Equivalently, one may define rank one centered semivaluations on $R$ not as functions on $R$ itself, but on the set $\ms{I}$ of ideals of $R$. In this formulation, the elements $\nu\in \hat{\mc{V}}_X$ are functions $\nu\colon \ms{I}\to \R\cup\{+\infty\}$ which satisfy $\nu(R) = 0$, $\nu(\mf{m})>0$, $\nu(\mf{a})\geq \nu(\mf{b})$ whenever $\mf{a}\subseteq \mf{b}$, $\nu(\mf{a} + \mf{b}) = \min\{\nu(\mf{a}), \nu(\mf{b})\}$, and $\nu(\mf{a}\mf{b}) = \nu(\mf{a}) + \nu(\mf{b})$. The equivalence between these two formulations is established by defining $\nu(\phi) = \nu(\phi R)$ for any $\phi\in R$ and exploiting the fact that ideals of $R$ are finitely generated. In the following, we will go back and forth between these two perspectives without further comment.

\begin{defi} A semivaluation $\nu\in \hat{\mc{V}}_X$ is \emph{finite} if $\nu(\mf{m})<+\infty$ and \emph{normalized} if $\nu(\mf{m}) = 1$. We let $\hat{\mc{V}}_X^*$ and $\mc{V}_X$ denote the subsets of $\hat{\mc{V}}_X$ consisting of all finite and normalized semivaluations, respectively. Note, there is exactly one semivaluation in $\hat{\mc{V}}_X$ which is not finite, which we denote by $\triv_{x_0}$.
\end{defi}

\begin{rmk}
The interest in centered valuations is quite natural since we study the singularity $(X,x_0)$ locally at $x_0$. Nevertheless, the normalization with respect to the value on the maximal ideal, although very natural, in only one of the possible normalizations we could consider. Other normalizations appear in literature, and we could also work with different normalizations, for example with respect to other $\mf{m}$-primary ideals, or with respect to irreducible curves. See \S\ref{ssec:different_normalizations} for further remarks.
\end{rmk}

The natural topology to put on $\hat{\mc{V}}_X$ is the so-called \emph{weak topology}, which is the weakest topology such that for each $\phi\in R$ the evaluation map $\nu\in \hat{\mc{V}}_X\mapsto \nu(\phi)\in [0,+\infty]$ is continuous (or, equivalently, one may require evaluation on ideals $\mf{a}\subseteq R$ to be continuous). It is not metrizable, but nonetheless possesses very useful properties: in the weak topology, $\mc{V}_X$ is compact Hausdorff, sequentially compact, path connected, and locally contractible. 

Note that any positive scalar multiple of a semivaluation $\nu\in \hat{\mc{V}}_X$ is again a semivaluation in $\hat{\mc{V}}_X$, and consequently any finite semivaluation $\nu$ can be scaled to obtain a normalized semivaluation  $\nu(\mf{m})^{-1}\nu\in \mc{V}_X$. In fact, scalar multiplication gives a homeomorphism $(0,+\infty)\times \mc{V}_X\to \hat{\mc{V}}_X^*$, so  topologically $\hat{\mc{V}}_X^*$ is a cylinder over $\mc{V}_X$. Similarly, $\hat{\mc{V}}_X$ is  a cone over $\mc{V}_X$ with vertex $\triv_{x_0}$.

\subsection{Classification of finite semivaluations}\label{ssec:classification}

In his foundational work on resolution of singularities in dimension two \cite{zariski:resolutionofsingularities}, Zariski classified centered semivaluations $\nu\in\hat{\mc{V}}_X^*$ into four types according to certain associated algebraic invariants of $\nu$ (the \emph{rational rank} and \emph{transcendence degree}), and then characterized these types geometrically. While we will not care about the algebraic invariants here, the geometric characterizations will be crucial. In the terminology of Favre-Jonsson, any $\nu\in \hat{\mc{V}}_X^*$ is either a divisorial valuation, an irrational valuation, an infinitely singular valuation, or a curve semivaluation.

\smallskip

A \emph{divisorial} valuation $\nu\in \hat{\mc{V}}_X^*$ is a valuation which is proportional to the order of vanishing along some exceptional prime divisor in some modification of $(X,x_0)$. Explicitly, $\nu$ is divisorial if there exists some modification $\pi\colon X_\pi\to (X,x_0)$, some exceptional prime divisor $E$ of $X_\pi$, and some constant $\lambda>0$ such that $\nu(\phi) = \lambda\ord_E(\phi\circ \pi)$ for all $\phi\in R$. Note, the order of vanishing $\ord_E$ along $E$ makes sense as we assume $X_\pi$ is normal, and hence $\mc{O}_{X_\pi, E}$ is a DVR for each exceptional prime $E$ of $X_\pi$. Observe also that if $\pi'$ is a modification dominating $\pi$, and if $F$ is the strict transform of $E$ in $X_{\pi'}$, then $\nu(\phi) = \lambda \ord_F(\phi\circ \pi')$ as well, so there is no harm in assuming for instance that $\pi$ is a good resolution of $(X,x_0)$.
Given an exceptional prime $E$, we denote by $\divi_E$ the divisorial valuation defined by $\divi_E(\phi)=\pi_*\ord_E(\phi):=\ord_E(\phi \circ \pi)$, and by $\nu_E$ the associated normalized valuation $\nu_E = b_E^{-1}\divi_E$, where $b_E = \divi_E(\mf{m})$ is called the \emph{generic multiplicity} of $E$ (see also \S\ref{ssec:dualgraphs}).
In the following, we say that a divisorial valuation $\lambda \nu_E$ is \emph{realized} by $\pi$ (or in $X_\pi$), if $\pi\colon X_\pi \to (X,x_0)$ is a modification and $E$ is an exceptional prime of $X_\pi$.

\smallskip

As divisorial valuations are associated to exceptional primes of good resolutions of $(X,x_0)$, \emph{irrational} valuations are associated to intersections of exceptional primes of good resolutions.
Explicitly, suppose that $\pi$ is a good resolution of $(X,x_0)$, and that $E_1$ and $E_2$ are two exceptional primes of $\pi$ that intersect at a point $p\in X_\pi$.
There then exist local holomorphic coordinates $(z_1, z_2)$ of $X_\pi$ around $p$ for which $E_i$ has defining equation $z_i = 0$. For any real $r,s\geq 0$ not both zero, one obtains a valuation $\nu_{r,s}\in \hat{\mc{V}}_X^*$ in the following manner.
For any $\psi = \sum_{i\!j} a_{i\!j}z_1^iz_2^j \in \nC\fps{z_1, z_2}$, we consider the monomial valuation at $p$ defined by
$
\mu_{r,s}(\psi) := \min\{ir + js : a_{i\!j}\neq 0\}
$.
Then $\nu_{r,s} = \pi_* \mu_{r,s}$.

If $r$ and $s$ are rationally independent, then $\nu_{r,s}$ is called an irrational valuation. On the other hand, if $r$ and $s$ are rationally dependent, then in fact $\nu_{r,s}$ is actually a divisorial valuation, though possibly not associated to any exceptional prime of $\pi$.
In either case, we call $\nu_{r,s}$ the \emph{monomial valuation} at $p$ with \emph{weights} $r$ and $s$.
Notice that in general $\nu_{r,s}$ is not normalized. In fact, if $\pi$ is a log resolution of $\mf{m}$, then $\nu_{r,s}(\mf{m}) = b_{E_1}r + b_{E_2}s$.
\begin{defi}
Let $\pi\colon X_\pi \to (X,x_0)$ be a log resolution of $\mf{m}$, and $E,F$ two exceptional primes of $X_\pi$ intersecting at a point $p$.
We denote by $\segment{\nu_E}{\nu_F}{p}$ (or simply $\segment{\nu_E}{\nu_F}{}$) the set of normalized monomial valuations $\nu_{r,s}$ at $p$.
The \emph{monomial parameterization} of $\segment{\nu_E}{\nu_F}{p}$ is the map $w \colon [0,1] \to \segment{\nu_E}{\nu_F}{p}$, defined for all $t \in [0,1]$ as $w(t)=\nu_{r(t),s(t)}$, where $r(t)=(1-t)b_E^{-1}$ and $s(t)=tb_F^{-1}$.
\end{defi}
In the terminology of Favre-Jonsson, divisorial and irrational valuations make up the class of \emph{quasimonomial} valuations, a term reflecting the geometry just described. They are also commonly referred to as \emph{Abhyankar} valuations when thinking of them in terms of algebraic invariants, a name  which alludes to the fact that the sum of the rational rank and transcendence degree is maximal for these valuations. As our perspective is geometric here, we will call them quasimonomial valuations.

\smallskip

A \emph{curve semivaluation} is a semivaluation associated to an irreducible formal curve germ $(C, x_0)$ in $X$ passing through $x_0$. It is well known that any such formal curve germ can be uniformized, that is, there exists a formal parameterization $h\colon (\C, 0)\to (C,x_0)$. Explicitly, a curve semivaluation associated to $C$ is any semivaluation of the form $\nu(\phi) = \lambda\ord_0(\phi|_C\circ h)$ for $\lambda>0$. Observe that curve semivaluations are \emph{not} valuations; in fact their kernel $\nu^{-1}(+\infty)$ is the prime ideal $\mf{p}$ corresponding to the curve germ $(C,x_0)$ in $X$.
We denote by $\inte_C$ the curve semivaluation obtained setting $\lambda = 1$, and by $\nu_C$ the associated normalized valuation $\nu_C = m(C)^{-1}\inte_C$, where $m(C) = \inte_C(\mf{m})$ is the \emph{multiplicity} of $C$.

\smallskip

The only remaining $\nu\in \hat{\mc{V}}_X^*$ are the \emph{infinitely singular} valuations. These have an interpretation in terms of Puiseux series which we will not need and thus omit. We point out, however, they are actual valuations instead of simply semivaluations. See \cite{favre-jonsson:valtree} or \cite[pp. 648-9]{zariski:resolutionofsingularities} for details.

\subsection{Dual divisors associated to valuations and b-divisors}\label{ssec:dualdivisors}

Let $\nu\in \hat{\mc{V}}_X^*$, and let $\pi\colon X_\pi\to (X,x_0)$ be a modification of $(X,x_0)$. By the valuative criterion of properness, $\nu$ has a unique \emph{center} in $X_\pi$, that is, a unique scheme-theoretic point $\xi\in X_\pi$ with the property that $\nu$ takes nonnegative values on the local ring $\mc{O}_{X_\pi, \xi}$ and strictly positive values exactly on its maximal ideal $\mf{m}_\xi$. We will write this $\xi = \cen_\pi(\nu)$. Note, the map $\cen_\pi\colon \hat{\mc{V}}_X^*\to X_\pi$ is typically called the \emph{reduction map} in the context of non-archimedean analytic geometry; it is anti-continuous in the sense that the inverse image of a Zariski open set is closed. If $\pi'$ is a modification dominating $\pi$, then one easily sees that $\cen_\pi = \eta_{\pi\pi'}\circ \cen_{\pi'}$.

Suppose now that $\pi\colon X_\pi\to (X,x_0)$ is a good resolution of $(X,x_0)$. Then $\nu$ can be evaluated on the group $\Ediv(\pi)$ of exceptional divisors by setting $\nu(D) := \nu(\phi)$, where $\phi\in \mc{O}_{X_\pi, \xi}$ is a defining equation of $D\in \Ediv(\pi)$ around $\xi = \cen_\pi(\nu)$. Clearly this evaluation $\nu\colon \Ediv(\pi)\to \R$ is $\Z$-linear, and hence extends linearly to an evaluation map $\nu\colon \Ediv(\pi)_\R\to \R$ on $\R$-divisors. Recalling that the intersection product on $X_\pi$ gives  a natural identification of $\Ediv(\pi)_\R^*$ with $\Ediv(\pi)_\R$, we conclude the following.

\begin{prop}\label{prop:defZnu}
For each $\nu\in \hat{\mc{V}}_X^*$ and each good resolution $\pi\colon X_\pi\to (X,x_0)$, there is a unique $\R$-divisor $Z_\pi(\nu)\in \Ediv(\pi)_\R$ such that $\nu(D) = Z_\pi(\nu)\cdot D$ for each $D\in \Ediv(\pi)_\R$.
\end{prop}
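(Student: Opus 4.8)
The plan is to recognize the statement as a Riesz-type representation for the intersection pairing: the evaluation of $\nu$ on exceptional divisors is a linear functional, and since the intersection form on $\Ediv(\pi)_\R$ is nondegenerate (indeed negative definite, as recalled in \S\ref{ssec:intersectiontheory}), every such functional is represented by a unique divisor.

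First I would verify that the evaluation $\nu\colon \Ediv(\pi)\to\R$, $D\mapsto\nu(D)$, introduced just before the statement, is genuinely well defined. Write $\xi=\cen_\pi(\nu)$ for the center of $\nu$ in $X_\pi$. Since $\pi$ is a good resolution, $X_\pi$ is regular, so every Weil divisor on $X_\pi$ is Cartier; in particular $D$ is cut out in a neighbourhood of $\xi$ by a single element $\phi\in\mc{O}_{X_\pi,\xi}$. If $\phi'$ is another local defining equation of $D$ at $\xi$, then $\phi/\phi'$ is a unit of $\mc{O}_{X_\pi,\xi}$, hence lies outside $\mf{m}_\xi$; by the defining property of the center, $\nu$ is nonnegative on $\mc{O}_{X_\pi,\xi}$ and strictly positive exactly on $\mf{m}_\xi$, so $\nu(\phi/\phi')=0$ and $\nu(\phi)=\nu(\phi')$. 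Thus $\nu(D):=\nu(\phi)$ depends only on $D$. Multiplicativity of $\nu$ on products of defining equations makes $D\mapsto\nu(D)$ a group homomorphism $\Ediv(\pi)\to\R$, which extends uniquely to an $\R$-linear functional $\nu\colon\Ediv(\pi)_\R\to\R$.

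Next I would invoke nondegeneracy of the intersection form. Because this form is negative definite on $\Ediv(\pi)_\R$, the map $\Ediv(\pi)_\R\to\Ediv(\pi)_\R^*$ sending $Z$ to $(D\mapsto Z\cdot D)$ is an isomorphism. Applying its inverse to the functional $D\mapsto\nu(D)$ yields a divisor $Z_\pi(\nu)\in\Ediv(\pi)_\R$ with $Z_\pi(\nu)\cdot D=\nu(D)$ for every $D\in\Ediv(\pi)_\R$; uniqueness follows from the same nondegeneracy, since $(Z-Z')\cdot D=0$ for all $D$ forces $Z=Z'$. Concretely, in terms of the exceptional primes $E_1,\dots,E_n$ and the dual basis $\check E_1,\dots,\check E_n$ of \S\ref{ssec:intersectiontheory}, this gives $Z_\pi(\nu)=\sum_i\nu(E_i)\,\check E_i$.

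I do not expect a serious obstacle here: the content is linear algebra once well-definedness of the evaluation is secured, and the only point requiring a line of care is that the center $\xi$ may be the generic point of an exceptional curve rather than a closed point — but the unit argument above is insensitive to this distinction, since $\mc{O}_{X_\pi,\xi}$ is a regular local ring in either case. If desired, one can further record, via the projection formula together with the identity $\eta_{\pi\pi'}^*\check E_i=\check F_i$ from \S\ref{ssec:intersectiontheory}, that $\eta_{\pi\pi'}^*Z_\pi(\nu)=Z_{\pi'}(\nu)$ whenever $\pi'\ge\pi$, though this compatibility is not needed for the statement itself.
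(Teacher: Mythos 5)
Your proposal is correct and follows essentially the same route as the paper, which proves the proposition implicitly in the paragraph immediately preceding it: define the evaluation $\nu\colon\Ediv(\pi)\to\R$ via a local defining equation at the center, note it is $\Z$-linear, extend to $\Ediv(\pi)_\R$, and appeal to the identification $\Ediv(\pi)_\R\cong\Ediv(\pi)_\R^*$ coming from the (negative-definite) intersection form. The one genuine addition in your write-up is the careful verification of well-definedness via the unit argument, which the paper elides with the word ``Clearly''.
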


If the center $\xi = \cen_\pi(\nu)$ lies within exactly one exceptional prime $E$ of $\pi$, then  $Z_\pi(\nu) = \lambda \check{E}$ for some $\lambda>0$. Otherwise, $\xi$ must be the intersection point of two exceptional primes, say $E$ and $F$, in which case $Z_\pi(\nu) = r\check{E} + s\check{F}$ for some $r,s>0$. Either way, we see that $Z_\pi(\nu)$ is relatively nef. 

Given $\nu\in \hat{\mc{V}}_X^*$ one can compute $Z_\pi(\nu)$ for any good resolution $\pi$ of $(X,x_0)$, but conversely, knowing the divisors $Z_\pi(\nu)$ for every good resolution $\pi$ allows one to recover the semivaluation $\nu$. In fact, one can evaluate semivaluations on ideals using intersections in the following way. 

\begin{prop}\label{prop:evaluate-with-intersections} Let $\nu\in \hat{\mc{V}}_X^*$ and let $\mf{a}\subset R$ be an $\mf{m}$-primary ideal. Let $\pi\colon X_\pi\to (X,x_0)$ be a log resolution of $\mf{a}$. Then $\nu(\mf{a}) = -Z_\pi(\nu)\cdot Z_\pi(\mf{a})$.
\end{prop}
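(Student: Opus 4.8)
The plan is to reduce the statement to the definition of $Z_\pi(\mf{a})$ together with \refprop{defZnu}. First I would recall that since $\pi$ is a log resolution of $\mf{a}$, the pullback $\pi^*\mf{a}$ is the invertible ideal sheaf $\mc{O}_{X_\pi}(Z_\pi(\mf{a}))$ with $Z_\pi(\mf{a}) = \sum_i b_i E_i$ and $b_i = \min_{\phi \in \mf{a}} \ord_{E_i}(\phi\circ\pi)$ as in \eqref{generic_multiplicities}. In particular $Z_\pi(\mf{a})$ is anti-effective (in fact $-Z_\pi(\mf{a})$ is relatively nef), so all the $b_i$ are $\leq -1$; I will freely use the sign conventions fixed in \S\ref{ssec:logresolutions}.

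Next I would use \refprop{defZnu}: for each $\nu\in\hat{\mc{V}}_X^*$ and each exceptional divisor $D\in\Ediv(\pi)$ one has $\nu(D) = Z_\pi(\nu)\cdot D$, where $\nu(D)$ means $\nu$ evaluated on a local defining equation of $D$ near the center $\xi = \cen_\pi(\nu)$. Applying this to $D = Z_\pi(\mf{a})$ gives $Z_\pi(\nu)\cdot Z_\pi(\mf{a}) = \nu(Z_\pi(\mf{a}))$, so it remains to identify $\nu(Z_\pi(\mf{a}))$ with $-\nu(\mf{a})$. The key local computation is this: near $\xi$, the ideal sheaf $\pi^*\mf{a}$ is generated by a single element $g$, and by the very definition of $Z_\pi(\mf{a})$ the function $g$ vanishes to order exactly $b_i$ along each $E_i$ passing through $\xi$ (the condition that $\mc{O}_{X_\pi}(Z_\pi(\mf{a})) = \pi^*\mf{a}$ is locally principal forces the generic vanishing orders to be achieved locally at $\xi$, since the other exceptional primes do not pass through $\xi$). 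Hence $-Z_\pi(\mf{a})$ is, locally around $\xi$, the divisor of $g$, i.e. $g$ is a local defining equation for $-Z_\pi(\mf{a})$ near $\xi$, so $\nu(-Z_\pi(\mf{a})) = \nu(g)$.

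Finally, I would relate $\nu(g)$ to $\nu(\mf{a})$. Since $g$ generates $\pi^*\mf{a} = \mf{a}\cdot\mc{O}_{X_\pi,\xi}$, for any $\phi\in\mf{a}$ we can write $\phi\circ\pi = g\cdot h_\phi$ with $h_\phi\in\mc{O}_{X_\pi,\xi}$, and conversely $g$ itself is an $\mc{O}_{X_\pi,\xi}$-linear combination of the $\phi\circ\pi$. Because $\nu$ is a semivaluation centered at $\xi$ (so nonnegative on $\mc{O}_{X_\pi,\xi}$), this gives $\nu(\phi\circ\pi) = \nu(g) + \nu(h_\phi) \geq \nu(g)$ for all $\phi\in\mf{a}$, with equality attained for the $\phi$ realizing the minimal vanishing order along all $E_i$ through $\xi$ simultaneously (or more simply, using that $\nu(g)\geq \min_\phi \nu(\phi\circ\pi)$ from the expression of $g$ as a combination). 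Since $\nu(\phi\circ\pi)$ is by definition $\nu$ evaluated on $\phi$ viewed in $R$ (compatibility of $\nu$ with the reduction map, from \S\ref{ssec:dualdivisors}), we conclude $\nu(\mf{a}) = \min_{\phi\in\mf{a}}\nu(\phi) = \nu(g) = -\nu(Z_\pi(\mf{a})) = -Z_\pi(\nu)\cdot Z_\pi(\mf{a})$, as desired.

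The main obstacle is the bookkeeping in the second step: making precise that the local generator $g$ of $\pi^*\mf{a}$ at the center $\xi$ has vanishing orders along the one or two exceptional primes through $\xi$ equal to the \emph{generic} multiplicities $b_i$ — equivalently, that $g$ is genuinely a local equation for $-Z_\pi(\mf{a})$ and not merely for some divisor supported at $\xi$ with possibly larger coefficients. This is exactly where the log resolution hypothesis (local principality of $\pi^*\mf{a}$) is used, and it should be stated carefully, perhaps by noting that $\mc{O}_{X_\pi}(Z_\pi(\mf{a}))$ and $\pi^*\mf{a}$ agree as subsheaves of the function field, so their local generators at $\xi$ differ by a unit. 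Once that identification is in hand, everything else is a formal manipulation with the definitions of $\nu$, $Z_\pi(\nu)$, and the intersection pairing.
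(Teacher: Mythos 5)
Your proof is correct and follows exactly the same route as the paper's: reduce $\nu(\mf{a})$ to the value of $\nu$ on the locally principal ideal $(\pi^*\mf{a})_\xi$ at the center $\xi$, identify the latter with $\nu(-Z_\pi(\mf{a}))$, and finish with \refprop{defZnu}. The paper's proof is simply a compressed version of this, so there is nothing substantively different to add. One bookkeeping caveat: your opening paragraph contains an internal sign inconsistency — you write $Z_\pi(\mf{a}) = \sum_i b_i E_i$ with $b_i = \min_{\phi\in\mf{a}}\ord_{E_i}(\phi\circ\pi)$ (hence $b_i\geq 1$), then immediately assert $b_i\leq -1$; and you call $-Z_\pi(\mf{a})$ relatively nef, whereas with the paper's convention it is $Z_\pi(\mf{a})$ that is relatively nef (it is $-D$ with $D$ the effective divisor of the pulled-back ideal). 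These slips do not propagate into the final chain of equalities, which is stated with the correct signs, but they should be fixed for the argument to read cleanly.
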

\begin{proof} Let $\xi$ be the center of $\nu$ in $X_\pi$, so that $\nu$ defines a centered semivaluation on the local ring $\mc{O}_{X_\pi, \xi}$. Tautologically, the value of $\nu$ on $\mf{a}\subset R$ agrees with the value of $\nu$ on the ideal $(\pi^*\mf{a})_\xi\subset \mc{O}_{X_\pi,\xi}$. Because $\pi$ is a log resolution of $\mf{a}$, the ideal sheaf $\pi^*\mf{a}$ is the ideal sheaf of the divisor $-Z_\pi(\mf{a})$, and thus in particular $\nu((\pi^*\mf{a})_\xi) = \nu(-Z_\pi(\mf{a})) = -Z_\pi(\nu)\cdot Z_\pi(\mf{a})$, completing the proof.
\end{proof}

\begin{cor}\label{cor:eval-compatibility} Let $\pi$ and $\pi'$ be good resolutions of $(X,x_0)$, with $\pi'$ dominating $\pi$. Then $(\eta_{\pi\pi'})_*Z_{\pi'}(\nu) = Z_\pi(\nu)$.
\end{cor}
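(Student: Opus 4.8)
The plan is to verify that the $\R$-divisor $(\eta_{\pi\pi'})_* Z_{\pi'}(\nu) \in \Ediv(\pi)_\R$ has the defining property that characterizes $Z_\pi(\nu)$ in \refprop{defZnu}, namely that $\nu(D) = \big((\eta_{\pi\pi'})_* Z_{\pi'}(\nu)\big)\cdot D$ for every $D \in \Ediv(\pi)_\R$; since the intersection form on $\Ediv(\pi)_\R$ is non-degenerate (indeed negative definite, by \cite{grauert:ubermodifikationen}), this equality of pairings forces $(\eta_{\pi\pi'})_* Z_{\pi'}(\nu) = Z_\pi(\nu)$. By bilinearity it suffices to check the identity when $D = E_i$ ranges over the exceptional primes of $\pi$.

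First I would reduce the intersection computation to $X_{\pi'}$ via the projection formula: for each exceptional prime $E_i$ of $\pi$,
\[
\big((\eta_{\pi\pi'})_* Z_{\pi'}(\nu)\big)\cdot E_i = Z_{\pi'}(\nu)\cdot \eta_{\pi\pi'}^* E_i,
\]
where $\eta_{\pi\pi'}^* E_i \in \Ediv(\pi')$ is the total transform. Applying \refprop{defZnu} on $X_{\pi'}$ gives $Z_{\pi'}(\nu)\cdot \eta_{\pi\pi'}^* E_i = \nu(\eta_{\pi\pi'}^* E_i)$, while applying it on $X_\pi$ gives $Z_\pi(\nu)\cdot E_i = \nu(E_i)$. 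So the corollary reduces to the compatibility statement $\nu(\eta_{\pi\pi'}^* E_i) = \nu(E_i)$.

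The latter is where the only real content lies, and it follows from the behavior of centers under domination. Recall $\cen_\pi(\nu) = \eta_{\pi\pi'}(\cen_{\pi'}(\nu))$. If $\phi \in \mc{O}_{X_\pi, \cen_\pi(\nu)}$ is a local defining equation of $E_i$ near $\cen_\pi(\nu)$, then $\phi\circ \eta_{\pi\pi'}$ is a local defining equation of the total transform $\eta_{\pi\pi'}^* E_i$ near $\cen_{\pi'}(\nu)$; by definition of the evaluation of $\nu$ on exceptional divisors we get $\nu(\eta_{\pi\pi'}^* E_i) = \nu(\phi\circ\eta_{\pi\pi'}) = \nu(\phi) = \nu(E_i)$, using that $\nu$, as a function on $R$, does not care whether $\phi$ is viewed on $X_\pi$ or pulled back to $X_{\pi'}$.

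I expect the main (and essentially only) obstacle to be making the last identification precise: one must be careful that the evaluation of $\nu$ on an exceptional divisor $D$ is well-defined (independent of the local equation chosen near the center) and genuinely agrees with $\nu$ applied to the corresponding element of $R$ after pushing down by $\pi$, so that pulling back by $\eta_{\pi\pi'}$ changes nothing. Once this bookkeeping is in place, the projection formula and non-degeneracy of the intersection form finish the argument immediately. (Alternatively, one can bypass \refprop{defZnu} and argue directly from \refprop{evaluate-with-intersections} together with the relation $Z_{\pi'}(\mf a) = \eta_{\pi\pi'}^* Z_\pi(\mf a)$ for $\mf m$-primary $\mf a$, using a basis of $\Ediv(\pi)_\R$ of the form $\{Z_\pi(\mf a_i)\}$ provided by \refprop{basis-of-ideals}; this gives the same conclusion.)
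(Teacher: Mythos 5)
Your primary argument is correct, and it takes a genuinely different route from the paper. The paper's proof expands against the basis $\{Z_\pi(\mf{a}_i)\}$ furnished by \refprop{basis-of-ideals}, uses the already-recorded identity $Z_{\pi'}(\mf{a}) = \eta_{\pi\pi'}^*Z_\pi(\mf{a})$ (end of \S\ref{ssec:logresolutions}), and applies \refprop{evaluate-with-intersections} twice — thereby sidestepping any direct analysis of local equations. You instead test against the basis of exceptional primes $E_i$, invoke the defining property of $Z_\pi(\nu)$ from \refprop{defZnu} on both resolutions, and reduce to the compatibility $\nu(\eta_{\pi\pi'}^*E_i) = \nu(E_i)$, which you justify by pulling back the local defining equation of $E_i$ through $\eta_{\pi\pi'}$ and noting $\nu$ factors through the common function field. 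Both are correct; the paper's route is marginally slicker because it piggybacks on an already-established pullback formula for $Z_\pi(\mf{a})$, while your route is more self-contained and exposes precisely where the compatibility originates (the invariance of $\nu$ under pullback along the birational morphism $\eta_{\pi\pi'}$). Your parenthetical alternative at the end is exactly the paper's argument.

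One minor remark on your "main obstacle": the well-definedness of $\nu(D)$ for $D \in \Ediv(\pi)$ is not really at issue once one observes that $\nu$, being a semivaluation on $R$, extends uniquely to the local ring $\mc{O}_{X_\pi,\cen_\pi(\nu)}$ through the common fraction field, and that two local defining equations differ by a unit, on which $\nu$ vanishes. So that bookkeeping is standard, and the argument closes without further obstruction.
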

\begin{proof} It suffices to check that $(\eta_{\pi\pi'})_*Z_{\pi'}(\nu)\cdot Z = Z_\pi(\nu)\cdot Z$ for all divisors $Z$ in a given basis $Z_1,\ldots, Z_n$ of $\Ediv(\pi)_\R$. Using \refprop{basis-of-ideals}, we can take the $Z_i$ to be of the form $Z_i = Z_\pi(\mf{a}_i)$ for some $\mf{m}$-primary ideals $\mf{a}_i$ of $R$. Then applying the projection formula and \refprop{evaluate-with-intersections} twice yields \[(\eta_{\pi\pi'})_*Z_{\pi'}(\nu)\cdot Z_\pi(\mf{a}_i) = Z_{\pi'}(\nu)\cdot \eta_{\pi\pi'}^*Z_\pi(\mf{a}_i) = Z_{\pi'}(\nu)\cdot Z_{\pi'}(\mf{a}_i) = -\nu(\mf{a}_i) = Z_\pi(\nu)\cdot Z_\pi(\mf{a}_i),
\] as desired.
\end{proof}

Note, it general it is not true that $Z_{\pi'}(\nu) = \eta_{\pi\pi'}^*Z_\pi(\nu)$. There is, on the other hand, one case in which this equality does hold, namely when $\nu$ is a divisorial valuation associated to an exceptional prime $E$ of $\pi$. In this case, $\nu = \lambda \divi_E$ for some $\lambda>0$, and $Z_\pi(\nu) = \lambda\check{E}$. If $F$ is the strict transform of $E$ in $X_{\pi'}$, then similarly $\nu = \lambda \divi_F$ and $Z_{\pi'}(\nu)= \lambda\check{F}$. The equality $Z_{\pi'}(\nu) = \eta_{\pi\pi'}^*Z_\pi(\nu)$ thus follows from the fact that $\eta_{\pi\pi'}^*\check{E} = \check{F}$, which is in turn a consequence of the projection formula. 

We may define $Z_\pi=Z_\pi(\nu)$ or $Z_\pi=Z_\pi(\mf{a})$ for all modifications $\pi$, by taking any high enough good resolution $\pi'$ dominating $\pi$ where $Z_{\pi'}$ is defined, and setting $Z_\pi=(\eta_{\pi\pi'})_* Z_{\pi'}$. As a consequence of \refcor{eval-compatibility}, $Z_\pi$ is well defined, as it does not depend on the choice of $\pi'$.
The family of divisors $Z=(Z_\pi)_\pi$ is sometimes called a \emph{b-divisor} in the sense of Shokurov \cite{shokurov:prelimitingflips} (see also \cite{boucksom-favre-jonsson:degreegrowthmeromorphicsurfmaps,favre:holoselfmapssingratsurf}).

\begin{defi}
Let $(X,x_0)$ be a normal surface singularity. A \emph{(Weil) (exceptional) b-divisor} $Z$ on $(X,x_0)$ is a collection $Z=(Z_\pi)_\pi$ of Weil $\nR$-divisors $Z_\pi \in \Ediv(\pi)_\nR$ for any modification $\pi \colon X_\pi \to (X,x_0)$, satisfying the relation $(\eta_{\pi\pi'})_*Z_{\pi'}=Z_\pi$ for all modifications $\pi, \pi'$ with $\pi'$ dominating $\pi$.
The divisor $Z_\pi$ is called the \emph{incarnation} of $Z$ in the model $\pi$.

If moreover there exists a modification $\pi$ so that $Z_{\pi'}=\eta_{\pi\pi'}^*Z_{\pi}$ for all modifications $\pi'$ dominating $\pi$, we say that $Z$ is a \emph{Cartier} b-divisor, \emph{determined} by the model $\pi$.

We denote by $\Ediv(X)$ the set of $b$-divisors, and by $\Ediv_C(X)$ its subset of \emph{Cartier} b-divisors.

Finally, we say that a b-divisor $Z=(Z_\pi)_\pi$ is \emph{nef} if $Z_\pi$ is nef for all modifications $\pi$.
\end{defi}

Notice that, to make sense of the definition of Cartier divisors, we have to be able to pull back divisors. Since $(X,x_0)$ is a surface, we can do it numerically in the sense of Mumford.
Alternatively, we may notice that if $\pi$ is a modification which is a determination of a Cartier b-divisor $Z$, then any other modification $\pi'$ dominating $\pi$ is a determination of $Z$. We may hence assume that $\pi$ is a resolution of $(X,x_0)$ in the definition of Cartier b-divisors.
In the following (see \S\ref{ssec:action_bdivisors}), we will consider a more general class of b-divisors. The b-divisors defined here will be the subset of \emph{exceptional} b-divisors, which have the property of having every incarnation exceptional.

\begin{rmk}\label{rmk:exceptional_bdivisors}
We describe here some properties of b-divisors, see \cite{boucksom-favre-jonsson:degreegrowthmeromorphicsurfmaps} for proofs.
The set $\Ediv_C(X)$ of Cartier b-divisor has a natural structure of infinite dimensional $\nR$-vector space.
Denote by $\varGamma_X^*$ the set of exceptional primes over $X$, which is in bijection with the set of divisorial valuations $\mc{V}_X^{\divv}$, through the identification $E \mapsto \nu_E$.
A basis for $\Ediv_C(X)$ is naturally given by dual divisors $\check{E}$, with $E$ varying in $\varGamma_X^*$. Analogously, one can consider the basis given by $Z(\nu_E)$, where $\nu_E$ varies among normalized divisorial valuations in $\mc{V}_X^{\text{div}}$.
Another natural basis for $\Ediv_C(X)$ can be constructed as follows.
Since $X$ is a surface, for any exceptional prime $E \in \varGamma_X^*$, there is a minimal resolution $\pi \colon X_\pi \to (X,x_0)$ realizing $\nu_E$. We set $Z(E)$ to be the b-divisor determined by $E$ in the model $\pi$.
The set $\{Z(E)\ |\ E \in \varGamma_X^*\}$ forms a basis of $\Ediv_C(X)$.
This latter basis has the property of being orthogonal with respect to the intersection form \emph{when $(X,x_0)$ is smooth}.
This no longer holds for singularities, since $Z(E) \cdot Z(F) \neq 0$ if $E$ and $F$ are two exceptional primes realized and intersecting in the minimal resolution of $(X,x_0)$.
Nevertheless, it allows to describe the space of Weil (exceptional) b-divisors $\Ediv(X)$, which is isomorphic to the vector space of functions $\varGamma_X^* \to \nR$. In fact, given a b-divisor $Z$, we may associate to it the function sending $E$ to $\check{E} \cdot Z := \ord_E(Z_\pi)$, where $Z_\pi$ is the incarnation of $Z$ in any model $\pi$ realizing $\nu_E$.

Finally, we may endow $\Ediv(X)$ with a topology, by stating that a sequence of b-divisors $Z_n$ converges to $Z$ if for any $E \in \varGamma_X^*$ we have $\check{E} \cdot Z_n \to \check{E} \cdot Z$.

With respect to this topology, and the weak topology on the valuative space, the map $\hat{\mc{V}}_X^* \to \Ediv(X)$ given by $\nu \mapsto Z(\nu)$ is continuous (see \refprop{evemb}).
\end{rmk}
\begin{rmk}\label{rmk:defZanonprimary}
\refprop{defZnu} can be used to define the b-divisor associated to a non-necessarily $\mf{m}_X$-primary ideal $\mf{a}$.
Let $\pi \colon X_\pi \to (X,x_0)$ be any good resolution.
We define $Z_\pi(\mf{a})$ as the unique divisor supported on $\pi^{-1}(0)$ so that $\ord_{E}(\pi^*\mf{a})=-\check{E} \cdot Z_\pi(\mf{a})$ for all exceptional primes $E$ of $\pi$.
The family $Z(\mf{a})=(Z_\pi(\mf{a}))_\pi$ defines a nef b-divisor which is Cartier if and only if $\mf{a}$ is $\mf{m}_X$-primary. 
\end{rmk}

\subsection{Intersection theory and skewness}

Having the family of divisors $Z_\pi(\nu)$ associated to semivaluations $\nu$ allows us to study semivaluations using the geometry of the vector spaces $\Ediv(\pi)_\R$ in which the $Z_\pi(\nu)$ reside. For instance, given two semivaluations $\nu$ and $\mu$ it is natural to consider the quantities \begin{equation}\label{skewnesses}
\alpha_\pi(\nu) := -Z_\pi(\nu)\cdot Z_\pi(\nu) \hspace{1 cm}\mbox{and}\hspace{1 cm}\beta_\pi(\nu \mid \mu) := \frac{Z_\pi(\nu)\cdot Z_\pi(\nu)}{Z_\pi(\nu)\cdot Z_\pi(\mu)}.
\end{equation} These are both positive. The former is of course the norm squared of $Z_\pi(\nu)$ with respect to the inner product given by the negative of the intersection product, and thus gives some measure of the size of $\nu$. The latter measures the component of $Z_\pi(\mu)$ in the direction of $Z_\pi(\nu)$, giving a means of comparing $\nu$ and $\mu$. 

\begin{prop}\label{prop:monotonic} Let $\nu, \mu\in \hat{\mc{V}}_X^*$. Then for high enough good resolutions $\pi$ of $(X,x_0)$,  the quantities $\alpha_\pi(\nu)$ and $\beta_\pi(\nu\mid \mu)$ increase monotonically to some values $\alpha(\nu)$ and $\beta(\nu\mid\mu)$ belonging to $(0, +\infty]$.
\end{prop}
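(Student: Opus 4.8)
The key is to compare $Z_\pi(\nu)$ and $Z_{\pi'}(\nu)$ when $\pi'$ dominates $\pi$. Write $\eta = \eta_{\pi\pi'}$. By \refcor{eval-compatibility} we have $\eta_*Z_{\pi'}(\nu) = Z_\pi(\nu)$, and there is a standard orthogonal decomposition in the intersection theory of a modification: for any $\R$-divisor $D'$ on $X_{\pi'}$, one can write $D' = \eta^*(\eta_* D') + N$, where $N$ is an exceptional divisor for $\eta$ (i.e.\ supported on the $\eta$-exceptional locus) and $N$ is $\eta$-numerically trivial after pushforward, hence orthogonal to $\eta^*(\text{anything})$ with respect to the intersection form on $X_{\pi'}$. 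Applying this to $D' = Z_{\pi'}(\nu)$ gives
\[
Z_{\pi'}(\nu) = \eta^*Z_\pi(\nu) + N_{\pi'}(\nu),
\]
with $\eta^*Z_\pi(\nu) \cdot N_{\pi'}(\nu) = 0$. Therefore
\[
Z_{\pi'}(\nu)\cdot Z_{\pi'}(\nu) = \eta^*Z_\pi(\nu)\cdot \eta^*Z_\pi(\nu) + N_{\pi'}(\nu)\cdot N_{\pi'}(\nu) = Z_\pi(\nu)\cdot Z_\pi(\nu) + N_{\pi'}(\nu)\cdot N_{\pi'}(\nu),
\]
using the projection formula for the first term. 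Since the intersection form is negative definite (by \cite{grauert:ubermodifikationen}), $N_{\pi'}(\nu)\cdot N_{\pi'}(\nu)\leq 0$, so $-Z_{\pi'}(\nu)\cdot Z_{\pi'}(\nu)\geq -Z_\pi(\nu)\cdot Z_\pi(\nu)$, i.e.\ $\alpha_{\pi'}(\nu)\geq \alpha_\pi(\nu)$. This proves monotonicity of $\alpha_\pi(\nu)$ along any cofinal chain of good resolutions; since any two good resolutions are dominated by a third, $\alpha_\pi(\nu)$ is monotone increasing on the directed set of good resolutions, and hence converges to a supremum $\alpha(\nu)\in(0,+\infty]$. (It is bounded below by $\alpha_{\pi_0}(\nu)>0$ for any fixed $\pi_0$, using that $Z_{\pi_0}(\nu)$ is a nonzero nef divisor together with \refcor{anti-effective2} — in fact positivity of $\alpha$ is already visible from \eqref{skewnesses} being "both positive" as stated.)

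For $\beta_\pi(\nu\mid\mu)$, the plan is to use the same decomposition for both $\nu$ and $\mu$ and keep track of the numerator and denominator separately. Write also $Z_{\pi'}(\mu) = \eta^*Z_\pi(\mu) + N_{\pi'}(\mu)$. Then by projection formula and orthogonality of $\eta$-exceptional pieces against pullbacks,
\[
Z_{\pi'}(\nu)\cdot Z_{\pi'}(\mu) = Z_\pi(\nu)\cdot Z_\pi(\mu) + N_{\pi'}(\nu)\cdot N_{\pi'}(\mu).
\]
So, setting $a = -Z_\pi(\nu)\cdot Z_\pi(\nu)>0$, $b = -Z_\pi(\nu)\cdot Z_\pi(\mu)>0$, $c = -N_{\pi'}(\nu)\cdot N_{\pi'}(\nu)\geq 0$, and $d = -N_{\pi'}(\nu)\cdot N_{\pi'}(\mu)$, one has
\[
\beta_{\pi'}(\nu\mid\mu)^{-1} = \frac{Z_{\pi'}(\nu)\cdot Z_{\pi'}(\mu)}{Z_{\pi'}(\nu)\cdot Z_{\pi'}(\nu)} = \frac{b+d}{a+c},\qquad \beta_\pi(\nu\mid\mu)^{-1} = \frac{b}{a}.
\]
To conclude $\beta_{\pi'}(\nu\mid\mu)\geq\beta_\pi(\nu\mid\mu)$, i.e.\ $\frac{b+d}{a+c}\leq\frac ab$, it suffices to show $bd \leq ac$ hmm, more precisely $b(b+d)\le? $ — the clean inequality one needs is $\dfrac{b+d}{a+c}\le\dfrac ba$, equivalently $a(b+d)\le b(a+c)$, i.e.\ $ad\le bc$. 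This is the heart of the argument: it is a Cauchy–Schwarz-type inequality for the negative-definite form restricted to the (exceptional-for-$\eta$) subspace spanned by $N_{\pi'}(\nu)$ and $N_{\pi'}(\mu)$, combined with the correlation between $a,b$ and $c,d$.

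\textbf{Main obstacle.} The genuinely delicate point is exactly this last inequality $ad\le bc$ linking the "new" contributions $c=-N_{\pi'}(\nu)^2$, $d=-N_{\pi'}(\nu)\cdot N_{\pi'}(\mu)$ to the "old" data $a,b$. A plain Cauchy–Schwarz on the negative-definite form only gives $d^2\le c\cdot(-N_{\pi'}(\mu)^2)$, which is not enough by itself; one needs to exploit the geometric meaning of the $N$-terms. The right tool is \refprop{positivity_EFH} (and, underlying it, \reflem{linear-algebra}): since the $\eta$-exceptional locus sits inside $X_{\pi'}$ and the centers $\mathrm{cen}_{\pi'}(\nu)$, $\mathrm{cen}_{\pi'}(\mu)$ determine where the $N$-divisors "point", the inequality $(\check E\cdot\check F)(\check E\cdot\check H)\le(\check E\cdot\check E)(\check F\cdot\check H)$ of \eqref{eqn:positivity_EFH} applied to the relevant exceptional primes $E$ (the one whose strict transform carries $Z_\pi(\nu)$'s center, or rather an exceptional prime of $\pi$ containing $\mathrm{cen}_\pi(\nu)$), $F$, $H$ is precisely what yields $ad\le bc$. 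I would reduce to the case where $\pi'$ is a single point blowup of $\pi$ (so the $\eta$-exceptional locus is one new prime, or a chain, and $N_{\pi'}(\nu)$, $N_{\pi'}(\mu)$ are explicit), handle that case by \refprop{positivity_EFH}, and then iterate along a chain of blowups connecting $\pi$ to $\pi'$. The remaining assertions — that the limits lie in $(0,+\infty]$ and that the $\sup$ over the directed system equals the $\lim$ over any cofinal sequence — are then formal: positivity at a fixed resolution comes from \refcor{anti-effective2} applied to the nef divisors $Z_{\pi_0}(\nu)$, $Z_{\pi_0}(\mu)$, and monotonicity upgrades "limsup along a cofinal chain" to "limit along the net."
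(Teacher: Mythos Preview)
Your treatment of $\alpha_\pi(\nu)$ is correct and essentially the paper's: the orthogonal decomposition $Z_{\pi'}(\nu)=\eta^*Z_\pi(\nu)+N$ together with negative-definiteness of the intersection form gives $\alpha_{\pi'}(\nu)\geq\alpha_\pi(\nu)$. This is precisely the content of \reflem{decreasing} in the paper.

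Your approach to $\beta_\pi(\nu\mid\mu)$, however, has a genuine gap. You try to prove $\beta_{\pi'}(\nu\mid\mu)\geq\beta_\pi(\nu\mid\mu)$ for \emph{every} pair $\pi'\geq\pi$, reducing this to the inequality $ad\leq bc$. But that inequality is simply false in general. For a concrete counterexample, take $(X,x_0)=(\C^2,0)$, let $\pi$ be the blowup of the origin with exceptional prime $E$, choose a free point $p\in E$ with local coordinates $(x',y')$ adapted so that $E=\{x'=0\}$, and let $\nu$ and $\mu$ be the monomial valuations at $p$ with weights $(1,1)$ and $(2,1)$ respectively. Then $Z_\pi(\nu)=\check E$, $Z_\pi(\mu)=2\check E$, so $\beta_\pi(\mu\mid\nu)=2$. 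Blowing up $p$ to obtain $\pi'$ with new prime $F$, one has $Z_{\pi'}(\nu)=\check F$ and $Z_{\pi'}(\mu)=\check{E}'+\check{F}$, and a direct computation gives $\beta_{\pi'}(\mu\mid\nu)=5/3<2$. Thus $\beta_\pi$ is \emph{not} monotone from the start. In the notation of your proposal (with the roles of $\nu,\mu$ swapped) one gets $a'=4$, $b'=2$, $c'=1$, $d'=1$, and $a'd'=4>2=b'c'$. Since the inequality $ad\leq bc$ is asymmetric in $\nu$ and $\mu$, no appeal to \refprop{positivity_EFH} can establish it in both directions; the obstacle you flag is not merely ``delicate'' but insurmountable as stated. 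The proposition only claims monotonicity for \emph{high enough} $\pi$, and this restriction is essential.

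The paper's proof exploits exactly this freedom, via a much simpler observation that you miss entirely. If $\mu=\lambda\nu$ the claim is trivial; otherwise there is a good resolution $\pi_0$ in which $\nu$ and $\mu$ have distinct centers $\xi$ and $\zeta$. For any $\pi\geq\pi_0$, one factors $\eta_{\pi_0\pi}$ as a composition of point blowups over $\xi$ followed by point blowups over $\zeta$; the former leave $Z(\mu)$ unchanged (it pulls back) and the latter leave $Z(\nu)$ unchanged, so by the projection formula the cross-term $Z_\pi(\nu)\cdot Z_\pi(\mu)$ is \emph{constant} for all $\pi\geq\pi_0$. Hence for $\pi\geq\pi_0$ one has $\beta_\pi(\nu\mid\mu)=\alpha_\pi(\nu)/(-Z_{\pi_0}(\nu)\cdot Z_{\pi_0}(\mu))$, and monotonicity of $\beta$ follows immediately from that of $\alpha$. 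In your framework this amounts to the observation that, once centers are separated, every single blowup satisfies $N_{\pi'}(\nu)=0$ or $N_{\pi'}(\mu)=0$, whence $d=0$ and $ad\leq bc$ is trivial; but you never make this observation, and the route through \refprop{positivity_EFH} is a red herring.
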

\begin{proof} First we consider the quantities $\beta_\pi(\nu\mid\mu)$. If $\mu = \lambda\nu$ for some $\lambda>0$, then   $\beta_\pi(\nu\mid \mu) = 1/\lambda$ for all good resolutions $\pi$, so the convergence is clear. Assume therefore that $\mu$ is not a scalar multiple of $\nu$. Then necessarily for some good resolution $\pi_0$ of $(X,x_0)$ the  centers $\xi = \cen_{\pi_0}(\nu)$ and $\zeta = \cen_{\pi_0}(\mu)$ will be distinct. Suppose that $\pi$ is a good resolution dominating $\pi_0$. Then $\eta_{\pi_0\pi}$ factors as $\eta_{\pi_0\pi} = \eta_2\circ \eta_1$, where $\eta_1\colon X_{\pi_1}\to X_{\pi_0}$ is a sequence of point blowups over $\xi$ and $\eta_2\colon X_\pi\to X_{\pi_1}$ is a sequence of point blowups over $\zeta$. Since $\eta_1$ is an isomorphism over a neighborhood of $\zeta$, we have that $Z_{\pi_1}(\mu) = \eta_1^*Z_{\pi_0}(\mu)$ and thus by the projection formula \[
Z_{\pi_1}(\nu)\cdot Z_{\pi_1}(\mu) = Z_{\pi_1}(\nu)\cdot \eta_1^*Z_{\pi_0}(\mu) = \eta_{1*}Z_{\pi_1}(\nu)\cdot Z_{\pi_0}(\mu) = Z_{\pi_0}(\nu)\cdot Z_{\pi_0}(\mu).\] The exact same argument used for $\eta_2$ allows us to conclude that  \[
Z_{\pi}(\nu)\cdot Z_\pi(\mu) = Z_{\pi_1}(\nu)\cdot Z_{\pi_1}(\mu) = Z_{\pi_0}(\nu)\cdot Z_{\pi_0}(\mu).\] We have therefore shown that the quantity $Z_\pi(\nu)\cdot Z_\pi(\mu)$ is constant in $\pi$, so long as $\pi$ dominates $\pi_0$. As a consequence, the denominator in the definition \eqref{skewnesses} of $\beta_\pi(\nu\mid\mu)$ is constant for high enough $\pi$, and the proposition will be proved if we can show that $\alpha_\pi(\nu)$ increases monotonically with $\pi$. But this follows from \refcor{eval-compatibility} and the next lemma. 
\end{proof}

\begin{lem}\label{lem:decreasing} Let $\pi$ and $\pi'$ be good resolutions of $(X,x_0)$, with $\pi'$ dominating $\pi$. For any  $D\in \Ediv(\pi')_\R$, one has $D\cdot D \leq (\eta_{\pi\pi'})_*D\cdot (\eta_{\pi\pi'})_*D$.
\end{lem}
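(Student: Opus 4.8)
The statement to prove is \reflem{decreasing}: if $\pi'$ dominates $\pi$ and $D\in \Ediv(\pi')_\R$, then $D\cdot D \leq (\eta_{\pi\pi'})_*D\cdot (\eta_{\pi\pi'})_*D$. Since every morphism $\eta_{\pi\pi'}$ of good resolutions factors as a composition of point blowups, by induction it suffices to treat the case where $\eta = \eta_{\pi\pi'}\colon X_{\pi'}\to X_\pi$ is the blowup of a single (closed) point $p\in \pi^{-1}(x_0)$, with exceptional curve $E_0 := \eta^{-1}(p)$, a $(-1)$-curve. The plan is then to use the orthogonal decomposition of $\Ediv(\pi')_\R$ furnished by $\eta^*$ and $E_0$: namely, write $D = \eta^*(\eta_* D) + c\,E_0$ for a suitable $c\in\R$. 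This decomposition is legitimate because $\eta^*$ is injective with image the orthogonal complement of $E_0$ (by the projection formula, $\eta^*C\cdot E_0 = C\cdot \eta_* E_0 = 0$ for any $C\in\Ediv(\pi)_\R$), and the scalar $c$ is pinned down by $D\cdot E_0 = c\,(E_0\cdot E_0) = -c$, so $c = -D\cdot E_0$; applying $\eta_*$ to the decomposition and using $\eta_* E_0 = 0$ confirms $\eta_*(\eta^*(\eta_*D)) = \eta_* D$, so the decomposition is consistent.

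Given this, one computes
\[
D\cdot D = \eta^*(\eta_*D)\cdot \eta^*(\eta_*D) + 2c\,\eta^*(\eta_*D)\cdot E_0 + c^2\, E_0\cdot E_0 = (\eta_*D)\cdot(\eta_*D) + 0 - c^2,
\]
where the cross term vanishes by orthogonality and $\eta^*(\eta_*D)\cdot\eta^*(\eta_*D) = (\eta_*D)\cdot(\eta_*D)$ again by the projection formula. Since $c^2\geq 0$, this immediately gives $D\cdot D \leq (\eta_*D)\cdot(\eta_*D)$, which is exactly the claim for a single blowup. The general case then follows by composing: if $\eta_{\pi\pi'} = \eta_1\circ\cdots\circ\eta_r$ with each $\eta_i$ a single point blowup, apply the one-step inequality $r$ times, noting $(\eta_{\pi\pi'})_* = (\eta_1)_*\circ\cdots\circ(\eta_r)_*$, to chain
\[
D\cdot D \leq (\eta_r)_*D\cdot (\eta_r)_*D \leq \cdots \leq (\eta_{\pi\pi'})_*D\cdot(\eta_{\pi\pi'})_*D.
\]

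The only genuinely delicate point is the factorization of $\eta_{\pi\pi'}$ into point blowups and the compatibility of pushforward along it with the iteration; this is standard for good resolutions of normal surface singularities (invoked already in \S\ref{sec:resolutions}, citing \cite[Chapter 5]{laufer:normal2dimsing}), so no real obstacle arises. One subtlety worth a sentence: when $X_\pi$ is only normal rather than smooth, the point $p$ being blown up should be a smooth point of $X_\pi$ lying on the exceptional locus — which is the case for the blowups arising in resolving $(X,x_0)$ — so that $E_0^2 = -1$ and the projection formula applies verbatim; if one wanted to allow blowups of singular points the same argument works with $E_0^2 < 0$ replaced appropriately, still yielding $-c^2(E_0\cdot E_0)^{-1}\cdot(\ldots)\le 0$, but this generality is not needed here.
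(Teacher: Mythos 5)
Your proof is correct, and it rests on the same essential mechanism as the paper's: the projection formula shows that $\eta^*\eta_*D$ is the orthogonal projection of $D$ onto $\eta^*\Ediv(\pi)_\R$ with respect to the (negative of the) intersection form, and the self-intersection can only go down when you drop the orthogonal piece. The difference is one of implementation. You reduce to a single point blowup, where you can explicitly identify the orthogonal complement as the line $\R E_0$ and compute $D\cdot D = (\eta_*D)^2 - c^2$, then chain the inequality along a factorization $\eta_{\pi\pi'} = \eta_1\circ\cdots\circ\eta_r$. The paper instead handles an arbitrary $\eta = \eta_{\pi\pi'}$ in a single step: it writes $D = \eta^*\eta_*D + R$ with $R$ orthogonal to $\eta^*\Ediv(\pi)_\R$, deduces $D\cdot D = (\eta_*D)\cdot(\eta_*D) + R\cdot R$ via the projection formula, and concludes by observing $R\cdot R\le 0$ because the intersection form on $\Ediv(\pi')_\R$ is negative definite (Grauert). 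This avoids any factorization into blowups and the attendant bookkeeping about intermediate models — it is shorter and requires no special knowledge about the structure of $\eta$ beyond that it is a morphism of resolutions. Your explicit computation is perhaps more transparent about where the loss $-c^2$ comes from (the exceptional curve of each blowup), but it does more work than is necessary: negative definiteness already gives you $R\cdot R\le 0$ for free, without knowing what $R$ looks like.
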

\begin{proof} To ease notation, let $\eta = \eta_{\pi\pi'}$. The point now is that by the projection formula $\eta^*\eta_*D$ is the orthogonal projection of $D$ onto the subspace $\eta^*\Ediv(\pi)_\R\subset \Ediv(\pi')_\R$. Thus $D = \eta^*\eta_*D + R$, where $R$ is orthogonal to $\eta^*\Ediv(\pi)_\R$, so \[
D\cdot D = \eta^*\eta_*D\cdot \eta^*\eta_*D + R\cdot R = \eta_*D \cdot \eta_*D + R\cdot R\leq \eta_*D\cdot \eta_*D,\] with the second equality by the projection formula and the inequality because the intersection product is negative definite.
\end{proof}

\refprop{monotonic} can also interpreted in terms of b-divisors. In fact, given two \emph{nef} b-divisors $Z,W \in \Ediv(X)$, we may define their intersection as $Z \cdot W = \lim_\pi Z_\pi \cdot W_\pi \in [-\infty,0)$. The nef hypothesis assures that the value $Z_\pi \cdot W_\pi$ is non-increasing with respect to $\pi$, and allows the limit to exist.
Notice that by the projection formula, $Z \cdot W$ is always finite as far as at least one between $Z$ and $W$ is Cartier. In this case, $Z \cdot W = Z_\pi \cdot W_\pi$ for any determination $\pi$ of the Cartier divisor.
More generally, from the proof of \refprop{monotonic}, we deduce the following property.
\begin{prop}\label{prop:intersection_differentcenters}
Let $(X,x_0)$ be a normal surface singularity, and $\nu, \mu \in \hat{\mc{V}}^*$ be two semivaluations.
If $\nu$ and $\mu$ have different centers in $X_\pi$ for a good resolution $\pi \colon X_\pi \to (X,x_0)$, then
$$
Z(\nu) \cdot Z(\mu)=Z_\pi(\nu) \cdot Z_\pi(\mu).
$$
\end{prop}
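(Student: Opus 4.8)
The statement to prove is \refprop{intersection_differentcenters}: if $\nu$ and $\mu$ have distinct centers on some good resolution $\pi$, then $Z(\nu)\cdot Z(\mu) = Z_\pi(\nu)\cdot Z_\pi(\mu)$.

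The plan is to extract the argument already contained in the proof of \refprop{monotonic} and repackage it. The key observation is that when $\cen_\pi(\nu) = \xi$ and $\cen_\pi(\mu) = \zeta$ are distinct points (or points that are not the specialization of one another — but on a good resolution the centers are either points or generic points of exceptional primes, and distinctness suffices), then for any good resolution $\pi'$ dominating $\pi$, the induced map $\eta_{\pi\pi'}$ factors through a sequence of point blowups over $\xi$ followed by a sequence of point blowups over $\zeta$ (these two loci being disjoint, the order is irrelevant, and each sequence is an isomorphism over a neighborhood of the other center).

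First I would reduce to the case where both centers are closed points: if say $\cen_\pi(\nu)$ is the generic point of an exceptional prime $E$, then $\nu$ is (proportional to) $\divi_E$ and $Z(\nu)$ is the Cartier b-divisor $\lambda\check E$ determined already by $\pi$, so $Z(\nu)\cdot Z(\mu) = Z_\pi(\nu)\cdot Z_\pi(\mu)$ follows from the remark (just before \refdef{}, around \refcor{eval-compatibility}) that intersection with a Cartier b-divisor is computed on any determining model via the projection formula. So assume $\xi,\zeta$ are distinct closed points. Next, for $\pi'\geq\pi$, factor $\eta_{\pi\pi'} = \eta_2\circ\eta_1$ with $\eta_1$ a composition of blowups over $\xi$ and $\eta_2$ a composition of blowups over $\zeta$; since $\eta_1$ is an isomorphism near $\zeta$ we get $Z_{\pi_1}(\mu) = \eta_1^* Z_\pi(\mu)$, and since $\eta_2$ is an isomorphism near (the point lying over) $\xi$ we get $Z_{\pi'}(\nu) = \eta_2^* Z_{\pi_1}(\nu)$. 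Then two applications of the projection formula give $Z_{\pi'}(\nu)\cdot Z_{\pi'}(\mu) = Z_\pi(\nu)\cdot Z_\pi(\mu)$, exactly as in the proof of \refprop{monotonic}. Passing to the limit over $\pi'$ (the limit defining $Z(\nu)\cdot Z(\mu)$, which exists and is monotone by nefness of $Z(\nu),Z(\mu)$) yields the claim.

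I expect the main obstacle — really the only point needing care — to be the justification that the factorization of $\eta_{\pi\pi'}$ into blowups "over $\xi$" and "over $\zeta$" behaves well, i.e. that after blowing up over $\xi$ the center of $\mu$ is unchanged and untouched, and symmetrically. This is standard (the exceptional locus of $\eta_1$ lies over $\xi\neq\zeta$, so $\eta_1$ is an isomorphism over an affine neighborhood of $\zeta$ disjoint from $\xi$), and it is precisely the manipulation already carried out in \refprop{monotonic}; one should simply remark that the argument there shows more than monotonicity — it shows the mixed intersection number $Z_\pi(\nu)\cdot Z_\pi(\mu)$ is eventually constant, hence equal to its value at any $\pi$ on which the centers are already separated.

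\begin{proof}
If $\nu$ is divisorial, say $\nu = \lambda\divi_E$ with $E$ an exceptional prime of $\pi$, then $Z(\nu) = \lambda\check E$ is a Cartier b-divisor determined by $\pi$, and as noted above (following \refcor{eval-compatibility}) the intersection $Z(\nu)\cdot Z(\mu)$ equals $Z_\pi(\nu)\cdot Z_\pi(\mu)$; similarly if $\mu$ is divisorial and realized by $\pi$. So we may assume $\xi := \cen_\pi(\nu)$ and $\zeta := \cen_\pi(\mu)$ are closed points, necessarily distinct by hypothesis. Let $\pi'$ be any good resolution dominating $\pi$. As in the proof of \refprop{monotonic}, the map $\eta_{\pi\pi'}$ factors as $\eta_{\pi\pi'} = \eta_2\circ\eta_1$ with $\eta_1\colon X_{\pi_1}\to X_\pi$ a composition of point blowups lying over $\xi$ and $\eta_2\colon X_{\pi'}\to X_{\pi_1}$ a composition of point blowups lying over $\zeta$. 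Since $\eta_1$ is an isomorphism over a neighborhood of $\zeta$ we have $Z_{\pi_1}(\mu) = \eta_1^*Z_\pi(\mu)$, so by the projection formula
\[
Z_{\pi_1}(\nu)\cdot Z_{\pi_1}(\mu) = Z_{\pi_1}(\nu)\cdot \eta_1^*Z_\pi(\mu) = (\eta_1)_*Z_{\pi_1}(\nu)\cdot Z_\pi(\mu) = Z_\pi(\nu)\cdot Z_\pi(\mu),
\]
the last equality by \refcor{eval-compatibility}. The identical argument applied to $\eta_2$ (an isomorphism over a neighborhood of the center of $\nu$ in $X_{\pi_1}$) gives $Z_{\pi'}(\nu)\cdot Z_{\pi'}(\mu) = Z_{\pi_1}(\nu)\cdot Z_{\pi_1}(\mu)$. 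Hence $Z_{\pi'}(\nu)\cdot Z_{\pi'}(\mu) = Z_\pi(\nu)\cdot Z_\pi(\mu)$ for all good resolutions $\pi'$ dominating $\pi$. Since $Z(\nu)$ and $Z(\mu)$ are nef b-divisors, $Z(\nu)\cdot Z(\mu) = \lim_{\pi'} Z_{\pi'}(\nu)\cdot Z_{\pi'}(\mu)$, and this limit equals the constant value $Z_\pi(\nu)\cdot Z_\pi(\mu)$.
\end{proof}
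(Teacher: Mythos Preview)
Your proof is correct and follows exactly the approach the paper intends: the paper does not give a separate proof of this proposition but simply states that it is deduced ``from the proof of \refprop{monotonic}'', and you have faithfully unpacked that argument (factoring $\eta_{\pi\pi'}$ into blowups over the two distinct centers and applying the projection formula twice). Your explicit handling of the case where a center is the generic point of an exceptional prime is a small clarification not spelled out in the paper, but otherwise the arguments are identical.
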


Analogously, \refprop{positivity_EFH} can be stated in terms of b-divisors. To ease statements, we first introduce a definition.
\begin{defi}
Let $(X,x_0)$ be a normal surface singularity, and let $\nu, \mu_1, \mu_2 \in \mc{V}_X$ be three semivaluations.
We say that \emph{$\nu$ disconnects $\mu_1$ and $\mu_2$} if either $\nu=\mu_1$, $\nu=\mu_2$, or $\mu_1$ and $\mu_2$ belong to different connected components of $\mc{V}_X \setminus \{\nu\}$.
\end{defi}

\begin{prop}\label{prop:positivity_bdivisors}
Let $(X,x_0)$ be a normal surface singularity, and $\nu, \mu_1, \mu_2 \in \hat{\mc{V}}_X^*$ be three semivaluations. Set $Z=Z(\nu)$ and $W_j=Z(\mu_j)$ for $j=1,2$.
Then we have
\begin{equation}\label{eqn:positivity_bdivisors}
(Z \cdot W_1)(Z \cdot W_2) \leq (Z \cdot Z)(W_1 \cdot W_2).
\end{equation}
Moreover (taking $\nu, \mu_1, \mu_2$ normalized) the equality holds if and only if $\nu$ disconnects $\mu_1$ and $\mu_2$.
\end{prop}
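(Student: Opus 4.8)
The plan is to reduce the b-divisor inequality to the divisor-level inequality \eqref{eqn:positivity_EFH} of \refprop{positivity_EFH} by passing to a sufficiently high good resolution and approximating the semivaluations by divisorial ones. First I would fix a good resolution $\pi_0$ of $(X,x_0)$ large enough that the centers of $\nu$, $\mu_1$, $\mu_2$ are distinct whenever the corresponding semivaluations are not scalar multiples of one another; by \refprop{intersection_differentcenters} the intersection numbers $Z\cdot W_j$ and $W_1\cdot W_2$ are then already computed in $X_{\pi_0}$ (or stabilize from some higher model on). The quantity $Z\cdot Z=-\alpha(\nu)$ is the monotone limit of $\alpha_\pi(\nu)$ by \refprop{monotonic}; since \eqref{eqn:positivity_bdivisors} only improves as $Z\cdot Z$ decreases (both sides being signed appropriately — note $Z\cdot Z<0$ and $W_1\cdot W_2\le 0$), it suffices to prove the inequality with $Z\cdot Z$ replaced by $Z_\pi(\nu)\cdot Z_\pi(\nu)$ for each sufficiently high $\pi$, and then let $\pi\to\infty$.

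Next I would handle the degenerate cases separately. If $\nu=\mu_1$ or $\nu=\mu_2$, say $\nu=\mu_1$, then $Z=W_1$ and \eqref{eqn:positivity_bdivisors} reads $(Z\cdot Z)(Z\cdot W_2)\le (Z\cdot Z)(Z\cdot W_2)$, an equality. If $\nu=\mu_1$ is a scalar multiple of $\mu_2$ (but the semivaluations are genuinely distinct as points of $\mc{V}_X$, impossible after normalization) — so after normalizing, the remaining case is that $\nu,\mu_1,\mu_2$ are pairwise non-proportional. Then for a high enough good resolution $\pi$ their centers $\xi,\zeta_1,\zeta_2$ in $X_\pi$ are pairwise distinct. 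Now the key reduction: $Z_\pi(\nu)$ is a nonnegative combination of at most two dual divisors $\check{E},\check{F}$ attached to the exceptional prime(s) through $\xi$, and similarly for $W_1,W_2$. Writing $Z_\pi(\nu)=r\check{E}+s\check{F}$ with $r,s\ge0$, $W_1=\sum a_i\check{E}_i$, $W_2=\sum b_j\check{E}_j$ with all coefficients $\ge0$, inequality \eqref{eqn:positivity_bdivisors} expands bilinearly into a nonnegative combination of instances of
\[
(\check{E}_p\cdot\check{E}_i)(\check{E}_p\cdot\check{E}_j)\le(\check{E}_p\cdot\check{E}_p)(\check{E}_i\cdot\check{E}_j),
\]
each of which is \eqref{eqn:positivity_EFH}. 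One must check the Cauchy–Schwarz-type bilinear manipulation goes through: if $Z_\pi(\nu)=r\check{E}+s\check{F}$ then $(Z_\pi(\nu)\cdot W_1)(Z_\pi(\nu)\cdot W_2)-(Z_\pi(\nu)\cdot Z_\pi(\nu))(W_1\cdot W_2)$ is a quadratic form in $(r,s)$ whose coefficients are combinations of the scalar inequalities above with the right signs — this is the routine but slightly fiddly computation I would not grind through here.

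Finally, for the equality characterization I would trace through when each scalar inequality \eqref{eqn:positivity_EFH} is an equality, using the criterion already supplied in \refprop{positivity_EFH} (namely $E_p=E_i$, $E_p=E_j$, or $E_i,E_j$ lie in different connected components of $\varGamma_\pi\setminus\{E_p\}$), and then translate this back to the topology of $\mc{V}_X$: the exceptional primes of $\pi$ through $\cen_\pi(\nu)$ separate $X_\pi$ near that point, and correspondingly $\nu$ separates $\mc{V}_X$ into the components corresponding to the branches at $\cen_\pi(\nu)$. Passing to the limit over all $\pi$, equality in \eqref{eqn:positivity_bdivisors} holds exactly when $\mu_1$ and $\mu_2$ cannot be joined in $\mc{V}_X$ without passing through $\nu$ — that is, precisely when $\nu$ disconnects $\mu_1$ and $\mu_2$ (including the boundary cases $\nu=\mu_1$ or $\nu=\mu_2$). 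The main obstacle I anticipate is not any single step but keeping careful track of signs and of the limiting argument: $\alpha_\pi(\nu)$ increases to $\alpha(\nu)$, so $Z_\pi(\nu)\cdot Z_\pi(\nu)$ decreases to $Z(\nu)\cdot Z(\nu)$, and one must make sure the inequality is preserved in the limit and that the equality case is not lost or spuriously created in the process — in particular that for the equality statement it suffices to have the divisor-level equality hold for all sufficiently high $\pi$, which is where the combinatorial connectedness criterion stabilizes.
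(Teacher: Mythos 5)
Your reduction to the level-$\pi$ inequality, with $Z\cdot Z$ replaced by $Z_\pi(\nu)\cdot Z_\pi(\nu)$, does not work: that inequality is genuinely \emph{stronger} than \eqref{eqn:positivity_bdivisors} (since $Z_\pi(\nu)\cdot Z_\pi(\nu)\geq Z(\nu)\cdot Z(\nu)$ and $W_1\cdot W_2<0$), and it is in fact false whenever $\nu$ is not realized by $\pi$. A concrete counterexample: take two exceptional primes $E,F$ with $E^2=F^2=-2$ and $E\cdot F=1$ (so $\check{E}^2=\check{F}^2=-2/3$, $\check{E}\cdot\check{F}=-1/3$), let $\nu$ be the monomial valuation at $E\cap F$ with weights $(1,1)$, and $\mu_1=\divi_E$, $\mu_2=\divi_F$. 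Then $Z\cdot W_1=Z\cdot W_2=-1$ and $W_1\cdot W_2=-1/3$, so the left side of \eqref{eqn:positivity_bdivisors} is $1$; but $Z_\pi(\nu)^2=(\check E+\check F)^2=-2$, so your proposed right side is $2/3<1$. The true right side uses $Z\cdot Z=-\alpha(\nu)=-(2+1)=-3$ (\refcor{skewness_formula}), giving $1$, an equality — consistent with the fact that $\nu$ disconnects $\nu_E$ and $\nu_F$. This is precisely where your ``bilinear expansion'' step, which you flagged as fiddly and left unverified, breaks down: \refprop{positivity_EFH} places a single dual divisor $\check{E}$ in the distinguished slot, and the cross-terms arising when $Z_\pi(\nu)=r\check{E}+s\check{F}$ do not reduce to instances of it.

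The paper's route avoids this entirely. First prove the inequality when $\nu$ is \emph{divisorial}: then one can choose $\pi$ realizing $\nu$, so $Z(\nu)=\lambda\check{E}$ as a b-divisor and $Z\cdot Z=\lambda^2\check{E}^2$ is already computed at level $\pi$ (no correction term). Writing $W_{j,\pi}$ as nonnegative combinations of dual divisors, the bilinear expansion now does reduce term-by-term to \refprop{positivity_EFH}, and since $W_{1,\pi'}\cdot W_{2,\pi'}$ decreases to $W_1\cdot W_2$ while the other intersections stabilize, the inequality passes to all of $\mc{V}_X$ for $\mu_1,\mu_2$. Then, for general (quasimonomial, curve, infinitely singular) $\nu$, one approximates $\nu$ by the divisorial retractions $r_{\pi_n}\nu$ and passes to the limit, using that $Z(r_{\pi_n}\nu)\cdot Z(r_{\pi_n}\nu)=-\alpha_{\pi_n}(\nu)\to -\alpha(\nu)=Z\cdot Z$ and the other intersections are continuous along this sequence. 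The equality characterization is likewise traced through the divisorial case (where the criterion of \refprop{positivity_EFH} applies directly and translates into disconnection in $\mc{S}_\pi$) and the limit, which is what the paper means by ``using the structure of $\mc{V}_X$ as a real graph.'' In short: you approximate $\mu_1,\mu_2$ only and fix $\nu$; the paper approximates $\nu$ itself by divisorial valuations, which is essential because the inequality is not valid at the finite-model level for non-divisorial $\nu$.
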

\begin{proof}
First notice that the statement is homogeneous on $\nu$, $\mu_1$ and $\mu_2$. If the three valuations are divisorial, the statement is just a rephrased version of \refprop{positivity_EFH}.
The general case is obtained taking the limit over the modifications $\pi$, and using the structure of $\mc{V}_X$ as a real graph.
\end{proof}
Notice that \eqref{eqn:positivity_bdivisors} still holds if we replace $W_1$ and $W_2$ with any \emph{nef} b-divisors. In fact any nef b-divisor is a non-negative linear combination of divisors of the form $\check{E}$ for some exceptional primes $E$.

We now come back to the quantity $\alpha(\nu)$ and $\beta(\nu\mid \mu)$. The former is called the \emph{skewness} of $\nu$ in \cite{favre-jonsson:valtree, favre-jonsson:eigenval, favre-jonsson:valmultideals, favre-jonsson:dynamicalcompactifications, jonsson:berkovich}, though it should be noted that in some of these works the definition of skewness differs from ours by a sign. We will call $\beta(\nu\mid \mu)$ the \emph{relative skewness} of $\nu$ relative to $\mu$. It is worth pointing out that the proof of \refprop{monotonic} gives a way of relating skewness and relative skewness.

\begin{cor}\label{cor:skewness_comparison}
Let $\nu, \mu\in \hat{\mc{V}}_X^*$. If $\pi\colon X_\pi\to (X,x_0)$ is a good resolution of $(X,x_0)$ for which $\nu$ and $\mu$ have distinct centers in $X_\pi$, then
\begin{equation}\label{skewness_comparison}
\alpha(\nu) = \beta(\nu\mid\mu)[-Z_\pi(\nu)\cdot Z_\pi(\mu)].
\end{equation}
In particular, if $\nu$ and $\mu$ are not proportional (so that such a good resolution $\pi$ exists) then $\alpha(\nu)$ is finite if and only if $\beta(\nu\mid\mu)$ is finite. 
\end{cor}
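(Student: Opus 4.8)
The plan is to extract the identity directly from the proof of \refprop{monotonic}, where it is shown that whenever $\nu$ and $\mu$ have distinct centers in a good resolution $\pi_0$, the quantity $Z_\pi(\nu)\cdot Z_\pi(\mu)$ is \emph{constant} for all good resolutions $\pi\geq \pi_0$; call this common value $-c$, with $c>0$ by the relative nefness and \refcor{anti-effective2}. (This is precisely \refprop{intersection_differentcenters}, so $Z(\nu)\cdot Z(\mu) = -c = Z_\pi(\nu)\cdot Z_\pi(\mu)$.) First I would observe that, unwinding the definitions in \eqref{skewnesses}, for each good resolution $\pi\geq \pi_0$ one has the tautological algebraic identity
\[
\alpha_\pi(\nu) = -Z_\pi(\nu)\cdot Z_\pi(\nu) = \frac{Z_\pi(\nu)\cdot Z_\pi(\nu)}{Z_\pi(\nu)\cdot Z_\pi(\mu)}\cdot \big[-Z_\pi(\nu)\cdot Z_\pi(\mu)\big] = \beta_\pi(\nu\mid\mu)\cdot c.
\]

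Next I would pass to the limit over $\pi\geq \pi_0$. By \refprop{monotonic}, $\alpha_\pi(\nu)\nearrow\alpha(\nu)$ and $\beta_\pi(\nu\mid\mu)\nearrow\beta(\nu\mid\mu)$ in $(0,+\infty]$, while the factor $c = -Z_\pi(\nu)\cdot Z_\pi(\mu)$ is a fixed finite positive constant independent of $\pi\geq\pi_0$. Taking limits in the displayed identity yields $\alpha(\nu) = \beta(\nu\mid\mu)\,c$, which is exactly \eqref{skewness_comparison} since $c = -Z_\pi(\nu)\cdot Z_\pi(\mu)$ for the given resolution $\pi$ (indeed for any good resolution in which $\nu$ and $\mu$ have distinct centers).

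Finally, for the ``in particular'' clause, I would recall that when $\nu$ and $\mu$ are not proportional, the argument in \refprop{monotonic} produces a good resolution $\pi$ in which $\cen_\pi(\nu)\neq\cen_\pi(\mu)$, so the hypothesis of the corollary is met and $c>0$ is finite. Then the relation $\alpha(\nu) = c\,\beta(\nu\mid\mu)$ with $0<c<\infty$ forces $\alpha(\nu)<+\infty$ if and only if $\beta(\nu\mid\mu)<+\infty$. I do not anticipate any real obstacle here: the only mild point to be careful about is that the constant $c$ genuinely stabilizes (not merely converges), which is exactly what the projection-formula computation in the proof of \refprop{monotonic} provides, and that $c\neq 0$, which follows from \refcor{anti-effective2} applied to the nonzero relatively nef divisors $Z_\pi(\nu)$ and $Z_\pi(\mu)$.
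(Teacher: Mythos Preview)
Your proposal is correct and matches the paper's intended argument: the corollary is stated without its own proof precisely because it is meant to be read off from the proof of \refprop{monotonic}, and you have extracted exactly that computation---the tautology $\alpha_\pi(\nu)=\beta_\pi(\nu\mid\mu)\cdot[-Z_\pi(\nu)\cdot Z_\pi(\mu)]$ together with the stabilization of the cross term once the centers separate, then passing to the limit. There is nothing to add.
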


Notice that for non-divisorial valuations $\nu\in \hat{\mc{V}}_X^*$, the skewness $\alpha(\nu)$ may be infinite. It turns out that the skewness is always finite for quasimonomial valuations. In fact, one can give a very explicit formula for $\alpha(\nu)$ when $\nu$ is quasimonomial using the next proposition.

\begin{prop}\label{prop:skewness_formula}
Let $\pi\colon X_\pi\to (X,x_0)$ be a good resolution of $(X, x_0)$, and suppose that $E_1$ and $E_2$ are two exceptional primes of $\pi$ which intersect in a point $p\in X_\pi$. Let $\nu_1$ and $\nu_2$ be the monomial valuations at $p$ with weights $r_1, s_1$ and $r_2, s_2$, respectively, as defined in \S\ref{ssec:classification}.
Then
\[
\lim_{\pi'\geq \pi} Z_{\pi'}(\nu_1)\cdot Z_{\pi'}(\nu_2) = Z_{\pi}(\nu_1)\cdot Z_\pi(\nu_2) - \min\{r_1s_2, r_2s_1\},\] the limit taken over all good resolutions $\pi'\geq \pi$. 
\end{prop}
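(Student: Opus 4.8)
The plan is to localize the problem at the smooth point $p\in X_\pi$ and then telescope over a tower of point blowups lying above $p$. First I would dispose of the degenerate cases: if, say, $r_1=0$, then $\nu_1$ is a divisorial valuation centered on $E_1$ or $E_2$ but not at $p$, so $\nu_1$ and $\nu_2$ have distinct centers already in $X_\pi$; by \refprop{intersection_differentcenters} the product $Z_{\pi'}(\nu_1)\cdot Z_{\pi'}(\nu_2)$ is then independent of $\pi'\geq\pi$, and since $\min\{r_1s_2,r_2s_1\}=0$ the formula is trivial. Hence I may assume $r_i,s_i>0$, so that $\cen_\pi(\nu_1)=\cen_\pi(\nu_2)=p$.

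The heart of the argument is a one-blowup formula. Suppose $\sigma\geq\pi$ is a good resolution in which $\nu_1$ and $\nu_2$ share a common closed-point center $q$; let $\sigma'$ be the blowup of $\sigma$ at $q$, with exceptional prime $E_0$ (so $E_0^2=-1$) and contraction $\eta\colon X_{\sigma'}\to X_\sigma$, and for a valuation $\nu$ with $\cen_\sigma(\nu)=q$ write $m(\nu,q):=\nu(\mf{m}_q)>0$ for its multiplicity at $q$. I would first prove
\[
Z_{\sigma'}(\nu)=\eta^*Z_\sigma(\nu)-m(\nu,q)\,E_0
\]
by checking that both sides have the same intersection number with every exceptional prime of $\sigma'$ (the intersection form being nondegenerate): against $E_0$ one uses $E_0^2=-1$, $\eta^*Z_\sigma(\nu)\cdot E_0=Z_\sigma(\nu)\cdot\eta_*E_0=0$, and $\nu(E_0)=\nu(\mf{m}_q)=m(\nu,q)$ since $\mf{m}_q\mc{O}_{X_{\sigma'}}=\mc{O}_{X_{\sigma'}}(-E_0)$; against the strict transform $\widetilde F$ of an exceptional prime $F$ of $\sigma$ one uses the projection formula together with $\eta^*F=\widetilde F$ or $\widetilde F+E_0$ according as $q\notin F$ or $q\in F$, and the matching identity $\nu(F)=\nu(\widetilde F)+(\mathrm{mult}_qF)\,m(\nu,q)$. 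Feeding this into $\eta_*E_0=0$, $E_0^2=-1$ and $\eta^*A\cdot\eta^*B=A\cdot B$ then gives the clean decrement
\[
Z_{\sigma'}(\nu_1)\cdot Z_{\sigma'}(\nu_2)=Z_\sigma(\nu_1)\cdot Z_\sigma(\nu_2)-m(\nu_1,q)\,m(\nu_2,q).
\]

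Next I would telescope along the tower. Starting from $q_0=p$, let $q_0,q_1,q_2,\dots$ be the successive common closed-point centers of $\nu_1$ and $\nu_2$ under iterated blowup. This tower is cofinal for the purpose of computing the limit: blowing up any point distinct from both centers leaves $Z_{\pi'}(\nu_1)\cdot Z_{\pi'}(\nu_2)$ unchanged (this is exactly the computation in the proof of \refprop{monotonic}), and once the sequence $(q_j)$ terminates---either because the two centers become distinct, or because $\nu_1$ and $\nu_2$ have become proportional realized divisorial valuations $\lambda_i\check E$, whose dual divisors are pullback-stable---the intersection is constant thereafter. Since the limit $\lim_{\pi'\geq\pi}Z_{\pi'}(\nu_1)\cdot Z_{\pi'}(\nu_2)$ exists in $[-\infty,0)$ by monotonicity of intersections of nef $b$-divisors, summing the decrement formula yields
\[
\lim_{\pi'\geq\pi}Z_{\pi'}(\nu_1)\cdot Z_{\pi'}(\nu_2)=Z_\pi(\nu_1)\cdot Z_\pi(\nu_2)-\sum_{j\geq0}m(\nu_1,q_j)\,m(\nu_2,q_j).
\]

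The remaining task, and the main obstacle, is the purely arithmetic identity $\sum_{j\geq0}m(\nu_1,q_j)\,m(\nu_2,q_j)=\min\{r_1s_2,r_2s_1\}$. Here each blowup executes, simultaneously on the two weight pairs, one step of the subtractive Euclidean algorithm: if $\nu_i$ has local weights $(a_i,b_i)$ at $q_j$ then $m(\nu_i,q_j)=\min\{a_i,b_i\}$, and at $q_{j+1}$ the weights become $(a_i,b_i-a_i)$ or $(a_i-b_i,b_i)$ according to which entry is larger, the sequence of common centers terminating precisely when the two pairs ``turn'' in opposite directions or one becomes balanced. One then checks by induction that $|a_1b_2-a_2b_1|$ is constant along the tower while $a_1b_2+a_2b_1$ decreases by exactly $2\,m(\nu_1,q_j)\,m(\nu_2,q_j)$ at each step, so that $\min\{a_1b_2,a_2b_1\}=\tfrac12\bigl(a_1b_2+a_2b_1-|a_1b_2-a_2b_1|\bigr)$ drops by exactly $m(\nu_1,q_j)\,m(\nu_2,q_j)$ per step and reaches $0$ when the tower terminates; the telescoping sum gives the claim. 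Alternatively, one can bypass this by observing that $D_i:=Z_{\pi'}(\nu_i)-\eta_{\pi\pi'}^*Z_\pi(\nu_i)$ is supported on the primes contracted to $p$, so $\eta_{\pi\pi'*}D_i=0$ and $Z_{\pi'}(\nu_1)\cdot Z_{\pi'}(\nu_2)=Z_\pi(\nu_1)\cdot Z_\pi(\nu_2)+D_1\cdot D_2$ by the projection formula; under the identification $(X_\pi,p)\cong(\C^2,0)$, $D_i$ is the incarnation of the $b$-divisor of the monomial valuation of weights $(r_i,s_i)$, and $D_1\cdot D_2\to-\min\{r_1s_2,r_2s_1\}$ is the classical formula for the intersection (``tent'') pairing of monomial valuations in the valuative tree of Favre--Jonsson.
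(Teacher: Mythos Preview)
Your proof is correct and follows essentially the same route as the paper's: both argue by induction along the tower of point blowups over $p$, showing that the quantity $Z_{\pi'}(\nu_1)\cdot Z_{\pi'}(\nu_2)-\min\{r_1's_2',r_2's_1'\}$ (in the running weights) is invariant under a single blowup, with the same case split according to whether the two weight pairs ``turn'' the same way or not.

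The only difference is organizational. The paper works directly with the dual-divisor identity $\check F=\check E_1+\check E_2-F$ and verifies the invariance of the combined quantity in one stroke, whereas you first isolate the general one-blowup decrement $Z_{\sigma'}(\nu_1)\cdot Z_{\sigma'}(\nu_2)=Z_\sigma(\nu_1)\cdot Z_\sigma(\nu_2)-m(\nu_1,q)\,m(\nu_2,q)$ (a clean consequence of $Z_{\sigma'}(\nu)=\eta^*Z_\sigma(\nu)-m(\nu,q)E_0$) and then separately prove the arithmetic telescoping identity $\sum_j m(\nu_1,q_j)\,m(\nu_2,q_j)=\min\{r_1s_2,r_2s_1\}$ via the Euclidean-algorithm bookkeeping. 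Your decrement formula is a bit more portable (it applies to any centered valuation, not only monomial ones), while the paper's packaging avoids having to argue the combinatorial sum separately. Your alternative ending, localizing to $(\C^2,0)$ and invoking the Favre--Jonsson pairing, is also valid but simply defers the same computation to that reference.
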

\begin{proof} After reversing the roles of $E_1$ and $E_2$ and/or the roles of $\nu_1$ and $\nu_2$, we may assume we are in one of the following two cases: \begin{enumerate}[itemsep=-1ex]
\item $r_1\geq s_1$ and $s_2 > r_2$, or
\item $r_1\geq s_1$ and $r_2\geq s_2$.
\end{enumerate}
Let $\pi_1$ be the good resolution of $(X,x_0)$ obtained by blowing up $p\in X_\pi$, and let $F$ be the new exceptional prime thus obtained.
Abusing notation, let $E_1$ and $E_2$ also denote the strict transforms of $E_1$ and $E_2$ in $X_{\pi_1}$. Let $p_i$ be the point of intersection of $F$ with $E_i$ for $i = 1,2$. 

Assume first we are in case 1.
Then one easily checks that $\nu_1$ is the monomial valuation at $p_1$ giving weight $r_1-s_1$ to $E_1$ and $s_1$ to $F$, whereas $\nu_2$ is the monomial valuation at $p_2$ giving weight $s_2-r_2$ to $E_2$ and $r_2$ to $F$. In particular, $\nu_1$ and $\nu_2$ have different centers in $\pi_1$, and by \refprop{intersection_differentcenters} we get
\[
\lim_{\pi'\geq \pi} Z_{\pi'}(\nu_1)\cdot Z_{\pi'}(\nu_2) = Z_{\pi_1}(\nu_1)\cdot Z_{\pi_1}(\nu_2) = [(r_1-s_1)\check{E}_1 + s_1\check{F}]\cdot [(s_2-r_2)\check{E}_2 + r_2\check{F}].\] Using the (easy) relation $\check{F} = \check{E}_1 + \check{E}_2 - F$, the right hand side of this equation becomes \begin{align*}
[r_1\check{E}_1 + s_1\check{E}_2 - s_1F]\cdot[r_2\check{E}_1 + s_2\check{E}_2 - r_2 F] & = [r_1\check{E}_1 + s_1\check{E}_2]\cdot[r_2\check{E}_1 + s_2\check{E}_2] - s_1r_2\\
& = Z_\pi(\nu_1)\cdot Z_\pi(\nu_2) - s_1r_2,
\end{align*}
the last equality by the projection formula. This completes the proof in case 1. 

Assume next we are in case 2. Now $\nu_1$ and $\nu_2$ are both monomial valuations at $p_1$ giving weights $r_{11} := r_1 - s_1$ and $r_{21} := r_2 - s_2$ to $E_1$, respectively, and weights $s_{11} := s_1$ and $s_{21} := s_2$ to $F$, respectively.
A similar computation to the one made in case 1 shows that
\[
Z_{\pi_1}(\nu_1)\cdot Z_{\pi_1}(\nu_2) = Z_{\pi}(\nu_1)\cdot Z_{\pi}(\nu_2) - s_1s_2,
\]
from which it follows that
\[
Z_{\pi_1}(\nu_1)\cdot Z_{\pi_1}(\nu_2) - \min\{r_{11}s_{21}, r_{21}s_{11}\} = Z_{\pi}(\nu_1)\cdot Z_{\pi}(\nu_2) - \min\{r_1s_2, r_2s_1\}.
\]
It therefore suffices to prove the proposition for $\pi_1$ instead of $\pi$. 

By iterating the above argument, one obtains a (possibly finite) sequence of good resolutions $\pi_n\geq \pi$ such that for each $n$ the valuations $\nu_1$ and $\nu_2$ are monomial valuations at some point $p_n\in X_{\pi_n}$ with weights $r_{1n}, s_{1n}$ and $r_{2n}, s_{2n}$, respectively, and such that $\pi_{n+1}$ is obtained from $\pi_n$ by blowing up $p_n$. Moreover, we have inductively that
\begin{equation}\label{inductive_step}
Z_{\pi_n}(\nu_1)\cdot Z_{\pi_n}(\nu_2) - \min\{r_{1n}s_{2n}, r_{2n}s_{1n}\} = Z_{\pi}(\nu_1)\cdot Z_\pi(\nu_2) - \min\{r_1s_2, r_2s_1\}
\end{equation}
for each $n$.
The sequence $\pi_n$ terminates at some $\pi_N$ if $\nu_1$ and $\nu_2$ have different centers after blowing up $p_N$; in this case the proposition is proved by case 1 above.
If the sequence $\pi_n$ does not terminate, then $\nu_1$ and $\nu_2$ have the same center in every good resolution of $(X,x_0)$, and hence are proportional. We may therefore assume with no loss in generality that $\nu_1 = \nu_2$, and thus that $r_{1n} = r_{2n}$ and $s_{1n}=s_{2n}$ for every $n$. The identity \eqref{inductive_step} then becomes
\[
Z_{\pi_n}(\nu_1)\cdot Z_{\pi_n}(\nu_2) - r_{1n}s_{1n} = Z_{\pi}(\nu_1)\cdot Z_{\pi}(\nu_2) - r_1s_1
\]
for all $n\geq 1$.
Since $r_{1n}s_{1n}\to 0$ as $n\to \infty$, we obtain in the limit that
\[
\lim_{\pi'\geq \pi} Z_{\pi'}(\nu_1)\cdot Z_{\pi'}(\nu_2) = \lim_{n\to \infty} Z_{\pi_n}(\nu_1)\cdot Z_{\pi_n}(\nu_2)  = Z_{\pi}(\nu_1)\cdot Z_{\pi}(\nu_2) - r_1s_1,\] completing the proof.
\end{proof}

\begin{cor}\label{cor:skewness_formula}
With the same setup as \refprop{skewness_formula}, we have the identities
\begin{equation}\label{eqn:skewness_formula}
\alpha(\nu_i) = -Z_\pi(\nu_i)\cdot Z_\pi(\nu_i) + r_is_i
\end{equation}
and
\begin{equation}\label{eqn:relative_skewness_formula}
\beta(\nu_1\mid\nu_2) = \frac{-Z_\pi(\nu_1)\cdot Z_\pi(\nu_1) + r_1s_1}{-Z_\pi(\nu_1)\cdot Z_\pi(\nu_2) + \min\{r_1s_2, s_1r_2\}}.
\end{equation}
\end{cor}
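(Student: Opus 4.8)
The plan is to read both identities directly off \refprop{skewness_formula}, combined with the description of $\alpha$ and $\beta$ as limits provided by \refprop{monotonic}. Recall that by \eqref{skewnesses} one has $\alpha_{\pi'}(\nu) = -Z_{\pi'}(\nu)\cdot Z_{\pi'}(\nu)$ and $\beta_{\pi'}(\nu\mid\mu) = \frac{Z_{\pi'}(\nu)\cdot Z_{\pi'}(\nu)}{Z_{\pi'}(\nu)\cdot Z_{\pi'}(\mu)}$, and by \refprop{monotonic} these increase to $\alpha(\nu)$ and $\beta(\nu\mid\mu)$ as $\pi'$ grows; since $\pi$ is already a good resolution in which $\nu_1$ and $\nu_2$ have a common center $p$, the limit over good resolutions $\pi'\geq\pi$ computes these same quantities.

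For \eqref{eqn:skewness_formula} I would specialize \refprop{skewness_formula} to the case $\nu_1=\nu_2=\nu_i$, so that $r_1=r_2=r_i$, $s_1=s_2=s_i$ and $\min\{r_1s_2,r_2s_1\}=r_is_i$. The proposition then gives $\lim_{\pi'\geq\pi}Z_{\pi'}(\nu_i)\cdot Z_{\pi'}(\nu_i)=Z_\pi(\nu_i)\cdot Z_\pi(\nu_i)-r_is_i$, and negating yields $\alpha(\nu_i)=-Z_\pi(\nu_i)\cdot Z_\pi(\nu_i)+r_is_i$. In particular this records, in passing, that monomial (hence quasimonomial) valuations have finite skewness.

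For \eqref{eqn:relative_skewness_formula} I would pass to the limit over $\pi'\geq\pi$ in $\beta_{\pi'}(\nu_1\mid\nu_2)=\frac{Z_{\pi'}(\nu_1)\cdot Z_{\pi'}(\nu_1)}{Z_{\pi'}(\nu_1)\cdot Z_{\pi'}(\nu_2)}$: the numerator tends to $Z_\pi(\nu_1)\cdot Z_\pi(\nu_1)-r_1s_1=-\alpha(\nu_1)$ by the previous step, and \refprop{skewness_formula} shows the denominator tends to $Z_\pi(\nu_1)\cdot Z_\pi(\nu_2)-\min\{r_1s_2,r_2s_1\}$. Taking the quotient and multiplying numerator and denominator by $-1$ yields the stated formula, using $\min\{r_1s_2,r_2s_1\}=\min\{r_1s_2,s_1r_2\}$. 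The only point that is not purely formal is the interchange of the limit with the quotient: when $\nu_1$ and $\nu_2$ are not proportional, the proof of \refprop{monotonic} shows that $Z_{\pi'}(\nu_1)\cdot Z_{\pi'}(\nu_2)$ is eventually constant and strictly negative, so that the limit of the ratio is the ratio of the limits; when $\nu_1=\lambda\nu_2$, the ratio $\beta_{\pi'}(\nu_1\mid\nu_2)$ is the constant $1/\lambda$ and one checks directly that the right-hand side of \eqref{eqn:relative_skewness_formula} also equals $1/\lambda$. This small amount of bookkeeping around proportionality (together with the cofinality observation in the first paragraph) is the only place the argument is not completely mechanical, so I would not expect any substantial obstacle.
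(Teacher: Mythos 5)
Your argument is correct and is essentially the (implicit) one the paper intends: specialize \refprop{skewness_formula} to $\nu_1=\nu_2=\nu_i$ and use the limit description of $\alpha$ from \refprop{monotonic} for \eqref{eqn:skewness_formula}, then take the quotient for \eqref{eqn:relative_skewness_formula}, with the only non-formal point being the interchange of limit and quotient, which you handle correctly by invoking the stabilization of $Z_{\pi'}(\nu_1)\cdot Z_{\pi'}(\nu_2)$. One minor slip in the proportional case: with $\nu_1=\lambda\nu_2$ one has $\beta_{\pi'}(\nu_1\mid\nu_2)=\lambda$, not $1/\lambda$ (this is $\beta(\nu\mid\mu)=1/\lambda$ for $\mu=\lambda\nu$, hence for $\nu_2=\lambda\nu_1$); since the right-hand side of \eqref{eqn:relative_skewness_formula} also evaluates to $\lambda$ in this case, the conclusion is unaffected.
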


We denote by $\mc{V}_X^\alpha$ the set of normalized valuations with finite skewness. As a consequence of \refcor{skewness_formula}, $\mc{V}_X^\alpha$ contains the set of quasimonomial valuations $\mc{V}_X^{\qm}$.

We conclude with an important alternate characterization of the 
relative skewness $\beta(\nu\mid \mu)$, as what is sometimes called the 
\emph{relative Izumi constant} of $\nu$ with respect to $\mu$.

%
%

\begin{prop}\label{prop:famous_equality}
Let $\nu, \mu\in \hat{\mc{V}}_X^*$.
Then 
$\beta(\nu\mid \mu)$ can also be computed as
\[
\beta(\nu\mid\mu) = \sup_{\mf{a}} \frac{\nu(\mf{a})}{\mu(\mf{a})},\] the supremum being taken over all $\mf{m}$-primary ideals $\mf{a}$ of $R$.
\end{prop}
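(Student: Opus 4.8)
The plan is to prove the two inequalities $\sup_{\mf{a}}\nu(\mf{a})/\mu(\mf{a})\le\beta(\nu\mid\mu)$ and $\sup_{\mf{a}}\nu(\mf{a})/\mu(\mf{a})\ge\beta(\nu\mid\mu)$, where $\mf{a}$ ranges over $\mf{m}$-primary ideals of $R$. First dispose of the trivial case: if $\mu=\lambda\nu$ for some $\lambda>0$, then $\nu(\mf{a})/\mu(\mf{a})=1/\lambda$ for every $\mf{a}$ and $\beta(\nu\mid\mu)=1/\lambda$ (as observed in the proof of \refprop{monotonic}), so the identity is immediate; assume henceforth that $\nu$ and $\mu$ are not proportional. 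For the upper bound, fix an $\mf{m}$-primary ideal $\mf{a}$ and a log resolution $\pi$ of $\mf{a}$. By \refprop{evaluate-with-intersections} one has $\nu(\mf{a})=-Z_\pi(\nu)\cdot Z_\pi(\mf{a})=-Z(\nu)\cdot Z(\mf{a})$ and likewise $\mu(\mf{a})=-Z(\mu)\cdot Z(\mf{a})$, both strictly positive by \refcor{anti-effective2} since $Z(\mf{a})$ is a nonzero nef Cartier b-divisor. If $\alpha(\nu)=+\infty$ then $\beta(\nu\mid\mu)=+\infty$ by \refcor{skewness_comparison} and there is nothing to prove; otherwise apply \refprop{positivity_bdivisors} (which, as noted there, remains valid when $W_1,W_2$ are arbitrary nef b-divisors) with $W_1=Z(\mu)$ and $W_2=Z(\mf{a})$ to get $(Z(\nu)\cdot Z(\mu))(Z(\nu)\cdot Z(\mf{a}))\le(Z(\nu)\cdot Z(\nu))(Z(\mu)\cdot Z(\mf{a}))$. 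All four intersection numbers are here finite and negative, so dividing yields $\nu(\mf{a})/\mu(\mf{a})\le\alpha(\nu)/(-Z(\nu)\cdot Z(\mu))$, which equals $\beta(\nu\mid\mu)$ by \refcor{skewness_comparison}.

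For the lower bound it suffices, by \refprop{monotonic}, to show that $\sup_{\mf{a}}\nu(\mf{a})/\mu(\mf{a})\ge\beta_\pi(\nu\mid\mu)$ for every good resolution $\pi$ of $(X,x_0)$. Fix such a $\pi$. The divisor $Z_\pi(\nu)\in\Ediv(\pi)_\R$ is nonzero and relatively nef, so it lies in $\mathrm{Nef}(\pi)=\overline{\mathrm{Amp}(\pi)}$; since rational points are dense in the nonempty open polyhedral cone $\mathrm{Amp}(\pi)$, we may choose rational relatively ample divisors $W$ arbitrarily close to $Z_\pi(\nu)$. For each such $W$, some positive integer multiple $kW$ is an integral relatively very ample divisor, hence relatively base point free, so by the correspondence recalled in \S\ref{ssec:logresolutions} we have $kW=Z_\pi(\mf{a})$ for the $\mf{m}$-primary ideal $\mf{a}:=\pi_*\mc{O}_{X_\pi}(kW)$, of which $\pi$ is a log resolution. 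Cancelling $k$, $\nu(\mf{a})/\mu(\mf{a})=(-Z_\pi(\nu)\cdot Z_\pi(\mf{a}))/(-Z_\pi(\mu)\cdot Z_\pi(\mf{a}))=(-Z_\pi(\nu)\cdot W)/(-Z_\pi(\mu)\cdot W)$. Since $-Z_\pi(\mu)\cdot Z_\pi(\nu)>0$ by \refcor{anti-effective2}, letting $W\to Z_\pi(\nu)$ and using continuity of the intersection form shows that this ratio tends to $(-Z_\pi(\nu)\cdot Z_\pi(\nu))/(-Z_\pi(\mu)\cdot Z_\pi(\nu))=\beta_\pi(\nu\mid\mu)$. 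Therefore $\sup_{\mf{a}}\nu(\mf{a})/\mu(\mf{a})\ge\beta_\pi(\nu\mid\mu)$, and taking the supremum over all $\pi$ gives $\sup_{\mf{a}}\nu(\mf{a})/\mu(\mf{a})\ge\beta(\nu\mid\mu)$, completing the proof. This argument also covers the case $\beta(\nu\mid\mu)=+\infty$, each $\beta_\pi(\nu\mid\mu)$ being finite.

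The upper bound is an essentially formal consequence of the positivity inequality \refprop{positivity_bdivisors}; the real work is in the lower bound, and more precisely in the approximation step. The subtlety there is that $Z_\pi(\nu)$ is only relatively nef, not a priori semiample, so it need not itself be of the form $Z_\pi(\mf{a})$; instead one approximates it by relatively very ample classes, which is possible precisely because it lies in the closure of the relatively ample cone and rational points are dense therein. Running through all good resolutions $\pi$ is then what lets these approximations exhaust the full value $\beta(\nu\mid\mu)=\sup_\pi\beta_\pi(\nu\mid\mu)$.
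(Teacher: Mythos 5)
Your proof is correct and its overall architecture is the same as the paper's, but the upper bound is packaged differently. For the upper bound the paper restricts to the cross-section $H\cap\mathrm{Nef}(\pi)$, writes $\nu(\mf{a})/\mu(\mf{a})$ as a ratio of intersections against $D=Z_\pi(\mf{a})$, and invokes ``Cauchy--Schwarz'' to claim the map $D\mapsto (Z_\pi(\nu)\cdot D)/(Z_\pi(\mu)\cdot D)$ on that cross-section is maximized when $D$ is proportional to $Z_\pi(\nu)$. You instead appeal directly to \refprop{positivity_bdivisors} together with the remark following it that $W_1,W_2$ may be arbitrary nef b-divisors. These are the same inequality in substance: the paper's ``Cauchy--Schwarz'' step is really $(Z\cdot W_1)(Z\cdot W_2)\le(Z\cdot Z)(W_1\cdot W_2)$ applied with $Z=Z_\pi(\nu)$, $W_1=Z_\pi(\mu)$, $W_2=D$, not the literal Cauchy--Schwarz inequality. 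Your version has the advantage of naming precisely which positivity result does the work, at the small cost of having to separate off the case $\alpha(\nu)=+\infty$ (because you divide by $Z(\nu)\cdot Z(\nu)$ at the b-divisor level, whereas the paper bounds by the always-finite $\beta_\pi$ and then takes $\sup_\pi$); your use of \refcor{skewness_comparison} to dispose of that case is correct. Your lower bound is the paper's argument verbatim, and the preliminary reduction to $\nu,\mu$ non-proportional is a harmless tidying-up that the paper leaves implicit.
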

\begin{proof} Suppose that $\mf{a}$ is an $\mf{m}$-primary ideal, and let $\pi$ be a log resolution of $\mf{a}$.
By \refprop{evaluate-with-intersections},
\[
\frac{\nu(\mf{a})}{\mu(\mf{a})} = \frac{Z_{\pi}(\nu)\cdot Z_{\pi}(\mf{a})}{Z_\pi(\mu)\cdot Z_{\pi}(\mf{a})}.
\]
Consider the hyperplane $H\subset \Ediv(\pi)_\R$ consisting of all divisors $D$ such that $Z_\pi(\mu)\cdot D = Z_\pi(\mu)\cdot Z_\pi(\mf{a})$. It follows from \refcor{anti-effective2}, that $H\cap \mathrm{Nef}(\pi)$ is a cross-section of the nef cone in $\Ediv(\pi)_\R$. The map \[
D\in H\cap \mathrm{Nef}(\pi) \mapsto \frac{Z_{\pi}(\nu)\cdot D}{Z_{\pi}(\mu)\cdot D} = \frac{ Z_{\pi}(\nu) \cdot D}{Z_{\pi}(\mu)\cdot Z_{\pi}(\mf{a})}\] is maximized by Cauchy-Schwarz precisely when $D$ is proportional to  $Z_{\pi}(\nu)$, and in this case \[
\frac{Z_\pi(\nu)\cdot D}{Z_{\pi}(\mu)\cdot D} = \frac{Z_\pi(\nu)\cdot Z_{\pi}(\nu)}{Z_\pi(\mu)\cdot Z_\pi(\nu)} = \beta_\pi(\nu\mid \mu).\] Thus proves that $\nu(\mf{a})/\mu(\mf{a})\leq \beta_\pi(\nu\mid \mu)$, and we can conclude that $\sup_\mf{a} \nu(\mf{a})/\mu(\mf{a}) \leq \beta(\nu\mid \mu)$. To show the opposite inequality, fix a good resolution $\pi$ of $(X,x_0)$. Since relatively ample $\Q$-divisors are dense in the nef cone of $\pi$, there exist relatively ample $\Q$-divisors $D_n$ with $D_n\to Z_{\pi}(\nu)$ as $n\to \infty$. For each $n$, let $k_n\in \N$ be a large enough integer that $k_nD_n$ is an integral relatively very ample divisor, and hence $k_nD_n = Z_\pi(\mf{a}_n)$ for some $\mf{m}$-primary ideal $\mf{a}_n$. Then \[
\frac{\nu(\mf{a}_n)}{\mu(\mf{a}_n)} = \frac{Z_\pi(\nu)\cdot Z_\pi(\mf{a}_n)}{Z_\pi(\mu)\cdot Z_\pi(\mf{a}_n)} = \frac{Z_\pi(\nu)\cdot D_n}{Z_\pi(\mu)\cdot D_n}\to \frac{Z_\pi(\nu)\cdot Z_\pi(\nu)}{Z_\pi(\mu)\cdot Z_\pi(\nu)} = \beta_\pi(\nu\mid \mu). \] Since $\beta(\nu\mid \mu) = \sup_\pi \beta_\pi(\nu\mid \mu)$, it follows immediately that $\beta(\nu\mid\mu) \leq \sup_\mf{a} \nu(\mf{a})/\mu(\mf{a})$. 
\end{proof}

\begin{rmk}
A similar argument can be deployed to prove an analogous statement for the skewness $\alpha$. In fact, the intersection $-Z_\pi(\mf{m}) \cdot Z_\pi(\mf{a})$ does not depend on the chosen log resolution $\pi$ of $\mf{m}$. One can define $m(\mf{a})$ as such value.
Then one can prove 
$$
\frac{\alpha(\nu)}{\nu(\mf{m})}=\sup_\mf{a}\frac{\nu(\mf{a})}{m(\mf{a})}.
$$ 
It suffices to replace $Z_\pi(\mu)$ by $Z_\pi(\mf{m})$ in the proof of \refprop{famous_equality}, and recall that $\nu(\mf{m})=-Z_\pi(\mf{m}) \cdot Z_\pi(\nu)$ for any log resolution $\pi$ of $\mf{m}$.
One can also replace the supremum over $\mf{m}$-primary ideals $\mf{a}$ with the supremum over elements $\phi$ in $\mf{m}$, and accordingly $\nu(\mf{a})$ with $\nu(\phi)$.
In fact, by \refprop{basis-of-ideals}, for any good desingularization $\pi$ one can find a $\mf{m}$-primary ideal $\mf{a}_\pi$ so that $\pi^*\phi = \mc{O}_{X_\pi}(Z_\pi(\mf{a}_\pi))$. Moreover, if $\pi'$ dominates $\pi$, then $\eta_{\pi\pi'}^*Z_\pi(\mf{a}_\pi) - Z_{\pi'}(\mf{a}_{\pi'})$ is effective (see also \refprop{order_effective}).
Alternatively, one can notice that $\nu(\phi)$ can be computed as the limit (the supremum) over good resolutions $\pi$ of $-Z_\pi(\nu)\cdot Z_\pi(\mf{a}_\pi)$, where $\mf{a}_\pi = (\phi) + \mf{m}^n$ for $n$ big enough (depending on $\pi$). 

Finally, if $\nu$ is a curve semivaluation and $\mu$ is another semivaluation, then $\beta(\nu\mid \mu) = +\infty$; indeed, if $\mf{p}\subset R$ is the kernel of $\nu$, one need only note that for the primary ideals $\mf{a}_n = \mf{p} + \mf{m}^n$ one has $\nu(\mf{a}_n)/\mu(\mf{a}_n)\to +\infty$ as $n\to \infty$. Since $\beta(\nu\mid \mu)$ is finite if and only if $\alpha(\nu)$ is finite, we have proved that curve semivaluations always have infinite skewness. On the other hand, infinitely singular valuations can have either finite or infinite skewness. In particular, having finite skewness is not equivalent to being quasimonomial.
\end{rmk}

\subsection{Weak topology and tangent vectors}\label{ssec:tangent_vectors}

We now focus our attention to the space of normalized semivaluations $\mc{V}_X$. The utility of this space lies in its topological properties (it is compact Hausdorff) as well as the fact that its structure reflects the combinatorics of good resolutions of $(X,x_0)$.
We start by describing in more details the weak topology on $\mc{V}_X$ introduced above.
\begin{defi}
Let $(X,x_0)$ be a normal surface singularity, $\pi \colon X_\pi \to (X,x_0)$ be a proper birational map, and $p \in \pi^{-1}(x_0)$ a point in the exceptional divisor of $\pi$. We set
$$
U_\pi(p)=\{\nu \in \mc{V}_X\ |\ \cen_\pi(\nu)=p\}.
$$
\end{defi}
Notice that the point $p$ does not need to be a smooth point of $X_\pi$. 
The family of sets $U_\pi(p)$ where $\pi$ varies among good resolutions and $p$ among points in $\pi^{-1}(x_0)$ form a prebasis for the weak topology on $\mc{V}_X$.
A (non-empty) connected weak open subset of $\mc{V}_X$ is the connected component of the complement of a finite set in $\mc{V}_X$.
We will need a notation for such connected weak open sets.
In what follows, a \emph{connected subgraph} of $\mc{V}_X$ is a weakly closed connected subset of $\mc{V}_X$ whose set of endpoints $\partial S$ is finite.
\begin{change}
An \emph{endpoint} of $S$ is a point $s \in S$ that does not disconnect any connected subset $U$ of $S$, i.e., for any connected subset $U \subseteq S$, $U \setminus \{s\}$ remains connected.
\end{change}
\begin{defi}
Let $(X,x_0)$ be a normal surface singularity, and let $S \subset \mc{V}_X$ be a connected subgraph.
We denote by $U(S)$ the connected weak open subset of $\mc{V}_X$ given by the connected component of $\mc{V}_X \setminus \partial S$ containing $S \setminus \partial S$.
\end{defi}
We will use extensively this notation when $I$ is an interval with divisorial endpoints. In this case, $U(I)=U_\pi(p)$, where $\pi\colon X_\pi \to (X,x_0)$ is a modification obtained from any good resolution $\pi' \colon X_\pi' \to (X,x_0)$ that realizes the endpoints of $I$, and contracting the chain of exceptional primes $E$ of $X_{\pi'}$ so that $\nu_E$ belongs to the interior part of $I$. In this case $p$ is the image through $\eta_{\pi\pi'}$ of the contracted chain of exceptional primes.

This description of the weak topology is related to the construction of tangent vectors attached to (divisorial) valuations, see \cite[Section 1.6]{favre-jonsson:eigenval} for the analogous construction in the smooth case.
Let $\nu \in \mc{V}_X$ be any normalized semivaluation. Let $U$ be any connected weak open neighborhood of $\nu$, and let $T_\nu(U)$ be the set of connected components of $U \setminus \{\nu\}$.
Given another connected open neighborhood $V \subset U$, there is a natural map $j_{V,U}\colon T_\nu(V) \to T_\nu(U)$ which sends a connected component $W$ of $V\setminus\{\nu\}$ to the unique connected component $W'$ of $U \setminus \{\nu\}$ which contains $W$.
The map $j_{V,U}$ is always surjective but not necessarily injective, 
the obstruction 
given by the presence of circles in $\mc{V}_X$.
The maps $j_{V,U}$ while $U,V$ vary among nested connected weakly open neighborhoods of $\nu$ form an inverse system, which allows the following definition.
\begin{defi}
Let $(X,x_0)$ be a normal surface singularity, and let $\mc{V}_X$ be its space of normalized semivaluations.
The \emph{tangent space} $T_\nu \mc{V}$ of $\mc{V}_X$ at $\nu$ is given by the projective limit
$$
T_\nu \mc{V}_X = \lim_{\substack{\longleftarrow\\U}} T_\nu(U).
$$
Its elements are called \emph{tangent vectors}.
\end{defi}

\begin{rmk}
The definition of tangent vector given above works in more general settings. For valuation spaces of normal surface singularities, the tangent space can be constructed also without taking projective limits.
This follows from the fact that the maps $j_{V,U}$ stabilize in the projective limit, meaning that there exists $U_0$ connected weakly open neighborhood of $\nu$ for which $j_{V,U}$ are bijections for every $V \subseteq U \subseteq U_0$.
Such open $U_0$ can be constructed by taking any good resolution $\pi\colon X_\pi \to (X,x_0)$, and by letting $U_0$ be the connected component of $(\mc{V}_X \setminus \mc{S}_\pi^*) \cup \{\nu\}$ containing $\nu$.
Here $\mc{S}_\pi^*$ denotes the set of divisorial valuations $\nu_E$ associated to exceptional primes $E$ of $X_\pi$. 
\end{rmk}

One can distinguish the type of valuation $\nu$ in $\mc{V}_X$ (non quasimonomial, irrational and divisorial respectively) according to the number of elements in $T_\nu \mc{V}_X$ (one, two, or infinitely many respectively).
In particular, for a divisorial valuation $\nu=\nu_E$, there is a natural $1$-to-$1$ correspondence between points in $E$ and tangent vectors in $T_\nu\mc{V}_X$. Explicitly, the point $p$ corresponds to the tangent vector determined by the set of valuations $U_\pi(p)$ whose center in $X_\pi$ is $p$.

\subsection{Dual graphs and the structure of $\mc{V}_X$}\label{ssec:dualgraphs}

We investigate here the relations between the dual graphs associated to good resolutions and the structure of valuation spaces.
Let $\pi\colon X_\pi\to (X,x_0)$ be any log resolution of $\mf{m}$. From \eqref{generic_multiplicities}, we know $Z_\pi(\mf{m}) = -\sum b_{E}E$, where the sum is taken over all exceptional primes $E$ of $\pi$ and where $b_{E} = \divi_{E}(\mf{m})\geq 1$ is the minimal order of vanishing of any function in $\mf{m}$ along $E$. This positive integer is called the \emph{generic multiplicity} of $E$.
Recall that the dual graph $\varGamma_\pi$ of $\pi$ is defined abstractly as the graph whose vertices are the exceptional primes of $\pi$ and with an edge connecting two distinct vertices $E$ and $F$ if and only if $E$ and $F$ intersect in $X_\pi$.
For us, it will be more convenient to realize the dual graph explicitly as a subset $\Gamma_\pi\subset \Ediv(\pi)_\R$ in the following manner.
In this section, we will assume the good resolutions $\pi \colon X_\pi \to (X,x_0)$ to satisfy the stronger property of having any two exceptional primes intersecting in at most one point (see \refrmk{mingoodres_notunique}).

\begin{defi}
The \emph{dual graph} $\Gamma_\pi$ of $\pi$ is the subset of $\Ediv(\pi)_\R$ consisting of the divisors $b_{E}^{-1}\check{E}$ for each of the exceptional primes $E$, as well as the straight line segment connecting $b_{E}^{-1}\check{E}$ and $b_{F}^{-1}\check{F}$ if $E$ and $F$ intersect in $X_\pi$. 
\end{defi}

\begin{prop}\label{prop:evemb}
The map $\ev_\pi\colon \mc{V}_X\to \Ediv(\pi)_\R$ which takes $\nu\in \mc{V}_X$ to its associated divisor $Z_\pi(\nu)$ is continuous and has $\Gamma_\pi$ as its image. Moreover, there is a continuous embedding $\emb_\pi\colon \Gamma_\pi\to \mc{V}_X$ such that $\ev_\pi\circ \emb_\pi$ is the identity on $\Gamma_\pi$.
\end{prop}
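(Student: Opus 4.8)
The plan is to establish the three assertions — continuity of $\ev_\pi$, the identification $\ev_\pi(\mc{V}_X)=\Gamma_\pi$, and the existence of a continuous section $\emb_\pi$ — separately, leaning throughout on the $\R$-divisors $Z_\pi(\nu)$ and their intersection-theoretic properties from \S\ref{ssec:dualdivisors}. For continuity: by \refprop{basis-of-ideals} pick $\mf{m}$-primary ideals $\mf{a}_1,\dots,\mf{a}_n$ resolved by $\pi$ such that $Z_\pi(\mf{a}_1),\dots,Z_\pi(\mf{a}_n)$ form a basis of $\Ediv(\pi)_\R$. By \refprop{evaluate-with-intersections} one has $Z_\pi(\nu)\cdot Z_\pi(\mf{a}_i)=-\nu(\mf{a}_i)$ for every $\nu$, and $\nu\mapsto\nu(\mf{a}_i)$ is continuous for the weak topology by definition; since the intersection form is non-degenerate, the coordinates of $Z_\pi(\nu)$ in any fixed basis depend linearly, hence continuously, on the tuple $\big(\nu(\mf{a}_i)\big)_i$, so $\ev_\pi$ is continuous.

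For the image, take an arbitrary $\nu\in\mc{V}_X$. Its center $\cen_\pi(\nu)$ lies in the simple normal crossings divisor $\pi^{-1}(x_0)$, so by the remark following \refprop{defZnu} either $Z_\pi(\nu)=\lambda\check{E}$ for a single exceptional prime $E$, or $Z_\pi(\nu)=r\check{E}+s\check{F}$ with $r,s>0$ for two exceptional primes $E,F$ meeting at the center. Imposing normalization $1=\nu(\mf{m})=-Z_\pi(\nu)\cdot Z_\pi(\mf{m})$, together with $Z_\pi(\mf{m})=-\sum_E b_E E$ and $\check{E}\cdot E'=\delta_{EE'}$, forces $\lambda=b_E^{-1}$ in the first case and $rb_E+sb_F=1$ in the second. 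Hence $Z_\pi(\nu)$ is either the vertex $b_E^{-1}\check{E}$ of $\Gamma_\pi$, or the point $(1-t)b_E^{-1}\check{E}+tb_F^{-1}\check{F}$ with $t:=sb_F\in(0,1)$ on the segment of $\Gamma_\pi$ joining $b_E^{-1}\check{E}$ and $b_F^{-1}\check{F}$ (which exists since $E\cap F\neq\emptyset$); so $\ev_\pi(\mc{V}_X)\subseteq\Gamma_\pi$. For the reverse inclusion, $Z_\pi(\nu_E)=b_E^{-1}\check{E}$ because $Z_\pi(\divi_E)=\check{E}$, and for a point on the segment of $E\cap F=\{p\}$ the normalized monomial valuation $\nu_{r(t),s(t)}$ at $p$ coming from the monomial parameterization of \S\ref{ssec:classification} has center $p$ in $X_\pi$, and a direct computation on a local equation $z_1^{d_E}z_2^{d_F}$ of an exceptional divisor near $p$ gives $Z_\pi(\nu_{r,s})=r\check{E}+s\check{F}$, which realizes the given point. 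Thus $\Gamma_\pi\subseteq\ev_\pi(\mc{V}_X)$.

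That same computation produces the section: set $\emb_\pi(b_E^{-1}\check{E}):=\nu_E$ at the vertices and $\emb_\pi\big((1-t)b_E^{-1}\check{E}+tb_F^{-1}\check{F}\big):=\nu_{r(t),s(t)}$ on the segment attached to $E\cap F$. This is well defined because the straight-line realization $\Gamma_\pi\subset\Ediv(\pi)_\R$ is a faithful copy of the abstract dual graph — two distinct segments, or a segment and a vertex, cannot share an interior point, since that would produce a nontrivial linear relation among distinct vectors of the dual basis $\{\check{E}_i\}$ — and because the monomial parameterization satisfies $\nu_{r(0),s(0)}=\nu_E$, $\nu_{r(1),s(1)}=\nu_F$, matching the vertex values. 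By construction $\ev_\pi\circ\emb_\pi=\mathrm{id}_{\Gamma_\pi}$, so $\emb_\pi$ is injective. Since $\Gamma_\pi$ is a finite union of closed segments glued at vertices, continuity of $\emb_\pi$ follows from the gluing lemma once one knows each monomial parameterization $t\mapsto\nu_{r(t),s(t)}$ is weakly continuous — the only genuinely analytic point, which I would handle by writing $\nu_{r,s}(\phi)=\min\{ir+js:a_{ij}\neq0\}$ for $\phi\circ\pi=\sum a_{ij}z_1^iz_2^j$ and noting this is a continuous (piecewise-linear, concave) function of $(r,s)$. Finally, a continuous injection from the compact space $\Gamma_\pi$ into the Hausdorff space $\mc{V}_X$ is automatically a topological embedding, giving $\emb_\pi$ all the asserted properties. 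The only steps needing care beyond bookkeeping are the weak-continuity of the monomial parameterization and the well-definedness of $\emb_\pi$ across the cells of $\Gamma_\pi$; everything else is immediate from the description of $Z_\pi$.
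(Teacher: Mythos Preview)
Your proof is correct and follows essentially the same approach as the paper's: continuity via \refprop{basis-of-ideals} and \refprop{evaluate-with-intersections}, the image computation via the center dichotomy and the normalization constraint, and $\emb_\pi$ defined by vertices $\mapsto\nu_E$ and edge points $\mapsto$ monomial valuations. The paper dismisses the verification of $\emb_\pi$'s properties as ``trivial to check,'' whereas you spell out the well-definedness across cells and the weak-continuity of the monomial parameterization; your extra care there is warranted but not a different argument.
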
 

The notation $\ev_\pi$ and $\emb_\pi$ is chosen to agree with that of \cite{jonsson:berkovich},  where $\ev_\pi$ is called \emph{evaluation}.

\begin{proof} To see that $\ev_\pi$ is continuous is suffices to check that $Z_\pi(\nu)\cdot Z$ varies continuously with $\nu$ for all divisors $Z$ in a basis $Z_1,\ldots, Z_n\in \Ediv(\pi)_\R$. By \refprop{basis-of-ideals}, we may choose such a basis consisting of divisors of the form $Z_i  = Z_\pi(\mf{a}_i)$ for $\mf{m}$-primary ideals $\mf{a}_i\subset R$. Applying \refprop{evaluate-with-intersections}, we have $Z_\pi(\nu)\cdot Z_i = -\nu(\mf{a}_i)$, which varies continuously in $\nu$ by the definition of the weak topology. Therefore $\ev_\pi$ is continuous.

Let $\nu\in \mc{V}_X$ and let $\xi = \cen_\pi(\nu)$. Assume first that $\xi$ lies within a unique exceptional prime $E$ of $\pi$. In this case we have seen $Z_\pi(\nu) = \lambda \check{E}$ for some $\lambda>0$. However, because $\nu$ is normalized, we must have $1 = \nu(\mf{m}) = -Z_\pi(\nu)\cdot Z_\pi(\mf{m}) = \lambda b_E$, proving that $Z_\pi(\nu) = b_E^{-1}\check{E}\in \Gamma_\pi$.
Assume next that $\xi$ is the intersection point of two exceptional primes $E$ and $F$ of $\pi$. Then $Z_\pi(\nu) = r\check{E} + s\check{F}$ for some $r,s>0$ satisfying the normalization condition $1 = \nu(\mf{m}) =  -Z_\pi(\nu)\cdot Z_\pi(\mf{m}) = rb_E + sb_F$. This says exactly that $Z_\pi(\nu)$ lies on the straight line segment between $b_E^{-1}\check{E}$ and $b_F^{-1}\check{F}$, and thus lies within $\Gamma_\pi$. To complete the proof, we need only construct the embedding $\emb_\pi$. This is done as follows. First, the vertex points $b_E^{-1}\check{E}$ in $\Gamma_\pi$ are mapped by $\emb_\pi$ to the normalized divisorial valuation $b_E^{-1}\divi_E$. Then, the edge points $r\check{E} + s\check{F}$ are mapped by $\emb_\pi$ to the monomial valuation $\nu_{r,s}$ at the point $E\cap F$ that was defined in \S\ref{ssec:classification}. It is trivial to check that $\emb_\pi$ has the desired properties.
\end{proof}

Observe that if $\pi'$ is a good resolution dominating $\pi$, then $\ev_{\pi} = (\eta_{\pi\pi'})_*\circ \ev_{\pi'}$ by \refcor{eval-compatibility}. In particular, $(\eta_{\pi\pi'})_*$ maps $\Gamma_{\pi'}$ onto $\Gamma_\pi$. In this way, the dual graphs $\Gamma_\pi$ and the maps $(\eta_{\pi\pi'})_*$ form an inverse system, allowing us to speak of the inverse limit $\varprojlim \Gamma_\pi$.
Notice that by construction $\varprojlim \Gamma_\pi$ naturally embeds in the set of (exceptional) $\nR$-b-divisors $\Ediv(X)$.
Moreover, by the universal property of the inverse limit, the  maps $\ev_\pi$ induce a continuous map $\ev\colon \mc{V}_X\to \varprojlim \Gamma_\pi$. The structure of $\mc{V}_X$ is then given by the following theorem. 

\begin{thm}\label{thm:structure} The map $\ev\colon \mc{V}_X\to \varprojlim \Gamma_\pi$ is a homeomorphism.
\end{thm}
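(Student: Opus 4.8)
The plan is to construct a continuous inverse to $\ev$ and to verify bijectivity by combining the per-model structural results already established. First I would check that $\ev$ is injective. If $\nu\neq\mu$ in $\mc{V}_X$, then since the weak topology is generated by the functions $\nu\mapsto\nu(\mf{a})$ for $\mf{m}$-primary ideals $\mf{a}$, there is some such $\mf{a}$ with $\nu(\mf{a})\neq\mu(\mf{a})$; taking a log resolution $\pi$ of $\mf{a}$ and using \refprop{evaluate-with-intersections} we get $Z_\pi(\nu)\cdot Z_\pi(\mf{a})\neq Z_\pi(\mu)\cdot Z_\pi(\mf{a})$, so $Z_\pi(\nu)\neq Z_\pi(\mu)$ and hence $\ev(\nu)\neq\ev(\mu)$. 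For surjectivity onto $\varprojlim\Gamma_\pi$, I would argue as follows: given a compatible family $(z_\pi)_\pi$ with $z_\pi\in\Gamma_\pi$ and $(\eta_{\pi\pi'})_* z_{\pi'}=z_\pi$, each $\ev_\pi$ is surjective onto $\Gamma_\pi$ with a section $\emb_\pi$ by \refprop{evemb}, and so the fibers $\ev_\pi^{-1}(z_\pi)\subset\mc{V}_X$ are nonempty; moreover they are weakly closed (by continuity of $\ev_\pi$) and, as $\pi$ ranges over good resolutions, they form a decreasing filtered family (compatibility of the $z_\pi$ forces $\ev_{\pi'}^{-1}(z_{\pi'})\subseteq\ev_\pi^{-1}(z_\pi)$ whenever $\pi'\geq\pi$, since $\ev_\pi=(\eta_{\pi\pi'})_*\circ\ev_{\pi'}$). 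By compactness of $\mc{V}_X$ in the weak topology, the intersection $\bigcap_\pi\ev_\pi^{-1}(z_\pi)$ is nonempty, and any $\nu$ in it satisfies $\ev(\nu)=(z_\pi)_\pi$.

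Next I would verify that $\ev$ is continuous — but this is already contained in \refprop{evemb}, since each $\ev_\pi$ is continuous and $\ev$ is the induced map to the inverse limit, which carries the initial topology with respect to the projections $\varprojlim\Gamma_\pi\to\Gamma_\pi$; continuity to an inverse limit is equivalent to continuity of each composite, which is exactly the content of \refprop{evemb}. Finally, to conclude that $\ev$ is a homeomorphism, I would invoke the standard fact that a continuous bijection from a compact space to a Hausdorff space is a homeomorphism: $\mc{V}_X$ is compact Hausdorff (recalled in \S\ref{sec:valuation_spaces}), and $\varprojlim\Gamma_\pi$ is Hausdorff as an inverse limit of Hausdorff spaces (each $\Gamma_\pi\subset\Ediv(\pi)_\R$ is a subspace of a finite-dimensional real vector space). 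Hence $\ev$ is a closed map, and being a continuous closed bijection it is a homeomorphism.

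The main obstacle I expect is surjectivity, specifically making the compactness argument airtight: one must be sure that the sets $\ev_\pi^{-1}(z_\pi)$ really do have the finite intersection property, which rests on the filteredness of the system of good resolutions (any two are dominated by a third) together with the compatibility $\ev_\pi=(\eta_{\pi\pi'})_*\circ\ev_{\pi'}$ from \refcor{eval-compatibility}. A secondary subtlety is confirming that the image of $\ev$ lands in $\varprojlim\Gamma_\pi$ rather than merely in $\varprojlim\Ediv(\pi)_\R$ — this is precisely the assertion in \refprop{evemb} that $\ev_\pi(\mc{V}_X)=\Gamma_\pi$, so the point is just to cite it. Everything else is formal inverse-limit and compact–Hausdorff topology, so no serious computation is needed.
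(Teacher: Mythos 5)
Your argument is correct and is essentially the standard one: the paper itself does not give a proof of \refthm{structure} but defers to Jonsson's Theorem~7.9 (whose argument for the smooth case is precisely the compactness plus inverse-limit reasoning you spell out, and which, as the paper notes, carries over verbatim to the singular setting). One small phrasing point: the weak topology is defined in the paper via evaluation on all $\phi\in R$ (equivalently all ideals), not only $\mf{m}$-primary ones; the correct justification for your first step is that if $\nu\neq\mu$ then they disagree on some $\phi$, hence on some $\mf{m}$-primary ideal $\mf{a}_n=(\phi)+\mf{m}^n$ for $n$ large, after which \refprop{evaluate-with-intersections} applies as you say. Everything else — injectivity via dual divisors, surjectivity from the finite-intersection property of the filtered family $\ev_\pi^{-1}(z_\pi)$ using \refcor{eval-compatibility}, continuity to the inverse limit, and the compact-to-Hausdorff upgrade to homeomorphism — is exactly the intended proof.
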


The proof of this theorem can be found in \cite[Theorem 7.9]{jonsson:berkovich} for $X = \C^2$, but in fact the argument works equally well in our singular setting. As a consequence of the theorem, the map $r_\pi := \emb_\pi\circ \ev_\pi$ is a retraction of $\mc{V}_X$ onto the \emph{embedded dual graph} $\mc{S}_\pi := \emb_\pi(\Gamma_\pi) \subset \mc{V}_X$.
More difficult to see is that in fact $\mc{V}_X$ \emph{deformation retracts} onto $\mc{S}_\pi$ (see \cite{berkovich:book, thuillier:homotopy}) and thus in particular has the homotopy type of the finite graph $\mc{S}_\pi\cong \Gamma_\pi$. 

Notice that the map $\ev$ coincides with the restriction of the map $\hat{ev}\colon \hat{\mc{V}}_X \to \Ediv(X)$ considered at the end of \S \ref{ssec:dualdivisors} to the set of normalized semivaluations $\mc{V}_X$.

\begin{rmk}
If $\pi_0$ is a minimal good resolution of $(X,x_0)$ dominated by $\pi$, then, using the fact that $\pi$ is obtained from $\pi_0$ by a composition of point blowups, one sees that $\Gamma_\pi$ deformation retracts onto a subset homeomorphic to the dual graph of $\pi_0$, and thus in fact $\mc{V}_X$ has the homotopy type of the dual graph of any minimal good resolution. Note, any two minimal good resolutions of $(X,x_0)$ have homeomorphic dual graphs, an easy consequence of \cite[Theorem 5.12]{laufer:normal2dimsing}.
Moreover, minimal good resolutions are obtained from the unique good resolution in the sense of Laufer by a sequence of blowups of \emph{satellite points} (see \refrmk{mingoodres_notunique}). Hence the image through $\emb_\pi$ of the dual graph $\Gamma_{\pi_0}$ of any minimal good resolution $\pi_0$ does not depend on $\pi$. We denote such image as $\skel{X}$, and refer to it as the \emph{skeleton} of $\mc{V}_X$.
\end{rmk}


For any log resolution $\pi$ of $\mf{m}$, we make $\Gamma_\pi$ into a \emph{metric} graph by specifying the lengths of its edges: if $E$ and $F$ are distinct intersecting exceptional primes of $\pi$, the length of the edge from $b_E^{-1}\check{E}$ to $b_F^{-1}\check{F}$ is set to be $1/(b_Eb_F)$. The metric has the very useful property that if $\pi'$ is a good resolution dominating $\pi$, then the continuous map $\ev_{\pi'}\circ \emb_\pi$ is an isometric embedding $i_{\pi'\pi}\colon \Gamma_\pi\to \Gamma_\pi'$. One can prove this easily when $\pi'$ is obtained from $\pi$ by a point blowup; the general case then follows by induction. Also by induction one sees that the inclusions $i_{\pi'\pi}$ are compatible in  that $\emb_\pi = \emb_{\pi'}\circ i_{\pi'\pi}$. It therefore makes sense to speak of a direct limit $\varinjlim \Gamma_\pi$ and an induced continuous map $\emb\colon \varinjlim \Gamma_\pi\to \mc{V}_X$. Note that by definition a valuation $\nu\in \mc{V}_X$ is quasimonomial if and only if it lies in the 
image $\mc{S}_\pi$ of $\emb_\pi$ for some $\pi$, so the image of $\emb\colon \varinjlim\Gamma_\pi\to \mc{V}_X$ is precisely the set $\mc{V}_X^\qm$ of quasimonomial valuations. It follows from this and \refthm{structure} that quasimonomial valuations are dense in $\mc{V}_X$. In fact, one can show that all four types of semivaluations are dense in $\mc{V}_X$.

\begin{thm} The map $\emb\colon \varinjlim \Gamma_\pi\to \mc{V}_X^\qm$ is a continuous bijection, but is not a homeomorphism.
\end{thm}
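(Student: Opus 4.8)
The plan is to check the three assertions in turn: that $\emb$ is continuous, that it is a bijection onto $\mc V_X^\qm$, and that it fails to be a homeomorphism; only the last requires a genuine construction.

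\emph{Continuity and bijectivity.} The direct limit $\varinjlim\Gamma_\pi$ carries the final topology for the canonical maps $\iota_\pi\colon \Gamma_\pi\to\varinjlim\Gamma_\pi$, so a map out of $\varinjlim\Gamma_\pi$ is continuous as soon as each of its precompositions with the $\iota_\pi$ is; for $\emb$ this precomposition is exactly $\emb_\pi$, continuous by \refprop{evemb}. Surjectivity onto $\mc V_X^\qm$ is immediate from the definition of a quasimonomial valuation as one lying in $\mc S_\pi=\emb_\pi(\Gamma_\pi)$ for some good resolution $\pi$. For injectivity, suppose $\emb(\xi)=\emb(\xi')$; choosing representatives of $\xi$ in $\Gamma_{\pi_1}$ and of $\xi'$ in $\Gamma_{\pi_2}$ and passing via the isometric embeddings $i_{\pi\pi_i}$ to a common good resolution $\pi\geq\pi_1,\pi_2$, one reduces to representatives $x,x'\in\Gamma_\pi$ with $\emb_\pi(x)=\emb_\pi(x')$; since $\emb_\pi$ is injective (\refprop{evemb}), $x=x'$, hence $\xi=\xi'$. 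This uses only that good resolutions form a directed system and that the $\emb_\pi$ are compatible embeddings.

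\emph{Failure of being a homeomorphism.} It suffices to exhibit a subset of $\varinjlim\Gamma_\pi$ that is closed in the direct limit topology but whose image in $\mc V_X^\qm$ is not weakly closed: a continuous bijection carrying a closed set to a non-closed set is not a homeomorphism. Fix a divisorial valuation $\nu=\nu_E$ realized by a good resolution $\pi_0$. Only finitely many points of the smooth curve $E$ lie on another exceptional prime of $\pi_0$ or on the strict transform in $X_{\pi_0}$ of the zero locus of one of a fixed finite set of generators of $\mf m$; choose infinitely many distinct points $p_1,p_2,\dots\in E$ outside this finite set, and let $E_n$ be the exceptional prime obtained by blowing up $p_n$, with $\mu_n:=\nu_{E_n}$. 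A local computation at $p_n$ — near which the total transform on $X_{\pi_0}$ of a given $\phi$ is a multiple of $E$ — shows that $b_{E_n}=b_E$ for all $n$ and that, for each fixed $\phi\in R$, $\divi_{E_n}(\phi)=\divi_E(\phi)$ for all but finitely many $n$; hence $\mu_n(\phi)\to\nu(\phi)$ for every $\phi$, i.e.\ $\mu_n\to\nu$ weakly. Since $\nu\in\mc V_X^\qm$ but $\nu\notin\{\mu_n\}$, the set $\{\mu_n:n\geq1\}$ is not weakly closed. On the other hand, a fixed good resolution $\pi$ realizes only finitely many $\mu_n$: if $\mu_n\in\mc S_\pi$ then either the strict transform of $E_n$ is an exceptional prime of $\pi$ — forcing $X_\pi$ to dominate the blow-up of $p_n$ — or $\mu_n$ lies on an open edge of $\mc S_\pi$ between two adjacent primes $G,H$ of $\pi$, forcing $G\cap H$ to map to $p_n$; in either case only finitely many indices $n$ qualify, as $X_\pi$ is a finite composition of blow-ups. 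Thus $\iota_\pi^{-1}(\{\mu_n\})=\emb_\pi^{-1}(\{\mu_n\})$ is finite, hence closed, for every $\pi$, so $\{\mu_n\}$ is closed in $\varinjlim\Gamma_\pi$ while $\emb(\{\mu_n\})$ is not closed in $\mc V_X^\qm$, and $\emb$ is not a homeomorphism.

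\emph{Main obstacle.} The only step requiring real care is the weak convergence $\mu_n\to\nu$: one must identify the correct finite "bad" locus on $E$ to avoid and then carry out the blow-up computation verifying that passing a generic point of $E$ changes neither the generic multiplicity $b_E$ nor the value $\divi_E(\phi)$ of a prescribed function $\phi$. The remaining arguments are formal manipulations of direct and inverse limits together with the already-established properties of the maps $\emb_\pi$.
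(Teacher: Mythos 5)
Your argument is correct, and the route is genuinely different from the paper's. The paper dispatches the non-homeomorphism claim with a single remark immediately following the theorem statement: $\varinjlim\Gamma_\pi$ carries a metric (the one making each $i_{\pi'\pi}$ an isometric embedding, as introduced in \S\ref{ssec:dualgraphs}), while the weak topology on $\mc V_X^\qm$ is not metrizable, so no homeomorphism can exist. That argument is shorter but relies on the reader knowing (or checking) that the weak topology fails to be first-countable at divisorial points. Your argument instead exhibits a concrete witness: the sequence $\mu_n=\nu_{E_n}$ obtained by blowing up infinitely many generic free points of a fixed $E$. Since $b_{E_n}=b_E$ by the choice of the points and, for each fixed $\phi$, $\divi_{E_n}(\phi)=\divi_E(\phi)$ once $p_n$ avoids the finitely many points of $E$ on the strict transform of $\{\phi=0\}$, one gets $\mu_n\to\nu_E$ weakly. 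Meanwhile $\{\mu_n\}$ is closed upstairs — and this is true for \emph{either} natural topology on $\varinjlim\Gamma_\pi$: in the colimit (final) topology your finiteness argument shows each $\Gamma_\pi$ meets $\{\mu_n\}$ in finitely many points, and in the metric topology the $\mu_n$ are pairwise at distance $2/b_E^2$ apart. The paper's phrasing leaves ambiguous which topology $\varinjlim\Gamma_\pi$ carries (the colimit topology of a countable directed union of compacta need not be metrizable, so "by construction a metric space" implicitly singles out the metric topology), and your construction sidesteps that ambiguity entirely. The continuity and bijectivity portions of your argument match what the paper leaves implicit. So: correct, more explicit, and slightly more robust than what is in the text.
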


Of course, the map $\emb$ cannot be a homeomorphism, since $\varinjlim \Gamma_\pi$ is by construction a metric space, whereas the weak topology is not metrizable. However we may push forward the metric through $\emb$ and obtain a new, strictly stronger topology on the space of quasimonomial valuations.

\begin{defi} Throughout this article, we denote by $d$ the metric on $\mc{V}_X^\qm$ constructed in this manner. Its induced topology will be called the \emph{strong topology}.
\end{defi}

\begin{prop} The skewness  $\alpha\colon \mc{V}_X\to (0,+\infty]$  and relative skewness  $\beta\colon \mc{V}_X\times \mc{V}_X\to (0, +\infty]$ functions are lower semicontinuous. When $\alpha$ is restricted to $\mc{V}_X^\qm$ and $\beta$ is restricted to $\mc{V}_X^\qm\times \mc{V}_X^\qm$, they are both continuous in the strong topology. 
\end{prop}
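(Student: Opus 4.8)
The plan is to prove the four assertions in turn, exploiting the fact that all the relevant quantities have been given intersection-theoretic descriptions in terms of the b-divisors $Z(\nu)$. For the lower semicontinuity of the skewness $\alpha\colon \mc{V}_X\to (0,+\infty]$, I would use the characterization $\alpha(\nu) = \sup_\pi \alpha_\pi(\nu) = \sup_\pi \bigl(-Z_\pi(\nu)\cdot Z_\pi(\nu)\bigr)$ from \refprop{monotonic}. For each \emph{fixed} good resolution $\pi$, the map $\nu\mapsto Z_\pi(\nu)$ is continuous by \refprop{evemb}, hence $\nu\mapsto \alpha_\pi(\nu)$ is continuous in the weak topology; since $\alpha$ is a supremum of a family of continuous functions, it is lower semicontinuous. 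The same reasoning applies verbatim to $\beta$: by \refprop{monotonic} again, $\beta(\nu\mid\mu) = \sup_\pi \beta_\pi(\nu\mid\mu)$, and for fixed $\pi$ the function $(\nu,\mu)\mapsto \beta_\pi(\nu\mid\mu) = \dfrac{Z_\pi(\nu)\cdot Z_\pi(\nu)}{Z_\pi(\nu)\cdot Z_\pi(\mu)}$ is continuous on $\mc{V}_X\times\mc{V}_X$ wherever the denominator is nonzero — but the denominator $Z_\pi(\nu)\cdot Z_\pi(\mu)$ is negative (both divisors are relatively nef and nonzero, so \refcor{anti-effective2} applies), so it never vanishes and $\beta_\pi$ is genuinely continuous on all of $\mc{V}_X\times\mc{V}_X$. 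Thus $\beta$ is lower semicontinuous as a supremum of continuous functions.

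For the second half, I would restrict to quasimonomial valuations and use the strong topology, which by construction is the topology pushed forward from $\varinjlim\Gamma_\pi$ via $\emb$. The key point is that every quasimonomial $\nu$ lies in some $\mc{S}_\pi = \emb_\pi(\Gamma_\pi)$, and on the image $\Gamma_\pi$ the skewness has the explicit formula from \refcor{skewness_formula}: if $\nu=\nu_{r,s}$ is the monomial valuation at the intersection point $p$ of exceptional primes $E_1,E_2$ with normalized weights, then $\alpha(\nu) = -Z_\pi(\nu)\cdot Z_\pi(\nu) + rs$, which is manifestly a continuous function of the edge parameter (the first term is a quadratic polynomial in $(r,s)$, the second is the product $rs$). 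Hence $\alpha$ restricted to each $\mc{S}_\pi$, equipped with its metric-graph structure, is continuous; since the strong topology on $\mc{V}_X^{\qm}$ is the direct-limit topology along the isometric embeddings $i_{\pi'\pi}$ and a set is strong-open iff its trace on each $\mc{S}_\pi$ is open, continuity on each $\mc{S}_\pi$ gives continuity of $\alpha$ on $\mc{V}_X^{\qm}$ in the strong topology. The analogous formula \eqref{eqn:relative_skewness_formula} for $\beta(\nu_1\mid\nu_2)$ handles the relative skewness \emph{provided} $\nu_1$ and $\nu_2$ lie in the same $\mc{S}_\pi$ and share a center; in general one reduces to this case by passing to a high enough common good resolution in which the two prescribed points lie in a common chart, using \refcor{skewness_comparison} and \refprop{intersection_differentcenters} to relate $\beta$ to quantities that are continuous.

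The main obstacle I anticipate is the continuity of $\beta$ in the strong topology near the diagonal and, more generally, making the local formula \eqref{eqn:relative_skewness_formula} into a genuinely local statement on $\mc{V}_X^{\qm}\times\mc{V}_X^{\qm}$: a pair $(\nu_1,\nu_2)$ of nearby quasimonomial valuations need not sit in a single segment $\segment{\nu_E}{\nu_F}{p}$, so one must argue that for $(\nu_1',\nu_2')$ strongly close to $(\nu_1,\nu_2)$ one can choose a single good resolution $\pi$ realizing a chart containing both $\cen_\pi(\nu_i')$, and then verify that all the terms appearing in \eqref{eqn:relative_skewness_formula} — the intersection numbers $Z_\pi(\nu_i')\cdot Z_\pi(\nu_j')$ and the weight-products and their minimum — vary continuously, the only subtlety being the $\min\{r_1's_2', s_1'r_2'\}$ term, whose two branches agree precisely on the locus where $\nu_1'$ and $\nu_2'$ have the same center, so that the piecewise-defined function is still continuous. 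Once this local description is in place, continuity follows, and the lower semicontinuity proved in the first paragraph shows the strong and weak topologies genuinely differ (a lower semicontinuous but not continuous function on $\mc{V}_X$ becomes continuous after refining the topology), which is consistent with the non-homeomorphism asserted just before the Proposition.
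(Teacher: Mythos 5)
Your proof follows the same approach as the paper: lower semicontinuity comes from writing $\alpha$ and $\beta$ as suprema of the weakly continuous functions $\alpha_\pi$ and $\beta_\pi$, and strong continuity on quasimonomial valuations comes from the explicit formulas \eqref{eqn:skewness_formula} and \eqref{eqn:relative_skewness_formula}. Your third paragraph flags a genuine subtlety that the paper elides by calling the strong continuity of $\beta$ ``immediate'' from \eqref{eqn:relative_skewness_formula}: one does need to check that pairs $(\nu_1',\nu_2')$ strongly close to $(\nu_1,\nu_2)$ can be realized in a common chart and that the $\min\{r_1's_2', r_2's_1'\}$ term is continuous as a piecewise-defined function, and your explanation of why that works is sound.
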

\begin{proof} For each log resolution $\pi$ of $\mf{m}$, the evaluation map $\ev_\pi$ is continuous, and hence so are the maps $\alpha_\pi$ and $\beta_\pi$ defined in \eqref{skewnesses}. Since $\alpha$ and $\beta$ are the supremum of $\alpha_\pi$ and $\beta_\pi$, respectively, they are lower semicontinuous. The continuity in the strong topology on quasimonomial valuations is immediate from \eqref{eqn:skewness_formula} for $\alpha$ and \eqref{eqn:relative_skewness_formula} for $\beta$. 
\end{proof}

\subsection{Partial order, trees, and parameterizations}\label{ssec:partial_order}

The space $\mc{V}_X$ carries a natural partial ordering, namely where $\nu\geq \mu$ if and only if $\nu(\mf{a})\geq \mu(\mf{a})$ for all $\mf{m}$-primary ideals $\mf{a}$ of $R$. One can give an equivalent geometric interpretation of this partial order in the following manner.

\begin{prop}\label{prop:order_effective}
For semivaluations $\nu, \mu\in \mc{V}_X$, one has $\nu\geq \mu$ if and only if the divisor $Z_\pi(\mu) - Z_\pi(\nu)$ is effective for each good resolution $\pi$ of $(X,x_0)$. 
\end{prop}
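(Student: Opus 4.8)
The plan is to prove the two implications of the equivalence separately, in each case passing between the valuative inequality $\nu(\mf{a})\ge\mu(\mf{a})$ and the geometric condition by means of \refprop{evaluate-with-intersections}.

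\emph{From effectivity to $\nu\ge\mu$.} Suppose $Z_\pi(\mu)-Z_\pi(\nu)$ is effective for every good resolution $\pi$. Given an $\mf{m}$-primary ideal $\mf{a}$, I would choose a log resolution $\pi$ of $\mf{a}$ (which is in particular a good resolution), and write $D:=Z_\pi(\mu)-Z_\pi(\nu)=\sum_i c_iE_i$ with $c_i\ge 0$ by hypothesis. By \refprop{evaluate-with-intersections}, $\nu(\mf{a})=-Z_\pi(\nu)\cdot Z_\pi(\mf{a})$ and $\mu(\mf{a})=-Z_\pi(\mu)\cdot Z_\pi(\mf{a})$, so $\nu(\mf{a})-\mu(\mf{a})=D\cdot Z_\pi(\mf{a})$. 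Since $Z_\pi(\mf{a})$ is relatively nef (see \S\ref{ssec:logresolutions}), one has $E_i\cdot Z_\pi(\mf{a})\ge 0$ for each $i$, whence $D\cdot Z_\pi(\mf{a})=\sum_i c_i\,(E_i\cdot Z_\pi(\mf{a}))\ge 0$. As $\mf{a}$ was arbitrary, $\nu\ge\mu$.

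\emph{From $\nu\ge\mu$ to effectivity.} Fix a good resolution $\pi$ with exceptional primes $E_1,\dots,E_n$ and write $Z_\pi(\mu)-Z_\pi(\nu)=\sum_j c_jE_j$. Using $\check{E}_i\cdot E_j=\delta_{ij}$ gives $c_j=(Z_\pi(\mu)-Z_\pi(\nu))\cdot\check{E}_j$, so it suffices to show each such intersection number is $\ge 0$. The point is to approximate $\check{E}_j$ by incarnations of ideals, exactly as in the proof of \refprop{famous_equality}: since relatively ample $\Q$-divisors are dense in $\mathrm{Nef}(\pi)$, pick relatively ample $\Q$-divisors $D_m\to\check{E}_j$ and integers $k_m$ so that $k_mD_m$ is integral and relatively very ample, hence $k_mD_m=Z_\pi(\mf{a}_m)$ for an $\mf{m}$-primary ideal $\mf{a}_m$ (and $\pi$ is automatically a log resolution of $\mf{a}_m$). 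Then by \refprop{evaluate-with-intersections},
\[
(Z_\pi(\mu)-Z_\pi(\nu))\cdot D_m=\tfrac{1}{k_m}(Z_\pi(\mu)-Z_\pi(\nu))\cdot Z_\pi(\mf{a}_m)=\tfrac{1}{k_m}\bigl(\nu(\mf{a}_m)-\mu(\mf{a}_m)\bigr)\ge 0,
\]
using $\nu\ge\mu$ in the last step. Letting $m\to\infty$ gives $c_j=(Z_\pi(\mu)-Z_\pi(\nu))\cdot\check{E}_j\ge 0$, as desired.

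The argument is elementary once \refprop{evaluate-with-intersections} is in hand; the only mildly delicate point — the main obstacle, such as it is — is the density/approximation device in the second implication, needed because $\check{E}_j$ lies on the boundary of the nef cone and hence is not itself of the form $Z_\pi(\mf{a})$ for an ideal $\mf{a}$. Since this device has already been used twice in the excerpt (in \refprop{monotonic} and \refprop{famous_equality}), it presents no real difficulty. One could also remark that, as every modification is dominated by a good resolution and pushforward of divisors preserves effectivity, it is equivalent to test effectivity of $Z_\pi(\mu)-Z_\pi(\nu)$ over any cofinal family of good resolutions rather than over all of them.
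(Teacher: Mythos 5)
Your proof is correct and follows essentially the same route as the paper's: in one direction, intersect $Z_\pi(\mu)-Z_\pi(\nu)$ against the relatively nef divisor $Z_\pi(\mf{a})$ on a log resolution of $\mf{a}$; in the other, approximate $\check{E}_j$ by relatively ample $\Q$-divisors of the form $k_m^{-1}Z_\pi(\mf{a}_m)$ and pass to the limit. The only cosmetic difference is that the paper phrases the second implication as a bound $\mu(\mf{a}_n)/\nu(\mf{a}_n)\le 1$ on ratios whereas you work directly with the difference $\nu(\mf{a}_m)-\mu(\mf{a}_m)\ge 0$.
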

\begin{proof} Suppose first that $Z_\pi(\mu) - Z_\pi(\nu)$ is effective for each $\pi$. Let $\mf{a}$ be any $\mf{m}$-primary ideal of $R$, and let $\pi$ be a log resolution of $\mf{a}$. Then by \refprop{evaluate-with-intersections} we see that $\nu(\mf{a}) - \mu(\mf{a}) = [Z_\pi(\mu) - Z_\pi(\nu)]\cdot Z_\pi(\mf{a})$. Since $Z_\pi(\mf{a})$ is relatively nef and $Z_\pi(\mu) - Z_\pi(\nu)$ is effective, this quantity is $\geq 0$, as desired. 

Conversely, assume that $\nu\geq \mu$, and let $\pi$ be any good resolution of $(X,x_0)$. Let $E$ be an exceptional prime of $\pi$. Since relatively ample $\Q$-divisors are dense in $\mathrm{Nef}(\pi)$, we can find a sequence of relatively ample divisors $D_n\in \Ediv(\pi)_\Q$ such that $D_n\to\check{E}$ as $n\to \infty$. Let $k_n\geq 1$ be an integer for which $k_nD_n$ is an integral relatively very ample  divisor, and thus such that $k_nD_n = Z_\pi(\mf{a}_n)$ for some $\mf{m}$-primary ideal $\mf{a}_n$. We then have by \refprop{evaluate-with-intersections} that \[1\geq \frac{\mu(\mf{a}_n)}{\nu(\mf{a}_n)} = \frac{Z_\pi(\mu)\cdot Z_\pi(\mf{a_n})}{Z_\pi(\nu)\cdot Z_\pi(\mf{a}_n)} = \frac{Z_\pi(\mu)\cdot D_n}{Z_\pi(\nu)\cdot D_n}\to \frac{Z_\pi(\mu)\cdot \check{E}}{Z_\pi(\nu)\cdot \check{E}},\] proving that $[Z_\pi(\mu) - Z_\pi(\nu)]\cdot \check{E} \geq 0$. As $E$ was an arbitrary exceptional prime of $\pi$, we conclude that $Z_\pi(\mu)  - Z_\pi(\nu)$ is effective. 
\end{proof}

Given this geometric characterization of the partial order, it makes sense to expect a tight relationship between the partial ordering and the behavior of the skewness and relative skewness functions $\alpha$ and $\beta$, which were defined in terms of the geometry of the dual divisors $Z_\pi(\nu)$. We investigate this interaction now.

\begin{lem}\label{lem:monotonicity}
Let $\nu, \mu\in \mc{V}_X$ be semivaluations. Then:
\begin{enumerate}[itemsep=-1ex]
\item The relative skewness $\beta$ satisfies $\beta(\mu\mid\nu)\geq 1$ with equality if and only if $\nu\geq \mu$. 
\item If $\nu > \mu$, then $\alpha(\mu) = -Z_\pi(\nu)\cdot Z_\pi(\mu)$ for any good resolution $\pi\colon X_\pi\to (X,x_0)$ such that $\mu$ and $\nu$ have distinct centers in $X_\pi$. 
\item The skewness function is monotonic in that $\alpha(\nu) > \alpha(\mu)$ if $\nu > \mu$. 
\end{enumerate}
\end{lem}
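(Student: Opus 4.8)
The plan is to establish the three statements in sequence, leaning on the earlier material of \S\ref{ssec:intersectiontheory}--\S\ref{ssec:partial_order}. For part (1), I would appeal to the Izumi-type formula $\beta(\mu\mid\nu)=\sup_{\mf{a}}\mu(\mf{a})/\nu(\mf{a})$ of \refprop{famous_equality}, the supremum running over $\mf{m}$-primary ideals $\mf{a}$. Evaluating at $\mf{a}=\mf{m}$ and using that $\nu,\mu$ are normalized contributes the term $\mu(\mf{m})/\nu(\mf{m})=1$, so $\beta(\mu\mid\nu)\geq 1$. Moreover $\beta(\mu\mid\nu)=1$ holds exactly when $\mu(\mf{a})\leq\nu(\mf{a})$ for \emph{every} $\mf{m}$-primary $\mf{a}$, which is precisely the defining condition ``$\nu\geq\mu$'' recorded at the start of \S\ref{ssec:partial_order}; this yields the equivalence.

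For part (2), assume $\nu>\mu$. Then $\nu\neq\mu$, and since both are normalized they are not proportional, so there is a good resolution $\pi$ in which they have distinct centers (as observed in the proof of \refprop{monotonic}). For any such $\pi$, \refcor{skewness_comparison} — applying \eqref{skewness_comparison} with the roles of $\nu$ and $\mu$ exchanged — gives $\alpha(\mu)=\beta(\mu\mid\nu)\cdot\bigl[-Z_\pi(\mu)\cdot Z_\pi(\nu)\bigr]$. Since $\nu\geq\mu$, part (1) forces $\beta(\mu\mid\nu)=1$, and hence $\alpha(\mu)=-Z_\pi(\nu)\cdot Z_\pi(\mu)$; in particular $\alpha(\mu)<+\infty$.

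For part (3), again pick $\pi$ with distinct centers for $\nu$ and $\mu$. If $\alpha(\nu)=+\infty$, then $\alpha(\nu)>\alpha(\mu)$ is immediate, because part (2) exhibits $\alpha(\mu)$ as the genuine finite intersection number $-Z_\pi(\nu)\cdot Z_\pi(\mu)$. Otherwise $\alpha(\nu)<+\infty$, and I would feed the triple $(\nu,\mu,\mu)$ into the Cauchy--Schwarz inequality \refprop{positivity_bdivisors}: with $Z=Z(\nu)$ and $W=Z(\mu)$ it reads $(Z\cdot W)^2\leq(Z\cdot Z)(W\cdot W)$, i.e.\ $\alpha(\mu)^2\leq\alpha(\nu)\,\alpha(\mu)$ after substituting $Z\cdot W=-\alpha(\mu)$ (part (2) together with \refprop{intersection_differentcenters}), $Z\cdot Z=-\alpha(\nu)$ and $W\cdot W=-\alpha(\mu)$. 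Dividing by $\alpha(\mu)\in(0,+\infty)$ gives $\alpha(\mu)\leq\alpha(\nu)$, and if equality held then equality would hold in \refprop{positivity_bdivisors}, whose equality clause says that $\nu$ disconnects $\mu$ from $\mu$ — which by the definition of ``disconnects'' forces $\nu=\mu$, contradicting $\nu>\mu$. So $\alpha(\mu)<\alpha(\nu)$.

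The step I expect to be the main obstacle is the strictness in part (3). The soft monotonicity bound $\alpha_\pi(\nu)=-Z_\pi(\nu)^2\geq-Z_\pi(\nu)\cdot Z_\pi(\mu)\geq-Z_\pi(\mu)^2=\alpha_\pi(\mu)$ on a \emph{fixed} resolution follows readily from $Z_\pi(\mu)-Z_\pi(\nu)$ being effective (\refprop{order_effective}) and $Z_\pi(\nu),Z_\pi(\mu)$ being relatively nef, but this inequality is not obviously strict and could a priori degenerate in the limit over $\pi$; routing through part (2) and the sharp equality characterization of \refprop{positivity_bdivisors} is what makes strictness go through. A secondary point to keep track of in parts (2)--(3) is that a good resolution separating the centers of $\nu$ and $\mu$ genuinely exists, which is exactly where the hypothesis $\nu\neq\mu$ (i.e.\ $\nu>\mu$) is used.
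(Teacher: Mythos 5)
Your treatment of parts (1) and (2) tracks the paper's proof exactly: part (1) is \refprop{famous_equality} plus the definition of the partial order, and part (2) follows from part (1) and \refcor{skewness_comparison} with $\nu$ and $\mu$ swapped. For part (3), however, you take a genuinely different route. The paper applies \refcor{skewness_comparison} once more, now in the ``forward'' order, to write
\[
\alpha(\nu) = \beta(\nu\mid\mu)\bigl[-Z_\pi(\nu)\cdot Z_\pi(\mu)\bigr] = \beta(\nu\mid\mu)\,\alpha(\mu),
\]
where the second equality is precisely part (2). Since $\nu>\mu$ means $\mu\geq\nu$ fails, part (1) (with the roles exchanged) gives $\beta(\nu\mid\mu)>1$, and since $\alpha(\mu)$ is finite and positive (part (2) again), strictness is immediate; the case $\alpha(\nu)=+\infty$ needs no separate treatment. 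Your argument instead routes through the Cauchy--Schwarz inequality of \refprop{positivity_bdivisors} with the triple $(\nu,\mu,\mu)$ together with its equality characterization via ``disconnects''; this is correct — after dividing by $\alpha(\mu)\in(0,\infty)$ and observing that $\nu$ disconnects $\mu$ from itself only when $\nu=\mu$ — but it is a heavier tool than needed. What the Cauchy--Schwarz route buys is a self-contained geometric explanation of strictness; what it costs is the extra case split on $\alpha(\nu)=\pm\infty$ and reliance on the more delicate equality clause of \refprop{positivity_bdivisors}, whereas the paper's two-line computation delivers strictness for free from $\beta(\nu\mid\mu)>1$. You correctly identified strictness as the crux, but you overlooked that the very relative skewness you computed in part (2) already quantifies the gap: $\alpha(\nu)/\alpha(\mu)=\beta(\nu\mid\mu)$, which is $>1$ whenever $\nu>\mu$.
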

\begin{proof} The first statement  follows immediately from  \refprop{famous_equality}. Note, the inequality $\beta(\mu\mid \nu) \geq 1$ is using the assumption that $\mu$ and $\nu$ are normalized, so that $\mu(\mf{m})/\nu(\mf{m}) = 1$.  The second statement is then a consequence of the first and \refcor{skewness_comparison}. For the third, take a good resolution $\pi\colon X_\pi\to (X,x_0)$ such that $\nu$ and $\mu$ have different centers in $X_\pi$ and apply \refcor{skewness_comparison} again to derive
\[
\alpha(\nu) = \beta(\nu\mid \mu)[-Z_\pi(\nu)\cdot Z_\pi(\mu)] = \beta(\nu\mid \mu)\alpha(\mu) > \alpha(\mu),
\]
completing the proof. 
\end{proof}

We focus our attention to skewness and relative skewness of normalized valuations which are monomial with respect to some fixed infinitely near point $p$.
The general situation can be described as follows.

\begin{prop}\label{prop:skewness_parameter}
Let $\pi\colon X_\pi\to (X,x_0)$ be a log resolution of $\mf{m}$, and suppose that $E$ and $F$ are two exceptional primes of $\pi$ which intersect at a point $p\in X_\pi$. Let $w\colon [0,1] \to \segment{\nu_E}{\nu_F}{p}$ be the monomial parameterization at $p$, and let $\mu \in \mc{V}_X$ be any valuation of finite skewness.
\begin{enumerate}
\item The function $[0,1] \ni t \mapsto \alpha(w(t))$ is a polynomial map of degree one or two.
\item The function $[0,1] \ni t \mapsto \beta(w(t) \mid \mu)$ is a piecewise rational map of degree $\leq 2$.
\item The function $[0,1] \ni t \mapsto \beta(\mu \mid w(t))$ is a piecewise rational map of degree $\leq 1$.
\end{enumerate}
Moreover, stationary points for these maps correspond to rational parameters.
\end{prop}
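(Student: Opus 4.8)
\emph{Approach.} All three statements will be derived from the explicit formulas of \refcor{skewness_formula} for the skewness and relative skewness of monomial valuations, together with the elementary observation that in the fixed log resolution $\pi_0$ realizing $E$ and $F$ the divisor $Z_{\pi_0}(w(t)) = r(t)\check{E}+s(t)\check{F}$ depends \emph{affinely} on $t$ (since $r(t)=(1-t)b_E^{-1}$ and $s(t)=t\,b_F^{-1}$), while $r(t)s(t)=t(1-t)(b_Eb_F)^{-1}$ is quadratic. Expanding intersection products bilinearly in the basis $\check{E},\check{F}$ thus expresses the quantities of interest as (piecewise) rational functions of $t$ whose coefficients are $\Q$-combinations of the fixed numbers $\check{E}\cdot\check{E}$, $\check{E}\cdot\check{F}$, $\check{F}\cdot\check{F}$ and $(b_Eb_F)^{-1}$ — in particular rational.

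Part (1) is then immediate: by \eqref{eqn:skewness_formula}, $\alpha(w(t))=-Z_{\pi_0}(w(t))\cdot Z_{\pi_0}(w(t))+r(t)s(t)$, which expands to a polynomial in $t$ of degree at most two with rational coefficients. It is non-constant, because $\alpha$ is strictly monotone along chains by \reflem{monotonicity}(3) while the endpoints $\nu_E=w(0)$ and $\nu_F=w(1)$ of the non-trivial segment $\segment{\nu_E}{\nu_F}{p}$ are distinct. Writing $\alpha\circ w = at^2+bt+c$, its unique stationary point — if $a\neq 0$ — is the vertex $-b/(2a)\in\Q$; if $a=0$ it has degree one and no stationary point.

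For parts (2) and (3) I would first use \refcor{skewness_comparison} (and \refprop{intersection_differentcenters} to pass to b-divisors) to write, for every $t$ with $w(t)\neq\mu$,
\[
\beta(w(t)\mid\mu)=\frac{\alpha(w(t))}{-\,Z(w(t))\cdot Z(\mu)},\qquad
\beta(\mu\mid w(t))=\frac{\alpha(\mu)}{-\,Z(w(t))\cdot Z(\mu)},
\]
treating the degenerate case $\alpha(\mu)=+\infty$ (where the second function is constant equal to $+\infty$) and the possible value of $t$ with $w(t)=\mu$ separately. Let $\nu_0=w(t_0)$ be the retraction of $\mu$ onto the closed segment, i.e.\ the point at which $\mu$ branches off $\segment{\nu_E}{\nu_F}{p}$; then $\nu_0$ disconnects $w(t)$ from $\mu$ for every $t$, so the equality case of \refprop{positivity_bdivisors} gives $\beta(w(t)\mid\mu)=\beta(\nu_0\mid\mu)\,\beta(w(t)\mid\nu_0)$ and, likewise, that $-Z(w(t))\cdot Z(\mu)$ is a fixed positive multiple of $-Z(w(t))\cdot Z(\nu_0)$. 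Since $\nu_0$ and $w(t)$ are monomial valuations at the common point $p$ of $X_{\pi_0}$, the quantity $\beta(w(t)\mid\nu_0)$ is given by \eqref{eqn:relative_skewness_formula} with $\nu_1=w(t)$, $\nu_2=\nu_0$: its numerator is the polynomial $\alpha\circ w$ of part (1), and its denominator is $-Z_{\pi_0}(w(t))\cdot Z_{\pi_0}(\nu_0)+\min\{r(t)s(t_0),s(t)r(t_0)\}$, which is piecewise affine in $t$ with a single breakpoint at $t_0$ (affine on $[0,t_0]$ and on $[t_0,1]$). Dividing, $\beta(w(\cdot)\mid\mu)$ is piecewise rational of degree $\le 2$ and $\beta(\mu\mid w(\cdot))$ is piecewise rational of degree $\le 1$, the pieces being $[0,t_0]$ and $[t_0,1]$.

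It remains to locate the stationary points, and this is where the real work lies. On each of the two pieces I would split further according to the partial order, using \reflem{monotonicity}(2): on a sub-arc the denominator $-Z(w(t))\cdot Z(\nu_0)$ is either \emph{constant} (this happens when $w(t)$ dominates $\nu_0$, or when they are incomparable — using disconnection at their infimum) or else equal to $\alpha(w(t))$ itself (when $\nu_0$ dominates $w(t)$, in which case comparison with the affine formula above forces $\alpha\circ w$ to be affine on that sub-arc, hence everywhere). In the first case $\beta(w(\cdot)\mid\mu)$ is a rational multiple of the polynomial $\alpha\circ w$, whose only stationary point is the (rational) vertex of part (1), and $\beta(\mu\mid w(\cdot))$ is constant; in the second case $\beta(w(\cdot)\mid\mu)$ is constant (equal to $\beta(\nu_0\mid\mu)$) and $\beta(\mu\mid w(\cdot))$ is a constant over a non-constant affine function, with no stationary point. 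Thus the only interior stationary point that can occur is the vertex of $\alpha\circ w$, a rational parameter (and if $\alpha\circ w$ has degree $\le 1$ there is none), while the finitely many breakpoints occur at the endpoints of $\segment{\nu_E}{\nu_F}{p}$ or at that vertex. The main obstacle is precisely this bookkeeping — verifying that $\nu_0$ is well defined, that the disconnection identity applies uniformly in $t$, and that the subdivision into constant versus affine sub-arcs is as described — which relies throughout on the tree structure of $\mc{V}_X$ and on \reflem{monotonicity}.
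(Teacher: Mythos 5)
Your overall plan matches the paper's — expand everything in the basis $\check{E},\check{F}$ and use the affine dependence of $Z_\pi(w(t))$ on $t$ — and part~(1) is essentially right. The gap is in your reduction for parts (2)--(3) when $\mu$ is not a monomial valuation at $p$. You introduce a ``retraction'' $\nu_0=w(t_0)$ of $\mu$ onto $\segment{\nu_E}{\nu_F}{p}$, assert that $\nu_0$ disconnects $w(t)$ from $\mu$ for every $t$, and deduce $\beta(w(t)\mid\mu)=\beta(\nu_0\mid\mu)\,\beta(w(t)\mid\nu_0)$. You yourself say this ``relies throughout on the tree structure of $\mc{V}_X$'' — and that is precisely where it fails, because $\mc{V}_X$ need not be a tree. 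For a cusp singularity the skeleton $\mc{S}_X$ is a circle; if $\mu$ sits over the far side of that circle, the retraction onto a proper subarc is not even canonically defined (removing the interior of the edge leaves $\mc{S}_\pi$ connected, so both endpoints are candidates), and even after choosing $\nu_0=\nu_E$, deleting that single vertex does not separate $w(t)$ from $\mu$. The disconnection hypothesis of \refprop{positivity_bdivisors} then fails and your factorization becomes a strict inequality. The ensuing case analysis (constant-versus-affine denominator; ``the only interior stationary point is the vertex of $\alpha\circ w$'') rests entirely on this and cannot stand — note already from \eqref{eqn:relative_skewness_formula} that $\beta(w(t)\mid w(s))$ has a genuine breakpoint at $t=s$, which is generally not the vertex of $\alpha\circ w$.

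The repair is simpler and is what the paper does. When $\mu\neq w(s)$ for every $s$, pass to a good resolution $\pi'\geq\pi$ in which $\mu$ and all the $w(t)$ have distinct centers — this is always possible, since in that case $\mu$ can share a center with the $w(t)$'s only above a divisorial point of the segment, which one can realize and blow past. Then \refcor{skewness_comparison} together with \refprop{intersection_differentcenters} give
\[
\beta(w(t)\mid\mu)=\frac{\alpha(w(t))}{-Z_{\pi'}(w(t))\cdot Z_{\pi'}(\mu)},\qquad
\beta(\mu\mid w(t))=\frac{\alpha(\mu)}{-Z_{\pi'}(w(t))\cdot Z_{\pi'}(\mu)},
\]
and the denominator is a (piecewise) affine function of $t$ with rational coefficients because $Z_{\pi'}(w(t))$ is. Parts (2) and (3) drop out by bilinearity, with no disconnection and no tree argument, and the genuinely piecewise case $\mu=w(s)$ is handled by the explicit formula \eqref{eqn:relative_skewness_formula}, as in the paper's \eqref{eqn:relative_skewness_formula2}. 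Finally, a smaller point: your non-constancy argument for part~(1) invokes \reflem{monotonicity}(3) ``along chains'', but $\segment{\nu_E}{\nu_F}{p}$ need not be totally ordered; it does admit a unique $\leq$-minimum, from which non-constancy follows, but not by the direct appeal you make.
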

\begin{proof}
Throughout the proof, we will let $\Delta$ denote the constant
\[
\Delta := \frac{\check{E}^2}{b_E^2} - \frac{\check{E}\cdot \check{F}}{b_Eb_F}.
\]
Observe that $\Delta \leq 0$ with equality if and only if $\nu_E \leq  \nu_F$; indeed, this is simply another way of writing the fact that $\beta(\nu_E\mid \nu_F) \geq 1$, with equality if and only if $\nu_E \leq \nu_F$. We now rewrite \eqref{eqn:skewness_formula} and \eqref{eqn:relative_skewness_formula} for valuations in $\segment{\nu_E}{\nu_F}{p}$ in terms of $\Delta$:
\begin{equation}\label{eqn:skewness_formula2}
\alpha(w(t)) = \left[\alpha(\nu_F) - \alpha(\nu_E) - 2\Delta - \frac{1}{b_Eb_F}\right]t^2 + \left[2\Delta + \frac{1}{b_Eb_F}\right] t + \alpha(\nu_E)
\end{equation}
and
\begin{equation}\label{eqn:relative_skewness_formula2}
\beta(w(t) \mid w(s)) = \frac{\left[\alpha(\nu_F) - \alpha(\nu_E) - 2\Delta - \frac{1}{b_Eb_F}\right]t^2 + \left[2\Delta + \frac{1}{b_Eb_F}\right]t + \alpha(\nu_E)}{\left[\alpha(\nu_F) - \alpha(\nu_E) - 2\Delta - \frac{1}{b_Eb_F}\right]st + \left[\Delta s + \Delta t + \frac{\min\{t,s\}}{b_Eb_F}\right] + \alpha(\nu_E)}.
\end{equation} 
The first assertion is a direct consequence of \eqref{eqn:skewness_formula2}, while the last two assertions directly follow from \eqref{eqn:relative_skewness_formula2} if $\mu=w(s)$ for some $s\in[0,1]$.

If it is not the case, then up to taking another good resolution $\pi'$ dominating $\pi$, we may assume that $\mu$ and $w(t)$ have distinct centers for any $t \in [0,1]$. Then the result follows from \refcor{skewness_comparison}.

\end{proof}

Once the general behavior of skewness and relative skewness is understood, we now investigate what happens when the monomial parameterization is monotonic. In this case, the situation is quite simpler, and more similar to what happens in the smooth setting.

\begin{prop}\label{prop:affine_linear}
Let $\pi\colon X_\pi\to (X,x_0)$ be a log resolution of $\mf{m}$, and suppose that $E$ and $F$ are exceptional primes of $\pi$ which intersect in a point $p$. Let $w\colon [0,1]\to \segment{\nu_E}{\nu_F}{p}$ be the monomial parameterization at $p$. The following are equivalent.
\begin{enumerate}[itemsep=-1ex]
\item The map $w(t)$ is monotonic increasing with respect to the partial order on $\mc{V}_X$.  
\item There exist $t_1, t_2\in [0,1]$ with $0<t_1 < t_2$ such that $w(t_1) < w(t_2)$. 
\item The function $\alpha(w(t))$ is affine linear and increasing with $t$. 
\item The distance $d(\nu_E, \nu_F):=1/b_Eb_F$ is equal to $\alpha(\nu_F) - \alpha(\nu_E)$. 
\end{enumerate}
\end{prop}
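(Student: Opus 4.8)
The plan is to prove the equivalence of the four conditions by establishing a cycle of implications, relying almost entirely on the explicit formula \eqref{eqn:skewness_formula2} for $\alpha(w(t))$ and on \reflem{monotonicity}, which relates monotonicity of the partial order to the behavior of $\alpha$ and $\beta$. The key observation, which I would record first, is that the coefficient structure of the quadratic \eqref{eqn:skewness_formula2} is governed by the sign of $\Delta := \check{E}^2/b_E^2 - (\check{E}\cdot\check{F})/(b_Eb_F)$ and the quantity $2\Delta + 1/(b_Eb_F)$; in particular the leading coefficient of $\alpha(w(t))$ vanishes precisely when $\alpha(\nu_F) - \alpha(\nu_E) = 2\Delta + 1/(b_Eb_F)$, and I would use the relation $\alpha(\nu_E) = -\check{E}^2/b_E^2$ (from \eqref{eqn:skewness_formula} with $r_i s_i = 0$ since $\nu_E$ is divisorial and realized at $\pi$) to rewrite $\Delta = -\alpha(\nu_E) + \check{E}\cdot\check{F}/(b_Eb_F)$, which makes the linear coefficient $2\Delta + 1/(b_Eb_F)$ manageable.

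For the implications, I would proceed as follows. \emph{$(1)\Rightarrow(2)$} is immediate, since monotonicity of $w$ on all of $[0,1]$ certainly yields $w(t_1) < w(t_2)$ for any $0 < t_1 < t_2$. \emph{$(2)\Rightarrow(3)$}: Given $w(t_1) < w(t_2)$ with $0<t_1<t_2$, by \reflem{monotonicity}(3) we have $\alpha(w(t_1)) < \alpha(w(t_2))$, so the quadratic $t\mapsto \alpha(w(t))$ is strictly increasing at least on a subinterval; I would then argue that since (by \refprop{skewness_parameter}) its stationary point, if any, corresponds to a rational parameter, and moreover by \reflem{monotonicity}(1) $\beta(w(t_1)\mid w(t_2)) = 1$ forces a rigid relationship — in fact I would instead directly compute: if $w(t_1)<w(t_2)$ then $\beta(w(t_2)\mid w(t_1))=1$, and plugging $t=t_2$, $s=t_1$ into \eqref{eqn:relative_skewness_formula2} and using $\min\{t_2,t_1\}=t_1$, the equality $\beta=1$ forces the numerator to equal the denominator, which after simplification forces the leading coefficient $\alpha(\nu_F)-\alpha(\nu_E)-2\Delta-1/(b_Eb_F)$ of \eqref{eqn:skewness_formula2} to vanish (using $0<t_1<t_2$, the only way two distinct evaluations of the quadratic and its ``diagonal'' partner agree is if it is actually affine). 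Hence $\alpha(w(t))$ is affine, and it is increasing because $\alpha(w(t_1))<\alpha(w(t_2))$. \emph{$(3)\Rightarrow(4)$}: If $\alpha(w(t))$ is affine linear, its leading coefficient vanishes, giving $\alpha(\nu_F)-\alpha(\nu_E) = 2\Delta + 1/(b_Eb_F)$; the linear coefficient is then $2\Delta + 1/(b_Eb_F)$ itself, so affine-increasing means $2\Delta+1/(b_Eb_F)>0$. But also, evaluating at the endpoints, $\alpha(w(1))-\alpha(w(0)) = \alpha(\nu_F)-\alpha(\nu_E)$ equals the linear coefficient, namely $2\Delta + 1/(b_Eb_F)$. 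I would then need to show this equals $1/(b_Eb_F)$, i.e. that $\Delta = 0$; here I invoke \reflem{monotonicity}(1)–(2): since $\alpha(w(t))$ is affine increasing, $w$ is monotonic (this is essentially $(3)\Rightarrow(1)$, proved simultaneously), so $\nu_E<\nu_F$ or $\nu_E>\nu_F$; monotone increasing $\alpha$ and the endpoint values force $\nu_E < \nu_F$, hence $\Delta = 0$ by the remark following \eqref{eqn:skewness_formula2}, giving $\alpha(\nu_F)-\alpha(\nu_E) = 1/(b_Eb_F) = d(\nu_E,\nu_F)$. \emph{$(4)\Rightarrow(1)$}: If $d(\nu_E,\nu_F) = \alpha(\nu_F)-\alpha(\nu_E)$, then $\alpha(\nu_F)-\alpha(\nu_E) = 1/(b_Eb_F) \leq 2\Delta + 1/(b_Eb_F)$ would force $\Delta \geq 0$; but always $\Delta \leq 0$, so $\Delta = 0$, meaning $\nu_E \leq \nu_F$. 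Then the leading coefficient of \eqref{eqn:skewness_formula2} is $\alpha(\nu_F)-\alpha(\nu_E) - 1/(b_Eb_F) = 0$, so $\alpha(w(t)) = t/(b_Eb_F) + \alpha(\nu_E)$ is affine increasing; combined with $\beta(w(1)\mid w(t)) = 1$ for all $t$ (which one checks from \eqref{eqn:relative_skewness_formula2} with the vanishing leading coefficient and $\Delta=0$), \reflem{monotonicity}(1) gives $w(t) \leq w(1) = \nu_F$ and similarly the relation $\beta(w(t)\mid w(0))=1$ — wait, more carefully, I would show $\beta(w(t)\mid w(s)) = 1$ for $s \geq t$, which is \reflem{monotonicity}(1)'s criterion for $w(s) \geq w(t)$, yielding monotonicity of $w$.

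\emph{The main obstacle} I anticipate is the bookkeeping in $(2)\Rightarrow(3)$ and the symmetric degenerate-case analysis: showing that a single strict inequality $w(t_1)<w(t_2)$ at interior-or-generic parameters forces the \emph{global} affine-linearity of the quadratic, rather than just local increase. The cleanest route is probably not to manipulate \eqref{eqn:relative_skewness_formula2} by brute force but to use \reflem{monotonicity}(1): $w(t_1)<w(t_2)$ is \emph{equivalent} to $\beta(w(t_2)\mid w(t_1)) = 1$, and then to observe — via \refcor{skewness_comparison} or \reflem{monotonicity}(2) — that $\alpha(w(t_1)) = -Z_\pi(w(t_2))\cdot Z_\pi(w(t_1))$, which is affine linear in $t_1$ for fixed $t_2$ (being an intersection-number expression linear in the $\check E,\check F$ coordinates of $w(t_1)$). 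Since $\alpha(w(t))$ agrees with an affine function on an interval and is itself a polynomial of degree $\leq 2$, it must be affine everywhere, and its increase follows from $\alpha(w(t_1))<\alpha(w(t_2))$. This reduces the whole argument to clean citations of \reflem{monotonicity} and \refcor{skewness_comparison} together with the one formula \eqref{eqn:skewness_formula2}, avoiding any messy rational-function identity.
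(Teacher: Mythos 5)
There are three genuine gaps, all in the same vein: you repeatedly substitute a single equation or a single comparison where the paper's argument requires a degree count or a global inequality over a continuum of parameters.

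In $(2)\Rightarrow(3)$, your ``direct computation'' from $\beta(w(t_1)\mid w(t_2))=1$ (note: it is $\beta(w(t_1)\mid w(t_2))$, not $\beta(w(t_2)\mid w(t_1))$, that Lemma~\ref{lem:monotonicity}(1) forces to equal $1$) yields, after simplification, only the single relation $Q t_1 + \Delta = 0$, where $Q = \alpha(\nu_F)-\alpha(\nu_E)-2\Delta-\tfrac{1}{b_E b_F}$. This does not force $Q=0$ unless one already knows $\Delta=0$. Your ``cleanest route'' is also broken: to invoke Lemma~\ref{lem:monotonicity}(2) and write $\alpha(w(t))=-Z_\pi(w(t_2))\cdot Z_\pi(w(t))$ as an affine function of $t$, you need $w(t)$ and $w(t_2)$ to have \emph{distinct centers in $X_\pi$}, but for $0<t,t_2<1$ both have center $p$. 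The formula is valid only on a sufficiently high resolution $\pi'$ separating the centers, and there $Z_{\pi'}(w(t))$ is no longer globally linear in $t$. The paper avoids this by first showing, via the degree-$\leq1$ rational function $s\mapsto\beta(w(t_1)\mid w(s))$ (which takes value $1$ at two points, hence identically), that $w(t_1)<w(s)$ for all $s>t_1$; then by a degree-$\leq 2$ argument (using that $t_1>0$ is interior, so $g(t)=\beta(w(t)\mid\nu_F)$ has a double root at $t_1$) that $\nu_F\geq w(t)$ for all $t$; only then can Lemma~\ref{lem:monotonicity}(2) be applied legitimately with $\nu_F$ as comparison valuation, since $\nu_F$ and $w(t)$ \emph{do} have distinct centers in $X_\pi$ for $t<1$.

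In $(3)\Rightarrow(4)$, your argument is circular: you invoke ``$(3)\Rightarrow(1)$, proved simultaneously'' to conclude $\nu_E<\nu_F$ and hence $\Delta=0$. But $(3)\Rightarrow(1)$ is precisely the hard part, and affine-increasing $\alpha(w(t))$ does not obviously imply $w$ is monotone (the quantity controlling the partial order is $\beta$, not $\alpha$). The paper instead shows that $g(t)=\beta(w(t)\mid\nu_F)$ has $g''\geq0$ with equality only if $\Delta=0$, and then uses a geometric input --- blowing up the free point $p$ and applying \refprop{famous_equality} to the ideal inequality $\divi_G(\mf{a})\geq\divi_E(\mf{a})+\divi_F(\mf{a})$ --- to conclude $g$ cannot be strictly convex. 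This geometric midpoint inequality is the missing ingredient in your outline.

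In $(4)\Rightarrow(1)$, the inequality ``$\alpha(\nu_F)-\alpha(\nu_E)=1/(b_E b_F)\leq2\Delta+1/(b_E b_F)$'' is unjustified; since $Q=-2\Delta\geq0$ and $\alpha(\nu_F)-\alpha(\nu_E)=Q+L\geq L$, the sign actually goes the other way. The correct argument, as in the paper, is to write out $\beta(w(t)\mid w(s))$ with $Q=-2\Delta$ plugged in and then use that the inequality $\beta\geq1$ must hold \emph{for every pair} $1\geq s\geq t\geq 0$, which is equivalent to $\Delta(2t-1)(s-t)\geq0$ for all such $(s,t)$; taking $t<1/2$ forces $\Delta=0$. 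A single evaluation does not suffice.
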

\begin{proof}
We use here the same notations as in the proof of \refprop{skewness_parameter}.
The implication (1 $\Rightarrow$ 2) is trivial.

(2 $\Rightarrow$ 3) Let $f\colon [t_1,1]\to \R$ be the function $f(s) = \beta(w(t_1)\mid w(s))$. By \eqref{eqn:relative_skewness_formula2}, this a rational function of degree at most one. On the other hand, $f$ takes the value $1$ twice: $f(t_1) = 1$ by definition and $f(t_2) = 1$ since we have assumed $w(t_1) < w(t_2)$. Thus $f\equiv 1$, proving that $w(t_1) < w(s)$ for all $s>t_1$. Now let $g\colon [0,1]\to \R$ be the function $g(t) = \beta(w(t)\mid \nu_F)$. By what we have just shown it takes the value $1$ at two distinct $t$, namely at $t = t_1$ and $t = 1$. On the other hand, since $t_1$ lies in the interior of $[0,1]$ and $g(t)\geq 1$ everywhere, it must be that $g'(t_1) = 0$. We have therefore found (with multiplicity) at least three solutions to the equation $g(t) = 1$. By \eqref{eqn:relative_skewness_formula2}, however, $g$ is a rational function of degree at most $2$, so this is possible only if $g\equiv 1$. This proves that $\nu_F \geq w(t)$ for all $t$. By \hyperref[lem:monotonicity]{Lemma~\ref*{lem:monotonicity}(2)}, \[
\alpha(w(t)) = -Z_\pi(w(t))\cdot Z_\pi(\nu_F)= -(1-t)\frac{\check{E}\cdot \check{F}}{b_Eb_F} - t\frac{\check{F}^2}{b_F^2},\] an affine linear function. Since $\alpha$ is monotonic by \hyperref[lem:monotonicity]{Lemma~\ref*{lem:monotonicity}(3)}, necessarily $\alpha(w(t))$ is increasing. 

(3 $\Rightarrow$ 4) The assumption that $\alpha(w(t))$ is affine linear tells us that the quadratic term in  \eqref{eqn:skewness_formula2} vanishes. Given this, it is clear we only need to prove that $\Delta = 0$. Again, we consider $g(t) := \beta(w(t)\mid \nu_F)$. From \eqref{eqn:relative_skewness_formula2}, we have $g(t) = \alpha(w(t))/h(t)$, where $h(t)$ is the nonnegative affine linear function \[h(t) = \left[\Delta + \frac{1}{b_Eb_F}\right]t + \alpha(\nu_E) + \Delta.\] The second derivative of $g$ is easily computed to be \[
g''(t)  = \frac{-2\Delta\alpha(\nu_F)\left[\Delta + \frac{1}{b_Eb_F}\right]}{h(t)^3} = \frac{-2\Delta\alpha(\nu_F)\left[\alpha(\nu_F) - \alpha(\nu_E) - \Delta\right]}{h(t)^3},
\]
the equalities coming from the vanishing of the quadratic term of \eqref{eqn:skewness_formula2}. Given the fact that $\Delta\leq 0$ and, by our assumption that $\alpha$ is increasing, $\alpha(\nu_F) > \alpha(\nu_E)$, we conclude that $g''(t) \geq 0$, with equality only if $\Delta = 0$. It therefore suffices to show that $g$ cannot be a strictly convex function. Let $\pi'$ be the good resolution of $(X,x_0)$ obtained by blowing up the point $p$, and let $G$ be the new exceptional prime thus obtained. Clearly for any $\mf{m}$-primary ideal $\mf{a}$ of $R$, we have $\divi_G(\mf{a}) \geq \divi_E(\mf{a}) + \divi_F(\mf{a})$. The normalized divisorial valuation $\nu_G = (b_E + b_F)^{-1}\divi_G$ associated to $G$ is exactly $w(t_0)$  with $t_0 = b_F/(b_E + b_F)$, so we can rewrite this inequality as \[
w(t_0)(\mf{a})\geq (1-t_0)\nu_E(\mf{a}) + t_0\nu_F(\mf{a}).\] Applying \refprop{famous_equality}, this proves that $g(t_0)\geq (1-t_0)g(0) + t_0g(1)$. Thus $g$ is not strictly convex, completing the proof. 

(4 $\Rightarrow$ 1) We wish to prove that $\beta(w(t)\mid w(s)) = 1$ whenever $s\geq t$. Since statement 4 is assumed to hold, the function $\beta(w(t)\mid w(s))$ takes the simple form \begin{equation}\label{simple_rs}
\beta(w(t)\mid w(s)) = \frac{-2\Delta t^2 + 2\Delta t + \frac{t}{b_Eb_F} + \alpha(\nu_E)}{-2\Delta st + \Delta s + \Delta t + \frac{t}{b_Eb_F} + \alpha(\nu_E)}\end{equation} whenever $s\geq t$. The fact that this quantity must be $\geq 1$ implies that necessarily \[
-2\Delta t^2 + 2\Delta t \geq -2\Delta st + \Delta s + \Delta t,\] or equivalently that $\Delta (2t - 1)(s-t)\geq 0$ whenever $1\geq s \geq t\geq 0$. Since $\Delta \leq 0$, this is only possible if $\Delta = 0$. But then from \eqref{simple_rs} we get $\beta(w(t)\mid w(s)) = 1$ whenever $s\geq t$, completing the proof. 
\end{proof}

\begin{rmk}
One cannot remove the assumption in statement 2 of \refprop{affine_linear} that $t_1 > 0$. One example of this is the cusp singularity $(X,0)$ with $X = \{x^2 + y^3 + z^9 - xyz = 0\}\subset \C^3$, see \S\ref{ssec:example_cusp_322}.
It has a unique minimal good resolution $\pi$, with three rational exceptional primes $E_1$, $E_2$, and $E_3$ that pairwise intersect in $X_\pi$.
Their self-intersections are $E_1^2=E_2^2=-2$, and $E_3^2 = -3$, and the generic multiplicities of all three equal $1$.
The resolution $\pi$ is not a log resolution of the maximal ideal $\mf{m}_X$, but there is a unique base point, which is a \emph{free} point in $E_3$ (see below).
It follows that $\check{E}_3^2 = \check{E}_3\cdot \check{F}_1 = -1$, and thus $\beta(\nu_{E_3} \mid \nu_{E_1})  = 1$, saying precisely that $\nu_{E_3}\leq \nu_{E_1}$. On the other hand, $\alpha(\nu_{E_1}) - \alpha(\nu_{E_3}) = 2/3\neq 1=d(\nu_{E_3}, \nu_{E_1})$, so the equivalent conditions of \refprop{affine_linear} are not satisfied. 
\end{rmk}

Recall that for each log resolution $\pi$ of $\mf{m}$, the map $\emb_\pi$ gives a homeomorphism of the dual graph $\Gamma_\pi\subset \Ediv(\pi)_\R$ onto a subset $\mc{S}_\pi\subset\mc{V}_X$. Moreover, if $b_E^{-1}\check{E}$ and $b_F^{-1}\check{F}$ are adjacent vertices in $\Gamma_\pi$, then the map $w\colon [0,1]\to \mc{V}_X$ given by $w(t) = \emb_\pi((1-t)b_E^{-1}\check{E} + tb_F^{-1}\check{F})$ is precisely monomial parameterization at $p$. Thus $w$ gives a continuous parameterization of the edge in $\mc{S}_\pi$ between the adjacent vertices $\nu_E$ and $\nu_F$. 

Suppose now that $\pi$ is a log resolution of $\mf{m}$, and $\pi'$ is the good resolution of $(X,x_0)$ obtained from $\pi$ by blowing up a single point $p\in X_\pi$. If $p$ is a \emph{satellite point} of $\pi$ (that is, if $p$ is the intersection of two distinct exceptional primes of $\pi$), then $\mc{S}_{\pi'} = \mc{S}_\pi$. If $p$ is a \emph{free point} of $\pi$ (that is, if $p$ lies on a unique exceptional prime $E$ of $\pi$), then $\mc{S}_{\pi}\subsetneq \mc{S}_{\pi'}$; more precisely, $\mc{S}_{\pi'}$ is obtained from $\mc{S}_\pi$ by adjoining the edge from $\nu_E\in \mc{S}_{\pi}$ to a new vertex $\nu_F$, with $F$ the exceptional divisor of $\eta_{\pi\pi'}$. We claim that the parameterization $w$ of this new edge satisfies the equivalent conditions of \refprop{affine_linear}. Indeed, it is easy to see that $b_F = b_E$ and that in $\Ediv(\pi')_\R$ one has $\check{F} = \check{E} - F$. It follows that \[
\alpha(\nu_F) = -\frac{\check{F}^2}{b_F^2} = -\frac{(\check{E} - F)^2}{b_E^2} = -\frac{\check{E}^2}{b_E^2}  - \frac{1}{b_E^2} = \alpha(\nu_E) + \frac{1}{b_E^2}, \] which is exactly condition 4 of \refprop{affine_linear}. Since any good resolution $\pi'$ of $(X,x_0)$ dominating $\pi$ is obtained from $\pi$ by a composition of point blowups, the above analysis applied inductively gives that $\mc{S}_{\pi'}$ is obtained from $\mc{S}_\pi$ by adjoining a collection of ordered trees. More precisely, the following holds. 

\begin{prop}\label{prop:baby_tree} Let $\pi_0$ be a fixed log resolution of $\mf{m}$, and suppose $\hat{\nu}\in \mc{S}_{\pi_0}$ is a chosen point. Let $\pi$ be any good resolution of $(X,x_0)$ dominating $\pi_0$, and let $\mc{S}_{\pi, \hat{\nu}}\subseteq \mc{S}_\pi$ denote the set of valuations $\nu$ whose retraction $r_{\pi_0}(\nu)$ onto $\mc{S}_{\pi_0}$ is the point $\hat{\nu}$. Then $\mc{S}_{\pi, \hat{\nu}}$ is a rooted tree with respect to the partial order $\geq$, having root $\hat{\nu}$. Moreover, the skewness function $\alpha$ is a parameterization of $\mc{S}_{\pi, \hat{\nu}}$ inducing the metric $d$. 
\end{prop}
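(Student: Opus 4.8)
The plan is to induct on the number $N$ of point blowups required to pass from $\pi_0$ to $\pi$, which is permissible since every good resolution dominating $\pi_0$ is obtained from it by a composition of point blowups \cite[Chapter 5]{laufer:normal2dimsing}; most of the computational content is in fact already recorded in the paragraph preceding the proposition, so the task is really one of organization. If $N=0$, that is $\pi=\pi_0$, then the retraction $r_{\pi_0}=\emb_{\pi_0}\circ\ev_{\pi_0}$ fixes $\mc S_{\pi_0}$ pointwise (as $\ev_{\pi_0}\circ\emb_{\pi_0}=\mathrm{id}_{\Gamma_{\pi_0}}$), so $\mc S_{\pi_0,\hat\nu}=\{\hat\nu\}$, which is trivially a rooted tree with root $\hat\nu$, and $\alpha(\hat\nu)$ is finite since $\hat\nu$ is quasimonomial (\refcor{skewness_formula}). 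For the inductive step, let $\pi'$ be obtained from $\pi$ by blowing up a single point $p\in X_\pi$, and assume the statement for $\pi$. If $p$ is a satellite point of $\pi$, then $\mc S_{\pi'}=\mc S_\pi$ inside $\mc V_X$ and $r_{\pi_0}$ is unchanged, so $\mc S_{\pi',\hat\nu}=\mc S_{\pi,\hat\nu}$ and there is nothing to prove.

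So suppose $p$ is a free point, lying on a unique exceptional prime $E$ of $\pi$, and let $F$ be the exceptional divisor of $\eta_{\pi\pi'}$, so that $\mc S_{\pi'}=\mc S_\pi\cup\segment{\nu_E}{\nu_F}{p}$, the new edge meeting $\mc S_\pi$ only at the vertex $\nu_E$. First I would locate this edge within the fibers of $r_{\pi_0}$: for every $t\in[0,1]$ one has $Z_\pi(w(t))=b_E^{-1}\check E$ (obvious for $t=0$, and for $t>0$ because the center of $w(t)$ in $X_\pi$ is $p$), hence $r_\pi(w(t))=\emb_\pi(b_E^{-1}\check E)=\nu_E$; since a short computation using $\ev_{\pi_0}=(\eta_{\pi_0\pi})_*\circ\ev_\pi$ and $\ev_\pi\circ\emb_\pi=\mathrm{id}$ gives $r_{\pi_0}=r_{\pi_0}\circ r_\pi$, it follows that $r_{\pi_0}(w(t))=r_{\pi_0}(\nu_E)$ for all $t$. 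Therefore $\mc S_{\pi',\hat\nu}=\mc S_{\pi,\hat\nu}$ unless $\hat\nu=r_{\pi_0}(\nu_E)$, in which case $\nu_E\in\mc S_{\pi,\hat\nu}$ and $\mc S_{\pi',\hat\nu}=\mc S_{\pi,\hat\nu}\cup\segment{\nu_E}{\nu_F}{p}$.

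It then remains to show that grafting the arc $\segment{\nu_E}{\nu_F}{p}$ onto the rooted tree $\mc S_{\pi,\hat\nu}$ at $\nu_E$ again produces a rooted tree with root $\hat\nu$ on which $\alpha$ parameterizes the metric $d$. Here \refprop{affine_linear} does the work: as computed just above the proposition, $b_F=b_E$ and $\check F=\check E-F$, whence $\alpha(\nu_F)=\alpha(\nu_E)+b_E^{-2}=\alpha(\nu_E)+d(\nu_E,\nu_F)$, which is condition (4) of \refprop{affine_linear}. Thus $w$ is monotone increasing (condition (1)) and $\alpha(w(t))$ is affine linear and increasing in $t$ (condition (3)), so $\alpha$ restricted to the new edge is exactly its arc-length parameter based at $\alpha(\nu_E)$; by the inductive hypothesis $\alpha$ is likewise the arc-length parameter based at $\alpha(\hat\nu)$ on $\mc S_{\pi,\hat\nu}$, and hence on all of $\mc S_{\pi',\hat\nu}$, which yields the parameterization and metric assertions once the order statement is in hand. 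For the latter, observe that $w(0)=\nu_E<w(t)$ for $t>0$, so the new arc lies strictly above its attaching node, and that $\nu_E=r_\pi(w(t))$ is the gate of this hair: every arc in $\mc V_X$ from $w(t)$ into $\mc S_\pi$ passes through $\nu_E$. Consequently, if $\mu\in\mc S_\pi$ with $\mu\leq w(t)$, then the arc $[\mu,w(t)]$ — being between $\leq$-comparable valuations, hence monotone — passes through $\nu_E$, so $\mu\leq\nu_E$; and if $\mu\in\mc S_\pi$ with $w(t)\leq\mu$ and $t\in(0,1]$, then the (again monotone) arc $[w(t),\mu]$ passes through $\nu_E$, forcing $w(t)\leq\nu_E$, which is impossible. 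Hence the new arc is $\geq$-comparable only with itself (monotonically) and with the part of $\mc S_{\pi,\hat\nu}$ lying below $\nu_E$, which together with the inductive hypothesis identifies $\mc S_{\pi',\hat\nu}$ as the asserted rooted tree and completes the induction.

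The step I expect to be the main obstacle is the partial-order bookkeeping of the last paragraph, i.e.\ that a freshly attached edge is $\geq$-incomparable with everything in the old skeleton outside the branch below its attaching vertex. This relies on two properties of the retraction $r_\pi\colon\mc V_X\to\mc S_\pi$ — that $r_\pi(\nu)$ is the ``gate'' through which all arcs from $\nu$ into $\mc S_\pi$ pass, and that arcs between $\leq$-comparable valuations are monotone — which are standard and routine in the case $X=\C^2$ but need to be set up carefully in the singular setting, in particular because $\mc V_X$ may contain circles (though $w(t)$, lying on a hair created by a free blowup, never lies on one, which is precisely what makes the local tree structure applicable). All of the rest — the base and satellite cases, and the affine-linear behaviour of $\alpha$ along the new edge — is immediate from \refprop{affine_linear} and the description given above of how $\mc S_\pi$ changes under a single point blowup.
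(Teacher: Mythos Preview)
Your proposal is correct and takes essentially the same approach as the paper: the paper does not give a formal proof of this proposition at all, but rather deduces it from the paragraph immediately preceding the statement, which performs exactly the free-point-blowup computation you use (namely $b_F=b_E$, $\check F=\check E-F$, hence condition (4) of \refprop{affine_linear}) and then says ``the above analysis applied inductively gives that $\mc S_{\pi'}$ is obtained from $\mc S_\pi$ by adjoining a collection of ordered trees.'' Your induction on the number of point blowups, with the satellite/free dichotomy and \refprop{affine_linear} as the engine, is precisely this argument spelled out in full; the partial-order bookkeeping you flag as the main obstacle is left entirely implicit in the paper.
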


Here the terms \emph{rooted tree} and \emph{parameterization} are in the sense of \cite[\S2]{jonsson:berkovich}. Explicitly, a rooted tree is a partially ordered set $(\mc{T},\leq )$             in  which \begin{enumerate}[itemsep=-1ex]
\item there exists a unique minimal element $\hat{x}$, called the \emph{root},
\item any two elements $x, y\in \mc{T}$ have an infimum $x\wedge y\in \mc{T}$,
\item the set $\{x\in \mc{T} : x\leq y\}$ is order isomorphic to $[0,1]$ when $y\neq \hat{x}$, and 
\item any nonempty totally ordered subset of $\mc{T}$ has a least upper bound  in $\mc{T}$. 
\end{enumerate} A \emph{parameterization} of a rooted tree $(\mc{T}, \leq)$ is a function $\sigma\colon \mc{T}\to \R\cup\{+\infty\}$ which is monotonically increasing and which maps intervals $[x,y] := \{z\in \mc{T} : x\leq z\leq y\}$ order isomorphically onto their images $[\sigma(x), \sigma(y)]$. Any such parameterization gives rise to a metric $d_\sigma$ on the set of all $x\in \mc{T}$ with $\sigma(x) < +\infty$, namely the metric defined by $d_\sigma(x,y) = \sigma(x) + \sigma(y) - 2\sigma(x\wedge y)$.

With the same setup as \refprop{baby_tree}, let $\mc{V}_{X,\hat{\nu}}$ denote the collection of all semivaluations $\nu\in\mc{V}_X$ whose retraction $r_{\pi_0}(\nu)$  onto $\mc{S}_{\pi_0}$ is the point $\hat{\nu}$. More or less immediately from \refthm{structure} we see that $\mc{V}_{X,\hat{\nu}}\cong \varprojlim\mc{S}_{\pi, \hat{\nu}}$. It is not hard to see that the rooted tree structure of the the $\mc{S}_{\pi, \hat{\nu}}$ induces in the limit a rooted tree structure on $\mc{V}_{X,\hat{\nu}}$.

\begin{prop}\label{prop:big_tree} The set $\mc{V}_{X,\hat{\nu}}$ is a rooted tree with respect to the partial order $\geq$, having root $\hat{\nu}$. The skewness function $\alpha$ is a parameterization of $\mc{V}_{X,\hat{\nu}}$ whose induced metric $d_\alpha$ agrees with $d$ on the set $\mc{V}_{X,\hat{\nu}}^\qm$ of quasimonomial valuations. 
\end{prop}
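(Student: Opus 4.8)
The plan is to bootstrap \refprop{big_tree} from the already-established local version \refprop{baby_tree} by taking a projective limit over good resolutions dominating the fixed log resolution $\pi_0$. First I would fix the isomorphism $\mc{V}_{X,\hat{\nu}}\cong \varprojlim_\pi \mc{S}_{\pi,\hat{\nu}}$ coming from \refthm{structure}: since $r_{\pi_0}$ commutes with all the retractions $r_\pi$ for $\pi\geq \pi_0$, a semivaluation $\nu$ satisfies $r_{\pi_0}(\nu)=\hat{\nu}$ if and only if $r_\pi(\nu)$ lies in $\mc{S}_{\pi,\hat{\nu}}$ for every $\pi\geq\pi_0$, and these retractions are compatible. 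The transition maps in the inverse system are exactly the restrictions of the retractions $\mc{S}_{\pi',\hat{\nu}}\to\mc{S}_{\pi,\hat{\nu}}$, which are order-preserving (by \refprop{order_effective}, since retracting only decreases the divisor $-Z_\pi$) and send the root $\hat{\nu}$ to the root $\hat{\nu}$. One then checks that an inverse limit of rooted trees along surjective, order-preserving, root-preserving, interval-collapsing maps is again a rooted tree: property (1) (unique minimal element) is immediate as $\hat{\nu}$ is the root at every finite stage; for property (2), given $\nu,\mu$, the infima $r_\pi(\nu)\wedge r_\pi(\mu)$ in $\mc{S}_{\pi,\hat{\nu}}$ form a compatible family and their limit is the desired infimum $\nu\wedge\mu$; property (4) (least upper bounds of totally ordered subsets) follows from sequential compactness of $\mc{V}_X$ in the weak topology together with the monotonicity of $\alpha$; and property (3) (that $\{\mu\leq\nu\}$ is order-isomorphic to $[0,1]$) is the one requiring the most care, which I address below.

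For the statement about skewness, I would argue that $\alpha$ is a parameterization of $\mc{V}_{X,\hat{\nu}}$. Monotonicity of $\alpha$ with respect to $\geq$ is \hyperref[lem:monotonicity]{Lemma~\ref*{lem:monotonicity}(3)}. To see that $\alpha$ maps each interval $[\mu,\nu]$ order-isomorphically onto $[\alpha(\mu),\alpha(\nu)]$, I would combine \refprop{baby_tree}, which gives this at each finite level $\mc{S}_{\pi,\hat{\nu}}$ with the metric $d$ on that level induced by $\alpha$, with the fact — noted in \S\ref{ssec:dualgraphs} — that the inclusions $\mc{S}_\pi\hookrightarrow\mc{S}_{\pi'}$ are isometric for $d$ and that $\alpha$ is continuous in the strong topology on quasimonomial valuations (the proposition following \refthm{structure}). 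Surjectivity of $\alpha$ onto $[\alpha(\mu),\alpha(\nu)]$ then follows because every value in $[\alpha(r_\pi(\mu)),\alpha(r_\pi(\nu))]$ is attained at some finite level and these partial images are nested and exhaust the interval; injectivity on $[\mu,\nu]$ follows because if $\mu\leq\lambda_1<\lambda_2\leq\nu$ then for $\pi$ large enough $r_\pi(\lambda_1)<r_\pi(\lambda_2)$ strictly, whence $\alpha(\lambda_1)<\alpha(\lambda_2)$ by \hyperref[lem:monotonicity]{Lemma~\ref*{lem:monotonicity}(3)} applied at level $\pi$ together with the monotonicity of $\alpha_\pi$. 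Finally, the formula $d_\alpha(\mu,\nu)=\alpha(\mu)+\alpha(\nu)-2\alpha(\mu\wedge\nu)$ agrees with $d$ on $\mc{V}_{X,\hat{\nu}}^\qm$ because for quasimonomial $\mu,\nu$ there is a single $\pi$ realizing both and containing $\mu\wedge\nu$, and there the identity is exactly the content of \refprop{baby_tree} (the metric $d$ being defined as the direct limit of the metric graph structures on the $\Gamma_\pi$).

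The main obstacle I expect is verifying axiom (3) of a rooted tree in the limit — that the set $\{\lambda\in\mc{V}_{X,\hat{\nu}} : \lambda\leq\nu\}$ is order-isomorphic to $[0,1]$ for $\nu\neq\hat{\nu}$ — and relatedly establishing completeness (axiom (4)). The subtlety is that passing to an inverse limit can a priori either fail to produce enough points (gaps) or, in pathological inverse systems, destroy the linear order of the fibers. Here the key input that rules this out is that the parameterization $\alpha$ already exists and is continuous and monotone: the map $\lambda\mapsto\alpha(\lambda)$ identifies $\{\lambda\leq\nu\}$ with a subset of $[\alpha(\hat{\nu}),\alpha(\nu)]\subset\R$, so it is automatically totally ordered, and one only needs that its image is the whole closed interval. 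That last point uses that quasimonomial valuations are dense (so the image contains a dense subset of the interval) and that $\{\lambda\leq\nu\}$ is weakly compact (so the image is closed), both of which follow from \refthm{structure}. Packaging these observations cleanly — i.e. reducing the verification of all four tree axioms to properties of $\alpha$ rather than manipulating the inverse system directly — is, I think, the cleanest route and avoids the only genuinely delicate point.
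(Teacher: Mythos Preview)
Your proposal is correct and follows exactly the approach the paper indicates: the paper does not give a detailed proof of \refprop{big_tree} but merely observes (just before the statement) that $\mc{V}_{X,\hat{\nu}}\cong\varprojlim\mc{S}_{\pi,\hat{\nu}}$ via \refthm{structure} and that the rooted tree structure on each $\mc{S}_{\pi,\hat{\nu}}$ from \refprop{baby_tree} passes to the inverse limit, deferring to \cite[\S7.6]{jonsson:berkovich} and \cite[\S3.2]{favre-jonsson:valtree} for the analogous argument in the smooth case. Your write-up simply unpacks that sketch, and your identification of axiom~(3) as the only point requiring care---resolved via the monotonicity and continuity of $\alpha$ together with density of quasimonomial valuations and weak compactness---is the natural way to fill in the details the paper omits.
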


\refprop{big_tree} generalizes the fact that $\mc{V}_X$ is a rooted tree in the smooth setting when $X = \C^2$, see \cite[\S7.6]{jonsson:berkovich} or \cite[\S3.2]{favre-jonsson:valtree}. Finally, we note that \refprop{big_tree} gives a natural way of extending the metric $d$ to the set of all finite skewness valuations of $\mc{V}_X$. Explicitly, if $\nu$ and $\mu$ are finite skewness valuations that lie in $\mc{V}_{X,\hat{\nu}}$ for some $\hat{\nu}\in \mc{S}_{\pi_0}$, then $d(\nu, \mu) := d_\alpha(\nu, \mu)$, whereas if $\nu$ and $\mu$ lie in $\mc{V}_{X,\hat{\nu}}$ and $\mc{V}_{X,\hat{\mu}}$ for distinct $\hat{\nu}$ and $\hat{\mu}$, then $d(\nu, \mu) := d_\alpha(\nu, \hat{\nu}) + d(\hat{\nu}, \hat{\mu}) + d_\alpha(\hat{\mu}, \mu)$. Henceforth, we will extend $d$ in this way, and call the resulting topology the \emph{strong topology} of finite skewness valuations.

\subsection{The angular metric}\label{angular_metric}

The metric $d$ constructed above has proved useful a number of times in previous works for studying the geometry of valuation spaces and developing potential theory on them, see \cite{favre-jonsson:valtree, favre-jonsson:eigenval, favre-jonsson:valmultideals, favre-jonsson:valanalplanarplurisubharfunct, favre-jonsson:dynamicalcompactifications, boucksom-favre-jonsson:izumi, baker-nicaise:weight-functions}. For us, however, another metric, which we will call the \emph{angular metric}, shall prove to be of greater use in our dynamical setting. 

\begin{defi} The \emph{angular distance} $\rho(\nu, \mu)$ between two semivaluations $\nu, \mu\in \mc{V}_X$ is given by \begin{equation}\rho(\nu, \mu) := \log [\beta(\nu\mid \mu)\beta(\mu\mid\nu)].\end{equation} This quantity may of course be $+\infty$. 
\end{defi}

The term angular distance alludes to the fact that  $\beta(\nu\mid\mu)\beta(\mu\mid \nu)$ is the limit over good resolutions $\pi$ of $\beta_\pi(\nu\mid \mu)\beta_\pi(\mu\mid \nu)$, which in turn is a measure of the angle between the divisors $Z_\pi(\nu)$ and $Z_\pi(\mu)$ as vectors in $\Ediv(\pi)_\R$, see \eqref{skewnesses}. 

\begin{prop} The angular distance $\rho$ gives a metric on the set of valuations of finite skewness in $\mc{V}_X$.
\end{prop}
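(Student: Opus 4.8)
The claim is that $\rho(\nu,\mu)=\log[\beta(\nu\mid\mu)\beta(\mu\mid\nu)]$ is a metric on the finite-skewness locus $\mc{V}_X^\alpha$ of $\mc{V}_X$. The three properties to verify are: (i) $\rho(\nu,\mu)\geq 0$ with equality iff $\nu=\mu$; (ii) symmetry; (iii) the triangle inequality. Symmetry is immediate from the definition since the product $\beta(\nu\mid\mu)\beta(\mu\mid\nu)$ is manifestly symmetric in $\nu$ and $\mu$. For (i), by \reflem{monotonicity}(1) we have $\beta(\mu\mid\nu)\geq 1$ and $\beta(\nu\mid\mu)\geq 1$ for normalized $\nu,\mu$, so the product is $\geq 1$ and $\rho\geq 0$; moreover equality forces $\beta(\nu\mid\mu)=\beta(\mu\mid\nu)=1$, which by \reflem{monotonicity}(1) means $\nu\geq\mu$ and $\mu\geq\nu$ simultaneously, hence $\nu=\mu$ (using that the partial order is genuinely antisymmetric, which follows from \refprop{order_effective} since it forces $Z_\pi(\nu)=Z_\pi(\mu)$ for all $\pi$, and $\nu$ is recovered from its b-divisor).

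**The main point: the triangle inequality.** This is the substantive step. The strategy is to work at a fixed good resolution $\pi$ and exploit the characterization \refprop{famous_equality}, $\beta(\nu\mid\mu)=\sup_\mf{a}\nu(\mf{a})/\mu(\mf{a})$, or equivalently the Cauchy–Schwarz interpretation from the proof of that proposition: $\beta_\pi(\nu\mid\mu)=\frac{-Z_\pi(\nu)\cdot Z_\pi(\nu)}{-Z_\pi(\nu)\cdot Z_\pi(\mu)}$. Writing $z=Z_\pi(\nu)$, $w=Z_\pi(\mu)$, $u=Z_\pi(\lambda)$ as vectors in $\Ediv(\pi)_\R$ with the inner product $\langle x,y\rangle:=-x\cdot y$ (positive definite by \cite{grauert:ubermodifikationen}, and all three vectors have strictly positive pairwise inner products since they are nonzero relatively nef by \refcor{anti-effective2}), one has
\[
\beta_\pi(\nu\mid\mu)\beta_\pi(\mu\mid\nu)=\frac{\langle z,z\rangle\langle w,w\rangle}{\langle z,w\rangle^2}=\frac{1}{\cos^2\theta_\pi(z,w)},
\]
where $\theta_\pi(z,w)\in[0,\pi/2)$ is the angle between $z$ and $w$. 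Thus $\rho_\pi(\nu,\mu):=\log[\beta_\pi(\nu\mid\mu)\beta_\pi(\mu\mid\nu)]=-2\log\cos\theta_\pi(z,w)$. The plan is to show that the function $(x,y)\mapsto -2\log\cos\theta(x,y)$, or rather its square root, behaves well enough to give a triangle inequality; the cleanest route is to verify directly that $\theta$ itself satisfies the triangle inequality $\theta_\pi(z,u)\leq\theta_\pi(z,w)+\theta_\pi(w,u)$ (this is the standard fact that angular distance on rays in a Euclidean space is a metric — it is the spherical metric on the sphere in $\Ediv(\pi)_\R$, restricted to the positive orthant), and then observe that $\phi\mapsto -2\log\cos\phi$ is increasing and convex on $[0,\pi/2)$ with value $0$ at $0$, hence subadditive-after-composition: more precisely one checks that $g(\phi)=-2\log\cos\phi$ satisfies $g(a+b)\geq g(a)+g(b)$ when $a+b<\pi/2$ — but this is the wrong direction. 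So instead I would pass to $\sqrt{g}$ or argue that since each $\rho_\pi$ is a metric (angular-type metrics on cones in inner product spaces, which can be cited or proven via the spherical triangle inequality plus monotone concave reparametrization), and $\rho=\lim_\pi\rho_\pi=\sup_\pi\rho_\pi$ by \refprop{monotonic}, a supremum of metrics is a metric. That last reduction — \emph{a pointwise supremum of a directed family of metrics is a metric} — is the crux and is elementary: the triangle inequality passes to suprema.

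**What remains to nail down.** The one genuine subtlety is that I must confirm each $\rho_\pi$ really is a metric, i.e. that the spherical/angular distance restricted to the positive orthant satisfies the triangle inequality — here the positivity $\langle z,w\rangle>0$ (from \refcor{anti-effective2}) guarantees all angles lie in $[0,\pi/2)$, so we never leave the hemisphere and the standard spherical triangle inequality applies; then $-2\log\cos$ is a concave increasing reparametrization of $[0,\pi/2)\to[0,\infty)$ vanishing at $0$, and composing a metric taking values in $[0,\pi/2)$ with a concave increasing function vanishing at $0$ yields a metric (concavity plus $f(0)=0$ gives subadditivity $f(a+b)\leq f(a)+f(b)$, hence $f(\theta(z,u))\leq f(\theta(z,w)+\theta(w,u))\leq f(\theta(z,w))+f(\theta(w,u))$). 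Wait — I should double-check concavity of $\phi\mapsto-2\log\cos\phi$: its second derivative is $-2\sec^2\phi<0$, so it is \emph{concave}, good. Finiteness on $\mc{V}_X^\alpha$ follows because $\alpha(\nu),\alpha(\mu)<\infty$ forces $\beta(\nu\mid\mu)<\infty$ by \refcor{skewness_comparison} (treating the proportional case separately, where $\rho=0$). Assembling: $\rho$ is symmetric, nonnegative, separates points, finite on $\mc{V}_X^\alpha$, and satisfies the triangle inequality as a supremum over $\pi$ of the metrics $\rho_\pi$; hence it is a metric.
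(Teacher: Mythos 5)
Your treatment of symmetry and positive definiteness agrees with the paper. The triangle inequality, however, is where your argument breaks down, and the gap is fundamental rather than cosmetic.

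\textbf{The concavity claim is wrong.} You write that the second derivative of $g(\phi)=-2\log\cos\phi$ is $-2\sec^2\phi<0$; in fact $g'(\phi)=2\tan\phi$ and $g''(\phi)=2\sec^2\phi>0$, so $g$ is strictly \emph{convex} on $[0,\pi/2)$, not concave. (You actually flagged this a few lines earlier — ``but this is the wrong direction'' — and then contradicted yourself when you returned to the point.) The subadditivity lemma you invoke (concave increasing with $f(0)=0$ $\Rightarrow$ subadditive) therefore does not apply; for a convex such $g$ one gets the reverse inequality $g(a+b)\geq g(a)+g(b)$.

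\textbf{Consequently $\rho_\pi$ is not a metric, and the ``supremum of metrics'' reduction fails.} Here is a concrete obstruction, already in two dimensions. Take $z=(1,0)$, $w=(\cos a,\sin a)$, $u=(\cos b,\sin b)$ with $0<a<b<\pi/2$. Then $\theta_\pi(z,u)=b$, $\theta_\pi(z,w)=a$, $\theta_\pi(w,u)=b-a$, and
\[
\cos b=\cos a\cos(b-a)-\sin a\sin(b-a)\ \leq\ \cos a\cos(b-a),
\]
so $\tfrac{1}{\cos^2 b}\geq\tfrac{1}{\cos^2 a\cos^2(b-a)}$, i.e.\ $\rho_\pi(z,u)\geq\rho_\pi(z,w)+\rho_\pi(w,u)$, with strict inequality in general. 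Indeed, letting $b\to\pi/2$ with $a$ fixed sends $\rho_\pi(z,u)\to+\infty$ while $\rho_\pi(z,w)+\rho_\pi(w,u)$ stays bounded. So the angular quantity $-2\log\cos\theta$ on a cone in an inner-product space simply does not satisfy a triangle inequality, and no single $\rho_\pi$ can be expected to be a metric. Since your plan hinges entirely on ``each $\rho_\pi$ is a metric, hence $\rho=\sup_\pi\rho_\pi$ is a metric,'' the plan does not close.

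\textbf{What makes $\rho$ (the supremum) a metric is not geometry in a single $\Ediv(\pi)_\R$, but the multiplicative supremum characterization.} The paper's proof bypasses angles entirely and uses \refprop{famous_equality}, $\beta(\nu\mid\mu)=\sup_{\mf{a}}\nu(\mf{a})/\mu(\mf{a})$. Inserting a third valuation $\gamma$ and splitting the supremum of a product into a product of suprema gives
\[
\beta(\nu\mid\mu)\beta(\mu\mid\nu)
=\sup_{\mf{a}}\frac{\nu(\mf{a})\gamma(\mf{a})}{\gamma(\mf{a})\mu(\mf{a})}\cdot\sup_{\mf{a}}\frac{\mu(\mf{a})\gamma(\mf{a})}{\gamma(\mf{a})\nu(\mf{a})}
\leq\beta(\nu\mid\gamma)\beta(\gamma\mid\mu)\beta(\mu\mid\gamma)\beta(\gamma\mid\nu),
\]
and taking logarithms yields the triangle inequality directly, with no appeal to $\rho_\pi$. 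This is the argument you should be making; the relevant monotonicity/stabilization (\refprop{monotonic}) is what lets the paper-level quantity $\rho$ behave better than any individual $\rho_\pi$.
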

\begin{proof} If $\nu, \mu\in\mc{V}_X$ are both valuations of finite skewness, then $\rho(\nu, \mu)$ is finite by \refcor{skewness_comparison}. Clearly $\rho$ is symmetric. It is positive definite by \hyperref[lem:monotonicity]{Lemma~\ref*{lem:monotonicity}(1)}, so we need only to prove the triangle inequality. If $\gamma\in \mc{V}_X$ is another  valuation of finite skewness, then by \refprop{famous_equality} \begin{align*}
\beta(\nu\mid \mu)\beta(\mu\mid \nu) & = \sup_\mf{a}\left(\frac{\nu(\mf{a})\gamma(\mf{a})}{\mu(\mf{a})\gamma(\mf{a})}\right)\times\sup_\mf{a}\left(\frac{\mu(\mf{a})\gamma(\mf{a})}{\nu(\mf{a})\gamma(\mf{a})}\right) \\
& \leq \sup_\mf{a} \left(\frac{\nu(\mf{a})}{\gamma(\mf{a})}\right)\times \sup_\mf{a} \left(\frac{\gamma(\mf{a})}{\mu(\mf{a})}\right)\times \sup_\mf{a}\left(\frac{\mu(\mf{a})}{\gamma(\mf{a})}\right)\times \sup_\mf{a}\left(\frac{\gamma(\mf{a})}{\nu(\mf{a})}\right)\\
& = \beta(\nu\mid\gamma)\beta(\gamma\mid \mu)\beta(\mu\mid\gamma)\beta(\gamma\mid\nu). 
\end{align*} Upon taking logarithms, this gives $\rho(\nu, \mu)\leq \rho(\nu, \gamma) + \rho(\gamma, \mu)$, as desired. 
\end{proof}

\begin{rmk}
In a recent work \cite{garciabarroso-gonzalezperez-popescupampu:ultrametricspacesarborescentsing}, the authors introduce a distance, called \emph{determinant distance}, on the set $\mc{S}_\pi^*$ of divisorial valuations realized in a given good resolution $\pi \colon X_\pi \to (X,x_0)$ of an \emph{arborescent singularity}. An arborescent singularity is a normal surface singularity whose dual graph of any good resolution is a tree.

Up to a rescaling factor, their determinant distance coincide with the restriction of the angular distance on $S_\pi^*$.
With this point of view, the angular distance can be seen as an inverse limit of the determinant distances along all good resolutions $\pi$ of $(X,x_0)$.
\end{rmk}

We spend the remainder of the section comparing the two metrics $d$ and $\rho$. We saw in \S\ref{ssec:partial_order} that the metric $d$ is the induced by the skewness parameterization on subsets of the form $\mc{V}_{X,\hat{\nu}}$. As we will now see, the angular metric is also induced by a parameterization on these sets, namely the parameterization by $\log \alpha$. 

\begin{prop}\label{prop:log_alpha} Let $\pi_0$ be a fixed log resolution of $\mf{m}$, and suppose $\hat{\nu}\in \mc{S}_{\pi_0}$ is a chosen point. Then $\log \alpha$ is a parameterization on the rooted tree $\mc{V}_{X,\hat{\nu}}$ that induces the angular metric on the set of finite skewness valuations. 
\end{prop}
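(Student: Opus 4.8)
The goal is to show that $\log\alpha$ is a parameterization of the rooted tree $\mc{V}_{X,\hat\nu}$ (in the sense recalled after \refprop{baby_tree}) and that the induced metric $d_{\log\alpha}$ coincides with the angular metric $\rho$ on the subset of finite-skewness valuations. The backbone of the argument is already in place: \refprop{big_tree} tells us that $\alpha$ itself is a parameterization of $\mc{V}_{X,\hat\nu}$, so in particular $\alpha$ is monotonically increasing and restricts to order isomorphisms $[\mu_1,\mu_2]\xrightarrow{\ \sim\ }[\alpha(\mu_1),\alpha(\mu_2)]$ on intervals in the tree. Since $\log$ is a strictly increasing homeomorphism of $(0,+\infty)$ onto $\R$, composing gives that $\log\alpha$ is again monotonically increasing and restricts to order isomorphisms on intervals; hence $\log\alpha$ is a parameterization. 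So the first paragraph of the proof is essentially immediate from \refprop{big_tree}.

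The substance is the identification of metrics. By definition of the metric induced by a parameterization (recalled after \refprop{baby_tree}), for finite-skewness $\nu,\mu\in\mc{V}_{X,\hat\nu}$ one has $d_{\log\alpha}(\nu,\mu)=\log\alpha(\nu)+\log\alpha(\mu)-2\log\alpha(\nu\wedge\mu)$, where $\nu\wedge\mu$ is the infimum in the rooted tree. On the other hand $\rho(\nu,\mu)=\log[\beta(\nu\mid\mu)\beta(\mu\mid\nu)]$. So it suffices to prove the identity
\[
\beta(\nu\mid\mu)\,\beta(\mu\mid\nu)=\frac{\alpha(\nu)\,\alpha(\mu)}{\alpha(\nu\wedge\mu)^2}.
\]
The natural route is through \refcor{skewness_comparison}: writing $\gamma=\nu\wedge\mu$, I would choose a good resolution $\pi$ high enough that $\nu$, $\mu$, $\gamma$ have pairwise distinct centers in $X_\pi$ whenever the corresponding pair of valuations is non-proportional. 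Because $\gamma\leq\nu$ and $\gamma\leq\mu$, \hyperref[lem:monotonicity]{Lemma~\ref*{lem:monotonicity}(2)} gives $\alpha(\gamma)=-Z_\pi(\nu)\cdot Z_\pi(\gamma)=-Z_\pi(\mu)\cdot Z_\pi(\gamma)$ (choosing $\pi$ so $\gamma$ has a center distinct from those of $\nu$ and $\mu$). The key geometric input is that $\gamma=\nu\wedge\mu$ \emph{disconnects} $\nu$ and $\mu$ in $\mc{V}_X$ — this is exactly the tree-theoretic meaning of the infimum — so \refprop{positivity_bdivisors} applies with equality: $(Z_\pi(\gamma)\cdot Z_\pi(\nu))(Z_\pi(\gamma)\cdot Z_\pi(\mu))=(Z_\pi(\gamma)\cdot Z_\pi(\gamma))(Z_\pi(\nu)\cdot Z_\pi(\mu))$, i.e.\ $\alpha(\gamma)^2=\alpha(\gamma)\cdot[-Z_\pi(\nu)\cdot Z_\pi(\mu)]$, whence $-Z_\pi(\nu)\cdot Z_\pi(\mu)=\alpha(\gamma)$. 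Then \refcor{skewness_comparison} gives $\alpha(\nu)=\beta(\nu\mid\mu)[-Z_\pi(\nu)\cdot Z_\pi(\mu)]=\beta(\nu\mid\mu)\,\alpha(\gamma)$ and symmetrically $\alpha(\mu)=\beta(\mu\mid\nu)\,\alpha(\gamma)$; multiplying these two identities yields the displayed formula, and taking logarithms finishes the case where $\nu,\mu$ lie in the same $\mc{V}_{X,\hat\nu}$. For the degenerate cases (one of $\nu,\mu,\gamma$ proportional to another, e.g.\ $\gamma=\nu$ so $\nu\leq\mu$) one checks directly: then $\beta(\nu\mid\mu)=1$ by \hyperref[lem:monotonicity]{Lemma~\ref*{lem:monotonicity}(1)} and $\alpha(\mu)=\beta(\mu\mid\nu)\alpha(\nu)$, so both sides equal $\log\beta(\mu\mid\nu)=\log\alpha(\mu)-\log\alpha(\nu)$.

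Finally, to extend from a single tree $\mc{V}_{X,\hat\nu}$ to all of $\mc{V}_X$, I would recall from \S\ref{ssec:partial_order} how $d$ (and likewise any parameterization metric) is extended across the skeleton $\mc{S}_{\pi_0}$: for $\nu\in\mc{V}_{X,\hat\nu}$ and $\mu\in\mc{V}_{X,\hat\mu}$ with $\hat\nu\neq\hat\mu$, one has $d_{\log\alpha}(\nu,\mu)=d_{\log\alpha}(\nu,\hat\nu)+d_{\log\alpha}(\hat\nu,\hat\mu)+d_{\log\alpha}(\hat\mu,\mu)$, and the claimed equality $\rho=d_{\log\alpha}$ on each piece plus additivity of $\rho$ along the tree — which itself follows from the same formula applied with $\gamma$ ranging over $\hat\nu$, the intermediate vertices, and $\hat\mu$ — gives the global statement.

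\textbf{Main obstacle.} The one point that deserves genuine care, rather than being routine, is the equality case of \refprop{positivity_bdivisors}: one must verify that the tree-infimum $\nu\wedge\mu$ computed from the partial order $\geq$ really does coincide with the topological notion of "disconnecting $\nu$ and $\mu$" used in that proposition. This is morally clear from \refprop{big_tree} together with the structure theorem \refthm{structure}, but spelling out that $r_{\pi_0}$-fibers are trees whose infima are separating points in $\mc{V}_X$ is where the argument has to be precise; everything else is bookkeeping with \refcor{skewness_comparison} and \hyperref[lem:monotonicity]{Lemma~\ref*{lem:monotonicity}}.
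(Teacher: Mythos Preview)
Your proof is correct, but it takes a somewhat different route from the paper's, and it is worth contrasting the two.

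The paper's proof is much shorter: after noting that $\log\alpha$ is a parameterization because $\alpha$ is, it only treats the case of \emph{comparable} valuations $\mu<\nu$. There $\nu\wedge\mu=\mu$, and \reflem{monotonicity} alone suffices: $\beta(\mu\mid\nu)=1$ and $-Z_\pi(\nu)\cdot Z_\pi(\mu)=\alpha(\mu)$, so $\rho(\nu,\mu)=\log\beta(\nu\mid\mu)=\log[\alpha(\nu)/\alpha(\mu)]$. The general case then follows formally from the definition $d_{\log\alpha}(\nu,\mu)=d_{\log\alpha}(\nu,\gamma)+d_{\log\alpha}(\gamma,\mu)$ with $\gamma=\nu\wedge\mu$, together with the corresponding additivity of $\rho$ along the tree (which the paper leaves implicit here, though the computation in the proof of the next lemma, \reflem{additiverho}, is essentially the same).

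Your approach instead proves the general identity $\beta(\nu\mid\mu)\beta(\mu\mid\nu)=\alpha(\nu)\alpha(\mu)/\alpha(\nu\wedge\mu)^2$ in one stroke, by invoking the equality case of \refprop{positivity_bdivisors} (with $\gamma=\nu\wedge\mu$ disconnecting $\nu$ and $\mu$) to get $-Z(\nu)\cdot Z(\mu)=\alpha(\gamma)$. This is a heavier tool than the paper uses, but it makes the argument more self-contained: you never need to appeal separately to additivity of $\rho$, and the ``main obstacle'' you flag (that the tree-theoretic infimum is a disconnecting point) is indeed the only nontrivial step, handled by the tree structure from \refprop{big_tree}. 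Your final paragraph about extending across $\mc{S}_{\pi_0}$ is not part of this proposition---that is precisely the content of the subsequent \reflem{additiverho}---so you can drop it.
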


\begin{proof} That $\log \alpha$ is a parameterization of $\mc{V}_{X, \hat{\nu}}$ is immediate from the fact that $\alpha$ is a parameterization of $\mc{V}_{X,\hat{\nu}}$. To see it induces the angular metric, we need only show that if $\nu, \mu\in \mc{V}_{X,\nu}$ are such that $\mu < \nu$, then $\rho(\mu, \nu) = \log\alpha(\nu) - \log\alpha(\mu) = \log[\alpha(\nu)/\alpha(\mu)]$. Indeed, if $\pi$ is a good resolution dominating $\pi_0$ in which the centers of $\nu$ and $\mu$ are distinct, we have by \reflem{monotonicity} that  \[
\rho(\nu, \mu) =\log[\beta(\mu\mid\nu)\beta(\nu\mid\mu)]= \log\beta(\nu\mid\mu) = \log\frac{\alpha(\nu)}{-Z_\pi(\nu)\cdot Z_\pi(\mu)} = \log\frac{\alpha(\nu)}{\alpha(\mu)},\] completing the proof.
\end{proof}

It follows essentially immediately that when restricted to sets of the form $\mc{V}_{X,\hat{\nu}}$ the metrics $d$ and $\rho$ are equivalent in that they both induce the strong topology on finite skewness valuations. To compare the metrics on all of $\mc{V}_X$, we need the following lemma. 

\begin{lem}\label{lem:additiverho}
Suppose that $\mu, \nu\in \mc{V}_X$ are finite skewness valuations that lie in $\mc{V}_{X,\hat{\mu}}$ and $\mc{V}_{X,\hat{\nu}}$, respectively, where $\hat{\mu}\neq \hat{\nu}$. Then $\rho(\nu, \mu) = \rho(\nu, \hat{\nu}) + \rho(\hat{\nu}, \hat{\mu}) + \rho(\hat{\mu}, \mu)$.
\end{lem}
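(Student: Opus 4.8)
The plan is to reduce the whole identity to a single ``three--point formula'' for the angular distance in terms of b--divisor intersection numbers, and then to exploit the equality case of \refprop{positivity_bdivisors} at the retraction points $\hat{\nu}$ and $\hat{\mu}$.

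First I would record the formula: for any two finite skewness semivaluations $\xi_1,\xi_2\in\mc{V}_X$ one has
\[
\rho(\xi_1,\xi_2)=\log\alpha(\xi_1)+\log\alpha(\xi_2)-2\log\bigl(-Z(\xi_1)\cdot Z(\xi_2)\bigr),
\]
with the convention $-Z(\xi)\cdot Z(\xi)=\alpha(\xi)$. Indeed, if $\xi_1,\xi_2$ are not proportional, pick a good resolution $\pi$ in which they have distinct centers; then \refcor{skewness_comparison} gives $\alpha(\xi_i)=\beta(\xi_i\mid\xi_{3-i})\,[-Z_\pi(\xi_1)\cdot Z_\pi(\xi_2)]$ for $i=1,2$, and $Z_\pi(\xi_1)\cdot Z_\pi(\xi_2)=Z(\xi_1)\cdot Z(\xi_2)$ by \refprop{intersection_differentcenters}; multiplying the two relations and taking logarithms yields the formula, and the proportional case $\xi_1=\xi_2$ is immediate. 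In particular, when $\xi_1\le\xi_2$ we have $-Z(\xi_1)\cdot Z(\xi_2)=\alpha(\xi_1)$ by \reflem{monotonicity}(2), so $\rho(\xi_1,\xi_2)=\log\alpha(\xi_2)-\log\alpha(\xi_1)$ --- which is of course also a restatement of \refprop{log_alpha}. Applying this to $\xi_1=\hat{\nu}\le\nu$ and to $\xi_1=\hat{\mu}\le\mu$ (the inequalities holding because $\hat{\nu}$, $\hat{\mu}$ are the roots of the trees $\mc{V}_{X,\hat{\nu}}$, $\mc{V}_{X,\hat{\mu}}$, see \refprop{big_tree}) already gives $\rho(\nu,\hat{\nu})=\log\alpha(\nu)-\log\alpha(\hat{\nu})$ and $\rho(\hat{\mu},\mu)=\log\alpha(\mu)-\log\alpha(\hat{\mu})$.

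Next I would establish two disconnection statements: $\hat{\nu}$ disconnects $\nu$ and $\mu$, and $\hat{\mu}$ disconnects $\hat{\nu}$ and $\mu$. Both follow from the claim that, for any $\omega_1\in\mc{V}_{X,\hat{\omega}}$ and any $\omega_2\in\mc{V}_X$ with $r_{\pi_0}(\omega_2)\ne\hat{\omega}$, the point $\hat{\omega}$ disconnects $\omega_1$ and $\omega_2$ (apply it with $(\hat{\omega},\omega_1,\omega_2)=(\hat{\nu},\nu,\mu)$ and with $(\hat{\mu},\mu,\hat{\nu})$; the cases $\nu=\hat{\nu}$ or $\mu=\hat{\mu}$ are trivial). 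To prove the claim, one checks from the description of the weak topology (\S\ref{ssec:tangent_vectors}) together with \refthm{structure} that $\mc{V}_{X,\hat{\omega}}$ is closed and that $\mc{V}_{X,\hat{\omega}}\setminus\{\hat{\omega}\}$ is open in $\mc{V}_X$; equivalently, every path joining a point of $\mc{V}_{X,\hat{\omega}}\setminus\{\hat{\omega}\}$ to a point outside $\mc{V}_{X,\hat{\omega}}$ passes through $\hat{\omega}$. Concatenating a path from $\omega_1$ to $\omega_2$ with a path from $\omega_2$ to $r_{\pi_0}(\omega_2)$ that stays inside $\mc{V}_{X,r_{\pi_0}(\omega_2)}$ (path connected, being a tree) produces a path from $\omega_1\in\mc{V}_{X,\hat{\omega}}\setminus\{\hat{\omega}\}$ to a point outside $\mc{V}_{X,\hat{\omega}}$; it must hit $\hat{\omega}$, and cannot do so along the second piece, so it hits $\hat{\omega}$ between $\omega_1$ and $\omega_2$; since $\mc{V}_X$ is locally path connected, path components and connected components of $\mc{V}_X\setminus\{\hat{\omega}\}$ coincide, giving the claim.

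Then I would feed this into \refprop{positivity_bdivisors}. With disconnecting valuation $\hat{\nu}$ and pair $\{\nu,\mu\}$ the equality case reads $\bigl(Z(\hat{\nu})\cdot Z(\nu)\bigr)\bigl(Z(\hat{\nu})\cdot Z(\mu)\bigr)=\bigl(Z(\hat{\nu})\cdot Z(\hat{\nu})\bigr)\bigl(Z(\nu)\cdot Z(\mu)\bigr)$; since $\hat{\nu}\le\nu$, \reflem{monotonicity}(2) gives $Z(\hat{\nu})\cdot Z(\nu)=Z(\hat{\nu})\cdot Z(\hat{\nu})=-\alpha(\hat{\nu})$, and dividing by $-\alpha(\hat{\nu})\ne0$ yields $Z(\nu)\cdot Z(\mu)=Z(\hat{\nu})\cdot Z(\mu)$. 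Applying \refprop{positivity_bdivisors} again with disconnecting valuation $\hat{\mu}$ and pair $\{\hat{\nu},\mu\}$, and using $\hat{\mu}\le\mu$ in the same way, gives $Z(\hat{\nu})\cdot Z(\mu)=Z(\hat{\nu})\cdot Z(\hat{\mu})$. Hence $-Z(\nu)\cdot Z(\mu)=-Z(\hat{\nu})\cdot Z(\hat{\mu})=:P$. Now plug the three--point formula into the four relevant distances: $\rho(\nu,\mu)=\log\alpha(\nu)+\log\alpha(\mu)-2\log P$, $\rho(\hat{\nu},\hat{\mu})=\log\alpha(\hat{\nu})+\log\alpha(\hat{\mu})-2\log P$, together with $\rho(\nu,\hat{\nu})=\log\alpha(\nu)-\log\alpha(\hat{\nu})$ and $\rho(\hat{\mu},\mu)=\log\alpha(\mu)-\log\alpha(\hat{\mu})$ from the previous paragraph; summing the last three recovers the first, which is the assertion.

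The main obstacle is the disconnection claim of the third paragraph --- i.e.\ the topological fact that $\mc{V}_{X,\hat{\nu}}$ is attached to $\mc{V}_X$ only at its root $\hat{\nu}$; everything else is a short bilinear--algebra computation once this is in hand. If one prefers to avoid the topological input, the two identities $Z(\nu)\cdot Z(\mu)=Z(\hat{\nu})\cdot Z(\mu)=Z(\hat{\nu})\cdot Z(\hat{\mu})$ can instead be checked directly on a high enough good resolution $\pi$ dominating $\pi_0$, by verifying that the effective divisor $Z_\pi(\hat{\nu})-Z_\pi(\nu)$ (resp.\ $Z_\pi(\hat{\mu})-Z_\pi(\mu)$) is supported on exceptional primes lying over the ``branch'' of $X_{\pi_0}$ contracted under $\eta_{\pi_0\pi}$ at $\hat{\nu}$ (resp.\ $\hat{\mu}$), hence has zero intersection with the relatively nef divisors $Z_\pi(\mu)$ and $Z_\pi(\hat{\nu})$ whose centers lie elsewhere; but this verification rests on essentially the same structural picture.
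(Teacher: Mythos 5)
Your argument is correct, but it takes a noticeably heavier route than the paper's. The paper's proof is a two-line computation: after passing from $\pi_0$ to a good resolution by \emph{satellite} blowups (which leave $\mc{S}_{\pi_0}$ unchanged), one arranges that $\hat{\nu}$ and $\hat{\mu}$ have distinct centers; by the very definition of $\mc{V}_{X,\hat{\nu}}$ one then has $Z_{\pi_0}(\nu)=Z_{\pi_0}(\hat{\nu})$ and $Z_{\pi_0}(\mu)=Z_{\pi_0}(\hat{\mu})$, and since the centers differ the b-divisor intersections are already computable in that model (\refprop{intersection_differentcenters}), so $Z(\nu)\cdot Z(\mu)=Z_{\pi_0}(\hat{\nu})\cdot Z_{\pi_0}(\hat{\mu})=Z(\hat{\nu})\cdot Z(\hat{\mu})$, and the additivity falls out of your ``three-point formula'' exactly as you wrote it in your last paragraph. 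What you do instead is derive this same key intersection identity $Z(\nu)\cdot Z(\mu)=Z(\hat{\nu})\cdot Z(\hat{\mu})$ from a topological disconnection statement combined with the equality case of \refprop{positivity_bdivisors} and the monotonicity relations $\hat{\nu}\le\nu$, $\hat{\mu}\le\mu$. This works (your disconnection argument for $\mc{V}_{X,\hat{\omega}}$ being closed with $\mc{V}_{X,\hat{\omega}}\setminus\{\hat{\omega}\}$ open is sound in both the divisorial and irrational cases), but it invokes the Cauchy--Schwarz equality criterion, which itself rests on \refprop{positivity_EFH} and a careful reduction, whereas the paper only needs the trivial fact that two valuations retracting to the same $\hat{\nu}$ have identical incarnations in $X_{\pi_0}$. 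The trade-off is that your route makes the role of disconnection explicit and is conceptually robust, while the paper's route is shorter and more elementary. You anticipate this yourself in the final sentence, where you observe that the two intersection identities can be checked directly on a high model by a support argument --- that is essentially the paper's method, just phrased via effectivity of $Z_\pi(\hat{\nu})-Z_\pi(\nu)$ rather than via $Z_{\pi_0}(\nu)=Z_{\pi_0}(\hat{\nu})$.
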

\begin{proof} After possibly replacing $\pi_0$ with a good resolution obtained from $\pi_0$ by a sequence of satellite point blowups (which do not change $\mc{S}_{\pi_0}$), we may assume without loss of generality that the centers of $\hat{\nu}$ and $\hat{\mu}$ are distinct in $X_{\pi_0}$. By definition, $Z_{\pi_0}(\nu) = Z_{\pi_0}(\hat{\nu})$ and similarly $Z_{\pi_0}(\mu) = Z_{\pi_0}(\hat{\mu})$, so applying \reflem{monotonicity} again, we see \[
\beta(\nu\mid\mu)\beta(\mu\mid \nu) = \frac{\alpha(\nu)\alpha(\mu)}{(Z_{\pi_0}(\hat{\nu})\cdot Z_{\pi_0}(\hat{\mu}))^2} = \frac{\alpha(\nu)\alpha(\mu)}{\alpha(\hat{\nu})\alpha(\hat{\mu})}\cdot\frac{\alpha(\hat{\nu})\alpha(\hat{\mu})}{(Z_{\pi_0}(\hat{\nu})\cdot Z_{\pi_0}(\hat{\mu}))^2} = \frac{\alpha(\nu)\alpha(\mu)}{\alpha(\hat{\nu})\alpha(\hat{\mu})}\beta(\hat{\nu}\mid\hat{\mu})\beta(\hat{\mu}\mid \hat{\nu}).\] We saw in \refprop{log_alpha} that $\rho(\nu, \hat{\nu}) = \log \big[\alpha(\nu)/\alpha(\hat{\nu})\big]$, and similarly for $\mu$, so upon taking logarithms we get exactly the desired equality. 
\end{proof}

We saw in \S\ref{ssec:partial_order} that the analogous formula holds for $d$. Given that $d$ and $\rho$ are known to be equivalent on $\mc{V}_{X,\hat{\nu}}$ and $\mc{V}_{X,\hat{\mu}}$, it will therefore follow that $d$ and $\rho$ are equivalent on all of $\mc{V}_X$ if we can show that $d$ and $\rho$ are equivalent on $\mc{S}_{\pi_0}$. The metric $d$ induces the Euclidean topology on $\mc{S}_{\pi_0}\cong \Gamma_{\pi_0}$ by definition. The fact that $\rho$ also induces the Euclidean topology on $\mc{S}_{\pi_0}$ follows immediately from \eqref{eqn:relative_skewness_formula}. We conclude that $d$ and $\rho$ both induce the strong topology on the set of finite skewness valuations of $\mc{V}_X$.

\section{Log discrepancy, essential skeleta, and special singularities}\label{sec:log_disc}

In the proofs of our main theorems, the most important singularities one needs to consider turn out to be the \emph{log canonical} singularities. In this section we review the classification of such singularities and discuss the related notion of log discrepancy for valuations. As usual, $(X,x_0)$ denotes a normal surface singularity and $(R,\mf{m})$ denotes the completed local ring $\hat{\mc{O}}_{X,x_0}$. We also fix a nontrivial holomorphic $2$-form $\omega$ on $(X,x_0)$; the vanishing of $\omega$ defines a Weil divisor on $X$ we denote by $\Div(\omega)$.

Given a good resolution $\pi\colon X_\pi\to (X, x_0)$ of $(X,x_0)$, the \emph{relative canonical divisor} of $\pi$ is the divisor $K_\pi\in \Ediv(\pi)_\Q$ defined by the equality $\Div(\pi^*\omega) = \pi^*\Div(\omega) + K_\pi$, where here $\pi^*\Div(\omega)$ refers to the Mumford pull-back of $\Div(\omega)$, see  \cite[p.\ 195]{matsuki:mori-program}. The relative canonical divisor $K_\pi$ does not depend on the choice of $\omega$. Indeed, the Mumford pull-back $\pi^*\Div(\omega)$ is numerically trivial on $\Ediv(\pi)$ by definition, so the adjunction formula for $X_\pi$ gives
\begin{equation}\label{eqn:adjunction_can_div}
K_\pi\cdot E = 2g(E) - 2 - E^2
\end{equation}
for every exceptional prime $E$ of $\pi$ independently of $\omega$, where here $g(E)$ denotes the genus of $E$. Since the intersection form on $\Ediv(\pi)_\R$ is definite, this uniquely determines $K_\pi$ independently of $\omega$. 

If $E$ is an exceptional prime of a good resolution $\pi$, the \emph{log discrepancy} of $E$ is  $A(E) := 1 + \ord_E(K_\pi)$. More generally, one can define a log discrepancy function $A\colon \hat{\mc{V}}_X^*\to \R\cup\{+\infty\}$ on valuations which is uniquely characterized by the following properties: \begin{enumerate}[itemsep=-1ex]
\item If $E_1$ and $E_2$ are exceptional primes of a good resolution $\pi$ intersecting in a point $p$, and if $\nu_{r,s}$ is the monomial valuation at $p$ with weights $r$ and $s$, then $A(\nu_{r,s}) = rA(E) + sA(F)$. 
\item $A$ is lower semicontinuous on $\hat{\mc{V}}_X^*$. 
\end{enumerate} Taking  $r = 1$ and $s = 0$ in the first statement, we derive that $A(\divi_E) = A(E)$ for any exceptional prime $E$ of a good resolution. Note also that $A$ is homogeneous in the sense that $A(\lambda \nu) = \lambda A(\nu)$, and thus if $\nu_E$ is the normalized divisorial valuation $\nu_E = b_E^{-1}\divi_E$ we have $A(\nu_E) = b_E^{-1}A(E)$.
For the details of the construction of $A$ as well as some other properties, see \cite{favre:holoselfmapssingratsurf}. Here we will call $A(\nu)$ the \emph{log discrepancy} of $\nu$, in agreement with \cite{boucksom-defernex-favre-urbinati, jonsson:berkovich}, but it should be noted that $A(\nu)$ has also been called the \emph{thinness} of $\nu$ \cite{favre-jonsson:valtree, favre-jonsson:eigenval, favre:holoselfmapssingratsurf, boucksom-favre-jonsson:valuationsplurisubharsing} and the \emph{weight} of $\nu$  \cite{mustata-nicaise:weight-functions, nicaise-xu:essential-skeleton, baker-nicaise:weight-functions}.

Suppose that $\pi$ is a log resolution of $\mf{m}$, that $E$ is an exceptional prime of $\pi$, and that $\pi'$ is the good resolution obtained from $\pi$ by blowing up a free point $p\in E$. If $F$ is the exceptional locus of $\eta_{\pi\pi'}$, then it is easy to verify that $A(F) = A(E) + 1$ and $b_F = b_E$. Therefore, if $\nu_E$ and $\nu_F$ are the normalized divisorial valuations associated to $E$ and $F$, we derive that $A(\nu_F) = A(\nu_E) + b_E^{-1}$. We showed in \S\ref{ssec:partial_order} that $\alpha(\nu_F) = \alpha(\nu_E) + b_E^{-2}$, so we see $A(\nu_F) - A(\nu_E) \geq \alpha(\nu_F) - \alpha(\nu_E)$. Applying this analysis inductively on point blowups, we obtain the following analogue of \refprop{big_tree} for log discrepancy. 

\begin{prop} Let $\pi_0$ be a log resolution of $\mf{m}$ and let $\hat{\nu}$ be a chosen point of $\mc{S}_{\pi_0}$. Then the function $A$ gives a parameterization of $\mc{V}_{X,\hat{\nu}}$. Moreover, if $d_A$ denotes the metric induced by this parameterization, then $d_A(\nu, \mu) \geq d(\nu, \mu)$ for all $\nu, \mu\in \mc{V}_{X,\hat{\nu}}$. In particular, if $A(\nu) < +\infty$, then $\alpha(\nu) < +\infty$ as well. 
\end{prop}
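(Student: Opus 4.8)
The plan is to follow the same pattern as the proof of \refprop{big_tree}, bootstrapping everything from the free-point-blowup computations recorded just above the statement. First I would verify the assertion at the level of a single good resolution $\pi\geq \pi_0$, working with the finite rooted tree $\mc{S}_{\pi,\hat\nu}$. If $\hat\nu$ is not divisorial then $\mc{S}_{\pi,\hat\nu}=\{\hat\nu\}$ and there is nothing to prove, so assume $\hat\nu$ is divisorial, say $\hat\nu=\nu_{E'}$ with $E'$ realized in $\pi$; as recalled in the discussion preceding \refprop{baby_tree}, $\mc{S}_{\pi,\hat\nu}$ is then obtained from the single point $\{\hat\nu\}$ by successively adjoining edges, each corresponding to the blowup of a free point. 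Along such an edge from $\nu_E$ to the new vertex $\nu_F$, property~(1) of the log discrepancy function gives that $t\mapsto A(w(t))=(1-t)A(\nu_E)+tA(\nu_F)$ is affine linear with $A(\nu_F)-A(\nu_E)=b_E^{-1}>0$; moreover condition~(4) of \refprop{affine_linear} holds on this edge (since $b_F=b_E$ and $\alpha(\nu_F)-\alpha(\nu_E)=b_E^{-2}=1/(b_Eb_F)$), so the monomial parameterization $w$ is monotone increasing for the partial order. Concatenating over all edges, $A$ is a strictly monotone parameterization of $\mc{S}_{\pi,\hat\nu}$, mapping every interval $[\nu_1,\nu_2]$ order-isomorphically onto $[A(\nu_1),A(\nu_2)]$.

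Next I would compare $d_A$ with $d$ on $\mc{S}_{\pi,\hat\nu}$. On each of these adjoined edges the $d$-length (equivalently the $d_\alpha$-length, by \refprop{baby_tree}) is governed by $\alpha(\nu_F)-\alpha(\nu_E)=b_E^{-2}$, while the $d_A$-length is $b_E^{-1}$; since $b_E\geq 1$, every edge — and, as $\alpha$ is also affine linear along the edge, every sub-edge — is at least as long in $d_A$ as in $d$. Hence for any $\nu,\mu\in\mc{S}_{\pi,\hat\nu}$, summing increments along the two arcs of the path through their infimum $\nu\wedge\mu$ yields $A(\nu)-A(\nu\wedge\mu)\geq\alpha(\nu)-\alpha(\nu\wedge\mu)$ and likewise for $\mu$, so that
\[
d_A(\nu,\mu)=A(\nu)+A(\mu)-2A(\nu\wedge\mu)\geq \alpha(\nu)+\alpha(\mu)-2\alpha(\nu\wedge\mu)=d(\nu,\mu).
\]

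Finally I would pass to the inverse limit $\mc{V}_{X,\hat\nu}=\varprojlim_\pi \mc{S}_{\pi,\hat\nu}$, exactly as in the passage from \refprop{baby_tree} to \refprop{big_tree}: $A$ extends to a parameterization of the limit tree, and the inequality $d_A\geq d$ extends from $\mc{V}_{X,\hat\nu}^\qm$ to all of $\mc{V}_{X,\hat\nu}$ by continuity of $d$ and $d_A$ along intervals. The one delicate point, which I expect to be the main obstacle, is to check that $A$ is genuinely continuous — not merely lower semicontinuous — along the branches $[\hat\nu,\nu]$ of $\mc{V}_{X,\hat\nu}$, equivalently that $A(\nu)=\sup\{A(\mu):\mu<\nu,\ \mu\in\mc{V}_{X,\hat\nu}^\qm\}$ for non-quasimonomial $\nu$; this I would obtain by combining the monotonicity of $A$, its lower semicontinuity (property~(2)), and the fact that an increasing sequence in the tree converges weakly to its least upper bound. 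Granting this, the last assertion is immediate: if $A(\nu)<+\infty$, set $\hat\nu:=r_{\pi_0}(\nu)\in\mc{S}_{\pi_0}$, which is quasimonomial (hence $A(\hat\nu)<+\infty$) and satisfies $\hat\nu\leq\nu$; then $d(\hat\nu,\nu)\leq d_A(\hat\nu,\nu)=A(\nu)-A(\hat\nu)<+\infty$, and therefore $\alpha(\nu)=\alpha(\hat\nu)+d(\hat\nu,\nu)<+\infty$, so $\nu$ has finite skewness.
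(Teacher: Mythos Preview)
Your proposal is correct and follows essentially the same approach as the paper: the paper's ``proof'' is just the paragraph preceding the statement (the free-point blowup computation $A(\nu_F)-A(\nu_E)=b_E^{-1}\geq b_E^{-2}=\alpha(\nu_F)-\alpha(\nu_E)$, followed by ``Applying this analysis inductively on point blowups, we obtain the following analogue of \refprop{big_tree}''), and you have faithfully unpacked that sketch. Your identification of the one delicate step---upgrading lower semicontinuity of $A$ to continuity along branches at non-quasimonomial ends---is apt; note that the cleanest way to close it is to invoke the explicit construction of $A$ (referenced by the paper to \cite{favre:holoselfmapssingratsurf}) as a supremum over retractions, which gives monotonicity of $A$ on $\mc{V}_{X,\hat\nu}$ directly rather than deriving it from (1) and (2) alone.
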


Note, it is possible for $\alpha(\nu) < +\infty$ while $A(\nu) = +\infty$, so the metrics $d$ and $d_A$ are not equivalent at the ends of the tree $\mc{V}_{X,\hat{\nu}}$. On the other hand, for any good resolution $\pi\geq\pi_0$, the parameterizations $\alpha$ and $A$ both induce the Euclidean topology on the finite tree $\mc{S}_{\pi, \hat{\nu}} := \mc{S}_\pi\cap \mc{V}_{X,\hat{\nu}}$, and thus $\alpha$ and $A$ both induce the strong topology on quasimonomial valuations $\mc{V}_{X,\hat{\nu}}^\qm \cong \varinjlim \mc{S}_{\pi, \hat{\nu}}$. 

\subsection{Log canonical and log terminal singularities}\label{ssec:lc_and_lt}

Because $A$ is lower semicontinuous and $\mc{V}_X$ is compact,  the log discrepancy function $A$ necessarily takes a minimum value on $\mc{V}_X$, which is called the \emph{log canonical threshold} of $\mf{m}$. If this minimum is nonnegative, then $(X,x_0)$ is said to be \emph{log canonical} (lc); if it is strictly positive, then $(X, x_0)$ is \emph{log terminal} (lt). Log  canonical and log terminal singularities  feature prominently in  the minimal model program, and thus have been studied intensively. They are relatively mild singularities, and in fact in our two-dimensional setting they are completely classified. The details of this classification, due originally to Kawamata \cite[\S10]{kawamata:lc_classification}, are discussed in depth in \cite[Ch.\ 4]{matsuki:mori-program}, see also \cite[\S3]{kollar:flips} or \cite[Ch.\ 3]{kollar:MMP-singularities} for an alternative approach due to Alexeev. We provide now a rough summary of this classification sufficient for our purposes.

\begin{defi} We say $(X,x_0)$ is a \emph{finite quotient} of another irreducible normal surface germ $(Y,y_0)$ if there is a finite group $G$ acting on $(Y,y_0)$ by analytic automorphisms, the action being free on $Y \setminus \{y_0\}$, such that $(X,x_0)\cong (Y,y_0)/G$. 
\end{defi}

If $(X,x_0)$ is a finite quotient of a normal surface germ $(Y,y_0)$, then $(X,x_0)$ is lc (resp.\ lt) if and only if $(Y,y_0)$ is lc (resp.\ lt). This can be seen, for instance, by applying the Jacobian formula (see \S\ref{ssec:Jacobian_formula}) to the quotient map $(Y,y_0)\to (X,x_0)$. 

\begin{thm}\label{thm:lc_classification} Every log canonical normal surface germ $(X, x_0)$ is a finite quotient $(Y,y_0)/G$, where $(Y,y_0)$ is either nonsingular, a cusp singularity, or a simple elliptic singularity. Moreover, $(X,x_0)$ is log terminal if and only if $(Y, y_0)$ is nonsingular. 
\end{thm}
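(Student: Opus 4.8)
The plan is to reduce the statement to a purely combinatorial analysis of dual graphs and then invoke the known classification. First I would recall that a normal surface singularity $(X,x_0)$ is log canonical if and only if, for its minimal good resolution $\pi\colon X_\pi\to(X,x_0)$, every exceptional prime $E$ satisfies $A(E)\geq 0$, i.e.\ $\ord_E(K_\pi)\geq -1$; equivalently, writing $K_\pi=\sum a_i E_i$, one has $a_i\geq -1$ for all $i$. Using the adjunction relation \eqref{eqn:adjunction_can_div}, $K_\pi\cdot E_i = 2g(E_i)-2-E_i^2$, together with the fact that the intersection form on $\Ediv(\pi)_\R$ is negative definite (Grauert), the coefficients $a_i$ are determined by the combinatorial data $(g(E_i), E_i^2, \#\{E_i\cap E_j\})$. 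So the log canonical condition becomes a finite list of inequalities on the weighted dual graph. The hard part of the argument — and the step I expect to be the main obstacle — is the case analysis showing that these inequalities force the dual graph to be one of a short explicit list: a single rational curve with self-intersection $\leq -1$, a chain or a star of rational curves (the $A_n$, $D_n$, $E_6$, $E_7$, $E_8$ ADE graphs and their quotient-singularity generalizations), a cycle of rational curves (cusps), or a single smooth elliptic curve (simple elliptic). This combinatorial classification is due to Kawamata \cite[\S10]{kawamata:lc_classification}, with the detailed write-up in \cite[Ch.\ 4]{matsuki:mori-program}; rather than reproduce it I would cite it.

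Concretely, I would proceed as follows. Step 1: observe that if $(X,x_0)$ is smooth then it is trivially log terminal, and conversely set up the framework so that all exceptional primes $E$ with $g(E)\geq 1$ or with ``many'' neighbors can be controlled. Step 2: show that if $(X,x_0)$ is log canonical and not log terminal, then equality $A(\nu)=0$ is attained; using the valuative/tree description of $\mc{V}_X$ from \S\ref{ssec:dualgraphs}–\S\ref{ssec:partial_order} and the parameterization of $\mc{V}_{X,\hat\nu}$ by $A$, conclude that the locus $\{A=0\}$ is itself a subgraph. A short numerical argument (again via \eqref{eqn:adjunction_can_div} and negative-definiteness) shows that an exceptional prime $E$ with $A(E)=0$ must be either an elliptic curve ($g=1$) with $E^2=0$ forced to be the whole exceptional fibre — giving a simple elliptic singularity — or a rational curve sitting in a cycle, each node having exactly two neighbors and self-intersection making $K_\pi\cdot E=0$ — giving a cusp. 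Step 3: in the log terminal case, the same numerics force every $E$ to be rational with $A(E)>0$, and the dual graph to be a tree with the ADE-type shape; this is exactly the statement that log terminal surface singularities are quotient singularities $(\C^2,0)/G$, which I would quote from the references above (Brieskorn/Prill for the cyclic and dihedral/polyhedral cases). Step 4: finally, invoke the standard fact that a normal surface germ is a finite quotient of one of the three model types precisely when its minimal resolution has one of the enumerated dual graphs, and that log canonicity (resp.\ log terminality) is preserved and reflected under finite quotients — this last point follows from the Jacobian formula applied to the quotient map, as indicated in \S\ref{ssec:Jacobian_formula}, since for a quotient étale in codimension one the relative canonical divisor contributes nonnegatively. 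Assembling Steps 2–4 yields the theorem.

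The genuinely delicate point, as noted, is Step 2/Step 3: squeezing the inequalities $a_i\geq -1$ (resp.\ $>-1$) to pin down the graph. I would not grind through this — it is a classical and somewhat lengthy case check — but would emphasize that the key leverage is always the combination of \eqref{eqn:adjunction_can_div} with negative-definiteness of the intersection form, which turns ``$A\geq 0$ everywhere'' into a rigidity statement on the discriminant of the dual graph. The remaining bookkeeping (behavior of $A$ and $b_E$ under blowing up free points, already recorded just before \S\ref{ssec:lc_and_lt}) ensures that passing between the minimal good resolution and higher resolutions does not change whether the log canonical bound holds, so it suffices to work with the minimal one throughout.
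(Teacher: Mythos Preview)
The paper does not prove this theorem: it is stated as a known classification result, attributed to Kawamata with pointers to \cite[Ch.~4]{matsuki:mori-program}, \cite[\S3]{kollar:flips}, and \cite[Ch.~3]{kollar:MMP-singularities}, and the surrounding text merely summarizes the outcome of the classification rather than arguing for it. So there is no ``paper's own proof'' to compare your plan against; your outline is essentially a sketch of what those cited references carry out, which is precisely the treatment the paper adopts (cite rather than reprove).

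One small correction to your Step~2: for a simple elliptic singularity the unique exceptional prime $E$ has genus $1$ and \emph{negative} self-intersection (necessarily, by negative-definiteness), not $E^2=0$. With $E$ the only prime and $E^2=-d$ for some $d\geq 1$, adjunction \eqref{eqn:adjunction_can_div} gives $K_\pi\cdot E = 2g(E)-2-E^2 = d$, whence $K_\pi=-E$ and $A(E)=0$ regardless of $d$. The slip is harmless for the shape of your argument, but ``$E^2=0$'' cannot occur for an exceptional divisor.
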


Cusp and simple elliptic singularities are defined in terms of the geometry of the exceptional loci of their minimal good resolutions $\pi_0$. We say $(X,x_0)$ is a \emph{cusp singularity} if the exceptional primes  of $\pi_0$ are $n \geq 2$ rational curves $E_1,\ldots, E_n$ which intersect in a \emph{cycle}, that is, for which up to re-indexing one has $E_i\cdot E_j = 1$ if and only if $i - j \equiv \pm 1\pmod{n}$. We say $(X,x_0)$ is a \emph{simple elliptic singularity} if the exceptional locus of $\pi_0$ consists of a single genus one curve $E$. In particular, the dual graph $\Gamma_{\pi_0}$ of $\pi_0$ is homeomorphic to a circle for cusp singularities and homeomorphic to a point for simple elliptic singularities. 

\refthm{lc_classification} gives that the class of lt surface singularities $(X,x_0)$ coincides with the class of \emph{quotient surface singularities}, that is, finite quotients $(\C^2,0)/G$. The group $G$ can always be taken to be a subgroup of $GL_2(\C)$ acting linearly on $(\C^2,0)$, so one can classify quotient singularities by understanding the finite subgroups of $GL_2(\C)$, see \cite{brieskorn:rational-singularities}. Of particular interest to us are the \emph{cyclic quotient singularities}, where $G$ is a cyclic group; these are sometimes called \emph{Hirzebruch-Jung quotient singularities}. It can be shown that $(X,x_0)$ is a cyclic quotient singularity if and only if the exceptional primes of the (unique) minimal good resolution $\pi_0$ of $(X,x_0)$ are $n\geq 1$ rational curves $E_1,\ldots, E_n$ intersecting in a \emph{chain}, that is, for which up to re-indexing one has $E_i\cdot E_j = 1$ if and only if $|i-j| = 1$. In particular, the dual graph $\Gamma_{\pi_0}$ for a cyclic quotient singularity 
is homeomorphic to either a point or an interval. If $(X,x_0)$ is a non-cyclic quotient singularity, then its (unique) minimal good resolution $\pi_0$ again only has rational exceptional primes, but now $\Gamma_{\pi_0}$ is homeomorphic to a tree with one degree-three \emph{fork}. 

The remaining lc singularities $(X,x_0)$ are quotients $(Y,y_0)/G$ of cusps or simple elliptic singularities. Suppose that $(X, x_0)$ is a quotient of a cusp. It may be that $(X,x_0)$ is again a cusp, but if it is not, then necessarily $G\cong \Z/2\Z$, the (unique) minimal good resolution $\pi_0$ of $(X,x_0)$ has only rational exceptional primes, and $\Gamma_{\pi_0}$ is homeomorphic to a tree with two forks, both of which have degree three.
These singularities are usually referred to as \emph{quotient-cusp} singularities.

If $(X,x_0)$ is a non-trivial quotient of a simple elliptic singularity, then necessarily $G\cong \Z/n\Z$ with $n\in \{2,3,4,6\}$. In all of these cases, the (unique) minimal good resolution $\pi_0$ of $(X,x_0)$ has only rational exceptional primes and $\Gamma_{\pi_0}$ is homeomorphic to a tree with one fork. If $n = 2$, this fork has degree four, while if $n \in \{3,4,6\}$ this fork has degree three. 

We have now determined the possible homeomorphism types of all the dual graphs $\Gamma_{\pi_0}$ of minimal good resolutions $\pi_0$ of lc singularities $(X,x_0)$. If $\pi$ is a log resolution of $\mf{m}$ dominating $\pi_0$, then $\emb_\pi$ maps $\Gamma_{\pi_0}$ homeomorphically onto the skeleton $\skel{X}=\mc{S}_{\pi_0} \subset \mc{V}_X$. The log discrepancy $A$ will be strictly positive off of $\skel{X}$, but if $(X,x_0)$ is not lt then $A$ must vanish somewhere on $\skel{X}$. In \reffig{dual_graphs} we have drawn all possible homeomorphism types of $\skel{X}$ for lc singularities $(X,x_0)$, and marked exactly where the log discrepancy function vanishes. Notice that $\skel{X}$ is not contractible only when $(X,x_0)$ is a cusp singularity.

\begin{figure}[ht]
\centering
\def\svgwidth{\columnwidth}
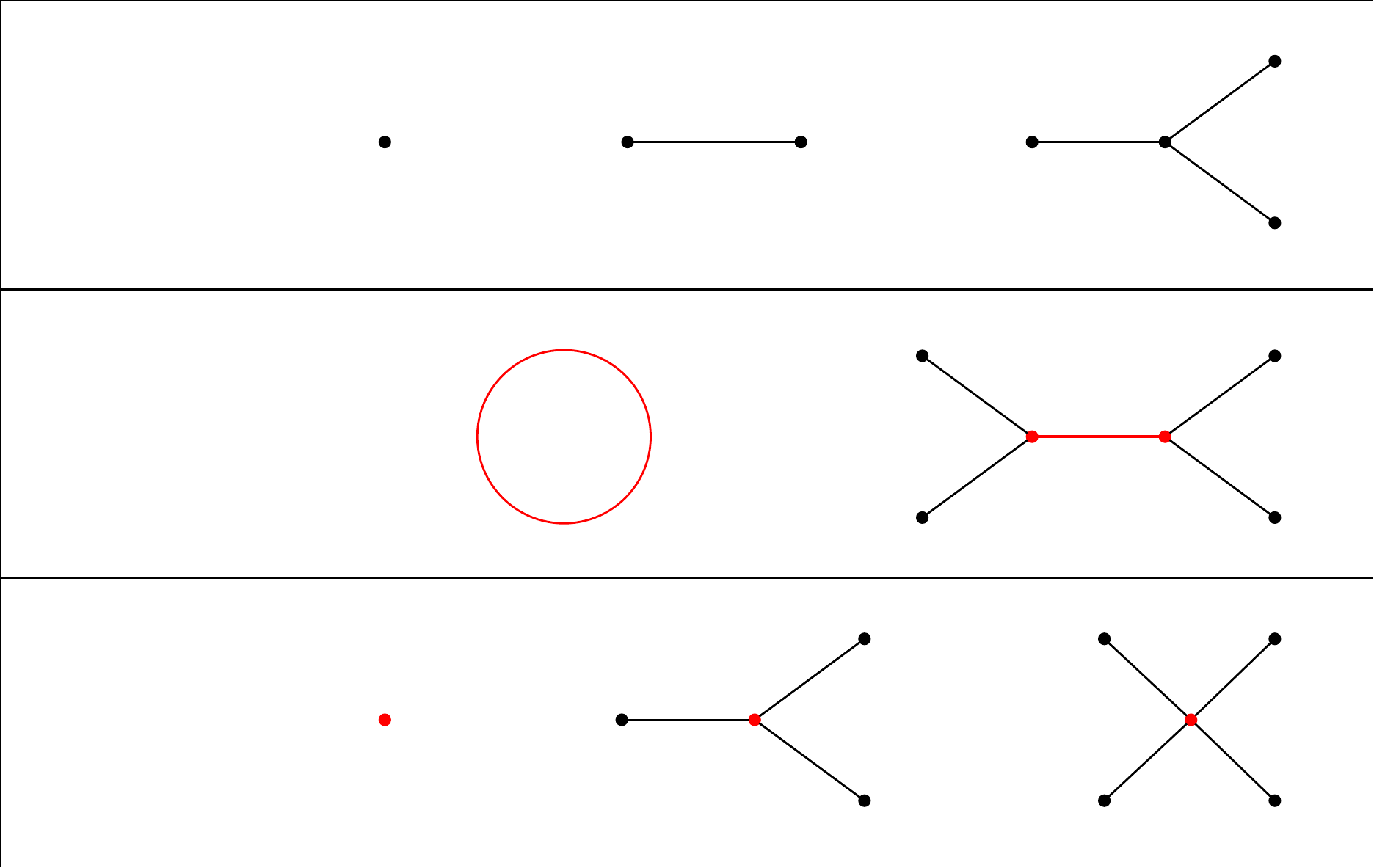
\caption{The possible skeleta $\skel{X} \subset \mc{V}_X$ of lc singularities $(X,x_0)$. Here dots are drawn only at forks and endpoints of $\skel{X}$, and a point $\skel{X}$ is red if and only if $A(\nu) = 0$.}\label{fig:dual_graphs}
\end{figure}

\subsection{The essential skeleton}\label{ssec:essential_skeleton}

Now suppose that $(X,x_0)$ is any irreducible normal surface germ, and let $\pi_0$ be a minimal good resolution of $(X,x_0)$. Within its dual graph $\Gamma_{\pi_0}$, let $\Gamma_X\subseteq \Gamma_{\pi_0}$ denote the smallest connected subgraph that contains all cycles, all forks, and all vertices corresponding to exceptional primes $E$ of genus $g(E)>0$. If $\pi$ is any log resolution of $\mf{m}$ which dominates $\pi_0$, let $\eskel{X}\subset \mc{V}_X$ denote the homeomorphic image of $\Gamma_X$ under the embedding $\emb_\pi$; this set is independent of both the choice of $\pi_0$ and the choice of $\pi$. 

\begin{defi} The subgraph $\eskel{X}\subset \mc{V}_X$ is called the \emph{essential skeleton} of $(X, x_0)$. 
\end{defi}

\begin{figure}[ht]
	\def\svgwidth{\columnwidth}
	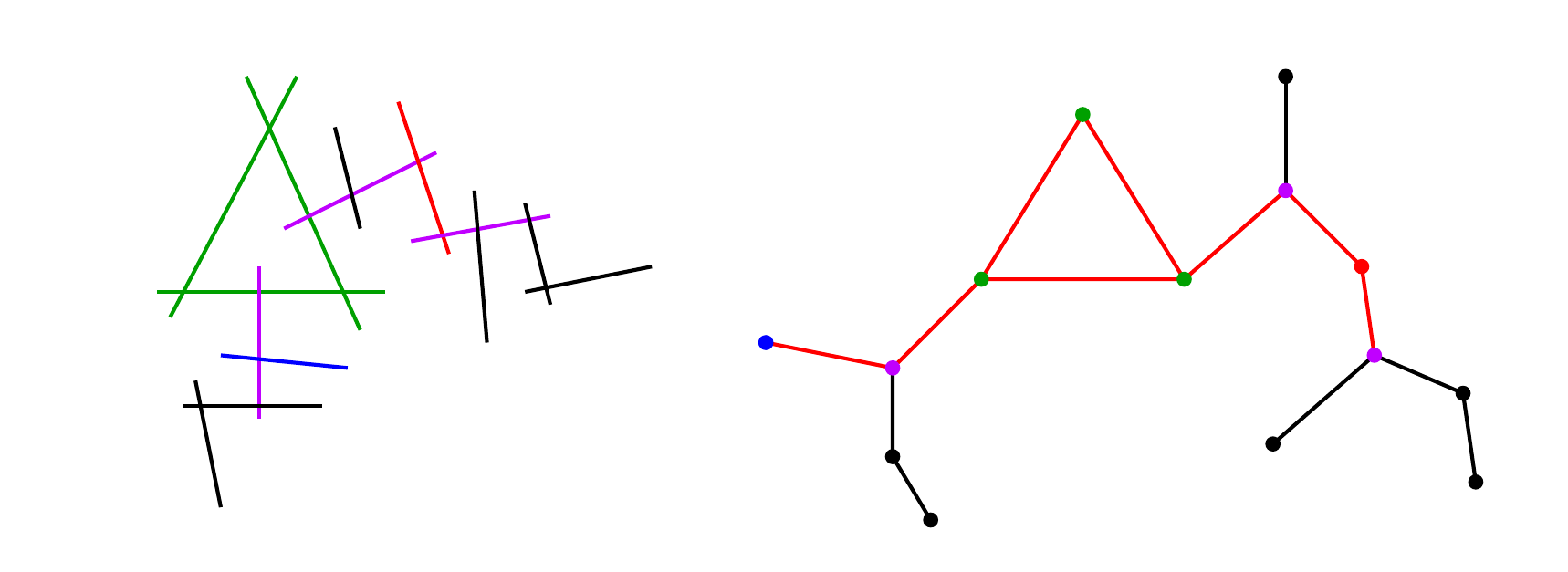
\caption{On the left is the exceptional locus of some minimal good resolution $\pi_0$. On the right is the corresponding dual graph $\Gamma_{\pi_0}$. The edges marked in red are the edges corresponding to the essential skeleton $\eskel{X}$.} \label{fig:essentialskeleton}
\end{figure}

An illustration of the essential skeleton is given in \reffig{essentialskeleton}. We make a couple of easy observations. First, the essential skeleton $\eskel{X}$ may be empty, but this happens if and only if $(X,x_0)$ is nonsingular or a cyclic quotient singularity. Second, the connected components of $\skel{X} \setminus \eskel{X}$ are always intervals, and the vertices appearing in any such interval correspond to chains of rational curves in $X_{\pi_0}$. In fact, one should think of $\eskel{X}$ as the (embedded) dual graph of the space $\hat{X}$ obtained from $X_{\pi_0}$ by contracting the chains of rational curves corresponding to the components of $\skel{X} \setminus \eskel{X}$. This space $\hat{X}$ is a modification of $(X,x_0)$ which has only cyclic quotient singularities (one for each of the contracted chains), and, assuming $\eskel{X}$ is nonempty, each of these singular points will lie on exactly one exceptional prime of $\hat{X}$. 

\begin{rmk}\label{rmk:dlt} It would be more natural to modify the definition of the essential skeleton $\eskel{X}$ and the space $\hat{X}$ in the following manner. We let $\pi\colon \hat{X}\to (X,x_0)$ be the minimal modification of $(X,x_0)$ for which the pair $(\hat{X}, B)$ is divisorial log terminal (dlt), where here $B$ is the reduced divisor supported on the exceptional locus of $\pi$. We then set $\eskel{X}$ to be the dual graph of $\pi$ embedded into $\mc{V}_X$ in the usual manner.  This agrees with the previous definition of $\hat{X}$ and $\eskel{X}$ in all but one case, namely when $(X,x_0)$ is a non-cyclic quotient singularity; in this case, by the previous definition $\eskel{X}$ would consist of a single point but with this new definition $\hat{X} = X$ and $\eskel{X}$ is empty. While this new definition is more natural, we will stick to the previous definition simply to avoid  a discussion of log discrepancies for pairs and dlt models, of which we will have no need.
\end{rmk}

The main result of the section is the following.

\begin{thm}\label{thm:nonpositive-discrepancies} If $(X,x_0)$ is not log terminal, then $A(\nu)\leq 0$ for all $\nu\in\eskel{X}$. 
\end{thm}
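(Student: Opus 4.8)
The plan is to reduce to the classification of log canonical and non-log-canonical singularities using \refthm{lc_classification} and the picture of skeleta in \reffig{dual_graphs}. First I would dispose of the case where $(X,x_0)$ is \emph{not} log canonical: then by definition the log canonical threshold of $\mf{m}$ is strictly negative, so $A$ takes a negative value somewhere on $\mc{V}_X$. One should argue that the minimum of $A$ is attained on $\eskel{X}$, or more precisely that $A \le 0$ throughout $\eskel{X}$ in this case, by exploiting the fact that outside $\eskel{X}$ the skeleton $\skel{X}$ consists of chains of rational curves emanating from $\eskel{X}$, along each of which $A$ is a parameterization that is increasing as one moves away from $\eskel{X}$ (this is the content of the log-discrepancy analogue of \refprop{big_tree} proved just before \S\ref{ssec:lc_and_lt}, together with the observation $A(\nu_F) = A(\nu_E) + b_E^{-1} > A(\nu_E)$ for a free blowup). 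Hence the infimum of $A$ over $\skel{X}$, and in fact over all of $\mc{V}_X$ by the retraction $r_{\pi_0}$ together with lower semicontinuity and $A(\nu) \ge A(r_{\pi_0}\nu)$, is already attained on $\eskel{X}$; if this infimum is negative then in particular $A$ is negative at some point of $\eskel{X}$. But that alone does not give $A \le 0$ on \emph{all} of $\eskel{X}$, so more care is needed.

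The real work is the case where $(X,x_0)$ \emph{is} log canonical but not log terminal. By \refthm{lc_classification}, such an $(X,x_0)$ is a finite quotient $(Y,y_0)/G$ where $(Y,y_0)$ is either a cusp singularity or a simple elliptic singularity. For these two model cases one computes $A$ on $\eskel{Y}$ directly. For a simple elliptic singularity, $\eskel{Y} = \{\nu_E\}$ is a single point, with $E$ the unique elliptic exceptional prime of the minimal good resolution $\pi_0$; from \eqref{eqn:adjunction_can_div} one has $K_{\pi_0}\cdot E = 2g(E)-2-E^2 = -E^2 > 0$, and combined with the fact that $K_{\pi_0} = \sum(A(E_i)-1)E_i$ is anti-effective on the support where... more cleanly: one shows $\ord_E(K_{\pi_0}) = -1$, i.e.\ $A(\nu_E) = 0$. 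For a cusp singularity, $\eskel{X} = \skel{X}$ is the whole circle of rational curves $E_1,\dots,E_n$ with $E_i^2 = -(2+\text{something})$, and one checks that $K_{\pi_0} = -\sum_i E_i$, i.e.\ $\ord_{E_i}(K_{\pi_0}) = -1$ for every $i$, giving $A(\nu_{E_i}) = b_{E_i}^{-1}(-1+\ord\ldots)$; one must then verify that $A$ is $\le 0$ not only at the vertices but along the edges, which follows from property (1) of the log discrepancy function (affine interpolation of $A(E)$ and $A(F)$ between adjacent divisorial valuations) once one knows $A(E_i) = 1 + \ord_{E_i}(K_{\pi_0}) = 0$ at every vertex. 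So the model computation boils down to showing the relative canonical divisor of the minimal good resolution of a cusp (resp.\ simple elliptic) singularity is exactly $-\sum E_i$ (resp.\ $-E$); this is a standard fact provable from \eqref{eqn:adjunction_can_div} and negative-definiteness of the intersection form.

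Finally I would transfer the model computation through the quotient map $p\colon (Y,y_0)\to (X,x_0)$. The Jacobian formula (referenced as \S\ref{ssec:Jacobian_formula} in the text, to be invoked) relates $A_X(p_\bullet\nu)$ to $A_Y(\nu)$: writing $p$ as a ramified cover, unramified away from $y_0$, one has $A_X(p_\bullet \nu) = A_Y(\nu)$ for valuations $\nu$, because the ramification of $p$ is supported only over $x_0$ and the relative canonical divisor contribution from $p$ vanishes on the exceptional locus — this is exactly the reasoning already used in the excerpt to argue that lc/lt is preserved under finite quotients. Since $p_\bullet$ maps $\eskel{Y}$ onto $\eskel{X}$ (the essential skeleton being defined by the combinatorial features — cycles, forks, positive-genus vertices — which are intrinsic to the dual graph and compatible with the quotient), and since $A_Y \le 0$ on $\eskel{Y}$ by the model computation, we conclude $A_X \le 0$ on $\eskel{X}$. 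I expect the \textbf{main obstacle} to be the careful bookkeeping in this last transfer step: one must check that $p_\bullet$ really does surject $\eskel{Y}$ onto $\eskel{X}$ and that the Jacobian/ramification contribution is genuinely trivial along the exceptional locus (as opposed to merely nonnegative), since an inequality in the wrong direction would only give $A_X \le$ (something nonnegative) rather than $A_X \le 0$. The simple elliptic and cusp model computations, while essential, are routine applications of adjunction and negative-definiteness.
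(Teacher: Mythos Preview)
Your approach is genuinely different from the paper's, and it has a real gap.

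\textbf{The gap.} You split into the non-lc and lc-but-not-lt cases, but you never actually prove the non-lc case. You observe correctly that knowing $A < 0$ \emph{somewhere} on $\eskel{X}$ does not give $A \le 0$ \emph{everywhere} on $\eskel{X}$, and then you move on. This is not a technicality: the essential skeleton of a general non-lc singularity can be an arbitrary connected graph with many vertices and edges, and you have given no mechanism to propagate the sign of $A$ from one vertex to another. Monotonicity of $A$ along free-blowup branches only tells you about directions \emph{leaving} $\eskel{X}$, not about what happens \emph{within} $\eskel{X}$. So as written, your argument covers only the lc-but-not-lt case, which is a finite list of combinatorial types, whereas the theorem is stated for all non-lt singularities.

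\textbf{What the paper does instead.} The paper gives a single uniform argument that does not use the classification \refthm{lc_classification} at all. It works directly on the space $\hat{X}$ (the minimal good resolution with the rational chains outside $\eskel{X}$ contracted to cyclic quotient points). On $\hat{X}$ one has a modified adjunction formula \eqref{modified_adjunction} with correction terms $1 - 1/|G_i|$ at each singular point. Writing $a = (A(E_1),\dots,A(E_n))^t$ and $M$ the intersection matrix of the primes of $\hat{X}$, adjunction reads $Ma = b$, and one checks by elementary combinatorics that every entry $b_j \ge 0$ unless $(X,x_0)$ is lt (the only negative contribution is $2g(E_j) - 2 + d_j$, and the correction terms from at least two singular points compensate when $d_j = 1$). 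Since $M^{-1}$ has strictly negative entries by \reflem{linear-algebra}, $a = M^{-1}b$ has nonpositive entries. This handles lc-non-lt and non-lc simultaneously.

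\textbf{On your lc-non-lt argument.} Your model computations for cusps and simple elliptic singularities are fine, and the Jacobian formula does give $A_X(p_\bullet\nu) = A_Y(\nu)/c(p,\nu)$ since $R_p = 0$ (the action is free off $y_0$), so the sign transfers correctly. The surjectivity $p_\bullet(\eskel{Y}) \supseteq \eskel{X}$ that you flag as the main obstacle is indeed the delicate point: for quotient-cusps $\eskel{Y}$ is a circle and $\eskel{X}$ a segment, and for quotients of simple elliptic singularities $\eskel{Y}$ is a point while $\eskel{X}$ is the fork vertex; in both cases the claim is true but requires an explicit identification of how $p_\bullet$ interacts with the skeleta, which you have not supplied. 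So even restricted to the lc-non-lt case your argument is incomplete, though salvageable.
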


This theorem was proved by Veys in \cite{veys:stringy}. Note, if we used the definition of $\eskel{X}$ in \refrmk{dlt}, then the theorem holds without the assumption that $(X,x_0)$ is not lt, because in this case $\eskel{X}$ is empty for lt singularities. We give a brief outline of the proof below. The key tool is the adjunction formula for the space $\hat{X}$. Since $\hat{X}$ has singular points, the usual adjunction formula is invalid, and one has to modify it by adding certain correction terms for each of the singular points $p_1,\ldots, p_r$ of $\hat{X}$. Recall that each $p_i$ is a cyclic quotient singularity, say $(\hat{X}, p_i)\cong (\C^2,0)/G_i$ where $G_i$ is a cyclic group. The adjunction formula for $\hat{X}$ is the following. For each exceptional prime $E$ of $\hat{X}$, \begin{equation}\label{modified_adjunction}
K_{\hat{X}}\cdot E = 2g(E) - 2 - E^2 + \sum_{p_i\in E}\left(1 - \frac{1}{|G_i|}\right) \deg_{E}[p_i],\end{equation} see \cite[Thm.\ 3.36]{kollar:MMP-singularities} for a proof.

\begin{proof}[Proof sketch for \refthm{nonpositive-discrepancies}] It suffices to check that $A(E)\leq 0$ for every exceptional prime $E$ of $\hat{X}$. Let $E_1,\ldots, E_n$ be the exceptional primes of $\hat{X}$, and let $a$ be the column vector $a = (A(E_1), \ldots, A(E_n))^t$. If $M$ is the intersection matrix $M = (E_i\cdot E_j)_{ij}$, then the adjunction formula \eqref{modified_adjunction} says exactly that $Ma = b$, where $b = (b_1,\ldots, b_n)^t$ is the column vector with entries\[
b_j = 2g(E_j) - 2 + d_j + \sum_{p_i\in E_j}\left(1 - \frac{1}{|G_i|}\right)\deg_{E_j}[p_i],\] with $d_j$ being the degree of the vertex in $\eskel{X}$ corresponding to $E_j$. Note, each term in the expression for $b_j$ is nonnegative except for the term $-2$; thus if either $g(E_j) \geq 1$ or $d_j\geq 2$, then necessarily $b_j\geq 0$. Assume then that $g(E_j) = 0$ and $d_j\in \{0, 1\}$. The case when $d_j = 0$ is special, since this says that $E_j$ is the only exceptional prime of $\hat{X}$; in this case the theorem is proved if $A(E_j)\leq 0$, and if $A(E_j)>0$ then in fact $A(E)>0$ for each of the contracted primes $E$ of $\pi_0$, so that $(X,x_0)$ is log terminal. We may assume therefore that $d_j = 1$. Then by the definition of $\eskel{X}$, there must be at least two singular points $p_i$ lying within $E_j$. Since $2g(E_j) - 2 + d_j = -1$ and for each of the (at least two) singular points $p_i\in E_j$ the term $(1 - |G_i|^{-1})\deg_{E_j}[p_i]$ is at least $1/2$, we conclude that again $b_j \geq 0$. To 
sum 
up, this proves that the column vector $b$ has nonnegative entries in all cases except for if $(X,x_0)$ is  log terminal, when it necessarily consists of a single negative entry. Note, the inverse  $M^{-1}$ has strictly negative entries by \reflem{linear-algebra}, so unless $(X,x_0)$ is lt, $a = M^{-1}b$ must have nonpositive entries, completing the proof.
\end{proof}

It is worth pointing out that the converse is false: it is not generally true that if $A(\nu)\leq 0$ then $\nu\in \eskel{X}$. For example, let  $(X,x_0)$ be a cone singularity over a hyperbolic Riemann surface $E$. This means that $(X,x_0)$ is obtained from the total space $X(L)$ of a line bundle $L$ of degree $\leq -1$ over $E$ by contracting the zero section of $L$. The minimal resolution of $(X,x_0)$ is then $X(L)$, and the essential skeleton $\eskel{X}$ consists only of the divisorial point $\nu_E$ corresponding to the zero section of $L$. As the log discrepancy $A(\nu_E)$ is strictly negative and $A$ is continuous on quasimonomial valuations, there is a whole neighborhood of $\nu_E$ in the strong topology where $A$ is strictly negative. 

\section{Dynamics on valuation spaces}\label{sec:dynamics_valspaces}

\subsection{Induced maps on valuation spaces}

Suppose now that $(X,x_0)$ and $(Y,y_0)$ are two normal surface singularities and $f\colon (X,x_0)\to (Y,y_0)$ is a dominant holomorphic map between them. In this section, we will define an associated continuous map $f_\bullet \colon \mc{V}_X\to \mc{V}_Y$, and discuss how this map $f_\bullet$ interacts with the various constructions from \S\ref{sec:valuation_spaces} and \S\ref{sec:log_disc}.
Note, in the dynamical setting when $(Y,y_0) = (X,x_0)$, we obtain a dynamical system $f_\bullet\colon \mc{V}_X\to \mc{V}_X$, which is the focus of our \refthm{valdynamics}.
In this section we will state a precise version of this theorem, while the next two sections are devoted to its proof.
Let $(R_X, \mf{m}_X)$ and $(R_Y, \mf{m}_Y)$ denote the completed local rings $\hat{\mc{O}}_{X,x_0}$ and $\hat{\mc{O}}_{Y,y_0}$, respectively, and let $f^*\colon R_Y\to R_X$ be the induced local ring homomorphism. 

\begin{defi}
The holomorphic map $f$ is said to be \emph{finite} if it is finite-to-one in a neighborhood of $x_0$. 
\end{defi}

There are many other equivalent characterizations of finiteness. Algebraically, $f$ is finite if and only if $f^*\mf{m}_Y$ is an $\mf{m}_X$-primary ideal. Geometrically, $f$ is finite if and only if it has no \emph{contracted curves}, that is, no formal curve germs $(C, x_0)\subset (X,x_0)$ for which $f(C) = \{y_0\}$.
Note, any contracted curve $(C,x_0)$ would necessarily have to be a holomorphic curve instead of simply a formal curve, since it would be critical for $f$, see \refrmk{contractedcurves}.
Therefore $f$ is finite if and only if it contracts no holomorphic curve $C$ through $\{x_0\}$ to a point. Observe that there are at most finitely many contracted curve semivaluations, since $f^*\mf{m}_Y$ is finitely generated. 
In the following, we denote by $\CC{f}$ the set of irreducible curves $C$ with $f(C)=y_0$, and by $\VC{f}$ the associated set of contracted curve valuations $\nu_C$.

%

Associated to $f$ is a map $f_*\colon \hat{\mc{V}}_X\to \hat{\mc{V}}_Y$ of valuation spaces, defined by $f_*\nu := \nu\circ f^*$. This map $f_*$ is continuous with respect to the weak topologies on $\hat{\mc{V}}_X$ and $\hat{\mc{V}}_Y$ and commutes with scalar multiplication of semivaluations. If $\nu\in \hat{\mc{V}}_X^*$ is a finite semivaluation then $f_*\nu$ will again be finite, except in exactly one situation: if $\nu$ is a curve semivaluation associated to a curve germ $(C, x_0)\subset (X, x_0)$ which is contracted by $f$, then $f_*\nu$ will not be finite. In particular, $f$ is finite if and only if $f_*$ maps finite semivaluations to finite semivaluations. 

\begin{defi}
Let $\nu\in {\mc{V}}_X$ be a semivaluation. The \emph{attraction rate} $c(f,\nu)$ of $f$ along $\nu$ is the quantity $c(f, \nu) := (f_*\nu)(\mf{m}_Y) = \nu(f^*\mf{m}_Y)$. 
\end{defi}

By definition $c(f, \nu)< +\infty$ if and only if $f_*\nu$ is finite, that is, if and only if $\nu$ is not a contracted curve semivaluation. Assuming $c(f,\nu)<+\infty$, the semivaluation $f_\bullet \nu := c(f,\nu)^{-1}f_*\nu\in \mc{V}_Y$ is normalized. In this way we obtain a map $f_\bullet\colon \mc{V}_X\dashrightarrow\mc{V}_Y$, defined and continuous away from the finite set of contracted curve semivaluations.  As the next proposition shows, $f_\bullet$ can be extended naturally to all of $\mc{V}_X$.  

\begin{prop}\label{prop:extensionfbullet}
There is a unique continuous extension $f_\bullet\colon \mc{V}_X\to \mc{V}_Y$ of $f_\bullet$ to all of $\mc{V}_X$. 
\end{prop}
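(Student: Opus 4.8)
Since $\mc{V}_X$ is compact Hausdorff and $\VC{f}$ is finite, the set $\mc{V}_X\setminus\VC{f}$ is dense, so any continuous extension is automatically unique; the content of the statement is existence. As $f_\bullet$ is already continuous off the finite set $\VC{f}$, it suffices to show, for each contracted curve $C\in\CC{f}$, that $f_\bullet\nu$ has a limit in $\mc{V}_Y$ as $\nu\to\nu_C$, and then to define $f_\bullet\nu_C$ to be that limit. Note that one cannot simply normalize $f_*\nu_C$: since $f|_C$ is constant equal to $y_0$, the pushforward $f_*\nu_C$ is the non-finite valuation $\triv_{y_0}$, which is why the extension requires an argument.

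The first step is to guess the limit. Let $\mf{p}_C\subset R_X$ be the prime of the (branch) $C$ and let $\ord_C$ be the order of vanishing along $C$, i.e. the discrete rank one valuation of the DVR $(R_X)_{\mf{p}_C}$, restricted to $R_X$. Because $f^*\mf{m}_Y\subseteq\mf{p}_C$, the pushforward $f_*\ord_C:=\ord_C\circ f^*$ is a valuation of $R_Y$ which is centered at $y_0$ and \emph{finite}, taking on $\mf{m}_Y$ the value $\ell_C:=\ord_C(f^*\mf{m}_Y)\geq 1$. The candidate is $f_\bullet\nu_C:=\ell_C^{-1}\,f_*\ord_C\in\mc{V}_Y$.

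The heart of the argument is a local computation near $\nu_C$. Fix $\phi\in\mf{m}_Y$; since the weak topology on $\mc{V}_Y$ is the topology of pointwise evaluation, and since $f_\bullet\nu$ vanishes on units, it is enough to prove that $f_\bullet\nu(\phi)\to\ell_C^{-1}\ord_C(f^*\phi)$ as $\nu\to\nu_C$. Blowing up repeatedly the point where the strict transform of $C$ meets the exceptional locus produces, for $N$ large (depending on $\phi$), a good resolution $\pi_N$ in which the strict transform $\widetilde C$ of $C$ is smooth and meets the exceptional locus at a single free point $p_N$ on one exceptional prime $E_N$, and in which $f^*\phi=u\,z_1^{a}z_2^{\rho}$ near $p_N$, with $z_1,z_2$ local equations of $E_N,\widetilde C$, $u$ a unit, $\rho=\ord_C(f^*\phi)$ and $a=\ord_{E_N}(f^*\phi)$; a one-line induction over the successive blowups gives the recursion $a\mapsto a+\rho$, hence $a=\rho N+O(1)$. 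Writing the analogous factorizations for generators $\phi_1,\dots,\phi_k$ of $\mf{m}_Y$, with exponents $a_j=\ord_{E_N}(f^*\phi_j)$ and $\rho_j=\ord_C(f^*\phi_j)$ (so $\ell_C=\min_j\rho_j$), one finds, for \emph{every} $\nu$ with $\cen_{\pi_N}(\nu)=p_N$,
\[
f_\bullet\nu(\phi)=\frac{\nu(f^*\phi)}{\nu(f^*\mf{m}_Y)}=\frac{a\,\nu(z_1)+\rho\,\nu(z_2)}{\min_j\bigl(a_j\,\nu(z_1)+\rho_j\,\nu(z_2)\bigr)}=\frac{a+\rho\,q}{\min_j\bigl(a_j+\rho_j\,q\bigr)},\qquad q:=\frac{\nu(z_2)}{\nu(z_1)}\in(0,+\infty],
\]
using that $\nu$ vanishes on the unit $u$ and that $0<\nu(z_1)<+\infty$ because $p_N$ is a smooth point of $E_N$. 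The right-hand side is a continuous function of $q$ whose value at $q=+\infty$ is $\rho/\ell_C$. Now $q$ depends continuously on $\nu$ (the evaluations $\nu\mapsto\nu(z_i)$ are weakly continuous into $[0,+\infty]$), and within the neighbourhood $U_{\pi_N}(p_N)$ one has $q=+\infty$ exactly at $\nu=\nu_C$ (the only semivaluation there whose kernel contains the branch $\widetilde C$). Hence $\nu\mapsto f_\bullet\nu(\phi)$ extends continuously across $\nu_C$ with value $\rho/\ell_C=\ell_C^{-1}\ord_C(f^*\phi)$. Letting $\phi$ vary yields the desired continuous extension $f_\bullet\colon\mc{V}_X\to\mc{V}_Y$, and along the way identifies it on $\VC{f}$ as $f_\bullet\nu_C=\ell_C^{-1}f_*\ord_C$.

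The step I expect to be the real obstacle is the last one: controlling $f_\bullet\nu$ for \emph{all} approaches to $\nu_C$ — not just a monotone sequence of divisorial valuations, but also curve valuations of curves with ever-higher contact with $C$, or divisorial valuations branching off the segment toward $\nu_C$ — and checking that the limit is independent of the approach. What makes it work is the monomialization $f^*\phi=u\,z_1^{a}z_2^{\rho}$ in two local coordinates at $p_N$, which collapses the dependence of $f_\bullet\nu(\phi)$ on $\nu$ to the single scalar $q=\nu(z_2)/\nu(z_1)\to+\infty$; pinning down the correct candidate $\ell_C^{-1}f_*\ord_C$ beforehand is the other point that requires some care.
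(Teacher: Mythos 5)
Your proof is correct, and it arrives at the same candidate $f_\bullet\nu_C=\ell_C^{-1}f_*\ord_C$ as the paper (in the paper's notation this is $\nu_E=b_E^{-1}\divi_E$, since $f_*\ord_C=\lambda\divi_E$ with $\lambda b_E=\divi_E(f^*\mf m_Y)\lambda=\ell_C$), but the continuity argument is genuinely different. The paper avoids all local coordinates: it observes that if $\nu_i\to\nu_C$ weakly then $\nu_i(\mf p)\to+\infty$, hence $\nu_i/\nu_i(\mf p)\to\ord_C$ in $\hat{\mc V}_X$, pushes this forward by the (weakly continuous, scale-equivariant) map $f_*\colon\hat{\mc V}_X\to\hat{\mc V}_Y$, and then peels off the scalar by evaluating the resulting convergence on $\mf m_Y$. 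Your argument instead fixes $\phi\in\mf m_Y$, monomializes $f^*\phi$ and the generators $f^*\phi_j$ of $f^*\mf m_Y$ near the strict transform of $C$ at a high free point $p_N$, and reduces $f_\bullet\nu(\phi)$ to the explicit rational function $(a+\rho q)/\min_j(a_j+\rho_j q)$ of $q=\nu(z_2)/\nu(z_1)\in(0,+\infty]$, which is continuous with value $\rho/\ell_C$ at $q=+\infty$. What each buys: the paper's argument is shorter once one accepts that $\nu_i/\nu_i(\mf p)\to\ord_C$ (a fact that itself requires a small argument for $\phi\in\mf p$), and it also invokes the Zariski--Abhyankar classification to identify the image as divisorial, while your argument is entirely self-contained and elementary, at the cost of a resolution-of-singularities computation; it also shows more concretely why only the ratio $q$ matters, collapsing the ``all directions of approach to $\nu_C$'' worry to a one-variable limit. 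One small remark: the line $a=\rho N+O(1)$ is a true observation but is not actually used; only the existence of the monomial form at some high $p_N$, together with $\ell_C=\min_j\rho_j$, is load-bearing.
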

\begin{proof} Suppose that $(C, x_0)\subset (X, x_0)$ is an irreducible curve germ which is contracted by $f$, and let $\nu_C$ be the associated normalized curve semivaluation. Let $\mf{p}\subset  R_X$ be the prime ideal of $C$. Since $(X, x_0)$ is normal, the local ring $\mc{O}_{X, C}$ is a DVR, and thus we get a valuation $\ord_C$ on $R_X$ associated to $C$, measuring the order of vanishing of functions along $C$. While $\ord_C$ is not centered at $x_0$ and thus does not belong to $\hat{\mc{V}}_X$, its image $f_*\ord_C := \ord_C\circ f^*$ is centered at $y_0$ because $C$ is contracted by $f$. The rational rank and transcendence degree of $f_*\ord_C$ are both $1$, so $f_*\ord_C$ is a divisorial valuation $\lambda \divi_E$ associated to some exceptional prime $E$ of some good resolution of $(Y, y_0)$. We define $f_\bullet \nu_C$ to be the normalized divisorial valuation $\nu_E = b_E^{-1}\divi_E$. To check that $f_\bullet$ defined in this way is continuous at $\nu_C$, take any sequence $\{\nu_i\}$ of 
semivaluations 
$\nu_i\in \mc{V}_X\setminus\{\nu_C\}$ converging to $\nu_C$. Then, since $\nu_i(\mf{p})\to \nu_C(\mf{p}) = +\infty$, we see that the sequence $\nu_i/\nu_i(\mf{p})$ converges to $\ord_C$. Because $f_*$ is continuous, it follows that
\[
\frac{c(f, \nu_i)}{\nu_i(\mf{p})}f_\bullet \nu_i = f_*\left(\frac{\nu_i}{\nu_i(\mf{p})}\right) \to f_*\ord_C = \lambda b_E\nu_E.
\]
Since evaluating semivaluations on ideals is weakly continuous, evaluating the above limit on $\mf{m}_Y$ yields that $c(f, \nu_i)/\nu_i(\mf{p})\to \lambda b_E>0$. Because scalar multiplication is continuous, we conclude that
\[
f_\bullet\nu_i = \frac{\nu_i(\mf{p})}{c(f, \nu_i)}\times \frac{c(f, \nu_i)}{\nu_i(\mf{p})}f_\bullet\nu_i\to \frac{1}{\lambda b_E}\times \lambda b_E\nu_E = \nu_E,\]
completing the proof. 
\end{proof}

In the next proposition, we collect some easy facts about the map $f_\bullet$ and the attraction rates $c(f,\nu)$. Each of them is proved in \cite{favre:holoselfmapssingratsurf} or in \cite{favre-jonsson:eigenval} in the case when $(X,x_0) = (Y,y_0) = (\C^2,0)$, but these proofs are very general and apply equally in our setting; the first three are essentially translations of standard results in valuation theory about how valuations on a field $K$ extend to valuations on a finite extension $L$ of $K$, and the last two are just a matter of unraveling definitions. 

\begin{prop}\label{prop:propertiesfbullet}
The following hold for any dominant holomorphic germ $f\colon (X,x_0)\to (Y,y_0)$.
\begin{enumerate}[itemsep=-1ex]
\item If $\nu\in \mc{V}_X$ is not a contracted curve semivaluation, then $f_\bullet\nu$ is of the same type (divisorial, irrational, infinitely singular, curve) as $\nu$. If $\nu$ is a contracted curve semivaluation, then $f_\bullet\nu$ is divisorial.
\item Every $\nu\in \mc{V}_Y$ has at most $N$ preimages under $f_\bullet$, where $N=[\Frac(R_X) : f^*\Frac(R_Y)]$ is the degree of the field extension $\Frac(R_X)/f^*\Frac(R_Y)$.
\item If $f$ is finite, then $f_\bullet$ is surjective.
\item The inequality $c(f, \nu)\geq 1$ holds for all $\nu\in \mc{V}_X$.
\item Attraction rates are multiplicative in the sense that if $g\colon (Y,y_0)\to (Z,z_0)$ is another dominant holomorphic germ between normal surface singularities, then  $c(g\circ f, \nu) = c(g, f_\bullet\nu)c(f,\nu)$. 
\end{enumerate}
\end{prop}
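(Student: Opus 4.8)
The plan is to prove the five assertions almost independently, obtaining (1)--(3) from the standard theory of extensions of a valuation along a finite field extension, and (4)--(5) by unwinding definitions. Throughout I use that $f$ dominant means $f^*\colon R_Y\to R_X$ is injective with $N:=[\Frac(R_X):f^*\Frac(R_Y)]<+\infty$, and that $f_*\nu=\nu\circ f^*$ carries $\hat{\mc{V}}_X$ into $\hat{\mc{V}}_Y$ because $f^*$ is a local homomorphism. Assertions (4) and (5) are immediate: since $f^*\mf{m}_Y\subseteq \mf{m}_X$ one gets $c(f,\nu)=\nu(f^*\mf{m}_Y)\geq \nu(\mf{m}_X)=1$ for normalized $\nu$; and for (5), with $\mf{a}:=g^*\mf{m}_Z\subseteq R_Y$, using $(g\circ f)^*=f^*\circ g^*$ and $f_*\nu=c(f,\nu)\,f_\bullet\nu$, one computes
\[
c(g\circ f,\nu)=\nu(f^*\mf{a})=(f_*\nu)(\mf{a})=c(f,\nu)\,(f_\bullet\nu)(\mf{a})=c(f,\nu)\,c(g,f_\bullet\nu),
\]
with the evident reading ($+\infty$ times a number $\geq 1$) when $\nu$ is a curve semivaluation contracted by $f$, which is then also contracted by $g\circ f$.

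For (1) and (2) I would first dispose of the genuine valuations $\nu\in \hat{\mc{V}}_X^*$ (those with trivial kernel). Then $f_*\nu$ is, up to scaling, an extension to $\Frac(R_X)$ of a fixed valuation of the subfield $f^*\Frac(R_Y)$. Abhyankar's inequality together with the description of its equality case shows that rational rank and transcendence degree are controlled along the finite extension, so $f_\bullet\nu$ is divisorial exactly when $\nu$ is and irrational exactly when $\nu$ is; the infinitely singular case follows by elimination, being the only remaining type of genuine valuation in $\hat{\mc{V}}_X^*$. A non-contracted curve semivaluation $\nu_C$ is handled by observing that $\ker(f_*\nu_C)=(f^*)^{-1}(\ker\nu_C)$ is the prime ideal of the curve $\overline{f(C)}$, so $f_*\nu_C$ descends to an $\ord_0$-type valuation on that curve and is again a curve semivaluation; the contracted case was already settled in the proof of \refprop{extensionfbullet}, where $f_\bullet\nu_C$ was shown to be divisorial. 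For (2), any preimage $\nu$ of a fixed $\mu\in \mc{V}_Y$ has $f_*\nu$ proportional to $\mu$; for genuine $\mu$ these are, after normalization, in bijection with a subset of the (at most $N$) equivalence classes of extensions of $\mu$ to $\Frac(R_X)$, while for a curve semivaluation $\mu=\nu_{C_Y}$ the preimages are exactly the branches of $f^{-1}(C_Y)$ through $x_0$, which number at most $N$ by the fundamental inequality $\sum_i e_if_i\leq N$, each branch contributing a unique normalized curve semivaluation.

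For (3), assume $f$ finite, so that $R_X$ is a finite $R_Y$-module and $f^*\mf{m}_Y$ is $\mf{m}_X$-primary. Given $\mu\in \mc{V}_Y$: if $\mu$ is a genuine valuation, Chevalley's extension theorem yields a valuation of $\Frac(R_X)$ restricting to $\mu$ on $f^*\Frac(R_Y)$, and module-finiteness forces its center in $X$ to be $x_0$, so after normalization it lies in $\mc{V}_X$ and is sent to $\mu$ by $f_\bullet$; if $\mu$ is a curve semivaluation along $C_Y$, one picks any branch of $f^{-1}(C_Y)$ through $x_0$ — nonempty because a finite germ is surjective onto $(Y,y_0)$ — whose normalized curve semivaluation maps to $\nu_{C_Y}=\mu$. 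The main obstacle throughout (1)--(3) is the bookkeeping for semivaluations with nontrivial kernel: the clean theorems of valuation theory concern honest valuations on fields, so one must translate statements about a curve semivaluation on $R_X$ into statements about $\ord_C$ on the function field of $C$, and, in (3), track centers carefully to guarantee the valuation produced by Chevalley's theorem is genuinely centered at $x_0$. Everything else is a routine unwinding of definitions.
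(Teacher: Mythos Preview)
Your proposal is correct and follows essentially the same approach as the paper, which does not give a detailed proof but simply notes that (1)--(3) are translations of standard results on extensions of valuations along a finite field extension while (4)--(5) are a matter of unraveling definitions. Your write-up supplies exactly those details (Abhyankar's inequality for type preservation, the bound on extensions for the preimage count, Chevalley plus module-finiteness for surjectivity, and the direct computations for (4)--(5)), so there is nothing to add.
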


As a special case of the first statement, a divisorial valuation $\nu_E\in \mc{V}_X$ necessarily maps to a divisorial valuation $\nu_{E'}\in \mc{V}_Y$, while a curve semivaluation $\nu_C \in \mc{V}_X$ maps to a curve semivaluation $\nu_{C'} \in \mc{V}_Y$, unless $f(C)=y_0$, and in this case it is mapped to a divisorial valuation $\nu_G \in \mc{V}_Y$.

In the following, we will often consider modifications (or good resolutions) realizing a finite set of divisorial valuations, and/or giving embedded resolutions of some curves, or with other features regarding the dynamics. We introduce here some notation to avoid long statements afterwards.

\begin{defi}
Let $f\colon (X,x_0) \to (Y,y_0)$ be a dominant germ, and let $V\subset \mc{V}_X$ be a finite set of divisorial or curve (semi-)valuations.
We say that two modifications $\pi \colon X_{\pi}\to (X,x_0)$ and $\varpi \colon Y_{\varpi}\to (Y,y_0)$ give a \emph{resolution of $f$ with respect to $V$} if the following conditions are satisfied :
\begin{itemize}
\item For any divisorial valuation $\nu_E \in V$, $\nu_E$ is realized by $\pi$, and $f_\bullet \nu_E$ is realized by $\varpi$.
\item For any curve valuation $\nu_C \in V$ non contracted by $f$, $\pi$ is an embedded resolution of $(C,x_0) \subset (X,x_0)$ and $\varpi$ is an embedded resolution of $(f(C),y_0) \subset (Y,y_0)$.
\item For any contracted curve valuation $\nu_C \in V$, $\pi$ is an embedded resolution of $(C,x_0) \subset (X,x_0)$ and $f_\bullet \nu_C$ is realized by $\varpi$.
\item The lift $\tilde{f}=\varpi^{-1}\circ f \circ \pi \colon X_\pi \to Y_\varpi$ is regular.
\end{itemize}
\end{defi}
Notice that such modifications always exists, and can be taken dominating any two given modifications.
In fact, since $V$ is finite, we can easily find good resolutions $\pi$ and $\varpi$ satisfying the first three conditions. By taking an opportune good resolution $\pi'$ dominating $\pi$, we can assure that the lift $\tilde{f}$ is also regular.

We now come back to the geometric interpretation of the action of $f_\bullet$ on divisorial and curve valuations.
Consider two good resolutions $\pi \colon X_\pi \to (X,x_0)$ and $\varpi \colon Y_\varpi \to (Y,y_0)$ giving a resolution of $f$ with respect to the empty set (meaning that we just ask the lift $\tilde{f}=\varpi^{-1}\circ f \circ \pi$ to be regular).
Let $D$ be a prime divisor in $X_\pi$ (it could be an exceptional prime $E$, or the strict transform $C_\pi$ of an irreducible curve $(C,x_0)$), and $D'$ be a prime divisor in $Y_\varpi$.
We denote by $k_D\geq 0$ the coefficient of $D$ in the divisor $\tilde{f}^*D'$.
Notice that $k_D \geq 1$ if and only if $\tilde{f}(D)\subset D'$.
The integer $k_D$ can also be characterized in the language of valuation theory as the ramification index $e(\divi_D/f_*\divi_D)$, i.e., the index of the value group of $f_*\divi_D$ within the value group of $\divi_D$.
Note that $k_D$ does not depend on the choice of the good resolutions $\pi$ and $\varpi$, since it is a quantity associated to the valuation associated to $D$.
If we have to keep track of the prime divisor $D'$, we will write $\kk{D}{D'}$ or $\kkk{D}{D'}{f}$ instead of $k_D$.

Analogously, if $f(D)=D'$, we denote by $e_{D} = \ee{D}{D'}=\eee{D}{D'}{f}$ the topological degree of the map $\tilde{f}|_{D} \colon D\to D'$.

\begin{rmk}
Notice that the values $\kk{D}{D'}$ and $\ee{D}{D'}$ do not depend on the good resolutions $\pi$ and $\varpi$ chosen (as far as they realize the valuations $\nu_E$ and $\nu_{E'}$).
In fact they are quantities associated to the divisorial or curve semivaluations associated more than to the primes $D$ and $D'$ themselves.
\end{rmk}

Notice that the values $k_D$ and $e_D$ do not depend on the choice of $\pi$ and $\varpi$ satisfying the above conditions.

We can now state the geometrical interpretation of the action of $f_*$ and of the attraction rate $c(f,\nu)$.
Some of these results have been stated and proved in \cite{favre-jonsson:eigenval} in the smooth setting, but the proofs easily generalize to the singular setting. We start with divisorial valuations.

\begin{prop}[{\cite[Prop.\ 2.5]{favre-jonsson:eigenval}}]\label{prop:divisorial_image}
Let $f \colon (X,x_0) \to (Y,y_0)$ be a dominant germ, $\nu_E \in \mc{V}_X$ a divisorial valuation, and $\nu_{E'}=f_\bullet \nu_E$.
Then for any modifications $\pi\colon X_\pi \to (X,x_0)$ and $\varpi\colon Y_\varpi \to (Y,y_0)$ realizing $\nu_E$ and $\nu_{E'}$ respectively, the lift $\wt{f}=\varpi^{-1} \circ f \circ \pi$ satisfies $\wt{f}|_E(E)=E'$. Moreover
\begin{equation}\label{eqn:divisorial_attraction_rate}
c(f, \nu_E) = \frac{b_{E'}}{b_E}\kk{E}{E'},
\end{equation}
\end{prop}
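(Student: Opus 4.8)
**Proposal for the proof of Proposition \ref*{prop:divisorial_image}.**

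The plan is to reduce everything to computing the value $\divi_E(f^*\mf{m}_Y)$ in terms of intersection-theoretic data on the resolutions, using the already-established dictionary between valuations, dual divisors, and pullbacks of ideals. First I would fix modifications $\pi\colon X_\pi\to(X,x_0)$ and $\varpi\colon Y_\varpi\to(Y,y_0)$ realizing $\nu_E$ and $\nu_{E'}$, and — invoking the remark that such modifications exist dominating any two given ones, together with \refprop{propertiesfbullet}(1) which guarantees $f_\bullet\nu_E$ is again divisorial — pass to higher models if necessary so that the lift $\wt f=\varpi^{-1}\circ f\circ\pi$ is regular. This is harmless because $k_E=\kk{E}{E'}$ and the genus/ramification data are attached to the valuations, not to the chosen models (as noted in the surrounding remarks). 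Once $\wt f$ is regular, the statement $\wt f|_E(E)=E'$ is just the assertion that the center of $f_*\divi_E$ in $Y_\varpi$ is the generic point of $E'$, which is exactly what it means for $f_\bullet\nu_E=\nu_{E'}$ to be realized by $\varpi$: the valuative criterion of properness identifies $\cen_\varpi(f_*\divi_E)$ with $E'$, and since $\wt f$ is a morphism this center is $\overline{\wt f(E)}$.

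For the numerical identity, the key computation is $c(f,\nu_E)=\divi_E(f^*\mf{m}_Y)=\ord_E\big((f^*\mf{m}_Y)\circ\pi\big)=\ord_E\big(\wt f^*(\varpi^*\mf{m}_Y)\big)$, using $f\circ\pi=\varpi\circ\wt f$. Now $\varpi^*\mf{m}_Y=\mc{O}_{Y_\varpi}(Z_\varpi(\mf{m}_Y))$ with $Z_\varpi(\mf{m}_Y)=-\sum_{F}b_F F$ (the generic-multiplicity formula \eqref{generic_multiplicities}), so $\wt f^*(\varpi^*\mf{m}_Y)$ is the ideal sheaf of $-\wt f^*Z_\varpi(\mf{m}_Y)=\sum_F b_F\,\wt f^*F$. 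Taking $\ord_E$ of this picks out the coefficient of $E$, which is $\sum_F b_F\,(\text{coeff.\ of }E\text{ in }\wt f^*F)$. The coefficient of $E$ in $\wt f^*F$ is nonzero only when $\wt f(E)\subseteq F$, i.e.\ only for $F=E'$, and in that case it equals $\kk{E}{E'}$ by definition. Hence $c(f,\nu_E)=b_{E'}\kk{E}{E'}$. Finally one divides by $b_E$: since $\nu_E=b_E^{-1}\divi_E$ and $c(f,\nu_E)$ in the statement refers to $\nu_E(f^*\mf{m}_Y)$ — wait, more precisely the displayed formula \eqref{eqn:divisorial_attraction_rate} is stated for $c(f,\nu_E)$ with $\nu_E$ the normalized valuation, so $c(f,\nu_E)=b_E^{-1}\divi_E(f^*\mf{m}_Y)=b_E^{-1}b_{E'}\kk{E}{E'}$, which is exactly the claimed equality.

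I expect the main obstacle to be purely expository rather than mathematical: one must be careful that $\kk{E}{E'}$ as \emph{defined} in the excerpt (the coefficient of $E$ in $\wt f^*E'$, equivalently the ramification index $e(\divi_E/f_*\divi_E)$) is genuinely the quantity appearing in the above multiplicity count, and that this is independent of the chosen regular models — but this independence is already asserted in the remarks preceding the proposition, so it may simply be cited. A secondary subtlety is justifying the pullback manipulation $\ord_E(\wt f^*\mathfrak{b})=\sum_F(\text{mult}_F\,\wt f^*F^{\mathrm{red}})\cdot v_F(\mathfrak{b})$ for the locally principal ideal $\mathfrak{b}=\varpi^*\mf{m}_Y$; this follows from functoriality of order-of-vanishing along a divisor under a dominant morphism of regular (or normal, using that $\mc{O}_{X_\pi,E}$ is a DVR) surfaces, i.e.\ the transformation rule for divisorial valuations under pullback. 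Both points are standard, so the write-up should be short.
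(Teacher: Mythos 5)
The paper itself gives no proof here — it cites \cite[Prop.~2.5]{favre-jonsson:eigenval} and remarks that the argument carries over from the smooth case to the singular one — so there is no in-paper proof to compare against. Your argument is correct and is the expected adaptation of Favre--Jonsson: the geometric claim $\wt f|_E(E)=E'$ is exactly the statement that $\cen_\varpi(f_*\divi_E)$ is the generic point $\xi_{E'}$ (this is what ``$\varpi$ realizes $\nu_{E'}=f_\bullet\nu_E$'' means, via the valuative criterion, and $\wt f$ is automatically defined at the codimension-one point $\xi_E$ even without passing to a regular lift); and the numerical identity follows from localizing at $\xi_{E'}$, where the stalk of $\varpi^*\mf{m}_Y$ in the DVR $\mc{O}_{Y_\varpi,E'}$ is $(t^{b_{E'}})$, so that $\divi_E(f^*\mf{m}_Y)=\ord_E\big((t\circ\wt f)^{b_{E'}}\big)=\kk{E}{E'}\,b_{E'}$, giving $c(f,\nu_E)=b_E^{-1}\kk{E}{E'}b_{E'}$ upon normalizing. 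One small imprecision: you call $\varpi^*\mf{m}_Y$ ``locally principal,'' but $\varpi$ is an arbitrary modification realizing $\nu_{E'}$, not necessarily a log resolution of $\mf{m}_Y$ — what is true, and all you use, is that its stalk at the generic point of $E'$ is principal of order $b_{E'}$, since $\mc{O}_{Y_\varpi,E'}$ is a DVR; the global expression of $\wt f^*(\varpi^*\mf{m}_Y)$ as the ideal sheaf of $\sum_F b_F\,\wt f^*F$ is not needed and is not quite accurate away from $\xi_{E'}$.
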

This geometrical interpretation of the image of divisorial valuations allows to describe tangent maps on valuation spaces. In fact, in the situation of \refprop{divisorial_image}, we may define the map $df_\bullet \colon T_{\nu_E} \mc{V}_X \to T_{\nu_{E'}} \mc{V}_Y$ by sending the tangent vector $\vect{v_p} \in T_{\nu_E} \mc{V}_X$ corresponding to a point $p \in E$, to the tangent vector $df_\bullet \vect{v_p} \in T_{\nu_{E'}} \mc{V}_Y$ corresponding to the point $q=\wt{f}|_E(p)$.
Notice that if $\pi$ and $\varpi$ give a resolution of $f$ with respect to $\{\nu_E\}$, again by \refprop{divisorial_image} we have $f_\bullet(U_\pi(p)) \subseteq U_{\pi'}(q)$. This corresponds to the description of tangent maps on real trees described in \cite{favre-jonsson:eigenval}.

We now focus on the geometric interpretation of the image of a curve semivaluation $\nu_C$. We first assume that $C$ is not contracted by $f$.

\begin{prop}[{\cite[Prop.\ 2.6]{favre-jonsson:eigenval}}]\label{prop:curve_image}
Let $f \colon (X,x_0) \to (Y,y_0)$ be a dominant germ, $\nu_C \in \mc{V}_X$ a non-contracted curve valuation, and $\nu_{C'}=f_\bullet \nu_C$.
Then $f(C)=C'$, and moreover
\begin{equation}\label{eqn:curve_attraction_rate}
c(f, \nu_C) = \frac{m(C')}{m(C)}\ee{C}{C'}.
\end{equation}
\end{prop}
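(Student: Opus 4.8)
The plan is to reduce everything to an explicit computation with parameterizations of the two curve germs, in the spirit of \cite[Prop.~2.6]{favre-jonsson:eigenval}. Fix a uniformization $h\colon (\C,0)\to (C,x_0)$. Since $f$ does not contract $C$, the composition $f\circ h\colon (\C,0)\to (Y,y_0)$ is a nonconstant formal arc, so its image is an irreducible (reduced) formal curve germ through $y_0$, which is exactly $C'=f(C)$. For every $\psi\in R_Y$ one has $(f_*\inte_C)(\psi)=\inte_C(f^*\psi)=\ord_0(\psi\circ f\circ h)$, so $f_*\inte_C$ is a curve semivaluation associated to $C'$; this already identifies the curve germ underlying $f_\bullet\nu_C$ as $C'$, once we check the normalization.

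To pin down the normalization I would factor $f\circ h=h'\circ g$, where $h'\colon (\C,0)\to (C',y_0)$ is a uniformization of $C'$ and $g\colon (\C,0)\to (\C,0)$ is the unique (nonzero) formal map making the triangle commute. Then for every $\psi\in R_Y$ we get $\ord_0(\psi\circ f\circ h)=\ord_0(g)\cdot\ord_0(\psi\circ h')=\ord_0(g)\cdot\inte_{C'}(\psi)$, so $f_*\inte_C=\ord_0(g)\,\inte_{C'}$. The substantive step is to identify $\ord_0(g)$ with the topological degree $\ee{C}{C'}$. For this I would take a resolution of $f$ with respect to $\{\nu_C\}$, i.e.\ good resolutions $\pi$ of $(X,x_0)$ and $\varpi$ of $(Y,y_0)$ that are embedded resolutions of $C$ and of $C'$ respectively and for which the lift $\tilde f$ is regular. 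The strict transforms $\tilde C\subset X_\pi$ and $\tilde C'\subset Y_\varpi$ are then smooth and are canonically the normalizations of $C$ and $C'$, under which $h$ and $h'$ become identities; accordingly $\tilde f|_{\tilde C}\colon \tilde C\to \tilde C'$ corresponds to $g$. By definition $\ee{C}{C'}$ is the topological degree of $\tilde f|_{\tilde C}$, and the topological degree of a finite map of smooth curve germs equals the order of vanishing of the power series describing it, i.e.\ $\ord_0(g)$. Hence $f_*\inte_C=\ee{C}{C'}\,\inte_{C'}$.

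Finally I would renormalize. Using $\nu_C=m(C)^{-1}\inte_C$ and $\inte_{C'}=m(C')\,\nu_{C'}$ we obtain $f_*\nu_C=m(C)^{-1}\ee{C}{C'}\,\inte_{C'}=\dfrac{\ee{C}{C'}\,m(C')}{m(C)}\,\nu_{C'}$. Evaluating on $\mf{m}_Y$ and using that $\nu_{C'}$ is normalized gives $c(f,\nu_C)=(f_*\nu_C)(\mf{m}_Y)=\ee{C}{C'}\,m(C')/m(C)$, which is \eqref{eqn:curve_attraction_rate}; dividing through, $f_\bullet\nu_C=c(f,\nu_C)^{-1}f_*\nu_C=\nu_{C'}$, confirming that $f(C)=C'$ is the curve germ underlying $f_\bullet\nu_C$.

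The only genuinely non-bookkeeping point is the identification $\ord_0(g)=\ee{C}{C'}$, that is, matching the parameterization/valuation-theoretic description of the local degree of $f|_C$ with the geometric definition via strict transforms in embedded resolutions; everything else is unwinding the definitions of $m(C)$, $\nu_C$ and $c(f,-)$. One could alternatively bypass parameterizations and identify $\ee{C}{C'}$ with the ramification index $e(\divi_{\tilde C}/f_*\divi_{\tilde C})$ of the order functions on the function fields of the two curves, but the parameterization route is the most transparent.
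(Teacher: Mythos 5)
Your proof is correct. Note that the paper does not in fact supply its own proof of this proposition: it is stated with a citation to \cite[Prop.\ 2.6]{favre-jonsson:eigenval}, and the surrounding text remarks only that those arguments generalize directly to the singular setting. Your argument --- factoring $f\circ h$ through a uniformization $h'$ of $C'=f(C)$, deducing $f_*\inte_C=\ord_0(g)\,\inte_{C'}$, identifying $\ord_0(g)$ with the local degree $\ee{C}{C'}$ of $\tilde{f}|_{C_\pi}\colon(C_\pi,p)\to(C'_\varpi,q)$ via an embedded resolution, and then renormalizing --- is precisely the standard parameterization computation being alluded to, and you correctly observe that it uses nothing about smoothness of $X$ or $Y$ beyond the existence of embedded resolutions.
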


For contracted curve valuations, we need some notations before proceeding.

\begin{defi}
Let $f \colon (X,x_0) \to (Y,y_0)$ be a dominant germ, and $\nu_C \in \mc{V}_X$ be a normalized curve semivaluation.
We set
$$
c_\alpha(f, \nu_C)=\lim_{\nu \to \nu_C} \frac{c(f,\nu)}{\alpha(\nu)}, \qquad
c_A(f, \nu_C)=\lim_{\nu \to \nu_C} \frac{c(f,\nu)}{A(\nu)}.
$$
\end{defi}

Notice that for any curve semivaluation $\nu_C$, there exists a subtree $\mc{V}_{X,\hat{\nu}}$ which contains $\nu_C$.
In particular the interval $I=[\hat{\nu},\nu_C]$ is totally ordered, and the limit can be taken over $I$.
Moreover, the skewness $\alpha$ and the log discrepancy $A$ give a parameterization of such interval $I$.
So $c_\alpha(f,\nu_C)$ and $c_A(f,\nu_C)$ can be seen as the derivatives of $c(f,\nu)$ at $\nu_C$ with respect to such parameterizations.

Finally, recall that for curve semivaluations, $\alpha(\nu_C)=A(\nu_C)=+\infty$. So $c_\alpha(f,\nu_C)=c_A(f,\nu_C)=0$ unless $\nu_C$ is a contracted curve valuation. In the latter case, $c(f,\nu_C)$ is affine with respect to both the $\alpha$ and $A$ parameterizations. Then the derivative of $c(f,\nu)$ is just the coefficient of the linear part.

\begin{prop}\label{prop:contractedcurve_image}
Let $f\colon (X,x_0) \to (Y,y_0)$ be a non-finite germ between normal surface singularities. Let $\nu_C \in \VC{f}$ be a contracted curve valuation, and set $\nu_G = f_\bullet \nu_C \in \mc{V}_Y$.
Then
$$
c_\alpha(f,\nu_C)=m(C) b_{G} \kk{C}{G}, \qquad
c_A(f,\nu_C)=b_{G}\kk{C}{G}, \qquad
$$
\end{prop}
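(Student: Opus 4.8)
\textbf{Proof plan for \refprop{contractedcurve_image}.}

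The plan is to reduce the computation of the derivatives $c_\alpha$ and $c_A$ to an ordinary linear-algebra computation on a fixed good resolution realizing enough of the relevant data. First I would fix good resolutions $\pi\colon X_\pi\to(X,x_0)$ and $\varpi\colon Y_\varpi\to(Y,y_0)$ giving a resolution of $f$ with respect to $\{\nu_C\}$: thus $\pi$ is an embedded resolution of $(C,x_0)$, $\nu_G=f_\bullet\nu_C$ is realized by $\varpi$, and the lift $\wt f=\varpi^{-1}\circ f\circ\pi$ is regular. Denote by $C_\pi$ the strict transform of $C$ in $X_\pi$; since $f$ contracts $C$ and $\wt f$ is regular, $\wt f(C_\pi)$ is the single point $\cen_\varpi(\nu_G)\in G$, and $\wt f^*G$ has coefficient $\kk{C}{G}$ along $C_\pi$ by definition of that ramification index. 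Now I would take the interval $I=\segment{\hat\nu}{\nu_C}{}$ inside the subtree $\mc V_{X,\hat\nu}$ containing $\nu_C$, parametrized near $\nu_C$ by monomial valuations $\nu_t$ at a point $p\in C_\pi\cap E$ (for $E$ the exceptional prime met by $C_\pi$, after possibly blowing up once more so that $C_\pi$ is in the boundary of the skeleton), where $\nu_t$ gives weight $t$ to $C_\pi$ and a fixed decreasing weight to $E$ so that $\nu_t\to\nu_C$ as $t\to\infty$; note that as observed before the statement, $c(f,\nu)$ is \emph{affine} in both the $\alpha$- and $A$-parametrizations along $I$ near $\nu_C$, so $c_\alpha$ and $c_A$ are genuine slopes.

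Next I would evaluate $c(f,\nu_t)=\nu_t(f^*\mf m_Y)$ using intersection theory. By \refprop{evaluate-with-intersections} (applied on a common high resolution $\varpi'\geq\varpi$ that is a log resolution of $\mf m_Y$), $c(f,\nu_t)=-Z_{\pi}(\nu_t)\cdot \wt f^*Z_{\varpi}(\mf m_Y)$ for $\pi$ high enough; the key point is that the $G$-component of $Z_\varpi(\mf m_Y)$ contributes $b_G$, and pulling back under $\wt f$ the divisor $G$ acquires coefficient $\kk{C}{G}$ along $C_\pi$. Writing $Z_\pi(\nu_t)=r(t)\check E+t\,\check{C_\pi}$ with the appropriate normalization, the term of $c(f,\nu_t)$ that is linear in $t$ comes precisely from intersecting $t\,\check{C_\pi}$ with the part of $\wt f^*Z_\varpi(\mf m_Y)$ supported on $C_\pi$, namely $-t\,\check{C_\pi}\cdot(-b_G\kk{C}{G}\,C_\pi)$; since $\check{C_\pi}\cdot C_\pi=1$ this is $b_G\kk{C}{G}\,t$. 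On the other hand, by \refcor{skewness_formula} (or directly \refprop{skewness_formula}), the skewness $\alpha(\nu_t)$ has $t$-slope equal to $m(C)$ at the end $\nu_C$ — this is exactly the statement that $\inte_C=m(C)^{-1}\nu_C$ has integer weight $1$ along $C_\pi$ while $\nu_C$ has weight $m(C)$ — whereas the log discrepancy $A(\nu_t)$ has $t$-slope equal to $1$, since $A$ is parametrized so that moving along $C_\pi$ with unit speed in the monomial parameter increases $A$ by $\ord_{C_\pi}(\text{nothing})=$ the log discrepancy increment, which for a curve direction is $1$ (equivalently $A(\inte_C)$ is normalized so that the slope in the integer weight is $1$). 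Dividing slopes then gives $c_\alpha(f,\nu_C)=b_G\kk{C}{G}/\,m(C)^{-1}=m(C)\,b_G\kk{C}{G}$ and $c_A(f,\nu_C)=b_G\kk{C}{G}/1=b_G\kk{C}{G}$.

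The main obstacle I anticipate is bookkeeping the exact coefficient of $C_\pi$ in $\wt f^*Z_\varpi(\mf m_Y)$ and checking that no \emph{other} exceptional prime of $\pi$ that meets $C_\pi$ contributes to the $t$-linear part as $\pi$ is made higher — i.e., that the limit defining $c_\alpha$ and $c_A$ is actually attained stably on a fixed resolution and that the $r(t)\check E$ term contributes only to the $t$-constant part of $c(f,\nu_t)$. This is handled by the same telescoping/projection-formula argument used in the proof of \refprop{skewness_formula}: blowing up the center of $\nu_t$ repeatedly only alters the intersection numbers by terms that are $O(\text{product of weights})$ and hence vanish in the end, so the slope at $\nu_C$ is computed correctly on any single resolution for which $\wt f$ is regular and $C_\pi$ sits at the boundary. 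A minor additional point is to confirm that $\wt f(C_\pi)$ is a \emph{single} point of $G$ and not a point where $\wt f$ is indeterminate (it is regular by hypothesis) nor the intersection of $G$ with another exceptional prime of $\varpi$ — if it were such an intersection point one would simply blow it up in $Y_\varpi$ and re-run the argument, which does not change $b_G$, $\kk{C}{G}$, or $m(C)$.
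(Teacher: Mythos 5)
Your overall plan — fix a resolution of $f$ with respect to $\{\nu_C\}$, parametrize near $\nu_C$ by monomial valuations $\nu_t$ at the contact point $p\in C_\pi\cap H$, and extract the slopes of $t\mapsto c(f,\nu_t)$ against the slopes of $t\mapsto\alpha(\nu_t)$ and $t\mapsto A(\nu_t)$ — is the same framework the paper uses. But the step where you compute $c(f,\nu_t)$ by intersection theory has a genuine gap.

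The formula $Z_\pi(\nu_t)=r(t)\check E+t\,\check{C_\pi}$ is not correct. By \refprop{defZnu} the dual divisor $Z_\pi(\nu_t)$ lives in $\Ediv(\pi)_\R$; it is purely exceptional and is determined by the values $\nu_t(D)$ on exceptional primes $D$ only. Since the center of $\nu_t$ in $X_\pi$ is the fixed free point $p$ of the single exceptional prime $H$ for \emph{every} $t$, one actually has $Z_\pi(\nu_t)=\tfrac{1}{m(C)}\check H$ independent of $t$ — the entire $t$-dependence is invisible in any fixed model. To see it one must pass to higher and higher models $\pi'$ where $\nu_t$ becomes divisorial, and the model required grows with $t$. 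So the claim that the $t$-linear part of $c(f,\nu_t)$ can be read off by pairing $t\,\check{C_\pi}$ against $\wt f^*Z_\varpi(\mf m_Y)$ in a single fixed resolution does not go through. Compounding this, the pairing $\check{C_\pi}\cdot C_\pi$ you invoke is not defined in this framework: as the paper points out at the start of \S\ref{ssec:action_dual_divisors}, the intersection product on $\Weil(\pi)$ is undefined on non-exceptional components because $C_\pi^2$ has no meaning, and writing $\check{C_\pi}=-C_\pi+\text{(exceptional)}$ shows that $\check{C_\pi}\cdot C_\pi$ would require exactly that self-intersection.

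The paper avoids this entirely by computing $c(f,\nu_t)=\nu_t(f^*\mf m_Y)$ in local coordinates. At $p$ take $(x,y)$ with $H=\{x=0\}$, $C_\pi=\{y=0\}$, let $\mu_t$ be the monomial valuation with weights $\bigl(\tfrac1m,t\bigr)$, and at $q=\tilde f(p)$ take $(z,w)$ with $G=\{w=0\}$. Then $c(f,\nu_t)=\mu_t(\tilde f^*w)\cdot\mu_t'(\varpi^*\mf m_Y)$, where $\mu_t'$ is the push-forward renormalized so that $\mu_t'(w)=1$. The first factor is $\kk{C}{G}\,t$ for $t$ large (since $\tilde f_*\ord_{C_\pi}=\kk{C}{G}\ord_G$), and the second stabilizes to $b_G$ once the center of $\mu_t'$ lies on $G$. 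This gives the $t$-linear part cleanly without any ill-defined intersections. Finally, a small internal inconsistency: you write that $\alpha(\nu_t)$ has $t$-slope $m(C)$, but then divide by $m(C)^{-1}$; the slope is in fact $1/m(C)$ (from $\alpha(\nu_t)=\alpha(\nu_0)+t/m$, since the weight on $x$ is $1/m$ and the incremental skewness is the product of the two weights), which is what your final line actually uses.
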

\begin{proof}
Let $\pi\colon X_\pi \to (X, x_0)$ and $\varpi\colon Y_\varpi \to (Y,y_0)$ give a resolution of $f$ with respect to $\{\nu_C\}$.
Up to taking higher good resolutions, we may also assume that $\pi$ is a log resolution of $\mf{m}_X$.
The strict transform $C_\pi$ of $C$ intersects $\pi^{-1}(x_0)$ in a unique exceptional prime $H$, transversely at a point $p$. Notice that $b_H=m(C)=:m$.
Take local coordinates $(x,y)$ at $p$ so that $H=\{x=0\}$ and $C=\{y=0\}$.
For any $t \geq 0$, let $\mu_t$ be the monomial valuation at $p$ of weights $\frac{1}{m}$ and $t$ with respect to the coordinates $(x,y)$, and set 
$\nu_t=\pi_* \mu_t \in \mc{V}_X$.
Notice that $\nu_t$ is normalized thanks to the choice of the weight on $x$.
By direct computation, we get that $A(\nu_t)=A(\nu_0)+t$, while $\alpha(\nu_t)=\alpha(\nu_0)+\frac{t}{m}$.
In particular $c_\alpha(f,\nu_C)=m c_A(f,\nu_C)$.

By \refprop{extensionfbullet}, $\tilde{f}_*\ord_{C_\pi} = k\ord_G$, where $k=\kk{C_\pi}{G}$.
By \refprop{divisorial_image} we get $\tilde{f}(C_\pi)=G$.
Set $q=\tilde{f}(p)$, and pick local coordinates $(z,w)$ at $q$, so that $G=\{w=0\}$.

We consider the sets of valuations $\mc{V}_p^{x^m}$ at $p$ normalized with respect to the value at $x^m$, and analogously $\mc{V}_q^w$.
Notice that with our choices, $\mu_t \in \mc{V}_p^{x^m}$ is normalized, and we have
$$
c(f,\nu_t)=\mu_t(\tilde{f}^*w) \cdot \mu'_t(\varpi^*\mf{m}_Y),
$$
where $\mu'_t$ is the valuation in $\mc{V}_q^w$ proportional to $\tilde{f}_*\mu_t$.
Moreover, since $f \circ \pi = \varpi \circ \tilde{f}$, we infer that $\varpi_*\mu'_t$ is proportional to $f_\bullet \nu_t$.
In particular, for $t$ big enough, $\mu'_t(\varpi^*\mf{m}_Y) = \ord_G(\varpi^*\mf{m}_Y) = b_G$.
We conclude by noticing that $\mu_t(\tilde{f}^*w) = kt$ for $t$ big enough. 
\end{proof}
\begin{rmk}
In the proof of \refprop{contractedcurve_image}, we may also consider the derivative of $c(f,\nu)$ with respect to the parameterization given by the skewness $\alpha_p$ computed at $p$ of the corresponding normalized valuation $m\nu_t$, instead of the skewness $\alpha$ computed at $x_0$.
In this case we have $\alpha_p(m\mu_t)=mt$ (for $t \geq 1/m$).
In particular, the derivative $c^\alpha(f,\nu_C)$ of $c(f,\nu)$ with respect to $\alpha_p$ satisfies
$$
c^\alpha(f,\nu_C)=\frac{b_{G}}{m(C)}\kk{C}{G}.
$$
\end{rmk}


We end this section by recalling a criterion that allows to recover all the indeterminacy points the lift $\tilde{f}$ of a dominant germ $f \colon (X,x_0) \to (Y,y_0)$ with respect to given modifications $\pi\colon X_\pi\to (X,x_0)$ and $\varpi\colon Y_\varpi\to (Y,y_0)$.
Recall that in general $\tilde{f} = \varpi^{-1}\circ f\circ \pi\colon X_\pi\dashrightarrow Y_\varpi$ is a meromorphic map.
If $p\in \pi^{-1}(x_0)$ is a closed point, we denote by $U_X(p)\subset \mc{V}_X$ the subset of semivaluations $\nu$ whose center in $X_\pi$ is $p$. This is a weak open set for all closed points $p$. Similarly, if $q\in \varpi^{-1}(y_0)$ is a closed point, then $U_Y(q)$ will denote those semivaluations $\nu\in \mc{V}_Y$ with center $q$ in $Y_\varpi$. Certainly if $\tilde{f}$ is holomorphic at a closed point $p\in \pi^{-1}(x_0)$, and if $q = \tilde{f}(p)$, then $f_\bullet(U_X(p))\subseteq U_Y(q)$; this is simply a matter of unraveling definitions. More importantly for us, the converse is also true, see \cite[Prop.\ 3.2]{favre-jonsson:eigenval} for a proof.

\begin{prop}\label{prop:detecting_holomorphicity}
The lift $\tilde{f}$ is holomorphic at a closed point $p\in \pi^{-1}(x_0)$ and has $\tilde{f}(p) = q$ if and only if $f_\bullet(U_X(p))\subseteq U_Y(q)$. 
\end{prop}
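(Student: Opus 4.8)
The plan is to prove the two implications separately; the forward one, as the authors indicate, is essentially an unwinding of definitions, while the reverse implication — which is \cite[Prop.\ 3.2]{favre-jonsson:eigenval} — is the substantial part.

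For the forward direction, suppose $\tilde f$ is holomorphic at $p$ with $\tilde f(p)=q$, so that $\tilde f$ induces a \emph{local} ring homomorphism $\mc{O}_{Y_\varpi,q}\to\mc{O}_{X_\pi,p}$, compatible with $f^*$ through the identity $f\circ\pi=\varpi\circ\tilde f$, i.e.\ $\pi^*\circ f^*=\tilde f^*\circ\varpi^*$ on function fields. Given $\nu\in U_X(p)$ which is not a contracted curve semivaluation, I would use these two facts to see that $f_*\nu$ takes nonnegative values on $\mc{O}_{Y_\varpi,q}$ and strictly positive values exactly on $\mf{m}_q$ (nonnegativity because $\cen_\pi(\nu)=p$ and $\tilde f^*$ sends $\mc{O}_{Y_\varpi,q}$ into $\mc{O}_{X_\pi,p}$; strict positivity on $\mf{m}_q$ because $\tilde f^*\mf{m}_q\subseteq\mf{m}_p$ and $\nu$ is strictly positive on $\mf{m}_p$). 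Hence $\cen_\varpi(f_*\nu)=q$ and $f_\bullet\nu\in U_Y(q)$. For the finitely many contracted curve semivaluations $\nu_C\in U_X(p)$ one argues analogously after passing to $\ord_C$ and invoking the continuity extension of \refprop{extensionfbullet}, tracking $f_\bullet\nu_C$ through the image divisor $\tilde f(C_\pi)\subseteq\varpi^{-1}(y_0)$ as in the proof of \refprop{contractedcurve_image}.

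For the reverse direction, assume $f_\bullet(U_X(p))\subseteq U_Y(q)$; I would first show that $\tilde f$ must be holomorphic at $p$, arguing by contradiction. Since $X_\pi$ and $Y_\varpi$ are surfaces, the indeterminacy set of $\tilde f$ is finite, so there is a composition of point blowups $\sigma\colon Z\to X_\pi$ over $p$ making $g:=\tilde f\circ\sigma\colon Z\to Y_\varpi$ holomorphic. If $g$ contracted the entire exceptional fibre $\sigma^{-1}(p)$ to a point, then by the rigidity of meromorphic maps between smooth surfaces (Zariski's main theorem applied to the graph of $\tilde f$ over the normal surface $X_\pi$) the map $\tilde f$ would already extend holomorphically across $p$; so, under our assumption, some exceptional prime $E'$ of $\sigma$ lying over $p$ satisfies $g(E')=D'$ for an irreducible curve $D'\subset Y_\varpi$, and the relation $f\circ\pi\circ\sigma=\varpi\circ g$ forces $\varpi(D')=\{y_0\}$, i.e.\ $D'$ is an exceptional prime of $\varpi$. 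Now $\nu_{E'}\in U_X(p)$ since $\cen_\pi(\nu_{E'})=\sigma(E')=p$, while the computation in the style of \hyperref[prop:divisorial_image]{Propositions~\ref*{prop:divisorial_image}} and \ref{prop:extensionfbullet} gives $f_*\divi_{E'}=k\,\divi_{D'}$ for some $k\geq1$, so $f_\bullet\nu_{E'}=\nu_{D'}$ is centered at the \emph{generic} point of $D'$ — not at any closed point. This contradicts $f_\bullet(U_X(p))\subseteq U_Y(q)$, so $\tilde f$ is holomorphic at $p$. To identify $\tilde f(p)$, take $\nu_{E_0}\in U_X(p)$ to be the normalized divisorial valuation attached to the prime obtained by blowing up $p\in X_\pi$; the forward direction gives $f_\bullet\nu_{E_0}\in U_Y(\tilde f(p))$, while the hypothesis gives $f_\bullet\nu_{E_0}\in U_Y(q)$, and since the sets $U_Y(q')$ for distinct closed points are pairwise disjoint, $\tilde f(p)=q$.

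I expect the main obstacle to be the heart of the reverse direction: converting the mere failure of holomorphicity at $p$ into a \emph{concrete} valuation in $U_X(p)$ whose $f_\bullet$-image escapes every $U_Y(q)$. This relies on the surface-specific rigidity statement above together with careful bookkeeping via $f\circ\pi=\varpi\circ\tilde f$ to guarantee that the offending image curve $D'$ is exceptional over $(Y,y_0)$, so that the corresponding point of $\mc{V}_Y$ is centered at a non-closed point of $Y_\varpi$. The contracted-curve bookkeeping in the forward direction is a secondary technical matter, handled through \refprop{extensionfbullet}.
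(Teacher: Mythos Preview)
The paper does not supply its own proof here: it declares the forward implication a ``matter of unraveling definitions'' and defers the converse to \cite[Prop.~3.2]{favre-jonsson:eigenval}. Your argument for the reverse direction --- resolve the indeterminacy at $p$ by blowups $\sigma\colon Z\to X_\pi$, locate an exceptional prime $E'$ of $\sigma$ whose image under the resolved map $g=\tilde f\circ\sigma$ is an exceptional curve $D'\subset\varpi^{-1}(y_0)$, and note that $f_\bullet\nu_{E'}=\nu_{D'}$ is centered at the generic point of $D'$ rather than at any closed point --- is exactly the Favre--Jonsson argument and is correct.

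Your treatment of contracted curve semivaluations in the forward direction, however, hides a genuine difficulty: the forward implication as literally stated \emph{fails} at such points. Take $(X,x_0)=(Y,y_0)=(\C^2,0)$, $f(x,y)=(x,xy)$, $\pi=\on{id}$, and $\varpi$ the blowup of the origin with exceptional prime $E$. In the chart $\varpi(u,w)=(u,uw)$ one finds $\tilde f(x,y)=(x,y)$, so $\tilde f$ is holomorphic at $p=0$ with $\tilde f(p)=q$ the origin of that chart. Yet for the contracted curve $C=\{x=0\}$ one has $f_*\ord_C(z_1)=\ord_C(x)=1=\ord_C(xy)=f_*\ord_C(z_2)$, so $f_*\ord_C=\divi_E$ and $f_\bullet\nu_C=\nu_E$, whose center in $Y_\varpi$ is the generic point of $E$, not $q$. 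Thus $\nu_C\in U_X(p)$ but $f_\bullet\nu_C\notin U_Y(q)$. Neither your ``argue analogously via $\ord_C$'' nor a continuity argument can rescue this, since $U_Y(q)$ is open but not closed. The correct forward statement restricts to $U_X(p)\setminus\VC{f}$ (for which your unraveling-of-definitions argument is valid); this is harmless for the paper, which only invokes \refprop{detecting_holomorphicity} in the reverse direction, and only after arranging that the relevant open sets avoid the critical skeleton and hence contain no contracted curve semivaluations. Your proof of the reverse direction uses only divisorial valuations and is unaffected by this correction.
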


\subsection{Action on dual divisors}\label{ssec:action_dual_divisors}

Let $f\colon (X,x_0) \to (Y,y_0)$ be a dominant map between two normal surface singularities.
Pick good resolutions $\pi\colon X_{\pi}\to (X,x_0)$ and $\varpi\colon Y_{\varpi}\to (Y,y_0)$ such that the lift $\tilde{f} := \varpi^{-1}\circ f\circ \pi$ is holomorphic.
If $f$ is \emph{finite}, then $\tilde{f}$ induces $\Z$-linear push-forward and pull-back operations $\tilde{f}_*\colon \Ediv(\pi)\to \Ediv(\varpi)$ and $\tilde{f}^*\colon \Ediv(\varpi)\to \Ediv(\pi)$ in the usual way; note, the finiteness of $f$ is needed in order for $\tilde{f}^*$ to map into $\Ediv(\pi)$. These operations satisfy the projection formula
\begin{equation}\label{eqn:projformula}
\tilde{f}_*D_1\cdot D_2 = D_1\cdot \tilde{f}^*D_2
\end{equation}
for all divisors $D_1\in \Ediv(\pi)$ and $D_2\in \Ediv(\varpi)$.
In this case, one can compute the push-forward and the pull-back of dual divisors, and more generally, divisors associated to valuations (see \cite[Lemma 1.10]{favre:holoselfmapssingratsurf}).

In fact, if $E$ is an exceptional prime of $\pi$ which is not contracted by $\tilde{f}$, then $E' = \tilde{f}(E)$ is an exceptional prime of $\varpi$ and a straightforward computation shows $\tilde{f}_*\check{E} = \kk{E}{E'}\check{E'}$.

Similarly, if $E'$ is an exceptional prime of $\varpi$ and $E_1,\ldots, E_n$ are all of the exceptional primes of $\pi$ for which $\wt{f}(E_i) = E'$, then $\wt{f}^*\check{E'} = \sum_i \ee{E_i}{E'} \check{E}_i$.


Finally, let us note that even if the lift $\tilde{f} = \varpi^{-1}\circ f\circ \pi$ is not holomorphic, we still get well-defined $\Z$-linear push-forward and pull-back operations $f_*\colon \Ediv(\pi)\to \Ediv(\varpi)$ and $f^*\colon \Ediv(\varpi)\to \Ediv(\pi)$, simply by passing to a good resolution $\pi'\geq \pi$ for which the lift $g := \varpi^{-1}\circ f\circ \pi'$ is holomorphic and defining $f_* :=g_*\circ  \eta_{\pi\pi'}^*$ and $f^* = (\eta_{\pi\pi'})_*\circ g^*$. These operations $f_*$ and $f^*$ again satisfy the projection formula.

We want to generalize these formulas to non-finite maps.
In this case, the pull-back $\tilde{f}^*$ acting on $\Ediv(\pi)$ does not give an element of $\Ediv(\pi)$, but in general a divisor supported in $\pi^{-1}(f^{-1}(y_0))$.
To deal with this situation, we need a few more notations.
For any curve $(C,x_0) \subset (X,x_0)$, and any modification $\pi \colon X_\pi \to (X,x_0)$, we denote by $C_\pi$ the strict transform of $C$ by $\pi$.

Let now $\pi \colon X_\pi \to (X,x_0)$ be a good resolution. We denote by $\Weil(\pi)$ the set of Weil divisors of $X_\pi$, i.e., a formal sum of the form
$$
D=\sum_{E \in \varGamma_\pi^*} a_E E + \sum_C a_C C_\pi, 
$$
where $E$ varies in the set $\varGamma_\pi^*$ of all exceptional primes of $\pi$, $C$ among all irreducible curves $(C,x_0) \subset (X,x_0)$, and $a_E, a_C \in \nZ$, all zero but for a finite number of indices.
We may define analogously the set of $\nQ$-divisors $\Weil(\pi)_\nQ$ and of $\nR$-divisors $\Weil(\pi)_\nR$.
Notice that the intersection product of two elements in $\Weil(\pi)$ is not well defined in general, since the self-intersection of non-exceptional divisors is not.
For any divisor $D \in \Weil(\pi)_\nR$ as above, we denote by $\Exc_\pi(D) = \sum_E a_E E$ its exceptional part, and by $\Rest_\pi(D) = D - \Exc_\pi(D)$ the non-exceptional part.

To any irreducible curve $(C,x_0) \subset (X,x_0)$, we associate a divisor $\check{C} \in \Weil(\pi)_\nQ$ as the only divisor so that $\check{C}+C_\pi \in \Ediv(\pi)_\nQ$ and $\check{C} \cdot E = 0$ for all $E \in \varGamma_\pi^*$.
We denote $\check{C}=\hat{Z}_\pi(\inte_C)$ if we need to remember the good resolution $\pi$ we are working with.

Notice that if $\pi \colon X_\pi \to (X,x_0)$ is an embedded resolution of $(C,x_0) \subset (X,x_0)$, then $C_\pi$ intersects a unique exceptional prime $H$ transversely, and $C+\check{C}=\check{H}=Z_\pi(\inte_C)$.

As before, a (not necessarily finite) map $f\colon (X,x_0) \to (Y,y_0)$ induces $\nZ$-linear push-forward and pull-back operators $\tilde{f}_* \colon \Weil(\pi) \to \Weil(\varpi)$ and $\tilde{f}^* \colon \Weil(\varpi) \to \Weil(\pi)$, and analogously for $\nQ$- and $\nR$-divisors.
Push-forward and pull-back operators satisfy the projection formula $\tilde{f}_*D_1\cdot D_2 = D_1\cdot \tilde{f}^*D_2$ for all divisors $D_1\in \Ediv(\pi)$ and $D_2\in \Ediv(\varpi)$.
Note that the right-hand side of the projection formula \eqref{eqn:projformula} is well defined only as far as the supports of $D_1$ and $\tilde{f}^*D_2$ have no common non-exceptional components.
When this happens, the projection formula still holds.

In this more general setting, we obtain the following formulae for pushforward and pullback of dual divisors.

\begin{prop}\label{prop:pushforward}
Let $f\colon (X,x_0) \to (Y,y_0)$ be a dominant germ, and $\nu_E \in \mc{V}_X$ be a divisorial valuation.
Let $\pi\colon X_\pi \to (X, x_0)$ and $\varpi\colon Y_\varpi \to (Y,y_0)$ give a resolution of $f$ with respect to $\VC{f} \cup \{\nu_E\}$.
Then we have
\begin{equation}\label{eqn:pushforward}
\tilde{f}_* \check{E} = \kk{E}{E'} \check{E'} - \sum_{C \in \CC{f}} \big(-C_\pi \cdot \check{E}\big)\kk{C}{G_C} \check{G}_C,
\end{equation}
where $\tilde{f}=\varpi^{-1}\circ f \circ \pi$ is the lift of $f$, and $E'$ and $G_C$ satisfy $\nu_{E'} = f_\bullet \nu_E$ and $f_\bullet \nu_C = \nu_{G_C}$ for all $C \in \CC{f}$.
\end{prop}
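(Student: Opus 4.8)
The plan is to prove \eqref{eqn:pushforward} by pairing both sides with the exceptional primes of $\varpi$ and invoking non-degeneracy of the intersection form. First I would check that both sides lie in $\Ediv(\varpi)_\nR$: the divisors $\check{E'}$ and $\check{G}_C$ are dual basis vectors, hence exceptional (and $G_C$ is genuinely an exceptional prime of $\varpi$, since $\varpi$ realizes $f_\bullet\nu_C=\nu_{G_C}$ by hypothesis), while $\tilde{f}_*\check{E}$ is supported on $\varpi^{-1}(y_0)$ — because $\check{E}$ is supported on $\pi^{-1}(x_0)$ and $\varpi\circ\tilde{f}=f\circ\pi$ carries $\pi^{-1}(x_0)$ into $\varpi^{-1}(y_0)$, as $\pi(\pi^{-1}(x_0))=\{x_0\}$ and $f(x_0)=y_0$ — so $\tilde{f}_*\check{E}$ is exceptional too. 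Since the intersection form on $\Ediv(\varpi)_\nR$ is negative definite, hence non-degenerate, it then suffices to show that both sides of \eqref{eqn:pushforward} have the same intersection number with every exceptional prime $G$ of $\varpi$.

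For the right-hand side this is immediate: $\check{E'}\cdot G=\delta_{E',G}$ and $\check{G}_C\cdot G=\delta_{G_C,G}$ by definition of the dual basis, so the right-hand side of \eqref{eqn:pushforward} meets $G$ in $\kk{E}{E'}\,\delta_{E',G}+\sum_{C\in\CC{f},\,G_C=G}(C_\pi\cdot\check{E})\,\kk{C}{G}$. For the left-hand side I would invoke the projection formula $\tilde{f}_*\check{E}\cdot G=\check{E}\cdot\tilde{f}^*G$, which is legitimate here since $\check{E}$ is exceptional while the non-exceptional part of $\tilde{f}^*G$ is a combination of strict transforms of $f$-contracted curves, so $\check{E}$ and $\tilde{f}^*G$ share no common non-exceptional component. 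The key step is then to identify $\tilde{f}^*G=\sum_D\kk{D}{G}\,D$, the sum running over all prime divisors $D$ of $X_\pi$: an exceptional prime $D$ occurs precisely when $\tilde{f}(D)\subseteq G$, while a strict transform $D=C_\pi$ occurs only if $C_\pi$ is contracted by $\varpi\circ\tilde{f}=f\circ\pi$, i.e.\ only if $(C,x_0)\in\CC{f}$, and then only if $\tilde{f}(C_\pi)=G$; but $\tilde{f}(C_\pi)$ is the exceptional prime of $\varpi$ realizing $f_\bullet\nu_C=\nu_{G_C}$, namely $G_C$ (cf.\ \refprop{divisorial_image} and the proof of \refprop{contractedcurve_image}, together with the description of $f_\bullet\nu_C$ in \refprop{extensionfbullet}). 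Using $\check{E}\cdot D=\delta_{E,D}$ for exceptional primes $D$ together with $\tilde{f}(E)=E'$ from \refprop{divisorial_image} (so that $\kk{E}{G}$ equals $\kk{E}{E'}$ when $G=E'$ and vanishes otherwise), one obtains $\check{E}\cdot\tilde{f}^*G=\kk{E}{E'}\,\delta_{E',G}+\sum_{C\in\CC{f},\,G_C=G}\kk{C}{G}\,(C_\pi\cdot\check{E})$, which is exactly the value computed above for the right-hand side. By non-degeneracy of the intersection form, \eqref{eqn:pushforward} follows.

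I expect the only real obstacle to be the precise description of $\tilde{f}^*G$ for $G$ an exceptional prime of $\varpi$: the content is that among non-exceptional prime divisors of $X_\pi$ only the finitely many strict transforms of $f$-contracted curves can be mapped into $G$, and that such a strict transform necessarily lands on the corresponding $G_C$. This is exactly where the hypothesis that $\pi$ and $\varpi$ give a resolution of $f$ with respect to $\VC{f}\cup\{\nu_E\}$ enters; everything else is bookkeeping with the dual basis and the projection formula. One may also note in passing that the coefficients $-C_\pi\cdot\check{E}$ appearing in \eqref{eqn:pushforward} are non-negative, since $\check{E}$ is anti-effective by \refprop{anti-effective} and $C_\pi\cdot E_j\geq 0$ for every exceptional prime $E_j$ of $\pi$.
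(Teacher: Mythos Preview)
Your proof is correct and follows essentially the same approach as the paper: test both sides of \eqref{eqn:pushforward} against each exceptional prime of $\varpi$, use the projection formula $\tilde{f}_*\check{E}\cdot G=\check{E}\cdot\tilde{f}^*G$, and compute $\tilde{f}^*G$ explicitly as the sum of $\kk{D}{G}D$ over exceptional primes $D$ of $\pi$ plus the contributions $\kk{C}{G_C}C_\pi$ from contracted curves with $G_C=G$. The paper's proof is considerably terser (it simply writes down $\tilde{f}^*D'$ and says ``the result easily follows''), whereas you carefully verify that both sides are exceptional, justify the projection formula, and spell out the matching of coefficients --- all of which is welcome but not a different argument.
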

\begin{proof}
It suffices to prove that the intersections of both sides of \eqref{eqn:pushforward} with any exceptional prime $D'$ in $Y_\varpi$ coincide. Notice that for any $D' \in \varGamma_\varpi^*$, we have
$$
\tilde{f}^*D'=\sum_{D \in \varGamma_\pi^*} \kk{D}{D'}D + \sum_{C \in \CC{f}} \one_{D'}^{G_C}\kk{C}{G_C}C_\pi,
$$
where $\one$ denotes the Kronecker's delta function.
The result easily follows.
\end{proof}

\begin{prop}\label{prop:pullback}
Let $f\colon (X,x_0) \to (Y,y_0)$ be a dominant germ, and $\nu_{E'} \in \mc{V}_Y$ be a divisorial valuation.
Let $\pi\colon X_\pi \to (X, x_0)$ and $\varpi\colon Y_\varpi \to (Y,y_0)$ give a resolution of $f$ with respect to $\VC{f} \cup f_\bullet^{-1}(\{\nu_{E'}\})$.
Then we have
\begin{equation}\label{eqn:pullback}
\tilde{f}^*\check{E'} = \sum_{i=1}^r \ee{E_i}{E'} \check{E_i} + \sum_{C \in \CC{f}} \big(-\check{G_C}\cdot \check{E'}\big) \kk{C}{G_C} \check{C},
\end{equation}
where $\tilde{f}=\varpi^{-1}\circ f \circ \pi$ is the lift of $f$, $\nu_{E_1}, \ldots, \nu_{E_r}$ are the divisorial valuations in $f_\bullet^{-1}(\{\nu_{E'}\})$, and $G_C$ satisfies $f_\bullet \nu_C = \nu_{G_C}$ for all $C \in \CC{f}$.
\end{prop}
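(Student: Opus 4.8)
The plan is to prove \eqref{eqn:pullback} in the same spirit as \refprop{pushforward}: determine the non-exceptional part of $\tilde f^*\check{E'}$ explicitly, compute its intersection with every exceptional prime of $\pi$, check that the right-hand side of \eqref{eqn:pullback} has the same non-exceptional part and the same intersection numbers, and conclude using the negative-definiteness of the intersection form on $\Ediv(\pi)_\nR$.

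First I would pin down the support of $\tilde f^*\check{E'}$. Since $\check{E'}$ is supported inside $\varpi^{-1}(y_0)$, the divisor $\tilde f^*\check{E'}$ is supported inside $\tilde f^{-1}(\varpi^{-1}(y_0))$; using $f\circ\pi=\varpi\circ\tilde f$, any point of this set is carried by $\pi$ into $f^{-1}(y_0)$, whose one-dimensional part is $\bigcup_{C\in\CC{f}}C$, and $\pi$ is an isomorphism off $x_0$. Hence $\tilde f^*\check{E'}$ is supported on the exceptional primes of $\pi$ together with the strict transforms $C_\pi$ of the curves $C\in\CC{f}$, so $\Rest_\pi(\tilde f^*\check{E'})=\sum_{C\in\CC{f}}q_C\,C_\pi$ for some $q_C\in\nQ$. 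Because $C$ is contracted and $\varpi$ realizes $\nu_{G_C}=f_\bullet\nu_C$, the strict transform satisfies $\tilde f(C_\pi)=G_C$; therefore $q_C$ equals $\kk{C}{G_C}$ times the multiplicity of the exceptional prime $G_C$ in $\check{E'}$, which by \refrmk{checkcoordinates} is $\check{E'}\cdot\check{G_C}$. On the other side, $\Rest_\pi(\check{C})=-C_\pi$, so the non-exceptional part of the right-hand side of \eqref{eqn:pullback} is $\sum_{C}(-\check{G_C}\cdot\check{E'})\kk{C}{G_C}(-C_\pi)=\sum_{C}\kk{C}{G_C}(\check{G_C}\cdot\check{E'})\,C_\pi$, which matches.

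Next I would compute $\tilde f^*\check{E'}\cdot E$ for an exceptional prime $E$ of $\pi$. Since $E$ is exceptional, no ill-defined self-intersections occur and the projection formula gives $\tilde f^*\check{E'}\cdot E=\check{E'}\cdot\tilde f_*E$. As $E\subset\pi^{-1}(x_0)$, its image lies in $\varpi^{-1}(y_0)$, so $\tilde f_*E$ is either $0$ (if $E$ is contracted to a point) or $\eee{E}{F}{f}\,F$ for the exceptional prime $F=\tilde f(E)$ of $\varpi$; in the latter case $\check{E'}\cdot\tilde f_*E=\eee{E}{F}{f}\,\delta_{E'F}$. By \refprop{divisorial_image}, $\tilde f(E)=E'$ exactly when $f_\bullet\nu_E=\nu_{E'}$, i.e.\ when $E\in\{E_1,\dots,E_r\}$; hence $\tilde f^*\check{E'}\cdot E=\ee{E_i}{E'}$ if $E=E_i$ and $0$ otherwise. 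For the right-hand side of \eqref{eqn:pullback}, using $\check{E_i}\cdot E=\delta_{E_iE}$ and $\check{C}\cdot E=0$ for every exceptional prime $E$ (the defining property of $\check{C}$), one gets precisely the same value. Consequently the difference of the two sides of \eqref{eqn:pullback} has vanishing non-exceptional part and is orthogonal to every exceptional prime of $\pi$; by negative-definiteness of the intersection form on $\Ediv(\pi)_\nR$ \cite{grauert:ubermodifikationen} this difference is zero, which proves the identity. The only genuinely delicate points are the support computation of the first step and checking that the projection formula is legitimately applicable throughout — which it is, since we only ever pair against an \emph{exceptional} divisor — and once these are settled the argument is entirely parallel to that of \refprop{pushforward}.
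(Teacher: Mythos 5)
Your proof is correct and follows essentially the same route as the paper's: identify the non-exceptional part of $\tilde f^*\check{E'}$ using the $G_C$-coefficient of $\check{E'}$, observe that subtracting the $\check{C}$-correction yields an exceptional divisor, compute its intersections against exceptional primes via the projection formula and \refprop{divisorial_image}, and conclude by negative-definiteness. The only cosmetic difference is that you phrase the final step as "the difference of the two sides is orthogonal to all exceptional primes, hence zero," whereas the paper reads off the coefficients of the exceptional part $L$ directly from the dual basis; these are the same argument.
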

\begin{proof}
First, let us compute the non-exceptional part of $\tilde{f}^* \check{E'}$.
For any $C \in \CC{f}$, the $C_\pi$-coefficient of $\tilde{f}^* \check{E'}$ is given by $\kk{C}{G_C}$ times the $G_C$-coefficient of $\check{E'}$.
This last coefficient is given by $\check{G_C} \cdot \check{E'}$ (see \refrmk{checkcoordinates}).
Recalling that the $C_\pi$-coefficient of $\check{C}$ is $-1$, it follows that
$$
L := \tilde{f}^*\check{E'} + \sum_{C \in \CC{f}} (\check{G_C}\cdot \check{E'}) \kk{C}{G_C} \check{C} \in \Ediv(\pi).
$$
Now we may compute the intersection of $L$ with any exceptional prime $D$ of $X_\pi$. Since $\check{C} \cdot D = 0$ for all $D \in \Ediv(X)$, using the projection formula we get
$$
D \cdot L = D \cdot \tilde{f}^* \check{E}' = \tilde{f}_*(D) \cdot \check{E'} = \ee{D}{\tilde{f}(D)} \tilde{f}(D) \cdot \check{E'} = \one_D^{E_i} \ee{E_i}{E'},
$$ 
and we are done.
\end{proof}

Notice that in the expressions \eqref{eqn:pushforward} and \eqref{eqn:pullback}, the coefficients $-C_\pi \cdot \check{E}$ and $-\check{G_C}\cdot \check{E'}$ are always positive.

\subsection{Action on b-divisors}\label{ssec:action_bdivisors}

This section is devoted to generalize equations \eqref{eqn:pushforward} and \eqref{eqn:pullback} to the setting of b-divisors.


Let $f\colon (X,x_0) \to (Y,y_0)$ be a dominant germ between normal surface singularities.
As for divisors, we will have to consider a more general class of b-divisors, whose support is not over the singular point.

\begin{defi}
Let $(X,x_0)$ be a normal surface singularity. A (not necessarily exceptional) b-divisor is a collection $Z=(Z_\pi)_\pi$ of divisors $Z_\pi \in \Weil(\pi)_\nR$ for all modification $\pi$, satisfying $Z_\pi = (\eta_{\pi\pi'})_*Z_{\pi'}$ whenever $\pi'$ is a modification dominating $\pi$. 

We say that $Z$ is \emph{Cartier} if there exists a modification $\pi$ so that $Z_\pi' = \eta_{\pi\pi'}^* Z_\pi$ for all modifications $\pi'$ dominating $\pi$. 

We denote by $\Weil(X)$ the set of all b-divisors over $X$ and by $\Weil_C(X)$ the subset of Cartier b-divisors.

A b-divisor $Z$ is \emph{nef} if $Z_\pi \cdot D \geq 0$ for any effective divisor $D \in \Ediv(\pi)_\nR$.
It is \emph{exceptional} if $Z_\pi \in \Ediv(\pi)_\nR$ for any modification $\pi$. 
\end{defi}

\begin{rmk}
Given a b-divisor $Z \in \Weil(X)$, we may define its \emph{exceptional part} $\Exc(Z)=(\Exc_\pi(Z_\pi))_\pi \in \Ediv(X)$.
Notice that $\Exc(Z)$ could be non-Cartier even though $Z$ is. A nice example of this phenomenon is given by b-divisors associated to curve semivaluations.
In fact, to a curve semivaluation $\inte_C$, we may associate a Cartier $b$-divisor $\hat{Z}(\inte_C)$, determined by any embedded resolution of $(C,x_0) \subset (X,x_0)$.
In this case, $\Exc(\hat{Z}(\inte_C)) = Z(\inte_C)$, which is not Cartier.

In general, since modifications are isomorphisms outside the exceptional divisor, for any b-divisor $X \in \Weil(X)$ we have
$$
Z-\Exc(Z)=\sum_{C} -a_C Z(C),
$$
where $C$ varies among irreducible curves in $(X,x_0)$, $a_C$ vanishes for all but a finite number of $C$, and $Z(C)=(C_\pi)_\pi$ denotes the Weil b-divisor whose incarnation in $\pi$ is the strict transform $C_\pi$ of $C$. Notice that $Z(C)$ is nef.
We set $\displaystyle \on{Supp}(Z)=\{x_0\} \cup \bigcup_{a_C \neq 0} \on{Supp}(C)$ the \emph{support} of $Z$.
It follows that $\Weil(X)=\Rest(X)\oplus \Ediv(X)$, and $\Weil_C(X)=\Rest(X)\oplus \Ediv_C(X)$, where $\Rest(X)=\bigoplus_{C} \check{C}$, with $C$ varying among irreducible curves in $(X,x_0)$, is an infinite dimensional $\nR$-vector space.
In particular, any Weil b-divisor $Z \in \Weil(X)$ can be uniquely written as
$$
Z = \sum_{C} a_C \check{C} + W, \qquad W \in \Ediv(X).
$$

Notice that $W \neq \Exc(Z)$. We will use the notation $W=\pr_\Exc(Z)$, and call it the \emph{projection} of $Z$ on $\Ediv(X)$.
Analogously, we set $\pr_\Rest(Z)=Z-\pr_\Exc(Z)$.

\end{rmk}

\begin{rmk}\label{rmk:defZhata}
The construction of the non-exceptional b-divisor associated to a curve semivaluation can be generalized to the case of non $\mf{m}_X$-primary ideals $\mf{a}$.
In fact, given such an ideal $\mf{a}$, we may consider any log resolution $\pi \colon X_\pi \to (X,x_0)$ of $\mf{a}$. In this case we have $\pi^* \mf{a} = \mc{O}_{X_\pi}(-D_\pi)$ for some divisor $D_\pi \in \Weil(\pi)$.
The projection of $D_\pi$ to $(X,x_0)$ gives a germ of curve $C$, which induces a decomposition of $D_\pi$ in its exceptional and non-exceptional parts $D_\pi=E_\pi + C_\pi$, where $C_\pi$ stands for the strict transform of $C$.
We set $\hat{Z}_\pi(\mf{a}):=-D_\pi$, and we denote by $\hat{Z}(\mf{a})$ the associated b-divisor.
As for curve semivaluations, in this case we have $\Exc(\hat{Z}(\mf{a}))=Z(\mf{a})$, where $Z(\mf{a})$ is the b-divisor defined in \refrmk{defZanonprimary}
\end{rmk}

We are ready to define pushforward and pullback of b-divisors (see \cite{boucksom-favre-jonsson:degreegrowthmeromorphicsurfmaps}).

We first define the pushforward $f_*:\Weil(X) \to \Weil(Y)$ as follows.
Let $Z \in \Weil(X)$ be a b-divisor, and fix a modification $\varpi\colon Y_\varpi \to (Y,y_0)$.
Pick a modification $\pi \colon X_\pi \to (X,x_0)$ so that the lift $\tilde{f}=\varpi^{-1} \circ f \circ \pi \colon X_\pi \to Y_\varpi$ of $f$ is holomorphic, and set $(f_*Z)_\varpi := \tilde{f}_*Z_\pi$.
This defines a continuous map $f_*\colon \Weil(X) \to \Weil(Y)$.
Moreover, if $Z \in \Weil_C(X)$ is Cartier, then $f_*(Z)$ is Cartier, determined by any $\varpi\colon Y_\varpi \to (Y,y_0)$ so that $\pi\colon X_\pi \to (X,x_0)$ is a determination of $Z$ and $\tilde{f}=\varpi^{-1}\circ f \circ \pi \colon X_\pi \dashrightarrow Y_\varpi$ does not contract any exceptional prime of $X_\pi$ or (the strict transform of) any curve in the support of $Z$ (see \cite[Corollary 2.6]{boucksom-favre-jonsson:degreegrowthmeromorphicsurfmaps}).

We now define the pullback $f^*:\Weil(Y) \to \Weil(X)$.
Let $Z' \in \Weil(Y)$ be a b-divisor, and fix a modification $\pi \colon X_\pi \to (X,x_0)$.
Pick a modification $\varpi \colon Y_\varpi \to (Y,y_0)$ so that the lift $\tilde{f}=\varpi^{-1} \circ f \circ \pi \colon X_\pi \to Y_\varpi$ of $f$ does not contract any (exceptional or not) irreducible component of $(f\circ \pi)^{-1}(\on{Supp}(Z'))$.
Then we set $(f^*Z')_\pi := \tilde{f}^*Z'_\varpi$.
This defines a continuous map $f^*\colon \Weil(Y) \to \Weil(X)$ (see \cite[Corollary 2.5]{boucksom-favre-jonsson:degreegrowthmeromorphicsurfmaps}), which preserves Cartier b-divisors. In fact if $Z'$ is determined by $\varpi$, then $Z$ is determined by a modification $\pi$ so that $\tilde{f}$ is regular.

\begin{thm}\label{thm:operations_bdivisors}
Let $f\colon (X,x_0) \to (Y,y_0)$ be a dominant germ, and $\nu \in \mc{V}_X$ and $\nu' \in \mc{V}_Y$ be normalized valuations.
Then we have
\begin{align}
f_*Z(\nu)&=c(f,\nu)Z(f_\bullet \nu) - \sum_{\nu_C \in \VC{f}} \big(-Z(\nu_C) \cdot Z(\nu)\big) c_\alpha(f,\nu_C)Z(f_\bullet \nu_C), \label{eqn:pushforward_bdivisors}\\
f^*Z(\nu')&=\sum_{f_\bullet\nu=\nu'}\frac{m(f,\nu)}{c(f,\nu)}Z(\nu) + \sum_{\nu_C \in \VC{f}} \big(-Z(f_\bullet \nu_C) \cdot Z(\nu')\big) c_\alpha(f,\nu_C)\hat{Z}(\nu_C),\label{eqn:pullback_bdivisors}
\end{align}
where $m(f,\nu) \in \nN^*$, and $\sum_{f_\bullet\nu=\nu'} m(f,\nu)$ is bounded by the local topological degree of $f$.
\end{thm}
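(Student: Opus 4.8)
The plan is to reduce both identities \eqref{eqn:pushforward_bdivisors} and \eqref{eqn:pullback_bdivisors} to their divisorial counterparts, \refprop{pushforward} and \refprop{pullback}, by a continuity/density argument, after first establishing them when $\nu$ (resp.\ $\nu'$) is divisorial. First I would fix the divisorial case. If $\nu = \nu_E$ is normalized divisorial, choose modifications $\pi$ and $\varpi$ giving a resolution of $f$ with respect to $\VC{f}\cup\{\nu_E\}$, so that \refprop{pushforward} applies; passing to the b-divisor level means reading \eqref{eqn:pushforward} as an equality of incarnations and then using that $f_*Z(\nu_E)$ is the b-divisor whose incarnation in $\varpi$ is $\tilde f_* \check E$. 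The only translation needed is to rewrite the coefficients in \eqref{eqn:pushforward} in valuative terms: $\kk{E}{E'}= c(f,\nu_E) b_E/b_{E'}$ by \eqref{eqn:divisorial_attraction_rate}, $Z_\varpi(f_\bullet\nu_E)= b_{E'}^{-1}\check E'$, and for the contracted terms $-C_\pi\cdot\check E = -Z(\nu_C)\cdot Z(\nu_E)$ (by \refprop{intersection_differentcenters}, since $C_\pi + \check C = \check H$ and $\check C\cdot\check E=0$), while $\kk{C}{G_C}\check G_C$ matches $c_\alpha(f,\nu_C)Z(f_\bullet\nu_C)$ via \refprop{contractedcurve_image}, using $c_\alpha(f,\nu_C)=m(C)b_{G_C}\kk{C}{G_C}$ and $Z(f_\bullet\nu_C)=b_{G_C}^{-1}\check G_C$, together with $b_H = m(C)$. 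Assembling these substitutions turns \eqref{eqn:pushforward} verbatim into \eqref{eqn:pushforward_bdivisors} for divisorial $\nu$. The pullback case is entirely parallel, starting from \refprop{pullback}: here one needs $\ee{E_i}{E'} = m(f,\nu_{E_i})$ up to the normalizing factor $c(f,\nu_{E_i})$ — more precisely the coefficient $\ee{E_i}{E'}\check E_i$ rewrites as $\tfrac{m(f,\nu_{E_i})}{c(f,\nu_{E_i})}Z(\nu_{E_i})$ once one sets $m(f,\nu):=\ee{\cdot}{\cdot}\cdot(b/b')^{\pm}$ appropriately and checks the count $\sum_{f_\bullet\nu=\nu'}m(f,\nu)$ is bounded by the local topological degree, which follows from \refprop{propertiesfbullet}(2) and the standard $\sum ef = [\text{degree}]$ relation in valuation theory; the contracted-curve term uses $-\check G_C\cdot\check E'= -Z(f_\bullet\nu_C)\cdot Z(\nu')$ and that the non-exceptional b-divisor $\hat Z(\nu_C)$ has incarnation $\check C$.

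Next I would remove the divisorial hypothesis. Both sides of \eqref{eqn:pushforward_bdivisors} and \eqref{eqn:pullback_bdivisors}, viewed as maps $\nu\mapsto(\text{b-divisor})$, are continuous in $\nu$ for the weak topology: $Z(\nu)$ depends continuously on $\nu$ (\refrmk{exceptional_bdivisors} and \refprop{evemb}), $c(f,\nu)=\nu(f^*\mf m_Y)$ is weakly continuous, $f_\bullet$ is continuous by \refprop{extensionfbullet}, the finite sum over $\VC{f}$ involves only the fixed contracted curves, and the intersection numbers $Z(\nu_C)\cdot Z(\nu)$ are computed on a fixed model (where $\nu_C$ and $\nu$ have distinct centers) hence are weakly continuous in $\nu$ by \refprop{intersection_differentcenters}. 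Since divisorial valuations are dense in $\mc{V}_X$ (and in $\mc{V}_Y$), the identities extend to all normalized valuations. One subtlety: on the left-hand side, $f_*Z(\nu)$ and $f^*Z(\nu')$ must be shown weakly continuous in $\nu$; this follows from the continuity of the b-divisor operations $f_*,f^*$ established in \cite{boucksom-favre-jonsson:degreegrowthmeromorphicsurfmaps} composed with $\nu\mapsto Z(\nu)$, but care is required at contracted curve semivaluations, where one should verify the formula directly: there $Z(\nu_C)$ is still a well-defined (non-Cartier, nef) b-divisor and both sides can be matched using \refprop{contractedcurve_image} and the already-proven divisorial case applied to $f_\bullet\nu_C=\nu_{G_C}$.

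The main obstacle I anticipate is bookkeeping rather than conceptual: getting every normalization factor right simultaneously — the generic multiplicities $b_E$, the multiplicities $m(C)$, the ramification indices $k$ and topological degrees $e$, and the definition of $m(f,\nu)$ — so that the stated coefficients $c(f,\nu)$, $c_\alpha(f,\nu_C)$, and $m(f,\nu)/c(f,\nu)$ emerge exactly. In particular one must be careful that the non-exceptional contributions in the pushforward are governed by $c_\alpha$ (the $\alpha$-derivative of the attraction rate) and not by $c(f,\cdot)$ or $c_A$, which is precisely why \refprop{contractedcurve_image} with its two distinct constants is invoked; conflating these would give the wrong scaling. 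A secondary technical point is justifying that the two sides agree as genuine b-divisors (i.e.\ compatibly across all modifications), not merely on one incarnation — but this is automatic once one observes that both sides are, by construction, pushforwards/pullbacks of the fixed b-divisors $Z(\nu)$, $Z(\nu')$, $\hat Z(\nu_C)$, which are intrinsic, together with the fact that \refprop{pushforward} and \refprop{pullback} were proved for an arbitrary sufficiently high resolution.
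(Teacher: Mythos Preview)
Your approach is essentially the same as the paper's: reduce to the divisorial case by density and continuity, then translate \refprop{pushforward} and \refprop{pullback} into the normalized/b-divisor language using \refprop{divisorial_image} and \refprop{contractedcurve_image}. The paper is slightly more explicit on two points. First, it defines $m(f,\nu_E) := e_E k_E$ concretely (product of the topological degree of $\tilde f|_E$ and the ramification index), whereas you leave this as ``$e_{\cdot\cdot}\cdot(b/b')^{\pm}$ appropriately''; you should pin this down, since the identity $e_{E_i}\,\check E_i/b_{E'} = (e_{E_i}k_{E_i}/c(f,\nu_{E_i}))\,Z(\nu_{E_i})$ is exactly what makes the first sum in \eqref{eqn:pullback_bdivisors} come out. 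Second, for the bound $\sum_{f_\bullet\nu=\nu'} m(f,\nu)\le e(f)$, the paper does \emph{not} invoke the valuation-theoretic $\sum ef=[L:K]$ formula (which, as you note, gives equality only in the finite case and requires identifying the residue degree with $e_E$); instead it counts preimages of a generic point near $E'$ directly in a monomialized local model, obtaining $\sum_i k_{E_i}e_{E_i}$ preimages near the $E_i$, hence $M(\nu')\le e(f)$. This geometric count works uniformly for non-finite $f$ and also handles the edge case $\nu'=f_\bullet\nu_C$ (where the term $m(f,\nu_C)/c(f,\nu_C)$ vanishes since $c(f,\nu_C)=+\infty$), which you should address explicitly rather than folding into the continuity argument.
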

\begin{proof}
By continuity we may assume that $\nu=\nu_E$ and $\nu'=\nu_{E'}$ are divisorial.
To obtain \eqref{eqn:pushforward_bdivisors}, we divide both sides of \eqref{eqn:pushforward} by $b_E$, and express the equation with respect to b-divisors associated to normalized valuations.
We conclude by applying \refprop{divisorial_image}, \refprop{contractedcurve_image}, and the fact that
$C_\pi \cdot Z_\pi(\nu_E)=Z_\pi(\inte_C) \cdot Z_\pi(\nu_E) = m(C) Z_\pi(\nu_C) \cdot Z_\pi(\nu_E)$, since $Z_\pi(\nu_E)$ is an exceptional divisor.

We proceed analogously for the pullback, dividing both sides of \eqref{eqn:pullback} by $b_{E'}$.
First, we analyze the projection of $f^*Z(\nu_{E'})$ to $\Rest(X)$. We get in this way the second sum of \eqref{eqn:pullback_bdivisors}. Notice that it varies continuously on $\nu'$.
It remains to study the projection of $f^*Z(\nu_{E'})$ to $\Ediv(X)$, which is given by
$$
\sum_{i=1}^r e_{E_i}\frac{b_{E_i}}{b_{E'}}Z(\nu_{E_i}) = \sum_{i=1}^r \frac{e_{E_i} k_{E_i}}{c(f,\nu_{E_i})}Z(\nu_{E_i})
$$
by \eqref{eqn:divisorial_attraction_rate}, where $e_{E_i}=\ee{E_i}{E'}$ and $k_{E_i}=\kk{E_i}{E'}$.
We set $m(f,\nu_E)=e_Ek_E \in \nN^*$, and get the same expression as in \eqref{eqn:pullback_bdivisors}, with possibly one exception.
In fact, if $\nu_{E'} = f_\bullet \nu_C$ then the two expressions differ for a term $\frac{m(f,\nu_C)}{c(f,\nu_C)}Z(\nu_C)$.
Assume we can prove that $m(f,\nu)$ is uniformly bounded. Then since $C$ is a contracted curve, $c(f,\nu_C)=+\infty$ and $\frac{m(f,\nu_C)}{c(f,\nu_C)}$ vanishes, giving \eqref{eqn:pullback_bdivisors}.

Set now $M(\nu'):=\sum_{f_\bullet \nu = \nu'} m(f,\nu)$.
Since the pull back of b-divisors is continuous, the map
\begin{equation}\label{eqn:pullback_bdivisors_exc}
\nu \mapsto 
\sum_{f_\bullet \nu = \nu'} \frac{m(f,\nu)}{c(f,\nu)}Z(\nu)
\end{equation}
is continuous. By intersecting with $Z(\mf{m}_X)$, we deduce that $\nu' \mapsto M(\nu')$ is continuous as far as $c(f,\nu) < +\infty$, i.e., on $\mc{V}_Y \setminus f_\bullet (\VC{f})$.
We want to prove that $M(\nu')\leq e(f)$ the topological degree of $f$, concluding the proof.
Pick good resolutions $\pi \colon X_\pi \to (X,x_0)$ and $\varpi\colon Y_\varpi \to (Y,y_0)$ that give a resolution of $f$ with respect to $f_\bullet^{-1}(\nu')$.
We want to compute the number of preimages for a generic point $q \in Y_\varpi$ close to $E'$.
The map $\tilde{f}|_E : E \to E'$ has topological degree $e_{E}=\ee{E}{E'}$, hence any point $q$ in $E'$ has $e_{E}$ preimages in $E$ (counted with multiplicities).
For any such preimage $p$, pick local coordinates $(x,y)$ at $p$ and $(z,w)$ at $q$ so that $E=\{x=0\}$ and $E'=\{z=0\}$.
Up to taking higher models, we may assume that $\wt{f}$ is monomial. In these coordinates it is of the form
$$
\tilde{f}(x,y)=\big(x^ky^b \eps_1(x,y), y^d \eps_2(x,y)\big),
$$
for suitable $\eps_i$ non-vanishing at $0$, $b \in \nN$ and $d \in \nN^*$. Notice that $d$ is the multiplicity of $p$ as a solution of $\tilde{f}|_{E}(p)=q$.
It follows that the number of preimages near $p$ of a generic point near $q$ is $kd$.
If we sum over all preimages $p$ of $q$, we get $ke$ preimages of a (generic) point near $q$.
Summing up for all preimages $\nu_E$ of $\nu_{E'}$, we obtain the value $M(\nu')$, which is hence bounded by the topological degree.
\end{proof}

\begin{rmk}
When $f \colon (X,x_0) \to (Y,y_0)$ is \emph{finite}, $\nu' \mapsto M(\nu')$ is a constant map in the whole space $\mc{V}_Y$, which coincides with the topological degree of $f$.
From the proof of \refthm{operations_bdivisors}, we also deduce that for (possibly) \emph{non-finite} maps, the topological degree can still be computed as $e(f)=\max\{M(\nu')\ |\ \nu' \in \mc{V}_Y\}$.
One can also study how $m(f,\nu)$ varies when $\nu$ varies. Its properties are related to the behavior of tangent maps $df_\bullet$ at $\nu$ (see \hyperref[rmk:multiplicity_finite]{Remarks~\ref*{rmk:multiplicity_finite}} and \ref{rmk:multiplicity_nonfinite}).
\end{rmk}

\begin{defi}
Let $f\colon (X,x_0) \to (Y,y_0)$ be a dominant germ, and $\nu \in \mc{V}_X$ be a normalized valuation.
The value $m(f,\nu)$ of \refthm{operations_bdivisors} is called \emph{multiplicity of $f$ at $\nu$}.
\end{defi}

\begin{rmk}\label{rmk:pullback_bdivisors_rest}
Analogous formulae can be obtained for pushforwards and pullbacks of any b-divisor. In particular, in the setting of \refthm{operations_bdivisors}, for any exceptional nef b-divisor $Z \in \Ediv(Y)$ we get
\begin{equation}
\Rest \big(f^*Z\big)=\sum_{\nu_C \in \VC{f}} \big(-Z(f_\bullet \nu_C) \cdot Z\big) c_\alpha(f,\nu_C) \frac{-Z(C)}{m(C)}.\label{eqn:pullback_bdivisors_rest}
\end{equation}
\end{rmk}

\subsection{Angular distance is non-increasing}

We come now to a remarkable property of the maps $f_\bullet\colon \mc{V}_X\to \mc{V}_Y$, a property that makes a classification result like \refthm{valdynamics} possible: $f_\bullet$ is distance non-increasing with respect to the angular metrics on $\mc{V}_X$ and $\mc{V}_Y$.

\begin{thm}\label{thm:non-expansion} For each $\nu_1, \nu_2\in \mc{V}_X$ of finite skewness one has $\rho(f_\bullet\nu_1, f_\bullet\nu_2) \leq \rho(\nu_1, \nu_2)$. 
\end{thm}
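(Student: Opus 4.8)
The plan is to reduce everything to the key identity of Proposition~\ref{prop:famous_equality}, which says that the relative skewness $\beta(\nu\mid\mu)$ is the supremum of $\nu(\mf{a})/\mu(\mf{a})$ over $\mf{m}_Y$-primary ideals $\mf{a}$. The angular distance is built out of $\beta$, so the inequality $\rho(f_\bullet\nu_1, f_\bullet\nu_2)\leq \rho(\nu_1,\nu_2)$ will follow once we show $\beta(f_\bullet\nu_1\mid f_\bullet\nu_2)\cdot\beta(f_\bullet\nu_2\mid f_\bullet\nu_1)\leq \beta(\nu_1\mid\nu_2)\cdot\beta(\nu_2\mid\nu_1)$, since $\rho$ is the logarithm of this product. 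In fact I expect that each factor behaves well enough that one can even hope for the stronger statement $\beta(f_\bullet\nu_1\mid f_\bullet\nu_2)\leq\beta(\nu_1\mid\nu_2)$, but the product form is what is needed, and proving the product inequality directly avoids worrying about whether the attraction-rate normalizations cancel factor-by-factor.

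First I would unwind the definition $f_\bullet\nu_i = c(f,\nu_i)^{-1}f_*\nu_i$, where $f_*\nu_i = \nu_i\circ f^*$. For any $\mf{m}_Y$-primary ideal $\mf{a}\subset R_Y$, its preimage ideal $f^*\mf{a}\cdot R_X$ is $\mf{m}_X$-primary (here one uses that $f$ is dominant, so $f^*\mf{a}$ contains a power of $\mf{m}_X$ — this holds even in the non-finite case for $\mf{m}_Y$-primary $\mf{a}$, since $f^{-1}(y_0)$ is contained in a curve and intersecting with the locus where $f^*\mf{a}$ is supported gives $\{x_0\}$; more simply $\nu(f^*\mf{a})<\infty$ for every $\nu\in\mc{V}_X$ of finite skewness because $\nu(f^*\mf{m}_Y)=c(f,\nu)<\infty$). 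Then
\[
\frac{f_*\nu_1(\mf{a})}{f_*\nu_2(\mf{a})} = \frac{\nu_1(f^*\mf{a})}{\nu_2(f^*\mf{a})}.
\]
Taking suprema over $\mf{m}_Y$-primary $\mf{a}$ on the left and noting that $\{f^*\mf{a}\,R_X : \mf{a}\text{ is }\mf{m}_Y\text{-primary}\}$ is a subfamily of the $\mf{m}_X$-primary ideals of $R_X$, Proposition~\ref{prop:famous_equality} gives
\[
\beta(f_*\nu_1\mid f_*\nu_2) = \sup_{\mf{a}}\frac{\nu_1(f^*\mf{a})}{\nu_2(f^*\mf{a})} \leq \sup_{\mf{b}}\frac{\nu_1(\mf{b})}{\nu_2(\mf{b})} = \beta(\nu_1\mid\nu_2),
\]
the middle supremum over $\mf{m}_Y$-primary $\mf{a}$, the last over all $\mf{m}_X$-primary $\mf{b}$. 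Finiteness of $\beta(\nu_1\mid\nu_2)$ is guaranteed by Corollary~\ref{cor:skewness_comparison} since $\nu_1$ has finite skewness. The same computation with the roles reversed gives $\beta(f_*\nu_2\mid f_*\nu_1)\leq\beta(\nu_2\mid\nu_1)$.

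Finally I would handle the normalization: $\beta$ is invariant under rescaling each argument by a positive constant in a way that is transparent from the defining formula $\beta(\nu\mid\mu) = \sup_\mf{a}\nu(\mf{a})/\mu(\mf{a})$ — namely $\beta(\lambda\nu\mid\mu\mu')$ scales the product $\beta(\lambda\nu\mid\mu\mu')\beta(\mu\mu'\mid\lambda\nu)$ by $(\lambda/\lambda)(\mu'/\mu')=1$ once we multiply the two one-sided quantities. Concretely, writing $\lambda_i = c(f,\nu_i)^{-1}$ so that $f_\bullet\nu_i = \lambda_i f_*\nu_i$, one has $\beta(f_\bullet\nu_1\mid f_\bullet\nu_2) = (\lambda_1/\lambda_2)\,\beta(f_*\nu_1\mid f_*\nu_2)$ and $\beta(f_\bullet\nu_2\mid f_\bullet\nu_1) = (\lambda_2/\lambda_1)\,\beta(f_*\nu_2\mid f_*\nu_1)$, so the rescaling factors cancel in the product:
\[
\beta(f_\bullet\nu_1\mid f_\bullet\nu_2)\,\beta(f_\bullet\nu_2\mid f_\bullet\nu_1) = \beta(f_*\nu_1\mid f_*\nu_2)\,\beta(f_*\nu_2\mid f_*\nu_1) \leq \beta(\nu_1\mid\nu_2)\,\beta(\nu_2\mid\nu_1).
\]
Taking logarithms yields $\rho(f_\bullet\nu_1,f_\bullet\nu_2)\leq\rho(\nu_1,\nu_2)$. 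The one subtlety I expect to be the main obstacle — and which deserves a careful line — is the case where one of $\nu_1,\nu_2$ is a contracted curve semivaluation: then $f_*\nu_i$ is not finite, $c(f,\nu_i)=+\infty$, and $f_\bullet\nu_i$ was defined by the limiting procedure of Proposition~\ref{prop:extensionfbullet} rather than by rescaling $f_*\nu_i$. However, the hypothesis is that $\nu_1,\nu_2$ have \emph{finite} skewness, and contracted curve semivaluations have infinite skewness (as noted after Proposition~\ref{prop:famous_equality}); so this case does not arise and the argument above applies verbatim.
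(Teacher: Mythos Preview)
Your approach is exactly the paper's: use \refprop{famous_equality} to write $\beta$ as a supremum of ratios $\nu(\mf{a})/\mu(\mf{a})$, observe that the normalization constants $c(f,\nu_i)$ cancel in the product, and bound the supremum over pulled-back ideals by the supremum over all $\mf{m}_X$-primary ideals.

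There is one concrete error, however. Your claim that $f^*\mf{a}\cdot R_X$ is $\mf{m}_X$-primary for every $\mf{m}_Y$-primary $\mf{a}$ is false when $f$ is non-finite: if $f$ contracts a curve $C$ to $y_0$, then the zero locus of $f^*\mf{m}_Y$ contains $C$, so its radical is the ideal of $C\cup\{x_0\}$, not $\mf{m}_X$. The parenthetical justification (``intersecting with the locus where $f^*\mf{a}$ is supported gives $\{x_0\}$'') is simply wrong, and the fallback observation that $\nu_i(f^*\mf{a})<\infty$ does not repair things, since \refprop{famous_equality} takes its supremum over $\mf{m}_X$-primary ideals and $f^*\mf{a}$ is not one of them. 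The paper handles this with a one-line fix: replace $f^*\mf{a}$ by $\mf{a}_n := f^*\mf{a} + \mf{m}_X^n$, which \emph{is} $\mf{m}_X$-primary, and note that $\nu_i(\mf{a}_n) = \min\{\nu_i(f^*\mf{a}),\, n\} = \nu_i(f^*\mf{a})$ for $n$ large (using precisely your observation that $\nu_i(f^*\mf{a})<\infty$ for finite-skewness $\nu_i$). With this correction your argument goes through verbatim.
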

\begin{proof} Using \refprop{famous_equality}, we have
\[
\rho(f_\bullet\nu_1,f_\bullet\nu_2) = \log\left(\sup_\mf{a} \frac{f_\bullet\nu_1(\mf{a})}{f_\bullet\nu_2(\mf{a})}\times \sup_\mf{a} \frac{f_\bullet\nu_2(\mf{a})}{f_\bullet\nu_1(\mf{a})}\right) = \log\left(\sup_\mf{a}\frac{\nu_1(f^*\mf{a})}{\nu_2(f^*\mf{a})}\times \sup_\mf{a}\frac{\nu_2(f^*\mf{a})}{\nu_1(f^*\mf{a})}\right),\]
the suprema taken over all $\mf{m}_Y$-primary ideals $\mf{a}\subset R_Y$.
The ideals $f^*\mf{a}$ will not be $\mf{m}_X$-primary if $f$ is not finite, but in any case for large enough $n\in \N$ the ideals $\mf{a}_n := f^*\mf{a} + \mf{m}_X^n$ are $\mf{m}_X$-primary and satisfy $\nu_i(\mf{a}_n) =\nu_i(\mf{a})$ for $i=  1,2$, and thus we can conclude that
\[
\rho(f_\bullet\nu_1, f_\bullet\nu_2) \leq \log\left(\sup_{\mf{b}}\frac{\nu_1(\mf{b})}{\nu_2(\mf{b})}\times \sup_\mf{b}\frac{\nu_2(\mf{b})}{\nu_1(\mf{b})}\right) = \rho(\nu_1, \nu_2),
\]
the suprema taken over all $\mf{m}_X$-primary ideals $\mf{b}\subset R_X$.
This completes the proof.
\end{proof}

We emphasize that this holds for \emph{every} dominant holomorphic map $f\colon (X,x_0)\to (Y,y_0)$. In particular, if we are in the dynamical setting where $(Y,y_0) = (X,x_0)$, then $\rho(f^n_\bullet \nu_1, f^n_\bullet\nu_2)\leq \rho(\nu_1, \nu_2)$ for all $n\geq 1$. This says that the family of maps $\{f^n_\bullet\}_{n\geq 1}$ is $\rho$-equicontinuous in a very strong sense. 

\begin{rmk} \refthm{non-expansion} is analogous to the well-known result in the theory of Riemann surfaces that if $f\colon \Sigma_1\to \Sigma_2$ is a holomorphic map between hyperbolic Riemann surfaces, then $f$ does not increase distances with respect to the hyperbolic metrics on $\Sigma_1$ and $\Sigma_2$. Using this fact and Montel's theorem, Fatou gave a classification of all the dynamics possible on a hyperbolic Riemann surface, see \cite{milnor:dyn1cplxvar, carleson-gamelin:cplxdyn}. With this in mind, our \refthm{valdynamics} can be viewed as an analogue of Fatou's classification theorem. 
\end{rmk}

In the following, we need a stronger characterization of when the map $f_\bullet$ is strictly decreasing the angular distance.

Given two distinct valuations $\nu, \mu \in \mc{V}_X$, we denote by $U_\nu(\mu)$ the connected component of $\mc{V}_X \setminus \{\nu\}$ containing $\mu$.

\begin{thm}\label{thm:strong_contraction}
Let $f \colon (X,x_0) \to (Y,y_0)$ be a dominant non-invertible germ between two normal surface singularities. For any $\nu \neq \mu \in \mc{V}_X$ of finite skewness, we have $\rho(f_\bullet \nu, f_\bullet \mu) \leq \rho(\nu,\mu)$.
The equality holds if and only if $f$ is finite, and moreover
\begin{itemize}
\item $\nu$ disconnects $\mu$ and any preimage of $f_\bullet \nu$, and symmetrically 
\item $\mu$ disconnects $\nu$ and any preimage of $f_\bullet \mu$.
\end{itemize}
\end{thm}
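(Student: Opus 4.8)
The inequality $\rho(f_\bullet\nu, f_\bullet\mu)\le \rho(\nu,\mu)$ is \refthm{non-expansion}, so the entire content is the characterization of equality. The plan is to trace through the proof of \refthm{non-expansion} and identify where slack can be introduced. Recall that by \refprop{famous_equality}, $\beta(\nu\mid\mu)=\sup_{\mf a}\nu(\mf a)/\mu(\mf a)$ over $\mf m$-primary ideals, and this supremum is attained in the limit by relatively very ample divisors approximating $Z_\pi(\nu)$; equivalently, by \refcor{skewness_comparison}, $\beta(\nu\mid\mu)=\alpha(\nu)/[-Z_\pi(\nu)\cdot Z_\pi(\mu)]$ for any good resolution $\pi$ separating the centers. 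The first step is therefore to re-express $\rho(f_\bullet\nu,f_\bullet\mu)$ and $\rho(\nu,\mu)$ entirely in terms of intersection numbers of dual b-divisors via the identity $\rho(\nu,\mu)=\log\big[\alpha(\nu)\alpha(\mu)/(Z(\nu)\cdot Z(\mu))^2\big]$ (valid for finite-skewness $\nu\ne\mu$, using \refcor{skewness_comparison} and $\beta(\nu\mid\mu)\beta(\mu\mid\nu)=\alpha(\nu)\alpha(\mu)/(Z(\nu)\cdot Z(\mu))^2$).

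The key computation is to push the dual b-divisors forward and use \refthm{operations_bdivisors}. Write $Z=Z(\nu)$, $W=Z(\mu)$, $Z'=Z(f_\bullet\nu)$, $W'=Z(f_\bullet\mu)$. From \eqref{eqn:pushforward_bdivisors} we have $f_*Z = c(f,\nu)Z' - \sum_{\nu_C\in\VC f}(-Z(\nu_C)\cdot Z)c_\alpha(f,\nu_C)Z(f_\bullet\nu_C)$, and similarly for $f_*W$. Using the projection formula $f_*Z\cdot Z'' = Z\cdot f^*Z''$ together with \eqref{eqn:pullback_bdivisors}, one computes $Z\cdot W$, $Z\cdot f^*Z(f_\bullet\nu)$, etc. The point is that $\alpha(f_\bullet\nu) = -Z'\cdot Z'$ while the relevant ``pulled-back'' intersection picks up the term $\sum_{f_\bullet\lambda=f_\bullet\nu}\frac{m(f,\lambda)}{c(f,\lambda)}(Z\cdot Z(\lambda))$ plus a contracted-curve contribution. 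Thus $\rho$ is preserved precisely when two sources of slack vanish: (i) the multiplicities $m(f,\nu)=e_\nu k_\nu$ force $c(f,\nu)\ge m(f,\nu)$ with equality iff there is a single preimage branch of full multiplicity — this is where non-invertibility and finiteness enter (if $f$ is not finite, a contracted-curve term $Z(\nu_C)\cdot Z>0$ appears and strictly shrinks $\rho$ unless $Z$ meets no contracted curve, but one must also then rule out equality using that the remaining factor is $<1$); and (ii) the off-diagonal contributions $Z\cdot Z(\lambda)$ for preimages $\lambda\ne\nu$ of $f_\bullet\nu$, which by \refprop{positivity_bdivisors} satisfy the Cauchy–Schwarz-type inequality with equality iff $\nu$ disconnects $\lambda$ from $\mu$ — i.e., iff $\nu$ disconnects $\mu$ from every preimage of $f_\bullet\nu$.

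Concretely the steps are: (1) reduce to $\nu=\nu_E$, $\mu=\nu_F$ divisorial by density and lower semicontinuity/continuity of $\rho$ on finite-skewness valuations; (2) fix a good resolution pair resolving $f$ with respect to $\{\nu_E,\nu_F\}\cup\VC f\cup f_\bullet^{-1}(f_\bullet\nu_E)\cup f_\bullet^{-1}(f_\bullet\nu_F)$ and separating all relevant centers; (3) expand $\beta(f_\bullet\nu\mid f_\bullet\mu) = \alpha(f_\bullet\nu)/[-Z(f_\bullet\nu)\cdot Z(f_\bullet\mu)]$ using $f_*Z(\nu)\cdot Z(f_\bullet\mu) = Z(\nu)\cdot f^*Z(f_\bullet\mu)$ and the explicit formulae \eqref{eqn:pushforward_bdivisors}, \eqref{eqn:pullback_bdivisors}; (4) compare term by term with $\beta(\nu\mid\mu)$, isolating the two slack terms above; (5) invoke \refprop{positivity_bdivisors} for the disconnection criterion and the finiteness/non-invertibility dichotomy for the multiplicity term; (6) symmetrize in $\nu\leftrightarrow\mu$ to get both disconnection conditions. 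The main obstacle is bookkeeping step (3)–(4): disentangling which preimages of $f_\bullet\nu$ and $f_\bullet\mu$ contribute, correctly handling the case where $f_\bullet\nu$ or $f_\bullet\mu$ happens to equal some $f_\bullet\nu_C$, and verifying that when $f$ is not finite the contracted-curve corrections genuinely produce a strict inequality (rather than accidentally cancelling); establishing that $c(f,\nu)>m(f,\nu)$ strictly whenever there is more than one preimage of $f_\bullet\nu$, which is where the hypothesis that $f$ is non-invertible is used to exclude the degenerate equality case.
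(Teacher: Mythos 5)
Your high-level plan matches the paper's: re-express $\rho$ in terms of b-divisor intersections, feed in the pushforward formula \eqref{eqn:pushforward_bdivisors} and pullback formula \eqref{eqn:pullback_bdivisors} together with the projection formula, split the resulting inequality into a positive combination of Cauchy--Schwarz-type inequalities, and invoke \refprop{positivity_bdivisors} for the disconnection criterion, finishing by symmetrizing $\nu\leftrightarrow\mu$. However, there are two genuine gaps.

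First, step (1), reducing to divisorial $\nu,\mu$ by density, is only valid for the inequality, not for the equality characterization: a sequence of strict inequalities can still produce equality in the limit, so knowing the equality $\Leftrightarrow$ disconnection dichotomy for divisorial valuations does not yield it for general finite-skewness semivaluations. The paper avoids this entirely by running the argument at the level of b-divisors, applying \refprop{positivity_bdivisors} (whose equality clause is stated for arbitrary semivaluations) directly to $\nu$ and $\mu$.

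Second, the claim in (i) that $c(f,\nu)\geq m(f,\nu)$ is false, and the slack analysis built on it does not reflect the actual argument. Already for $f(x,y)=(x^2,y^2)$ on $(\nC^2,0)$ and $\nu=\ord_0$ one finds $c(f,\ord_0)=2$ while $m(f,\ord_0)=e_{E_0}k_{E_0}=2\cdot2=4$. No comparison between $c$ and $m$ enters the paper's proof: the multiplicities appear only as strictly positive coefficients $a_i$ in a sum of inequalities \eqref{eqn:strongcontraction_bdivisors3}, each of which is a pure disconnection condition, so equality in the sum forces equality in every term irrespective of the coefficient sizes. Relatedly, the decisive step for strictness in the non-finite case --- which you correctly flag as an obstacle but leave unresolved --- is a short topological observation and not a multiplicity estimate: after symmetrizing, equality would require both that $\nu$ disconnects a contracted-curve semivaluation $\nu_C$ from $\mu$ and that $\mu$ disconnects $\nu_C$ from $\nu$, and these two conditions cannot hold simultaneously for $\nu\ne\mu$ in a real graph.
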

\begin{proof}
We want to prove the same estimate as in \refthm{non-expansion} in terms of b-divisors. This will allow to study the equality cases by \refprop{positivity_bdivisors}.
In particular, we will show that $\beta(f_\bullet \nu | f_\bullet \mu) \leq \frac{c(f,\mu)}{c(f,\nu)}\beta(\nu|\mu)$, which gives the desired inequality by symmetry.
This relations is equivalent to proving
\begin{equation}\label{eqn:angulardistance_bdivisor}
\big(Z(f_\bullet \nu) \cdot c(f,\nu) Z(f_\bullet \nu)\big) \big(Z(\nu) \cdot Z(\mu)\big) \leq 
\big(Z(f_\bullet \nu) \cdot c(f,\mu) Z(f_\bullet \mu)\big) \big(Z(\nu) \cdot Z(\nu)\big). 
\end{equation}
By \eqref{eqn:pushforward_bdivisors}, we get
\begin{align*}
c(f,\nu) Z(f_\bullet \nu) &= f_*Z(\nu) + \sum_{\nu_C \in \VC{f}}(-Z(\nu_C) \cdot Z(\nu))c_\alpha(f,\nu_C)Z(f_\bullet \nu_C),\\
c(f,\mu) Z(f_\bullet \mu) &= f_*Z(\mu) + \sum_{\nu_C \in \VC{f}}(-Z(\nu_C) \cdot Z(\mu))c_\alpha(f,\nu_C)Z(f_\bullet \nu_C).
\end{align*}
Notice that all coefficients of $Z(f_\bullet \nu_C)$ in the two expressions are positive.
By plugging these expressions in \eqref{eqn:angulardistance_bdivisor}, we get to the equivalent system of inequalities:
\begin{align}
\big(Z(f_\bullet \nu) \cdot f_*Z(\nu)\big) \big(Z(\mu) \cdot Z(\nu)\big) \leq & 
\big(Z(f_\bullet \nu) \cdot f_*Z(\mu)\big) \big(Z(\nu) \cdot Z(\nu)\big), \label{eqn:strongcontraction_bdivisors1}\\
\big(Z(\nu_C) \cdot Z(\nu)\big) \big(Z(\mu) \cdot Z(\nu)\big) \leq & 
\big(Z(\nu_C) \cdot Z(\mu)\big) \big(Z(\nu) \cdot Z(\nu)\big),\label{eqn:strongcontraction_bdivisors2}
\end{align}
where the equality holds for \eqref{eqn:angulardistance_bdivisor} if and only if the equality holds for all such equations.
Recall that by \eqref{eqn:pullback_bdivisors}, we have
$$
f^*Z(f_\bullet \nu) = \sum_{i=1}^r a_i Z(\nu_i) + \sum_{\nu_C \in \VC{f}} a_C Z(f_\bullet \nu_C)
$$
for suitable $a_i, a_C > 0$, where $\{\nu_1, \ldots, \nu_r\}$ are the preimages of $f_\bullet \nu$.
By using the projection formula and plugging in this pullback formula, \eqref{eqn:strongcontraction_bdivisors1} is equivalent to the system of inequalities
\begin{align}
\big(Z(\nu_i) \cdot Z(\nu)\big) \big(Z(\mu) \cdot Z(\nu)\big) \leq & 
\big(Z(\nu_i) \cdot Z(\mu)\big) \big(Z(\nu) \cdot Z(\nu)\big), \label{eqn:strongcontraction_bdivisors3}\\
\big(Z(f_\bullet \nu_C) \cdot Z(\nu)\big) \big(Z(\mu) \cdot Z(\nu)\big) \leq & 
\big(Z(f_\bullet \nu_C) \cdot Z(\mu)\big) \big(Z(\nu) \cdot Z(\nu)\big). \label{eqn:strongcontraction_bdivisors4}
\end{align}

All these inequalities \eqref{eqn:strongcontraction_bdivisors2}, \eqref{eqn:strongcontraction_bdivisors3} and \eqref{eqn:strongcontraction_bdivisors4} hold by \refprop{positivity_bdivisors}, and we deduce by symmetry that $\rho(f_\bullet\nu,f_\bullet\mu) \leq \rho(\nu,\mu)$.

Notice that if $f$ is finite, the set $\VC{f}$ is empty, and the inequalities \eqref{eqn:strongcontraction_bdivisors2} and \eqref{eqn:strongcontraction_bdivisors4} do not occur.
By \refprop{positivity_bdivisors}, the equality holds in \eqref{eqn:strongcontraction_bdivisors3} if and only if $\nu$ disconnects $\mu$ and $\nu_i$.
By symmetry, we get the statement for finite maps.

Assume now that $f$ is not finite.
Again by \refprop{positivity_bdivisors}, the equality in \eqref{eqn:strongcontraction_bdivisors2} holds if and only if
$\nu$ disconnects $\nu_C$ and $\mu$.
We apply the same argument with the role of $\nu$ and $\mu$ interchanged, and for the angular distance to be preserved, we need that $\nu$ disconnects $\nu_C$ and $\mu$, and $\mu$ disconnects $\nu_C$ and $\nu$.
It is easy to check that this cannot happen, and the angular distance strictly decreases for non-finite maps.
\end{proof}

We state here more explicitly what we proved for non-finite germs.

\begin{cor}\label{cor:strong_contraction_non-finite}
Let $f \colon (X,x_0) \to (Y,y_0)$ be a dominant \emph{non-finite} germ. Then $\rho(f_\bullet \nu, f_\bullet \mu) < \rho(\nu,\mu)$ for all valuations $\nu \neq \mu \in \mc{V}_X^\alpha$ of finite skewness.
\end{cor}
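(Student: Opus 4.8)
The plan is to extract this statement directly from the proof of \refthm{strong_contraction}. That theorem already records the non-strict inequality $\rho(f_\bullet\nu,f_\bullet\mu)\le\rho(\nu,\mu)$ for finite skewness valuations, together with a complete description of when equality holds, and in the non-finite case that description leaves no room for equality. So strictly speaking nothing new needs to be proved; what I would do is isolate and re-state the relevant part of that argument so that the corollary reads cleanly on its own.

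Concretely, I would recall the chain of reductions used there. By \refprop{famous_equality} it suffices to show $\beta(f_\bullet\nu\mid f_\bullet\mu)\le\frac{c(f,\mu)}{c(f,\nu)}\beta(\nu\mid\mu)$, which after clearing denominators is the b-divisor inequality \eqref{eqn:angulardistance_bdivisor}. Substituting the pushforward formula \eqref{eqn:pushforward_bdivisors} for $c(f,\nu)Z(f_\bullet\nu)$ and $c(f,\mu)Z(f_\bullet\mu)$, and using that all coefficients of the terms $Z(f_\bullet\nu_C)$ occurring there are positive, \eqref{eqn:angulardistance_bdivisor} splits into the system \eqref{eqn:strongcontraction_bdivisors1}--\eqref{eqn:strongcontraction_bdivisors2}, with equality in \eqref{eqn:angulardistance_bdivisor} forcing equality in every member. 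Plugging the pullback formula \eqref{eqn:pullback_bdivisors} into \eqref{eqn:strongcontraction_bdivisors1} and applying the projection formula replaces it in turn by \eqref{eqn:strongcontraction_bdivisors3}--\eqref{eqn:strongcontraction_bdivisors4}. Each of \eqref{eqn:strongcontraction_bdivisors2}, \eqref{eqn:strongcontraction_bdivisors3}, \eqref{eqn:strongcontraction_bdivisors4} is an instance of \refprop{positivity_bdivisors}, which also says exactly when equality is attained.

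The one genuine point is the conclusion. Since $f$ is non-finite, the set $\VC{f}$ of contracted curve valuations is non-empty, so for some $\nu_C\in\VC{f}$ the inequality \eqref{eqn:strongcontraction_bdivisors2} actually occurs in the system. By \refprop{positivity_bdivisors}, equality in \eqref{eqn:strongcontraction_bdivisors2} holds iff $\nu$ disconnects $\nu_C$ and $\mu$; running the same computation with the roles of $\nu$ and $\mu$ exchanged, equality in $\rho$ would additionally require $\mu$ to disconnect $\nu_C$ and $\nu$. For distinct $\nu,\mu$ these two disconnection conditions are incompatible: in the real-graph structure of $\mc{V}_X$ they would force $\nu$ to lie between $\nu_C$ and $\mu$ and simultaneously $\mu$ to lie between $\nu_C$ and $\nu$, which is impossible. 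Hence equality fails in \eqref{eqn:angulardistance_bdivisor} and $\rho(f_\bullet\nu,f_\bullet\mu)<\rho(\nu,\mu)$. I expect this last incompatibility check (the ``easy to check'' step already invoked in the theorem's proof) to be the only non-mechanical part; everything else is bookkeeping with the b-divisor identities of \refthm{operations_bdivisors} that are already in hand.
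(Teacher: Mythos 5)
Your proposal is correct and matches the paper's approach exactly: the corollary is simply the non-finite branch of the equality characterization in \refthm{strong_contraction}, and your re-derivation—including the disconnection incompatibility check, which you spell out more fully than the paper's terse ``easy to check''—follows the same chain of reductions through \eqref{eqn:angulardistance_bdivisor} and the system \eqref{eqn:strongcontraction_bdivisors1}--\eqref{eqn:strongcontraction_bdivisors4}, culminating in \refprop{positivity_bdivisors}. One small nit: the reduction to the single inequality $\beta(f_\bullet\nu\mid f_\bullet\mu)\le\tfrac{c(f,\mu)}{c(f,\nu)}\beta(\nu\mid\mu)$ comes from the multiplicative definition of $\rho$ together with the b-divisor form of $\beta$ (as in \refcor{skewness_comparison}), not from \refprop{famous_equality}; the latter is used in \refthm{non-expansion} but not in the b-divisor recast of \refthm{strong_contraction}.
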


\subsection{The Jacobian formula}\label{ssec:Jacobian_formula}

Another important tool in understanding the maps $f_\bullet$ on valuation spaces is the \emph{Jacobian formula}, which says precisely how the log discrepancy function $A$ behaves with respect to this action.
This proof has been suggested by Charles Favre; a version for finite maps can be found in \cite[Prop.\ 1.9]{favre:holoselfmapssingratsurf}.

\begin{prop}[The Jacobian Formula]\label{prop:jacobian_formula}
Let $(X,x_0)$ and $(Y,y_0)$ be two normal surface singularities, and let $f\colon(X,x_0) \to (Y,y_0)$ be a dominant holomorphic map.
Then there exists a Weil divisor $R_f$ on $X$ so that
\begin{equation}\label{eqn:jacobian_formula}
A(f_*\nu)= A(\nu) + \nu(R_f)
\end{equation}
for any $\nu \in \hat{\mc{V}}_X^*$.
\end{prop}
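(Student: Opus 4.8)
The plan is to first establish the formula for divisorial valuations, where everything can be computed geometrically, and then extend to all of $\hat{\mc{V}}_X^*$ by the characterizing properties of $A$ (piecewise linearity on monomial segments plus lower semicontinuity). First I would \emph{define} the divisor $R_f$. Choose any good resolutions $\pi\colon X_\pi \to (X,x_0)$ and $\varpi\colon Y_\varpi \to (Y,y_0)$ so that the lift $\tilde f = \varpi^{-1}\circ f \circ \pi$ is holomorphic, and fix nonvanishing holomorphic $2$-forms $\omega_X$ on $(X,x_0)$ and $\omega_Y$ on $(Y,y_0)$. Then $\tilde f^* (\pi_Y^*\omega_Y$-type pullback$)$ differs from $\pi^*(\text{pullback of }\omega_X)$ by a divisor supported on $X_\pi$, and pushing this down to $X$ defines a Weil divisor $R_f$ on $X$ (the ``ramification'' or relative Jacobian divisor); concretely $\divi(f^*\omega_Y) = \divi(\omega_X) + R_f$ as Weil divisors on $X$, so $R_f$ does not depend on the choices of $\omega_X,\omega_Y$ (any two choices differ by a divisor of a unit times a principal divisor, which a centered valuation sees the same way on both sides). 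The subtle point to record here is that $R_f$ may have non-exceptional components precisely along the contracted curves $C \in \CC{f}$, which is why we must allow $R_f$ to be a genuine Weil divisor rather than an exceptional one.

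Next I would verify \eqref{eqn:jacobian_formula} for a divisorial valuation $\nu = \divi_E$, where $E$ is an exceptional prime of $\pi$ (WLOG $\pi$ high enough that $\tilde f$ is holomorphic and $E' := \tilde f(E)$ is an exceptional prime of $\varpi$, using \refprop{divisorial_image}, unless $E$ maps into a contracted curve's image in which case $f_*\divi_E$ is still divisorial by \refprop{propertiesfbullet}). This is a local computation: pick local coordinates at a general point of $E$ and at its image, write $\tilde f$ in these coordinates, and compare orders of vanishing along $E$ of $\tilde f^*(dz\wedge dw)$ and of $dx\wedge dy$. The ramification index $k := \kk{E}{E'}$ enters through the chain rule: $\ord_E \tilde f^*(\text{local generator of }K_{Y_\varpi}\text{-part}) = $ (contribution from $K_\varpi$ along $E'$, scaled by $k$) $+ (k-1)$ from the Jacobian of the map in the $E$-direction, and the remaining discrepancy is exactly $\ord_E(R_f)$. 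Unwinding $A(\divi_E) = 1 + \ord_E K_\pi$, $A(f_*\divi_E) = A(\divi_{E'})$ times the appropriate normalization, one gets $A(f_*\divi_E) = A(\divi_E) + \divi_E(R_f)$. I would lean on the finite-map version \cite[Prop.\ 1.9]{favre:holoselfmapssingratsurf} for the bulk of this: a general dominant germ factors, after the resolutions, as a finite map composed with birational modifications, and $A$ transforms correctly under birational modifications by the very definition of $K_\pi$, so the finite case plus functoriality of $R$ (i.e.\ $R_{g\circ f} = R_f + f^* R_g$, matching the multiplicativity in \refprop{propertiesfbullet}(5)) gives the divisorial case in general.

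Finally I would extend from divisorial valuations to all of $\hat{\mc{V}}_X^*$. Both sides of \eqref{eqn:jacobian_formula} are, by \refprop{propertiesfbullet}(1) and the construction of $A$, affine on each monomial segment $\segment{\nu_E}{\nu_F}{p}$: the left side because $f_*$ sends the monomial segment at $p$ to a monomial segment at $\tilde f(p)$ (again using that $\tilde f$ may be assumed monomial at $p$ after enough blowups) on which $A$ is affine, and the right side because $\nu \mapsto \nu(R_f)$ is linear in the monomial weights. Since the two sides agree at the divisorial endpoints, they agree on all quasimonomial valuations. Both sides are also lower semicontinuous on $\hat{\mc{V}}_X^*$ ($A$ by its defining property, $\nu\mapsto\nu(R_f)$ and $f_*$ by weak continuity of evaluation on the finitely many components of $R_f$), and quasimonomial valuations are dense, so equality propagates to all of $\hat{\mc{V}}_X^*$ — with the usual caveat that one checks both sides are genuinely $+\infty$ simultaneously on curve semivaluations contracted by $f$ (there $A(f_*\nu_C)$ is finite but $\nu_C(R_f) = +\infty$ since $C$ is a non-exceptional component of $R_f$, while $A(\nu_C) = +\infty$, so the identity reads $\text{finite} = \infty + (-\infty)$ — this degenerate case needs either excluding or interpreting, and I expect the statement is really intended for $\nu$ with $f_*\nu$ finite, i.e.\ $\nu \notin \VC{f}$, where everything is clean). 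The main obstacle is the bookkeeping in the local coordinate computation of step two — correctly accounting for the ramification index versus the genuine ``critical'' vanishing of the Jacobian, and making sure the non-exceptional part of $R_f$ is identified correctly — but this is exactly the content already handled in the finite case in loc.\ cit., so the new work is only in the reduction and the semicontinuity/density extension argument.
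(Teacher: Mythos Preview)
Your overall architecture --- define $R_f$ via $2$-forms, verify the identity on divisorial valuations, then extend by the characterizing properties of $A$ --- is exactly the paper's. But your concrete definition of $R_f$ is wrong, and this is not just a typo. You set $R_f = \divi(f^*\omega_Y) - \divi(\omega_X)$ with \emph{two independent} forms $\omega_X$ on $X$ and $\omega_Y$ on $Y$. Both $\divi(f^*\omega_Y)$ and $\divi(\omega_X)$ are canonical divisors on $X$, so their difference is principal; it genuinely depends on the choices and is not the ramification divisor. The correct object is
\[
R_f \;=\; \divi(f^*\omega) \;-\; f^*\divi(\omega),
\]
the difference between the divisor of the pulled-back form and the (Mumford) pullback of the divisor of the form, for a single choice of $\omega$ on $(Y,y_0)$. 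This is what the paper does: it takes $\omega_X := f^*\omega$ so that $K_X = \divi(f^*\omega)$, and then $R_f = K_X - f^*K_Y$. With your definition the divisorial computation does not close up: tracing through $A(\divi_E) + \divi_E(R_f)$ you pick up an extra term $\divi_E\big(f^*\divi(\omega_Y) - \divi(\omega_X)\big)$, which has no reason to vanish.

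Two smaller points. First, ``both sides lower semicontinuous and agreeing on a dense set'' does not force equality (two lsc functions can disagree at a single point). What actually works --- and what the paper's ``by density'' is implicitly invoking --- is that both sides are affine in the monomial weights on each segment (property (1) in the characterization of $A$, plus \reflem{action_monomialweights} for $A\circ f_*$, plus linearity of $\nu\mapsto\nu(R_f)$), so agreement at divisorial endpoints gives agreement on all quasimonomial valuations; the extension to the remaining points is then by continuity along segments, not bare lsc. Second, your detour through ``factor as finite $\circ$ birational and cite the finite case'' is unnecessary: the paper's direct computation with $\omega_X = f^*\omega$ and the identity $\ord_E(J_{\tilde f}) = k_E - 1$ (valid because $\tilde f$ is holomorphic between smooth surfaces in characteristic zero) handles the non-finite case in one stroke, with no factorization or functoriality of $R$ needed.
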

We call $R_f$ the \emph{Jacobian divisor} of $f$.
\begin{proof}
By density, we can reduce ourselves to the case of divisorial valuations.
Moreover, since \eqref{eqn:jacobian_formula} is linear homogeneous, we may assume that $\nu = \divi_E$ for some exceptional prime $E$.
Let $\pi \colon X_\pi \to (X,x_0)$ and $\varpi \colon Y_\varpi \to (Y,y_0)$ give a resolution of $f$ with respect to $\{\nu_E\}$. 
By homogeneity of the log discrepancy, equation \eqref{eqn:jacobian_formula} in this setting becomes
\begin{equation}\label{eqn:jacobian_formula_ord}
k_E(1+\ord_{E'}(K_\varpi))= 1+\ord_E(K_\pi) + \ord_E(\pi^*R_f),
\end{equation}
where $K_\pi$ and $K_\varpi$ denote the relative canonical divisors of $\pi$ and $\varpi$ respectively.

Pick $\omega_Y = \omega$ a non-trivial $2$-form on $(Y,y_0)$, and set $\omega_X = f^*\omega$.
We may define canonical divisors as $K_X=\on{Div}(\omega_X)$, $K_{X_\pi} = \on{Div}(\pi^*\omega_X)$, and analogously for $K_Y$, $K_{Y_\varpi}$, so that, by definition,
\[
K_\pi= K_{X_\pi} - \pi^* K_X \quad \text{ and } \quad K_\varpi= K_{Y_\varpi} - \varpi^* K_{Y_\varpi}.
\]
Since $\wt{f}:X_\pi \to Y_\varpi$ is holomorphic, there exists an effective divisor $J_{\wt{f}}$ of $X_\pi$ so that
\[
K_{X_\pi}-\wt{f}^*K_{Y_\varpi} = J_{\wt{f}}.
\]
We plug in all expressions together, and we get
\begin{align*}
A(\nu)=1+\ord_E(K_\pi) & = 1+\ord_E(K_{X_\pi})-\ord_E (\pi^* K_X)\\
& = 1+\ord_E(\wt{f}^*K_{Y_\varpi}) + \ord_E(J_{\wt{f}})-\ord_E (\pi^* K_X)\\
& = 1+k_E\ord_{E'}(K_{Y_\varpi}) + k_E-1 -\ord_E (\pi^* K_X)\\
& = k_E\big[1+\ord_{E'}(K_\varpi) + \ord_F(\varpi^* K_Y)\big] -\ord_E (\pi^* K_X)\\
& = k_E A(\divi_{E'}) + k_E \ord_{E'}(\varpi^* K_Y) -\ord_E (\pi^* K_X)\\
& = A(f_* \nu) + \ord_E(\wt{f}^*\varpi^* K_Y) -\ord_E (\pi^* K_X).
\end{align*}
Set $R_{\wt{f}}= \pi^* K_X - \wt{f}^*\varpi^* K_Y$, so that the previous equations translate to
$$
A(f_* \nu) = A(\nu) + \ord_E(R_{\wt{f}}).
$$
Notice that
$$
R_{\wt{f}} = \pi^* K_X - \wt{f}^*\varpi^* K_Y 
= \pi^* \on{Div}(f^*\omega) - \wt{f}^*\varpi^* \on{Div}(\omega)  
= \pi^* \big[\on{Div}(f^*\omega) - f^* \on{Div}(\omega)\big] = \pi^* R_f,
$$
where we set $R_f = \on{Div}(f^*\omega) - f^* \on{Div}(\omega)$.
Notice that $R_f$ does not depend on the choice of $\pi$ and $\varpi$, since if $\pi' \geq \pi$ and $\varpi'\geq\varpi$ are two higher good resolutions so that $\wt{f}'=(\varpi')^{-1} \circ f \circ \pi'$ is regular, then $R_{\wt{f}'} = \eta_{\pi,\pi'}^* R_{\wt{f}}$, and their projections on $X$ coincide.
We then conclude
$$
A(f_* \nu) = A(\nu) + \ord_E(\pi^*R_f) = A(\nu) + \nu(R_f).
$$

\end{proof}

\begin{rmk}\label{rmk:jacobian_charp}
Here we use in an essential way the hypothesis on the zero characteristic of the field. In fact the Jacobian formula is not valid, not even in the smooth case, for dominant maps between surfaces defined over positive characteristic fields (see \refex{Frobenius4}).
\end{rmk}

\begin{rmk}\label{rmk:jacobian_formula_finite}
For normalized semivaluations, the Jacobian formula \eqref{eqn:jacobian_formula} can be rewritten as
\begin{equation}\label{eqn:jacobian_formula_bullet}
c(f, \nu) A(f_\bullet \nu) = A(\nu) + \nu(R_f).
\end{equation} 
Notice that for any $x \in X$ so that $f(x) \neq y_0$, then $(X,x)$ and $(Y,f(x))$ are both smooth, and $R_f$ is given, locally at $x$, by the Weil divisor defined by the Jacobian determinant of $f\colon(X,x) \to (Y,f(x))$.
In particular, the divisor $R_f$ is effective whenever $f$ is finite, but it may have negative components at the contracted curves, as the following example shows.
\end{rmk}

\begin{ex}
Let $(X,x_0)$ be any non log canonical singularity. Let $\pi\colon X_{\pi} \to (X,x_0)$ be any good desingularization of $(X,x_0)$, and let $E$ be an exceptional prime for $\pi$ so that $A(E) < 0$. By continuity of log discrepancy on dual graphs, for any $p \in E$ there exists a valuation $\nu \in \mc{V}_{X_{\pi}}$ centered at $p$ such that $A(\pi_*\nu)<0$.
The Jacobian formula \eqref{eqn:jacobian_formula} applied to $\pi\colon(X_{\pi},p) \to (X,x_0)$ at $\nu$ gives
$$
\nu(R_\pi) = A(\pi_* \nu)- A(\nu) < 0,
$$
where the last inequality follows from $A(\nu)> 0$ since $X_{\pi_0}$ is smooth.
In particular, $R_\pi$ is not effective.
\end{ex}

Non-effective Jacobian divisors are quite common, even for dynamical systems. Here we give some construction of selfmaps with some prescribed behavior.

\begin{lem}\label{lem:nutoord0}
Let $\mc{V}=\mc{V}_{\nC^2}$ denote the valuative tree. Then for every divisorial valuation $\nu \in \mc{V}$, there exists a dominant superattracting germ $f:(\nC^2,0) \to (\nC^2,0)$ so that $f_\bullet \nu = \ord_0$. The map $f$ may be chose both finite or non-finite.
\end{lem}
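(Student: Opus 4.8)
\textbf{Plan of proof for \reflem{nutoord0}.}

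The strategy is to reduce to a very explicit picture in a good resolution where $\nu$ is realized, and then to build $f$ by hand so that it collapses the exceptional prime carrying $\nu$ onto a line through $0$. First I would pick a good resolution $\pi\colon X_\pi\to (\nC^2,0)$ realizing $\nu$, say $\nu=\nu_E$ for an exceptional prime $E\subset X_\pi$ (after rescaling we may assume $\nu$ is the normalized divisorial valuation $\nu_E$; the general divisorial case $\lambda\nu_E$ follows by composing with $z\mapsto z^{\lceil\lambda^{-1}\rceil}$ or a similar trick, or is simply absorbed since the statement only asks $f_\bullet\nu=\ord_0$ up to the normalization built into $f_\bullet$). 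Choose a point $p\in E$ which is a free point (lying on no other exceptional prime), and choose local coordinates $(u,v)$ at $p$ so that $E=\{u=0\}$. The key point is \refprop{divisorial_image}: if $g\colon(\nC^2,0)\to(\nC^2,0)$ is a germ and $\varpi$ a modification so that the lift $\tilde g=\varpi^{-1}\circ g\circ\pi$ is regular near $p$ and maps $E$ onto an exceptional prime $E'$ of $\varpi$ with $\nu_{E'}=\ord_0$ (i.e. $E'$ is the exceptional divisor of a single point blowup of $(\nC^2,0)$), then $g_\bullet\nu_E=\ord_0$. So it suffices to produce $g$ whose lift contracts $E$ onto the first exceptional divisor over $0$.

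Concretely, I would proceed as follows. Let $\sigma\colon X_\sigma\to(\nC^2,0)$ be the blowup of the origin, with exceptional prime $E_0$; note $\nu_{E_0}=\ord_0$. The resolution $\pi$ is obtained from $(\nC^2,0)$ by a finite sequence of point blowups, hence $\pi$ factors as $\pi=\sigma\circ\pi_1$ with $\pi_1\colon X_\pi\to X_\sigma$, and the prime $E$ (unless $E=E_0$, in which case we are already done with $g=\mathrm{id}$) is the image under $\pi_1$-strict-transform of some exceptional prime sitting over a point of $E_0$. Then I would define $g$ by specifying $\tilde g\colon X_\pi\to X_\sigma$ directly: take $\tilde g$ to be a holomorphic map that sends $E$ onto $E_0$ with some positive degree, e.g. in suitable coordinates near a point $p\in E$ and a point $q\in E_0$, with $E=\{u=0\}$, $E_0=\{w=0\}$, set $\tilde g(u,v)=(u^k\epsilon_1, v^d\epsilon_2)$ monomial-type as in the normal forms appearing in the proof of \refthm{operations_bdivisors}, and check this extends to a holomorphic map $X_\pi\to X_\sigma$ (one may need to further blow up $X_\pi$, which is harmless since blowups don't change $\nu_E$). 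Pushing down via $\sigma\circ\tilde g\circ(\text{section of }\pi)$ — more precisely $g:=\sigma\circ\tilde g$ read through $\pi$, which makes sense as a germ $(\nC^2,0)\to(\nC^2,0)$ because $\pi$ is a birational modification — gives the desired $g$. The condition that $g$ be \emph{superattracting}, i.e. $(g^n)^*\mf{m}\subseteq\mf{m}^2$ for some $n$, can be arranged by post-composing with a generic linear-free germ or by choosing the degrees $k,d\geq 2$: since $E$ is contracted, $g$ automatically has critical behavior forcing $dg_0$ to be non-invertible, hence $g$ is superattracting (not merely semi-superattracting) as soon as $dg_0$ is nilpotent, which one checks from the explicit form.

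For the finite-versus-non-finite dichotomy: the construction above, if carried out so that $\tilde g$ contracts \emph{no} curve other than being a resolution-level lift (i.e. $g$ itself contracts no curve through $0$), gives a finite $g$; concretely one arranges that the non-exceptional part of any fiber is trivial. For a non-finite example one simply pre-composes or modifies the construction so that $g$ additionally contracts a line $\ell$ through $0$ to $0$ — e.g. replace $g(x,y)=(g_1,g_2)$ by $(y\cdot g_1, y\cdot g_2)$ or similar, which introduces the contracted curve $\{y=0\}$ while not disturbing the image of $\nu_E$ provided $\nu_E$ is not the curve valuation of that line; by \refprop{propertiesfbullet} and the continuity of $f_\bullet$ the value $f_\bullet\nu_E=\ord_0$ is preserved. \textbf{The main obstacle} I anticipate is purely bookkeeping: verifying that the hand-built lift $\tilde g$ genuinely descends to a \emph{holomorphic} germ $g\colon(\nC^2,0)\to(\nC^2,0)$ (rather than merely a meromorphic one) and that its resolution data $\kk{E}{E_0}$, $\ee{E}{E_0}$ are as desired so that \refprop{divisorial_image} applies cleanly — this requires either a careful choice of the intermediate blowups or an explicit check that the indeterminacy locus of $\sigma\circ\tilde g\circ\pi^{-1}$ is empty at $0$. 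Everything else (the skewness/angular-distance machinery) is not needed here; only \refprop{divisorial_image} and the elementary geometry of point blowups of $(\nC^2,0)$ enter.
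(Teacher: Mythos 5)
Your outline is correct in principle: \emph{if} you could produce a regular map $\tilde g\colon X_\pi\to X_\sigma$ sending $E$ onto $E_0$ and mapping all of $\pi^{-1}(0)$ into $E_0$, then $g:=\sigma\circ\tilde g\circ\pi^{-1}$ would indeed descend to a holomorphic germ (boundedness near $0$ plus Riemann extension), and \refprop{divisorial_image} would give $g_\bullet\nu_E=\ord_0$. But you never actually construct $\tilde g$, and this is not a bookkeeping issue — it is the entire content of the lemma. Writing down a monomial normal form $(u,v)\mapsto(u^k\epsilon_1,v^d\epsilon_2)$ in a chart near one point $p\in E$ does \emph{not} define a map on all of $X_\pi$: you would need to verify that this local prescription extends globally to a regular map on the whole surface $X_\pi$, and moreover that this global extension contracts \emph{every} exceptional prime of $\pi$ into $E_0$ (otherwise $\sigma\circ\tilde g$ is not constant on $\pi^{-1}(0)$ and no descent exists). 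There is no a priori reason such an extension exists; in fact the standard way to get one is to start from a germ $g$ and lift it — which is circular. So the ``main obstacle'' you flag is the real gap, and the proposal does not resolve it.

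The paper's proof sidesteps the whole issue by being fully explicit at the level of germs, with no resolution-theoretic existence claim. It factors $f=f_1\circ f_2$. If $\nu$ has multiplicity one, one chooses coordinates $(x,y)$ so that $\nu$ lies on the segment $[\ord_0,\nu_x)$; there $\nu$ is a monomial valuation, and one writes down a monomial map $f_1(x,y)=(x^ay^b,x^cy^d)$ (with $ad-bc\ne 0$) sending it to $\ord_0$, choosing the exponents to make $f_1$ finite or not as desired. If $\nu$ has multiplicity $\ge 2$, one first applies $f_2(x,y)=(\phi(x,y),y)$, where $\phi$ cuts out an irreducible curve $C$ with $\nu<\nu_C$; this map is order-preserving on the relevant half-tree, sends $\nu_C$ to $\nu_x$, and hence sends $\nu$ to a multiplicity-one divisorial valuation on $[\ord_0,\nu_x)$, reducing to the first case. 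This is the key trick you do not have: the ``curve-above-$\nu$'' map $f_2$ that straightens an arbitrary divisorial valuation down to the spine of the tree. Finally, the finite/non-finite dichotomy is handled in the paper by the opposite perturbation to yours — starting from a non-finite $f$ and adding a generic element of $\mf{m}^N$ for $N\gg0$ to make it finite without changing $f_\bullet\nu$, using the ultrametric inequality. (Your trick of multiplying both components by $y$ to go finite $\to$ non-finite does turn out to work, but your justification via ``continuity of $f_\bullet$'' is not the right argument; one should check directly, as the paper does for its perturbation, that the values $\nu(f^*\phi)$ are unchanged.)
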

\begin{proof}
First, assume that $m_0(\nu)=1$, where $m_0$ denotes the multiplicity at $0$ of $\nu$, see \cite[p. 63]{favre-jonsson:valtree}.
We may find coordinates $(x,y)$ at $0 \in \nC^2$ so that $\nu$ belongs to the segment $[\ord_0,\nu_x)$, where $\nu_x$ is the curve semivaluation associated to $\{x=0\}$.
Then there exists a monomial map $f_1(x,y)=(x^a y^b,x^c y^d)$, with $a,b,c,d \in \nN$ and $ad-bc \neq 0$ (we may assume $ad-bc= \pm 1$), so that $(f_1)_\bullet \nu = \ord_0$.
We may pick the coefficients so that $f_1$ is not finite.

Assume now that $m_0(\nu) \geq 2$. Up to a linear change of coordinates, we may assume that $\nu$ belongs to the connected component $U$ of $\mc{V} \setminus \{\ord_0\}$ containing $\nu_x$.
Let $C=\{\phi(x,y)=0\}$ be an irreducible curve so that the associated curve semivaluation $\nu_C$ satisfies $\nu < \nu_C$.
Then the map $f_2(x,y)=(\phi(x,y),y)$ is order preserving on $U$, and sends $\nu_C$ to $\nu_x$. Hence $(f_2)_\bullet \nu$ is a divisorial valuation (of multiplicity $1$) which belongs to $[\ord_0,\nu_x)$.
The map $f=f_1 \circ f_2$ satisfies the desired properties.

If $f$ is not finite, a small perturbation with an element of $\mf{m}^N$ for $N$ big enough gives a finite map which preserves the property $f_\bullet \nu = \ord_0$.
\end{proof}

In the next proposition, we call $\emph{essential}$ divisorial valuation any divisorial valuation $\nu_E \in \mc{V}_X$ associated to an exceptional prime $E$ in the minimal (possibly non good) resolution of $(X,x_0)$.

\begin{prop}\label{prop:nutomu}
Let $(X,x_0)$ be a normal surface singularity, $\nu \in \mc{V}_X$ be any \emph{divisorial} valuation, and $\mu \in \mc{V}_X$ a \emph{non-essential} divisorial valuation.
Then there exists a (non-finite) superattracting germ $f\colon(X,x_0) \to (X,x_0)$ so that $f_\bullet \nu = \mu$.
\end{prop}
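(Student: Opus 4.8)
The plan is to construct $f$ as a composition $f=\beta\circ\psi\circ g\circ p_0$, reducing everything to the analogous statement over the smooth germ $(\nC^2,0)$ together with \reflem{nutoord0}. First I reduce to $(\nC^2,0)$. Since $\mu$ is non-essential, any exceptional prime realizing $\mu$ appears strictly above the minimal resolution $\sigma\colon Y\to(X,x_0)$, so $\cen_Y(\mu)$ cannot be a curve — otherwise $\mu$ would be proportional to $\divi_E$ for an exceptional prime $E$ of $\sigma$, i.e.\ essential — hence $\cen_Y(\mu)=p$ is a closed, necessarily smooth, point of $Y$. Fixing an isomorphism $(Y,p)\cong(\nC^2,0)$, let $\tilde\mu\in\mc{V}_{\nC^2}$ be the normalized local incarnation of $\mu$ at $p$, a divisorial valuation, so that the germ at $p$ of $\sigma$, viewed as a dominant germ $\beta\colon(\nC^2,0)\cong(Y,p)\to(X,x_0)$, satisfies $\beta_\bullet\tilde\mu=\mu$. (If $(X,x_0)=(\nC^2,0)$ is already smooth, take $p=x_0$, $\beta=\mathrm{id}$, $\tilde\mu=\mu$.) I also fix, by Noether normalization, a finite dominant germ $p_0\colon(X,x_0)\to(\nC^2,0)$ — e.g.\ a generic pair of linear forms in $\mf{m}_X$ — and set $\nu_1:=(p_0)_\bullet\nu$, which is again divisorial by \refprop{propertiesfbullet}(1).

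The core step, and the main obstacle, is: \emph{for every divisorial valuation $\tilde\mu\in\mc{V}_{\nC^2}$ there exists a finite dominant germ $\psi\colon(\nC^2,0)\to(\nC^2,0)$ with $\psi_\bullet\ord_0=\tilde\mu$.} Note this is precisely the proposition in the smooth case, since over $(\nC^2,0)$ every divisorial valuation is non-essential. To prove it I would use the Puiseux description of $\tilde\mu$: in suitable coordinates $(x,y)$ there are a positive integer $q$, exponents $e_i$, coefficients $a_i$ and a final rational weight so that $\tilde\mu$ is monomial with respect to $x$ and a truncated Puiseux polynomial shifted off $y$, and then take
\[
\psi(x,y)=\Bigl(x^{q},\ \sum_i a_i x^{e_i}+y^{N}\Bigr)
\]
for $N$ sufficiently large. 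A direct computation of the valuation $\psi_*\ord_0$ on monomials of the target shows it is proportional to $\tilde\mu$, while $\det(D\psi)=qN\,x^{q-1}y^{N-1}\not\equiv 0$ makes $\psi$ finite and dominant. (Alternatively one induces on the number of blow-ups needed to realize $\tilde\mu$, reducing to the monomial case handled by monomial maps; or invokes the structure theory of the valuative tree.) The delicate point here is handling arbitrary Puiseux data cleanly, in particular chains of satellite blow-ups; everything else below is routine.

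With $\beta$, $p_0$, $\nu_1$, $\psi$ in hand, I choose — as allowed by \reflem{nutoord0} — a \emph{non-finite} superattracting germ $g\colon(\nC^2,0)\to(\nC^2,0)$ with $g_\bullet\nu_1=\ord_0$, and set $f:=\beta\circ\psi\circ g\circ p_0\colon(X,x_0)\to(X,x_0)$. This is a dominant holomorphic germ as a composition of such, and functoriality of $\nu\mapsto\nu_\bullet$ gives
\[
f_\bullet\nu=\beta_\bullet\psi_\bullet g_\bullet(p_0)_\bullet\nu=\beta_\bullet\psi_\bullet g_\bullet\nu_1=\beta_\bullet\psi_\bullet\ord_0=\beta_\bullet\tilde\mu=\mu.
\]
For non-finiteness: $g$ contracts some curve germ $C$ to a point, and since $p_0$ is finite, $p_0^{-1}(C)$ is again a curve germ, with $f\bigl(p_0^{-1}(C)\bigr)=\beta\psi g\bigl(C\cap(\text{image of }p_0)\bigr)$ a single point, so $f$ contracts a curve and is not finite. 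Finally $f$ is superattracting: by multiplicativity of attraction rates (\refprop{propertiesfbullet}(5)) and $c(\cdot,\cdot)\ge 1$ (\refprop{propertiesfbullet}(4)), for every normalized $\nu'\in\mc{V}_X$ one has $c(f,\nu')\ge c(g,\,\cdot\,)\ge 2$, the last inequality because $g$ superattracting means $g^*\mf{m}\subseteq\mf{m}^2$; hence $f^*\mf{m}_X\subseteq\mf{m}_X^2$, so $f$ is superattracting. This would complete the proof.
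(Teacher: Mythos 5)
Your strategy — reduce to a smooth chart at the center of $\mu$, use \reflem{nutoord0} to push $\nu$ to $\ord_0$, and splice — is the same as the paper's, but you have introduced an extra step that the paper avoids, and that step is where your argument has a genuine gap.

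The difference is in which smooth chart you use. You fix the minimal resolution $\sigma\colon Y\to(X,x_0)$ and set $p=\cen_Y(\mu)$. The local incarnation $\tilde\mu$ of $\mu$ at $p$ is then an \emph{arbitrary} divisorial valuation on $(\nC^2,0)$, so you are forced to manufacture a finite $\psi$ with $\psi_\bullet\ord_0=\tilde\mu$ — the ``core step'' you flag. The paper instead exploits the full strength of the hypothesis: since $\mu$ is non-essential, it can choose a (not necessarily good) modification $\pi\colon X_\pi\to(X,x_0)$ and a smooth point $p\in\pi^{-1}(x_0)$ with $\mu=\pi_\bullet\ord_p$, i.e.\ so that the local incarnation of $\mu$ at $p$ is exactly $\ord_0$. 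Then a local biholomorphism $\sigma\colon(\nC^2,0)\to(X_\pi,p)$ already sends $\ord_0$ to $\ord_p$, and $f=\pi\circ\sigma\circ h\circ g$ closes the loop with no intermediate $\psi$ at all. Your core step is simply bypassed by climbing higher in the tower of modifications.

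Regarding the core step itself: the statement that every divisorial $\tilde\mu\in\mc{V}_{\nC^2}$ equals $\psi_\bullet\ord_0$ for some finite dominant $\psi$ is plausible, but it is not a cited lemma, it is not the same as \reflem{nutoord0} (which goes in the opposite direction, sending a given $\nu$ \emph{to} $\ord_0$, which is a strictly easier contraction problem), and the explicit formula $\psi(x,y)=(x^q,\sum a_ix^{e_i}+y^N)$ does not obviously cover valuations whose realizing sequence of blowups involves several Puiseux pairs or long satellite chains: as you vary $N$ the resulting valuation $\psi_\bullet\ord_0$ does not sweep out the whole segment towards the branch $\{v=\phi(u)\}$, because additional coefficients in $P$ are needed before increasing $N$ helps. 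You acknowledge the delicacy yourself; as written this is a missing lemma, not a routine verification. One minor secondary point: you assert $c(g,\cdot)\ge 2$ from ``$g$ superattracting means $g^*\mf{m}\subseteq\mf{m}^2$'', but the paper's definition of superattracting only requires $(g^n)^*\mf{m}\subseteq\mf{m}^2$ for $n$ large, so $c(g,\cdot)\ge 2$ may fail for the germ $g$ supplied by \reflem{nutoord0}; this is easily repaired, e.g.\ by post-composing with $(x,y)\mapsto(x^2,y^2)$, which fixes $\ord_0$ and makes the attraction rate uniformly $\ge 2$.
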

\begin{proof}
The hypothesis on $\mu$ assures that there exists a (not necessarily good) resolution $\pi:X_{\pi} \to (X,x_0)$ and a point $p \in \pi^{-1}(x_0)$ so that $\mu = \pi_\bullet \ord_p$, where $\ord_p$ is the vanishing order at $p$ (also called the multiplicity valuation at $p$).
Embed the singularity $(X,x_0)$ in an affine space $(\nC^n,0)$, and take a (generic) projection to $(\nC^2,0)$. Call $g\colon(X,x_0) \to (\nC^2,0)$ the composition.
By \reflem{nutoord0} applied to the divisorial valuation $g_\bullet\nu$, there exists a superattracting germ $h \colon (\nC^2,0) \to (\nC^2,0)$ so that $h_\bullet g_\bullet \nu = \ord_0$.
Let $\sigma \colon (\nC^2,0) \to (X_\pi,p)$ be any local isomorphism sending $0$ to $p$, so that $\sigma_\bullet \ord_0 = \ord_p$. 
Then the map $f=\pi \circ \sigma \circ h \circ g \colon (X,x_0) \to (X,x_0)$ satisfies
$$
f_\bullet \nu = \pi_\bullet \sigma_\bullet h_\bullet g_\bullet \nu = \pi_\bullet \sigma_\bullet \ord_0 = \pi_\bullet \ord_p = \mu.
$$
\end{proof}
Notice that the condition of $\mu$ being non-essential is sufficient, but not necessary.
On the one hand, it is not always possible to send $\nu$ to an essential divisorial valuation $\mu$.
An easy example is given by a divisorial valuation $\mu=\nu_{E'}$ with $E'$ a non-rational exceptional prime. If $f_\bullet$ sends a divisorial valuation $\nu_E$ to $\mu$, then by \refprop{divisorial_image} $f$ induces a non-constant map $E \to E'$, and $g(E) \geq g(E')$, where $g$ denotes the genus.

On the other hand, essential valuations can be eigenvaluations, even for non-finite maps. See \S \ref{ssec:example_quotient1}, \S\ref{ssec:example_elliptic}, \S\ref{ssec:example_quasihom} for specific examples.

We conclude with some considerations on the effectiveness of the Jacobian divisor for non-finite selfmaps.

\begin{rmk}
Let $(X,x_0)$ be either non lc singularity, or (the finite quotient of) a cusp.
Let $\nu \in \mc{V}_X$ be any divisorial valuation with positive log discrepancy $A(\nu) > 0$, and $\mu \in \mc{V}_X$ any non-essential divisorial valuation with non-positive log discrepancy $A(\mu) \leq 0$.
In the first case, this is possible because the log discrepancy $A$ is continuous on dual graphs, and by definition there exists an essential valuation with negative log discrepancy.
In the second case, the subset of $\mc{V}_X$ where the log discrepancy is zero is not discrete, and we can find a non-essential divisorial valuation with zero log discrepancy.
By \refprop{nutomu}, there exists a superattracting germ $f\colon(X,x_0) \to (X,x_0)$ for which $f_\bullet \nu = \mu$.
Then by the Jacobian formula \eqref{eqn:jacobian_formula_bullet}, we get
$$
\nu(R_f) = c(f, \nu) A(f_\bullet \nu) - A(\nu) = c(f, \nu) A(\mu)-A(\nu) < 0,
$$
and $R_f$ is not effective.
\end{rmk}

With similar arguments, one can prove that Jacobian divisors are always effective as far as the target space is \emph{canonical}.
We recall that a singularity $(Y,y_0)$ is \emph{canonical} (resp., \emph{terminal}) if for any exceptional prime $E$ in any good resolution, we have
$\ord_E(K_\varpi) = A(\divi_E)-1 \geq 0$ (resp., $> 0$), where $\varpi\colon Y_\varpi \to (Y,y_0)$ is any good resolution.

\begin{prop}
Let $f \colon (X,x_0) \to (Y,y_0)$ be a dominant germ between normal surface singularities.
If $(Y,y_0)$ is \emph{canonical}, then the jacobian divisor $R_f$ is effective.
\end{prop}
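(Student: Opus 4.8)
The plan is to reduce everything to the divisor‑theoretic computation already carried out in the proof of \refprop{jacobian_formula}, and then to observe that the hypothesis that $(Y,y_0)$ is canonical is exactly what is needed to control the one term that can a priori be negative.

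First I would fix a resolution $\pi\colon X_\pi\to(X,x_0)$ (so $X_\pi$ is smooth) and a good resolution $\varpi\colon Y_\varpi\to(Y,y_0)$ such that the lift $\tilde f := \varpi^{-1}\circ f\circ\pi\colon X_\pi\to Y_\varpi$ is holomorphic; such $\pi$ and $\varpi$ always exist. Fixing a nontrivial $2$-form $\omega$ on $(Y,y_0)$ and writing, as in the proof of \refprop{jacobian_formula}, $K_X=\Div(f^*\omega)$, $K_Y=\Div(\omega)$, $K_{X_\pi}=\Div(\pi^*f^*\omega)$ and $K_{Y_\varpi}=\Div(\varpi^*\omega)$, one has the relations $\pi^*K_X = K_{X_\pi}-K_\pi$ and $\varpi^*K_Y = K_{Y_\varpi}-K_\varpi$, the effectivity of the ramification divisor $J_{\tilde f}:=K_{X_\pi}-\tilde f^*K_{Y_\varpi}$ of the morphism $\tilde f$ of smooth surfaces, and the identity $\pi^*R_f = R_{\tilde f} = \pi^*K_X-\tilde f^*\varpi^*K_Y$. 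Combining these exactly as there gives
\begin{equation*}
\pi^*R_f \;=\; J_{\tilde f}\;-\;K_\pi\;+\;\tilde f^*K_\varpi .
\end{equation*}

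It then remains to read off coefficients. Because $\pi$ is an isomorphism over $X\setminus\{x_0\}$, the Weil divisor $R_f$ on $X$ is effective if and only if every \emph{non-exceptional} prime divisor $\Gamma$ of $X_\pi$ occurs in $\pi^*R_f$ with non-negative coefficient (a prime component of $R_f$ either meets $x_0$, and is then the strict transform of such a $\Gamma$, or avoids $x_0$, where $R_f$ is the critical divisor of $f$ and hence already effective). In the displayed identity $J_{\tilde f}$ is effective, the relative canonical divisor $K_\pi$ is supported on $\pi^{-1}(x_0)$ and so contributes $0$ to the coefficient of any non-exceptional $\Gamma$, and, crucially, since $(Y,y_0)$ is canonical the divisor $K_\varpi\in\Ediv(\varpi)_\Q$ satisfies $\ord_E(K_\varpi)=A(\divi_E)-1\geq 0$ for every exceptional prime $E$ of $\varpi$, hence $K_\varpi\geq 0$, so that $\tilde f^*K_\varpi$, the pullback of an effective divisor under the dominant morphism $\tilde f$ of smooth surfaces, is again effective. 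Thus all non-exceptional coefficients of $\pi^*R_f$ are $\geq 0$, i.e.\ $R_f\geq 0$.

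The only genuinely new ingredient beyond the proof of the Jacobian formula is the effectivity of $\tilde f^*K_\varpi$, and this is precisely where canonicity of the target enters; the potentially negative contributions to $R_f$ — those along curves contracted by $f$, as in the preceding examples — are exactly the coefficients carried by $\tilde f^*K_\varpi$, so under the hypothesis they become harmless. There is therefore no real obstacle, only the minor bookkeeping point of checking that reading the non-exceptional coefficients of $\pi^*R_f$ faithfully recovers $R_f$, which holds because $\pi$ is a modification.
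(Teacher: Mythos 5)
Your proof is correct, and it takes a route that is related to but cleanly different from the paper's. You read the effectivity of the non-exceptional part of $\pi^*R_f$ directly off the divisor identity
\[
\pi^*R_f \;=\; J_{\tilde f} \;-\; K_\pi \;+\; \tilde f^* K_\varpi,
\]
noting that $J_{\tilde f}$ is effective, $K_\pi$ is purely exceptional, and $\tilde f^*K_\varpi$ is effective because canonicity makes $K_\varpi\geq 0$ and $\tilde f$ is a dominant morphism of smooth surfaces. The paper instead works valuation by valuation: it fixes an irreducible contracted curve $C$, writes the $C$-coefficient of $R_f$ as a limit along a thinness parameterization using the Jacobian formula $c(f,\nu)A(f_\bullet\nu)=A(\nu)+\nu(R_f)$, invokes \refprop{contractedcurve_image} to evaluate $c_A(f,\nu_C)=\kk{C}{G}b_G$, and concludes $\alpha_C=\kk{C}{G}A(\divi_G)-1\geq A(\divi_G)-1\geq 0$. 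Unwinding your identity on the strict transform $C_\pi$ gives exactly the same number (the $C_\pi$-coefficient of $J_{\tilde f}$ is $\kk{C}{G}-1$ and that of $\tilde f^*K_\varpi$ is $\kk{C}{G}\,(A(\divi_G)-1)$, summing to $\kk{C}{G}A(\divi_G)-1$), so the two arguments are arithmetically identical; the difference is that yours avoids the valuative machinery ($c_A$, limits along $\nu_t$, \refprop{contractedcurve_image}) and replaces it with standard divisor-level reasoning, which is arguably cleaner and more self-contained. One tiny inaccuracy in your exposition: since $R_f$ is a divisor germ at $x_0$, all of its prime components pass through $x_0$ — there is no second case of a component ``avoiding $x_0$'' — but this does not affect the argument, as effectivity of $R_f$ is still equivalent to nonnegativity of the non-exceptional coefficients of $\pi^*R_f$, which is what you prove.
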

\begin{proof}
Let $C$ be an irreducible contracted critical curve, and $\nu_C$ the associated semivaluation. Let $\alpha_C C$ be the component of $C$ in $R_f$. We want to show that $\alpha_C \geq 0$.
We proceed as in the proof of \refprop{contractedcurve_image}.
Let $\pi\colon X_\pi \to (X, x_0)$ and $\varpi\colon Y_\varpi \to (Y,y_0)$ give a resolution of $f$ with respect to $\{\nu_C\}$.
The strict transform $C_\pi$ of $C$ intersects $\pi^{-1}(x_0)$ in a unique exceptional prime $H$, transversely at a point $p$.
Up to taking higher models, we may also assume that $\pi$ is a log resolution of $\mf{m}_X$, and that the strict transform of any other irreducible component of $f^{-1}(y_0)$ doesn't pass through $p$.
Set $b_H=m(C)=:m$.
Take local coordinates $(x,y)$ at $p$ so that $H=\{x=0\}$ and $C_\pi=\{y=0\}$.
For any $t \geq 0$, let $\mu_t$ be the monomial valuation at $p$ of weights $\frac{1}{m}$ and $t$ with respect to the coordinates $(x,y)$, and set 
$\nu_t=\pi_* \mu_t \in \mc{V}_X$.
By direct computation, we have $A(\nu_t)=A(\nu_0)+t$, while $\nu_t(C)= t + \text{const}$.
Taking the limit for $t \to +\infty$ of the Jacobian formula \eqref{eqn:jacobian_formula_bullet} divided by $t$ on both sides, we get
$$
\alpha_C=\lim_{t \to +\infty} \frac{\nu_t(R_f)}{t} = \lim_{t \to +\infty} \frac{c(f,\nu_t) A(f_\bullet \nu_t) - A(\nu_t)}{t} =c_A(f,\nu_C) A(f_\bullet \nu_C) - 1
$$
Set $f_\bullet \nu_C=\nu_G$.
By \refprop{contractedcurve_image}, we get $c_A(f,\nu_C) = \kk{C}{G}b_G$, and 
$$
\alpha_C=\kk{C}{G} b_G A(\nu_G)-1 = \kk{C}{G} A(\divi_G) -1 \geq A(\divi_G) -1 \geq 0.
$$
\end{proof}

\begin{rmk}\label{rmk:contractedcurves}
Notice that if $(Y,y_0)$ is \emph{terminal}, then every irreducible component of $f^{-1}(y_0)$ has a non-trivial contribution to $R_f$.
This is not always the case for non-terminal singularities, see \S\ref{ssec:example_elliptic} for an example.

In particular, not all curves contracted to $y_0$ belong to the support of $R_f$. Nevertheless, following the notations used in the proof of \refprop{jacobian_formula}, their strict transforms belong to the critical set of the lift $\tilde{f}$.
In particular, contracted curves are necessarily holomorphic.
\end{rmk}

\subsection{Critical skeleton}

Here we study how the maps $\nu \mapsto c(f, \nu)$ and $\nu \mapsto \nu(R_f)$ behave.
It turns out that in fact, they are locally constant outside a finite graph.
In the smooth setting, such maps belong to the class of \emph{tree potentials}, see \cite[Section 1.8]{favre-jonsson:eigenval}.
To describe the analogous on valuative spaces associated to normal surface singularities $(X,x_0)$, we need to introduce a few objects.
We already introduced in \S\ref{ssec:dualgraphs} the skeleton $\skel{X}$ of $X$, which is given by the embedded dual graph of any minimal good resolution of $X$, and the essential skeleton $\eskel{X}$ in \S\ref{ssec:essential_skeleton}.
In the following we will need to consider other subgraphs of $\mc{V}_X$ depending on $\mf{m}_X$-primary ideals and finite sets of valuations. 

Recall that for any $\mf{m}_X$-primary ideal $\mf{a}$, the associated b-divisor $Z(\mf{a})$ is Cartier (and nef).
In particular, there exists a finite set $\rees(\mf{a})$ of divisorial valuations so that $Z(\mf{a})=\sum_{\nu} a_\nu Z(\nu)$, with $a_\nu > 0$ for all $\nu \in \rees(\mf{a})$. The set $\rees(\mf{a})$ is called the set of \emph{Rees valuations} of $\mf{a}$.

\begin{defi}
Let $(X,x_0)$ be a normal surface singularity, and $V \subset \mc{V}_X$ be a finite set of semivaluations.
Denote by $r_X \colon \mc{V}_X \to \mc{S}_X$ the retraction map.
We call the set $\displaystyle \mc{S}_X(V):=\mc{S}_X \cup \bigcup_{\nu \in V} [r_X \nu, \nu]$ the \emph{skeleton generated by} $V$.
If moreover $\mf{a}$ is a $\mf{m}_X$-primary ideal, we refer to $\mc{S}_X(V \cup \rees(\mf{a}))$ as the \emph{skeleton generated by} $V$ and \emph{adapted to} $\mf{a}$. We denote it by $\mc{S}_\mf{a}(V)$.
\end{defi}

Recall that given two valuations $\nu, \mu \in \mc{V}_X$, we may compute the intersection of the associated b-divisors $Z(\mu) \cdot Z(\nu) \in [-\infty,0)$.
We now fix $\mu$, and study how this intersection varies when $\nu$ varies in the valuative space $\mc{V}_X$.

\begin{prop}\label{prop:intersection_locallyconst}
Let $(X,x_0)$ be a normal surface singularity, and $\mu \in \mc{V}_X$ be any normalized semivaluation.
Then the map $g\colon \nu \mapsto Z(\nu) \cdot Z(\mu)$ is locally constant on $\mc{V}_X \setminus \mc{S}_{\mf{m}_X}(\mu)$.
\end{prop}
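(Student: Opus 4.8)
The plan is to show that $g$ is constant on each connected component of the weakly open set $\mc{V}_X\setminus\mc{S}_{\mf{m}_X}(\mu)$; since $\mc{V}_X$ is locally connected these components are open, and this gives local constancy. Write $\mc{T}:=\mc{S}_{\mf{m}_X}(\mu)=\skel{X}\cup[r_X\mu,\mu]\cup\bigcup_{\nu\in\rees(\mf{m}_X)}[r_X\nu,\nu]$, a closed connected subgraph of $\mc{V}_X$ containing $\skel{X}$ and $\mu$ (indeed $\mu\in[r_X\mu,\mu]$). All topological circles of $\mc{V}_X$ lie inside $\skel{X}\subseteq\mc{T}$, so each connected component $U$ of $\mc{V}_X\setminus\mc{T}$ is an open subtree whose closure meets $\mc{T}$ in a single point $\nu_\star$. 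Being an attaching point, $\nu_\star$ is divisorial — a non-divisorial quasimonomial valuation has only two tangent directions, both absorbed by $\mc{T}$, and a curve or infinitely singular valuation is an end of $\mc{V}_X$ — the sole exception being that when $\mu$ is irrational, $U$ may be the component attached in the tangent direction of $\mu$ not used by $[r_X\mu,\mu]$, in which case $\nu_\star=\mu$. In that exceptional case, by \refprop{baby_tree} every valuation in a subtree $\mc{V}_{X,\hat\nu}$ dominates its root, so $[r_X\mu,\mu]$ is increasing for the partial order and the unused direction at $\mu$ is the increasing one; thus $\nu>\mu$ for all $\nu\in U$, and \reflem{monotonicity} together with \refprop{intersection_differentcenters} give $g(\nu)=Z(\nu)\cdot Z(\mu)=-\alpha(\mu)$ for every $\nu\in U$, a constant.

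So assume $\nu_\star=\nu_{E_\star}$ is divisorial. First I would choose a good resolution $\pi\colon X_\pi\to(X,x_0)$ which dominates a minimal good resolution, is a log resolution of $\mf{m}_X$, realizes $\nu_\star$, realizes $\mu$ when $\mu$ is quasimonomial, and involves no point blowup interior to $U$ (such $\pi$ exists because all of these requirements concern objects lying in $\mc{T}$, hence disjoint from $U$). Then $U$ is disjoint from $\mc{S}_\pi$ while $\nu_\star\in\mc{S}_\pi$, so $U$ is a connected component of $\mc{V}_X\setminus\mc{S}_\pi$; every such component has the form $U_\pi(p)$ for a free closed point $p$ of $\pi$ lying on a unique exceptional prime (the satellite points of $\pi$ on a prime $E$ correspond to edges of $\mc{S}_\pi$ at $\nu_E$, i.e.\ to directions absorbed by $\mc{S}_\pi$), so $U=U_\pi(p_0)$ with $p_0$ a free point on $E_\star$. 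For every $\nu\in U$ the normalization condition forces $Z_\pi(\nu)=b_{E_\star}^{-1}\check{E}_\star$, and since $\cen_\pi(\nu)=p_0\neq\cen_\pi(\mu)$ — as $\mu\in\mc{T}$ while $U_\pi(p_0)\cap\mc{T}=\emptyset$ — \refprop{intersection_differentcenters} yields
\[
g(\nu)=Z(\nu)\cdot Z(\mu)=Z_\pi(\nu)\cdot Z_\pi(\mu)=b_{E_\star}^{-1}\,\check{E}_\star\cdot Z_\pi(\mu),
\]
independently of $\nu\in U$. When $\mu$ is a curve semivaluation one uses instead an embedded resolution of the corresponding curve germ; then $\mc{T}\setminus\mc{S}_\pi$ is a single half-open segment contained in $U_\pi(\cen_\pi\mu)$, and the components of $\mc{V}_X\setminus\mc{T}$ meeting $U_\pi(\cen_\pi\mu)$ are treated in the same way after one further blowup realizing their attaching divisorial valuation; the infinitely singular case is identical to the quasimonomial one. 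This finishes the proof.

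The step I expect to require the most care — rather than genuine difficulty — is the structural bookkeeping: identifying the components of $\mc{V}_X\setminus\mc{S}_\pi$ with sets $U_\pi(p)$ centered at free points, and choosing $\pi$ so that $\mc{S}_\pi$ does not intrude into $U$ while still realizing $\nu_\star$ and $\mu$, especially in the non-quasimonomial cases. Once that is in place, the conclusion is forced by the elementary fact that $Z_\pi(\nu)$ depends only on $\cen_\pi(\nu)$ and, at a free point, is completely determined by it, together with the compatibility statement \refprop{intersection_differentcenters}.
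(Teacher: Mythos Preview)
Your argument is correct, but it takes a more hands-on route than the paper's. You fix a good resolution $\pi$ realizing the attaching point $\nu_\star$ of each component $U$, then use $Z_\pi(\nu)=Z_\pi(\nu_\star)$ together with \refprop{intersection_differentcenters} to freeze the value of $g$ on $U$. This forces case distinctions on the type of $\nu_\star$ and of $\mu$, several of which are in fact unnecessary: you never need $\pi$ to realize $\mu$, nor to be a log resolution of $\mf{m}_X$; the only thing used is $\cen_\pi(\mu)\neq p_0$, and this is immediate from $\mu\in\mc{T}$ and $U_\pi(p_0)=U\subseteq\mc{V}_X\setminus\mc{T}$, regardless of the type of $\mu$. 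The paper instead argues entirely with b-divisors, with no choice of model and no case split. Writing $r\colon\mc{V}_X\to\mc{T}$ for the retraction, one observes that $r\nu$ disconnects $\mu$ and $\nu$, so the equality case of \refprop{positivity_bdivisors} gives
\[
\big(Z(r\nu)\cdot Z(\nu)\big)\big(Z(r\nu)\cdot Z(\mu)\big)=\big(Z(r\nu)\cdot Z(r\nu)\big)\big(Z(\nu)\cdot Z(\mu)\big).
\]
Since $\mc{T}$ contains the skeleton of a log resolution of $\mf{m}_X$, \refprop{big_tree} gives $r\nu\le\nu$, hence $\beta(r\nu\mid\nu)=1$ by \reflem{monotonicity}, i.e.\ $Z(r\nu)\cdot Z(\nu)=Z(r\nu)\cdot Z(r\nu)$; cancelling yields $g(\nu)=g(r\nu)$ in one stroke. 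Your approach has the virtue of being completely explicit --- the constant value appears as a concrete intersection number on a named model --- while the paper's buys uniformity and brevity, and makes transparent that the relevant input is precisely the disconnection criterion of \refprop{positivity_bdivisors}.
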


\begin{proof}
Denote by $r \colon \mc{V}_X \to \mc{S}_{\mf{m}_X}(\mu)$ the natural retraction.
To avoid trivial cases, we may assume that $\nu \not \in \mc{S}_{\mf{m}_X}(\mu)$. In particular
we want to prove that
$
Z(\nu) \cdot Z(\mu)=Z(r\nu) \cdot Z(\mu)
$.

Notice that either $\nu=r\nu$, or $\nu$ and $\mu$ belong to different components of $\mc{V}_X \setminus \{r\nu\}$.
Then by \refprop{positivity_bdivisors}, we have
$$
(Z(r\nu) \cdot Z(\nu)) (Z(r\nu) \cdot Z(\mu)) = (Z(\nu) \cdot Z(\mu)) (Z(r\nu) \cdot Z(r\nu)).
$$
By \refprop{baby_tree} we have $r\nu \leq \nu$, and by \reflem{monotonicity} we have $\beta(r\nu|\nu)=1$, which written in terms of b-divisors gives
$$
Z(r\nu)\cdot Z(r\nu)=Z(r\nu) \cdot Z(\nu).
$$
Putting these last two equations together, we deduce the desired equality.
\end{proof}

\refprop{intersection_locallyconst} allows to study the properties of other functionals over $\mc{V}_X$. In particular, we will study the maps $\nu \mapsto \nu(\phi)$ for any $\phi \in R_X$, $\nu \mapsto \nu(R_f)$ for the Jacobian divisor $R_f$, and $\nu \mapsto c(f,\nu)$.

\smallskip
	
Let $\pi \colon X_\pi \to (X,x_0)$ be a good resolution, and $E \in \varGamma_\pi$ be an exceptional prime of $\pi$.
Let $D \in \Weil(\pi)$ be the divisor associated to $\phi \circ \pi$, so that $\divi_E(\phi)=\ord_E(\phi \circ \pi)$ is given by the coefficient of $E$ in $D$.
Since the non-exceptional part of $D$ coincides with the strict transform $C_\pi$ of the curve $C=\{\phi=0\}$, and $D \cdot E=0$ for all exceptional primes, we infer that
$$
D= C_\pi - \sum_{k} d_k Z_\pi(\inte_{C_k}),
$$
where we decomposed the curve $C$ in its irreducible components $C_k$ of multiplicity $d_k$.

To simplify notations, we set $Z_\pi(\inte_C)=Z_\pi(\inte_\phi)=\sum_{k} d_k Z_\pi(\inte_{C_k})$, and denote by $Z(\inte_C)=Z(\inte_\phi)$ its associated b-divisor.
Notice that in general the map $\inte_C$ of local intersection with $C=\{\phi=0\}$ fails to be a valuation (unless $C$ is irreducible).

The coefficient of $E$ in $D$ is then given by $-\check{E} \cdot Z(\inte_{C})$.
Since divisorial valuations are dense in $\mc{V}_X$, by continuity we deduce the following result.

\begin{prop}
Let $(X,x_0)$ be a normal surface singularity, $\phi \in R_X$ and $C=\{\phi=0\}$. Then 
\begin{equation}\label{eqn:valuationasintersection}
\nu(\phi)= -Z(\nu) \cdot Z(\inte_{C})
\end{equation}
for all $\nu \in \hat{\mc{V}}_X$.

In particular, $\nu \mapsto \nu(\phi)$ is locally constant on $\mc{V} \setminus \mc{S}_{\mf{m}_X}(\{\nu_{C_k}\})$, where $C_k$ varies among the irreducible components of $C$.
\end{prop}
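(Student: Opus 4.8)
The plan is to prove the identity \eqref{eqn:valuationasintersection} first on the dense subset of divisorial valuations, where both sides become finite computations on a single good resolution, and then to extend it to all of $\hat{\mc{V}}_X$ by continuity; the ``in particular'' clause will then follow formally from \refprop{intersection_locallyconst} and linearity.

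For a divisorial valuation $\nu = \divi_E$ I would fix a good resolution $\pi\colon X_\pi\to (X,x_0)$ which simultaneously realizes $\nu_E$ and is an embedded resolution of $C$. Then $\divi_E(\phi)$ is the $E$-coefficient of the principal divisor $D = \Div(\phi\circ\pi)$ on $X_\pi$, whose non-exceptional part is $\sum_k d_k (C_k)_\pi$. Since $D$ is principal it is numerically trivial against every exceptional prime, hence also against every $\check F$; extracting the $E$-coefficient of its exceptional part through the dual basis (\refrmk{checkcoordinates}) yields $\divi_E(\phi) = -\check E\cdot \sum_k d_k (C_k)_\pi$. Because $\check{C_k} = Z_\pi(\inte_{C_k}) - (C_k)_\pi$ is orthogonal to all exceptional primes, this equals $-\check E\cdot Z_\pi(\inte_C)$; and since $Z_\pi(\divi_E) = \check E$ this is exactly $-Z_\pi(\divi_E)\cdot Z_\pi(\inte_C)$. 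Homogeneity of both sides then gives \eqref{eqn:valuationasintersection} for all $\nu = \lambda\divi_E$.

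To pass to a general $\nu$ I would note that both sides of \eqref{eqn:valuationasintersection} are weakly continuous. The left-hand side is continuous by the very definition of the weak topology. For the right-hand side, the point is that $Z(\inte_C)$ is a \emph{Cartier} nef b-divisor, determined by any embedded resolution $\pi$ of $C$: passing to a higher model does not alter the fact that the strict transform of each $C_k$ meets the exceptional locus transversally in a single point, so $Z_{\pi'}(\inte_{C_k}) = \eta_{\pi\pi'}^* Z_\pi(\inte_{C_k})$. Hence $Z(\nu)\cdot Z(\inte_C) = Z_\pi(\nu)\cdot Z_\pi(\inte_C)$ is computed on the fixed model $\pi$ and varies continuously with $\nu$ because $\ev_\pi$ is continuous by \refprop{evemb}. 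Together with the density of divisorial valuations in $\mc{V}_X$ and the cone structure of $\hat{\mc{V}}_X$, this establishes the identity in general.

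Finally, for the local constancy statement I would rewrite \eqref{eqn:valuationasintersection} using $\inte_{C_k} = m(C_k)\nu_{C_k}$ as $\nu(\phi) = -\sum_k d_k m(C_k)\big(Z(\nu)\cdot Z(\nu_{C_k})\big)$, and apply \refprop{intersection_locallyconst} with $\mu = \nu_{C_k}$ to each summand; a finite linear combination of functions that are locally constant off the $\mc{S}_{\mf{m}_X}(\nu_{C_k})$ is locally constant off $\bigcup_k \mc{S}_{\mf{m}_X}(\nu_{C_k}) = \mc{S}_{\mf{m}_X}(\{\nu_{C_k}\})$. The only place requiring genuine care is the divisorial step — the bookkeeping separating the exceptional and non-exceptional parts of $\Div(\phi\circ\pi)$, and the companion fact that $Z(\inte_C)$ is truly Cartier so that the continuity of the right-hand side comes for free; everything else is formal.
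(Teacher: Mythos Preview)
Your approach is the same as the paper's: verify \eqref{eqn:valuationasintersection} on divisorial valuations by reading off the $E$-coefficient of the principal divisor $\Div(\phi\circ\pi)$, then extend by density and continuity; the ``in particular'' clause is deduced from \refprop{intersection_locallyconst}. The divisorial computation and the final clause are correct and match the paper.

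There is, however, a genuine slip in your continuity step. You claim that $Z(\inte_C)$ is a Cartier b-divisor determined by any embedded resolution $\pi$ of $C$, arguing that $Z_{\pi'}(\inte_{C_k}) = \eta_{\pi\pi'}^* Z_\pi(\inte_{C_k})$. This is false: if $(C_k)_\pi$ meets the exceptional prime $H$ at the free point $p$ and $\pi'$ is the blowup of $p$ with new prime $F$, then $(C_k)_{\pi'}$ meets $F$, so $Z_{\pi'}(\inte_{C_k}) = \check F = \check{H'} - F \neq \check{H'} = \eta_{\pi\pi'}^*\check H$. The paper in fact notes explicitly that $Z(\inte_C)$ is \emph{not} Cartier; the Cartier object is $\hat Z(\inte_C)$, which is orthogonal to every exceptional divisor and hence useless for computing $Z(\nu)\cdot(-)$. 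The gap is easily patched: for $\nu$ not proportional to any $\nu_{C_k}$, choose an embedded resolution $\pi$ of $C$ in which $\cen_\pi(\nu)$ avoids each $(C_k)_\pi$; then \refprop{intersection_differentcenters} gives $Z(\nu)\cdot Z(\inte_{C_k}) = Z_\pi(\nu)\cdot Z_\pi(\inte_{C_k})$, and your divisorial calculation applies verbatim at $\cen_\pi(\nu)$ since $\Div(\phi\circ\pi)$ is purely exceptional there. For $\nu$ proportional to some $\nu_{C_j}$ both sides are $+\infty$.
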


Similarly, for Jacobian divisors we get
\begin{cor}\label{cor:jacobiandivisor_locallyconst}
Let $f\colon(X,x_0) \to (Y,y_0)$ be a dominant map between two normal surface singularities, and denote by $R_f$ the Jacobian divisor of $f$.
Then the map $\nu \mapsto \nu(R_f)$ is locally constant outside the skeleton $\mc{S}_{R_f}$ adapted to $\mf{m}_X$ and generated by the curve valuations associated to irreducible components in the support of $R_f$.
\end{cor}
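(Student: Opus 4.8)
\textbf{Proof proposal for \refcor{jacobiandivisor_locallyconst}.}

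The plan is to reduce the statement to the previously established local constancy result for intersection numbers of b-divisors, namely \refprop{intersection\_locallyconst} and its immediate consequence \eqref{eqn:valuationasintersection}. First I would recall the content of the Jacobian formula \refprop{jacobian\_formula}: the Jacobian divisor $R_f = \on{Div}(f^*\omega) - f^*\on{Div}(\omega)$ is a fixed Weil divisor on $X$, and the quantity $\nu(R_f)$ is defined by linearly extending the order of vanishing along $\nu$ over the irreducible components of $R_f$. Writing $R_f = \sum_k a_k C_k$ for the decomposition of $R_f$ into irreducible components $C_k$ (with integer, possibly negative, coefficients $a_k$), we have by linearity $\nu(R_f) = \sum_k a_k \nu(C_k)$ for every $\nu \in \hat{\mc{V}}_X^*$, where $\nu(C_k) = \nu(\phi_k)$ for $\phi_k$ a local defining equation of $C_k$.

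The key step is then to apply the formula \eqref{eqn:valuationasintersection} to each $\phi_k$: since $C_k$ is irreducible, $Z(\inte_{C_k}) = Z(\inte_{\phi_k})$ is simply $m(C_k)Z(\nu_{C_k})$ up to the normalization, and in any case it is a fixed nef b-divisor, so $\nu(C_k) = -Z(\nu)\cdot Z(\inte_{C_k})$. By \refprop{intersection\_locallyconst} applied with $\mu = \nu_{C_k}$ (or, more precisely, to the curve semivaluation $\inte_{C_k}$, which is a limit of normalized divisorial valuations and so the same local constancy holds), the map $\nu \mapsto \nu(C_k)$ is locally constant on $\mc{V}_X \setminus \mc{S}_{\mf{m}_X}(\nu_{C_k})$, i.e., outside the skeleton adapted to $\mf{m}_X$ and generated by $\nu_{C_k}$. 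Summing over $k$ with coefficients $a_k$, the map $\nu \mapsto \nu(R_f) = \sum_k a_k\nu(C_k)$ is locally constant on the complement of $\bigcup_k \mc{S}_{\mf{m}_X}(\nu_{C_k})$. Since a finite union of skeleta adapted to $\mf{m}_X$ is itself the skeleton adapted to $\mf{m}_X$ generated by the union of the relevant sets of curve valuations, this complement is exactly $\mc{V}_X \setminus \mc{S}_{R_f}$, where $\mc{S}_{R_f} = \mc{S}_{\mf{m}_X}(\{\nu_{C_k}\}_k)$ in the notation of the statement.

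The only genuine subtlety — and the point I would treat with some care rather than routine — is the passage from divisorial $\mu$ in \refprop{intersection\_locallyconst} to the curve semivaluation $\inte_{C_k}$, and the associated claim that the relevant exceptional set really is the skeleton generated by the normalized curve valuations $\nu_{C_k}$ rather than something larger. This is handled by observing that $Z(\inte_{C_k})$ is a nef b-divisor (as noted right after \refprop{positivity\_bdivisors}, hence a nonnegative combination of divisors $\check E$), that \refprop{positivity\_bdivisors} applies verbatim with one argument replaced by such a nef b-divisor, and that the retraction-based argument in the proof of \refprop{intersection\_locallyconst} goes through unchanged once one replaces $\mc{S}_{\mf{m}_X}(\mu)$ by the interval $[r_X\nu_{C_k}, \nu_{C_k}]$ adjoined to $\mc{S}_X$ and adapted to $\mf{m}_X$. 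With this in hand the corollary follows immediately, so I expect no further obstacles.
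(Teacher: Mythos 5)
Your proof is correct and follows the route the paper intends: the paper states the corollary with "Similarly, for Jacobian divisors we get...", leaving the argument implicit because it is precisely the linearity reduction you spell out, namely writing $R_f = \sum_k a_k C_k$ with possibly negative $a_k$, applying the preceding proposition (the formula $\nu(\phi_k) = -Z(\nu)\cdot Z(\inte_{C_k})$ together with the local constancy of $\nu\mapsto\nu(\phi_k)$ on $\mc{V}_X\setminus\mc{S}_{\mf{m}_X}(\nu_{C_k})$), and summing. The "subtlety" you flag at the end is already absorbed into that proposition, so you need not re-derive the curve-semivaluation case from \refprop{intersection_locallyconst}; otherwise the argument is precisely what the paper has in mind.
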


We now focus our attention on the map $\nu \mapsto c(f, \nu)$.
First, we give an interpretation of the attraction rate $c(f,\nu)$ in the setting of b-divisors.

\begin{prop}\label{prop:attractionrate_intersection}
Let $f\colon (X,x_0) \to (Y,y_0)$ be a dominant germ between two normal surface singularities, and $\nu \in \mc{V}_X$.
Then
\begin{equation}\label{eqn:attractionrate_intersection}
c(f,\nu) = - Z(\nu) \cdot \Exc \big(f^* Z(\mf{m}_Y)\big).
\end{equation}
\end{prop}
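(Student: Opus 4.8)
The plan is to unwind the definitions and reduce to the case of a divisorial valuation, where the identity becomes a transparent computation in a single good resolution.

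Recall that by definition $c(f,\nu)=\nu(f^*\mf{m}_Y)$, so the first task is to identify the $b$-divisor $\Exc(f^*Z(\mf{m}_Y))$. I would choose a good resolution $\varpi\colon Y_\varpi\to(Y,y_0)$ which is a log resolution of $\mf{m}_Y$, and then a good resolution $\pi\colon X_\pi\to(X,x_0)$ high enough that the lift $\tilde{f}=\varpi^{-1}\circ f\circ\pi$ is holomorphic. Since $\varpi^*\mf{m}_Y=\mc{O}_{Y_\varpi}(Z_\varpi(\mf{m}_Y))$ is locally principal, so is $\pi^*(f^*\mf{m}_Y)=\tilde{f}^*\varpi^*\mf{m}_Y=\mc{O}_{X_\pi}(\tilde{f}^*Z_\varpi(\mf{m}_Y))$; hence $\pi$ is a log resolution of $f^*\mf{m}_Y$, and writing $\pi^*(f^*\mf{m}_Y)=\mc{O}_{X_\pi}(-D_\pi)$ for an effective divisor $D_\pi=E_\pi+\sum_{C\in\CC{f}}d_C\,C_\pi$ (with $E_\pi$ exceptional and $C_\pi$ the strict transform of a contracted curve $C$), one has $(f^*Z(\mf{m}_Y))_\pi=\tilde{f}^*Z_\varpi(\mf{m}_Y)=-D_\pi=\hat{Z}_\pi(f^*\mf{m}_Y)$, i.e.\ $f^*Z(\mf{m}_Y)=\hat{Z}(f^*\mf{m}_Y)$. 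Taking exceptional parts and using \refrmk{defZhata} (which gives $\Exc(\hat{Z}(\mf{a}))=Z(\mf{a})$ for an arbitrary ideal $\mf{a}$), I obtain $\Exc(f^*Z(\mf{m}_Y))=Z(f^*\mf{m}_Y)$, the nef exceptional $b$-divisor whose incarnation in $\pi$ is $-E_\pi=Z_\pi(f^*\mf{m}_Y)$.

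Next I would reduce to divisorial valuations. Both sides of \eqref{eqn:attractionrate_intersection} are weakly continuous on $\mc{V}_X$ with value $+\infty$ exactly on $\VC{f}$: the left side because $\nu\mapsto\nu(f^*\mf{m}_Y)$ is continuous for the weak topology and is infinite precisely when $f^*\mf{m}_Y$ lies in the kernel of $\nu$, i.e.\ when $\nu$ is a contracted curve semivaluation; the right side because $\Exc(f^*Z(\mf{m}_Y))=\hat{Z}(f^*\mf{m}_Y)+\sum_{C\in\CC{f}}d_C\,Z(C)$ is the sum of a Cartier $b$-divisor and a non-negative combination of the nef $b$-divisors $Z(C)$, whose intersections with $Z(\nu)$ are controlled by \eqref{eqn:valuationasintersection}. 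So, using density of divisorial valuations together with the homogeneity of both sides in $\nu$, it suffices to prove $c(f,\divi_E)=-Z(\divi_E)\cdot\Exc(f^*Z(\mf{m}_Y))$ for an exceptional prime $E$ of $\pi$. Now the center of $\divi_E$ in $X_\pi$ is the generic point of $E$, which lies on no strict transform of a curve, so a local equation of $D_\pi$ there is a local equation of $E_\pi$ alone; hence $c(f,\divi_E)=\divi_E(f^*\mf{m}_Y)=\ord_E(D_\pi)=\ord_E(E_\pi)$, the coefficient of $E$ in $E_\pi$, which by \refrmk{checkcoordinates} equals $E_\pi\cdot\check{E}=-Z_\pi(f^*\mf{m}_Y)\cdot Z_\pi(\divi_E)$.

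To conclude, note that $Z(\divi_E)$ is a Cartier $b$-divisor determined by $\pi$, so its intersection with any $b$-divisor may be computed in the model $\pi$; in particular $Z(\divi_E)\cdot\Exc(f^*Z(\mf{m}_Y))=Z_\pi(\divi_E)\cdot Z_\pi(f^*\mf{m}_Y)$. Combined with the previous step this gives $c(f,\divi_E)=-Z(\divi_E)\cdot\Exc(f^*Z(\mf{m}_Y))$, and the general case follows by homogeneity and the continuity reduction. The one point requiring care is that when $f$ is not finite the $b$-divisor $\Exc(f^*Z(\mf{m}_Y))$ is \emph{not} Cartier, so a priori $Z(\nu)\cdot\Exc(f^*Z(\mf{m}_Y))$ is only an infimum over models; this is harmless here precisely because we first pair it against the Cartier $b$-divisor $Z(\divi_E)$, and only afterwards pass to general $\nu$ by continuity — so establishing the weak continuity of the right-hand side away from $\VC{f}$ is the real content of the reduction, and is the step I expect to need the most bookkeeping.
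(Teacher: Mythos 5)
Your proof is correct, and it genuinely differs from the paper's. The paper deduces the formula as an algebraic consequence of the pushforward and pullback formulae \eqref{eqn:pushforward_bdivisors} and \eqref{eqn:pullback_bdivisors_rest} from \refthm{operations_bdivisors}: it intersects the pushforward identity with $-Z(\mf{m}_Y)$, applies the projection formula, and then subtracts the residual contribution computed from \eqref{eqn:pullback_bdivisors_rest}. You instead bypass that machinery entirely: you first identify $\Exc(f^*Z(\mf{m}_Y))$ with $Z(f^*\mf{m}_Y)$ via \refrmk{defZhata}, then verify the identity directly for divisorial valuations realized in a log resolution of $f^*\mf{m}_Y$ (where both sides reduce to reading off a coefficient $\ord_E(E_\pi)=E_\pi\cdot\check{E}$), and finally pass to general $\nu$ by weak density of divisorial valuations. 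The continuity step you worry about is sound: decomposing $\Exc(f^*Z(\mf{m}_Y))=\hat{Z}(f^*\mf{m}_Y)+\sum_C d_C\,Z(C)$, the pairing of $Z(\nu)$ with the Cartier piece is weakly continuous with finite values, and the pairing with each $Z(C)$ is $\nu(\phi_C)$ up to a positive constant by \eqref{eqn:valuationasintersection}, hence weakly continuous into $[0,+\infty]$. What your route buys is independence from the multiplicity-counting in \refthm{operations_bdivisors} and a more transparent geometric picture; what it costs is that one must first see the b-divisor identity $f^*Z(\mf{m}_Y)=\hat{Z}(f^*\mf{m}_Y)$, which the paper encodes implicitly in the pullback formula.
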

\begin{proof}
Applying \eqref{eqn:pushforward_bdivisors} and intersecting with $-Z(\mf{m}_Y)$, we have
$$
-f_*Z(\nu) \cdot Z(\mf{m}_Y) = c(f,\nu) - \sum_{\nu_C \in \VC{f}}\big(-Z(\nu_C)\cdot Z(\nu)\big)c_\alpha(f,\nu_C),
$$
where we used the fact that $-Z(\nu') \cdot Z(\mf{m}_Y)=1$ for all normalized semivaluations $\nu' \in \mc{V}_Y$.
By the projection formula, $-f_*Z(\nu)\cdot Z(\mf{m}_y) = -Z(\nu) \cdot f^*Z(\mf{m}_Y)$.
By \eqref{eqn:pullback_bdivisors_rest} we have
\begin{align*}
-Z(\nu) \cdot \Rest\big(f^*Z(\mf{m}_Y)\big) 
&=-\sum_{\nu_C \in \VC{f}} \big(-Z(f_\bullet \nu_C) \cdot Z(\mf{m}_Y)\big) c_\alpha(f,\nu_C) \frac{-Z(C)}{m(C)}\cdot Z(\nu)\\
&=-\sum_{\nu_C \in \VC{f}} c_\alpha(f,\nu_C) \big(-Z(\nu_C)\cdot Z(\nu)\big),
\end{align*}
where we used the fact that $Z(C) \cdot Z = Z(\inte_C) \cdot Z$ for any exceptional b-divisor $Z \in \Ediv(X)$.
Equation \eqref{eqn:attractionrate_intersection} directly follows, recalling that $f^*Z(\mf{m}_Y) =\Exc\big(f^*Z(\mf{m}_Y)\big) +\Rest\big(f^*Z(\mf{m}_Y)\big)$. 
\end{proof}

\begin{cor}\label{cor:attractionrate_locallyconst}
Let $f\colon(X,x_0) \to (Y,y_0)$ be a dominant map between two normal surface singularities.
Set $V_f=\VC{f} \cup f_\bullet^{-1}(\rees_Y)$, where $\VC{f}$ is the set of contracted curve valuations, and $\rees_Y=\rees(\mf{m}_Y)$ is the set of Rees valuations of the maximal ideal $\mf{m}_Y$.
Then the function $\nu \mapsto c(f, \nu)$ is locally constant on $\mc{V}_X \setminus \mc{S}_{c(f)}$, where $\mc{S}_{c(f)}=\mc{S}_{\mf{m}_X}(V_f)$ is the skeleton generated by $V_f$ and adapted to $\mf{m}_X$.
\end{cor}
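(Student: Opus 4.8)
The plan is to deduce \refcor{attractionrate_locallyconst} directly from \refprop{attractionrate_intersection} together with \refprop{intersection_locallyconst}. The key observation is that \refprop{attractionrate_intersection} expresses $c(f,\nu)$ as an intersection number $-Z(\nu)\cdot W$, where $W := \Exc\big(f^*Z(\mf{m}_Y)\big)$ is a fixed exceptional b-divisor (independent of $\nu$). Since $W$ is nef (being the exceptional part of the pullback of the nef b-divisor $Z(\mf{m}_Y)$, which by \refprop{positivity_bdivisors} and the remark following it is a non-negative linear combination of divisors $Z(\mu)$ for divisorial $\mu$), we can write $W = \sum_j a_j Z(\mu_j)$ for finitely many divisorial valuations $\mu_j$ and positive constants $a_j$. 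Then $c(f,\nu) = \sum_j a_j\big[-Z(\nu)\cdot Z(\mu_j)\big]$, and by \refprop{intersection_locallyconst} each map $\nu\mapsto -Z(\nu)\cdot Z(\mu_j)$ is locally constant off $\mc{S}_{\mf{m}_X}(\mu_j)$. Hence $\nu\mapsto c(f,\nu)$ is locally constant off $\bigcup_j \mc{S}_{\mf{m}_X}(\mu_j)$.

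The remaining work is purely bookkeeping: identify the finite set $\{\mu_j\}$ with (a subset of) $V_f := \VC{f}\cup f_\bullet^{-1}(\rees_Y)$, so that the exceptional set is contained in $\mc{S}_{\mf{m}_X}(V_f) = \mc{S}_{c(f)}$. To do this I would use the pullback formula \eqref{eqn:pullback_bdivisors} applied to each Rees valuation $\nu'\in\rees_Y$, recalling that $Z(\mf{m}_Y) = \sum_{\nu'\in\rees_Y} b_{\nu'} Z(\nu')$ with $b_{\nu'}>0$. Pulling back and taking exceptional parts, $\Exc\big(f^*Z(\mf{m}_Y)\big)$ is a positive combination of the b-divisors $Z(\nu)$ for $\nu\in f_\bullet^{-1}(\nu')$, $\nu'\in\rees_Y$, together with the b-divisors $Z(f_\bullet\nu_C)$ for $\nu_C\in\VC{f}$ — more precisely, from \eqref{eqn:pullback_bdivisors} the exceptional part picks up the terms $\frac{m(f,\nu)}{c(f,\nu)}Z(\nu)$ over $f_\bullet\nu=\nu'$, while the $\hat Z(\nu_C)$ terms contribute their exceptional parts $Z(\nu_C)$ (see \refrmk{defZhata}). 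Thus every divisorial valuation appearing in $W$ lies in $f_\bullet^{-1}(\rees_Y)$ or equals $\nu_C$ for some $\nu_C\in\VC{f}$; in both cases $\mc{S}_{\mf{m}_X}(\mu_j)\subseteq \mc{S}_{\mf{m}_X}(V_f)$, and we are done.

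The main obstacle, such as it is, is being careful about which exceptional b-divisors actually occur in $W$ and verifying that they are all accounted for by $V_f$; in particular one must check that the contracted-curve contributions enter through their exceptional parts $Z(\nu_C)$ rather than through $f_\bullet\nu_C$, so that it is $\nu_C\in\VC{f}$ (and not $f_\bullet\nu_C$) that governs the exceptional locus, matching the definition $V_f=\VC{f}\cup f_\bullet^{-1}(\rees_Y)$ in the statement. Everything else is a direct application of results already established: \refprop{attractionrate_intersection} for the intersection-theoretic formula, the nefness discussion after \refprop{positivity_bdivisors} to decompose $W$, and \refprop{intersection_locallyconst} for the local constancy of each summand. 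Since a finite union of sets of the form $\mc{S}_{\mf{m}_X}(\mu_j)$ is again of the form $\mc{S}_{\mf{m}_X}(V)$ for $V=\{\mu_j\}$ a finite set, and adding more valuations to $V$ only enlarges the skeleton, the conclusion follows with the skeleton $\mc{S}_{c(f)}=\mc{S}_{\mf{m}_X}(V_f)$ as claimed.
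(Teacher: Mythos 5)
Your proof is correct and follows the same route the paper takes: apply \refprop{attractionrate_intersection} to write $c(f,\nu) = -Z(\nu)\cdot W$ with $W = \Exc\big(f^*Z(\mf{m}_Y)\big)$, take the exceptional part of the pullback formula \eqref{eqn:pullback_bdivisors} to express $W$ as a positive combination of $Z(\mu)$ with $\mu$ ranging over $V_f$, and then invoke \refprop{intersection_locallyconst} for each summand. The only small inaccuracy is calling the $\mu_j$ divisorial: the contributions from $\hat{Z}(\nu_C)$ give exceptional parts $Z(\nu_C)$ with $\nu_C\in\VC{f}$ a \emph{curve} semivaluation, not a divisorial one — but this is harmless, since \refprop{intersection_locallyconst} holds for any normalized semivaluation and the skeleton $\mc{S}_{\mf{m}_X}(V)$ is defined for any finite set of semivaluations.
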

\begin{proof}
By \refprop{attractionrate_intersection}, we may compute $c(f,\nu)$ as minus the intersection of $Z(\nu)$ with the exceptional part of $f^*(\mf{m}_Y)$.
Taking the exceptional part of \eqref{eqn:pullback_bdivisors}, we have
$$
\Exc\big(f^*Z(\mf{m}_Y)\big)=\sum_{\nu \in V_f} a_\nu Z(\nu),
$$
for suitable $a_\nu > 0$. We conclude by \refprop{intersection_locallyconst}.
\end{proof}

Notice that when $(Y,y_0) = (\nC^2,0)$ is smooth, then $Z(\mf{m}) = Z(\ord_0)$, where $\ord_0$ is the multiplicity valuation at $0 \in \nC^2$.
Then $\rees_{\nC^2}=\{\ord_0\}$, and \refcor{attractionrate_locallyconst} coincide with the characterization given by \cite[Proposition 3.4]{favre-jonsson:eigenval}.

The skeleta $\mc{S}_{c(f)}$ and $\mc{S}_{R_f}$ will play an important role on the study of the dynamical features of $f_\bullet$.
We will denote by $\mc{S}_f = \mc{S}_{c(f)} \cup \mc{S}_{R_f}$ their union, and refer to it as the \emph{critical skeleton} of $f$. It is generated by a finite number of divisorial or curve semivaluations, and the functions $\nu \mapsto c(f, \nu)$ and $\nu \mapsto \nu(R_f)$ are both locally constant on $\mc{V}_X \setminus \mc{S}_f$.

\subsection{Classification of valuative dynamics}

Finally, we come now to a precise statement of \refthm{valdynamics}, our classification of dynamics on the valuation spaces $\mc{V}_X$. The setup for the theorem the following: $(X,x_0)$ is an irreducible normal surface germ, $f\colon (X,x_0)\to (X,x_0)$ is a dominant  holomorphic map, and $R_f$ is the Weil divisor on $(X,x_0)$ defined by the vanishing of the Jacobian determinant of $f$, as discussed in \S\ref{ssec:Jacobian_formula}. In addition, we make the crucial assumption that $f$ is \emph{non-invertible}; note, everything discussed in this section up to now is valid without this assumption, but we will use it heavily in our proof of the classification.
In fact for \emph{invertible} germs, the action $f_\bullet$ is always an isometry with respect to the angular distance $\rho$. Moreover, $A(f_\bullet \nu)=A(\nu)$ and $c(f, \nu) = 1$ for all valuations $\nu \in \mc{V}_X$.

In the statement, we will use the following terminology. A set $S\subseteq\mc{V}_X$ is said to be \emph{totally invariant} if $f_\bullet^{-1}(S) = S$. This is stronger than usual notion of invariance $f_\bullet(S)\subseteq S$. If $f_\bullet$ is surjective and $S$ is totally invariant, then $f_\bullet(S) = S$. 

\begin{thm}\label{thm:classification}
For the dynamical system $f_\bullet\colon \mc{V}_X\to \mc{V}_X$, exactly one of the following statements holds.
\begin{enumerate}
\item The map $f$ is not finite. In this case, there is a semivaluation $\nu_\star\in \mc{V}_X$ for which $f_\bullet^n\nu\to \nu_\star$ weakly as $n\to \infty$ for all $\nu\in \mc{V}_X$ of finite skewness.
Obviously any such $\nu_\star$ is unique and fixed by $f_\bullet$.
If $\nu_\star$ is of finite skewness, then in fact $f^n_\bullet\nu\to \nu_\star$ in the strong topology for each $\nu\in \mc{V}_X$ of finite skewness.
\item The map $f$ is finite and $R_f\neq 0$. In this case, $(X,x_0)$ must be a quotient singularity (or non-singular). There is a subset $I\subset \mc{V}_X$ which is homeomorphic to a closed interval or a point satisfying the following properties: (a) $I$ is totally invariant, (b) $I$ is fixed pointwise by $f_\bullet^2$, and (c) for every $\nu\in \mc{V}_X$ of finite skewness there is a $\nu_\star\in I$ such that $f^{2n}_\bullet\nu\to \nu_\star$ weakly as $n\to \infty$. If this $\nu_\star$ itself has finite skewness, then in fact $f^{2n}_\bullet\nu\to \nu_\star$ in the strong topology.
\item The map $f$ is finite and $R_f = 0$. In this case $(X,x_0)$ is a lc but not lt singularity. The set $S := A^{-1}(0)\subset \mc{V}_X$ is totally invariant for $f_\bullet$, and $f_\bullet|_S$ is an isometry for the angular distance $\rho$. The set $S$ is either a point (for simple elliptic singularities or their quotients), a segment (quotient-cusp singularities) or a circle (cusp singularities). Denote by $r_S\colon \mc{V}_X \to S$ the natural retraction. Then for every $\nu\in \mc{V}_X$ of finite skewness we have $\rho(f^n_\bullet \nu, f^n_\bullet r_S\nu) \to 0$ as $n\to \infty$. 
\end{enumerate}
\end{thm}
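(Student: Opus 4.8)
The plan is to split into the three cases according to whether $f$ is finite and whether $R_f = 0$, and in each case to exploit the non-expansion property (\refthm{non-expansion}) and the strong contraction property (\refthm{strong_contraction}) of $f_\bullet$ for the angular metric $\rho$, together with the Jacobian formula (\refprop{jacobian_formula}). First I would dispose of the non-finite case (1): by \refcor{strong_contraction_non-finite}, $f_\bullet$ strictly decreases $\rho$ on $\mc{V}_X^\alpha$, so for any fixed quasimonomial $\nu_0$ the sequence $f^n_\bullet \nu_0$ has the property that $\rho(f^{n+1}_\bullet\nu_0, f^n_\bullet\nu_0)$ is strictly decreasing. One then argues (using compactness of $\mc{V}_X$ in the weak topology, together with the fact that consecutive attraction rates $c(f, f^n_\bullet\nu_0) \geq 1$ and the critical-skeleton description from \refcor{attractionrate_locallyconst}) that the orbit must weakly converge to a fixed point $\nu_\star$; the key point is that any two weak cluster points $\mu_1, \mu_2$ of the orbit would have to satisfy $\rho(\mu_1, \mu_2) = 0$ once one knows the $\rho$-diameter of the tail of the orbit tends to $0$ — and this last fact is where one needs a genuine \emph{uniform} strict contraction estimate, e.g. that on the compact piece $\mc{S}_\pi$ of a high enough resolution the contraction factor is bounded away from $1$. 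Uniqueness of $\nu_\star$ is immediate since any limit is fixed; independence of the starting point follows since $\rho(f^n_\bullet\nu, f^n_\bullet\nu') \leq \rho(\nu,\nu')$ is non-increasing and the orbits get absorbed into the same region. The upgrade from weak to strong convergence when $\alpha(\nu_\star) < \infty$ uses \refprop{log_alpha}: $\log\alpha$ parameterizes $\mc{V}_{X,\hat\nu_\star}$ and induces $\rho$, so weak convergence plus $\rho$-contraction forces the $\alpha$-values, hence the $d$-distances, to converge.

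For the finite non-invertible case, the Jacobian formula in the form \eqref{eqn:jacobian_formula_bullet}, namely $c(f,\nu)A(f_\bullet\nu) = A(\nu) + \nu(R_f)$ with $R_f$ effective (since $f$ is finite, $(X,x_0)=(Y,y_0)$), together with Wahl's theorem cited in the introduction, gives that $(X,x_0)$ must be log canonical: otherwise $A$ would be negative somewhere on $\eskel{X}$ (\refthm{nonpositive-discrepancies}), but iterating the Jacobian formula along an orbit converging into $\eskel{X}$ and using $c(f,\nu)\geq 1$, $R_f \geq 0$ would force a contradiction with the lower bound $A \geq \mathrm{lct}$. For case (3), $R_f = 0$ forces $A(f_\bullet\nu) = A(\nu)/c(f,\nu) \le A(\nu)$; combined with \refprop{propertiesfbullet}(3) (surjectivity) and the fact that $A$ attains its minimum, one shows $c(f,\nu)=1$ on $S := A^{-1}(0)$ and that $S$ is totally invariant. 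The classification of lc singularities (\refthm{lc_classification}) and \reffig{dual_graphs} identify $S$ as a point, segment, or circle. Then $f_\bullet|_S$ is $\rho$-non-expanding by \refthm{non-expansion} and, being a bijection of a compact segment/circle/point onto itself (using surjectivity and total invariance), must be a $\rho$-isometry. The convergence statement $\rho(f^n_\bullet\nu, f^n_\bullet r_S\nu)\to 0$ follows because $r_S\nu \le \nu$ or lies on the same branch, $\rho(f^n_\bullet\nu, f^n_\bullet r_S\nu)$ is non-increasing, and along the orbit the ``overhang'' outside $S$ is strictly contracted at each step by the strong contraction estimate of \refthm{strong_contraction} (the equality case there requires $\nu$ to disconnect $\mu$ from all preimages of $f_\bullet\nu$, which fails generically once the orbit reaches $S$ — one must check this carefully using the fork/endpoint structure of $\skel X$).

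For case (2), $R_f \neq 0$ but $f$ finite: here \refcor{jacobiandivisor_locallyconst} and \refthm{operations_bdivisors} combined with the Jacobian formula rule out cusp and simple elliptic type, forcing $(X,x_0)$ to be a quotient singularity (or smooth) — this is the step where one invokes the detailed analysis of \S\ref{sec:dynamics_finite} (the referenced \refthm{superattracting_finite}). On a quotient singularity $\skel X$ is a tree, so $\mc{V}_X$ is a tree, and $f^2_\bullet$ is an order-preserving (or at least tree-)self-map; a fixed-point argument on trees (as in \cite{favre-jonsson:eigenval}, using that $\rho$-non-expansion plus the tree structure yields either a fixed point or a fixed end) produces the invariant set $I$, a closed interval or point, fixed pointwise by $f^2_\bullet$. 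The weak convergence $f^{2n}_\bullet\nu\to\nu_\star\in I$ and the strong upgrade are then as in case (1), using the retraction onto $I$ and the $\log\alpha$-parameterization.

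\textbf{Main obstacle.} The hardest part, common to all three cases, is converting the \emph{non-strict} inequality $\rho(f^n_\bullet\nu_1, f^n_\bullet\nu_2)\le\rho(\nu_1,\nu_2)$ and the pointwise strict inequality of \refthm{strong_contraction} into an honest \emph{uniform} contraction on the relevant compact subgraphs, which is what actually yields convergence rather than mere boundedness. This requires a careful case analysis of the equality conditions in \refprop{positivity_bdivisors} (equivalently the disconnection conditions in \refthm{strong_contraction}) relative to the finite set of preimages of a point under $f_\bullet$, and the use of \refcor{attractionrate_locallyconst} to ensure the orbit eventually enters a region where these equality conditions provably fail. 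The cusp case (3) with $f_\bullet|_S$ an irrational rotation is precisely the place where no uniform contraction toward a point is available — hence the theorem only claims $\rho(f^n_\bullet\nu, f^n_\bullet r_S\nu)\to 0$, convergence of the orbit \emph{to the circle}, not to a point — and verifying that this weaker statement still holds (the overhang outside the circle contracts even though the rotation on the circle does not) is the delicate endgame.
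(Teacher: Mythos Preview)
Your three-case split matches the paper, but Case (3) contains a genuine error and Cases (1)--(2) miss key ingredients.

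In Case (3) you assert $c(f,\nu)=1$ on $S=A^{-1}(0)$. This is false: the Jacobian formula $c(f,\nu)A(f_\bullet\nu)=A(\nu)$ is vacuous on $S$ since both sides vanish, so it says nothing about $c$. The paper proves the opposite---$f$ is \emph{superattracting}, i.e.\ $c(f^n,\nu)\to\infty$ for every $\nu$ (\refthm{superattracting_finite})---and this is the actual engine driving convergence to $S$, via $A(f_\bullet^n\nu)=A(\nu)/c(f^n,\nu)\to 0$. The proof of superattractivity is a b-divisor computation: combining \eqref{eqn:pushforward_bdivisors}, \eqref{eqn:pullback_bdivisors} and the projection formula yields $c(f,\nu)\,\alpha(f_\bullet\nu)=e(f)\,\alpha(\nu)$ for $\nu\in S$ (equation \eqref{eqn:c_and_e}), and then one argues that $c(f,f_\bullet^n\nu)\equiv 1$ along an orbit would force topological degree $e(f)=1$, contradicting non-invertibility. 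Your strong-contraction route cannot replace this: $f_\bullet|_S$ is an isometry, so there is no contraction \emph{along} $S$, only transverse to it, and you need a quantitative transverse rate. The final statement $\rho(f_\bullet^n\nu,f_\bullet^n r_S\nu)\to 0$ then follows from the commutation $r_S\circ f_\bullet=f_\bullet\circ r_S$ (a direct consequence of total invariance of $S$), not from a disconnection analysis.

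In Case (2) the paper does not work intrinsically on $\mc{V}_X$ via tree fixed points. Instead it lifts $f$ through the universal cover $\pr\colon(\nC^2,0)\to(X,x_0)$ to a finite germ $g$ on $(\nC^2,0)$---this lift exists precisely because $f$ is finite---and then applies the known smooth case from \cite{gignac-ruggiero:attractionrates} to $g$, pushing the conclusion down via $\pr_\bullet$. Your reference to \refthm{superattracting_finite} here is misplaced; that theorem belongs to Case (3). In Case (1), the strong-convergence upgrade when $\alpha(\nu_\star)<\infty$ requires $c(f,\nu_\star)>1$ (\refprop{semisuper_nonfinite}, another b-divisor identity); without it the Jacobian iteration does not contract. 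The construction of $\nu_\star$ itself is done via Edelstein's fixed-point theorem (\reflem{fixedpoint}) applied to the retracted maps $F_\pi=r_\pi\circ f_\bullet$ on each compact skeleton $\mc{S}_\pi$, producing a compatible family of fixed points, rather than by a cluster-point argument on a single orbit as you sketch.
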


An important special case of \refthm{classification} was proved in the papers \cite{gignac-ruggiero:attractionrates, ruggiero:rigidification}, namely the case when $(X,x_0) = (\C^2,0)$ is non-singular. More precisely, \cite{gignac-ruggiero:attractionrates} proved the theorem for dominant holomorphic maps $f\colon (\C^2,0)\to (\C^2,0)$ which are \emph{superattracting}, that is, for which the differential $df_0$ is nilpotent. If $f$ is non-invertible but not superattracting, then $df_0$ has exactly one non-zero eigenvalue; in this case $f$ is said to be \emph{semi-superattracting}, and the theorem was proved in \cite{ruggiero:rigidification}. Note, the only two statements that apply in the non-singular setting are the first two. 

A strong dichotomy appears in the statement of \refthm{classification}.
When the map $f\colon(X,x_0)\to (X,x_0)$ is \emph{non-finite}, we will use the contraction properties of $f_\bullet$ with respect to the angular distance given by \refcor{strong_contraction_non-finite}.
When the map $f$ is \emph{finite}, we will apply the following results, see Wahl \cite{wahl:charnumlinksurfsing}, \cite{favre:holoselfmapssingratsurf} for proofs.

\begin{thm}[{\cite{wahl:charnumlinksurfsing}, \cite[Theorem B]{favre:holoselfmapssingratsurf}}]\label{thm:wahl}
Let $f\colon(X,x_0) \to (X,x_0)$ be a non-invertible germ. 
If $f$ is finite, then $(X,x_0)$ is log canonical. Moreover, $(X,x_0)$ is log terminal if and only if $R_f\neq 0$. 
\end{thm}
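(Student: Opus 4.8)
The statement to prove is \refthm{wahl}: if $f\colon(X,x_0)\to(X,x_0)$ is a finite non-invertible germ, then $(X,x_0)$ is log canonical, and it is log terminal if and only if $R_f\neq 0$.

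\textbf{Approach.} The plan is to exploit the Jacobian formula \refprop{jacobian_formula} together with the fact that a finite noninvertible selfmap must eventually decrease the log discrepancy of some valuation unless the singularity is already mild. First I would recall that since $f$ is finite, the Jacobian divisor $R_f = \on{Div}(f^*\omega) - f^*\on{Div}(\omega)$ is \emph{effective} (this is noted in \refrmk{jacobian_formula_finite}: away from $x_0$ the map is between smooth surfaces and $R_f$ is locally cut out by the Jacobian determinant, which is a genuine function since there are no contracted curves). Thus for every $\nu\in\hat{\mc{V}}_X^*$ we have $\nu(R_f)\geq 0$, and the normalized Jacobian formula \eqref{eqn:jacobian_formula_bullet} reads $c(f,\nu)A(f_\bullet\nu) = A(\nu) + \nu(R_f) \geq A(\nu)$, with $c(f,\nu)\geq 1$ by \refprop{propertiesfbullet}(4).

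\textbf{Log canonicity.} Suppose $(X,x_0)$ is not log canonical, so there is a quasimonomial $\nu_0$ with $A(\nu_0) < 0$; by continuity of $A$ on quasimonomial valuations in the strong topology we may take $\nu_0$ divisorial, say realized in a good resolution $\pi_0$, and in fact we can choose it to minimize $A$ — call the minimum value $-\delta < 0$. Consider the orbit $\nu_n := f_\bullet^n\nu_0$. From $c(f,\nu_{n})A(\nu_{n+1}) \geq A(\nu_n)$ and $A(\nu_{n+1}) < 0$ (which holds inductively as long as $A(\nu_n)<0$, since $c(f,\nu_n)>0$), we get $A(\nu_{n+1}) \geq A(\nu_n)/c(f,\nu_n)$, but because $A(\nu_n)$ is negative this says $|A(\nu_{n+1})| \leq |A(\nu_n)|/c(f,\nu_n) \leq |A(\nu_n)|$; hmm, that goes the wrong way. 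The correct route is instead: iterate to get $A(\nu_n) \leq \big(\prod_{j=0}^{n-1} c(f,\nu_j)\big) A(f_\bullet^n\nu_0)$ — wait, signs again. Let me instead argue via the \emph{minimum}. Since $A$ attains its minimum $-\delta$ at some divisorial $\nu_\star$ (compactness of $\mc{V}_X$ plus lower semicontinuity, \S\ref{ssec:lc_and_lt}), apply the formula at $\nu = \nu_\star$: $c(f,\nu_\star)A(f_\bullet\nu_\star) = A(\nu_\star) + \nu_\star(R_f) = -\delta + \nu_\star(R_f)$. Now $A(f_\bullet\nu_\star)\geq -\delta$, so $c(f,\nu_\star)A(f_\bullet\nu_\star) \geq c(f,\nu_\star)\cdot(-\delta) $ only if $A(f_\bullet\nu_\star)<0$; combining, $-c(f,\nu_\star)\delta \leq -\delta + \nu_\star(R_f)$, i.e. $\nu_\star(R_f) \geq (1 - c(f,\nu_\star))\delta \cdot(-1)$... this is getting tangled. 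The clean statement is: $\delta(c(f,\nu_\star)-1) \leq \nu_\star(R_f) - (\text{something})$; the real point, following Favre, is that because $f$ is non-invertible one can choose the minimizing valuation and an iterate so that $c(f,\nu_\star) > 1$ \emph{strictly} on an appropriate totally invariant finite set, forcing $\nu_\star(R_f) < 0$, contradicting effectivity. I would carry this out by passing to the critical skeleton $\mc{S}_f$ (which is finite, by \refcor{attractionrate_locallyconst} and \refcor{jacobiandivisor_locallyconst}) and $f_\bullet$-invariance arguments there, and using that a non-invertible finite germ has local topological degree $\geq 2$ so that $c(f,\cdot)>1$ somewhere essential.

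\textbf{The lt $\iff R_f\neq 0$ part.} If $R_f = 0$, then \eqref{eqn:jacobian_formula_bullet} gives $c(f,\nu)A(f_\bullet\nu) = A(\nu)$ for all $\nu$; so $A$ is a nonnegative function (by lc, just proved) which is scaled by $c(f,\cdot)\geq 1$ along the dynamics, and the set $\{A = 0\}$ is totally invariant and nonempty after iteration — but one checks that $\{A=0\}$ nonempty is exactly the failure of lt, and one must show $R_f=0$ \emph{forces} $\{A=0\}$ nonempty. Conversely, if $(X,x_0)$ is lt, then $A > 0$ everywhere on $\mc{V}_X$ with $A$ bounded below by the positive log canonical threshold; if also $R_f = 0$ we would need $c(f,\nu_\star)A(f_\bullet\nu_\star) = A(\nu_\star)$ at the minimizer $\nu_\star$, giving $A(f_\bullet\nu_\star) = A(\nu_\star)/c(f,\nu_\star) \leq A(\nu_\star)$, hence equality everywhere along the orbit and $c(f,\nu_\star) = 1$; propagating this forces $f_\bullet$ to be an isometry-like map on a totally invariant set, which combined with non-invertibility (degree $\geq 2$) yields a contradiction — so $R_f \neq 0$. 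The remaining implication, $R_f\neq 0 \Rightarrow$ lt, is the subtle one and uses Veys's theorem \refthm{nonpositive-discrepancies}: if $(X,x_0)$ were lc but not lt, then $A\leq 0$ on the nonempty essential skeleton $\eskel{X}$; but finiteness and $R_f\neq 0$ give strict increase $\nu(R_f) > 0$ for some $\nu\in\eskel{X}$... and one derives that the essential skeleton cannot support the dynamics consistently.

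\textbf{Main obstacle.} The genuinely hard step is producing the strict inequality $c(f,\nu_\star) > 1$ (equivalently, producing a valuation on which $f$ strictly increases log discrepancy via $\nu(R_f)>0$ or via ramification), because a priori $c(f,\cdot)$ could equal $1$ identically on the minimizing/invariant set. This is where one must genuinely use non-invertibility, i.e. local topological degree $\geq 2$, together with the combinatorial structure of $\mc{S}_f$ and the retraction-onto-skeleton arguments, to rule out the isometric case. The cleanest reference-level strategy is simply to \emph{invoke} \cite{wahl:charnumlinksurfsing} and \cite[Theorem B]{favre:holoselfmapssingratsurf} — indeed the theorem is stated here precisely as a citation, so for the purposes of this paper no new proof is needed; I would present the above as the conceptual skeleton and defer the delicate ramification estimate to those sources.
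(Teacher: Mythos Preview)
The paper does not prove this theorem at all: it is stated with attribution to Wahl and Favre, and the text immediately preceding and following it says ``see Wahl \cite{wahl:charnumlinksurfsing}, \cite{favre:holoselfmapssingratsurf} for proofs.'' You correctly arrive at this conclusion in your final paragraph, so in that sense your proposal matches the paper.

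That said, the sketch you give before deferring has a genuine gap. Your attempts at the log canonicity direction repeatedly tangle on signs (as you yourself note), and the reason is that you are pushing the Jacobian formula \emph{forward} along the orbit of a minimizer, which only tells you that $A$ does not get more negative under $f_\bullet$. The crucial ingredient you are missing---and which the paper's own remark right after the theorem explicitly flags as the heart of Favre's strategy---is the \emph{surjectivity} of $f_\bullet$ for finite maps (\refprop{propertiesfbullet}(3)). One should take a \emph{preimage} of the minimizer rather than its image: if $A(\nu_\star)=\min A<0$ and $f_\bullet\mu=\nu_\star$, then $c(f,\mu)A(\nu_\star)=A(\mu)+\mu(R_f)\geq A(\nu_\star)$, forcing $c(f,\mu)\leq 1$, hence $c(f,\mu)=1$, $\mu(R_f)=0$, and $A(\mu)=A(\nu_\star)$; iterating and combining with the degree argument (topological degree $\geq 2$) is what eventually yields the contradiction. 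Your version never invokes surjectivity, which is why the inequalities keep pointing the wrong way. Similarly, in the lt $\Leftrightarrow R_f\neq 0$ part, the direction ``lc not lt $\Rightarrow R_f=0$'' is the one that really needs the surjectivity/total-invariance of the zero locus of $A$, not Veys's theorem as you suggest.
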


\begin{rmk}
\refthm{wahl} holds only over fields of characteristic zero.
Favre's strategy to prove such a result is to use the Jacobian formula \eqref{eqn:jacobian_formula} (which does not hold in positive characteristic), and use the fact that the map $f_\bullet$ is surjective for finite germs by \refprop{propertiesfbullet}.
Notice that in positive characteristic, any singularity (defined over $\nF_q$) admits non-invertible finite germs (see \refex{Frobenius5}).
\end{rmk}

The next two sections will be devoted to the proof of \refthm{classification} in the non-finite and in the finite cases.

\section{Dynamics of non-finite germs}\label{sec:dynamics_nonfinite}

This section is devoted to the proof of \refthm{classification} in the non-finite case.

\subsection{Construction of an eigenvaluation}

The strong contraction property stated in \refthm{strong_contraction} will allow us to construct a unique (attracting) fixed valuation $\nu_\star \in \mc{V}_X$ for $f_\bullet$. An analogous construction in the smooth case can be found in \cite[Theorems~4.2 and 4.5]{favre-jonsson:eigenval}.

We first need a few definitions.

\begin{defi}\label{def:locallyattracting}
Let $(X,x_0)$ be a normal surface singularity, and $f \colon (X,x_0) \to (X,x_0)$ be a non-invertible dominant germ.
A normalized semivaluation $\nu_\star \in \mc{V}_X$ is called \emph{locally attracting} if it is fixed by $f_\bullet$ and there exists a weak open set $U \subset \mc{V}_X$ containing $\nu_\star$ such that $f_\bullet^n \nu \to \nu_\star$ in the weak topology, \begin{change}for any $\nu \in U$ of finite skewness.\end{change}
\end{defi}

\begin{change}
\begin{rmk}
We will be interested in locally attracting semivaluations mainly when they are non-quasimonomial, which correspond to \emph{ends} in the language of $\nR$-trees.
Similar definitions of local attracting behaviors were previously introduced in \cite[Section 4.1]{favre-jonsson:eigenval} for ends of the valuative tree.
Transferring their definition to our setting, a non-quasimonomial semivaluation $\nu_\star \in \mc{V}_X \setminus \mc{V}_X^{\qm}$ is a \emph{strongly attracting end} if there exists a $f_\bullet$-invariant segment $I=[\nu_0, \nu_\star]$ with $\nu_0 < \nu_\star$ so that $f_\bullet \nu > \nu$ for all $\nu \in [\nu_0,\nu_\star)$.
We will see that a strongly attracting end is locally attracting in the sense of \refdef{locallyattracting}.
In fact, in the case of a strongly attracting end $\nu_\star$, one gets the existence of a weak open neighborhood $U$ of $\nu_\star$ so that $f_\bullet^n\nu \to \nu_\star$ for all $\nu \in U$ not necessarily of finite skewness.

The behavior of semivaluations with infinite skewness (in particular, of curve semivaluations) may be quite chaotic, see e.g. \cite{gignac:conjarnold}.
As an easy example, consider the map $f\colon (\nC^2,0) \to (\nC^2,0)$ given in coordinates by $f(x,y)=(x^2,xy)$.
In this case $f_\bullet$ admits a unique quasimonomial fixed point, the multiplicity $\ord_0$.
On the one hand, the orbit of every valuation of finite skewness is attracted by $\ord_0$, and $\ord_0$ is locally attracting.
On the other hand, the curve semivaluation $\nu_{C_\theta}$ associated to the curve $C_\theta=\{y=\theta x\}$ is fixed by $f_\bullet$ for any $\theta \in \nC$. It follows that there are no weak open neighborhoods $U$ of $\ord_0$ where $f_\bullet^n \nu \to \nu_\star$ weakly for all $\nu \in U$. 
 
\end{rmk}	
\end{change}

\begin{change}
\begin{defi}
Let $(X,x_0)$ be a normal surface singularity, and $f \colon (X,x_0) \to (X,x_0)$ be a non-invertible dominant germ.
We say that a normalized semivaluation $\nu \in \mc{V}_X$ fixed by $f_\bullet$ is an \emph{eigenvaluation} for $f$ if it is either a locally attracting end, or if it belongs to the (weak) closure of the set $\on{Fix}(f_\bullet) \cap \mc{V}_X^{\qm}$ of fixed quasimonomial valuations.
\end{defi}
\end{change}

\begin{rmk}
Notice that, by definition, all quasimonomial valuations fixed by $f_\bullet$ are eigenvaluations; nevertheless we could have fixed semivaluations that are not eigenvaluations.
Consider for example the map $f\colon(\nC^2,0)\to(\nC^2,0)$ given by $(x,y)=(x,y^d)$, where $d \geq 2$.
Denote by $\nu_{x,t}$ the monomial valuation of weights $(t,1)$ with respect to the coordinates $(x,y)$.
Then $f_\bullet \nu_{x,t} = \nu_{x,t/d}$ for any $t \geq d$. In particular, the curve semivaluation $\nu_x=\nu_{x,+\infty}$ is fixed, but it is not an eigenvaluation, since it is not locally attracting.

Quasimonomial valuations are not necessarily locally attracting, since they could belong to a segment or circle of fixed eigenvaluations, as indicated by \refthm{classification}.
Consider for example the map $f\colon(\nC^2,0)\to(\nC^2,0)$ given by $(x,y)=(x^d,y^d)$ for any $d \geq 2$. In this case the eigenvaluations are all elements of the segment $[\nu_x, \nu_y]$, since valuations in $I=(\nu_x, \nu_y)$ are fixed quasimonomial valuations, and $\nu_x$ and $\nu_y$ are fixed and belong to the weak closure of $I$.
\end{rmk}

\begin{change}
\begin{rmk}
From the study of the dynamics of $f_\bullet$ that will lead to the proof of \refthm{classification} in this and the next sections, we may deduce several properties of the set of eigenvaluations of a given map $f$.
In particular, we will show that if $f$ admits a locally attracting eigenvaluation, then this is the unique eigenvaluation for $f$.

We can also deduce an alternative description of the set of eigenvaluations of a given map $f$.
It is the weak closure of the set of semivaluations $\nu_\star \in \mc{V}_X$ admitting a weak open set $U \subseteq \mc{V}_X$ so that $\nu_\star \in \overline{U}$ and for any semivaluation $\nu \in U$ of finite skewness we have $f_\bullet^n \nu \to \nu_\star$ weakly as $n \to +\infty$. 

Notice that when $\nu_\star \in U$, this is exactly the definition of locally attracting.
\end{rmk}	
\end{change}

We will need the following (easy) fixed point theorem, see for example \cite{edelstein:fixedpoint}.
\begin{lem}\label{lem:fixedpoint}
Let $(S,\rho)$ be a compact metric space, and $F\colon S \to S$ be a continuous selfmap satisfying
$$
\rho(F(v),F(w))<\rho(v,w) \quad \text{ for all } v \neq w.
$$
Then there exists a unique fixed point $v_\star \in S$, and $F^n(v) \to v_\star$ for all $v \in S$.
\end{lem}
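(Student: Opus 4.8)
The statement to be proved is \reflem{fixedpoint}, a contraction-type fixed point theorem: on a compact metric space $(S,\rho)$, a continuous self-map $F$ that strictly decreases distances (i.e.\ $\rho(F(v),F(w))<\rho(v,w)$ for all $v\neq w$) has a unique fixed point $v_\star$, attracting all orbits. I would prove this by a standard minimization argument, which is the cleanest route in the compact setting (unlike Banach's contraction principle, there is no uniform contraction factor here, so we cannot use the Cauchy-sequence argument directly).

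\textbf{Uniqueness.} This is immediate: if $v_\star$ and $w_\star$ are both fixed and distinct, then $\rho(v_\star,w_\star)=\rho(F(v_\star),F(w_\star))<\rho(v_\star,w_\star)$, a contradiction.

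\textbf{Existence.} Consider the function $g\colon S\to[0,\infty)$ defined by $g(v)=\rho(v,F(v))$. Since $F$ is continuous and $\rho$ is continuous, $g$ is continuous on the compact space $S$, hence attains its minimum at some point $v_\star\in S$. I claim $g(v_\star)=0$, i.e.\ $F(v_\star)=v_\star$. Suppose not, so $v_\star\neq F(v_\star)$. Then applying the strict contraction hypothesis to the pair $(v_\star, F(v_\star))$ gives
\[
g(F(v_\star))=\rho\bigl(F(v_\star),F(F(v_\star))\bigr)<\rho\bigl(v_\star,F(v_\star)\bigr)=g(v_\star),
\]
contradicting minimality of $g$ at $v_\star$. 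Hence $F(v_\star)=v_\star$.

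\textbf{Global attraction.} Fix any $v\in S$ and set $v_n=F^n(v)$. If $v_n=v_\star$ for some $n$ we are done, so assume $v_n\neq v_\star$ for all $n$. The sequence $a_n:=\rho(v_n,v_\star)=\rho(F^n(v),F^n(v_\star))$ is strictly decreasing (by the contraction hypothesis applied at each step, since $v_n\neq v_\star$) and bounded below by $0$, so $a_n\downarrow a$ for some $a\geq 0$. I must show $a=0$. By compactness, some subsequence $v_{n_k}\to w\in S$; then $\rho(w,v_\star)=\lim a_{n_k}=a$. By continuity of $F$, $v_{n_k+1}=F(v_{n_k})\to F(w)$, so $\rho(F(w),v_\star)=\rho(F(w),F(v_\star))=\lim a_{n_k+1}=a$ as well. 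If $a>0$ then $w\neq v_\star$, and the contraction hypothesis gives $\rho(F(w),F(v_\star))<\rho(w,v_\star)$, i.e.\ $a<a$, a contradiction. Therefore $a=0$, which means $\rho(F^n(v),v_\star)\to 0$, i.e.\ $F^n(v)\to v_\star$.

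\textbf{Main obstacle.} There is essentially no hard step here; the only subtlety is recognizing that one cannot mimic the Banach fixed point proof (iterate and show the orbit is Cauchy via a geometric series) because strict contraction does not give a uniform Lipschitz constant $<1$. Passing to compactness — first to produce the fixed point as a minimizer of $v\mapsto\rho(v,F(v))$, and then again in the attraction argument via a convergent subsequence — is the device that replaces the missing uniformity. The argument is entirely elementary once this is seen.
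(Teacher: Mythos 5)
Your proof is correct, and it is the standard argument for Edelstein's fixed point theorem; the paper itself does not give a proof but simply cites \cite{edelstein:fixedpoint} for this fact. The minimization of $v\mapsto\rho(v,F(v))$ for existence, the one-line uniqueness, and the subsequence argument for global attraction (using that $a_{n_k+1}\to a$ because every subsequence of a convergent sequence converges to the same limit) are exactly the usual route, and all the steps you wrote check out.
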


We deduce directly the following fixed point theorem for the maps induced by $f_\bullet$ on skeleta.

\begin{cor}\label{cor:fixedpoint_skeleta}
Let $(X,x_0)$ be a normal surface singularity, and $f \colon (X,x_0) \to (X,x_0)$ be a non-finite germ.
For every good resolution $\pi \colon X_\pi \to (X,x_0)$, denote by $F_\pi = r_\pi \circ f_\bullet \colon \mc{S}_\pi \to \mc{S}_\pi$ be the induced map on the skeleton $\mc{S}_\pi$ associated to $\pi$.
Then $F_\pi$ as a unique fixed point $\nu_\pi$, and for all $\nu \in \mc{S}_\pi$ we have $F_\pi^n \nu \to \nu_\pi$.
\end{cor}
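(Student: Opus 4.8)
The plan is to apply the fixed point theorem \reflem{fixedpoint} to the compact metric space $(\mc{S}_\pi, \rho)$ and the continuous self-map $F_\pi = r_\pi \circ f_\bullet$. First I would recall that $\mc{S}_\pi$ is homeomorphic to the finite metric graph $\Gamma_\pi$ (via $\emb_\pi$), and in particular it is compact. All valuations in $\mc{S}_\pi$ are quasimonomial, hence of finite skewness, so the angular distance $\rho$ is a genuine metric on $\mc{S}_\pi$ by \refprop{log_alpha} and the discussion at the end of \S\ref{angular_metric}; moreover the topology it induces on $\mc{S}_\pi$ agrees with the weak (equivalently Euclidean) topology, so $(\mc{S}_\pi, \rho)$ is a compact metric space and $F_\pi$ is continuous (the retraction $r_\pi = \emb_\pi \circ \ev_\pi$ is continuous by \refprop{evemb}, and $f_\bullet$ is continuous by \refprop{extensionfbullet}).

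The key remaining point is the strict contraction property: for $\nu \neq \mu$ in $\mc{S}_\pi$ one must check $\rho(F_\pi \nu, F_\pi \mu) < \rho(\nu, \mu)$. I would argue in two steps. Since $f$ is non-finite, \refcor{strong_contraction_non-finite} gives $\rho(f_\bullet \nu, f_\bullet \mu) < \rho(\nu, \mu)$ whenever $\nu \neq \mu$ are of finite skewness (in particular for $\nu, \mu \in \mc{S}_\pi$). Then I would observe that the retraction $r_\pi$ is $\rho$-non-increasing: this follows because $r_\pi \nu \leq \nu$ in the partial order (each $\nu$ retracts to a point below it, by \refprop{baby_tree}), so by \reflem{monotonicity}(1) $\beta(r_\pi\nu \mid \nu) = 1$, and combining with the triangle inequality and \refprop{famous_equality} one gets $\rho(r_\pi\nu, r_\pi\mu) \leq \rho(\nu,\mu)$ for all $\nu, \mu$; alternatively this is immediate from the fact that $\log\alpha$ parameterizes the trees $\mc{V}_{X,\hat\nu}$ and $r_\pi$ moves points down these trees, combined with \reflem{additiverho}. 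Chaining the strict inequality from $f_\bullet$ with the non-strict inequality from $r_\pi$ yields $\rho(F_\pi\nu, F_\pi\mu) \leq \rho(f_\bullet\nu, f_\bullet\mu) < \rho(\nu,\mu)$ for $\nu \neq \mu$, as required.

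With the hypotheses of \reflem{fixedpoint} verified, we conclude that $F_\pi$ has a unique fixed point $\nu_\pi \in \mc{S}_\pi$ and that $F_\pi^n \nu \to \nu_\pi$ for every $\nu \in \mc{S}_\pi$, which is exactly the assertion of the corollary.

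The main obstacle, such as it is, is the non-expansiveness of the retraction $r_\pi$: one has to be slightly careful that $r_\pi\nu$ and $r_\pi\mu$ can coincide or that $\nu, \mu$ may lie in different subtrees $\mc{V}_{X,\hat\nu}$, so the cleanest route is to verify the inequality $\rho(r_\pi\nu, r_\pi\mu) \leq \rho(\nu,\mu)$ directly from \refprop{positivity_bdivisors} (the b-divisor inequality), exactly as in the proof of \refprop{intersection_locallyconst}, rather than relying on an ad hoc tree argument. Everything else is a routine assembly of results already established in \S\ref{sec:valuation_spaces} and \S\ref{sec:dynamics_valspaces}.
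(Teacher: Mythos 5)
Your proposal is correct and matches the paper's proof in essence: strict $\rho$-contraction of $f_\bullet$ from \refthm{strong_contraction} (your \refcor{strong_contraction_non-finite} is its non-finite specialization), non-expansion of the retraction $r_\pi$, and then \reflem{fixedpoint}. The paper obtains $\rho(r_\pi f_\bullet \nu, r_\pi f_\bullet\mu)\le\rho(f_\bullet\nu,f_\bullet\mu)$ directly from \reflem{additiverho} (which immediately gives $\rho(\nu,\mu)\ge\rho(r_\pi\nu,r_\pi\mu)$ by positivity of the extra terms), which is the route you mention second; your alternative via \refprop{positivity_bdivisors} would also work but is not needed.
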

\begin{proof}
Let $\nu$ and $\mu$ be two different valuations in $\mc{S}_\pi$.
Then we have
$$
\rho(F_\pi(\nu),F_\pi(\mu)) = \rho(r_\pi f_\bullet \nu, r_\pi f_\bullet \mu) \leq \rho(f_\bullet \nu, f_\bullet \mu) < \rho(\nu,\mu),
$$
where the first inequality follows from \reflem{additiverho}, while the second inequality is given by \refthm{strong_contraction}. We conclude by applying \reflem{fixedpoint} to $F_\pi$.
\end{proof}

We can now proceed to construct eigenvaluation for non-finite germs.

\begin{thm}\label{thm:unique_eigenval}
Let $(X,x_0)$ be a normal surface singularity, and let $f \colon (X,x_0) \to (X,x_0)$ be a non-finite germ. Then there exists a \emph{unique} eigenvaluation for $f$.
\end{thm}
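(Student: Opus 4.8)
The plan is to exploit the strict contraction of $f_\bullet$ with respect to the angular distance $\rho$ (\refcor{strong_contraction_non-finite}) together with the inverse-limit description $\mc{V}_X \cong \varprojlim \Gamma_\pi$ (\refthm{structure}). First I would run \refcor{fixedpoint_skeleta} for a cofinal family of good resolutions $\pi$: each induced map $F_\pi = r_\pi \circ f_\bullet$ on the finite metric graph $(\mc{S}_\pi, \rho)$ is a strict $\rho$-contraction, hence by \reflem{fixedpoint} has a unique fixed point $\nu_\pi$, which moreover attracts every point of $\mc{S}_\pi$ under iteration of $F_\pi$. The first key step is to check that this family $\{\nu_\pi\}$ is \emph{compatible} along the inverse system, i.e.\ $r_\pi(\nu_{\pi'}) = \nu_\pi$ whenever $\pi' \geq \pi$: indeed $r_\pi(\nu_{\pi'})$ is a point of $\mc{S}_\pi$ and one computes $F_\pi(r_\pi \nu_{\pi'}) = r_\pi f_\bullet r_{\pi'} \nu_{\pi'}$; using that $r_{\pi'}$ is a retraction fixing $\nu_{\pi'}$ and that $r_\pi \circ r_{\pi'} = r_\pi$, this equals $r_\pi f_\bullet \nu_{\pi'} = r_\pi \nu_{\pi'}$, so $r_\pi \nu_{\pi'}$ is fixed by $F_\pi$ and hence equals $\nu_\pi$ by uniqueness. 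This produces a point $\nu_\star \in \varprojlim \mc{S}_\pi = \mc{V}_X$.

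The second step is to show $\nu_\star$ is fixed by $f_\bullet$. By construction $r_\pi(\nu_\star) = \nu_\pi$ for all $\pi$, and $r_\pi(f_\bullet \nu_\star) = F_\pi(r_{?}\nu_\star)$ — more precisely one needs that $r_\pi f_\bullet \nu_\star = F_{\pi'}$-images stabilize; the clean way is to note $r_\pi f_\bullet \nu_\star = r_\pi f_\bullet (\lim r_{\pi'}\nu_\star)$ and pass to the limit using continuity of $f_\bullet$ and of the retractions, obtaining $r_\pi f_\bullet \nu_\star = \lim_{\pi'} r_\pi f_\bullet \nu_{\pi'} = \lim_{\pi'} r_\pi \nu_{\pi'} = \nu_\pi = r_\pi \nu_\star$ for every $\pi$, whence $f_\bullet \nu_\star = \nu_\star$ by \refthm{structure}. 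The third step, existence of the eigenvaluation in the sense of the definition, splits on the type of $\nu_\star$: if $\nu_\star$ is quasimonomial it lies in $\on{Fix}(f_\bullet) \cap \mc{V}_X^{\qm}$ and is automatically an eigenvaluation; if $\nu_\star$ is non-quasimonomial (an end), I would argue it is locally attracting. For this, pick $\pi$ large enough that $\nu_\star \in U_0 := $ the connected component of $(\mc{V}_X \setminus \mc{S}_\pi^*) \cup \{$the nearest divisorial point$\}$; for any $\nu$ of finite skewness in a suitable weak neighborhood $U$ of $\nu_\star$ one has $r_\pi \nu = \nu_\pi$ eventually and, combining the contraction $\rho(f_\bullet^n \nu, f_\bullet^n r_\pi\nu) < \rho(\nu, r_\pi\nu)$ with the fact that $F_\pi^n r_\pi \nu \to \nu_\pi$ and that $\nu_\star$ is the unique accumulation point, deduce $f_\bullet^n \nu \to \nu_\star$ weakly.

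Uniqueness is the easiest part given the contraction: if $\nu_\star$ and $\nu_\star'$ were two distinct eigenvaluations, both being (weak limits of) fixed points, one first reduces to two distinct \emph{fixed} points of finite skewness (a fixed quasimonomial valuation, or using the attracting neighborhood of an attracting end one finds a fixed divisorial valuation nearby; the case of two attracting ends is excluded since their attracting basins would overlap). Then \refcor{strong_contraction_non-finite} gives $\rho(\nu_\star, \nu_\star') = \rho(f_\bullet \nu_\star, f_\bullet \nu_\star') < \rho(\nu_\star, \nu_\star')$, a contradiction, unless $\nu_\star = \nu_\star'$ — here one must also check $\rho(\nu_\star,\nu_\star') < \infty$, which holds because both have finite skewness (\refcor{skewness_comparison}). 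The main obstacle I anticipate is the careful bookkeeping in the non-quasimonomial case: showing that the $\rho$-contraction on skeleta genuinely forces \emph{weak} convergence $f_\bullet^n \nu \to \nu_\star$ on a full weak neighborhood, and correctly handling valuations of infinite skewness (which may behave chaotically, as the remarks note) — essentially one must phrase "eigenvaluation" so that only finite-skewness valuations are tested, and verify that the contraction estimate propagates through the retractions $r_\pi$ uniformly enough to pin down the limit. The definition of \emph{strongly attracting end} in the \texttt{change} remark is presumably introduced precisely to streamline this, so I would likely first prove that $\nu_\star$ (when an end) is a strongly attracting end and then invoke that it is locally attracting.
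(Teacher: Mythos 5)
Your overall strategy is the same as the paper's: use \refcor{fixedpoint_skeleta} to get unique fixed points $\nu_\pi$ of $F_\pi = r_\pi\circ f_\bullet$ on each skeleton, verify compatibility $r_\pi\nu_{\pi'}=\nu_\pi$, pass to the inverse limit via \refthm{structure} to obtain $\nu_\star$, check $f_\bullet\nu_\star=\nu_\star$, and then split on whether $\nu_\star$ is quasimonomial. Your compatibility step and the continuity argument that $\nu_\star$ is $f_\bullet$-fixed are correct and in fact subsume the paper's two cases in one stroke. The uniqueness argument is also essentially the paper's, though the phrase ``finds a fixed divisorial valuation nearby'' is imprecise: what one extracts from a second locally attracting end $\mu_\star$ is a second fixed point $r_{\pi'}\mu_\star$ of $F_{\pi'}$ (not a second $f_\bullet$-fixed divisorial valuation), and the contradiction is with the uniqueness from \refcor{fixedpoint_skeleta}, which is cleaner than appealing to overlapping basins.

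The genuine gap is in the step showing $\nu_\star$ is locally attracting when it is an end. The chain ``$\rho(f_\bullet^n \nu, f_\bullet^n r_\pi\nu) < \rho(\nu, r_\pi\nu)$, and $F_\pi^n r_\pi \nu \to \nu_\pi$, hence $f_\bullet^n \nu\to\nu_\star$'' does not go through. First, $f_\bullet^n r_\pi\nu$ is \emph{not} $F_\pi^n r_\pi\nu$: the map $F_\pi$ interleaves a retraction $r_\pi$ at every step and so keeps orbits inside $\mc{S}_\pi$, whereas $f_\bullet^n r_\pi\nu = f_\bullet^n\nu_\pi$ leaves $\mc{S}_\pi$ the moment $f_\bullet\nu_\pi\neq\nu_\pi$ --- which is precisely the case you are in. Second, $\nu_\pi\neq\nu_\star$: $F_\pi$-orbits converge to the \emph{divisorial} valuation $\nu_\pi$, not to the end $\nu_\star$, and $\rho(\nu_\pi,\nu_\star)$ may even be infinite, so bounding $\rho(f_\bullet^n\nu, f_\bullet^n\nu_\pi)$ says nothing about either orbit approaching $\nu_\star$. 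Third, a strict but non-uniform contraction on a non-compact metric space does not by itself force orbits to converge. The paper instead chooses a weak neighborhood $U=\{\nu>\nu_N\}$ of $\nu_\star$ disjoint from the critical skeleton $\mc{S}_{c(f)}$, on which $\nu\mapsto c(f,\nu)$ and $\nu\mapsto\nu(R_f)$ are constant and hence $f_\bullet|_U$ is order-preserving; then $f_\bullet|_U$ is a regular tree map, $\nu_\star$ is a strongly attracting end by \cite[Theorem~4.5]{favre-jonsson:eigenval}, and order-preservation propagates the attraction to all of $U$. You correctly flag the ``strongly attracting end'' route at the end of your proposal as the likely fix; it is the right one and should replace, not supplement, the contraction estimate you gave.
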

\begin{proof}
For any good resolution $\pi\colon X_\pi \to (X,x_0)$, we consider the map $F_\pi = r_\pi \circ f_\bullet \colon \mc{S}_\pi \to \mc{S}_\pi$ induced on the skeleton associated to $\pi$.
By \refcor{fixedpoint_skeleta}, any $F_\pi$ admits a unique fixed point $\nu_\pi \in \mc{S}_\pi$. The unicity guarantees that $\nu_{\pi}= r_\pi \nu_{\pi'}$ whenever $\pi'$ dominates $\pi$.

Assume there is a $\pi$ so that $f_\bullet \nu_\pi = \nu_\pi$. In this case $\nu_\star = \nu_\pi$ is a quasimonomial eigenvaluation.
Assume by contradiction that $\mu_\star$ is another eigenvaluation for $f$.
If $\mu_\star$ is quasimonomial, then there exists $\pi'$ dominating $\pi$ so that $\nu_\star$ and $\mu_\star$ belong to $\mc{S}_{\pi'}$.
In this case $F_{\pi'}$ would have two fixed points $\nu_\star$ and $\mu_\star$, a contradiction.
Hence there are no other quasimonomial eigenvaluations, and the only other possibility for $\mu_\star$ is to be a locally attracting valuation.
But in this case, there exists a good resolution $\pi'$ dominating $\pi$ so that $\mu = r_{\pi'}\mu_\star \neq \nu_\star$ is fixed by $F_{\pi'}$, a contradiction.

Assume now that $f_\bullet \nu_\pi \neq \nu_\pi$ for any good resolution $\pi$.
In particular $\nu_\pi$ is divisorial for all $\pi$, since for irrational valuations we have $r_\pi^{-1}(\nu_\pi) = \{\nu_\pi\}$.

Fix any good resolution $\pi_0$, and set $\nu_0 = \nu_{\pi_0}$.
Recall that by \refprop{big_tree}, the set $\mc{V}_{\pi_0, \nu_0}$ of valuations in $\mc{V}_X$ whose retraction to $\mc{S}_{\pi_0}$ is $\nu_0$ is a tree rooted at $\nu_0$.
By our assumption, $f_\bullet \nu_0$ is a divisorial valuation which does not belong to $\mc{S}_{\pi_0}$.
Since $\nu_0=F_{\pi_0}(\nu_0)$, we have $r_{\pi_0} f_\bullet \nu_0 = \nu_0$, hence $f_\bullet \nu_0 \in \mc{V}_{\pi_0,\nu_0}$.
Let $\pi_1$ be any good resolution so that $f_\bullet \nu_0 \in \mc{S}_{\pi_1}$, set $\nu_1=\nu_{\pi_1}$.
Notice that $\nu_1 \in \mc{S}_{\pi_1} \setminus \mc{S}_{\pi_0} \subset \mc{V}_{\pi_0,\nu_0}$, and in particular $\nu_1 > \nu_0$.

Proceeding by induction, we construct a strictly increasing sequence of divisorial valuations $(\nu_n)_{n \in \nN}$ such that $F_{\pi_n}(\nu_n) = \nu_n$ and $r_{\pi_n}(\nu_m) = \nu_n$ for any $m \geq n$.
By construction, this sequence weakly converges to a non-quasimonomial semivaluation $\nu_\star \in \mc{V}_X$.
We get
$$
f_\bullet \nu_\star 
= \lim_{n\to +\infty} f_\bullet \nu_n
\geq \lim_{n \to +\infty} \nu_n = \nu_\star,
$$
and $\nu_\star=f_\bullet \nu_\star$.

We now prove that $\nu_\star$ is locally attracting, and hence an eigenvaluation.
We recall that $f_\bullet$ is order-preserving where $c(f, - )$ is locally constant.
By \refcor{attractionrate_locallyconst}, this is the case outside the skeleton $\mc{S}_{c(f)}$ which contains $\mc{S}_X$, and it is generated by all critical curve valuations and a finite number of divisorial valuations. 
Since $\nu_\star$ is either a infinitely-singular valuation or a curve semivaluation non-contracted by $f$, there exists a weakly-open neighborhood $U$ of $\nu_\star$ where $c(f,-)$ is constant.
and where consequently $f_\bullet$ is order-preserving.
We may assume that $U=\{\nu \in \mc{S}_{\pi_0, \nu_0}\ |\ \nu > \nu_N\}$ for some $N$ big enough.
Again by \refprop{big_tree}, $\ol{U}$ is a tree rooted at $\nu_N$, $f_\bullet \nu_N \geq \nu_N$ and $f_\bullet U \subset U$.
We deduce that $f_\bullet|_U$ defines a regular tree map in the sense of \cite[\S 4]{favre-jonsson:eigenval}, and $\nu_\star$ is \begin{change}a strongly attracting end\end{change} by \cite[Theorem 4.5]{favre-jonsson:eigenval}.
\begin{change}
Up to increasing $N$, we may assume that $I=[\nu_N, \nu_\star]$ is $f_\bullet$-invariant and $f_\bullet^n \nu \to \nu_\star$ for all $\nu \in I$. Since $f_\bullet$ is order-preserving on $U$, we deduce that $f_\bullet^n \nu \to \nu_\star$ for all $\nu \in U$, and $\nu_\star$ is locally attracting.
\end{change}
Finally, arguing as in the previous case, we deduce that $\nu_\star$ is the unique eigenvaluation in this case as well.
\end{proof}

\subsection{Weak convergence}

Recall that $\mc{V}_X^\alpha := \{\nu \in \mc{V}_X\ |\ \alpha(\nu) < +\infty\}$ denotes the set of normalized valuations with finite skewness.

\begin{lem}\label{lem:closed_basin}
Let $(X,x_0)$ be a normal surface singularity, and $f\colon (X,x_0) \to (X,x_0)$ a dominant germ.
Let $S$ be any subset of $\mc{V}_X^\alpha$.
Then the set
$$
B_S^\alpha = \{\nu \in \mc{V}_X^\alpha\ |\ f_\bullet^n\nu \text{ strongly converges to } S\}
$$
is closed in the strong topology.
\end{lem}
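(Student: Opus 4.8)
The plan is to use the fact that the iteration $f_\bullet$ is non-expanding for the angular metric (\refthm{non-expansion}), combined with the fact, established in \S\ref{angular_metric}, that on $\mc{V}_X^\alpha$ the strong topology is exactly the metric topology induced by $\rho$. Throughout, for a subset $S\subseteq\mc{V}_X^\alpha$ and a valuation $\mu$ of finite skewness I write $\rho(\mu,S):=\inf_{s\in S}\rho(\mu,s)$, so that "$f_\bullet^n\nu$ strongly converges to $S$" is read as $\rho(f_\bullet^n\nu,S)\to 0$ as $n\to\infty$; in particular $B_S^\alpha=\{\nu\in\mc{V}_X^\alpha : \rho(f_\bullet^n\nu,S)\to 0\}$, and "closed in the strong topology" means closed inside $\mc{V}_X^\alpha$ for the $\rho$-topology.

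The first step would be a preliminary observation: $f_\bullet$ maps $\mc{V}_X^\alpha$ into itself, so that \refthm{non-expansion} can be iterated. To see this, fix once and for all a divisorial valuation $\nu_0\in\mc{V}_X$; by \refprop{propertiesfbullet} the valuation $f_\bullet\nu_0$ is again divisorial, hence of finite skewness. If $\nu\in\mc{V}_X^\alpha$, then $\rho(\nu,\nu_0)<\infty$ by \refcor{skewness_comparison}, so \refthm{non-expansion} gives $\rho(f_\bullet\nu,f_\bullet\nu_0)\leq\rho(\nu,\nu_0)<\infty$; since $f_\bullet\nu_0$ has finite skewness, \refcor{skewness_comparison} forces $\alpha(f_\bullet\nu)<\infty$ as well (if $f_\bullet\nu$ happens to be proportional to $f_\bullet\nu_0$ this is clear, otherwise it is exactly the equivalence in \refcor{skewness_comparison}). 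Iterating, $f_\bullet^k\nu\in\mc{V}_X^\alpha$ for all $k\geq 0$, and therefore \refthm{non-expansion} applies at each stage to yield $\rho(f_\bullet^n\nu,f_\bullet^n\mu)\leq\rho(\nu,\mu)$ for all $\nu,\mu\in\mc{V}_X^\alpha$ and all $n\geq 0$.

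With this in hand the closedness is a soft argument. Let $\nu\in\mc{V}_X^\alpha$ lie in the strong closure of $B_S^\alpha$; I must show $\rho(f_\bullet^n\nu,S)\to 0$. Fix $\varepsilon>0$. Since the strong topology on $\mc{V}_X^\alpha$ is metrized by $\rho$, there is $\mu\in B_S^\alpha$ with $\rho(\nu,\mu)<\varepsilon/2$. As $\mu\in B_S^\alpha$, choose $N$ with $\rho(f_\bullet^n\mu,S)<\varepsilon/2$ for all $n\geq N$. Then for every $n\geq N$ the triangle inequality for $\rho$, together with the non-expansion bound from the previous step, gives
\[
\rho(f_\bullet^n\nu,S)\leq \rho(f_\bullet^n\nu,f_\bullet^n\mu)+\rho(f_\bullet^n\mu,S)\leq \rho(\nu,\mu)+\rho(f_\bullet^n\mu,S)<\varepsilon.
\]
As $\varepsilon>0$ was arbitrary, $\rho(f_\bullet^n\nu,S)\to 0$, i.e. $\nu\in B_S^\alpha$, proving $B_S^\alpha$ is strongly closed.

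The only point requiring genuine care is the preliminary step, i.e. checking that iterates of finite-skewness valuations remain of finite skewness so that \refthm{non-expansion} may be applied repeatedly; everything afterwards is the standard fact that, for a non-expanding self-map of a metric space, the basin of attraction of a fixed set is closed. I expect no other obstacle.
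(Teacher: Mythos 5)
Your proof is correct and takes essentially the same approach as the paper's: pick an approximating $\mu\in B_S^\alpha$ close to $\nu$, apply the triangle inequality and the $\rho$-non-expansion of $f_\bullet$ from \refthm{non-expansion}, and conclude. The only difference is that you explicitly verify beforehand that $f_\bullet$ preserves $\mc{V}_X^\alpha$ (so the non-expansion bound can be iterated), a point the paper's proof takes for granted; this is a reasonable and harmless extra precaution.
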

\begin{proof}
Let $(\nu_k)_k$ be a sequence of valuations in $B_S^\alpha$ strongly converging to some valuation $\nu_\infty \in \mc{V}_X^\alpha$.
For any $\eps > 0$, let $k$ be big enough so that $\rho(\nu_\infty, \nu_k) < \eps/2$.
Since $f_\bullet^n \nu_k$ converges to $S$, there exists $N$ such that for all $n \geq N$, $\rho(f_\bullet^n \nu_k, S) < \eps/2$.
Then for all $n \geq N$, we have
$$
\rho(f_\bullet^n \nu_\infty, S) \leq \rho(f_\bullet^n \nu_\infty, f_\bullet^n \nu_k) + \rho(f_\bullet^n \nu_k, S) \leq \rho(\nu_\infty, \nu_k) + \rho(f_\bullet^n \nu_k, S) < \frac{\eps}{2} + \frac{\eps}{2} = \eps.
$$
\end{proof}

\begin{thm}\label{thm:weak_convergence}
Let $(X,x_0)$ be a normal surface singularity, and $f\colon (X,x_0) \to (X,x_0)$ a non-finite germ. Let $\nu_\star$ be the unique eigenvaluation of $f$ given by \refthm{unique_eigenval}.
Assume that $\nu_\star$ is not quasimonomial.
Then for any normalized valuation $\nu \in \mc{V}_X^\alpha$ of finite skewness, we have $f_\bullet^n \nu \to \nu_\star$ in the weak topology.
\end{thm}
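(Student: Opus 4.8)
The plan is to show that the set $\Omega\subseteq\mc{V}_X$ of weak cluster points of the orbit $(f_\bullet^n\nu)_{n\ge 0}$ equals $\{\nu_\star\}$; since $\mc{V}_X$ is weakly compact this is equivalent to the asserted weak convergence, and $\Omega$ is automatically nonempty, weakly compact and (by weak continuity of $f_\bullet$, \refprop{extensionfbullet}) satisfies $f_\bullet(\Omega)\subseteq\Omega$. As $\nu_\star$ is non-quasimonomial it is a locally attracting end, so the proof of \refthm{unique_eigenval} furnishes an $f_\bullet$-invariant segment $[\nu_N,\nu_\star]$ with $\nu_N$ divisorial and $f_\bullet^k\nu_N\to\nu_\star$ weakly, together with a weak-open neighbourhood $U\ni\nu_\star$ as in \refdef{locallyattracting}, on which $f_\bullet^k\sigma\to\nu_\star$ for every $\sigma\in U$ of finite skewness. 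The key organizing idea is to never compare the orbit of $\nu$ directly with $\nu_\star$ (whose skewness may be infinite, so $\rho(\,\cdot\,,\nu_\star)\equiv+\infty$ off $\nu_\star$), but instead with the convergent reference orbit $(f_\bullet^k\nu_N)$, and then transfer estimates to $\Omega$ by lower semicontinuity of $\rho$.

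First I would record that the whole orbit of $\nu$ stays in $\mc{V}_X^\alpha$: by induction on $k$, \refthm{non-expansion} gives $\rho(f_\bullet^k\nu,f_\bullet^k\nu_N)\le\rho(\nu,\nu_N)<\infty$ (and each $f_\bullet^k\nu_N$ is divisorial, hence of finite skewness), whence $\alpha(f_\bullet^k\nu)<\infty$ by \refcor{skewness_comparison}; if at some stage $f_\bullet^k\nu=f_\bullet^k\nu_N$ the orbit already converges and we are done. Next, if some $\mu\in\Omega$ lies in the open set $U$, then $f_\bullet^{n_0}\nu\in U$ for some $n_0$, and since $f_\bullet^{n_0}\nu$ has finite skewness we get $f_\bullet^n\nu\to\nu_\star$, so $\Omega=\{\nu_\star\}$ and we are finished. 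Thus it remains to derive a contradiction from the hypothesis $\Omega\cap U=\varnothing$, which in particular forces $\nu_\star\notin\Omega$.

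Under that hypothesis, take any $\mu\in\Omega$ with $f_\bullet^{n_k}\nu\to\mu$ weakly. Since also $f_\bullet^{n_k}\nu_N\to\nu_\star$, the weak lower semicontinuity of $\rho$ (which follows from that of $\beta$) together with the uniform bound above gives $\rho(\mu,\nu_\star)\le R$ where $R:=\rho(\nu,\nu_N)$; hence $\Omega$ lies in the closed $\rho$-ball of radius $R$ about $\nu_\star$. If $\alpha(\nu_\star)=+\infty$, then $\rho(\sigma,\nu_\star)=+\infty$ for all $\sigma\ne\nu_\star$ by \refcor{skewness_comparison}, so this ball is $\{\nu_\star\}$ and $\Omega\subseteq\{\nu_\star\}$; but $\nu_\star\notin\Omega$ makes this impossible, contradiction. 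If $\alpha(\nu_\star)<+\infty$, then every $\sigma\in\Omega$ has finite skewness (again \refcor{skewness_comparison}), so $\sigma\mapsto\rho(\sigma,\nu_\star)$ is a weakly lower semicontinuous function on the weakly compact set $\Omega$ and attains a minimum $m_0$ at some $\sigma_0$; as $\sigma_0\ne\nu_\star$, \refcor{strong_contraction_non-finite} yields $\rho(f_\bullet\sigma_0,\nu_\star)=\rho(f_\bullet\sigma_0,f_\bullet\nu_\star)<\rho(\sigma_0,\nu_\star)=m_0$, while $f_\bullet\sigma_0\in\Omega$, contradicting minimality. In either case we reach a contradiction, so $\Omega=\{\nu_\star\}$.

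I expect the main points needing care to be the two that make the comparison with the reference orbit legitimate: verifying that $\rho$ is genuinely weakly lower semicontinuous (so that bounds pass to weak cluster points) and that all iterates $f_\bullet^n\nu$ remain of finite skewness (so that \refthm{non-expansion}, \refcor{skewness_comparison} and the local-attractingness of \refthm{unique_eigenval} may all be invoked). Once these are in place, the strict-contraction statement \refcor{strong_contraction_non-finite} and the minimum-attainment on the compact set $\Omega$ close the argument essentially formally, with the case split $\alpha(\nu_\star)=+\infty$ versus $\alpha(\nu_\star)<+\infty$ being the only bookkeeping subtlety.
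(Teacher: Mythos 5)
Your proof is correct, and it takes a genuinely different route from the paper's. The paper gives a connectedness argument: it shows that the weak basin of attraction $B^\alpha=\{\nu\in\mc{V}_X^\alpha : f_\bullet^n\nu\to\nu_\star\}$ is nonempty, open and closed in $(\mc{V}_X^\alpha,d)$, so that $B^\alpha=\mc{V}_X^\alpha$. Openness comes from writing $B^\alpha=\mc{V}_X^\alpha\cap\bigcup_n f_\bullet^{-n}(U)$ for the attracting weak-open neighbourhood $U=\{\nu>\mu\}$ produced in \refthm{unique_eigenval}; closedness combines the non-expansion \refthm{non-expansion} with the tree-geometric fact that $U$ contains a $\rho$-ball of radius $\rho(\mu,f_\bullet\mu)>0$ about $f_\bullet U\cap\mc{V}_X^\alpha$, so that the tail of the orbit of any strong limit of basin points is trapped in $U$. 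You instead work with the weak cluster set $\Omega$ of a single orbit, exploiting weak compactness of $\mc{V}_X$, lower semicontinuity of $\rho$, and the strict contraction of \refcor{strong_contraction_non-finite} to exclude a minimizer of $\rho(\cdot,\nu_\star)$ on $\Omega$ other than $\nu_\star$. This bypasses both the strong-topology connectedness of $\mc{V}_X^\alpha$ and the absorbing-neighbourhood estimate on $U$, at the price of the reference-orbit device (comparing with $f_\bullet^k\nu_N$ rather than with $\nu_\star$, so that $\rho$ stays finite along the orbit) and the case split on $\alpha(\nu_\star)$, which is forced because $\rho(\cdot,\nu_\star)\equiv+\infty$ off $\nu_\star$ when $\alpha(\nu_\star)=+\infty$. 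Both arguments rest on the same contraction principle; yours is an Edelstein-type cluster-set argument in the spirit of \reflem{fixedpoint}, while the paper reuses the open/closed-basin template that also underlies \refthm{superattracting_finite}.
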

\begin{proof}
We set $B^\alpha = \{\nu \in \mc{V}_X^\alpha\ |\ f_\bullet^n\nu \text{ weakly converges to } \nu_\star\}$
the weak basin of attraction to $\nu_\star$.
The strategy of the proof is to prove that $B^\alpha$ is both open and closed in the strong topology. Since $\mc{V}_X^\alpha$ is connected, this gives the statement.

Since $\nu_\star$ is not quasimonomial, it is a weakly attracting end,
and there exists a weakly open neighborhood $U$ of $\nu_\star$ so that $f_\bullet^n \nu \to \nu_\star$ weakly for all $\nu\in U$.
Note that we may pick $U$ of the form $\{\nu \in \mc{V}_X\ |\ \nu > \mu\}$ for a suitable divisorial valuation $\mu$.

The basin of attraction $B^\alpha$ is clearly a non-empty open set (in the weak and strong topologies), since it coincides with
$$
B^\alpha = \mc{V}_X^{\alpha} \cap \bigcup_{n \in \nN} f_\bullet^{-n}(U).
$$
We now show that $B^\alpha$ is also closed in the strong topology.

Let $(\nu_k)_k$ be a sequence of valuations belonging to $B^\alpha$ and converging to some valuation $\nu_\infty \in \mc{V}_X^\alpha$.

Set $\eps = \rho(\mu, f_\bullet \mu)>0$, so that $U$ contains a $\eps$-neighborhood of $f_\bullet U \cap \mc{V}_X^\alpha$.
Pick $k$ big enough so that $\rho(\nu_k, \nu_\infty)< \eps$.
Since $f_\bullet^n \nu_k \to \nu_\star$ there exists $N \gg 1$ so that $f_\bullet^n \nu_k \in f_\bullet (U)$ for all $n \geq N$.
By \refthm{non-expansion}, for all $n \in \nN$ we have 
$$
\rho(f_\bullet^n \nu_k, f_\bullet^n \nu_\infty) \leq \rho (\nu_k, \nu_\infty) < \eps.
$$
In particular, $f_\bullet^n \nu_\infty$ belong to $U$ for all $n \geq N$, and $\nu_\infty \in B^\alpha$.
\end{proof}

In the next section, we will deal with the case when the eigenvaluation $\nu_\star$ is quasimonomial.
In this case, and more in general when $\nu_\star$ has finite skewness, we will show (see \refthm{strong_convergence}) that the orbit of all valuations with finite skewness strongly converge to $\nu_\star$.

\begin{rmk}
For all known applications, the weak convergence of orbits of valuations of finite skewness to $\nu_\star$ would be sufficient.

With this in mind, one could try to prove an analogous statement to the one of  \refthm{weak_convergence} when the eigenvaluation $\nu_\star$ is quasimonomial.

Since the basin of attraction to $\nu_\star$ is closed by \reflem{closed_basin}, we only need to show that it is also open in $\mc{V}_X^\alpha$.
In some cases it is easy to show that, given a quasimonomial valuation $\nu \in \mc{V}_X$, the sequence $\nu_n=f_\bullet^n \nu$ weakly converges to $\nu_\star$.
\begin{itemize}
\item The first trivial case is when there exists $N$ so that $\nu_N=\nu_\star$, since in this case $\nu_n=\nu_\star$ for all $n \geq N$. In the next cases, we will assume that $\nu_n \neq \nu_\star$ for all $n$.
\item If there exists a good resolution $\pi \colon X_\pi \to (X,x_0)$ so that $\nu_n \in \mc{S}_\pi$ for all $n$, then $\nu_n = f_\bullet^n(\nu)=F_\pi^n(\nu)$ for all $n \in \nN$, where $F_\pi\colon \mc{S}_\pi \to \mc{S}_\pi$ is the map induced by $f$ on the skeleton $\mc{S}_\pi$. By 
\refcor{fixedpoint_skeleta}, $\nu_n \to \nu_\star$ strongly, and hence weakly.
\item \begin{change} Suppose that for any connected component $U$ of $\mc{V}_X \setminus \{\nu_\star\}$, the set $I_U=\{n \in \nN\ |\ \nu_n \in U\}$ is finite.
Then again $\nu_n \to \nu_\star$ weakly.
Recall that a weak open set is the union of connected components of the complement of a finite number of points in $\mc{V}_X$.
In particular, any weak open connected neiborhood $V$ of $\nu_\star$ contains all but finitely many connected components $U_1, \ldots, U_r$ of $\mc{V}_X \setminus \{\nu_\star\}$.
Fixed any such $V$, take $N=N(V)$ big enough so that $I_{U_j} \subseteq \{0, \ldots, N\}$ for all $j=1, \ldots, r$. Then $\nu_n$ belongs to $V$ for all $n > N$, and $\nu_n \to \nu_\star$ weakly.\end{change}
\end{itemize}

\begin{change}
The situations described above are quite special, and do not cover all possible behaviors of orbits of a valuation $\nu$. 
To deal with the general situation, even for weak convergence, we need to measure the speed of contraction towards the eigenvaluation through log discrepancies, using the Jacobian Formula \eqref{eqn:jacobian_formula_bullet}.\end{change}
\end{rmk}


\subsection{Semi-superattracting germs}

To prove the strong convergence of $f_\bullet$ to eigenvaluations, for non-finite germs as well as for finite germs, we need to establish the \emph{superattracting} behavior of $f$, i.e., that the orbits of points converge to $x_0$ faster than exponentially.
In fact, when $(X,x_0)\cong(\nC^2,0)$ is a smooth point, the study of maps $f\colon(\nC^2,0)\to (\nC^2,0)$ differs according to the properties of the differential $df_0$ at $0$.
If $f$ is non-invertible, there are essentially two cases: either $df_0$ is nilpotent (\emph{superattracting} case), or it has a non-zero eigenvalue (\emph{semi-superattracting case}).

Although this definition does not carry over the singular setting, its algebraic interpretation does.
\begin{defi}
Let $(X,x_0)$ be a normal surface singularity, and $f\colon(X,x_0) \to (X,x_0)$ be a holomorphic germ.
We say that $f$ is \emph{superattracting} if $(f^n)^*\mf{m} \subseteq \mf{m}^2$ for $n \in \nN^*$ big enough.
\end{defi}
Superattractivity can be rephrased in terms of the behavior of the maps $\nu \mapsto c(f^n,\nu)$ on $\mc{V}_X$.

%

\begin{prop}
A dominant selfmap $f \colon (X,x_0) \to (X,x_0)$ on a normal surface singularity is superattracting if and only if there exists $C > 1$ and $n$ big enough so that $c(f^n,\nu) \geq C$ for all normalized semivaluations $\nu \in \mc{V}_X$.
\end{prop}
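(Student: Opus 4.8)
The statement is an equivalence, so I will prove the two implications separately, and the key is to translate the ideal-theoretic condition $(f^n)^*\mf{m}\subseteq \mf{m}^2$ into a statement about the attraction rates $c(f^n,\nu)=\nu((f^n)^*\mf{m})$. The basic observation is that, for any $\mf{m}$-primary ideal $\mf a$, the condition $\mf a\subseteq \mf{m}^2$ is equivalent to $\nu_E(\mf a)\geq 2$ for every exceptional prime $E$ realized in a log resolution of $\mf{m}$ — indeed $\mf a\subseteq \mf{m}^2$ iff $Z_\pi(\mf a)\leq Z_\pi(\mf{m}^2)=2Z_\pi(\mf{m})$ as divisors on a common log resolution $\pi$ of $\mf a$ and $\mf m$, which by \refprop{evaluate-with-intersections} and the anti-effectivity of $Z_\pi(\mf m)$ (\refcor{anti-effective2}) translates into $\divi_E(\mf a)\geq 2\,\divi_E(\mf m)=2b_E$ for every exceptional prime $E$ of $\pi$, i.e. $\nu_E(\mf a)\geq 2$. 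Applying this with $\mf a=(f^n)^*\mf m$ shows that $f$ is superattracting iff there is $n$ with $c(f^n,\nu_E)\geq 2$ for all divisorial $\nu_E$, which already looks very close to the claimed condition.

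\textbf{The easy implication.} Suppose there exist $C>1$ and $n$ with $c(f^n,\nu)\geq C$ for all $\nu\in\mc V_X$. Using multiplicativity of attraction rates (\refprop{propertiesfbullet}(5)), $c(f^{nk},\nu)=\prod_{j=0}^{k-1}c(f^n,f_\bullet^{nj}\nu)\geq C^k$ for every $\nu$; choosing $k$ with $C^k\geq 2$ gives $c(f^{nk},\nu)\geq 2$ for all normalized $\nu$, in particular for all divisorial $\nu_E$. By the translation above this says $(f^{nk})^*\mf m\subseteq\mf m^2$, so $f$ is superattracting. (One must also pass to $(f^m)^*\mf m\subseteq\mf m^2$ for all $m$ large, but this is automatic: if $(f^N)^*\mf m\subseteq\mf m^2$ then for $m>N$, writing $f^m=f^{m-N}\circ f^N$ and using $(f^{m-N})^*\mf m\subseteq\mf m$ together with multiplicativity of pullbacks, one gets $(f^m)^*\mf m=(f^N)^*((f^{m-N})^*\mf m)\subseteq (f^N)^*\mf m\subseteq\mf m^2$.)

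\textbf{The substantive implication.} Now suppose $f$ is superattracting, so there is $N$ with $c(f^N,\nu_E)\geq 2$ for every divisorial valuation $\nu_E$. I need to upgrade this to a \emph{uniform} lower bound $c(f^n,\nu)\geq C>1$ over \emph{all} of $\mc V_X$, not just divisorial points, for some fixed $n$. The natural route is via \refprop{attractionrate_intersection}: $c(f^N,\nu)=-Z(\nu)\cdot \Exc\big((f^N)^*Z(\mf m)\big)$, and by the exceptional part of the pullback formula \eqref{eqn:pullback_bdivisors}, $\Exc\big((f^N)^*Z(\mf m)\big)=\sum_{\nu\in V}a_\nu Z(\nu)$ is a finite nonnegative combination of dual divisors of valuations in a finite set $V$ (the set $V_{f^N}$ of \refcor{attractionrate_locallyconst}). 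Hence $\nu\mapsto c(f^N,\nu)$ is continuous on $\mc V_X$ in the weak topology away from a finite graph, and more importantly it is lower semicontinuous (being a sum of lower semicontinuous functions $-Z(\cdot)\cdot Z(\nu)$, or directly because $c(f^N,\nu)=\sup_\pi\big(-Z_\pi(\nu)\cdot Z_\pi((f^N)^*\mf m)\big)$ is a supremum of weakly continuous functions). Since $\mc V_X$ is compact in the weak topology and divisorial valuations are dense, a lower semicontinuous function bounded below by $2$ on a dense set is bounded below by $2$ on the closure — wait, that direction fails for lsc functions, so instead I argue: the minimum of the lsc function $c(f^N,\cdot)$ on the compact space $\mc V_X$ is attained, say at $\nu_0$; if the minimum value were $<2$ then by lower semicontinuity there is a whole weak neighborhood of $\nu_0$ on which $c(f^N,\cdot)<2$, contradicting density of divisorial valuations where the value is $\geq 2$. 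Therefore $c(f^N,\nu)\geq 2$ for all $\nu\in\mc V_X$, and we may take $C=2$, $n=N$.

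\textbf{Main obstacle.} The crux is the uniform lower bound: converting "$c(f^N,\nu)\geq 2$ on divisorial valuations" to "on all of $\mc V_X$". The two ingredients that make this work are (i) the representation of $\nu\mapsto c(f^N,\nu)$ as the supremum over good resolutions of the weakly continuous functions $-Z_\pi(\nu)\cdot Z_\pi((f^N)^*\mf m)$, hence its lower semicontinuity, and (ii) weak compactness of $\mc V_X$ together with density of divisorial valuations. Care is needed with the exact logical form of the lsc argument (a liminf-type inequality, not a limsup one), which is why I phrase it via "minimum attained at $\nu_0$ and contradict with a neighborhood on which the value drops." Everything else — the dictionary between $\mf a\subseteq\mf m^2$ and $\nu_E(\mf a)\geq 2$, and the multiplicativity bootstrap $C\rightsquigarrow C^k\geq 2$ — is routine given the results already established in the excerpt.
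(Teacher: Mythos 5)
Your proof has a genuine gap in what you call the ``easy implication'' (from the $c$-bound to superattractivity), and the gap lives precisely in the ``dictionary'' you set up at the start. The condition $\nu_E(\mf a)\geq 2$ for every divisorial valuation, equivalently $Z_\pi(\mf a)\leq 2Z_\pi(\mf m)$ on a common log resolution, is equivalent to $\pi^*\mf a\subseteq\pi^*\mf m^2$ as ideal sheaves, hence to $\mf a\subseteq\pi_*\pi^*\mf m^2=\ol{\mf m^2}$ — the \emph{integral closure} of $\mf m^2$, not $\mf m^2$ itself. You correctly get the one-way implication $\mf a\subseteq\mf m^2\Rightarrow\nu_E(\mf a)\geq 2$, but the converse you need goes through integral closure, and in a general normal surface singularity it is not automatic that $\ol{\mf m^2}=\mf m^2$. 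So after bootstrapping to $c(f^{nk},\nu_E)\geq 2$ you may only conclude $(f^{nk})^*\mf m\subseteq\ol{\mf m^2}$, not $(f^{nk})^*\mf m\subseteq\mf m^2$. The paper closes exactly this gap with the Brian\c con--Skoda theorem: since $C>1$ one can bootstrap further to $c(f^m,\nu)\geq k$ for any prescribed $k$, which gives $(f^m)^*\mf m\subseteq\ol{\mf m^k}$, and Brian\c con--Skoda then yields $\ol{\mf m^k}\subseteq\mf m^2$ for $k$ large enough, hence superattractivity.

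Two further remarks on the structure of your argument. First, you have the relative difficulty of the two implications inverted: the direction ``superattracting $\Rightarrow$ $c$-bound'' is trivial, since $(f^N)^*\mf m\subseteq\mf m^2$ gives $c(f^N,\nu)=\nu((f^N)^*\mf m)\geq\nu(\mf m^2)=2$ directly for \emph{every} normalized semivaluation $\nu$, not only divisorial ones; there is no need for the lower-semicontinuity-plus-compactness argument (in fact $\nu\mapsto\nu(\mf a)$ is weakly continuous by definition of the weak topology, so that machinery is also overpowered). Second, while your lsc argument for that direction is not wrong, you created the need for it by passing through the incomplete dictionary to reduce to divisorial valuations; had you just applied monotonicity of $\nu$ on ideals, it would have vanished. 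The real content of the proposition is the other direction, and there the missing ingredient is Brian\c con--Skoda.
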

\begin{proof}
The direct implication is straigthforward, since if $(f^n)^*\mf{m} \subseteq \mf{m}^2$, then for any normalized semivaluation $\nu \in \mc{V}_X$ we have
$$
c(f^n,\nu)=\nu((f^n)^*\mf{m}) \geq \nu(\mf{m}^2) = 2\nu(\mf{m})=2.
$$

Conversely, assume there exists $C > 1$ so that $c(f, \nu) \geq C$ for all $\nu \in \mc{V}_X$ (achieved from the hypothesis up to replacing $f$ by an iterate).
Notice that
$$
c(f^n,\nu)=\prod_{j=0}^{n-1} c(f,f_\bullet^j \nu) \geq C^n.
$$
Since $C>1$, for any $k \geq 2$ there exists $n=n(k)$ so that $c(f^n,\nu) \geq k$ for all $\nu \in \mc{V}_X$.
This implies that $(f^n)^*\mf{m}\subseteq \ol{(f^n)^*\mf{m}} \subseteq \ol{\mf{m}^k}$, where, given an ideal $\mf{a}$, we denote by $\ol{\mf{a}}$ is integral closure (see, e.g., \cite[Section 6.8]{huneke-swanson:integralclosure}).
By the Brian\c{c}on-Skoda theorem (see \cite[Theorem 4.13]{huneke:uniformbounds}), we have that $\ol{\mf{m}^k}\subseteq \mf{m}^2$ for $k$ big enough, and we are done.
\end{proof}

In the case of non-finite germs, being superattracting is the same as having $c(f, \nu_\star)> 1$, where $\nu_\star$ is the unique eigenvaluation of $f$ given by \refthm{unique_eigenval}.

In the smooth setting, the valuative dynamics of semi-superattracting germs $f\colon (\nC^2,0) \to (\nC^2,0)$ is similar to the one of superattracting germs, and in fact simpler.
Since by definition the eigenvalues of $df_0$ are $0$ and $\lambda \neq 0$, we have a ``superstable'' manifold $D$ (associated to the eigenvalue $0$) and a ``relatively unstable'' manifold $C$ (associated to the eigenvalue $\lambda \neq 0$). In this case $\nu_C$ is the unique eigenvaluation of $f$, and all valuations (but at most $\nu_D$) weakly-converge to $\nu_C$ (see \cite{ruggiero:rigidification}).  

In the singular setting, a similar phenomenon happens.

\begin{prop}\label{prop:semisuper_nonfinite}
Let $(X,x_0)$ be a normal surface singularity, and $f\colon (X,x_0) \to (X,x_0)$ a non-finite germ. Let $\nu_\star \in \mc{V}_X$ be the unique eigenvaluation of $f$ given by \refthm{unique_eigenval}.
If $c(f,\nu_\star)=1$, then $\alpha(\nu_\star)=+\infty$.
\end{prop}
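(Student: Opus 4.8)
The plan is to prove the contrapositive: if $\alpha(\nu_\star)<+\infty$ then $c(f,\nu_\star)>1$ (recall $c(f,\nu_\star)\geq 1$ always, by \refprop{propertiesfbullet}). So I would assume, for a contradiction, that $\alpha_\star:=\alpha(\nu_\star)=-Z(\nu_\star)\cdot Z(\nu_\star)$ lies in $(0,+\infty)$ and that $c(f,\nu_\star)=1$. Since $\alpha(\nu_\star)<+\infty$, $\nu_\star$ is not a curve semivaluation (curve semivaluations have infinite skewness), so it has finite‑valued intersection with every other finite semivaluation by \refprop{intersection_differentcenters}; and since $f$ is non‑finite, $\VC{f}$ is a non‑empty (finite) set. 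For each $\nu_C\in\VC{f}$ write $\nu_{G_C}=f_\bullet\nu_C$, a divisorial valuation; by \refprop{contractedcurve_image} the constant $c_\alpha(f,\nu_C)$ is a positive integer, and the intersection numbers $-Z(\nu_C)\cdot Z(\nu_\star)$ and $-Z(\nu_{G_C})\cdot Z(\nu_\star)$ are finite and strictly positive. All of this guarantees that the $b$-divisor pushforward and pullback formulas \eqref{eqn:pushforward_bdivisors}--\eqref{eqn:pullback_bdivisors}, which hold for divisorial valuations and extend by continuity, are meaningful when evaluated at $\nu_\star$.

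The computation I would carry out is to evaluate $f_*Z(\nu_\star)\cdot Z(\nu_\star)$ in two ways. First, pushing forward: by \eqref{eqn:pushforward_bdivisors}, using $f_\bullet\nu_\star=\nu_\star$ and $c(f,\nu_\star)=1$, one has $f_*Z(\nu_\star)=Z(\nu_\star)-\sum_{\nu_C\in\VC{f}}\big(-Z(\nu_C)\cdot Z(\nu_\star)\big)c_\alpha(f,\nu_C)\,Z(\nu_{G_C})$; intersecting with $Z(\nu_\star)$ and using $Z(\nu_{G_C})\cdot Z(\nu_\star)<0$ gives $f_*Z(\nu_\star)\cdot Z(\nu_\star)=-\alpha_\star+P$, where $P:=\sum_{\nu_C\in\VC{f}}\big(-Z(\nu_C)\cdot Z(\nu_\star)\big)\big(-Z(\nu_{G_C})\cdot Z(\nu_\star)\big)c_\alpha(f,\nu_C)$ is a sum of strictly positive terms over a non‑empty set, hence $P>0$ — this is precisely where non‑finiteness enters. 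Second, pulling back: by \eqref{eqn:pullback_bdivisors} at $\nu_\star$, using that $\hat{Z}(\nu_C)$ has zero intersection with every exceptional $b$-divisor (so $\hat{Z}(\nu_C)\cdot Z(\nu_\star)=0$) and again $c(f,\nu_\star)=1$, one gets $Z(\nu_\star)\cdot f^*Z(\nu_\star)=-m(f,\nu_\star)\,\alpha_\star-Q$, where $Q:=\sum_{f_\bullet\nu=\nu_\star,\ \nu\neq\nu_\star}\frac{m(f,\nu)}{c(f,\nu)}\big(-Z(\nu)\cdot Z(\nu_\star)\big)\geq 0$ is a finite sum (by \refprop{propertiesfbullet}) of nonnegative terms. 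Comparing the two expressions through the projection formula $f_*Z(\nu_\star)\cdot Z(\nu_\star)=Z(\nu_\star)\cdot f^*Z(\nu_\star)$ yields
\[
P+Q=\big(1-m(f,\nu_\star)\big)\,\alpha_\star .
\]
Since $m(f,\nu_\star)\geq 1$ and $\alpha_\star>0$, the right‑hand side is $\leq 0$, whereas $P+Q\geq P>0$: contradiction. Hence $c(f,\nu_\star)=1$ forces $\alpha(\nu_\star)=+\infty$.

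The steps needing the most care are the sign bookkeeping in the two intersection computations (keeping track of which intersection numbers are negative) and, above all, the verification that formulas \eqref{eqn:pushforward_bdivisors}--\eqref{eqn:pullback_bdivisors} may be applied at $\nu_\star$ with all the intersection numbers appearing being finite — this is the one place where the hypothesis $\alpha(\nu_\star)<+\infty$ is genuinely used (together with the fact that $\nu_\star$ is a valuation, not a contracted curve). The one conceptual point, rather than a technical obstacle, is the choice to intersect with $Z(\nu_\star)$ itself rather than with $Z(\mf{m}_X)$: intersecting with $Z(\mf{m}_X)$ only reproduces the defining relation $c(f,\nu_\star)=-Z(\nu_\star)\cdot\Exc(f^*Z(\mf{m}_X))$ and yields nothing new, whereas intersecting with $Z(\nu_\star)$ brings in the integer multiplicity $m(f,\nu_\star)\geq 1$, and it is the tension between this and the strictly positive contracted‑curve contribution $P$ that produces the contradiction.
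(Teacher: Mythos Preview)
Your argument is correct and is essentially the paper's own proof: apply the pushforward formula \eqref{eqn:pushforward_bdivisors} and the pullback formula \eqref{eqn:pullback_bdivisors} at $\nu_\star$, intersect both with $-Z(\nu_\star)$, equate via the projection formula, and use $m(f,\nu_\star)\geq 1$ together with the strictly positive contracted-curve term. Your treatment of the $\hat{Z}(\nu_C)$ contribution in the pullback (namely $\hat{Z}(\nu_C)\cdot Z(\nu_\star)=0$) is in fact slightly more careful than the paper, which writes $Z(\nu_C)$ in that spot and claims the corresponding $b$ is strictly positive; either reading leads to the same contradiction, since your $P>0$ on the pushforward side already forces the inequality.
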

\begin{proof}
Since $c(f,\nu_\star)=1$ and $f_\bullet \nu_\star=\nu_\star$, \eqref{eqn:pushforward_bdivisors} in this case gives
$$
f_*Z(\nu_\star)=Z(\nu_\star) - \sum_{\nu_C \in \VC{f}} a_C Z(f_\bullet \nu_C)
$$
for suitable $a_C > 0$.
By intersecting with $-Z(\nu_\star)$, we infer that
\begin{equation}\label{eqn:semisuper_push}
-Z(\nu_\star) \cdot f_* Z(\nu_\star)=\alpha(\nu_\star) - a
\end{equation}
for a suitable $a > 0$.
Analogously, from \eqref{eqn:pullback_bdivisors} we get
$$
f^*Z(\nu_\star) = m(f,\nu_\star) Z(\nu_\star) + \sum_{\nu_\star \neq \mu \in f_\bullet^{-1}\{\nu_\star\}} b_\mu Z(\mu) + \sum_{\nu_C \in \VC{f}} b_C Z(\nu_C)
$$
for suitable $b_\mu, b_C > 0$, and by intersecting with $-Z(\nu_\star)$ we get
\begin{equation}\label{eqn:semisuper_pull}
-Z(\nu_\star) \cdot f^*Z(\nu_\star) = m(f, \nu_\star) \alpha(\nu_\star) + b
\end{equation}
for a suitable $b > 0$ (notice that $Z(\nu_C) \cdot Z(\nu_\star) < +\infty$, since $\nu_\star$ cannot be a contracted curve semivaluation).
By the projection formula, \eqref{eqn:semisuper_push} and \eqref{eqn:semisuper_pull} coincide, and we have
$$
\alpha(\nu_\star) = m(f,\nu_\star) \alpha(\nu_\star) + a + b,
$$
which implies $\alpha(\nu_\star)=+\infty$.
\end{proof}

We will deal later with the situation of semi-superattracting finite germs (\refthm{superattracting_finite}).

\subsection{Strong convergence}

We can now state and prove the strong convergence of valuations towards the eigenvaluation for non-finite germs.

\begin{thm}\label{thm:strong_convergence}
Let $(X,x_0)$ be a normal surface singularity, and $f\colon (X,x_0) \to (X,x_0)$ a non-finite germ. Let $\nu_\star$ be the unique eigenvaluation of $f$ given by \refthm{unique_eigenval}, and assume that $\nu_\star \in \mc{V}_X^\alpha$ has finite skewness.
Then for any normalized valuation $\nu \in \mc{V}_X^\alpha$, we have $f_\bullet^n \nu \to \nu_\star$ in the \emph{strong} topology.
\end{thm}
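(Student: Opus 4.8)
The plan is to leverage the strong contraction property of $f_\bullet$ with respect to the angular metric $\rho$ together with the superattracting behavior forced by \refprop{semisuper_nonfinite}. First I would note that by \refprop{semisuper_nonfinite}, since $\nu_\star$ has finite skewness, necessarily $c(f, \nu_\star) > 1$; combined with lower semicontinuity of $c(f, \cdot)$ and \refcor{attractionrate_locallyconst}, one obtains a weak neighborhood of $\nu_\star$ on which $c(f, \cdot)$ is bounded below by some constant $C > 1$. Using multiplicativity of attraction rates (\refprop{propertiesfbullet}(5)) one then sees that orbits which stay near $\nu_\star$ have attraction rates growing like $C^n$, hence $f$ is superattracting, and in particular all orbits are eventually pulled into any fixed weak neighborhood of $\nu_\star$. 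The weak convergence part is essentially \refthm{weak_convergence} when $\nu_\star$ is not quasimonomial; for $\nu_\star$ quasimonomial one argues separately that every orbit is weakly attracted, which is the content needed to set up the strong convergence argument.

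The heart of the matter is upgrading weak convergence to strong convergence. Here the plan is to exploit the Jacobian formula in the normalized form \eqref{eqn:jacobian_formula_bullet}, namely $c(f^n, \nu) A(f_\bullet^n \nu) = A(\nu) + \nu(R_f^{(n)})$ where $R_f^{(n)}$ is the Jacobian divisor of $f^n$. Since $\nu \mapsto \nu(R_f)$ is locally constant off the critical skeleton $\mc{S}_f$ (\refcor{jacobiandivisor_locallyconst}) and $c(f, \cdot)$ is locally constant off $\mc{S}_{c(f)}$, once the orbit of $\nu$ enters a small weak neighborhood $U$ of $\nu_\star$ disjoint from these skeleta, the quantities $c(f, f_\bullet^k \nu)$ and $(f_\bullet^k \nu)(R_f)$ become constant in $k$. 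Feeding this into the Jacobian formula and using $c(f, \nu_\star) > 1$, one gets that $A(f_\bullet^n \nu)$ is eventually comparable to a geometric series, hence converges; more precisely the increments $A(f_\bullet^{n+1}\nu) - A(\nu_\star)$ are controlled geometrically. Since $\log \alpha$ is a parameterization of the relevant subtree $\mc{V}_{X, \hat\nu}$ inducing the angular metric (\refprop{log_alpha}), and since $A - \alpha$ is itself a nonnegative parameterization-type quantity (from the comparison $d_A \geq d$), bounding the $A$-increments along the orbit will bound the $\rho$-distances $\rho(f_\bullet^n \nu, \nu_\star)$, giving $\rho(f_\bullet^n \nu, \nu_\star) \to 0$, i.e.\ strong convergence.

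More carefully, I would argue as follows. By \reflem{closed_basin} the set $B_{\nu_\star}^\alpha$ of valuations whose orbits strongly converge to $\nu_\star$ is closed in the strong topology, and since $\mc{V}_X^\alpha$ is strongly connected it suffices to show $B_{\nu_\star}^\alpha$ is also open and nonempty. Nonemptiness follows since $\nu_\star$ itself lies in it, and the fixed-point statement \refcor{fixedpoint_skeleta} shows divisorial valuations in skeleta containing $\nu_\star$ converge strongly. For openness, take $\nu$ with $f_\bullet^n\nu \to \nu_\star$ strongly; pick a large $N$ so that $f_\bullet^N\nu$ lies in a small weak neighborhood $U$ of $\nu_\star$ on which $c(f,\cdot)$ and $\cdot(R_f)$ are constant and $U$ lies in a single subtree $\mc{V}_{X,\hat\nu}$; by \refthm{non-expansion}, every $\mu$ with $\rho(\mu, \nu)$ small has $f_\bullet^N\mu$ also in $U$ (shrinking $U$ if needed), and then the Jacobian-formula estimate above applies verbatim to the orbit of $\mu$, giving strong convergence for $\mu$.

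The main obstacle I anticipate is the bookkeeping in the Jacobian-formula estimate: one must track not just $A(f_\bullet^n\nu)$ but the relation between the $A$-increments, the $\alpha$-increments, and the $\rho$-distance to $\nu_\star$, and ensure that the finitely many places where the orbit might touch the critical skeleton $\mc{S}_f$ (before settling into $U$) do not spoil the geometric decay. A secondary subtlety is handling the case where $\nu_\star$ is quasimonomial but the orbit of $\nu$ does not eventually lie in a fixed skeleton $\mc{S}_\pi$ — there the weak convergence to $\nu_\star$ has to be established first (not covered by \refthm{weak_convergence}), presumably again via the Jacobian formula to show the orbit cannot escape to infinity in any tangent direction at $\nu_\star$, after which the strong convergence argument proceeds as above. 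I expect the superattracting hypothesis $c(f,\nu_\star)>1$, guaranteed by \refprop{semisuper_nonfinite} precisely because $\alpha(\nu_\star)<+\infty$, to be exactly what makes the geometric-decay estimate close.
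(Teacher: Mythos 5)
Your high-level scaffolding matches the paper: $c(f,\nu_\star)>1$ comes from \refprop{semisuper_nonfinite}, the strong basin $B^\alpha_{\nu_\star}$ is closed by \reflem{closed_basin}, so by connectedness of $\mc{V}_X^\alpha$ it suffices to produce a strong open neighborhood of $\nu_\star$ in the basin, and the Jacobian formula \eqref{eqn:jacobian_formula_bullet} is the engine for geometric decay in the $A$-parameterization. That much is correct and is how the paper begins.

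The genuine gap is in your openness argument, specifically when $\nu_\star$ is divisorial. You write: ``once the orbit of $\nu$ enters a small weak neighborhood $U$ of $\nu_\star$ disjoint from these skeleta, the quantities $c(f, f_\bullet^k\nu)$ and $(f_\bullet^k\nu)(R_f)$ become constant in $k$,'' and later flag ``the finitely many places where the orbit might touch the critical skeleton $\mc{S}_f$ before settling into $U$.'' This presumes the orbit eventually avoids $\mc{S}_f$, which is false in general: the tangent map $df_\bullet$ at a divisorial $\nu_\star$ acts on an infinite set of tangent directions, and the orbit of $\nu$ may land for all large $n$ in directions where $U_{\pi_0}(p)\cap\mc{S}_f\neq\emptyset$ (the set $\mc{T}$ in the paper's proof). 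In particular the orbit of its tangent direction can be \emph{periodic} inside $\mc{T}$ (the paper's set $\mc{T}_2$), in which case $c(f,f_\bullet^k\nu)$ and $(f_\bullet^k\nu)(R_f)$ are genuinely non-constant along the orbit forever, the Jacobian-formula estimate does not close in one step, and one has to fall back on \refcor{fixedpoint_skeleta} applied to an $f_\bullet^m$-invariant segment $J\subset\mc{S}_{\pi'}$. The paper devotes the entire Case~3 to a recursive construction: it partitions tangent directions into $\mc{T}_1$ (orbits never meeting $\mc{T}$, handled by the constant-rate Jacobian argument you describe), $\mc{T}_2$ (periodic in $\mc{T}$, handled by the skeleton fixed point theorem), and transitional directions, assigns a radius $\rho(\vect{v_p})>0$ to each, and then shows $\inf_p \rho(\vect{v_p})>0$ to get a uniform strong neighborhood. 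Your sketch does not contain this mechanism, and without it one cannot conclude that the basin is strongly open around a divisorial $\nu_\star$.

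A secondary gap is the infinitely singular case when $\alpha(\nu_\star)<+\infty$ but $A(\nu_\star)=+\infty$: your argument passes through $A$-increments, but if $A(\nu_\star)$ is infinite the quantity $A(f_\bullet^n\nu)-A(\nu_\star)$ is meaningless. The paper resolves this by comparing $A(f_\bullet^n\nu)$ to $A(f_\bullet^n r_I\nu)$ for the retraction onto an invariant segment $I\subset U$, i.e.\ it measures distance to the segment rather than to $\nu_\star$ directly, and then uses that $f_\bullet^n r_I\nu\to\nu_\star$ strongly in $I$. You would need this extra step for your $A$-based estimate to produce a $\rho$-estimate. Finally, a minor correction: $\rho$ is induced by the $\log\alpha$ parameterization (\refprop{log_alpha}), not by $\alpha$, so what you must ultimately control is the \emph{ratio} $\alpha(f_\bullet^n\nu)/\alpha(\nu_\star)$ rather than the difference; that conversion is unproblematic once $\alpha(\nu_\star)<+\infty$, but it should be stated.
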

\begin{proof}
Following the proof of \refthm{weak_convergence}, we only need to prove that there exists a strongly open neighborhood of $\nu_\star$ in $\mc{V}_X^\alpha$ where the orbits strongly converge to $\nu_\star$.
Notice that by \refprop{semisuper_nonfinite}, the attraction rate $c=c(f,\nu_\star)$ is strictly bigger than $1$.
We split the proof according to the type of eigenvaluation $\nu_\star$.
\begin{enumerate}[label=\textit{Case} \arabic*.,leftmargin=0pt, itemindent=40pt]
\item The eigenvaluation $\nu_\star$ is infinitely singular and has finite skewness.
Assume first that $\nu_\star$ has finite thinness as well.
As in the proof of \refthm{unique_eigenval}, we may find a connected weakly open set $U \subset \mc{V}_X$ where $c(f,\nu)=c$ and $\nu(R_f)=\eps$ are constant, and $f_\bullet$ is order preserving.
We deduce by the Jacobian Formula \eqref{eqn:jacobian_formula_bullet} that
$$
A(f_\bullet \nu)-A(\nu_\star) = \frac{A(\nu)+\eps}{c}-\frac{A(\nu_\star)+\eps}{c} = \frac{A(\nu) - A(\nu_\star)}{c}.
$$
By iterating this estimate, we get that the thinness distance between $f_\bullet^n\nu$ and $\nu_\star$ decreases to $0$ when $n$ goes to $+\infty$, and we have strong convergence.

Assume now that $\nu_\star$ has infinite thinness (but finite skewness). We can take the open neighborhood of $\nu_\star$ as above of the form $U=\{\nu \in \mc{V}\ |\ \nu > \nu_0\}$ for a suitable $\nu_0$. Set $I=(\nu_0,\nu_\star]$, and denote by $r_I\colon U \to I$ the natural retraction.
Notice that $I$ is $f_\bullet$-invariant, and for any $\mu \in I$ the orbit $f_\bullet^n \mu \to \nu_\star$ in the strong topology.
We conclude by remarking
$$
A(f_\bullet^n \nu)-A(f_\bullet^n r_I \nu) = \frac{A(\nu) - A(r_I \nu)}{c^n}\to 0.
$$
\item The eigenvaluation $\nu_\star$ is irrational.
Let $I=[\nu_0,\nu_1]$ be an interval with divisorial endpoints, containing $\nu_\star$ in its interior.
Up to shrinking $I$, we may assume that $f_\bullet$ is injective on $I$, and $\nu \mapsto c(f,\nu)$ and $\nu \mapsto \nu(R_f)$ are locally constant on $U(I) \setminus I$ (notice that such maps need not to be constant on $I$ itself).
Since $f_\bullet I$ is an interval containing $\nu_\star$, we have that $J=I \cap f_\bullet I$ is an interval containing $\nu_\star$ in its interior, satisfying $f_\bullet(J) \subseteq I$.
In particular, $f_\bullet$ coincides with the map $F_\pi\colon \mc{S}_\pi \to \mc{S}_\pi$ for a suitable good resolution $\pi \colon X_\pi \to (X,x_0)$, as defined in the proof of \refthm{unique_eigenval}. By \refcor{fixedpoint_skeleta}, $f_\bullet^n \mu \to \nu_\star$ for all $\mu \in J$.

We now want to show strong convergence of $f_\bullet^n \nu \to \nu_\star$ for all $\nu \in U$, for a suitable strong open neighborhood of $\nu_\star$.
Consider the set $K=f_\bullet^{-1}(J)$. It is made by a finite number of connected components, and denote by $K_0$ the one containing $\nu_\star$.
Then $K_0$ is a finite union of segments with divisorial endpoints.

Up to shrinking $J$, we may assume that $K_0 \cap U(J) \subset J$. Set $U=U(J)$.
By construction, $r_\pi f_\bullet^n \nu = f_\bullet^n r_\pi \nu$ for all $\nu \in U$.
Since $f_\bullet$ is order preserving on $U \setminus J$, $f_\bullet$ is increasing on the interval $[r_\pi \nu,\nu]$.
Moreover, $c(f,\nu)=c(f,r_\pi \nu)$ and $\nu(R_f)=r_\pi\nu(R_f)$ for all $\nu \in U$.
Pick any $\nu \in U$ of finite thinness, and set $\mu=r_\pi \nu$.
By the Jacobian formula \eqref{eqn:jacobian_formula_bullet}, we infer that
$$
A(f_\bullet^n \nu,f_\bullet^n \mu) = \frac{A(\nu)-A(\mu)}{c(f^n, \mu)} \to 0 
$$
when $n \to \infty$.
While $A$ could be constant on $J$ (if $J$ belongs to the essential skeleton $\eskel{X}$ of $\mc{V}_X$), $A$ is not locally constant on $U \setminus J$, and we deduce the strong convergence of $f_\bullet^n \nu \to \nu_\star$ for all $\nu \in U$ of finite thinness. Since $U \cap \{A(\nu) < +\infty\}$ is a strong neighborhood of $\nu_\star$, we are done.

\item The eigenvaluation $\nu_\star = \nu_{E_\star}$ is divisorial.
To construct a strongly open neighborhood of $\nu_\star$ that belongs to the strong basin of attraction to $\nu_\star$, we follow the same strategy used in the smooth case, see \cite[Lemma 3.9]{gignac-ruggiero:attractionrates}.
Let $\pi \colon X_{\pi} \to (X,x_0)$ be any good resolution that realizes $E_\star$. Recall that for any $p$, the open set $U_{\pi}(p)$ represents a tangent vector $\vect{v_p} \in T_{\nu_\star} \mc{V}_X$.
For any such tangent vector, we want to find a positive number $\rho(\vect{v_p})$ so that:
\begin{itemize}
\item for any $\nu \in U_{\pi}(p)$ so that $\rho(\nu,\nu_\star) < \rho(\vect{v_p})$, then $f_\bullet^n \nu \to \nu_\star$ strongly;
\item $\inf_{p \in E_\star} \rho(\vect{v_p}) > 0$.
\end{itemize}
Let $\mc{S}_f$ denote the critical skeleton, so that $\nu \mapsto c(f,\nu)$ and $\nu \mapsto \nu(R_f)$ are locally constant on $\mc{V}_X \setminus \mc{S}_f$.
Let $\mc{T}$ be the (finite) set of tangent directions $\vect{v_p}$ so that $U_{\pi_0}(p) \cap \mc{S}_f \neq \emptyset$.
Consider the sets $\mc{T}_1$ of tangent vectors $\vect{v}$ so that $df_\bullet^n \vect{v} \not \in \mc{T}$ for all $n \in \nN$, and $\mc{T}_2$ of tangent vectors in $\mc{T}$ which are $df_\bullet$-periodic.
Any tangent vector $\vect{v}$ is eventually mapped by $df_\bullet$ to either $\mc{T}_1$ or $\mc{T}_2$.

First, assume that $\vect{v_p} \in \mc{T}_1$, and pick any quasimonomial valuation $\nu \in U_\pi(p)$. Notice that $c(f, f_\bullet^n \nu)= c(f, \nu_\star)=:c$ and $f_\bullet^n\nu(R_f)=\nu_\star(R_f)=:\eps$ for all $n \in \nN$.
As in \textit{Case $1$} we get
$$
A(f_\bullet^n \nu) - A(\nu_\star) = c^{-n}(A(\nu) - A(\nu_\star)) \to 0
$$
as $n \to +\infty$.
Since the thinness function $A$ is not constant on any segment in $\mc{V} \setminus \mc{S}_\pi$, this implies the strong convergence.
We set in this case $\rho(\vect{v_p})=+\infty$.

Assume now that $\vect{v_p} \in \mc{T}_2$, and pick again any quasimonomial valuation $\nu'' \in U_\pi(p)$. Consider the segment $[\nu_\star, \nu'']$.
Since $\vect{v_p}$ is periodic, there exists $m \in \nN^*$ and another segment $J=[\nu_\star, \nu']$ with $\nu' \in (\nu_\star, \nu'']$, so that $f_\bullet^m (J) \subseteq I$.
Let $\pi'\colon X_{\pi'} \to (X,x_0)$ be a good resolution dominating $\pi$ and so that $J \subset \mc{S}_{\pi'}$.
By construction, $f_\bullet^m$ coincides with $F_{\pi'}^m$ on $J$, where $F_{\pi'}^m= r_{\pi'}\circ f_\bullet^m \colon \mc{S}_{\pi'} \to \mc{S}_{\pi'}$.
By \refcor{fixedpoint_skeleta}, $f_\bullet^{nm}\mu \to \nu_\star$ strongly for any $\mu \in J$.
Repeating the same argument for $df_\bullet^k \vect{v_p}$ and $f_\bullet^k \nu''$ for all $k=1, \ldots, m-1$, we deduce that $f_\bullet^n \mu \to \nu_\star$.
Let $U'$ denote the connected component of $\mc{V}_X \setminus \{\nu_\star, \nu'\}$ intersecting $J$.
Up to shrinking $J$, we may assume that $(U' \setminus J) \cap \mc{S}_f = \emptyset$.
By the same argument used in the irrational case, we can deduce that $f_\bullet^n \nu \to \nu_\star$ strongly for any $\nu \in U'$.
In this case we set $\rho(\vect{v_p})= \rho(\nu', \nu_\star) > 0$.

Finally, assume that $\vect{v_p} \not \in \mc{T}_1 \cup \mc{T}_2$.
We define $\rho(\vect{v_p})$ recursively as follows.
Suppose that $\vect{v_p}$ is such that $\rho(df_\bullet \vect{v_p})$ has been defined.
If $\vect{v_p}$ is one of the (finite number of) tangent vectors in $\mc{T} \setminus \mc{T}_2$, then we just set $\rho(\vect{v_p})$ to be a value $\rho>0$ sufficiently small to have that for any $\nu \in U_\pi(p)$ so that $\rho(\nu, \nu_\star) < \rho$, then $\rho(f_\bullet \nu, \nu_\star) < \rho(df_\bullet \vect{v_p})$.
If $\vect{v_p} \not \in \mc{T}$, then $f_\bullet$ is order preserving on $U_\pi(p)$. It follows that $f_\bullet(U_\pi(p)) \subset \mc{V}_X \setminus \{\nu_\star\}$, and by continuity, it belongs to the connected component associated to $df_\bullet \vect{v_p}$.
We may define $\rho(\vect{v_p})=\rho(df_\bullet \vect{v_p})$.
In fact, if $\nu \in U_\pi(p)$ and $\rho(\nu,\nu_\star) \leq \rho$, by \refthm{non-expansion} we have $\rho(f_\bullet\nu, \nu_\star)$.

We have defined $\rho(\vect{v})$ for any $\vect{v} \in T_{\nu_\star} \mc{V}_X$. Clearly by construction, we have that $\rho_\star := \displaystyle \inf_{\vect{v}} \rho(\vect{v}) > 0$.
In particular, the basin of attraction to $\nu_\star$ with respect to the strong topology contains the strong open neighborhood $\{\nu \in \mc{V}_X\ |\ \rho(\nu,\nu_\star) < \rho_\star\}$.
This completes the proof.
\end{enumerate}
\end{proof}

\begin{rmk}\label{rmk:weak_convergence_charp}
Notice that the weak convergence towards the eigenvaluation established by \refthm{weak_convergence} holds also over fields of positive characteristic.
This will be enough for proving the existence of dynamically stable models for non-finite germs having a non-quasimonomial eigenvaluation even in positive characteristic.
\end{rmk}

\section{Dynamics of non-invertible finite germs}\label{sec:dynamics_finite}

This section is devoted to the proof of \refthm{classification} for finite germs.
By \refthm{wahl}, $(X,x_0)$ is log canonical (lc).
We will analyze separately the cases when $(X,x_0)$ is log terminal (lt, which are the quotient singularities), when $(X,x_0)$ is not.
In the latter case, we will show that $f$ is necessarily superattracting (\refthm{superattracting_finite}).

\subsection{Quotient singularities}\label{ssec:dynamics_quotientsing}

Suppose first that $(X,x_0)$ is log terminal, i.e., a quotient singularity.
It is isomorphic to the quotient $(\C^2,0)/G$ of $\nC^2$ by a finite subgroup $G$ of $GL_2(\nC)$, acting freely on $\C^2\setminus\{0\}$.
We denote by $\pr\colon \C^2\to X$ the natural projection. Notice that when restricted to $\nC^2 \setminus \{0\}$, the map $\pr$ gives the universal covering of $X \setminus \{x_0\}$.

Let now $f \colon (X,x_0) \to (X,x_0)$ be a non-invertible germ.
While in the non-finite case, $f$ cannot in general be lifted to a holomorphic germ $g\colon (\nC^2,0) \to (\nC^2,0)$ (see \S\ref{ssec:example_quotient1}), it is always the case when $f$ is finite.
\begin{prop}
Let $f \colon (X,x_0) \to (X,x_0)$ be a \emph{finite} germ on a quotient singularity $(X,x_0)=(\nC^2,0)/G$. Let $\pr \colon (\nC^2,0) \to (X,x_0)$ be the natural projection.
Then there exists a finite germ $g \colon (\nC^2,0) \to (\nC^2,0)$ so that $f \circ \pr = \pr \circ g$.
\end{prop}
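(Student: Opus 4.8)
The plan is to use that $\pr$ restricts to the universal covering map of $X \setminus \{x_0\}$ and to lift $f$ across the puncture. First I would observe that since $G$ acts freely on $\nC^2 \setminus \{0\}$ and $\nC^2 \setminus \{0\}$ is simply connected (it is homotopy equivalent to $S^3$), the restriction $\pr \colon \nC^2 \setminus \{0\} \to X \setminus \{x_0\}$ is the universal covering map, with deck transformation group $G \cong \pi_1(X \setminus \{x_0\})$. Since $f$ is finite it contracts no curve through $x_0$, so in particular $f^{-1}(x_0) = \{x_0\}$ on a sufficiently small neighborhood $U$ of $x_0$, and $f$ restricts to a holomorphic map $f \colon U \setminus \{x_0\} \to X \setminus \{x_0\}$. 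I would then choose a ball $B \subset \pr^{-1}(U)$ around $0 \in \nC^2$. The punctured ball $B \setminus \{0\}$ is simply connected and locally path connected, so the lifting criterion for covering spaces applies to the composite $f \circ \pr \colon B \setminus \{0\} \to X \setminus \{x_0\}$: it lifts to a continuous map $\tilde{g} \colon B \setminus \{0\} \to \nC^2 \setminus \{0\}$ with $\pr \circ \tilde{g} = f \circ \pr$ on $B \setminus \{0\}$, and $\tilde g$ is automatically holomorphic, being a local lift of a holomorphic map through the local biholomorphism $\pr$.

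Next I would extend $\tilde{g}$ across $0$. Since $\pr$ is a finite, hence proper, map and $(f\circ\pr)(z) \to x_0$ as $z \to 0$, the image of $\tilde g$ near the puncture is contained in a relatively compact neighborhood of $0$; in particular $\tilde g$ is bounded near $0$. By the Riemann removable singularity theorem for bounded holomorphic maps on the complement of a point in $\nC^2$, $\tilde{g}$ extends to a holomorphic germ $g \colon (\nC^2, 0) \to \nC^2$; continuity of $\pr \circ g$ forces $\pr(g(0)) = x_0$, and using properness of $\pr$ together with $\tilde g \to 0$ along the puncture, in fact $g(0) = 0$. By continuity the identity $\pr \circ g = f \circ \pr$ then holds on all of $B$, i.e., as germs at $0$.

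It remains to check that $g$ is finite. If $g$ contracted an irreducible formal curve germ $(C, 0) \subset (\nC^2, 0)$ to $0$, then, $\pr$ being finite, $\pr(C)$ would be an irreducible curve germ through $x_0$ with $f(\pr(C)) = \pr(g(C)) = \{x_0\}$, contradicting the finiteness of $f$. Hence $g$ has no contracted curve and is finite, completing the proof. I expect the only points requiring genuine care to be the invocation of the lifting criterion — which crucially uses that $B \setminus \{0\}$ is simply connected, and hence that we are in dimension two — and the removable-singularity extension together with the verification that $g(0) = 0$; the remaining verifications are routine.
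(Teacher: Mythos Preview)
Your proof is correct and follows essentially the same strategy as the paper: use finiteness of $f$ to get $f^{-1}(x_0)=\{x_0\}$ on a small neighborhood, lift $f\circ\pr$ through the universal cover $\pr$ on the punctured space, and then extend the lift across $0$. The only minor differences are that the paper invokes Hartogs' theorem for the extension (which in dimension $\geq 2$ needs no boundedness argument), whereas you appeal to the bounded removable-singularity theorem, and the paper leaves the verifications $g(0)=0$ and ``$g$ is finite'' implicit where you spell them out.
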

\begin{proof}
Since $f$ is finite, $f^{-1}(x_0)$ is discrete, and there exists a neighborhood $U \subset X$ of $x_0$ such that $f\colon U \to f(U)$ satisfies $f^{-1}(x_0)=\{x_0\}$.
The map $f \circ \pr$, which sends $pr^{-1}(U)\setminus \{0\}$ to $f(U) \setminus \{x_0\}$, lifts to a holomorphic map $g \colon \pr^{-1}(U)\setminus \{0\} \to \pr^{-1}(f(U)) \setminus \{0\}$.
By Hartog's theorem, $g$ extends through $0$ to a map defined on $pr^{-1}(U)$, which clearly satisfies the wanted properties.
\end{proof}

The commutative diagram $f \circ \pr = \pr \circ g$ induces a commutative diagram on the action on valuative spaces, $f_\bullet \circ \pr_\bullet = \pr_\bullet \circ g_\bullet$.
In particular $\pr_\bullet$ acts as a semi-conjugation between $f_\bullet$ and $g_\bullet$.

By the result for non-invertible maps on smooth surfaces \cite[Theorem 3.1]{gignac-ruggiero:attractionrates}, we know that $g_\bullet$ satisfies case $2$  of \refthm{classification}.
Assume we are in the first case and let $\mu_\star$ be the unique eigenvaluation for $g_\bullet$.
Then it is easy to check that $\nu_\star=\pr_\bullet \mu_\star$ is the unique eigenvaluation for $f_\bullet$, and the orbit of any quasimonomial valuation converges to $\nu_\star$, strongly as far as $\mu_\star$ (and hence $\nu_\star$) has finite skewness.
Assume we are in the second case, and let $J$ be the segment of eigenvaluations for $g_\bullet^2$.
Then again $I=\pr_\bullet J$ is a segment of eigenvaluations for $f_\bullet^2$, and $f_\bullet^{2n} \nu \to I$ strongly for any quasimonomial valuation $\nu$.
This concludes the proof in this case.

\subsection{Non-lt singularities}

Recall that by \refthm{wahl}, for any non-invertible finite germ $f\colon(X,x_0) \to (X,x_0)$ on a lc non lt singularity, the Jacobian divisor $R_f$ is trivial.

\begin{lem}\label{lem:thinness_isometry}
Let $(X,x_0)$ be a lc non lt singularity, and $f \colon (X,x_0) \to (X,x_0)$ be a non-invertible finite germ.
Then the set $S=\{\nu \in \mc{V}_X\ |\ A(\nu)=0\}$ is totally invariant, and $f_\bullet|_S : S \to S$ is an isometry with respect to the angular distance $\rho$.
\end{lem}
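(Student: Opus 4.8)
The plan is to exploit the Jacobian formula together with the surjectivity of $f_\bullet$ for finite germs. Since $f$ is finite on a non-lt singularity, \refthm{wahl} gives $R_f = 0$, so the Jacobian formula \eqref{eqn:jacobian_formula_bullet} reduces to $c(f,\nu)A(f_\bullet\nu) = A(\nu)$ for all $\nu \in \mc{V}_X$. By \refprop{propertiesfbullet}(4) we have $c(f,\nu) \geq 1$, and by \refthm{nonpositive-discrepancies} the log discrepancy $A$ is $\leq 0$ on $\eskel{X}$; moreover $A \geq 0$ everywhere on $\mc{V}_X$ (since $(X,x_0)$ is lc). Hence $A(\nu) = 0$ forces $c(f,\nu)A(f_\bullet\nu) = 0$, and since $A(f_\bullet\nu) \geq 0$ and $c(f,\nu) \geq 1 > 0$, we get $A(f_\bullet\nu) = 0$. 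This shows $f_\bullet(S) \subseteq S$. For the reverse inclusion, note that if $A(f_\bullet\nu) = 0$ then $A(\nu) = c(f,\nu)\cdot 0 = 0$, so $f_\bullet^{-1}(S) \subseteq S$; combined with surjectivity of $f_\bullet$ (\refprop{propertiesfbullet}(3)) this yields $f_\bullet^{-1}(S) = S$, i.e. $S$ is totally invariant. As a byproduct, we also see that for $\nu \in S$ one has $A(\nu) = A(f_\bullet\nu) = 0$, which forces $c(f,\nu) = 1$ only if $A(\nu) \neq 0$ — so this particular argument does not immediately pin down $c(f,\nu)$ on $S$; I will need a separate argument for that, carried out below.

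The next step is to identify $S$ with (the valuations supported on) the essential skeleton $\eskel{X}$, and in particular to see that $S$ is a point, a segment, or a circle according to the classification of \S\ref{ssec:lc_and_lt}. By \reffig{dual_graphs} and the discussion following \refthm{lc_classification}, for lc non-lt singularities the zero set $A^{-1}(0)$ is exactly the set of red points drawn there: a single point for simple elliptic singularities and their quotients, a segment for quotient-cusp singularities, and a circle for cusp singularities. (One should double-check using the parameterization results of \S\ref{ssec:partial_order} and \refprop{big_tree}: off the skeleton $A$ is strictly increasing along the tree direction, so $A^{-1}(0)$ cannot escape $\skel{X}$; along the rational chains in $\skel{X} \setminus \eskel{X}$ the function $A$ is again not identically zero by the free-blowup computation $A(\nu_F) = A(\nu_E) + b_E^{-1}$ recalled just before \refprop{big_tree}, together with the boundary behavior, so $A^{-1}(0) \subseteq \eskel{X}$; conversely $A \leq 0$ on $\eskel{X}$ by \refthm{nonpositive-discrepancies}, and since $(X,x_0)$ is lc, $A \geq 0$, so $A \equiv 0$ on $\eskel{X}$.) This pins down the topological type of $S$.

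It remains to prove that $f_\bullet|_S$ is an isometry for the angular distance $\rho$. The key point is that since $S$ is totally invariant and $f_\bullet$ is surjective, $f_\bullet(S) = S$, and since $S$ is compact and connected of one of the three listed types, $f_\bullet|_S$ is a surjective continuous self-map. By \refthm{non-expansion}, $f_\bullet$ is $\rho$-non-increasing, so $f_\bullet|_S$ is a non-expanding surjection of $(S,\rho)$ onto itself. The hard part — and the main obstacle — is to upgrade "non-expanding surjection of a compact metric space onto itself" to "isometry." For a point this is vacuous; for a segment or a circle (with the $\rho$-metric, which by \refprop{log_alpha} and \reflem{additiverho} makes $S$ isometric to a compact interval or a circle of finite total length) one argues as follows: a non-expanding surjection of a compact interval $[0,L]$ onto itself must preserve the endpoints (an interior point cannot be the image of an endpoint under a map that is onto and does not increase distances, by a length/total-variation argument), hence is a non-expanding surjection fixing $\partial$, which forces it to be distance-preserving; for a circle of total length $L$, the total length cannot strictly decrease under a non-expanding surjection, so again equality must hold everywhere. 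I would make this precise via the observation that if $F : (S,\rho) \to (S,\rho)$ is onto and $1$-Lipschitz and $S$ is a finite metric graph that is a point, an interval, or a circle, then $\mathrm{len}(S) = \mathrm{len}(F(S)) \leq \mathrm{len}(S)$ with equality iff $F$ is a local isometry, and a surjective local isometry of such a graph is a global isometry. Alternatively, one can invoke that $f_\bullet|_S$ is conjugate, via the $\rho$-isometric identification of $S$ with a standard interval or circle, to a $1$-Lipschitz surjection thereof, and such maps are isometries — this is the step where I expect to spend the most care, and where the precise description of $(S,\rho)$ coming from \S\ref{ssec:partial_order}–\S\ref{angular_metric} is essential.
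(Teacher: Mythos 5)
Your proof follows the same route as the paper's: the Jacobian formula with $R_f=0$ (via \refthm{wahl}) gives total invariance of $S$, the classification in \S\ref{ssec:lc_and_lt} identifies $S$ as a point, segment, or circle, and the combination of $\rho$-non-expansion (\refthm{non-expansion}) with surjectivity of $f_\bullet|_S$ yields the isometry. The only cosmetic differences are that the paper silently invokes the general fact that a surjective $1$-Lipschitz self-map of a compact metric space is automatically an isometry, so the case-by-case interval/circle length argument is unnecessary, and your side remark about $c(f,\nu)$ ``not being pinned down'' is a dead end that never gets revisited and can be removed.
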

\begin{proof}
The Jacobian formula \eqref{eqn:jacobian_formula_bullet} states in this case that
$
A(f_\bullet \nu) = \frac{A(\nu)}{c(f,\nu)}
$.
In particular, the set $S$ is totally invariant by $f_\bullet$.
By \S\ref{ssec:lc_and_lt}, the set $S$ is either a point, a segment or a circle.
By \refthm{non-expansion}, the map $f_\bullet$ is non-expanding for the angular distance.
By \refprop{propertiesfbullet}, $f_\bullet|_S$ is surjective.
If follows that $f_\bullet|_S$ is an isometry.
\end{proof}

As for non-finite germs, to prove strong convergence to $S$ we need to show that $f$ is necessarily superattracting.
Notice that quotient singularities admit finite non-invertible germs which are not superattracting. Consider for example the cyclic group $G$ generated by $\Phi(x,y)=(\zeta x, \zeta^q y)$, with $\zeta$ a primitive $p$-root of unity, and $q$ coprime with $p$.
Then the map $g(x,y)=(\lambda x, y^d)$ commutes with $\Phi$ as far as $d \equiv 1$ modulo $p$, and defines a semi-superattracting germ on the cyclic quotient singularity $(\nC^2,0)/G$.
This cannot happen for other lc singularities.

\begin{thm}\label{thm:superattracting_finite}
Let $f \colon (X,x_0) \to (X,x_0)$ be a non-invertible finite germ on a normal surface singularity. Suppose that $(X,x_0)$ is lc not lt. Then $f$ is superattracting.
\end{thm}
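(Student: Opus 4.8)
\emph{Reduction to a bad orbit.} Recall that $f$ is superattracting iff $(f^n)^*\mf m\subseteq\mf m^2$ for $n$ large, which by the valuative characterisation of superattractivity means that there are $C>1$ and $n$ with $c(f^n,\nu)\ge C$ for all $\nu\in\mc V_X$. Since $\nu\mapsto c(f^n,\nu)=\nu\big((f^n)^*\mf m\big)$ is weakly continuous and $\ge 1$, and $\mc V_X$ is weakly compact, its minimum is attained; if $f$ were not superattracting this minimum would be $1$ for every $n$, and choosing $\nu^{(n)}$ realising it, multiplicativity $c(f^n,\nu)=\prod_{j<n}c(f,f_\bullet^j\nu)$ with each factor $\ge 1$ gives $c(f^m,\nu^{(n)})=1$ for $0\le m\le n$; a weak limit point $\nu$ of $(\nu^{(n)})_n$ then satisfies $c(f^m,\nu)=1$ for all $m$, equivalently $c(f,\nu_j)=1$ for all $j$, where $\nu_j:=f_\bullet^j\nu$. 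I will argue by contradiction starting from such an orbit. (The reduction also lets one assume $\nu$ quasimonomial or infinitely singular of finite thinness; curve semivaluations, and infinitely singular valuations of infinite thinness, are excluded by a separate shorter argument, which I omit in this sketch.)

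\emph{Constancy of $A$ and boundedness of $\alpha$ along the orbit.} By \refthm{wahl}, since $(X,x_0)$ is lc but not lt and $f$ is finite non-invertible, $R_f=0$, so the Jacobian formula \eqref{eqn:jacobian_formula_bullet} reads $c(f,\nu_j)\,A(\nu_{j+1})=A(\nu_j)$, whence $A(\nu_j)\equiv a$ for some $a\in[0,+\infty)$ (using $A\ge 0$ on $\mc V_X$ because $(X,x_0)$ is lc, and that types of semivaluations are preserved by $f_\bullet$, \refprop{propertiesfbullet}). Fix a log resolution $\pi_0$ of $\mf m$ and put $\hat\nu_j:=r_{\pi_0}(\nu_j)\in\mc S_{\pi_0}$. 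On the tree $\mc V_{X,\hat\nu_j}$ of \refprop{big_tree}, a free blowup raises $A$ by $b_E^{-1}$ and $\alpha$ by $b_E^{-2}\le b_E^{-1}$, so $\alpha(\nu_j)-\alpha(\hat\nu_j)\le A(\nu_j)-A(\hat\nu_j)$; as $\mc S_{\pi_0}$ is compact this yields $0<\min_{\mc S_{\pi_0}}\alpha\le\alpha(\nu_j)\le a+\max_{\mc S_{\pi_0}}\alpha-\min_{\mc S_{\pi_0}}A=:M<+\infty$ for all $j$, so $\nu_j\in\mc V_X^\alpha$. Finally, $c(f,\nu)=-Z(\nu)\cdot Z(f^*\mf m)$ by \refprop{evaluate-with-intersections}, a continuous and hence bounded function of $\nu\in\mc V_X$.

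\emph{The b-divisor recursion and the contradiction.} Since $f$ is finite, $\VC f=\emptyset$, so \eqref{eqn:pushforward_bdivisors}, \eqref{eqn:pullback_bdivisors} give $f_*Z(\nu_j)=Z(\nu_{j+1})$ and $f^*Z(\nu_{j+1})=m(f,\nu_j)\,Z(\nu_j)+\sum_{\mu}\tfrac{m(f,\mu)}{c(f,\mu)}Z(\mu)$, the sum over preimages $\mu\ne\nu_j$ of $\nu_{j+1}$; pairing with $-Z(\nu_j)$ and using the projection formula,
\[
\alpha(\nu_{j+1})=m(f,\nu_j)\,\alpha(\nu_j)+\sum_{\mu}\frac{m(f,\mu)}{c(f,\mu)}\bigl(-Z(\mu)\cdot Z(\nu_j)\bigr),
\]
with all summands positive. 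Thus $\alpha(\nu_j)$ is nondecreasing and bounded, hence convergent, so the increments tend to $0$; as $\alpha(\nu_j)\ge\min_{\mc S_{\pi_0}}\alpha>0$ this forces $m(f,\nu_j)=1$ for $j$ large, and since $\sum_{f_\bullet\mu=\nu_{j+1}}m(f,\mu)=e(f)\ge 2$ for finite maps there is a preimage $\mu_j\ne\nu_j$ of $\nu_{j+1}$ with $\tfrac{-Z(\mu_j)\cdot Z(\nu_j)}{c(f,\mu_j)}\to0$. The Jacobian formula gives $c(f,\mu_j)\,a=A(\mu_j)$. If $a=0$, then $A(\mu_j)=0$, so $\mu_j$ and $\nu_j$ both lie in $S=A^{-1}(0)$ while $f_\bullet\mu_j=f_\bullet\nu_j$, contradicting the injectivity of $f_\bullet|_S$ (an isometry for $\rho$ by \reflem{thinness_isometry}). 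If $a>0$, then $c(f,\mu_j)=A(\mu_j)/a$ is bounded, hence $A(\mu_j)$, hence (as above) $\alpha(\mu_j)$ are bounded; writing $-Z(\mu_j)\cdot Z(\nu_j)=\alpha(\nu_j)/\beta(\nu_j\mid\mu_j)$ with $\alpha(\nu_j)$ bounded below, $\tfrac{-Z(\mu_j)\cdot Z(\nu_j)}{c(f,\mu_j)}\to0$ forces $\beta(\nu_j\mid\mu_j)\to\infty$; but by \refprop{log_alpha} and \reflem{additiverho}, $\rho(\nu_j,\mu_j)\le\bigl(\log\alpha(\nu_j)-\log\alpha(\hat\nu_j)\bigr)+\operatorname{diam}_\rho(\mc S_{\pi_0})+\bigl(\log\alpha(\mu_j)-\log\alpha(\hat\mu_j)\bigr)$ is bounded, so $\beta(\nu_j\mid\mu_j)\le e^{\rho(\nu_j,\mu_j)}$ is bounded — a contradiction. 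Hence no such orbit exists, and $f$ is superattracting.

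\emph{Where the difficulty lies.} The conceptual key is the boundedness of $\alpha$ along the orbit, extracted from the fact that $R_f=0$ makes $A$ constant along the orbit. The genuinely delicate step is the case $a>0$ of the final argument: from the single asymptotic relation for the ``extra'' preimage $\mu_j$ one must simultaneously control the log discrepancy $A(\mu_j)$ — possible only because $R_f=0$ lets the Jacobian formula tie it to $c(f,\mu_j)$, which is globally bounded — and the angular distance $\rho(\nu_j,\mu_j)$, which one pins down through the $\log\alpha$-parameterisation of the valuation trees together with compactness of a fixed skeleton.
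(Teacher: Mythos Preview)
Your argument for orbits with $A(\nu)=a<\infty$ is correct and takes a genuinely different route from the paper: you work with an \emph{arbitrary} bad orbit and exploit the full pullback formula with all preimages of $\nu_{j+1}$, deriving a contradiction via the tension between bounded $\rho(\nu_j,\mu_j)$ and unbounded $\beta(\nu_j\mid\mu_j)$ in the case $a>0$. This is a nice idea.

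However, the gap you dismiss is real and not shorter than the rest of the argument. Your weak limit $\nu$ of the minimizers $\nu^{(n)}$ can perfectly well be a curve semivaluation or an infinitely singular valuation with $A(\nu)=+\infty$, and then your entire skewness machinery collapses: you cannot bound $\alpha(\nu_j)$, so the recursion gives nothing. For a curve semivaluation $\nu_C$ with $c(f^n,\nu_C)=1$ for all $n$, one only learns that the multiplicities $m(f^j(C))$ stabilize and $f$ eventually restricts to isomorphisms along the orbit of curves --- this does not readily yield a contradiction. Retracting to a fixed skeleton does not help either, since the critical skeleton for $c(f^n,\cdot)$ grows with $n$ and retraction does not commute with $f_\bullet$.

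The paper's proof sidesteps this entirely by working on $S=A^{-1}(0)$ from the start, where all valuations are quasimonomial. The decisive observation is that $S$ is \emph{totally} invariant (\reflem{thinness_isometry}): every preimage of a point of $S$ lies in $S$, and since $f_\bullet|_S$ is a bijection, $\nu$ is the \emph{unique} preimage of $f_\bullet\nu$. Your b-divisor recursion then collapses to the single-term identity relating $c(f,\nu)$, $\alpha(\nu)$, $\alpha(f_\bullet\nu)$ and $e(f)$; if some orbit in $S$ had $c\equiv 1$ one would get $\alpha(f_\bullet^n\nu)=e(f)^n\alpha(\nu)\to\infty$, impossible since $\alpha$ is bounded on the compact set $S$. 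A compactness-plus-continuity argument then gives $c(f^N,\cdot)\ge C>1$ on a neighbourhood $S_\eps$, and the open--closed basin argument finishes. No case split on $a$, and no ends of the tree, are ever needed.
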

\begin{proof}
Set $S=\{\nu \in \mc{V}_X\ |\ A(\nu)=0\}$.
By \reflem{thinness_isometry}, $f_\bullet|_S$ is an isometry with respect to $\rho$.
By \S\ref{ssec:lc_and_lt}, $S$ is either a point, a segment, or a circle.
Since $f_\bullet|_S$ is an isometry, it follows that it is either of finite order, or $S$ is a circle and $f_\bullet$ acts as an irrational rotation.

Let $\nu \in S$ be any valuation. Since $S$ are totally invariant and $f|_S$ is a bijection, we get $f_\bullet \nu \in S$ and $f_\bullet^{-1}(f_\bullet \nu)=\{\nu\}$. By \eqref{eqn:pushforward_bdivisors} and \eqref{eqn:pullback_bdivisors} we have
\begin{equation}\label{eqn:c_and_e}
c(f,\nu) Z(f_\bullet \nu) \cdot Z(f_\bullet\nu)
= Z(f_\bullet \nu) \cdot f_*Z(\nu)
=f^*Z(f_\bullet \nu) \cdot Z(\nu)
=e(f)Z(\nu) \cdot Z(\nu),
\end{equation}
where $e(f)$ denotes the topological degree of $f$.

Assume that $f_\bullet|_S$ has finite order.
Up to taking a suitable iterate, we may assume that $f_\bullet$ acts as the identity on $S$.
Suppose by contradiction that there exists a point $\nu \in S$ satisfying $c(f,\nu)=1$.
Then \eqref{eqn:c_and_e} applied to $\nu$ says that $e(f)=c(f,\nu)=1$, which is a contradiction, since finite maps with topological degree $1$ are invertible.

Assume now that $f_\bullet|_S$ is an irrational rotation on the circle $S$.
Suppose by contradiction that there exists a $\nu \in S$ so that $c(f,f_\bullet^n \nu)=1$ for all $n$.
We may find a subsequence $(f_\bullet^{n_k} \nu)_k$ of the orbit of $\nu$ which converges to $\nu$.
Then \eqref{eqn:c_and_e} tells us that
$$
e(f)^{n_k}Z(\nu) \cdot Z(\nu)=
Z(f_\bullet^{n_k} \nu) \cdot Z(f_\bullet^{n_k}\nu)
\to Z(\nu) \cdot Z(\nu),
$$
and again $e(f)=1$, a contradiction.

By continuity of $\nu \mapsto c(f^n,\nu)$ (with respect to the weak topology) and compacity of $S$, we infer that there exists $C>1$ and $N \gg 1$ so that $c(f^N,\nu) \geq C$ for all $\nu \in S_\eps$, where $S_\eps =\{\nu \in \mc{V}_X\ |\ A(\nu) < \eps\}$.
In particular, $c(f^n, \nu) \to +\infty$ for all $\nu \in S_\eps$.
By the Jacobian formula \eqref{eqn:jacobian_formula_bullet}, for any $s \in S_\eps$ we have that
$$
A(f_\bullet^n \nu) = \frac{A(\nu)}{c(f,\nu)} \to 0,
$$
and $S_\eps$ is contained in the basin of attraction $B_S^\alpha$ to $S$ (of valuations with finite skewness).
Hence $B_S^{\alpha}$ is open in $\mc{V}_X^\alpha$. But $B_S^\alpha$ is also closed in $\mc{V}_X^\alpha$ by \reflem{closed_basin}. By connectedness, $B_S^\alpha=\mc{V}_X^\alpha$.

In particular $c(f^n,\nu) \to \infty$ for any valuation of finite skewness, and $f$ is superattracting.
\end{proof}

As a consequence of the proof of \refthm{superattracting_finite}, we get strong convergence towards $S$.

\begin{cor}\label{cor:thinness_convergence}
Let $(X,x_0)$ be a lc non lt singularity, and $f \colon (X,x_0) \to (X,x_0)$ be a non-invertible finite germ.
Set $S=\{\nu \in \mc{V}_X\ |\ A(\nu)=0\}$. Then for any valuation $\nu \in \mc{V}_X$ of finite skewness, we have $f_\bullet^n \nu \to S$ with respect to the strong topology.
\end{cor}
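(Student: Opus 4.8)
The statement to prove is \refcor{thinness_convergence}: for a non-invertible finite germ $f$ on a log canonical, non log terminal surface singularity $(X,x_0)$, writing $S = \{\nu \in \mc{V}_X : A(\nu) = 0\}$, every valuation $\nu$ of finite skewness has $f_\bullet^n\nu \to S$ in the strong topology.

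\medskip

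\textbf{Proof.} The plan is to combine the superattracting property established in \refthm{superattracting_finite} with the Jacobian formula. First, by \refthm{wahl} the Jacobian divisor $R_f$ vanishes, so the Jacobian formula \eqref{eqn:jacobian_formula_bullet} reads $c(f,\nu)A(f_\bullet\nu) = A(\nu)$ for all $\nu$, and by \reflem{thinness_isometry} the set $S$ is totally invariant and $f_\bullet|_S$ is a $\rho$-isometry; by \S\ref{ssec:lc_and_lt}, $S$ is a point, a segment, or a circle, hence in particular compact. By \refthm{superattracting_finite} the map $f$ is superattracting, so there exist $C > 1$ and $N \geq 1$ with $c(f^N,\mu) \geq C$ for all $\mu \in \mc{V}_X$; consequently $c(f^n,\nu) \to +\infty$ for every $\nu$. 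Applying the iterated Jacobian formula $c(f^n,\nu)A(f^n_\bullet\nu) = A(\nu)$, we get $A(f^n_\bullet\nu) = A(\nu)/c(f^n,\nu) \to 0$ for every $\nu$ with $A(\nu) < +\infty$ (note $A(\nu) < +\infty$ whenever $\alpha(\nu) < +\infty$, by the proposition following \refprop{big_tree}).

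\medskip

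It remains to upgrade "$A(f^n_\bullet\nu) \to 0$" to "$f^n_\bullet\nu \to S$ in the strong topology", which is the point requiring care, since $A$ and the angular metric are not globally comparable. The argument in the proof of \refthm{superattracting_finite} already shows that the $\alpha$-basin $B_S^\alpha = \{\nu \in \mc{V}_X^\alpha : f^n_\bullet\nu \to S \text{ strongly}\}$ is all of $\mc{V}_X^\alpha$: it contains the strong neighborhood $S_\eps = \{A(\nu) < \eps\}$ of $S$ because on $S_\eps$ one has $c(f^n,\cdot)\to+\infty$ uniformly for some iterate and hence $A(f^n_\bullet\nu) \to 0$; this forces $f^n_\bullet\nu$ into arbitrarily small strong neighborhoods of $S$ since, on the finite tree structure of $\mc{V}_{X,\hat\nu}$ attached to any root $\hat\nu \in \mc S_{\pi_0}$, the function $A$ parameterizes each branch and $A = 0$ exactly on $S$ within the skeleton, so small $A$ of a point means small $\rho$-distance to $S$ (using \refprop{log_alpha} and \reflem{additiverho} to pass from the $\log\alpha$-parameterization to $\rho$). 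Thus $S_\eps \subseteq B_S^\alpha$, giving openness of $B_S^\alpha$ in $\mc{V}_X^\alpha$; \reflem{closed_basin} gives that $B_S^\alpha$ is closed in $\mc{V}_X^\alpha$; and $\mc{V}_X^\alpha$ is connected. Hence $B_S^\alpha = \mc{V}_X^\alpha$, which is exactly the assertion of the corollary.

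\medskip

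The main obstacle is the comparison, near $S$, between the log discrepancy $A$ and the angular distance $\rho(\cdot, S)$: a priori a valuation could have $A(\nu)$ tiny while sitting $\rho$-far from $S$ along a direction where $A$ varies slowly. This is handled by the structure theory of \S\ref{ssec:partial_order}–\S\ref{sec:log_disc}: decompose $\mc{V}_X$ via the retraction onto a fixed skeleton $\mc S_{\pi_0}$ containing $S$ (and containing the critical skeleton $\mc S_f$), so that off $\mc S_{\pi_0}$ the map $A$ is \emph{not} locally constant along any nondegenerate segment, while $c(f,\cdot)$ and hence the dynamics are controlled. On the skeleton $S_\eps \cap \mc S_{\pi_0}$, $A$ being small in the euclidean/parameterization sense is equivalent to $\rho$-closeness to $S$ because both $A$ and $\log\alpha$ induce the euclidean topology on finite subgraphs (the remark after \refprop{log_alpha} and the proposition after \refprop{big_tree}); off the skeleton, the finite-tree picture of \refprop{baby_tree}–\refprop{big_tree} together with \refprop{log_alpha} and \reflem{additiverho} lets one bound $\rho(\nu, S)$ in terms of $A(\nu) - A(r_{\pi_0}\nu)$ plus the corresponding quantity for the skeleton point, both of which are driven to $0$ by the factor $c(f^n,\cdot)^{-1}$. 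This is precisely the mechanism already used in \textit{Case 3} of \refthm{strong_convergence}, and the same bookkeeping applies verbatim here with the whole set $S$ in place of the single eigenvaluation $\nu_\star$. $\square$
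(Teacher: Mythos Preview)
Your second paragraph is exactly the paper's argument: the corollary is stated as a direct consequence of the proof of \refthm{superattracting_finite}, and indeed that proof already shows $S_\eps \cap \mc{V}_X^\alpha \subseteq B_S^\alpha$ (via the Jacobian formula on $S_\eps$, where $A<\eps$ is automatically finite), then uses \reflem{closed_basin} and connectedness of $\mc{V}_X^\alpha$ to conclude $B_S^\alpha = \mc{V}_X^\alpha$. Your third paragraph, making explicit why $A(\nu)\to 0$ forces $\rho(\nu,S)\to 0$, is a welcome elaboration of a step the paper leaves implicit; the decomposition via the retraction $r_{\pi_0}$ together with $d_A\geq d$ on each $\mc{V}_{X,\hat\nu}$ and the compactness of the skeleton is the right way to see it.

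There is, however, a genuine error in your first paragraph: the parenthetical ``$A(\nu) < +\infty$ whenever $\alpha(\nu) < +\infty$'' has the implication backwards. The proposition following \refprop{big_tree} says $d_A \geq d$, hence $A(\nu)<+\infty \Rightarrow \alpha(\nu)<+\infty$, and the paper explicitly remarks that the converse can fail (there exist infinitely singular valuations with finite skewness and infinite log discrepancy). So you cannot conclude $A(f_\bullet^n\nu)\to 0$ directly from the iterated Jacobian formula for \emph{every} $\nu$ of finite skewness. Fortunately this claim is not load-bearing: your open--closed--connected argument in the second paragraph only applies the Jacobian formula on $S_\eps$, where $A<\eps$ by definition, and then extends to all of $\mc{V}_X^\alpha$ by connectedness. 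Simply delete the first paragraph's concluding parenthetical (or the whole paragraph, since it is superseded by the next one), and the proof is correct and matches the paper's.
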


We have seen in \S\ref{ssec:lc_and_lt} that $S$ can be either a point, a segment or a circle, according to whether $(X,x_0)$ is (a quotient of) a simple elliptic singularity, a quotient-cusp singularity, or a cusp singularity.

In the first case, \refcor{thinness_convergence} gives directly case $3$ of the statement of \refthm{classification}.
In the second and third cases, we conclude by the following proposition.

\begin{prop}
Let $f\colon(X,x_0) \to (X,x_0)$ be a non-invertible finite germ. Let $S \subset \mc{V}_X$ be a (closed, connected) set of normalized semivaluations, totally invariant for $f_\bullet$.
Denote by $r_S : \mc{V}_X \to S$ the retraction to $S$.
Then for any valuation $\nu \in \mc{V}_X$ such that $f_\bullet^n \nu \to S$, we have $\rho(f_\bullet^n,f_\bullet^n r_S \nu) \to 0$. 
\end{prop}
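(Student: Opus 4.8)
The statement to prove is: for a non-invertible finite germ $f$ and a closed connected totally invariant set $S \subset \mc{V}_X$ with retraction $r_S$, if $\nu \in \mc{V}_X$ satisfies $f_\bullet^n\nu \to S$, then $\rho(f_\bullet^n\nu, f_\bullet^n r_S\nu) \to 0$.

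My plan is the following. First I would observe that by total invariance of $S$ and surjectivity of $f_\bullet$ for finite germs (\refprop{propertiesfbullet}), we have $f_\bullet(S) = S$; moreover since $r_S$ is a retraction onto a connected subgraph, $r_S$ commutes with $f_\bullet$ in the appropriate sense, namely $r_S f_\bullet \nu = f_\bullet r_S \nu$ for $\nu$ in a neighborhood of $S$ where the relevant tangent maps are well-behaved. The crucial structural point to extract is a \emph{disconnection} property: for any $\nu \in \mc{V}_X$, the point $r_S \nu$ disconnects $\nu$ from every point of $S$ other than $r_S\nu$ itself (this is essentially the definition of the retraction onto a connected subgraph in an $\nR$-tree-like space). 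I would combine this with the equality case of \refthm{strong_contraction}: $\rho(f_\bullet\mu_1, f_\bullet\mu_2) = \rho(\mu_1,\mu_2)$ precisely when $\mu_1$ disconnects $\mu_2$ from every preimage of $f_\bullet\mu_1$, and symmetrically.

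The key step is then to apply \refthm{strong_contraction} with $\mu_1 = f_\bullet^{n-1} r_S\nu \in S$ (or rather with the roles arranged so the disconnection hypothesis involving $r_S$ holds). Since $f_\bullet^{n-1} r_S\nu$ lies in $S$ and $S$ is totally invariant, every preimage of $f_\bullet(f_\bullet^{n-1}r_S\nu) = f_\bullet^n r_S\nu$ that lies outside $S$ is separated from $S$; but I want to compare with $f_\bullet^{n-1}\nu$, whose retraction onto $S$ is $f_\bullet^{n-1} r_S\nu$. Thus $f_\bullet^{n-1}r_S\nu$ disconnects $f_\bullet^{n-1}\nu$ from every point of $S$, in particular from every preimage of $f_\bullet^n r_S\nu$ lying in $S$; preimages lying outside $S$ must be handled by noting the retraction of such a preimage onto $S$ is again $f_\bullet^{n-1}r_S\nu$ or lies on the same side. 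This should force the equality $\rho(f_\bullet^n\nu, f_\bullet^n r_S\nu) = \rho(f_\bullet^{n-1}\nu, f_\bullet^{n-1}r_S\nu)$ for \emph{all} $n$, so the sequence $\rho(f_\bullet^n\nu, f_\bullet^n r_S\nu)$ is in fact constant. The conclusion $\to 0$ then follows because $f_\bullet^n\nu \to S$: taking $\mu_n = r_S(f_\bullet^n\nu)$ we have $\rho(f_\bullet^n\nu, \mu_n) \to 0$ by hypothesis (the distance to $S$ is realized, up to the retraction, precisely by $\rho(f_\bullet^n\nu, r_S f_\bullet^n\nu)$), and since $r_S f_\bullet^n\nu = f_\bullet^n r_S\nu$ we get $\rho(f_\bullet^n\nu, f_\bullet^n r_S\nu) \to 0$; combined with constancy, every term is $0$ — or more carefully, one just directly gets convergence to $0$ without needing constancy, and the monotonicity from \refthm{non-expansion} plus $f_\bullet^n\nu \to S$ is already enough once one identifies $\rho(f_\bullet^n\nu, f_\bullet^n r_S\nu)$ with the distance $\rho(f_\bullet^n\nu, S)$.

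Actually, reflecting further, the cleanest route avoids the equality analysis: I would show that $\rho(f_\bullet^n\nu, S) = \rho(f_\bullet^n\nu, r_S(f_\bullet^n\nu))$ (distance to a connected subgraph is realized at the retraction, a general fact about the angular metric following from \reflem{additiverho} and \refprop{log_alpha}), then use $r_S(f_\bullet^n\nu) = f_\bullet^n(r_S\nu)$, which holds because $f_\bullet$ maps $S$ onto $S$ and maps the "branch" hanging off $r_S\nu$ towards $\nu$ into the branch hanging off $f_\bullet(r_S\nu)$ — a consequence of the disconnection property together with \refthm{non-expansion} (if $r_S\nu$ disconnects $\nu$ from a point $s\in S$, then $f_\bullet r_S\nu$ disconnects $f_\bullet\nu$ from $f_\bullet s$, since otherwise the angular distance would strictly increase somewhere, contradicting non-expansion and the fact that $f_\bullet|_S$ is an isometry in case 3). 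Then $\rho(f_\bullet^n\nu, f_\bullet^n r_S\nu) = \rho(f_\bullet^n\nu, r_S f_\bullet^n\nu) = \rho(f_\bullet^n\nu, S) \to 0$ by the assumption $f_\bullet^n\nu \to S$.

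The main obstacle I anticipate is justifying the commutation $r_S(f_\bullet^n\nu) = f_\bullet^n(r_S\nu)$ rigorously — i.e., that $f_\bullet$ does not "fold" the branch at $r_S\nu$ back across $S$. This is where the hypothesis that $f$ is finite (so $f_\bullet$ is surjective and $S$ is totally invariant, and by \reflem{thinness_isometry}-type arguments $f_\bullet|_S$ behaves like an isometry) is essential, and it is exactly the content needed from \refthm{strong_contraction}: any failure of commutation would place a preimage of a point of $S$ on the wrong side of $r_S\nu$, and then the disconnection conditions in the equality case of \refthm{strong_contraction} fail, which would only help (strict decrease) — but one must be careful that strict decrease at one step, combined with non-expansion thereafter, is still compatible with $f_\bullet^n\nu\to S$; the real point is that it cannot happen that $f_\bullet$ maps $r_S\nu \in S$ off $S$, and total invariance handles this. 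I would write this up by first proving the auxiliary lemma "$r_S$ is $f_\bullet$-equivariant" using total invariance and the disconnection characterization, and then the main statement is two lines.
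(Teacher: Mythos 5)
You correctly isolate the key step — the $f_\bullet$-equivariance $r_S f_\bullet^n\nu = f_\bullet^n r_S\nu$ — and this is exactly the claim the paper reduces the statement to. Once that claim is in hand the conclusion is indeed a one-liner, since $\rho(f_\bullet^n\nu, f_\bullet^n r_S\nu) = \rho(f_\bullet^n\nu, r_S f_\bullet^n\nu) = \rho(f_\bullet^n\nu, S) \to 0$ by hypothesis. So your high-level plan is sound and matches the paper's structure.

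However, your proposed \emph{proof} of the commutation lemma has a gap. The route via the equality case of \refthm{strong_contraction} does not close: as you yourself note, a failure of the disconnection hypotheses there only yields strict contraction somewhere, which is entirely compatible with $f_\bullet^n\nu\to S$ and hence provides no contradiction. Your fallback remark that ``the real point is that $f_\bullet$ cannot map $r_S\nu \in S$ off $S$'' misidentifies the crux: that $f_\bullet(S)\subseteq S$ is trivial and is not where total invariance does its work. The relevant half of total invariance is $f_\bullet^{-1}(S)\subseteq S$, i.e.\ \emph{a point outside $S$ cannot map into $S$}. The paper's argument is a short topological one exploiting exactly this: take a minimal $n$ for which the commutation fails and replace $\nu$ by $f_\bullet^{n-1}\nu$ to assume $n=1$; set $\nu_\star = r_S\nu$. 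Since $\nu\notin S$ and $S$ is totally invariant, $f_\bullet\nu\notin S$. If $r_S f_\bullet\nu \neq f_\bullet\nu_\star$, then the connected image $f_\bullet([\nu_\star,\nu])$ joins $f_\bullet\nu_\star\in S$ to a point $f_\bullet\nu$ lying in a component of $\mc{V}_X\setminus S$ whose adherent point on $S$ is $r_S f_\bullet\nu\neq f_\bullet\nu_\star$; by continuity the image must therefore meet $S$ at the image of some $\mu\in(\nu_\star,\nu)$. But every such $\mu$ lies off $S$, so $f_\bullet\mu\in S$ contradicts $f_\bullet^{-1}(S)=S$. No appeal to the angular metric, non-expansion, or the isometry of $f_\bullet$ on $S$ is needed for this lemma — those tools are not the right ones here, and you should replace the metric-based argument for equivariance by this continuity-and-connectedness argument.
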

\begin{proof}
We claim that $r_S f_\bullet^n \nu = f_\bullet^n r_S\nu$ for any $n \in \nN^*$.
This implies the statement.

If $\nu \in S$, by invariance of $S$ we also have $f_\bullet^n \nu \in S$, and there is nothing to prove. We assume that $\nu \not \in S$, and set $\nu_\star:=r_S \nu \neq \nu$.
Suppose by contradiction that there exists $n \in \nN^*$ so that $r_S f_\bullet^n \nu \neq f_\bullet^n \nu_\star$.
Take a minimal $n$ satisfying this condition. Up to replacing $\nu$ by $f_\bullet^{n-1} \nu$, we may assume that $n=1$.

Consider the segment $[\nu_\star, \nu]$.
Notice that by hypothesis, $f_\bullet \nu$ and $f_\bullet \nu_\star$ belong to different connected components of $\mc{V}_X \setminus S$.
Since $f_\bullet$ is continuous, there exists $\mu \in (\nu_\star, \nu)$ so that $f_\bullet \mu \in S$. This contradicts the total invariance of $S$, since $\mu \not \in S$.
\end{proof}

\subsection{Irrational rotations on cusp singularities}

We conclude by showing that irrational rotations occur on any cusp singularity.
The examples given are a generalization of \cite[\S 2.5]{favre:holoselfmapssingratsurf}.

\begin{prop}\label{prop:irrational_rotations}
For any cusp singularity $(X,x_0)$, there exists a finite germ $f\colon (X,x_0) \to (X,x_0)$ so that the action of $f_\bullet$ on the set $S$ of normalized valuations of zero log discrepancy is conjugated to an irrational rotation.
\end{prop}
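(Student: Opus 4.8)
The strategy is to exploit the arithmetic description of cusp singularities recalled in the appendix (\S\ref{sec:cusps}). A cusp singularity $(X,x_0)$ is determined, up to analytic isomorphism, by a periodic sequence of integers $(b_i)_{i \in \mathbb{Z}/n\mathbb{Z}}$ with each $b_i \geq 2$ and at least one $b_i \geq 3$; equivalently, it arises from the Hilbert modular construction associated to a real quadratic field $K$ together with a ``module'' $M \subset K$ and a totally positive unit $\varepsilon \in \mathcal{O}_K^{+,\times}$ acting on the upper half-plane picture, the cycle $(b_i)$ being the ``minus continued fraction'' expansion of the relevant quadratic irrationality. Favre's construction in \cite[\S 2.5]{favre:holoselfmapssingratsurf} produces, from multiplication by a totally positive unit, a finite non-invertible endomorphism of the cusp; I would first recall this construction in the generality needed here, namely that for any cusp there is a unit $\varepsilon_0$ (a fundamental totally positive unit, or a suitable power) whose multiplication action descends to a finite germ $f \colon (X,x_0) \to (X,x_0)$.

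First I would identify the set $S = A^{-1}(0) \subset \mathcal{V}_X$ with the embedded essential skeleton $\widehat{\mathcal{S}}_X$, which for a cusp is a circle whose vertices are the divisorial valuations $\nu_{E_i}$ of the exceptional primes $E_1,\dots,E_n$ of the minimal good resolution, joined in a cycle (see \S\ref{ssec:lc_and_lt} and \refthm{nonpositive-discrepancies}, together with the fact that on a cusp $A \equiv 0$ exactly on this circle). The key point is that the angular metric $\rho$ restricted to $S$ gives $S$ the structure of a metric circle of some finite total length $L$, computable from the intersection matrix $(E_i \cdot E_j)$ via the formulas \eqref{eqn:skewness_formula2}–\eqref{eqn:relative_skewness_formula2}; concretely, $\rho(\nu_{E_i},\nu_{E_{i+1}})$ along the edge is governed by the self-intersections $b_i$. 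By \reflem{thinness_isometry}, $f_\bullet|_S \colon S \to S$ is a $\rho$-isometry of this circle, hence is either a rotation or a reflection; since $f$ is non-invertible and orientation-preserving on the resolution picture (it is a degree-$>1$ covering compatible with the cyclic ordering of the $E_i$, by \refprop{divisorial_image} applied around the cycle), it must be a rotation by some angle $\theta = \theta(f) \in \mathbb{R}/L\mathbb{Z}$.

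The heart of the matter is then to compute $\theta(f)$ and show it can be made irrational relative to $L$. Here I would trace through the Hilbert-modular model: the circle $S$ is naturally a ``period'' of the action, and under the identification of $S$ with (a fundamental domain for the $\varepsilon$-action on) a geodesic or horocyclic object in the modular picture, the rotation induced by multiplication by $\varepsilon_0$ corresponds to translation by $\log|\sigma(\varepsilon_0)|$ for an embedding $\sigma \colon K \hookrightarrow \mathbb{R}$, while $L$ corresponds to $\log|\sigma(\varepsilon)|$ where $\varepsilon$ is the unit defining the cusp. Thus $\theta/L = \log|\sigma(\varepsilon_0)| / \log|\sigma(\varepsilon)|$. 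If $\varepsilon_0$ and $\varepsilon$ are multiplicatively independent in $\mathcal{O}_K^{\times}/\{\pm 1\}$ — which, since this group has rank one, means $\varepsilon_0$ is not a rational power of $\varepsilon$ — this ratio is irrational; and one can always arrange this: taking $\varepsilon_0$ to be a fundamental totally positive unit, $\varepsilon$ is then a \emph{power} $\varepsilon_0^k$ of it (the cusp construction only uses units preserving the module $M$, which form a finite-index subgroup), so in fact $\theta/L = 1/k'$ for some integer — \emph{rational}, which is the wrong direction. The correct fix, and the genuinely delicate step, is to instead enlarge: replace $(X,x_0)$'s defining data so that $f$ comes from a unit $\varepsilon_0$ lying in a \emph{larger} order or a unit of $K$ not in the stabilizer of $M$ for the cusp's own module but acting on a commensurable module, giving $\log|\sigma(\varepsilon_0)|$ incommensurable with $L$. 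I expect the main obstacle to be precisely this bookkeeping: exhibiting, for an arbitrary prescribed cycle $(b_i)$, a concrete totally positive unit whose log-ratio against the period is irrational, and checking its multiplication genuinely descends to a \emph{finite} (not merely meromorphic) self-germ of that particular $(X,x_0)$. Once $\theta/L \notin \mathbb{Q}$ is secured, the orbit of any vertex $\nu_{E_i}$ under $f_\bullet|_S$ is equidistributed and in particular infinite, so $f_\bullet|_S$ is conjugate (via the arc-length parameterization of $(S,\rho)$) to the irrational rotation $x \mapsto x + \theta/L$ on $\mathbb{R}/\mathbb{Z}$, completing the proof.
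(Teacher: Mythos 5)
There is a genuine gap, and it is precisely at the point you flagged as ``the genuinely delicate step.'' Your setup is sound: the arithmetic picture of the cusp, the identification of $S=A^{-1}(0)$ with the skeleton circle, the use of \reflem{thinness_isometry} to see that $f_\bullet|_S$ is a $\rho$-isometry, and (correctly) the observation that the positive unit group of a real quadratic field has rank one, so if $f$ comes from multiplication by a totally positive \emph{unit} $\varepsilon_0$, the rotation number $\log(\varepsilon_0/\varepsilon_0')/\log(\varepsilon/\varepsilon')$ is rational. Where the argument breaks is that you then try to salvage the unit-based construction by passing to a larger order or a commensurable module. This does not lead anywhere: any unit of any order in $K=\nQ(\sqrt{d})$ still lies in the rank-one group $\mf{o}^\times$, so the incommensurability you need cannot come from units. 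Worse, by \refrmk{Qinteger} the germ $f_\alpha$ has topological degree $Q(\alpha)=\alpha\alpha'$, so taking $\alpha$ a unit gives $Q(\alpha)=1$, i.e.\ an \emph{automorphism}, not a (non-invertible) finite germ at all; the construction would never produce the object the proposition is about.

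The paper's proof abandons units entirely. Following \S\ref{ssec:arith_finitegerm}, one takes a totally positive \emph{non-unit} $\alpha\in K$ preserving the lattice, $\alpha N_\omega\subseteq N_\omega$ (and $\alpha\in\varepsilon^{-1}N_\omega$); then $f_\alpha$ is a finite germ of topological degree $Q(\alpha)>1$, and $(f_\alpha)_\bullet$ acts on $\mc{S}_X\cong\nR/\nZ$ by translation by $\beta=\frac{\log(\alpha/\alpha')}{2\log\varepsilon}$. Writing $\varepsilon=a+b\sqrt{d}$ one sets $\alpha=p+b\sqrt{d}$ with $p\in\nN$ large: then $\alpha=(p-a)+\varepsilon\in N_\omega$, $\varepsilon\alpha\in N_\omega$, and $\alpha$ is totally positive for $p>a$. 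The irrationality of $\beta$ is then forced by integrality, not by unit theory: $\beta\in\nQ$ would give $(\alpha/\alpha')^m=\varepsilon^{2n}$ for some $m,n$, and $\varepsilon^{2n}$ is an algebraic integer; but a direct computation shows $\alpha/\alpha'=\frac{p^2+db^2+2bp\sqrt{d}}{p^2-db^2}$ is \emph{not} an algebraic integer once $p$ is large, and since $\mc{O}_K$ is integrally closed no power $(\alpha/\alpha')^m$ can be integral either. So $\beta\notin\nQ$. This is the idea missing from your proposal: allowing $\alpha$ to be a non-unit (hence $f_\alpha$ genuinely non-invertible) trades the rigid cyclic structure of units for the much softer integrality obstruction, which is easy to violate.
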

\begin{proof}
Any cusp singularity $(X,x_0)$ is analytically isomorphic to one constructed as in \S\ref{ssec:arith_cusp}.
It depends on a square-free positive integer $d \geq 2$, a totally positive element $\omega \in \nQ(\sqrt{d})$, generating a rank $2$ free $\nZ$-module $N_\omega=\nZ \oplus \omega \nZ$, and a totally positive unit $\eps=a+b\sqrt{d}$ so that $\eps N_\omega = N_\omega$ (in particular, $\eps^n \in N_\omega$ for all $n \in \nZ$). Notice that $a$ and $b$ are not necessarily integers depending on $d$, but $2a$ and $2b$ are always positive integers.

We consider now the finite endomorphism $f_\alpha$ associated to a totally positive element $\alpha \in N_\omega \cap \eps^{-1} N_\omega$ (see \S\ref{ssec:arith_finitegerm}).
The action of $(f_\alpha)_\bullet$ on the cycle $\mc{S}_X \cong \nR/\nZ$ is conjugated to the translation of
$$
\beta=\frac{\log(\alpha/\alpha')}{2\log \eps}
$$
modulo $\nZ$.

To prove the statement, it suffice to find a $\alpha \in N_\omega \cap \eps^{-1} N_\omega$ which is totally positive and so that $\beta \in \nR\setminus \nQ$.
Notice that $\beta \in \nQ$ if and only if some power of $\alpha/\alpha'$ is an integer multiple of a power of $\eps^2$.
Write $\eps=a+b\sqrt{d}$, and set consider $\alpha = p+b\sqrt{d}$, where $p \in \nN$ is to determine.
Notice that $\alpha = p-a+\eps \in N_\omega$ and $\eps \alpha = \eps^2 + (p-a)\eps \in N_\omega$. Moreover, $\alpha$ is totally positive whenever $p > a$, since in this case it is given by a positive linear combination of totally positive numbers.
By direct computation,
$$
\frac{\alpha}{\alpha'}=\frac{p+b\sqrt{d}}{p-b\sqrt{d}}=\frac{p^2+db^2 + 2bp\sqrt{d}}{p^2-db^2}.
$$
Notice that for $p$ big enough, $2\abs{p^2-db^2} > \abs{2bp}$. It follows that for such $p$ the number $\frac{\alpha}{\alpha'}$ is not integral in $\nQ(\sqrt{d})$.
The same holds for any power of $\alpha/\alpha'$.
Since $\eps$ is integral in $\nQ(\sqrt{d})$, so is any positive multiple of any power of $\eps$.
It follows that $\beta$ is irrational, and we are done.
\end{proof}

\section{Algebraic stability}\label{sec:algebraic_stability}

This section is devoted to proving \refthm{geometricstability}.
First, we introduce a definition to simplify the statement.
\begin{defi}
Let $f\colon (X,x_0) \to (X,x_0)$ be a dominant non-invertible germ.
We say that a birational model $\pi\colon X_\pi \to (X,x_0)$ is a \emph{geometrically stable} model for $f$ if the lift $f_\pi \colon X_\pi \dashrightarrow X_\pi$ has the following property. For every exceptional prime $E$ of $X_\pi$, $f_\pi^n (E)$ is an indeterminacy point of $f_\pi$ for at most finitely many $n$.
\end{defi}

Geometrical stability should be thought as the right concept of algebraic stability in the local setting.
In fact, from geometrical stability one can recover easily algebraic stability.

\begin{lem}\label{lem:pullback_is_functorial}
Let $f\colon(X,x_0) \to (Y,y_0)$ and $g\colon(Y,y_0) \to (Z,z_0)$ be two dominant germs between normal surface singularities. Let $\pi\colon X_\pi \to (X,x_0)$, $\varpi\colon Y_\varpi \to (Y,y_0)$ and $\eta \colon Z_\eta \to (Z,z_0)$ be good resolutions. Set $\wt{f}=\varpi^{-1}\circ f \circ \pi \colon X_\pi \dashrightarrow Y_\varpi$ and $\wt{g}=\eta^{-1}\circ g \circ \varpi \colon Y_\varpi \dashrightarrow Z_\eta$.
If for any exceptional prime $E \in \varGamma_\pi^*$ we have that $\wt{f}(E)$ is not an indeterminacy point of $\wt{g}$, then $(g \circ f)^* = f^* \circ g^*$.
\end{lem}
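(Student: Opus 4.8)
The plan is to prove functoriality of pullback by comparing both $(g\circ f)^*$ and $f^*\circ g^*$ directly on a $\Z$-basis of $\Ediv(\eta)$, namely the classes $\check{E'}$ as $E'$ runs over the exceptional primes of $\eta$. The central point is that the pullback operators for $f$, $g$, and $g\circ f$ are all defined (as in \S\ref{ssec:action_dual_divisors}) by first passing to good resolutions that make the relevant lifts regular, performing the naive pullback of Cartier divisors, and then pushing forward; the only obstruction to $(g\circ f)^* = f^*\circ g^*$ is that the naive composition $\wt g\circ \wt f$ need not be regular at points where $\wt f$ sends an exceptional prime of $X_\pi$ into the indeterminacy locus of $\wt g$. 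The hypothesis rules exactly this out.

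First I would set up the diagram carefully. Since $\wt f$ is merely meromorphic, I would choose a good resolution $\pi'\geq \pi$ so that $h := \varpi^{-1}\circ f\circ \pi'\colon X_{\pi'}\to Y_\varpi$ is regular, so that $f^* = (\eta_{\pi\pi'})_*\circ h^*$ by definition. Likewise choose $\varpi'\geq \varpi$ so that $k := \eta^{-1}\circ g\circ \varpi'\colon Y_{\varpi'}\to Z_\eta$ is regular. The key geometric claim is that, under the stated hypothesis, one can find a good resolution $\pi''\geq \pi'$ (and a corresponding $\varpi''\geq\varpi'$) such that the lift $\ell := \varpi''^{-1}\circ f\circ \pi''$ is regular and \emph{does not map any exceptional prime of $X_{\pi''}$ into $\mathrm{Ind}(\wt g\text{-lift on }Y_{\varpi''})$}; this is exactly where we use that $\wt f(E)\notin \mathrm{Ind}(\wt g)$ for each exceptional prime $E\in\varGamma_\pi^*$ — passing to higher models only subdivides exceptional primes and creates new ones whose images are points already lying on, hence in the regular locus over, the old images, so the "bad" locus cannot reappear. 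Then the composition $k''\circ \ell\colon X_{\pi''}\to Z_\eta$ of regular maps is regular and is a lift of $g\circ f$, so $(g\circ f)^* = (\eta_{\pi\pi''})_*\circ(k''\circ\ell)^* = (\eta_{\pi\pi''})_*\circ\ell^*\circ k''^*$ using functoriality of pullback for genuinely regular maps (the projection formula / the fact that $(\beta\circ\alpha)^* = \alpha^*\circ\beta^*$ for morphisms).

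Next I would identify $(\eta_{\pi\pi''})_*\circ\ell^*\circ k''^*$ with $f^*\circ g^*$. Starting from $g^* = (\eta_{\varpi\varpi''})_*\circ k''^*$ on $\Ediv(\eta)$, I need $f^*\circ(\eta_{\varpi\varpi''})_* = (\eta_{\pi\pi''})_*\circ\ell^*$ as maps $\Ediv(\varpi'')\to\Ediv(\pi)$. This is a compatibility statement for the pullback $f^*$ under refining the target resolution, which follows from the definitions together with the projection formula: compute the intersection of both sides with an arbitrary exceptional prime of $X_\pi$, lift everything to $X_{\pi''}$, and use that $\ell$ is a regular lift of $f$ relating $X_{\pi''}$ and $Y_{\varpi''}$, while $(\eta_{\varpi\varpi''})_*$ is push-forward on the target and $(\eta_{\pi\pi''})_*$ on the source. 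I expect this bookkeeping — tracking the several resolutions $\pi,\pi',\pi'',\varpi,\varpi',\varpi''$ and checking the compatibilities via the projection formula — to be routine, because each individual identity is standard; the genuinely load-bearing step, and the main obstacle, is the geometric claim that the hypothesis "$\wt f(E)\notin\mathrm{Ind}(\wt g)$ for exceptional primes $E$" is stable under passing to higher models of both $X$ and $Y$, i.e.\ that one really can simultaneously make $\wt g\circ\wt f$ regular without introducing new exceptional primes whose $\wt f$-image lands in an indeterminacy point of $\wt g$. I would prove this by an explicit local analysis at each point of $\mathrm{Ind}(\wt f)$ and $\mathrm{Ind}(\wt g)$, using \refprop{detecting_holomorphicity} to control where the lifts become holomorphic in terms of the containments $f_\bullet(U_X(p))\subseteq U_Y(q)$.
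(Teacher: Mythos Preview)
The paper does not actually prove this lemma; it only remarks that the argument is a straightforward adaptation of \cite[Lemma~5.1]{gignac-ruggiero:attractionrates}. Your outline can be completed to a correct proof, but you have mislocated where the hypothesis enters, and as a result you overstate the difficulty.

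Once $\varpi''\geq\varpi'$, the lift $k'':=\eta^{-1}\circ g\circ\varpi'' = k\circ\eta_{\varpi'\varpi''}$ is already regular, so $\mathrm{Ind}(k'')=\emptyset$ and your ``key geometric claim'' is vacuous; the hypothesis plays no role there, and there is no ``stability under higher models'' to establish. The hypothesis is used precisely in what you call routine bookkeeping, namely in verifying $f^*\circ(\eta_{\varpi\varpi''})_* = (\eta_{\pi\pi''})_*\circ\ell^*$. Since $\eta_{\varpi\varpi''}\circ\ell$ is a regular lift of $f$, one has $f^* = (\eta_{\pi\pi''})_*\circ\ell^*\circ\eta_{\varpi\varpi''}^*$, so the discrepancy between the two sides is $(\eta_{\pi\pi''})_*\circ\ell^*$ applied to divisors in the image of $\eta_{\varpi\varpi''}^*(\eta_{\varpi\varpi''})_* - \id$, which are supported on the $\eta_{\varpi\varpi''}$-exceptional primes of $Y_{\varpi''}$. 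Choosing $\varpi''=\varpi'$ to resolve exactly $\mathrm{Ind}(\wt g)$ and nothing more, every such prime $R$ lies over a point of $\mathrm{Ind}(\wt g)$. If a component $P$ of $\ell^*R$ were not $\eta_{\pi\pi''}$-exceptional, it would be the strict transform of some $E\in\varGamma_\pi^*$, and then $(\eta_{\varpi\varpi''}\circ\ell)(P)=\wt f(E)$ would lie in $\mathrm{Ind}(\wt g)$, contradicting the hypothesis. Hence $(\eta_{\pi\pi''})_*\ell^*R=0$, and the identity follows.

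In short: take $\varpi'$ resolving $\mathrm{Ind}(\wt g)$, then $\pi'$ making $\ell\colon X_{\pi'}\to Y_{\varpi'}$ regular, and compute as above. No local analysis and no appeal to \refprop{detecting_holomorphicity} are needed.
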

The proof of \reflem{pullback_is_functorial} is a straightforward adaptation of the arguments used in \cite[Lemma 5.1]{gignac-ruggiero:attractionrates}.
As an immediate consequence, we get: 

\begin{prop}\label{prop:geometrical_and_algebraic_stability}
Let $f\colon (X,x_0) \to (X,x_0)$ be a dominant non-invertible germ. Let $\pi\colon X_\pi \to (X,x_0)$ be a proper birational map, and denote by $f_\pi\colon X_\pi \dashrightarrow X_\pi$ the lift of $f$ to $X_\pi$.
If $\pi$ is a \emph{geometrically stable} model for $f$, then there exists $N \in \nN$ so that for any $n \geq N$, we have
$$
(f^n_\pi)^* = (f^N_\pi)^* (f_\pi^*)^{n-N}\qquad \text{ and } \qquad \Exc_\pi \circ (f^n_\pi)^* = \Exc_\pi \circ (f^N_\pi)^* (\Exc_\pi \circ f_\pi^*)^{n-N}. 
$$
\end{prop}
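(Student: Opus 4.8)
The plan is to derive \refprop{geometrical_and_algebraic_stability} directly from \reflem{pullback_is_functorial} together with the definition of geometric stability. Since $\pi$ is a geometrically stable model for $f$, for each exceptional prime $E$ of $X_\pi$ there are only finitely many $n$ for which $f_\pi^n(E)$ is an indeterminacy point of $f_\pi$. As $X_\pi$ has only finitely many exceptional primes $E_1,\ldots,E_r$, we can choose $N \geq 1$ large enough that for every $i$ and every $n \geq N$, the point $f_\pi^n(E_i)$ is \emph{not} an indeterminacy point of $f_\pi$. (Here we use that $f_\pi^n(E_i)$ is well-defined: the image of a prime divisor under a meromorphic surface map is again either a prime divisor or a point, and in either case we can speak of whether it is an indeterminacy point of $f_\pi$; if $f_\pi^n$ contracts $E_i$ then $f_\pi^n(E_i)$ is a single point and the condition simply asks that this point avoid the finite indeterminacy locus of $f_\pi$.)

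Next I would run an induction on $n \geq N$ to show $(f_\pi^{n+1})^* = (f_\pi^n)^* f_\pi^*$. The base of the induction is the case $n = N$. For the inductive step, one applies \reflem{pullback_is_functorial} with the choices $(X,x_0) = (Y,y_0) = (Z,z_0)$, all three good resolutions equal to $\pi$, and with the germ ``$f$'' of the lemma being $f^n$ and the germ ``$g$'' being $f$ — so $\widetilde f = f_\pi^n$ and $\widetilde g = f_\pi$. The hypothesis of the lemma requires that $\widetilde f(E) = f_\pi^n(E)$ not be an indeterminacy point of $\widetilde g = f_\pi$ for any exceptional prime $E$ of $\pi$; this holds precisely because $n \geq N$. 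The conclusion of the lemma then gives $(f^{n+1})^* = (f\circ f^n)^* = (f^n)^* \circ f^* = (f_\pi^n)^* f_\pi^*$ (identifying the abstract pull-back with the one on $\Ediv(\pi)$ via the lift). Iterating from $n = N$ upwards yields $(f_\pi^n)^* = (f_\pi^N)^* (f_\pi^*)^{n-N}$ for all $n \geq N$, which is the first displayed identity.

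For the second identity, I would compose the first one with the projection $\Exc_\pi \colon \Weil(\pi) \to \Ediv(\pi)$ onto the exceptional part and argue that $\Exc_\pi$ is compatible with the relevant compositions. The point is that for a dominant germ the pull-back $f_\pi^*$ maps $\Ediv(\varpi)$ into $\Weil(\pi)$, and the exceptional part of $f_\pi^* D$ for $D$ exceptional is computed by the formula in \refprop{pullback} (or \eqref{eqn:pullback_bdivisors}); in particular, applying $\Exc_\pi$ and then $f_\pi^*$ again only sees the exceptional contributions, because the non-exceptional (strict-transform-of-contracted-curve) terms $\check C$ satisfy $\check C \cdot E = 0$ and hence contribute nothing to further exceptional pull-backs at the level of intersection numbers that define $\Ediv$-valued operators. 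More concretely, from $(f_\pi^{n+1})^* = (f_\pi^n)^* f_\pi^*$ on $\Ediv(\pi)$ one gets, upon applying $\Exc_\pi$ on the left and inserting $\Exc_\pi$ after the inner $f_\pi^*$ (which is legitimate since the $\Rest$ part of $f_\pi^* D$ lies in the kernel of $\Exc_\pi \circ (f_\pi^n)^*$, again by \refprop{pullback} and the vanishing $\check C \cdot E = 0$), the relation $\Exc_\pi \circ (f_\pi^{n+1})^* = (\Exc_\pi \circ (f_\pi^n)^*)(\Exc_\pi \circ f_\pi^*)$; iterating from $n = N$ gives the second displayed formula.

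The main obstacle I expect is the bookkeeping in this last step: making rigorous the claim that the non-exceptional terms can be discarded, i.e.\ that $\Exc_\pi \circ (f_\pi^n)^* \circ \Rest_\pi = 0$ as an operator, so that inserting $\Exc_\pi$ in the middle of the composition is harmless. This is really a matter of unraveling \refprop{pushforward} and \refprop{pullback}: the image $f_\pi^* \check C$ of a ``curve'' b-divisor has exceptional part supported only over contracted-curve data, and when composed once more with an exceptional pull-back the relevant intersection numbers $\check C \cdot (\text{exceptional prime})$ vanish, killing the contribution. Everything else — the choice of $N$, the induction, the invocation of \reflem{pullback_is_functorial} — is routine, so I would keep that part brief and devote the care to verifying the $\Exc_\pi$-compatibility.
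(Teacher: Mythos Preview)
Your approach matches the paper's, which simply records the proposition as an ``immediate consequence'' of \reflem{pullback_is_functorial} without further detail; the choice of $N$ and the inductive application of the lemma for the first identity are exactly right.

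For the second identity, the claim you isolate --- that $\Exc_\pi \circ (f_\pi^n)^*$ kills the non-exceptional part of $f_\pi^* D$ --- is correct, but the justification you sketch is muddled. The $\Rest_\pi$ part of $f_\pi^* D$ is a combination of strict transforms $C_\pi$, not of the divisors $\check C$, and the vanishing $\check C \cdot E = 0$ is a statement about intersection numbers, not about the operator $\Exc_\pi \circ (f_\pi^n)^*$; it does not by itself show that the contribution disappears. The clean argument is purely geometric: choose $\pi' \geq \pi$ so that the lift $g \colon X_{\pi'} \to X_\pi$ of $f^n$ is holomorphic. Every exceptional prime of $\pi'$ lies over $x_0$, so $g$ sends it into $\pi^{-1}(x_0)$; in particular no exceptional prime of $\pi'$ can dominate a non-exceptional curve $C_\pi \subset X_\pi$. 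Hence $g^* C_\pi$ is supported on strict transforms of curves in $(X,x_0)$, and pushing down by $(\eta_{\pi\pi'})_*$ gives $(f_\pi^n)^* C_\pi \in \Rest(\pi)$, i.e.\ $\Exc_\pi\big((f_\pi^n)^* C_\pi\big) = 0$. With this in hand, inserting $\Exc_\pi$ between $(f_\pi^n)^*$ and $f_\pi^*$ is legitimate and your induction goes through. So the outline is fine; just replace the $\check C$-intersection heuristic with this one-line geometric observation.
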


\refthm{geometricstability} can be reformulated as the existence of geometrically stable models, dominating any given model.

\begin{thm}\label{thm:dimensional_stability}
Let $(X,x_0)$ be an irreducible germ of a normal complex surface at a point $x_0\in X$, and let $f\colon (X,x_0)\to (X,x_0)$ be a dominant non-invertible holomorphic map.
Assume that $f$ is not a finite germ at a cusp singularity inducing an irrational rotation.
Then for any modification $\pi\colon X_{\pi}\to (X,x_0)$, one can find another modification $\pi'\colon X_{\pi'}\to (X,x_0)$ dominating $\pi$ for which $f$ is geometrically stable.
In general $X_{\pi'}$ may have cyclic quotient singularities. It can be taken smooth up to replacing $f$ by an iterate.
\end{thm}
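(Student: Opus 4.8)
The plan is to reduce the statement, via \refprop{geometrical\_and\_algebraic\_stability}, to constructing for any modification $\pi$ a dominating modification $\pi'$ which is geometrically stable, and then to exploit the classification of valuative dynamics in \refthm{classification} together with \refprop{detecting\_holomorphicity}. Recall that $p \in \pi'^{-1}(x_0)$ is an indeterminacy point of $f_{\pi'}$ precisely when there is no single point $q$ with $f_\bullet(U_{\pi'}(p)) \subseteq U_{\pi'}(q)$, i.e.\ when the weak-open $U_{\pi'}(p)$ is ``split'' by $f_\bullet$ across several of the open sets $U_{\pi'}(\cdot)$ associated to $\pi'$. An exceptional prime $E$ of $\pi'$ gives a divisorial point $\nu_E$, and $f_{\pi'}^n(E)$ lands on an indeterminacy point of $f_{\pi'}$ iff $f_\bullet^n \nu_E$ has center at such a bad point $p$. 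So I want to choose $\pi'$ so large that the (finitely many) orbits $\{f_\bullet^n \nu_E\}_n$, for $E$ ranging over exceptional primes of $\pi'$, eventually enter a region where $f_\bullet$ is ``nice''.

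First I would set up the dictionary: for a modification $\varpi \geq \pi$, an exceptional prime $E$ of $\varpi$ is eventually non-problematic if for all large $n$, $\cen_\varpi(f_\bullet^n \nu_E)$ is a point $p$ with $f_\bullet(U_\varpi(p))$ contained in a single $U_\varpi(q)$; by \refprop{detecting\_holomorphicity} this is exactly geometric stability. The key structural input is the critical skeleton $\mc{S}_f = \mc{S}_{c(f)} \cup \mc{S}_{R_f}$ (\refcor{attractionrate\_locallyconst}, \refcor{jacobiandivisor\_locallyconst}): outside $\mc{S}_f$ the map $f_\bullet$ is order-preserving and locally constant attraction rates hold, which is precisely what forces the ``no-splitting'' condition once the resolution realizes enough of $\mc{S}_f$ and of the relevant invariant set $S$ from \refthm{classification}. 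Concretely, in case (1) ($f$ non-finite) I would take $\pi'$ realizing $\mc{S}_f$, the eigenvaluation $\nu_\star$ (if quasimonomial) or a suitable divisorial approximation, and finitely many valuations controlling the orbits of the new exceptional primes; \refthm{weak\_convergence} and \refthm{strong\_convergence} guarantee $f_\bullet^n \nu_E \to \nu_\star$, so for large $n$ these points sit in a region where $f_\bullet$ is order-preserving and hence holomorphicity holds. In case (2) (finite, $R_f \neq 0$, quotient singularity) one replaces $f$ by $f^2$, uses the totally invariant interval or point $I$ that is fixed pointwise by $f_\bullet^2$, and argues that orbits converge to $I$ and that a high enough resolution realizing $I$ and $\mc{S}_f$ is geometrically stable — this is where the lift to $(\C^2,0)$ of \S\ref{ssec:dynamics\_quotientsing} and the cited \cite[\S5]{gignac-ruggiero:attractionrates} do the work, which is why the ``smooth up to iterate'' clause holds. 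In case (3) (finite, $R_f = 0$, lc non-lt), excluding the cusp-with-irrational-rotation subcase, $f_\bullet|_S$ is an isometry of finite order (simple elliptic / quotient-cusp, or cusp with rational rotation), so replacing $f$ by an iterate makes $f_\bullet|_S = \mathrm{id}$; combined with \refcor{thinness\_convergence} (strong convergence of orbits of finite-skewness valuations to $S$) and superattractivity (\refthm{superattracting\_finite}), one again gets that a resolution realizing $S$ and $\mc{S}_f$ is geometrically stable.

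The remaining point is the finiteness/bookkeeping argument that makes the choice of $\pi'$ self-consistent: when one blows up to realize a list of valuations, one introduces new exceptional primes whose orbits must also be controlled, a priori an infinite regress. I would handle this exactly as in \cite[\S5]{gignac-ruggiero:attractionrates}: the new exceptional primes created lie in $U_{\pi'}(p)$ for the various bad points $p$, their divisorial points $\nu_E$ are $\geq$ (in the partial order) a fixed finite set of valuations already realized, and because $f_\bullet$ contracts toward $S$ (or $\nu_\star$, or $I$) in the strong topology while being order-preserving off $\mc{S}_f$, only finitely many iterates can keep $\cen_{\pi'}(f_\bullet^n \nu_E)$ at a point of indeterminacy before the orbit is absorbed into the good region. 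The cusp-with-irrational-rotation case is genuinely excluded: there the orbit $\{f_\bullet^n \nu_E\}$ equidistributes on the circle $S$ and never stabilizes its center in any fixed model, so no geometrically stable model exists (this is consistent with \S\ref{ssec:finitecusp}).

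The main obstacle I anticipate is the self-consistency/finiteness step just described: proving rigorously that after realizing a carefully chosen finite set of valuations, the orbits of \emph{all} exceptional primes of the resulting model — including the newly created ones — eventually avoid indeterminacy points. This requires combining the strong-convergence statements of \refthm{strong\_convergence} and \refcor{thinness\_convergence} with the order-preserving behavior of $f_\bullet$ outside $\mc{S}_f$ and a uniform control (via the log-discrepancy contraction in the Jacobian formula \eqref{eqn:jacobian\_formula\_bullet}, exactly as in the proof of \refthm{strong\_convergence}) on how fast orbits enter the ``good'' neighborhood; the argument is essentially the singular analogue of \cite[Theorem 5.4 and its proof]{gignac-ruggiero:attractionrates}, adapted case-by-case using \refthm{classification}, and the ``at most cyclic quotient singularities'' claim comes from only needing to blow up along chains of rational curves, while the ``smooth up to replacing $f$ by an iterate'' claim comes from resolving those cyclic quotient points after passing to the iterate that trivializes the relevant torsion/rotation.
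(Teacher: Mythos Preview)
Your high-level decomposition via \refthm{classification} and the use of \refprop{detecting_holomorphicity} match the paper's strategy, and your case analysis is essentially the paper's case analysis. But you have the central mechanism backwards, and this is a genuine gap.

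You propose to \emph{blow up} to realize $\mc{S}_f$, the invariant set $S$, and enough auxiliary valuations, and then argue that orbits of the resulting exceptional primes eventually avoid indeterminacy. The paper does the opposite: starting from a good resolution it \emph{contracts} chains of rational curves to manufacture points $\tilde p$ (possibly cyclic quotient singularities) such that $f_\bullet(U_{\pi'}(\tilde p))\subseteq U_{\pi'}(\tilde q)$, and then shows every orbit eventually lands at such a point. Concretely: in the irrational-eigenvaluation case one contracts the chain corresponding to an invariant segment $J\ni\nu_\star$ to a single point $\tilde p$; in the divisorial/segment/circle cases one invokes a key lemma (\reflem{noperiodic_cyclicquot}) which, for each periodic tangent direction at a divisorial valuation in $S$ that is not parallel to $S$, contracts a small chain to make the lift regular along that periodic orbit. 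Your sentence that the cyclic quotient singularities ``come from only needing to blow up along chains of rational curves'' is exactly inverted; they come from contracting such chains.

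This also dissolves the ``self-consistency/finiteness'' obstacle you flag. In the paper's argument there is no infinite regress: orbits of exceptional primes either eventually coincide with a component of $S$, or their centers are eventually points on $E_\star$ (or the divisor supporting $S$); these points are either aperiodic (hence avoid the finitely many indeterminacy points after finitely many steps) or periodic, and periodic ones are handled once and for all by the contraction lemma, which reduces the number of periodic indeterminacy orbits strictly. The uniform rate-of-convergence control you propose via the Jacobian formula is not what closes the argument here; the finiteness is a simple count of periodic orbits meeting $\on{Ind}(f_\pi)$ on a compact curve. Finally, the ``smooth up to an iterate'' clause is not obtained by resolving the quotient points after trivializing torsion; it is treated separately (the paper's \S7.2) via monomial-weight computations (\reflem{selfmap_monomialweights}) and a toric argument showing one can find a genuinely invariant \emph{edge} $[\nu_{E'},\nu_{F'}]$ at a smooth satellite point, bypassing the contraction entirely in Cases~1--2 and in Case~3b, while Case~3c genuinely requires an iterate.
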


Notice that the singular surfaces $X_{\pi'}$ will only have (cyclic) quotient singularities. Such surfaces can be also described with the formalism of \emph{orbifolds} (also called $V$-manifolds).
Hence \refthm{dimensional_stability} could be also restated as a result on the existence of geometrically stable orbifolds.

Finally, in view of \refprop{geometrical_and_algebraic_stability}, we clearly have that \refthm{geometricstability} implies \refthm{algebraicstability}.

\subsection{Existence of geometrically stable models}

This section is dedicated to proving \refthm{dimensional_stability}.

We split the proof of \refthm{dimensional_stability} according to the dynamics of $f_\bullet$ (see \refthm{classification}):
\begin{enumerate}
\item $f_\bullet$ admits a unique eigenvaluation $\nu_\star$, which is not quasimonomial; every valuation $\nu \in \mc{V}_X^\alpha$ weakly converges to $\nu_\star$.
\item $f_\bullet$ admits a unique eigenvaluation $\nu_\star$, which is irrational; every valuation $\nu \in \mc{V}_X^\alpha$ strongly converges to $\nu_\star$.
\item\label{item:condition3} There exists $S \subseteq \mc{V}_X$ which is either a divisorial point, a segment with divisorial or curve endpoints, or a circle, so that $f_\bullet^k|_S=\on{id}_S$, and every valuation $\nu \in \mc{V}_X^\alpha$ converges strongly to $S$.
\end{enumerate}

\begin{enumerate}[label=\textit{Case} \arabic*.,leftmargin=0pt, itemindent=40pt]
\item $f_\bullet$ admits a unique eigenvaluation $\nu_\star$, which is not quasimonomial.

Let $U$ be a weak open neighborhood of $\nu_\star$ which avoids the critical skeleton $\mc{S}_f$ where either $\nu \mapsto c(f,\nu)$ or $\nu \mapsto \nu(R_f)$ are not locally constant.
As we have seen in the proof of \refthm{strong_convergence}, this implies that $f_\bullet(U)\subrelcpct U$ and $f_\bullet^n \nu \to \nu_\star$ for any $\nu \in U$.
In fact, we may take $U$ of the form $U=U(\nu_0)=\{\nu \in \mc{V}_X\ |\ \nu > \nu_0\}$, where $\nu_0$ is any valuation such that $\nu_0 < \nu_\star$ and sufficiently close to $\nu_\star$.

For any modification $\pi \colon X_{\pi} \to (X,x_0)$, consider another modification $\pi' \colon X_{\pi'} \to (X,x_0)$ dominating $\pi$ and so that there exists an exceptional prime $E$ in $X_{\pi'}$ so that $\nu_E \geq \nu_0$.

We may assume (by taking the exceptional prime $E$ closer to $\nu_\star$) that the center of $\nu_\star$ in $X_{\pi'}$ is a free point $p$ in $E$.
Then $U(\nu_E)=U_{\pi'}(p)$ is a neighborhood of $\nu_\star$ which is $f_\bullet$-invariant.

By \refprop{detecting_holomorphicity}, the lift $f_{\pi'} \colon X_{\pi'} \to X_{\pi'}$ defines a holomorphic fixed point at $p$.
By \refthm{weak_convergence}, for any exceptional prime $D$ of $X_{\pi'}$, there exists $N$ so that $f_\bullet^n(\nu_D) \in U$ for all $n \geq N$.
Hence $f_{\pi'}^n(D) = p$ for all $n \in \nN$, and the model $\pi'$ is geometrically stable.

Notice that in this case $X_{\pi'}$ is smooth.

\item $f_\bullet$ admits a unique eigenvaluation $\nu_\star$, which is irrational.

As seen in the proof of \refthm{strong_convergence} for non-finite germs, or in \S\ref{ssec:dynamics_quotientsing} for finite germs, we may find a closed interval $J$ containing $\nu_\star$ such that $f_\bullet J \subset J$, and $f_\bullet U(J) \subrelcpct U(J)$.
We may assume that $J$ has divisorial endpoints $\nu_E$ and $\nu_F$.
Let $\pi''\colon X_{\pi''} \to (X,x_0)$ be a good resolution dominating $\pi$ so that $\nu_\star \in \mc{S}_{\pi''}$, and $\nu_E$ and $\nu_F$ are realized in $X_{\pi''}$. Notice that this implies $J \subset \mc{S}_{\pi''}$.
Let $\eta_{\tilde{\pi}\pi''}\colon X_{\pi''} \to X_{\tilde{\pi}}$ be the contraction of the (possibly empty) set of exceptional primes $D$ in $X_{\pi''}$ for which $\nu_D$ belongs to the open segment $\mathring{J}=]\nu_E, \nu_F[$, and denote by $\wt{\pi}\colon X_{\wt{\pi}} \to (X,x_0)$ the induced modification.
By our choice of $J$, the modification $\wt{\pi}$ still dominates $\pi$.
The image through $\eta_{\tilde{\pi}\pi''}$ of the contracted divisor is a (possibly singular) point $\wt{p} \in \wt{\pi}^{-1}(x_0)$, and $U(J)=U_{\wt{\pi}}(\wt{p})$. Up to taking a smaller $J$, we may assume that the contracted divisor is a chain of rational curves, so that $\wt{p}$ is either a smooth point or a cyclic quotient singularity.

By \refprop{detecting_holomorphicity}, the lift $f_{\wt{\pi}} \colon X_{\wt{\pi}} \dashrightarrow X_{\wt{\pi}}$ has a holomorphic fixed point at $\wt{p}$.
By \refthm{weak_convergence}, for any exceptional prime $D$, there exists $N$ so that $f_\bullet^n(\nu_D) \in U(J)$ for all $n \geq N$.
Hence $f_{\wt{\pi}}^n(D) = \wt{p}$ for all $n \in \nN$, and the model $\wt{\pi}$ is geometrically stable.

\item The set $S$ of eigenvaluations for the iterates of $f$ contains at least a divisorial valuation.

We split this case according to the geometry of $S$ (which is either a point, a segment, or a circle).
Before proceeding, we need to introduce some definitions and small lemmas.

Suppose we have a good resolution $\pi_0 \colon X_{\pi_0} \to (X,x_0)$ so that the set $\mc{S}_{\pi_0}^*$ of divisorial valuations realized by $\pi_0$ contains at least the orbit of a divisorial valuation $\nu_{E_0} \in S$. Notice that by construction, the whole orbit belong to $S$.
Set $\nu_{E_i}=f_\bullet^i \nu_{E_0}$ for all $i=1, \ldots, k$, where $E_k=E_0$ (we allow also other exceptional primes to coincide, so $k$ needs not to be the minimal period of $\nu_{E_0}$).

We recall (see \S\ref{ssec:tangent_vectors}) that to each point $p \in E_i$ correspond a tangent vector $\vect{v_p}^{i}$ at $\nu_{E_i}$.
Moreover, any such tangent vector is associated to a connected component of $\mc{V}_X \setminus \{\nu_{E_i}\}$, that we denote by $U_i(p)$.
Notice that different tangent vectors may be associated to the same connected component, notably when $S$ is a circle.  

\vspace{3mm}

\begin{defi}
We say that a point $p$ is \emph{parallel to $S$} if $U_i(p) \cap S \neq \emptyset$.
\end{defi}

\vspace{3mm}

\begin{rmk}
Notice that if $p \in E_i \cap E_j$, then the segment $]\nu_{E_i},\nu_{E_j}[_p$ belong to $S \cap U_i(p) \cap U_j(p)$.
We deduce two properties.
Firstly, being parallel depend only on $p$ and not on the exceptional prime $E_i$ that contains it, so the definition is well posed.
Secondly, points that are non-parallel to $S$ belong to a unique exceptional prime among the $\{E_i, i=0, \ldots, k-1\}$.

In fact, if we denote by $S_{\pi_0}^* = \{\nu_E \in S\ |\ E \in \Gamma_{\pi_0}^*\}$ the set of divisorial valuations on $S$ realized by $\pi_0$, and by $D(S_{\pi_0})$ the union of exceptional primes $E \subset X_{\pi_0}$ so that $\nu_E \in S_{\pi_0}^*$, then the points parallel to $S$ are exactly the singular points of $D(S_{\pi_0})$.
In particular, if $S$ consists of a single divisorial valuation $\nu_{E_\star}$, all points of $E_\star$ are not parallel to $S$.
\end{rmk}

Let now $p^0$ be a point non parallel to $S$. It belongs to a unique exceptional prime $E^0 \subset \pi_0^{-1}(x_0)$.
Let $\pi_1\colon X_{\pi_1} \to (X,x_0)$ be a modification dominating $\pi_0$, so that $\eta = \eta_{\pi_0\pi_1} \colon X_{\pi_1} \to X_{\pi_0}$ is an isomorphism over $X_{\pi_0} \setminus \{p^0\}$.
The strict transform of $E^0$ by $\eta$ intersects $\eta^{-1}(p^0)$ in a unique point, which we denote by $p^1$. In fact $p^0 \in X_{\pi_0}$ and $p^1 \in X_{\pi_1}$ correspond to the same tangent vector $\vect{v_p}^i$ at $\nu_{E_i}$. To ease notations, we write $p^0=p^1=p$, having in mind this interpretation through tangent vectors.

Moreover, for any point $p \in E$, we denote by $f_\pi(p)$ the image of $p$ though $f_\pi|_E \colon E \to E$, which is well defined even if $p \in \on{Ind}(f_\pi)$.

We now need a lemma, identical in spirit to the analogous \cite[Lemma 4.6]{favre-jonsson:dynamicalcompactifications} and \cite[Lemma 5.4]{gignac-ruggiero:attractionrates}. Here the proof is simpler with respect to the cited papers, since we allow the creation of cyclic quotient singularities.
As usual, for any modification $\pi \colon X_\pi \to (X,x_0)$, we denote by $f_\pi \colon X_\pi \dashrightarrow X_\pi$ the lift of $f$ to $X_\pi$.

\begin{lem}\label{lem:noperiodic_cyclicquot}
Let $f\colon(X,x_0) \to (X,x_0)$ be a dominant germ satisfying condition \ref{item:condition3}.
Consider a periodic divisorial valuation $\nu_{E_0} \in S$, and let $\pi_0\colon X_{\pi_0} \to (X,x_0)$ be any good resolution realizing $\nu_{E_i}=f_\bullet^i\nu_{E_0}$ for all $i\in \nN$.
Let $p_0 \in E_0$ be any periodic point for $f_{\pi_0}$ satisfying the condition $p_i=f_{\pi_0}^i(p_0)$ is not parallel to $S$ for any $i \in \nN$.
Then there exists a modification $\pi_1\colon X_{\pi_1} \to (X,x_0)$ dominating $\pi_0$, obtained as modifications over the orbit of $p_0$, so that $f_{\pi_1}$ is regular along the orbit of $\tilde{p}_0$.
Moreover, $\pi_1$ can be chosen so that $X_{\pi_1}$ has at most cyclic quotient singularities.
\end{lem}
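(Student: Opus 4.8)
The idea is a standard Favre--Jonsson style argument: one analyzes the orbit of the periodic point $p_0$ under the action of $f_{\pi_0}$ on the exceptional prime $E_0$ together with the behavior of $f_\bullet$ transversally to $S$, and one removes indeterminacy along the orbit by a finite sequence of point blowups (allowing cyclic quotient contractions). First I would set up the combinatorics: since $\nu_{E_0}\in S$ is periodic of period dividing $k$ and $f_\bullet^k|_S=\mathrm{id}_S$, by replacing $f$ by $f^k$ we may assume $f_\bullet\nu_{E_0}=\nu_{E_0}$, so that $E_0=E'$ and $f_{\pi_0}$ restricts to a (regular) self-map $f_{\pi_0}|_{E_0}\colon E_0\to E_0$; the point $p_0$ is a periodic point of this self-map by hypothesis, and again up to iterating we assume $f_{\pi_0}(p_0)=p_0$, so we reduce to the case of a single fixed non-parallel point $p$ on the fixed exceptional prime $E=E_0$.

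\textbf{Key steps.} With $p$ a fixed point of $f_{\pi_0}|_E$ that is non-parallel to $S$, the tangent-vector formalism of \S\ref{ssec:tangent_vectors} and \S\ref{ssec:action_dual_divisors} identifies $p$ with a tangent direction $\vect{v}$ at $\nu_E$ and gives the induced tangent map $df_\bullet\colon T_{\nu_E}\mc{V}_X\to T_{\nu_E}\mc{V}_X$ fixing $\vect{v}$. If $f_{\pi_0}$ is already holomorphic at $p$ we are done (with $\pi_1=\pi_0$, a good resolution), so assume $p\in\on{Ind}(f_{\pi_0})$. By \refprop{detecting_holomorphicity}, holomorphicity of $f_{\pi_0}$ at $p$ with $f_{\pi_0}(p)=p$ is equivalent to $f_\bullet(U_{\pi_0}(p))\subseteq U_{\pi_0}(p)$; the failure of this inclusion means the divisorial valuation $\nu_E$ lies strictly below part of its image, but since $p$ is non-parallel to $S$, and every quasimonomial valuation in $U_{\pi_0}(p)$ converges to $S$ (by condition \ref{item:condition3}), the orbit cannot escape $U_{\pi_0}(p)$ into $S$ without crossing $\nu_E$; so on the subtree $\mc{V}_{X,p}$ rooted at $\nu_E$ the map $f_\bullet$ is order-increasing near the root in the direction $\vect{v}$. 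This is exactly the situation of a \emph{regular tree map} in the sense of \cite[\S4]{favre-jonsson:eigenval}, so there is a fixed divisorial valuation $\nu_{E^1}>\nu_E$ in the direction $\vect{v}$, realized by some modification $\pi_1\colon X_{\pi_1}\to(X,x_0)$ dominating $\pi_0$ obtained by blowing up points in the orbit of $p$ (i.e. over $p$), with $U_{\pi_1}(p^1)$ now $f_\bullet$-invariant. Applying \refprop{detecting_holomorphicity} again, $f_{\pi_1}$ is regular at $p^1$; iterating over the (finitely many) points in the original orbit of $p_0$ before we reduced to a fixed point handles the general periodic case. Finally, to get at worst cyclic quotient singularities: the blowups needed to produce $\nu_{E^1}$ are all over the single free point $p$ lying on a single exceptional prime $E$, so the chain of exceptional curves introduced is a chain of rational curves meeting $E$; contracting all but the relevant one (the analogue of the interval contractions used in \S\ref{ssec:essential_skeleton}) produces a model whose only new singular point is a cyclic (Hirzebruch--Jung) quotient singularity, exactly as in \refrmk{mingoodres_notunique}. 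Since $p$ is non-parallel to $S$, none of these contractions touch $S$, so condition \ref{item:condition3} and the hypotheses are preserved; in particular $\pi_1$ still dominates $\pi_0$ and $f_{\pi_1}$ is regular along the whole orbit of $\tilde p_0$.

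\textbf{The main obstacle.} The delicate point is the reduction to a \emph{regular} tree map and the control that only \emph{finitely many} blowups over $p$ are needed: a priori the tangent map $df_\bullet$ at $\nu_E$ could fix the direction $\vect{v}$ while $f_\bullet$ pushes valuations upward without any fixed divisorial valuation in that direction (the eigenvaluation in the direction $\vect{v}$ could be irrational or infinitely singular). This is where the hypothesis that $p$ is \emph{non-parallel to $S$} is essential: it guarantees that the retraction $r_{\pi_0}(\nu)$ of any valuation $\nu\in U_{\pi_0}(p)$ onto $\mc{S}_{\pi_0}$ is $\nu_E$ itself, so the orbit of $\nu_E$ cannot reach $S$ in the direction $\vect{v}$, hence by condition \ref{item:condition3} it must stabilize; combined with local constancy of $c(f,\cdot)$ and of $\cdot(R_f)$ off the critical skeleton $\mc{S}_f$ (Corollaries~\ref{cor:attractionrate_locallyconst} and \ref{cor:jacobiandivisor_locallyconst}) and the Jacobian formula \eqref{eqn:jacobian_formula_bullet}, the speed estimate forces the eigenvaluation in the direction $\vect{v}$ to be divisorial, realized after finitely many blowups. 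Making this precise — in particular choosing the modification $\pi_0$ so that $\mc{S}_{\pi_0}^*$ meets $\mc{S}_f$ enough, and tracking that the blowups over $p$ stay in a region where $c(f,\cdot)$ and $\cdot(R_f)$ are constant — is the technical heart, and follows the template of \cite[Lemma 5.4]{gignac-ruggiero:attractionrates} and \cite[Lemma 4.6]{favre-jonsson:dynamicalcompactifications}, with the simplification afforded by permitting cyclic quotient singularities.
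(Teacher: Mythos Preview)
Your proposal has a genuine gap in the direction of the dynamics, which undermines the central step.

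You claim that on the subtree $\mc{V}_{X,p}$ rooted at $\nu_E$ (with $\nu_E\in S$ and $p$ not parallel to $S$) the map $f_\bullet$ is \emph{order-increasing} near the root, and you then invoke a regular tree map argument to produce a fixed divisorial valuation $\nu_{E^1}>\nu_E$ in the direction $\vect v$. But this is exactly backwards. Under condition \ref{item:condition3}, every finite-skewness valuation converges strongly to $S$; since $p$ is not parallel to $S$, the retraction $r_S$ of anything in $U_{\pi_0}(p)$ is $\nu_E$ itself, so orbits in that direction converge to $\nu_E$. Thus $f_\bullet$ is \emph{contracting} towards the root $\nu_E$ on this subtree, and there is no attracting fixed valuation $\nu_{E^1}>\nu_E$ to find. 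Your ``main obstacle'' paragraph tries to argue that the eigenvaluation in the direction $\vect v$ is divisorial, but in fact the only relevant fixed point in that direction is $\nu_E$ itself. The regular-tree-map template from \cite{favre-jonsson:eigenval} applies when the dynamics pushes away from the root towards an end; here it does the opposite.

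The paper's proof avoids this entirely and is considerably simpler. It never looks for a fixed valuation above $\nu_{E_i}$. Instead one picks an \emph{arbitrary} divisorial valuation $\nu_0$ in the tangent direction $p_0$, close enough to $\nu_{E_0}$, sets $\nu_i:=f_\bullet^i\nu_0$, and forms the segments $J_i=[\nu_{E_i},\nu_i]_{p_i}$ for $i=0,\dots,r-1$. By choosing $\nu_0$ small one arranges $U(J_i)\cap\mc{S}_f\subseteq J_i$, which forces $f_\bullet(U(J_i))\subseteq U(J_{i+1})$. One then realizes all the $\nu_i$ in a good resolution and \emph{contracts} each open chain $]\nu_{E_i},\nu_i[$ to a cyclic quotient point $\tilde p_i$; since $U_{\tilde\pi}(\tilde p_i)=U(J_i)$, \refprop{detecting_holomorphicity} gives regularity of $f_{\tilde\pi}$ along the whole orbit at once. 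This is precisely where allowing cyclic quotient singularities buys you something: you can absorb an entire interval into a single point, so there is no need for the endpoints $\nu_i$ to be fixed by $f_\bullet$. A secondary issue in your proposal is the reduction to iterates: the lemma asks for regularity of $f_{\pi_1}$, not of $f^k_{\pi_1}$ or $f^{kr}_{\pi_1}$, and your ``iterating over the finitely many points in the original orbit'' to undo the reduction is not a well-defined construction. The paper treats all $r$ points of the periodic orbit simultaneously and never passes to an iterate.
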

\begin{proof}
Set $r \in \nN^*$ the period of $p_0$. Notice that since $p_i$ is not parallel to $S$ for all $i$, it belongs to a unique component in the orbit of $E_0$, which is $E_i$ by construction.
Let $\nu_0$ be any divisorial valuation on the tangent direction associated to $p_0$, and set $\nu_i=f_\bullet^i\nu_0$ for $i=1, \ldots, r$.
By taking $\nu_0$ sufficiently close to $\nu_{E_0}$, we may assume that $\nu_i$ is on the tangent direction associated to $p_i$ at $\nu_{E_i}$ for all $i=0,\ldots, r$.
Denote by $J_i$ the segment $[\nu_{E_i},\nu_i]_{p_i}$ for all $i=0 \ldots, r$.
Again by taking $\nu_0$ sufficiently close to $\nu_\star$, we may assume that $U(J_i) \cap \mc{S}_f \subseteq J_i$. It follows that $f_\bullet(U(J_i)) \subseteq U(J_{i+1})$ for all $i=0, \ldots, r-1$.
Moreover, we may also assume that $J_i$ does not contain in its relative interior any divisorial valuation associated to non-rational exceptional primes.
Consider a good resolution $\pi_2$ dominating $\pi_0$ and realizing $\nu_0, \ldots, \nu_{r-1}$.
Now let $\eta_{\tilde{\pi}\pi_2}\colon X_{\pi_2} \to X_{\tilde{\pi}}$ be the contraction of all prime divisors $D$ whose associated valuation $\nu_D$ belongs to $]\nu_{E_i}, \nu_i[_{p_i}$ for $i =0, \ldots, r-1$.

The induced modification $\tilde{\pi}\colon X_{\tilde{\pi}} \to (X,x_0)$ has (at most) $r$ cyclic quotient singularities $\tilde{p}_0, \ldots, \tilde{p}_{r-1}$, corresponding to the contraction of the chains of rational curves belonging to $J_0, \ldots, J_{r-1}$.
By construction, $U(J_i)=U_{\tilde{\pi}}(\tilde{p}_i)$, and by \refprop{detecting_holomorphicity}, the action of the lift $f_{\tilde{\pi}} \colon X_{\tilde{\pi}} \dashrightarrow X_{\tilde{\pi}}$ is regular at the points $\tilde{p}_i$, $i=0, \ldots, r-1$.
\end{proof}

We now come back to the proof of \refthm{dimensional_stability}.

\begin{enumerate}[label=\textit{Case} \textit{\arabic{enumi}\alph*}.,leftmargin=0pt, itemindent=48pt]
\item $S=\{\nu_{\star}\}$ is a unique divisorial eigenvaluation.

Let $\pi\colon X_\pi \to (X,x_0)$ be any modification, and consider a good resolution $\pi_0 \colon X_{\pi_0} \to (X,x_0)$ dominating $\pi$ and so that $\nu_\star = \nu_{E_\star}$ is realized by $\pi_0$.
Let $U$ be any weak open neighborhood of $\nu_{\star}$ that does not contain any other divisorial valuation realized by $\pi_0$.
By construction, for any exceptional prime $D$ in $X_{\pi_0}$, there exists $N$ so that $f_\bullet^n \nu_D \in U$ for all $n \geq N$.

If there exists $M$ so that $f_\bullet^M \nu_D = \nu_\star$, then we get that $f_\pi(D) = E_\star$ for all $n \geq M$, and $D$ satisfies the geometrical stability condition.
Assume this is not the case. Then for any $n \geq N$, $f_\bullet^n \nu_D$ belongs to the connected component of $\mc{V}_X \setminus \Gamma_{\pi_1}^*$ associated to some tangent direction $\vect{v_n}$ corresponding to a point $p_n \in E_\star$.
For $n$ big enough, $\vect{v_{n+1}}= df_\bullet \vect{v_n}$.
If the orbit $\vect{v_n}$ is infinite, it will avoid indeterminacy points after a finite number of iterates, and the geometrical stability condition is satisfied.
Assume the orbit is finite, and up to replacing $\nu_D$ by $f_\bullet^n \nu_D$ for $n$ big enough, we may assume that $p_n$ is periodic of some period $r$.
By applying \reflem{noperiodic_cyclicquot} to the periodic point $p_n$, we may find another modification $\pi_1 \colon X_{\pi_1} \to (X,x_0)$ dominating $\pi_0$ so that the orbit of $\tilde{p}_n$ does not meet $\on{Ind}(f_{\pi_1})$.
In particular $D$ satisfies the geometrical stability condition.

Notice that since $E_\star$ is compact, the number of indeterminacy points of $f_{\pi_0}$ in $E_\star$ is finite.
Moreover, the modification $\pi_1$ is an isomorphism outside of the orbit of $p_n$. In particular, the number of indeterminacy points of $f_{\pi_1}$ on $E_\star$ is strictly smaller than the one for $f_{\pi_0}$.
By applying this argument recursively on the number of periodic orbits in $E_\star$ meeting indeterminacy points, we find a model $\pi'\colon X_{\pi'} \to (X,x_0)$ where $f_{\pi'}$ acts regularly along all periodic orbits in $E_\star$.
The model $\pi'$ is geometrically stable for $f$.

\item $S$ is either a (non-trivial) segment or a circle of eigenvaluations for $f$.
Let $\pi\colon X_\pi \to (X,x_0)$ be any modification, and let $S_\pi^*$ be the set of all divisorial valuations in $S$ realized by $\pi$.
Let $\pi_0 \colon X_{\pi_0} \to (X,x_0)$ be any good resolution dominating $\pi$ and realizing all divisorial valuations in $S_\pi^*$, plus possibly the divisorial endpoints of $S$ if it is a segment. Up to taking an even higher model, we may assume that $S_{\pi_0}^*$ is not empty.

For any divisorial valuation $\nu_E \in S_{\pi_0}^*$, the exceptional prime $E$ is invariant by $f_{\pi_0}$. By applying recursively \reflem{noperiodic_cyclicquot}, we find a modification $\pi_1 \colon X_{\pi_1} \to (X,x_0)$ dominating $\pi_0$, with at most cyclic quotient singularities, so that $f_{\pi_1}$ is regular along all periodic orbits which are not parallel to $S$.
Now, let $\nu_E, \nu_F \in S_{\pi_1}^*$ be any two divisorial valuations realized by $\pi_1$ for which $E \cap F = \{p\}$ in $X_{\pi_1}$. By construction, $p$ is a smooth point, and by total invariance of $S$, we have that $f_\bullet(U_{\pi_1}(p)) \subseteq U_{\pi_1}(p)$.
In particular, $f_{\pi_1}$ is regular at $p$ for all $p \in \mc{V}_X$.
If $S$ has a curve endpoint, say $\nu_C$, then an analogous property holds for the point of intersection between the strict transform of $C$ and the exceptional divisor of $\pi_1$.
We deduce that, if $E$ is the union of all exceptional primes associated to valuations in $S_{\pi_1}^*$, then $f_{\pi_1}$ is regular along every periodic orbit in $E$.
Since $f_\bullet^n \nu \to S$ for any $\nu \in \mc{V}_X^\alpha$, we have that for any exceptional prime $D \in X_{\pi_1}$, $f_{\pi_1}^n(D)$ is either a component $E$ so that $\nu_E \in S$, or $f_{\pi_1}^n(D)=p_n$ is a point in $E$, for $n$ big enough.
Notice that if there is $N$ so that $f_{\pi_1}^N(D)=E$, then $f_{\pi_1}^n(D)=E$ for all $n \geq N$, and the geometrical stability condition holds.
If this is not the case, then either $p_n$ has infinite orbit, and it will avoid the indeterminacy points of $f_{\pi_1}$ for $n$ big enough, or $p_n$ has finite orbit, and again it will avoid indeterminacy points as far as it enters the periodic cycle in $E$.
Hence $\pi_1$ is geometrically stable.

\item $S$ is a (non-trivial) segment or a circle of eigenvaluations for $f^k$, $k \geq 2$, but not for $f$.

Let $\pi_0 \colon X_{\pi_0} \to (X,x_0)$ be any good resolution dominating $\pi$ and realizing all divisorial valuations in the set $V$ given by $\displaystyle \bigcup_{i=0}^{k-1} f_\bullet^i S_\pi^*$, plus possibly the divisorial endpoints of $S$ when it is a segment.
Again by \reflem{noperiodic_cyclicquot}, we may find a modification $\pi_1 \colon X_{\pi_1} \to (X,x_0)$ dominating $\pi_0$ and so that $f_{\pi_1}$ is regular along any periodic orbit of $E=\bigcup_{\nu_D \in V} D$ not parallel to $S$.
In this case, there may be points $p \in E$, parallel to $S$, where $f_{\pi_1}$ is not regular.
To fix this, we consider the contraction $\eta_{\tilde{\pi}\pi_1} \colon X_{\pi_1} \to X_{\tilde{\pi}}$ of every exceptional prime $D \in S_{\pi_1}^* \setminus V$. These primes are organized in chains of rational curves between consecutive divisorial elements of $V$. In particular, the induced modification $\tilde{\pi} \colon X_{\tilde{\pi}} \to (X,x_0)$ has at most cyclic quotient singularities (the ones given by $\pi_1$, plus the ones obtained by the contraction $\eta_{\tilde{\pi}\pi_1}$). Notice that by construction, $\tilde{\pi}$ dominates $\pi_0$, and hence $\pi$.
By invariance of $V$ under the action of $f_\bullet$, and since $f_\bullet|_S$ is totally invariant, we deduce that the action of $f_{\tilde{\pi}}$ is regular along every periodic orbit of $E$.
As before, we deduce that $\tilde{\pi}$ is a geometrically stable model for $f$.
\end{enumerate}

\end{enumerate}

\subsection{Smoothness of geometrically stable models}

In the previous section, we constructed geometrically stable models which have cyclic quotient singularities. While this result is quite natural, and sufficient for all known applications, one could be interested in knowing when we can construct \emph{smooth} geometrically stable models.
Here we collect a few techniques and remarks to get such smooth models.

First, notice that the proof of \refthm{dimensional_stability} produces already a smooth geometrically stable model when $f$ falls into \textit{Case 1}, i.e., it admits a unique eigenvaluation, which is non-quasimonomial.
For the other cases, we first need some preliminary lemmas.

Consider a dominant germ $f\colon (X,x_0) \to (Y,y_0)$, and its induced map $f_\bullet\colon \mc{V}_X\to \mc{V}_Y$ on valuative spaces.
If we are interested in the local behavior of this map on monomial valuations at some infinitely-near point $p$, it is natural to consider the action with respect to the monomial weights.

\begin{lem}\label{lem:action_monomialweights}
Let $f\colon (X,x_0) \to (Y,y_0)$ be a dominant germ between normal surface singularities, and let $\nu \in \mc{V}_X$ be any quasimonomial valuation.
Then there exists good resolutions $\pi\colon X_\pi \to (X,x_0)$ and $\varpi\colon Y_\varpi \to (Y,y_0)$ and infinitely near points $p \in \pi^{-1}(x_0)$, $q \in \varpi^{-1}(y_0)$, so that:
\begin{itemize}
\item $\nu$ is a monomial valuation at $p$.
\item For all $r,s \geq 0$ not both zero, the monomial valuation $\nu_{r,s}$ at $p$ is sent to a monomial valuation $\nu_{r',s'}=f_*\nu_{r,s}$ at $q$.
\item There exists an invertible matrix $\left(\begin{smallmatrix}a&b\\c&d\end{smallmatrix}\right)$ with non-negative integer entries, so that
$
(r',s')=(ar+bs,cr+ds).
$
\end{itemize}
\end{lem}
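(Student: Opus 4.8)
The plan is to reduce the statement to the classical local monomialization of a dominant morphism between surfaces, after which the assertion becomes a one-line computation with monomial valuations. First I would realize $\nu$ concretely. Since $\nu$ is quasimonomial, by \refthm{structure} and the description of $\mc{V}_X^{\qm}$ in \S\ref{ssec:dualgraphs} it lies on the embedded dual graph $\mc{S}_{\pi_0}$ of some good resolution $\pi_0$ of $(X,x_0)$; after possibly blowing up a free point so that the relevant exceptional prime meets a second one, $\nu$ is the pushforward $\pi_{0*}\mu_{r_0,s_0}$ of a monomial valuation at a satellite point $p_0=E_0\cap F_0$ in local coordinates $(z_1,z_2)$ with $E_0=\{z_1=0\}$, $F_0=\{z_2=0\}$ (one of the weights being $0$ if $\nu$ is divisorial). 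I would then choose a good resolution $\varpi_0$ of $(Y,y_0)$ large enough that the lift $\tilde f_0=\varpi_0^{-1}\circ f\circ\pi_0$ is regular, and set $q_0=\tilde f_0(p_0)$.

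Second, I would invoke local monomialization for the regular dominant morphism $\tilde f_0\colon X_{\pi_0}\to Y_{\varpi_0}$ at the point $p_0$: there are finite sequences of point blowups $\pi\geq\pi_0$ over $p_0$ and $\varpi\geq\varpi_0$ over $q_0$ so that the resulting lift $\tilde f=\varpi^{-1}\circ f\circ\pi$ is regular and monomial along the fiber of $\eta_{\pi_0\pi}$ over $p_0$. This is precisely the reduction already used in the proof of \refthm{operations_bdivisors} (``up to taking higher models, we may assume that $\tilde f$ is monomial''), and in dimension two it is elementary and terminates. Tracking $\nu$ through these blowups — a monomial valuation at a satellite point remains monomial at a satellite point under blowups of satellite points, as in the computations of \refprop{skewness_formula} — the valuation $\nu$ is still monomial at its center $p$ in $X_\pi$ (with $p$ lying over $p_0$), and $\tilde f$ is monomial at $p$: there are local coordinates $(z_1,z_2)$ at $p$ and $(w_1,w_2)$ at $q:=\tilde f(p)$, with exceptional primes of $\pi$ (resp.\ of $\varpi$) among the coordinate axes, such that
\[
\tilde f(z_1,z_2)=\bigl(z_1^{a}z_2^{b}\,u_1(z_1,z_2),\ z_1^{c}z_2^{d}\,u_2(z_1,z_2)\bigr),
\]
with $u_1,u_2$ holomorphic units near $p$ and $a,b,c,d\in\nN$. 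Since $f$ is dominant, so is $\tilde f$, and hence the monomial map $(z_1,z_2)\mapsto(z_1^{a}z_2^{b},z_1^{c}z_2^{d})$ is dominant, i.e.\ $ad-bc\neq 0$, so the matrix $\left(\begin{smallmatrix}a&b\\c&d\end{smallmatrix}\right)$ is invertible.

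Finally, the conclusion is a direct computation. Writing $\mu_{r,s}$ for the monomial valuation on $\mc{O}_{X_\pi,p}$ with weights $r$ on $z_1$ and $s$ on $z_2$, so that $\nu_{r,s}=\pi_*\mu_{r,s}$ and $\nu=\nu_{r_0,s_0}$, one expands $\varpi^*\phi=\sum c_{ij}w_1^{i}w_2^{j}$ for $\phi$ near $q$ and substitutes the monomial expression for $\tilde f$ to get $\tilde f^*\varpi^*\phi=\sum c_{ij}z_1^{ai+cj}z_2^{bi+dj}u_1^{i}u_2^{j}$, whence $\mu_{r,s}(\tilde f^*\varpi^*\phi)=\min\{\,i(ar+bs)+j(cr+ds):c_{ij}\neq0\,\}$, which is exactly the value at $\varpi^*\phi$ of the monomial valuation at $q$ with weights $ar+bs$ on $w_1$ and $cr+ds$ on $w_2$. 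Using $f\circ\pi=\varpi\circ\tilde f$ this gives $f_*\nu_{r,s}=\varpi_*\tilde f_*\mu_{r,s}=\nu_{r',s'}$, the monomial valuation at $q$ with $(r',s')=(ar+bs,\,cr+ds)$, which is the claim. I expect the only genuinely delicate point — the main obstacle — to be the monomialization step itself: one must be careful to ensure that after finitely many blowups over $p_0$ and over $q_0$ the lift is monomial precisely at the center of $\nu$ and that the coordinate axes downstairs are again exceptional (so that ``monomial valuation at $q$'' is meaningful as an element of $\mc{S}_\varpi$); this is where one genuinely uses that the morphism is between surfaces, rather than appealing to the much deeper monomialization theorems in higher dimension.
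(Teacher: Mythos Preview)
Your proposal is correct and follows essentially the same route as the paper: realize $\nu$ as monomial at a point, invoke local monomialization of the lift $\tilde f$ (the paper cites Cutkosky's \cite[Theorem~3.2]{cutkosky:monomialization3foldstosurf} for this, where you argue it is elementary in dimension two and point to its earlier use in the proof of \refthm{operations_bdivisors}), and then carry out the direct computation with monomial weights. Your write-up is more explicit on the final computation and on why $ad-bc\neq 0$, but the architecture of the argument is the same.
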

\begin{proof}
Take any good resolution $\pi$ so that $\nu$ is monomial at a certain point in $\pi^{-1}(x_0)$.
By \cite[Theorem 3.2]{cutkosky:monomialization3foldstosurf}, there exists good resolutions $\pi$ and $\varpi$ and infinitely-near points $p$ and $q$ as in the statement, so that the map $\tilde{f}=\varpi^{-1}\circ f \circ \pi \colon X_\pi \to Y_\varpi$ is regular, sends $p$ to $q$, and can be written as
$$
\tilde{f}(x,y)=\big(x^ay^b u(x,y), x^c y^d v(x,y)\big),
$$
where $(x,y)$ are local coordinates at $p$ adapted to $\pi^{-1}(x_0)$, the coordinates in the target space are also centered at $q$ and adapted to $\varpi^{-1}(y_0)$, and $u$ and $v$ are suitable holomorphic functions not vanishing at $p$.
The statement easily follows from a direct computation.
\end{proof}

For selfmaps, we need to express this behavior on monomial weights with the respect to the same monomial weights at the source and at the target, or equivalently, we need to impose $p=q$.

\begin{lem}\label{lem:selfmap_monomialweights}
Let $f\colon (X,x_0) \to (X,x_0)$ be a dominant germ, and let $\nu_\star \in \mc{V}_X$ be any quasimonomial eigenvaluation.
Assume there are intervals $I \subset \mc{V}_X$ containing $\nu_\star$, as small as wanted, that are $f_\bullet$-invariant.
Let $\pi\colon X_\pi \to (X,x_0)$ be any good resolution so that all valuations in $I$ are monomial at a suitable point $p \in \pi^{-1}(x_0)$.
Denote by $(r_\star,s_\star)$ the weights of $\nu_\star$ as a monomial valuation at $p$. Then the following properties hold.
\begin{itemize}
\item For all $r,s \geq 0$ so that $s/r$ is sufficiently close to $s_\star/r_\star$, the monomial valuation $\nu_{r,s}$ at $p$ is sent to a monomial valuation $\nu_{r',s'}=f_*\nu_{r,s}$ at $p$.
\item There exists an invertible matrix $\left(\begin{smallmatrix}a&b\\c&d\end{smallmatrix}\right)$ with non-negative integer entries, so that
$
(r',s')=(ar+bs,cr+ds)
$
for all $(r,s)$ with $s/r$ sufficiently close to $s_\star/r_\star$.
\end{itemize}
\end{lem}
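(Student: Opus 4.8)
\textbf{Proof plan for Lemma~\ref{lem:selfmap_monomialweights}.} The idea is to reduce to Lemma~\ref{lem:action_monomialweights} by exploiting the hypothesis that arbitrarily small $f_\bullet$-invariant intervals $I$ containing $\nu_\star$ exist. First I would fix such an interval $I \ni \nu_\star$ so small that, by Lemma~\ref{lem:action_monomialweights} applied to $\nu_\star$, there are good resolutions $\pi_0\colon X_{\pi_0}\to(X,x_0)$ and $\varpi_0\colon X_{\varpi_0}\to(X,x_0)$ with infinitely near points $p_0$, $q_0$ such that every valuation in $I$ is monomial at $p_0$ (here I would enlarge $I$-realizing resolutions as needed and use that $\ev_{\pi_0}$ is an isometric embedding on skeleta, so a whole subinterval near $\nu_\star$ consists of monomial valuations at $p_0$), and such that on the cone of monomial weights near the direction $(r_\star,s_\star)$ the lift $\tilde{f}=\varpi_0^{-1}\circ f\circ\pi_0$ is a monomial map $\tilde f(x,y)=(x^ay^bu,x^cy^dv)$ sending $p_0$ to $q_0$. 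This already gives the linear action $(r',s')=(ar+bs,cr+ds)$ with non-negative integer entries and invertible matrix; the only point still to be arranged is that the target infinitely near point coincides with the source, i.e.\ that we may take $q_0=p_0$ after passing to a single resolution $\pi$.

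For that I would use $f_\bullet$-invariance of $I$ together with Proposition~\ref{prop:detecting_holomorphicity}. Since $f_\bullet(I)\subseteq I$ and $I$ is a segment of monomial valuations at $p_0$ realized (after contracting the relevant chain of rational curves, as in the irrational-eigenvaluation construction in the proof of Theorem~\ref{thm:dimensional_stability}) as $U(I)=U_{\pi}(\tilde p)$ for a suitable modification $\pi$ dominating both $\pi_0$ and $\varpi_0$, the lift $f_\pi$ has a holomorphic fixed point at $\tilde p$, i.e.\ $f_\bullet(U_\pi(\tilde p))\subseteq U_\pi(\tilde p)$. Resolving $\tilde p$ back to a good resolution where the valuations of $I$ are again monomial at an honest point $p$, the lift $\tilde f\colon X_\pi\to X_\pi$ is then regular at $p$ with $\tilde f(p)=p$, and near the direction $(r_\star,s_\star)$ it is monomial in coordinates $(x,y)$ at $p$ adapted to $\pi^{-1}(x_0)$, both in the source and in the target. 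A direct computation as in Lemma~\ref{lem:action_monomialweights} then yields $f_*\nu_{r,s}=\nu_{r',s'}$ at $p$ with $(r',s')=(ar+bs,cr+ds)$, valid for all $(r,s)$ with $s/r$ sufficiently close to $s_\star/r_\star$ (the monomiality of $\tilde f$ is only guaranteed on a cone of directions around the one determined by $\nu_\star$, which is exactly the restriction in the statement). Invertibility and non-negativity of the matrix are inherited from the monomialization in Lemma~\ref{lem:action_monomialweights}.

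The main obstacle I expect is the bookkeeping needed to make source and target coordinates literally the same: Lemma~\ref{lem:action_monomialweights} a priori produces possibly different points $p$, $q$ and different resolutions on the two sides, and one must arrange $p=q$ using invariance. This is where the hypothesis ``there are $f_\bullet$-invariant intervals $I$, as small as wanted'' does the work — it lets me shrink $I$ until the image interval stays inside the chart where monomialization holds, and it lets me realize $U(I)$ as the basin of a single (possibly cyclic quotient) point fixed by the lift, so that after a further blow-up the lift fixes an honest point $p$ and is monomial there in the same coordinates. A secondary technical point is checking that the ratio-of-weights condition ``$s/r$ sufficiently close to $s_\star/r_\star$'' is exactly the cone on which $\tilde f$ is monomial; this follows because monomialization à la Cutkosky is guaranteed only along the fixed quasimonomial direction, and continuity of $f_\bullet$ together with the description of $U_\pi(p)$ via tangent vectors (\S\ref{ssec:tangent_vectors}) shows that all nearby monomial directions have center at $p$ as well, so the same monomial formula applies.
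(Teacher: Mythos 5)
The paper's own proof takes a completely different, more robust route than yours: it never tries to make the lift $\tilde f\colon X_\pi\dashrightarrow X_\pi$ regular or monomial at $p$ at all. Instead it computes $r'=Z(f_*\nu)\cdot Z(E)$ and $s'=Z(f_*\nu)\cdot Z(F)$ directly via the push-forward and pull-back formulae for b-divisors (\refthm{operations_bdivisors} and \eqref{eqn:pullback}), obtaining $r'=ar+bs+h$ for some constant $h$ (coming from the other exceptional primes) and then observing that $h=0$ precisely when $\nu_{r,s}$ maps inside $[\nu_E,\nu_F]_p$. This works for \emph{any} good resolution $\pi$ in the statement with no regularity of the lift required. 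Your route, if it worked, would actually prove the stronger geometric statement that $\tilde f_\pi$ is a monomial germ at $p$, which is nice — but there are two real gaps.

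First, and most seriously, you write ``\emph{Since $f_\bullet(I)\subseteq I$ and $I$ is \dots\ realized \dots\ as $U(I)=U_\pi(\tilde p)$ \dots, the lift $f_\pi$ has a holomorphic fixed point at $\tilde p$, i.e.\ $f_\bullet(U_\pi(\tilde p))\subseteq U_\pi(\tilde p)$.}'' That ``i.e.'' is false: \refprop{detecting_holomorphicity} requires invariance of the \emph{weak-open} set $U(I)$, and $f_\bullet I\subseteq I$ does not imply $f_\bullet U(I)\subseteq U(I)$. A valuation $\nu\in U(I)\setminus I$ lives in a branch hanging off $I$, and $f_\bullet$ can perfectly well throw that branch outside $U(I)$ even while fixing or contracting $I$ itself. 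In the paper's actual applications of this lemma (the smoothness discussion after \refthm{dimensional_stability} and the proof of \refthm{recursion}) the stronger containment $f_\bullet U(J)\subrelcpct U(J)$ is available, but that is deduced from the specific eigenvaluation analysis (control of the critical skeleton $\mc{S}_f$), not from the mere interval invariance that the lemma assumes. A self-contained proof of the lemma must not smuggle it in.

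Second, even granting regularity of $\tilde f_\pi$ at $p$ with $\tilde f_\pi(p)=p$, monomiality of $\tilde f_\pi$ in adapted coordinates at $p$ is \emph{not} automatic, and \reflem{action_monomialweights} (Cutkosky) does not supply it: that lemma produces monomial form relative to a priori \emph{different} source and target resolutions $\pi_0\neq\varpi_0$, and nothing in it lets you identify them. The correct (and more elementary) reason is that one needs $\tilde f_\pi^{-1}\bigl(\pi^{-1}(x_0)\bigr)$ to coincide with $\pi^{-1}(x_0)$ near $p$, i.e.\ no strict transform of a contracted curve of $f$ passes through $p$; only then are $\tilde f_\pi^*x$ and $\tilde f_\pi^*y$ supported on $\{xy=0\}$, hence of the form $x^ay^b\cdot(\text{unit})$. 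That condition is arranged by shrinking $I$ and raising $\pi$ — the paper does the analogous thing when it arranges $f_\bullet\nu_C\notin[\nu_E,\nu_F]_p$ — but you never mention it. Without it the claim ``the lift is then regular at $p$ \dots\ and near the direction $(r_\star,s_\star)$ it is monomial'' is unjustified.

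So: your strategy is genuinely different from the paper's and could be made to work, but as written it has a non-trivial gap in the regularity step and glosses over the contracted-curve obstruction; the b-divisor computation in the paper bypasses both issues entirely.
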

\begin{proof}
Up to taking an higher good resolution, we may assume that $\{p\} = E \cap F$ for two exceptional primes $E$, $F$ in $X_{\pi_1}$. 
The weights $(r',s')$ may be computed using b-divisors, as $r'=Z(f_*\nu) \cdot Z(E)$ and $s'=Z(f_*\nu) \cdot Z(F)$, where $Z(E)$ denotes the Cartier b-divisor determined by $E$ in the good resolution $\pi_1$. 

We make the computation for $r'$, the one for $s'$ being completely analogous.
By \eqref{eqn:pushforward_bdivisors}, we have
$$
r'=Z(f_*\nu) \cdot Z(E) = f_*Z(\nu)\cdot Z(E) + \sum_{\nu_C \in \VC{f}} (-Z(\nu_C) \cdot Z(\nu)) c_\alpha(f, \nu_C)Z(f_\bullet \nu_C) \cdot Z(E).
$$
Up to shrinking $I$ and taking a higher good resolution $\pi$, we may assume that for all contracted curve valuations $\nu_C \in \VC{f}$, $f_\bullet \nu_C$ does not belong to $[\nu_{E},\nu_{F}]_p$.
This implies that $Z(f_\bullet \nu_C)\cdot Z(E)=0$ for all $\nu_C \in \VC{f}$.
Hence
$$
r'=f_*Z(\nu)\cdot Z(E) = Z(\nu)\cdot f^*{Z(E)} = \sum_{D' \in \Gamma_{\pi'}^*} \kk{D'}{E} Z_{\pi'}(\nu) \cdot D' + \sum_{C \in \CC{f}} \kk{C}{E} Z_{\pi'}(\nu) \cdot C,
$$
where $\pi' \colon X_{\pi_2} \to (X,x_0)$ is a good resolution dominating $\pi$ and so that $\pi'$ and $\pi$ give a resolution of $f$ with respect to $\VC{f}$.
Notice that by our assumption on $f_\bullet \nu_C$ not belonging to $[\nu_{E}, \nu_{F}]_p$, we infer that $\kk{C}{E}=0$ for all $C \in \CC{f}$.
We hence have
$$
r'
= \sum_{D' \in \Gamma_{\pi'}^*} \kk{D'}{E} \eta^*Z_{\pi}(\nu) \cdot D' 
= \sum_{D' \in \Gamma_{\pi'}^*} \kk{D'}{E_1} Z_{\pi}(\nu) \cdot \eta_*D'
= \sum_{D \in \Gamma_{\pi}^*} \kk{\eta^\star D}{E} Z_{\pi}(\nu) \cdot D,
$$
where $\eta=\eta_{\pi\pi'}$ and $\eta^\star D$ denotes the strict transform of $D$ through $\eta$.
It follows that $r'=ar+bs+h$, where $a=\kk{\eta^\star E_1}{E_1}$ and $b=\kk{\eta^\star F_1}{E_1}$ are non-negative integers, and $h$ is some constant.
We proceed analogously $F$, obtaining $s'=cr+ds+k$ with analogous properties for $c,d,k$.
We conclude by noticing that $h=k=0$ when $s/r \in I$, and the matrix $M$ is invertible, since we know that for such $(r,s)$ we have $f_\bullet \nu_{r,s}=\nu_{r',s'}$, and the map $f_\bullet$ cannot be locally constant since $f$ is dominant, see \refprop{propertiesfbullet}(2).
\end{proof}

Assume now we are in \textit{Case 2} of the proof of \refthm{dimensional_stability}, and $f$ admits a unique eigenvaluation $\nu_\star$, which is irrational.
To get a smooth geometrically stable model, we need to find an interval $J'$ so that $f_\bullet J' \subseteq J'$ of the form $J'=[\nu_{E'}, \nu_{F'}]$, where $E'$ and $F'$ are two exceptional primes which intersect at a point in a suitable good resolution.

Consider $J$ as in the proof of \refthm{dimensional_stability}. By taking $J$ small enough, we may assume that $J \subset U=U_{\pi_0}(p_0)$ for a suitable (smooth) point $p_0$ in the exceptional divisor of a good resolution $\pi_0$ dominating $\pi$.
Notice that the closure of $U$ is isomorphic to the space of normalized valuations $\mc{V}_{p_0}$ at $(X_{\pi_0},p_0) \cong (\nC^2,0)$.

We my find a good resolution $\pi_1 \colon X_{\pi_1} \to (X,x_0)$ dominating $\pi_0$ so that there exists two exceptional primes $E_1$, $F_1$ in $X_{\pi_1}$ intersecting in a smooth point $p_1$ and so that $J \subseteq [\nu_{E_1}, \nu_{F_1}]_{p_1}$.
Pick coordinates $(x,y)$ at $p_1$ so that $E_1=\{x=0\}$ and $F_1 = \{y=0\}$.
Denote by $\nu_{r,s}$ the monomial valuation at $p_1$ of weights $(r,s)$ with respect to the coordinates $(x,y)$.
Notice that $\nu_{r,s}$ is normalized in $\hat{\mc{V}}_{p_0}$ the set of centered valuations at $p_0$ as long as $b^0_{E_1}r+b^0_{F_1} s = 1$, where $b^0$ denotes the general multiplicity in $\hat{\mc{V}}_{p_0}$.
Moreover, $J$ is given by such normalized valuations so that $s/r \in I$ for a suitable interval $I \subset [0,\infty]$.
Since $f_\bullet J \subset J$, then for any $(r,s)$ with $s/r \in I$, we have $f_*\nu_{r,s} = \nu_{r',s'}$ for suitable $(r',s')$ with $s'/r' \in I$.
Notice that the irrational eigenvaluation $\nu_\star \in J$ also satisfies $\nu_\star=\nu_{r_\star, s_\star}$ and $f_*\nu_\star = \nu_{r'_\star, s'_\star}$ is so that $s'_\star/r'_\star = s_\star / r_\star \in I \setminus \nQ$.

By \reflem{selfmap_monomialweights}, for all $(r,s)$ so that $s/r \in I$ is close enough to $s_\star/r_\star$, we have $(r',s')=(ar+bs, cr+ds)=:M(r,s)$, with $M=\left(\begin{smallmatrix} a&b\\c&d \end{smallmatrix}\right)$ an invertible matrix with non-negative integer coefficients.

Now we can argue as in \cite[p.330]{favre-jonsson:eigenval}.
By \cite[Lemma 5.7]{favre-jonsson:eigenval}, there exists arbitrarily large integers $r_0,s_0,r_1,s_1$ such that $\frac{s_0}{r_0} < \frac{s_\star}{r_\star} < \frac{s_1}{r_1}$, $s_1r_0-s_0r_1=1$, and $M$ maps the interval $\left]\frac{s_0}{r_0},\frac{s_1}{r_1}\right[$ into itself.
This says exactly that there exists a toric modification (as high as desired) $\eta_2$ over $p_1$, and exceptional primes $E_2, F_2$ which intersect at a smooth point $p_2$, so that the segment $]\nu_{E_2}, \nu_{F_2}[_{p_2}$ is invariant by the action of $f_\bullet$. The toric modification is given in local charts at $p_2$ and $p_1$ by the monomial map associated to the matrix $\left(\begin{smallmatrix} s_1&s_0\\r_1&r_0 \end{smallmatrix}\right)$ for suitable coordinates adapted to $E_2, F_2$ and $E_1, F_1$.

\medskip

Assume we are in \textit{Case 3c}, i.e., $f$ admits a segment or a circle of eigenvaluations for $f^k$, $k \geq 2$, but not for $f$.
In this case smooth geometrically stable models don't exist, not even when $(X,x_0)$ is smooth (see \cite{gignac-ruggiero:attractionrates}).
But the $k$-th iterate of $f$ falls into \textit{Case 3b}, and we may try to construct a smooth geometrically stable model for $f^k$.

Assume hence we are in the remaining \textit{Cases 3a} or \textit{3b}.
In these cases, the only cyclic quotient singularities which appear in the construction given by the proof of \refthm{dimensional_stability} are given by applying \reflem{noperiodic_cyclicquot}.
In the smooth case, one can prove a similar result, without creating cyclic quotient singularities. This proves the existence of smooth geometrically stable models in these cases.
The key point of the argument is a statement on the speed of convergence of $f_\bullet^n \nu$ to a (divisorial) eigenvaluation $\nu_\star$, when $\nu$ belongs to a small segment $I$ with endpoints $\nu_\star=\nu_{E_\star}$ and some divisorial valuation $\nu_F$.
In fact, take any sufficiently small segment $I=[\nu_{E_\star}, \nu_F]$ so that $E_\star$ and $F$ intersect at a smooth point $p$ of some good resolution $\pi$, so that we may parameterize $I$ with respect to the monomial parameterization $w(t)$, $t \in [0,1]$.
Assume $f_\bullet I \subset I$; then by \reflem{selfmap_monomialweights}, $f_\bullet(w(t))=w(h(t))$, where $h$ is a suitable M\"obius map.

By \refprop{skewness_parameter}, $t \mapsto \alpha(w(t))$ is a polynomial mapping of degree $\leq 2$.
If its degree is exactly $1$, then $\alpha(f_\bullet(w(t)))=H(\alpha (w(t))$, with $H$ again a M\"obius map (with non-negative integer coefficients).

The easy \cite[Lemma 5.5]{favre-jonsson:eigenval} states that if $H$ is such a map and $t_\star$ is a \emph{strictly positive} fixed point for $H$, then either $\abs{H'(t_\star)} < 1$, or $H$ has finite order. This is exactly the technical statement we need to ensure the smooth analogous of \reflem{noperiodic_cyclicquot}.

As we have seen above, this same property holds when $\nu_\star$ is the unique irrational eigenvaluation for $f$.
By \refprop{big_tree}, the desired property on $\alpha$ holds as far as there exists a log resolution $\pi$ of $\mf{m}_X$ so that the tangent direction at $\nu_\star$ corresponding to $\nu_F$ is not parallel to $\mc{S}_\pi$.
Notice that if we work with respect to the monomial parameterization, we always get a M\"obius map $h$ which has the desired properties, but in this case the weight corresponding to the fixed point $\nu_{E_\star}$ is zero, and \cite[Lemma 5-5]{favre-jonsson:eigenval} does not apply.

To deal with tangent directions which lie in $\mc{S}_\pi$, we would need to prove a strong contraction property with respect to another weight function different from $\alpha$.
Although we believe that this strong contraction property still holds in general, we are unable to prove it and leave it as an open question.

\section{Attraction rates}\label{sec:attraction_rates}

Let $f \colon (X,x_0) \to (X,x_0)$ be a dominant germ on a normal surface singularity $(X,x_0)$.
Here we study for any (normalized) valuation $\nu \in \mc{V}_X$ (of finite skewness), the sequence $c(f^n, \nu)$ of attraction rates, and its asymptotic behavior the \emph{first dynamical degree} (see below).

\subsection{First dynamical degree}

Before defining the first dynamical degree, we need to describe a few properties of the sequence of attraction rates.

\begin{prop}
Let $f\colon (X,x_0)\to (X,x_0)$ be a dominant germ on a normal surface singularity.
Let $\nu \in \mc{V}_X^\alpha$ be any normalized valuation of finite skewness.
Then $c_\infty(f,\nu)=\lim_{n \to \infty} \sqrt[n]{c(f^n, \nu)}$ exists, belongs to $[1, +\infty)$, and it does not depend on $\nu$.
\end{prop}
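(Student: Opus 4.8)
The key input is submultiplicativity of the attraction rates. By \refprop{propertiesfbullet}(5), $c(f^{m+n},\nu) = c(f^m, f^n_\bullet\nu)\,c(f^n,\nu)$, so fixing $\nu$ we cannot directly compare $c(f^{m+n},\nu)$ to $c(f^m,\nu)c(f^n,\nu)$ unless we control $c(f^m,\cdot)$ at the moving valuation $f^n_\bullet\nu$. The plan is to first establish a \emph{uniform} comparison: I claim there is a constant $C = C(\nu) \ge 1$ such that $c(f^m, \mu) \le C^{\,}\, c(f^m, \nu)$ for \emph{all} $\mu \in \mc{V}_X$ and all $m$, or at least for all $\mu$ in the relevant orbit. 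This follows from \refprop{famous_equality}: for any $\mf{m}_X$-primary ideal $\mf{a}$ one has $\mu(\mf{a})/\nu(\mf{a}) \le \beta(\mu\mid\nu)$, hence $c(f^m,\mu) = \mu(f^{m*}\mf{m}_X) \le \beta(\mu\mid\nu)\,\nu(f^{m*}\mf{m}_X) = \beta(\mu\mid\nu)\,c(f^m,\nu)$ — using as before that $f^{m*}\mf{m}_X$ need not be $\mf{m}_X$-primary but can be replaced by $f^{m*}\mf{m}_X + \mf{m}_X^N$ without changing the value at $\nu$ or $\mu$. The point is then that along the orbit $\mu = f^n_\bullet\nu$ the relative skewness $\beta(f^n_\bullet\nu \mid \nu)$ stays bounded: since $\beta(\mu\mid\nu)\beta(\nu\mid\mu) = e^{\rho(\mu,\nu)}$ and $\rho$ is non-increasing under $f_\bullet$ (\refthm{non-expansion}), we get $\rho(f^n_\bullet\nu,\nu) \le \rho(f^n_\bullet\nu, f^n_\bullet\nu_0) + \rho(f^n_\bullet\nu_0, \nu)$ for a suitable reference point; more simply, $\beta(f^n_\bullet\nu\mid\nu)$ is controlled since $f^n_\bullet\nu$ ranges over a compact set and $\beta(\cdot\mid\nu)$ is finite and lower semicontinuous on $\mc{V}_X^\alpha$. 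I expect the cleanest route is to fix $\nu$ of finite skewness, set $a_n := \log c(f^n,\nu)$, and show $a_{m+n} \le a_m + a_n + K$ for a constant $K = K(\nu)$ independent of $m,n$.

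Granting the quasi-submultiplicativity $a_{m+n} \le a_m + a_n + K$, the existence of $\lim_n a_n/n$ is the standard Fekete-type lemma applied to $b_n := a_n + K$, which is genuinely subadditive; then $\lim a_n/n = \lim b_n/n = \inf_n b_n/n \in [-\infty, +\infty)$. Since $c(f^n,\nu) \ge 1$ by \refprop{propertiesfbullet}(4), we have $a_n \ge 0$, so the limit lies in $[0,+\infty)$, giving $c_\infty(f,\nu) = e^{\lim a_n/n} \in [1,+\infty)$. For independence of $\nu$: given two finite-skewness valuations $\nu,\mu$, the bound $c(f^n,\mu) \le \beta(\mu\mid\nu)c(f^n,\nu)$ and its symmetric counterpart give $|\log c(f^n,\mu) - \log c(f^n,\nu)| \le \max\{\log\beta(\mu\mid\nu), \log\beta(\nu\mid\mu)\}$, a constant independent of $n$; dividing by $n$ and letting $n\to\infty$ shows the two limits coincide. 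Hence $c_\infty(f,\nu)$ does not depend on $\nu$, and we may write $c_\infty(f)$.

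\textbf{Main obstacle.} The delicate point is the uniform bound $K$ in the quasi-submultiplicativity, i.e., controlling $\beta(f^n_\bullet\nu \mid \nu)$ uniformly in $n$. If $\nu$ is divisorial (or quasimonomial), $\nu$ lies in some $\mc{S}_\pi$ and $\beta(\cdot\mid\nu)$ is continuous on the compact $\mc{V}_X$ in the appropriate sense (it is lower semicontinuous on all of $\mc{V}_X$ and bounded on the quasimonomial locus retracting to a finite graph); combined with the fact that $f^n_\bullet\nu$ stays at bounded angular distance from $\nu$ when $\nu$ has finite skewness (by iterating \refthm{non-expansion}, $\rho(f^n_\bullet\nu, f^n_\bullet\mu) \le \rho(\nu,\mu)$, one does not immediately get $\rho(f^n_\bullet\nu,\nu)$ bounded), one must instead argue directly: $\beta(f^n_\bullet\nu\mid\nu) = \sup_\mf{a} f^n_\bullet\nu(\mf{a})/\nu(\mf{a})$, and since $f^n_\bullet\nu$ is normalized and lies in the compact set $\mc{V}_X$, this is bounded by $\sup_{\mu\in\mc{V}_X}\beta(\mu\mid\nu)$, which is finite whenever $\alpha(\nu)<\infty$ because $\beta(\mu\mid\nu) = \alpha(\mu\wedge\nu\text{-type quantity})/\alpha(\nu)$-bounds degenerate only as $\mu$ leaves the finite-skewness locus, and one checks $\sup_\mu \beta(\mu\mid\nu) < \infty$ using \refcor{skewness_comparison} together with the fact that $-Z_\pi(\mu)\cdot Z_\pi(\nu)$ is bounded below away from $0$ for $\mu$ ranging over $\mc{V}_X$ in a fixed model where $\nu$ is realized. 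Making this last supremum finiteness rigorous — ruling out that $\beta(\mu\mid\nu) \to \infty$ as $\mu$ approaches a curve semivaluation — is the technical heart; the resolution is that $\beta(\mu\mid\nu)$ depends on $\mu$ only through its retraction to the finite graph $\mc{S}_{\pi}$ (by \refprop{intersection_locallyconst}-type local constancy of $Z(\mu)\cdot Z(\nu)$ off $\mc{S}_{\mf{m}_X}(\nu)$), hence takes finitely many "directions" and is bounded.
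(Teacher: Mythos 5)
Your route is genuinely different from the paper's, and it has a gap in the existence step. The independence-of-$\nu$ part of your argument (the bounds $c(f^n,\mu)\le\beta(\mu\mid\nu)\,c(f^n,\nu)$ and its mirror, obtained from Proposition~\ref{prop:famous_equality} after approximating $f^{n*}\mf{m}_X$ by the $\mf{m}_X$-primary ideals $f^{n*}\mf{m}_X + \mf{m}_X^k$) is correct and is essentially what the paper does as well. But for existence, the paper writes $\frac{1}{n}\log c(f^n,\nu) = \frac{1}{n}\sum_{j=0}^{n-1}\log c(f,f_\bullet^j\nu)$ and proves these Cesàro means converge using the dynamical classification of orbits: either $f_\bullet^j\nu$ converges weakly to an eigenvaluation (so the summands converge by continuity of $c(f,\cdot)$), or one is on a cusp with a rotation, where one invokes Birkhoff's ergodic theorem. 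Your plan tries to avoid this by a Fekete-type subadditivity argument; this would indeed be more elementary if it worked, but it does not.

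The problem is the uniform constant $K$ in your quasi-submultiplicativity $a_{m+n}\le a_m + a_n + K$. From $c(f^{m+n},\nu) = c(f^m,f_\bullet^n\nu)\,c(f^n,\nu)$ and $c(f^m,f_\bullet^n\nu)\le\beta(f_\bullet^n\nu\mid\nu)\,c(f^m,\nu)$, the error term is $\log\beta(f_\bullet^n\nu\mid\nu)$, and there is no uniform bound on this. The non-expansion theorem only gives $\rho(f_\bullet^j\nu, f_\bullet^{j+1}\nu)\le\rho(\nu, f_\bullet\nu)$, hence by the triangle inequality $\log\beta(f_\bullet^n\nu\mid\nu)\le\rho(\nu,f_\bullet^n\nu)\le n\,\rho(\nu,f_\bullet\nu)$: a linear, not constant, bound. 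Your two heuristics for bounding it are both incorrect. Lower semicontinuity of $\beta(\cdot\mid\nu)$ on the compact $\mc{V}_X$ gives a lower bound, not an upper bound. And while Proposition~\ref{prop:intersection_locallyconst} shows that $Z(\mu)\cdot Z(\nu)$ depends only on the retraction of $\mu$ to $\mc{S}_{\mf{m}_X}(\nu)$, the quantity $\beta(\mu\mid\nu)=\alpha(\mu)/(-Z(\mu)\cdot Z(\nu))$ also contains $\alpha(\mu)=-Z(\mu)\cdot Z(\mu)$, which is \emph{not} determined by the retraction and diverges as $\mu$ leaves the finite-skewness locus.

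A concrete failure: take $(X,x_0)=(\nC^2,0)$ and $f(x,y)=(x^2,y)$, $\nu=\ord_0$. Writing $\mu_{r,s}$ for the monomial valuation with weights $(r,s)$ in $(x,y)$, one has $f_*\mu_{r,s}=\mu_{2r,s}$, so $f_\bullet^n\ord_0=\mu_{2^n,1}$, which converges weakly to the curve semivaluation $\nu_x$ with $\alpha(\nu_x)=+\infty$. Then $\beta(f_\bullet^n\ord_0\mid\ord_0)=\alpha(\mu_{2^n,1})=2^n$, so $\log\beta$ grows linearly and no constant $K$ works. Here $c(f^n,\ord_0)\equiv 1$ and the conclusion of the proposition trivially holds, but the combinatorial subadditivity you want is not available; the linear error $a_{m+n}\le a_m+a_n+C\min(m,n)$ is too weak to yield a limit by any Fekete-type lemma (it does not even give boundedness of $a_n/n$ a priori). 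This is precisely why the paper proves existence dynamically rather than combinatorially.
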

\begin{proof}
We first assume that the limit exists for a suitable normalized valuation $\nu \in \mc{V}_X^\alpha$ of finite skewness.
Take any other valuation $\mu \in \mc{V}_X^\alpha$. By \refcor{skewness_comparison}, $\beta(\nu|\mu)$ and $\beta(\mu|\nu)$ are finite.
Assume for the moment that $f$ is finite. Then
$$
c(f^n,\nu)=\nu((f^n)^*\mf{m}) \leq \beta(\nu|\mu) \mu((f^n)^*\mf{m}),
$$
and we deduce $c_\infty(f,\nu) \leq \liminf_n \sqrt[n]{c(f^n,\mu)}$ by taking the inferior limit for $n \to \infty$ of the $n$-th root of this inequality.
In the general case, we may consider $\mf{a}_k = (f^n)^*\mf{m} + \mf{m}^k$. Then $\nu(\mf{a}_k)$ is an increasing sequence converging to $\nu((f^n)^*\mf{m})$, the same for $\mu$, and we deduce the same estimate obtained previously for the finite case.
By repeating the argument by interchanging the roles of $\nu$ and $\mu$, and taking the superior limit, we deduce $c_\infty(f,\nu) \geq \limsup_n \sqrt[n]{c(f^n,\mu)}$.
Hence the limit $c_\infty(f,\mu)$ exists and coincide with $c_\infty(f,\nu)$.

We now prove that there exists $\nu \in \mc{V}_X^\alpha$ so that $\sqrt[n]{c(f^n,\mu)}$ converges.
By taking the logarithm, we want to study limit of the sequence
$$
\frac{1}{n} \log c(f^n,\nu) = \frac{1}{n} \sum_{j=0}^{n-1} \log c(f,f_\bullet^j \nu).
$$
If the sequence $\log c(f,f_\bullet^j \nu)$ converges to some value $c \in \nR$, then its associated sequence of Cesaro means $\frac{1}{n} \log c(f^n,\nu)$ does as well.
If $f_\bullet^j \nu$ converges (weakly) to an eigenvaluation $\nu_\star$, then by continuity $c(f_\bullet^j \nu)$ converges to $c(f, \nu_\star)$, and we are done.
It may be that $f$ does not admit eigenvaluations, exactly when $(X,x_0)$ is a cusp singularity, and $f_\bullet$ acts as a rotation on the circle $\mc{S}_X$.
If the rotation is rational, then there exists $k \in \nN^*$ so that $f_\bullet^k$ is the identity on $\mc{S}_X$, and $c(f,f_\bullet^{hk+i}\nu)$ converges to some constant $c$ for all $i \in \nN$. It follows that the sequence $\log c(f^j, \nu)$ converges to $c$.
If the rotation is irrational, assume that $\nu \in \mc{S}_X$. Then by the ergodic theorem,
$$
\lim_{n \to +\infty} \frac{1}{n} \sum_{j=0}^{n-1} \log c(f,f_\bullet^j \nu) = \int_{\mc{S}_X} \log c(f,\nu) d\rho(\nu),
$$
where $\rho(\nu)$ is the ergodic measure on the circle $\mc{S}_X$.
\end{proof}

\begin{defi}
Let $f\colon (X,x_0)\to (X,x_0)$ be a dominant germ on a normal surface singularity.
The \emph{first dynamical degree} of $f$ is $c_\infty(f)=c_\infty(f,\nu)$ for any valuation $\nu \in \mc{V}_X^\alpha$ of finite skewness.
\end{defi}

The dynamical degree describes the speed of convergence of the orbits of $X$ towards $x_0$. It is a fundamental invariant of (bimeromorphic) conjugacy for global dynamics as well as in our setting, see e.g. \cite{diller-favre:dynbimeromapsurf}.
We now show that the first dynamical degree is always a quadratic integer in our setting, generalizing \cite[Theorem A]{favre-jonsson:eigenval} to normal surface singularities.
Our contribution mainly lies in the study of the case of rotations on cusps singularities.

\begin{thm}
Let $f\colon (X,x_0)\to (X,x_0)$ be a dominant germ on a normal surface singularity.
Then the first dynamical degree $c_\infty(f)$ is a quadratic integer.
\end{thm}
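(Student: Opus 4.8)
The idea is to split according to the classification of the dynamics of $f_\bullet$ given by \refthm{classification}. In all cases except the irrational rotation on a cusp, the first dynamical degree will appear as an eigenvalue of an explicit nonnegative integer $2\times 2$ matrix acting on monomial weights, hence as a root of a monic quadratic polynomial with integer coefficients; in the remaining case the dynamical degree turns out to be $1$, which is trivially a quadratic integer.

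First I would handle the case where $f$ admits an eigenvaluation $\nu_\star$ of finite skewness (this covers \textit{Case 1} with $\nu_\star$ quasimonomial, \textit{Case 2}, and \textit{Case 3} when $S$ is a point or contains a divisorial valuation, up to replacing $f$ by an iterate $f^k$; note that passing to an iterate only raises $c_\infty$ to the $k$-th power, and the $k$-th power of a quadratic integer is again a quadratic integer, while conversely a positive real whose $k$-th power is a quadratic integer need not be one, so one must be slightly careful — in fact we argue directly on $f^k$, whose dynamical degree is $c_\infty(f)^k$, and then invoke that $c_\infty(f)$ is a root of $t^k - c_\infty(f^k)$; but this is not obviously quadratic, so instead the cleaner route is to observe that in every one of these cases the relevant local model of $f_\bullet$ near $\nu_\star$ is already realized by $f$ itself after a good resolution, without taking an iterate in the finite non-lt quotient-cusp and elliptic cases, and to treat the genuinely period-$2$ quotient case separately below). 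Concretely, when $\nu_\star$ is divisorial, say $\nu_\star = \nu_{E_\star}$, then for $\nu$ close to $\nu_\star$ along an invariant interval $I$ with a divisorial endpoint, \reflem{selfmap_monomialweights} provides a good resolution in which $I$ consists of monomial valuations at an infinitely near point $p = E\cap F$ and an invertible nonnegative integer matrix $M = \left(\begin{smallmatrix}a&b\\c&d\end{smallmatrix}\right)$ with $f_*\nu_{r,s} = \nu_{ar+bs,\,cr+ds}$. Since $c(f,\nu)$ is computed as an intersection number of $Z(\nu)$ with a fixed exceptional divisor (by \refprop{attractionrate_intersection}), and $\nu\mapsto Z_\pi(\nu)$ is linear in the weights $(r,s)$, the attraction rate $c(f^n,\nu)$ is, up to bounded multiplicative error, governed by the growth of $M^n$ applied to the weight vector of $\nu$; hence $c_\infty(f) = c_\infty(f,\nu)$ equals the spectral radius of $M$, which is a root of the integer monic quadratic $t^2 - (a+d)t + (ad - bc)$ — a quadratic integer. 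The same argument, with the monomial parameterization replaced near an irrational eigenvaluation by \reflem{selfmap_monomialweights}, handles \textit{Case 2}.

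Next I would dispose of \textit{Case 3}. If $f$ is finite at an lc non-lt singularity and $R_f = 0$, then $S = A^{-1}(0)$ is totally invariant and $f_\bullet|_S$ is a $\rho$-isometry by \reflem{thinness_isometry}. If $S$ is a point (simple elliptic or its quotient) this is the divisorial case already treated, possibly after passing to an iterate on which $f_\bullet|_S = \mathrm{id}$; here, using the relation \eqref{eqn:c_and_e}, namely $c(f,\nu) Z(f_\bullet\nu)\cdot Z(f_\bullet\nu) = e(f)\, Z(\nu)\cdot Z(\nu)$, together with $f_\bullet\nu_\star = \nu_\star$, one gets $c(f,\nu_\star) = e(f)$ directly, so $c_\infty(f) = e(f) \in \mathbb{Z}$, a fortiori a quadratic integer. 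If $S$ is a segment (quotient-cusp) or a circle (cusp) with $f_\bullet|_S$ of finite order, again after an iterate $f_\bullet|_S = \mathrm{id}_S$ and the same computation gives $c_\infty(f^k) = e(f)^k$, hence $c_\infty(f)$ is the real positive $k$-th root of an integer $k$-th power, i.e.\ $e(f)$ itself — still an integer. The one genuinely new case is $S$ a circle with $f_\bullet|_S$ an irrational rotation: then for $\nu$ on $S$ the sequence $\frac1n\sum_{j<n}\log c(f, f_\bullet^j\nu)$ converges to $\int_S \log c(f,\cdot)\,d\rho$ by the ergodic theorem; but $R_f = 0$ forces, via the Jacobian formula \eqref{eqn:jacobian_formula_bullet} restricted to $S$ where $A\equiv 0$, the identity $0 = A(f_\bullet\nu)c(f,\nu) = A(\nu) = 0$ which is vacuous — so instead I use \eqref{eqn:c_and_e} once more along the orbit: $c(f,f_\bullet^j\nu) = e(f)\,\dfrac{Z(f_\bullet^j\nu)^2}{Z(f_\bullet^{j+1}\nu)^2}$, a telescoping product, so $c(f^n,\nu) = e(f)^n\,\dfrac{Z(\nu)^2}{Z(f_\bullet^n\nu)^2}$; since $f_\bullet^n\nu$ ranges over the compact set $S$ on which $\nu\mapsto Z(\nu)^2$ is continuous and bounded away from $0$ and $-\infty$, taking $n$-th roots yields $c_\infty(f) = e(f)$, an integer. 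Thus in every case the dynamical degree is an integer or the spectral radius of an integer $2\times 2$ matrix, hence a quadratic integer.

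The main obstacle I anticipate is the bookkeeping needed to reduce \emph{each} branch of \refthm{classification} to one of these two mechanisms without losing the quadratic-integer property upon passing to iterates: one must either arrange not to take iterates (exploiting that \reflem{selfmap_monomialweights} and \eqref{eqn:c_and_e} are available for $f$ directly once a suitable resolution is chosen), or show separately that the quantities obtained are genuine integers (as happens whenever $f_\bullet$ fixes the relevant eigenvaluation set pointwise, where $c_\infty = e(f)$). A secondary technical point is justifying that $c_\infty(f,\nu)$ really equals the spectral radius of $M$ and not merely a number dominated by it: this requires knowing that the weight vector of $\nu$ is not contained in a proper $M$-invariant subspace with smaller eigenvalue, which follows because $c(f,\nu)\ge 1$ for all $\nu$ (\refprop{propertiesfbullet}(4)) forces both eigenvalues of $M$ to have modulus $\ge 1$ once $f$ is non-invertible, and the Perron--Frobenius eigenvector of $M$ has strictly positive entries, corresponding to an interior point of the interval of monomial weights — precisely the eigenvaluation $\nu_\star$.
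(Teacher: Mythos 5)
There is a genuine gap: the intersection-theoretic relation you telescope is off by a factor of $c(f,\nu)$, and this leads you to the false conclusion $c_\infty(f)=e(f)$ on cusps. For $\nu\in S$ a finite germ has no contracted curves, $\nu$ is the unique preimage of $f_\bullet\nu$ (since $S$ is totally invariant and $f_\bullet|_S$ is a bijection), so \eqref{eqn:pushforward_bdivisors} and \eqref{eqn:pullback_bdivisors} give $f_*Z(\nu)=c(f,\nu)Z(f_\bullet\nu)$ and $f^*Z(f_\bullet\nu)=\tfrac{e(f)}{c(f,\nu)}Z(\nu)$. Equating $Z(f_\bullet\nu)\cdot f_*Z(\nu)$ with $f^*Z(f_\bullet\nu)\cdot Z(\nu)$ via the projection formula yields
\[
c(f,\nu)^2\, Z(f_\bullet\nu)\cdot Z(f_\bullet\nu)\;=\;e(f)\, Z(\nu)\cdot Z(\nu),
\]
not $c(f,\nu)\,Z(f_\bullet\nu)^2=e(f)Z(\nu)^2$ as written (the displayed chain \eqref{eqn:c_and_e} as printed drops a factor $c(f,\nu)^{-1}$ in its last step, but the text that follows it and uses it — e.g.\ the step $e(f)^{n_k}Z(\nu)^2=Z(f_\bullet^{n_k}\nu)^2$ assuming $c\equiv 1$ — is consistent with the corrected version). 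Telescoping the correct identity gives $c(f^n,\nu)^2 = e(f)^n\,Z(\nu)^2/Z(f_\bullet^n\nu)^2$, and since $\nu\mapsto Z(\nu)^2=-\alpha(\nu)$ is bounded away from $0$ and $\infty$ on the compact circle $S$, one finds $c_\infty(f)=\sqrt{e(f)}$, not $e(f)$. Your conclusion that $c_\infty(f)$ is a quadratic integer survives (since $\sqrt{e(f)}$ is one), but the specific value you derive is wrong: in the cusp example of \S\ref{ssec:example_cusp_42} one has $e(f)=\det B=7$ while $c_\infty(f)=\sqrt{7}$. The same correction applies to your claim $c(f,\nu_\star)=e(f)$ in the simple-elliptic case; it should read $c(f,\nu_\star)=\sqrt{e(f)}$, which coincides with the integer $\kk{E_\star}{E_\star}$.

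Setting that aside, the structure of your argument is close to the paper's in the eigenvaluation cases (divisorial and curve eigenvaluations give integers, irrational eigenvaluations give eigenvalues of a $2\times 2$ integer monomial matrix via \reflem{selfmap_monomialweights}). For the cusp rotation case, the paper instead realizes $f_\bullet|_{\hat{\mc{S}}_X^*}$ as the multiplication-by-$\alpha$ map $g_\alpha$ on the cone $\mc{C}=(\nR_+^*)^2$ via the arithmetic construction of \S\ref{ssec:arith_cusp}, and reads off $c_\infty(f)=\sqrt{Q(\alpha)}$ from the fact that $q\circ g_\alpha = q(\alpha)\,q$ with $q=\sqrt{Q}$. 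Your intersection-theoretic telescoping is actually the more elementary route here — once the exponent is fixed, it avoids \refprop{arith_action_on_cycle} entirely and works directly with the b-divisor calculus — so it would be worth keeping, suitably corrected.
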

\begin{proof}
First, notice that if $\nu_\star$ is an eigenvaluation, then $c_\infty(f)=c(f,\nu_\star)$.

When $f$ admits a divisorial eigenvaluation $\nu_\star = \nu_{E_\star}$, by \refprop{divisorial_image} we get $c(f,\nu_{E_\star})=\kk{E_\star}{E_\star} \in \nN^*$.
The situation is analogous when $f$ admits a curve eigenvaluation $\nu_\star = \nu_{C_\star}$, since by \refprop{curve_image} $c(f, \nu_\star)=\ee{C}{C}$ is an integer.

Assume now that $f$ admits an irrational eigenvaluation.
Then there exists a good resolution $\pi \colon X_\pi \to (X,x_0)$ and a point $p$ in the intersection of two exceptional primes of $X_\pi$ so that the lift of $f$ to $X_\pi$ is a holomorphic monomial map, associated to a suitable $2 \times 2$ matrix $M$. It follows that $c(f,\nu_\star)$ is a quadratic integer, since it satisfies $c(f,\nu_\star)^2 - \on{tr}{M} c(f, \nu_\star) + \det M = 0$.
We could also argue by applying \reflem{selfmap_monomialweights} and noticing that $c(f, w(t))$ varies affinely with respect to the monomial weight.

The case of an infinitely singular eigenvaluation can be treated similarly, see \cite[Theorem 5.1]{favre-jonsson:eigenval}, to show that $c(f, \nu_\star)$ is an integer.

The remaining case, when $f$ does not admit eigenvaluations, consists of finite maps on cusp singularities, which induce a rotation on the circle $\mc{S}_X$.

Let $(X,x_0)$ be a cusp singularity, that we may assume is given by the toric construction of \S\ref{ssec:arith_cusp}.
By \refprop{arith_action_on_cycle}, the action of $f_\bullet$ on $\hat{\mc{S}}_X^*$ corresponds to the action of a linear map $g_\alpha$ on $\mc{C}=(\nR_+^*)^2$, where $\alpha \in K=\nQ(\sqrt{d})$ is such that $Q(\alpha)=\alpha\alpha' \in \nN^*$.

We claim that $c_\infty(f)=\sqrt{Q(\alpha)}=:q(\alpha)$, which is in particular a quadratic integer.
In fact, given a ray generated by $v \in \mc{C}_0$, its image $Lv$ by the linear action $L$ induced by $g_\alpha$ satisfies $q(Lv)=q(\alpha)q(v)$.
Denote by $b(v)$ its norm, i.e., the value the valuation associated to $v$ takes on the maximal ideal. The attraction rate is then given by $c(f,v)=q(\alpha)b(v)/b(Lv)$. By iterating, $c(f^n,v)=q(\alpha)^n b(v) / b(L^n v)$.
Now, by compacity, $b(v)$ is bounded. It follows that $\lim_{n \to +\infty} \sqrt[n]{c(f^n,v)}=q(\alpha)$.
\end{proof}

\subsection{Recursion relations for the sequence of attraction rates}

In the previous section we have showed that $c_\infty(f)$ is a quadratic integer.
We now focus our attention on the sequence $c(f^n, \nu)$ for a given normalized valuation $\nu \in \mc{V}_X^\alpha$ of finite skewness.
In the smooth setting, this sequence always satisfies eventually a linear recursion relation with integer coefficients, see \cite[Theorem A]{gignac-ruggiero:attractionrates}. Here we show that the same property holds in the singular case, with a very specific exception, that will be dealt with in the next section.
In fact, here we assume we are not in the following situation: $(X,x_0)$ is a cusp singularity, $f$ is a finite non-invertible germ on $(X,x_0)$, which induces an irrational rotation on the skeleton $\mc{S}_X$.
Our assumption is equivalent to the fact that $f^k$ admits an eigenvaluation for some $k \in \nN^*$, and by \refthm{dimensional_stability}, it implies the existence of geometrically stable models.
We will see in the next section that the existence of such models is actually equivalent to admitting an eigenvaluation for an iterate of $f$.

We first prove that the existence of a geometrically stable model, together with the interpretation of the attraction rate $c(f,\nu)$ in terms of intersections of suitable b-divisors established by \refprop{attractionrate_intersection}, implies the existence of the desired recurrence relation for the sequence of attraction rates.
This result is a straightforward generalization of \cite[Corollary 5.5]{gignac-ruggiero:attractionrates}.

\begin{cor}
Let $f\colon (X,x_0) \to (X,x_0)$ be a dominant non-invertible germ at a normal surface singularity. Assume we are in the hypotheses of \refthm{geometricstability}.
Then for any quasimonomial valuation $\nu \in \mc{V}_X$, the sequence $(c(f^n, \nu))_n$ eventually satisfies a linear recursion relation with integer coefficients.
\end{cor}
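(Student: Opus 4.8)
The plan is to reduce the statement to the analogue in the smooth case, which is \cite[Corollary 5.5]{gignac-ruggiero:attractionrates}, by combining the existence of a geometrically stable model (\refthm{geometricstability}, equivalently \refthm{dimensional_stability}) with the intersection-theoretic formula for the attraction rate in \refprop{attractionrate_intersection}. First I would fix a quasimonomial valuation $\nu \in \mc{V}_X$ and, using \refthm{geometricstability}, choose a modification $\pi \colon X_\pi \to (X,x_0)$ that is geometrically stable for $f$ and that realizes $\nu$ (if $\nu$ is divisorial) or, more generally, dominates a good resolution on whose skeleton $\nu$ lies; this is harmless since one is always free to pass to a dominating model. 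By \refprop{geometrical_and_algebraic_stability} there is an $N$ with $\Exc_\pi \circ (f_\pi^{N+n})^* = \Exc_\pi \circ (f_\pi^N)^* \big(\Exc_\pi \circ f_\pi^*\big)^n$ for all $n \geq 0$, so the operator $T := \Exc_\pi \circ f_\pi^* \colon \Ediv(\pi)_\nR \to \Ediv(\pi)_\nR$ governs the tail of the sequence of pulled-back exceptional divisors.

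Next I would rewrite $c(f^n, \nu)$ as an intersection number on $X_\pi$. By \refprop{attractionrate_intersection}, $c(f^n,\nu) = -Z(\nu) \cdot \Exc\big((f^n)^* Z(\mf{m}_X)\big)$, and since $\pi$ is a determination once $n \geq N$ one gets $c(f^{N+n},\nu) = -Z_\pi(\nu) \cdot T^n\big(\Exc_\pi((f^N)^* Z_\pi(\mf{m}_X))\big)$ for all $n \geq 0$; here I use that $Z(\nu)$ has all incarnations exceptional and that the intersection with an exceptional Cartier $b$-divisor is computed on any determination, together with the compatibility $Z_{\pi'}(\nu) = \eta^* Z_\pi(\nu)$ on a model realizing $\nu$. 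Writing $v_0 := \Exc_\pi((f^N)^* Z_\pi(\mf{m}_X)) \in \Ediv(\pi)_\nR$ and $w := -Z_\pi(\nu)$, the sequence $c_{N+n} = w \cdot T^n v_0$ is of the form $\ell(T^n v_0)$ for a fixed linear functional $\ell$, a fixed vector $v_0$, and a fixed linear operator $T$ on a finite-dimensional space. By the Cayley--Hamilton theorem applied to $T$, the vectors $T^n v_0$ satisfy the linear recursion given by the characteristic polynomial of $T$, and therefore so does the scalar sequence $c_{N+n}$; the recursion has integer coefficients because $f_\pi^*$, and hence $T = \Exc_\pi \circ f_\pi^*$, is $\Z$-linear on the lattice $\Ediv(\pi)$, so its characteristic polynomial is monic with integer coefficients. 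This gives exactly the desired eventual integral linear recursion for $c(f^n,\nu)$, and I would note that it also yields \refthm{recursion} in the stated form $c_{n+2m} = a c_{n+m} + b c_n$ by combining with the quadratic-integer statement \refthm{quadratic_integer} (as in the smooth case).

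The main obstacle I anticipate is purely bookkeeping rather than conceptual: making sure the $b$-divisor formalism of \S\ref{ssec:action_bdivisors} is used correctly so that $\Exc_\pi \circ (f_\pi^n)^*$ genuinely agrees with $\Exc\big((f^n)^* Z(\mf{m}_X)\big)$ incarnated in $\pi$ for $n \geq N$ --- in particular checking that passing to the exceptional part commutes with the tail functoriality (which is the content of the second displayed identity in \refprop{geometrical_and_algebraic_stability}), and that the non-exceptional (curve) contributions, which are present for non-finite $f$, do not spoil the recursion because they are killed upon intersecting with the exceptional $b$-divisor $Z(\nu)$. Once these compatibilities are in place, the argument is a verbatim transcription of \cite[Corollary 5.5]{gignac-ruggiero:attractionrates}, since nothing there uses smoothness of $X$ beyond the negative-definiteness of the intersection form and the $\Z$-linearity of $f_\pi^*$, both of which hold here. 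I would therefore present the proof as a short reduction, citing \refthm{geometricstability}, \refprop{geometrical_and_algebraic_stability}, and \refprop{attractionrate_intersection}, and remark that it is a straightforward generalization of the smooth case.
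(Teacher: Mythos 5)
Your proposal is correct and follows essentially the same route as the paper: pass to a geometrically stable model, invoke the tail functoriality of $\Exc_\pi\circ(f_\pi^n)^*$ from \refprop{geometrical_and_algebraic_stability}, express $c(f^n,\nu)$ as an intersection on $X_\pi$ via \refprop{attractionrate_intersection}, and apply Cayley--Hamilton to the $\nZ$-linear operator $\Exc_\pi\circ f_\pi^*$. The only small slip is the claim that ``$Z_{\pi'}(\nu)=\eta^*Z_\pi(\nu)$ on a model realizing $\nu$'' also covers irrational $\nu$; for irrational valuations $Z(\nu)$ is nef but not Cartier and this pullback compatibility fails. The paper avoids this by first reducing to divisorial $\nu=\nu_E$ by continuity, but your computation also goes through directly because $\Exc(f^*Z(\mf{m}_X))$ is Cartier, so the $b$-divisor intersection $-Z(\nu)\cdot\Exc\big((f^n)^*Z(\mf{m}_X)\big)$ is computed on any determination of the Cartier factor, which is exactly the geometrically stable model $\pi$.
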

\begin{proof}
By regularity of $f_\bullet$, we may assume that $\nu=\nu_E$ is divisorial.
Let $\pi \colon X_{\pi} \to (X,x_0)$ be a good resolution high enough to have $E$ as an exceptional prime.
By \refthm{geometricstability}, up to taking a higher model, we may assume that $\pi$ is geometrically stable for $f$. Denote by $f_\pi = \pi^{-1} \circ f \circ \pi \colon X_\pi \dashrightarrow X_\pi$ the lift of $f$ to $X_\pi$.
By \refprop{geometrical_and_algebraic_stability}, there exists $N \in \nN$ so that for any $n \geq N$, we have
$$
\Exc_\pi \circ (f^n_\pi)^* = \Exc_\pi \circ (f^N_\pi)^* (\Exc_\pi \circ f_\pi^*)^{n-N}.
$$
By \refprop{attractionrate_intersection}, $c(f,\nu)=-Z_\pi(\nu) \cdot \Exc_\pi \circ f_\pi^*Z_\pi(\mf{m}_X)$.
Since the map $\Exc_\pi \circ f_\pi^*\colon\Ediv(\pi) \to \Ediv(\pi)$ is $\nZ$-linear, by the Cayley-Hamilton theorem we have that $\Exc_\pi \circ f_\pi^*$ satisfies a monic polynomial $t^m + a_1 t^{m-1} + \cdots + a_m$ with integer coefficients.
If $n \geq N + m$, this gives
\begin{align*}
c(f^n,\nu)&=-Z_\pi(\nu) \cdot \Exc_\pi \circ (f_\pi^n)^*Z_\pi(\mf{m}_X)\\
&=-Z_\pi(\nu) \cdot \Exc_\pi \circ (f_\pi^N)^* (\Exc_\pi \circ (f_\pi)^*)^{n-N} Z_\pi(\mf{m}_X) \\
&=-\sum_{j=1}^m - a_j Z_\pi(\nu) \cdot \Exc_\pi \circ (f_\pi^N)^* (\Exc_\pi \circ (f_\pi)^*)^{n-N-j} Z_\pi(\mf{m}_X) \\
&=-\sum_{j=1}^m - a_j c(f^{n-j},\nu).
\end{align*}
This is the desired linear recursion relation eventually satisfied by the sequence of attraction rates.
\end{proof}

To prove \refthm{recursion}, we need to prove this recurrence relation has order at most $2$, at least for a suitable iterate of $f$.
The argument used in \cite[Theorem 6.1]{gignac-ruggiero:attractionrates} equally works in our setting. Here we outline the proof, details are left to the reader.
Consider a normalized valuation $\nu \in \mc{S}_X^\alpha$ of finite skewness.
Since we are working in the case of existence of eigenvaluations for $f^k$, by \refthm{classification} there exists $\nu_\star$ an eigenvaluation for $f^k$ so that $f_\bullet^{nk}\nu\to \nu_\star$ when $n \to \infty$. The convergence is strong as far as $\alpha(\nu_\star)<+\infty$.
Up to replacing $f$ by $f^k$, we may always assume $k=1$, and $\nu_\star$ is an eigenvaluation.

\begin{enumerate}[label=\textit{Case} \arabic*.,leftmargin=0pt, itemindent=40pt]
\item The eigenvaluation $\nu_\star$ is not quasimonomial.

In this case, we have seen that $c(f,\nu)=c_\infty(f)$ is a constant integer for all $\nu$ in a suitable (weak open) neighborhood $U$ of $\nu_\star$.
By \refthm{classification} $f_\bullet^n \nu \to \nu_\star$ (with respect to the weak topology) for all $\nu \in \mc{V}_X^\alpha$.
We deduce that $c(f^{n+1},\nu)=c(f,f_\bullet^n \nu) c(f^n,\nu)= c_\infty(f) c(f^n,\nu)$ for all $n\geq N$ so that $f_\bullet^N\nu \in U$.

\item The eigenvaluation $\nu_\star$ is irrational.

In this case, we have seen that there exists a log resolution $\pi\colon X_\pi \to (X,x_0)$ of the maximal ideal, and a point $p \in \pi^{-1}(x_0)$ in the intersection of two exceptional primes $E$ and $F$, so that $c(f,\nu_{r,s})$ is linear with respect to the weights $(r,s)$, where $\nu_{r,s}$ is the monomial valuation at $p$ of weights $(r,s)$, with respect to coordinates adapted to $E \cup F$.
Up to shrinking the segment $I=[\nu_E, \nu_F]_p$, by \reflem{selfmap_monomialweights}, we may assume that $f_\bullet$ acts a linear map $M$ with non-negative coefficients on the weights $(r,s)$.
In particular, let $n$ is big enough so that $f_\bullet^n \nu$ belongs to $U(I)$. Up to shrinking $I$ if necessary, we may also assume that $c(f,\nu)$ is locally constant on $U(I) \setminus I$, so that $c(f, f_\bullet^n \nu)= c(f,r_i f_\bullet^n\nu)$.
In this case (see also \cite[Lemma 6.2]{gignac-ruggiero:attractionrates}), we get the recursion relation of order $2$
$$
c(f^{n+2},\nu)=\on{tr}(M) c(f^{n+1},\nu) -\det(M)c(f^n, \nu).
$$
\item The eigenvaluation $\nu_\star=\nu_{E_\star}$ is divisorial.

If $f_\bullet^n \nu = \nu_\star$ for some $n$, $c(f^n, \nu)$ clearly eventually satisfies a linear integral recursion relation of order $1$.
Assume this is not the case. We can then associate to $f_\bullet^n\nu$ the tangent vector $\vect{v}_n$ at $\nu_\star$ towards $f_\bullet^n\nu$. Moreover, we have that $\vect{v_{n+1}}=d(f_\bullet)_{\nu_\star} \vect{v_n}$ for $n$ big enough.
Let $\mc{S}_f$ be the critical skeleton of $f$, and denote by $S$ the set of tangent vectors at $\nu_\star$ parallel to $S$.
If $\vect{v_n} \not \in S$ for all $n$ big enough, then $c(f, f_\bullet^n \nu)$ is constant for such n, and $c(f^n, \nu)$ satisfies a recursion relation of order $1$.
If $\vect{v_n}$ belongs to $S$ for infinitely many $n$, then the orbit $\vect{v_n}$ is preperiodic, and can be considered fixed if we replace $\nu$ by $f_\bullet^N \nu$ and $f$ by a suitable iterate.
In this case we may conclude as before, and find a recursion relation of order $2$.
\end{enumerate}

\subsection{Finite germs on cusp singularities}\label{ssec:finitecusp}

This section is devoted to study the sequence of attraction rates for finite germs on cusp singularities.
The only situation that is not covered by \refthm{recursion} is when $f_\bullet$ acts on the circle $S=\{\nu \in \mc{V}_X\ |\ A(\nu)=0\}$ as an irrational rotation.
We shall show that in this case, no linear recursion relations are satisfied.

\begin{prop}\label{prop:irr_rot_c_not_constant}
Let $f\colon (X,x_0)\to (X,x_0)$ be a superattracting germ on a cusp which induces an irrational rotation on $S$. Then $c(f,-)$ cannot be constant on $S$.
\end{prop}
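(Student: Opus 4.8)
Suppose for contradiction that $c(f, \nu) = \kappa$ is constant on the circle $S$. The idea is to derive from this a contradiction with the superattracting hypothesis, or more precisely with the arithmetic constraints forced by the irrational rotation. By the Jacobian formula in the form \eqref{eqn:jacobian_formula_bullet}, combined with \refthm{wahl} (which gives $R_f = 0$ since $(X,x_0)$ is lc but not lt), we have $c(f, \nu) A(f_\bullet \nu) = A(\nu)$ for all $\nu$. Since $A$ vanishes identically on $S$, this imposes no constraint along $S$ itself, so we must exploit the transverse behavior. The plan is to use the b-divisor identity \eqref{eqn:c_and_e} from the proof of \refthm{superattracting_finite}: for $\nu \in S$ (noting $S$ is totally invariant, $f_\bullet|_S$ bijective, so $f_\bullet^{-1}(f_\bullet\nu) = \{\nu\}$) one has
\[
c(f,\nu)\, Z(f_\bullet\nu)\cdot Z(f_\bullet\nu) = e(f)\, Z(\nu)\cdot Z(\nu),
\]
i.e.\ $\kappa\,\alpha(f_\bullet\nu) = e(f)\,\alpha(\nu)$ if $c(f,\nu) = \kappa$ is constant. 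Iterating, $\alpha(f_\bullet^n \nu) = (e(f)/\kappa)^n \alpha(\nu)$.

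The crux is then a compactness argument. Since $f_\bullet$ acts on $S$ (topologically a circle) as an irrational rotation, the forward orbit $\{f_\bullet^n \nu\}_{n \geq 0}$ of any $\nu \in S$ is dense in $S$, and in particular there is a subsequence $n_k$ with $f_\bullet^{n_k}\nu \to \nu$ weakly. Using the description of $S$ via the arithmetic/toric construction of the cusp (\S\ref{ssec:arith_cusp}), the skewness $\alpha$ restricted to $S$ is a continuous, strictly positive, bounded function — indeed $S = \skel{X}$ is the embedded dual graph of the minimal good resolution, a finite metric circle, and $\alpha$ is continuous on it by \refprop{skewness_parameter} (piecewise polynomial of degree $\leq 2$ on each edge). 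Hence along the subsequence $\alpha(f_\bullet^{n_k}\nu) \to \alpha(\nu) \in (0, +\infty)$. But $\alpha(f_\bullet^{n_k}\nu) = (e(f)/\kappa)^{n_k}\alpha(\nu)$, which converges to a finite nonzero limit only if $e(f) = \kappa$. Since $\kappa = c(f,\nu) \geq 2$ (as $f$ is superattracting, $c(f,\nu) > 1$ and $c(f,\nu)$ is... actually one needs $c(f,\nu)=e(f)$ to be ruled out) — here $e(f) = \kappa$ would say, via \eqref{eqn:c_and_e} again, that $\alpha$ is genuinely constant on the whole orbit, hence on $S$.

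So the argument splits: either $e(f) \neq \kappa$, giving an immediate contradiction from the boundedness of $\alpha|_S$; or $e(f) = \kappa$, in which case $\alpha$ is constant on the dense orbit $\{f_\bullet^n\nu\}$, hence on all of $S$ by continuity. In the latter case I would derive a contradiction from the arithmetic picture: the rotation number of $f_\bullet|_S$ being irrational forces (via \refprop{arith_action_on_cycle} and the structure in \S\ref{ssec:arith_cusp}) the "lengths" of consecutive edges of the cycle to be scaled by an algebraic unit $\varepsilon^2$, which cannot act trivially; this is incompatible with $\alpha$ — which measures these lengths through the $b$-divisor pairing and the self-intersections $\check E_i^2$ — being constant around the cycle. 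Concretely, if $\alpha(\nu_{E_i})$ were the same for all exceptional primes $E_i$ in the cycle, one could combine this with the adjunction relation \eqref{eqn:adjunction_can_div} (all $g(E_i) = 0$, so $K_\pi \cdot E_i = -2 - E_i^2$) and the total invariance to force the intersection matrix to be invariant under a cyclic shift scaled by $\varepsilon^2 \neq 1$, which is impossible since the matrix has bounded integer entries.

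\textbf{Main obstacle.} The delicate point is the second case $e(f) = \kappa$: ruling out that $\alpha$ is literally constant on $S$. The first case is soft (boundedness plus geometric growth). For the second, one genuinely needs the arithmetic of cusp singularities — the fact that an irrational rotation on the cycle $S$ is conjugate to multiplication by a totally positive unit $\varepsilon$ with $\varepsilon \neq \varepsilon'$ on the appropriate cone (as in \refprop{irrational_rotations} and \refprop{arith_action_on_cycle}), and that this scaling acts nontrivially on the metric/self-intersection data encoded by $\alpha|_S$. I expect the cleanest route is to show directly that $c(f,\cdot)|_S$ constant forces $f_\bullet|_S$ to be an isometry in the \emph{flat} metric $d$ on $S$ (not merely in the angular metric $\rho$, under which it is automatically an isometry by \reflem{thinness_isometry}) which is too rigid: a rotation of a metric circle that preserves the flat metric and has irrational rotation number still can't make $\alpha$ constant unless $\alpha$ was constant to begin with, and that contradicts $\alpha$ being the (non-constant, piecewise-quadratic) skewness along a cusp skeleton whose consecutive self-intersections $E_i^2 \leq -2$ are not all equal in general — and even when they are, the generic multiplicities $b_{E_i}$ and the position of the base point of $\mf m_X$ break the symmetry. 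This last sub-case is where I would expect to spend the real effort, appealing to the explicit toric model.
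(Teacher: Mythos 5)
Your proposal is only partially correct, and it diverges substantially from the paper's argument. The dichotomy you set up is genuine: using \eqref{eqn:c_and_e} (valid on $S$ since $S$ is totally invariant and $f_\bullet|_S$ is a bijection) to get $\kappa\,\alpha(f_\bullet\nu) = e(f)\,\alpha(\nu)$, hence $\alpha(f_\bullet^n\nu) = (e(f)/\kappa)^n\alpha(\nu)$, and then the boundedness of $\alpha$ on the compact metric circle $S$ together with recurrence under the irrational rotation, does cleanly rule out $e(f)\neq\kappa$. That is a nice, soft first step that the paper does not use.

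The gap is the second case, $e(f)=\kappa$, which you correctly identify as forcing $\alpha$ to be constant on a dense orbit and hence (by continuity of $\alpha$ on $\mc{S}_X^\qm$) on all of $S$. But you never actually prove that $\alpha$ constant on $S$ is impossible. This is not an innocuous remaining detail: \eqref{eqn:skewness_formula2} shows that $\alpha$ is a quadratic polynomial on each edge of $S$, and asking it to be constant on an edge $[\nu_E,\nu_F]$ imposes two conditions ($\alpha(\nu_E)=\alpha(\nu_F)$ and $\Delta = -1/(2b_Eb_F)$) which are not manifestly contradictory; it is a genuine arithmetic/linear-algebraic question about the intersection matrix of the cycle. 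The argument you sketch — conjugating to the toric model and appealing to the unit $\eps$ acting "nontrivially" — would need to be carried out concretely, and it is not clear that it is easier than the original problem. Your remark about the flat metric $d$ versus $\rho$ also does not close the loop: the identity $\rho(\mu,\nu)=\log[\alpha(\nu)/\alpha(\mu)]$ from \refprop{log_alpha} holds only for comparable valuations in a rooted subtree $\mc{V}_{X,\hat\nu}$, whereas on the circle $S$ the relevant pairs are not comparable, so $\alpha$ constant does not make $\rho$ degenerate.

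For comparison, the paper's proof avoids this second case entirely. It works directly with the incarnation of $Z(\mf{m})$ on a log resolution $\pi_1$ and a single divisorial $\nu_E\in S$, uses the irrational-rotation hypothesis only to guarantee (after passing to an iterate) that the preimage $\nu_G = f_\bullet^{-1}\nu_E$ is \emph{not} realized by $\pi_1$ — so its center lies at an intersection point $p=D_1\cap D_2$ of the cycle — and then blows up $p$. The relation $\check H = \check D_1 + \check D_2 - H$ together with the assumed constancy of $c(f,\cdot)$ forces $H\cdot\eta_*f^*Z_{\pi_1}(\mf{m})=0$, while the positivity of the pullback of $\check E$ in the $\check G$ direction (whose center is $H$) forces this same intersection number to be strictly positive. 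That is a purely local, intersection-theoretic contradiction; no dichotomy on $e(f)$ versus $\kappa$, no appeal to the arithmetic model, and no claim about $\alpha$ being nonconstant on $S$. So your approach, even if the second case could be salvaged, is genuinely different and currently incomplete precisely where the work is.
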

\begin{proof}
Suppose for contradiction that $c(f,-)\equiv M$ on $S$.
That is to say, for any $\nu$ on $S$, we have
$$	
M = c(f,\nu) = -\lim_\pi Z_{\pi}(\nu)\cdot f^*Z_\pi(\mf{m}).
$$
Let $\pi_1 \colon X_{\pi_1}\to (X,0)$ be a log resolution of $\mf{m}$.
Then we can write
\[
Z_{\pi_1}(\mf{m}) = \sum_{i\in I} \lambda_i \check{E_i} + \sum_{j \in J} \mu_j\check{F}_j,
\]
where $\lambda_i, \mu_j>0$ for each $i$ and $j$,  $\nu_{E_i} \in S$ and $\nu_{F_j} \not \in S$.

Take any exceptional prime $E \subset X_{\pi_1}$ so that $\nu_E \in S$.
By \refprop{pullback}, it is of the form $f^*\check{E} = \ee{G}{E}\check{G}$ for some $\nu_G \in S$.
Since $f_\bullet$ is an irrational rotation on $S$, up to taking a suitable iterate, we can assume that $G$ does not appear as an exceptional prime in $X_{\pi_1}$.
In particular, the center of $\nu_G$ in $X_{\pi_1}$ must be a closed point $p$ lying in the intersection of two exceptional primes $D_1$ and $D_2$ which belong to the cycle. 

Blow up $p$ in $X_{\pi_1}$, thus obtaining a modification $\pi_2\colon X_{\pi_2}\to (X,x_0)$ and a new exceptional prime $H$.
Let $\pi_3 \colon X_{\pi_3} \to X_{\pi_2}$ be a modification dominating $\pi_2$, large enough to contain all the primes appearing in $f^*Z_{\pi_1}(\mf{m})$. Set $\wt{\eta}= \pi_2^{-1} \circ \pi_3$. Then we get
\[
M = c(f,\nu_H) = -Z_{\pi_3}(\nu_H)\cdot f^*Z_{\pi_1}(\mf{m}) = -\frac{1}{b_H}\wt{\eta}^*\check{H}\cdot f^*Z_{\pi_1}(\mf{m}) = -\frac{1}{b_H}\check{H}\cdot \wt{\eta}_*f^*Z_{\pi_1}(\mf{m}).
\]
Now we use the fact that $\check{H} = \check{D}_1 + \check{D}_2 - H$ in $X_{\pi_2}$ to get
\begin{align*}
M & = -\frac{1}{b_H}\check{D}_1\cdot \eta_*f^*Z_{\pi_1}(\mf{m}) - \frac{1}{b_H}\check{D}_2\cdot \eta_*f^*Z_{\pi_1}(\mf{m}) + \frac{1}{b_H}H\cdot \eta_*f^*Z_{\pi_1}(\mf{m})\\
& = -\frac{1}{b_H}\eta^*\check{D}_1\cdot f^*Z_{\pi_1}(\mf{m}) - \frac{1}{b_H}\eta^*\check{D}_2\cdot f^*Z_{\pi_1}(\mf{m}) + \frac{1}{b_H}H\cdot \eta_*f^*Z_{\pi_1}(\mf{m})\\
& = \frac{1}{b_H}b_{D_1}c(f,\nu_{D_1}) + \frac{1}{b_H}b_{D_2}c(f,\nu_{D_2}) + \frac{1}{b_H}H\cdot \eta_*f^*Z_{\pi_1}(\mf{m})\\
& = \frac{b_{D_1} + b_{D_2}}{b_H}M + \frac{1}{b_H}H\cdot \eta_*f^*Z_{\pi_1}(\mf{m})\\
& = M + \frac{1}{b_H}H\cdot \eta_*f^*Z_{\pi_1}(\mf{m})
\end{align*}
We conclude that $H\cdot \eta_*f^*Z_{\pi_1}(\mf{m}) = 0$.
We claim that this gives a contradiction, since by construction, this intersection number should be positive.

Assume first that $I$ is not empty, and take $E=E_1$. Then
\begin{align*}
H\cdot \eta_*f^*Z_{\pi_1}(\mf{m}) & = H\cdot \eta_*f^*\check{E}_1 + \sum_{i>1}H\cdot \eta_*f^*\check{E}_i + \sum_j H\cdot \eta_*f^*\check{F}_j,
\end{align*}
and each of the terms in the two summations is nonnegative.
The first term can be computed by
\[
H\cdot \eta_*f^*\check{E}_1 = \ee{G}{E_1}H\cdot \eta_*\check{G},\] and, since the valuation $\nu_G$ is centered in $H$ (by our choice of $H$), it follows that the intersection $H\cdot \eta_*\check{G}$ is positive.

Assume now that $I$ is empty. Set $E$ to be the exceptional prime in the cycle corresponding to the retraction to the cycle of the divisorial valuation $\nu_{F_1}$.
In this case
\begin{align*}
H\cdot \eta_*f^*Z_{\pi_1}(\mf{m}) & = H\cdot \eta_*f^*\check{F}_1 + \sum_{j>1} H\cdot \eta_*f^*\check{F}_j.
\end{align*}
Again each of the terms in the summation is nonnegative, and the first one is positive, since the center of any preimage of $\nu_{F_1}$ in $X_{\pi_2}$ is $H$.
This completes the proof.
\end{proof}

Using an argument due to \cite{hasselblatt-propp:degreegrowthmonmaps}, we show that for irrational rotations on cusps, $c(f^n, \nu)$ does not satisfy any linear recursion relation. As a corollary, such maps do not admit geometrically stable models.

\begin{prop}
Let $f\colon (X,x_0)\to (X,x_0)$ be a superattracting germ on a cusp which induces an irrational rotation on the cycle.
Let $\nu \in \mc{V}_X^\alpha$ be any valuation of finite skewness.
Then the sequence $c(f^n,\nu)$ does not satisfy any linear recursion relation.
\end{prop}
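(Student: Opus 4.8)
The plan is to argue by contradiction, assuming that $c_n := c(f^n,\nu)$ satisfies some linear recursion relation $c_{n+k} = a_1 c_{n+k-1} + \cdots + a_k c_n$ for all $n$ large. The starting point is \refprop{irr_rot_c_not_constant}: the function $\nu \mapsto c(f,\nu)$ is not constant on the circle $S$. Combined with \refprop{divisorial_image} and the description of $f_\bullet|_S$ as an irrational rotation, the sequence $c(f,f_\bullet^n\nu)$ (for $\nu$ close to or on $S$) samples a non-constant continuous function along a dense, equidistributed orbit on the circle, so it is genuinely ``quasi-periodic'' rather than eventually periodic.

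First I would reduce to the case $\nu \in S$, or more precisely to controlling the factors $c(f, f_\bullet^n\nu)$: by the multiplicativity of attraction rates (\refprop{propertiesfbullet}), $c_{n+1}/c_n = c(f, f_\bullet^n\nu)$, and since $f_\bullet^n\nu \to S$ strongly by \refcor{thinness_convergence} while $c(f,-)$ is continuous on the weak topology, the ratios $c_{n+1}/c_n$ are asymptotic to the values $c(f, g^n\bar\nu)$ where $g$ is the irrational rotation on $S$ and $\bar\nu = r_S\nu$. Thus $\log c_n = \sum_{j<n}\log(c_{j+1}/c_j)$ behaves, up to a bounded error, like a Birkhoff sum $\sum_{j<n}\varphi(g^j\bar\nu)$ of the continuous function $\varphi = \log c(f,-)$ over an irrational rotation. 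The dynamical degree computation shows $\int_S \varphi\,d\rho = \log c_\infty(f)$, so $\log c_n = n\log c_\infty(f) + o(n)$, but the correction term is a bounded-below, genuinely oscillating cocycle coboundary-obstruction.

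Next I would invoke the Hasselblatt--Propp argument from \cite{hasselblatt-propp:degreegrowthmonmaps}: a sequence of positive integers satisfying a linear recursion relation has, for each of its growth rates, a rigid asymptotic form — it is a finite sum of terms $\lambda_i^n p_i(n)$ for algebraic $\lambda_i$ and polynomials $p_i$, and in particular $c_n / c_\infty(f)^n$ must converge (or be eventually periodic after extracting the dominant root block), since the subdominant contributions decay exponentially. On the other hand, the Birkhoff-sum analysis above shows $c_n/c_\infty(f)^n = \exp\bigl(\sum_{j<n}(\varphi(g^j\bar\nu) - \log c_\infty(f)) + O(1)\bigr)$, and because $\varphi$ is non-constant (\refprop{irr_rot_c_not_constant}) and $g$ is uniquely ergodic, the Birkhoff sums $\sum_{j<n}(\varphi - \int\varphi)$ are unbounded both above and below unless $\varphi - \int\varphi$ is a continuous coboundary; one checks that a continuous coboundary for an irrational rotation still forces the partial sums to be bounded but not convergent unless $\varphi$ is constant, contradicting either way the convergence of $c_n/c_\infty(f)^n$. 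This contradiction shows no linear recursion can hold.

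The main obstacle, I expect, is making the comparison between the combinatorial structure forced by a linear recursion relation and the ergodic-theoretic behavior of the Birkhoff sums fully rigorous — in particular handling the possibility that $c_\infty(f)$ is not the unique dominant eigenvalue of the recursion (it could coincide in modulus with complex conjugate roots, producing genuine oscillation in the recursion solution too) and showing that even then the \emph{irrational}, non-periodic oscillation coming from the rotation cannot be matched by the \emph{algebraic}, eventually-periodic-in-phase oscillation of $\lambda_i^n p_i(n)$ with $|\lambda_i| = c_\infty(f)$. This is exactly the point where one must quote the precise statement from \cite{hasselblatt-propp:degreegrowthmonmaps} that a recursive integer sequence cannot have this kind of quasi-periodic-but-aperiodic growth, and then feed in the non-constancy of $c(f,-)|_S$ together with unique ergodicity of the irrational rotation to close the argument.
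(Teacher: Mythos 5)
Your overall strategy (contradiction, non-constancy of $c(f,-)|_S$ from \refprop{irr_rot_c_not_constant}, unique ergodicity of the irrational rotation) lines up with the ingredients the paper uses, but the mechanism you propose to derive the contradiction does not close, and the gap is precisely the one you flag yourself.

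The claim that a positive integer sequence satisfying a linear recursion must have $c_n/c_\infty(f)^n$ convergent (or ``eventually periodic after extracting the dominant root block'') is false. If the recursion's characteristic polynomial has two dominant roots $c_\infty(f)e^{\pm i\theta}$ with $\theta/\pi$ irrational, the normalized sequence $c_n/c_\infty(f)^n$ oscillates quasi-periodically, with frequency governed by $\theta$ — exactly the same qualitative behavior a Birkhoff sum over an irrational rotation produces. Your Birkhoff-sum/coboundary analysis can rule out convergence and (with Gottschalk–Hedlund) boundedness, but it cannot distinguish the two kinds of aperiodic oscillation; ruling them apart would amount to comparing the rotation number $\beta$ of $f_\bullet|_S$ to the algebraic angle $\theta$ of a hypothetical recursion, and you give no mechanism for that. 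So the step ``contradicting either way the convergence of $c_n/c_\infty(f)^n$'' is not actually a contradiction.

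The paper's proof circumvents this asymptotic subtlety altogether by a number-theoretic comparison rather than a growth-rate one. Using the toric/arithmetic construction of the cusp (\S\ref{ssec:arith_cusp}), the action of $f_*$ on $\hat{\mc{S}}_X^*$ lifts to a \emph{single linear map} $\Lambda = g_\alpha$ on the cone $\mc{C}$, and $c(f^n,\nu)=\on{ev}_\mf{m}(\Lambda^n p)$ where $\on{ev}_\mf{m}$ is continuous and piecewise linear, built from linear pieces on sectors. One picks one linear piece $L$ and sets $d_n:=L(\Lambda^n p)$: this \emph{does} satisfy a linear recursion (it is a linear functional applied to powers of a fixed matrix). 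If $c_n$ also satisfied an integer linear recursion, so would $c_n-d_n$, and by the Skolem–Mahler–Lech theorem the zero set $\{n: c_n=d_n\}$ is a finite union of arithmetic progressions. But since $\Lambda$ induces an irrational rotation, the orbit of $p$ visits the sector where $\on{ev}_\mf{m}\equiv L$ along a set of $n$ with irrational density, hence not a finite union of arithmetic progressions unless $\on{ev}_\mf{m}\equiv L$ globally — i.e.\ unless $c(f,-)$ is constant on the cycle, contradicting \refprop{irr_rot_c_not_constant}. This is the Hasselblatt–Propp trick in its precise form; your proposal invokes the reference but substitutes a weaker asymptotic statement that does not carry the load.

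One smaller point: your reduction from $\nu\in\mc{V}_X^\alpha$ to $\nu\in S$ via ``$c_{n+1}/c_n$ asymptotic to $c(f,g^n\bar\nu)$'' only gives an error that is $o(1)$ in the factors, not a clean $O(1)$ multiplicative error on $c_n$ itself without more care. The paper instead uses that $c(f,-)$ is locally constant outside the finite critical skeleton $\mc{S}_{c(f)}$ together with $r_X f_\bullet^n\nu=f_\bullet^n r_X\nu$: after finitely many iterates the orbit of $\nu$ avoids the finitely many bad tangent directions, and $c(f,f_\bullet^n\nu)$ agrees exactly with $c(f,f_\bullet^n r_X\nu)$ for $n$ large, which is what actually lets one transfer the non-recursion from $r_X\nu\in\mc{S}_X$ to $\nu$.
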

\begin{proof}
We may assume that $(X,x_0)$ is constructed as in \S\ref{ssec:arith_cusp}. The construction depends on a lattice $N_\omega$, and a totally positive unit $\eps$ leaving $N_\omega$ invariant, which induces a linear map $g_\eps$ leaving the open cone $\mc{C}=(\nR_+^*)^2$ invariant.
By \refrmk{arith_valuations}, valuations in $\hat{\mc{S}}_X^*$ are in $1$-to-$1$ correspondence with $\mc{C}_\eps = \mc{C}/\langle g_\eps \rangle$.
Moreover, the function $\on{ev}_\mf{m}\colon \hat{\mc{S}}_X^* \to \nR_+^*$ given by $\on{ev}_\mf{m}(\nu)= \nu(\mf{m})$ lifts to a continuous, $g_\eps$-equivariant map on $\mc{C}$, which is piecewise linear and in fact made up of linear maps on sectors.
By \refprop{arith_action_on_cycle}, the action of $f_*$ on $\hat{\mc{S}}_X^*$ lifts to a linear map $\Lambda = g_\alpha \colon \mc{C}\to \mc{C}$.

Suppose at first that $\nu$ belongs to the circle $\mc{S}_X$.
We have seen that $\nu$ corresponds to a point $[p] \in \mc{C}_\eps$. Take any representative $p \in \mc{C}$. Then $c(f^n, \nu) = (f^n_*\nu)(\mf{m}) = \on{ev}_\mf{m}(\Lambda^n p)$.
Define $c_n := c(f^n, \nu)$.
Now, let $L$ be one of the linear maps making up $\on{ev}_\mf{m}$, and define $d_n = L(\Lambda^n p)$.
Note that $d_n$ satisfies a linear recursion relation. 
Suppose that $c_n$ also eventually satisfies a recursion relation.
Then so would the sequence $c_n - d_n$.
By Skolem-Mahler-Lech's theorem, we conclude that $\{n : c_n = d_n\}$ consists of a finite union of arithmetic progressions.
Since $f$ gives an irrational rotation, the only way this can happen is if $\on{ev}_\mf{m}\equiv L$, which implies $c(f,\cdot)$ is constant on the cycle, a contradiction by \refprop{irr_rot_c_not_constant}.

Let now take any valuation $\nu \in \mc{V}_X^\alpha$.
Notice that $c(f,\cdot)$ is locally constant outside the (finite) skeleton $\mc{S}_{c(f)}$.
In particular, there is only a finite number of tangent vectors at divisorial valuations in $\mc{S}_X$ along which $c(f,\cdot)$ is not locally constant. Since $f_\bullet$ acts as an irrational translation on $\mc{S}_X$ and $r_X f_\bullet^n \nu = f_\bullet^n r_X \nu$, we avoid these directions after a finite number of iterates.
We deduce that $c(f^n, \nu)$ does not eventually satisfy any linear recursion relation for any valuation $\nu$ of finite skewness.
\end{proof}


\section{Examples and remarks}\label{sec:examples}

\subsection{A finite map at a smooth point}\label{ssec:example_smooth_finite}
Consider a smooth point $(X,x_0)=(\nC^2,0)$. In this case the space of normalized valuations is called the \emph{valuative tree} $\mc{V}$.
It is a complete real tree, rooted at the multiplicity valuation $\ord_0$, which is the divisorial valuation associated to the exceptional prime $E_0=\pi_0^{-1}(0)$, where $\pi_0 \colon X_{\pi_0} \to (\nC^2,0)$ is the blow-up of the origin.
Notice that $\pi_0$ is also a log resolution of the maximal ideal, and $Z_{\pi_0}(\mf{m}) = Z_{\pi_0}(\ord_0) = \check{E}_0 = -E_0$.
For any $k \in \nN$, let $f_k \colon (\nC^2,0)\to (\nC^2,0)$ be the dominant germ
$$
f_k(x,y)=(x^k(y+x^3),x^2y).
$$
Notice that $f_k$ is non-invertible, and finite if and only if $k=0$.
For any irreducible $\phi \in R=\hat{\mc{O}}_{\nC^2,0}$, we denote by $\nu_\phi$ the associated curve semivaluation. For any $t \in [1,\infty]$, we denote by $\nu_{\phi,t}$ the unique normalized valuation in $[\ord_0,\nu_\phi]$ with skewness $t$.
Consider first the case $k=0$ and write $f=f_0$.
By direct computation, we notice that $f_\bullet^{-1}(\ord_0)=\{\nu_{y+x^3,5}\}$.
By \refcor{attractionrate_locallyconst}, the attraction rate $c(f,\nu)$ is locally constant outside $\mc{T}_{c(f)} = [\ord_0,\nu_{y+x^3,5}]$.
Moreover,
$$
f_* \nu_{x,t} = \nu_{y,1+2t},
\quad
f_* \nu_{y,t} =
\begin{cases}
t \nu_{y,1+\frac{2}{t}} & \text{if } t \in [1,3],\\
3 \nu_{y,\frac{2+t}{3}} & \text{if } t \in [3,+\infty],
\end{cases}
\quad
f_* \nu_{y+x^3,t} =
\begin{cases}
t \nu_{y,\frac{5}{t}} & \text{if } t \in [3,5],\\
5 \nu_{x,\frac{t}{5}} & \text{if } t \in [5,+\infty].
\end{cases}
$$
In particular, $f$ admits a unique divisorial eigenvaluation $\nu_\star=\nu_{y,2}$, and $c_\infty(f)=c(f,\nu_\star)=2$.
It is easy to check (see the similar computation on \S\ref{ssec:example_smooth_nonfinite}) that the sequence of attraction rates $c_n=c(f^n,\ord_0)$ satisfies the recursion relation
$$
c_0=1, c_1 = 1, c_{n+2}=c_{n+1} + 2 c_n, \forall n \in \nN.
$$
Notice that the sequence $c'_n=c(f^n, \nu_{y+x^3,5})$ satisfies $c'_{n+1}=c'_1 c_n$, and hence satisfies the same recursion relation for $n \geq 1$, while $c'_0=1$, $c'_1=5$, $c'_2 = 5$, so in particular the recursion relation does not hold for $n=0$.

\begin{rmk}\label{rmk:multiplicity_finite}
We can also compute the multiplicity of $f$ at any valuation on $[\nu_x, \nu_y] \cup [\nu_{y,3}, \nu_{y+x^3}]$.
We get
$$
m(f,\nu)=
\begin{cases}
5 & \text{if } \nu \in [\nu_{y,3}, \nu_{y+x^3}],\\
3 & \text{if } \nu \in ]\nu_{y,3}, \nu_{y}],\\
2 & \text{if } \nu \in ]\nu_{y,3}, \nu_{x}].\\
\end{cases}
$$
By \refthm{operations_bdivisors}, the map $M(f, \nu')=\displaystyle\sum_{f_\bullet \nu = \nu'}m(f,\nu)$ is the constant $5=e(f)$ the topological degree of $f$.

The values of $m(f,\nu)$ are quite difficult to control in general. In fact, at any branch point (i.e., divisorial valuation) $\nu_E$, the value of $\mc(f,\nu)$ on branches at $\nu_E$ depend on the values of $k_E$ and $e_E$.
For example, one can check that if $\nu_E$ is the divisorial valuation associated to $\nu_{y+x^3,5}$, we get $k_E=5$ and $e_E=1$.
In particular, if we denote by $E_0$ the exceptional prime obtained by blowing up the origin, the tangent map $df_\bullet \colon T_{\nu_E} \mc{V} \to T_{\nu_{E_0}} \mc{V}$ has degree $1$, and $m(f,\nu)=5$ for valuations $\nu$ along any branches, close enough to $\nu_E$.
\end{rmk}

We summarize in \reffig{valAdyn} the dynamics of $f_\bullet$. We denote by black dots curve semivaluations, and by white dots divisorial valuations.
\begin{figure}[ht]
\centering
\def\svgwidth{0.90\columnwidth}
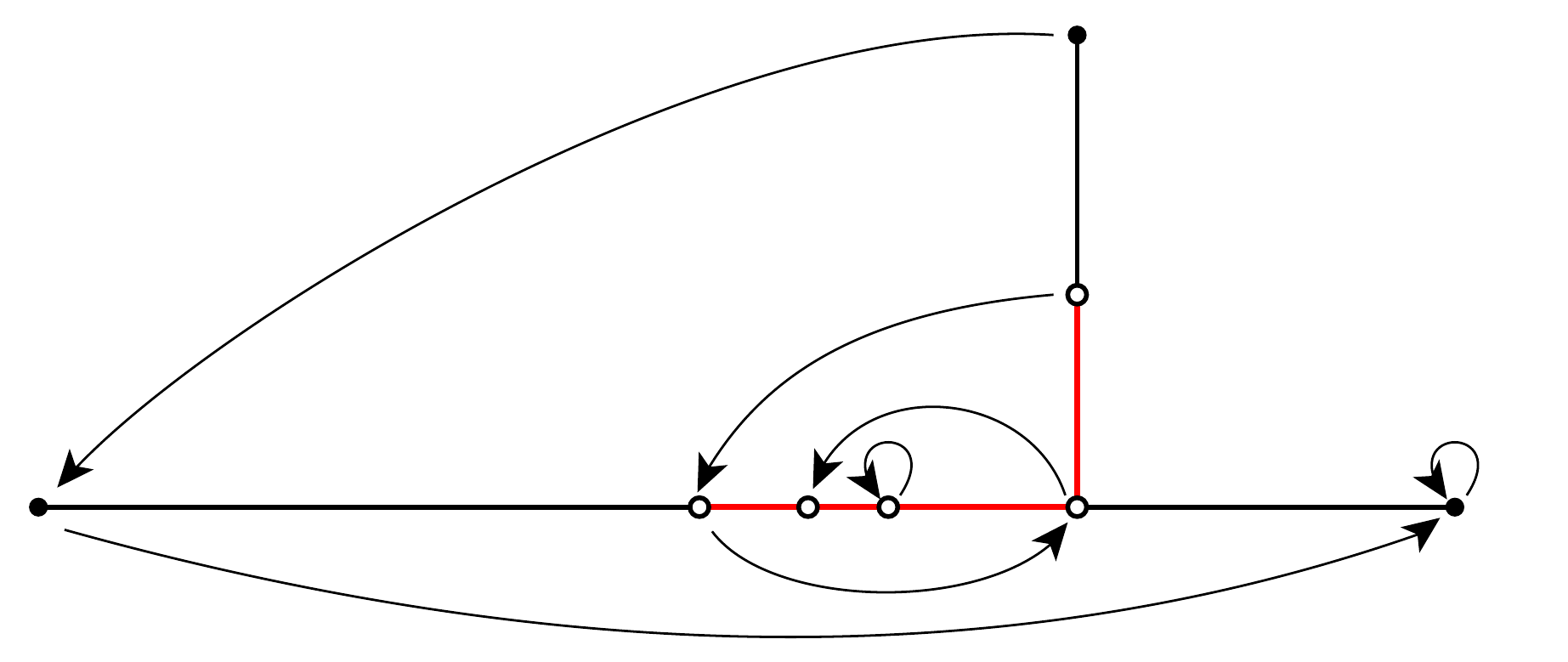
\caption{Action of $f_\bullet$ (\S\ref{ssec:example_smooth_finite}).}\label{fig:valAdyn}
\end{figure}

The model $\pi_0$ is not algebraically stable for $f$. In fact, denoting by $p_0 \in E_0$ the point associated to the tangent vector at $\nu_{E_0}$ towards $\nu_y$, we have $\on{Ind}(f_{\pi_0}) = \{p_0\}$, and $f_{\pi_0}(E_0) = p_0$.
Consider $\pi_1 \colon X_{\pi_1} \to (\nC^2,0)$, obtained from $\pi_0$ by blowing up $p_0$. Denote by $E_1$ the new exceptional prime. We have $\nu_{E_1}=\nu_\star$, hence $f_{\pi_1}$ acts on $E_1$ as the rational selfmap $h \colon \zeta \mapsto \zeta^{-1}$, where $\zeta$ corresponds to the tangent vector at $\nu_\star$ towards $\nu_{y-\zeta x^2}$. We notice that the point $p_1$ corresponding to $\zeta=0$ is of indeterminacy for $f_{\pi_1}$, and it is periodic for $h$.
We blow up $p_1$, obtaining another model $\pi_2 \colon X_{\pi_2} \to (\nC^2,0)$. Although $f_{\pi_2}$ has still indeterminacy points (corresponding to the tangent vectors at $\nu_{y,3}$ towards $\nu_{y+x^3}$ and $\nu_y$), it acts regularly on $E_1$, and the model is algebraically stable.
\reffig{dynAalg} resumes the situation, where black dots are regular points, and white dots are indeterminacy points.

\begin{figure}[ht]
\centering
\def\svgwidth{0.20\columnwidth}
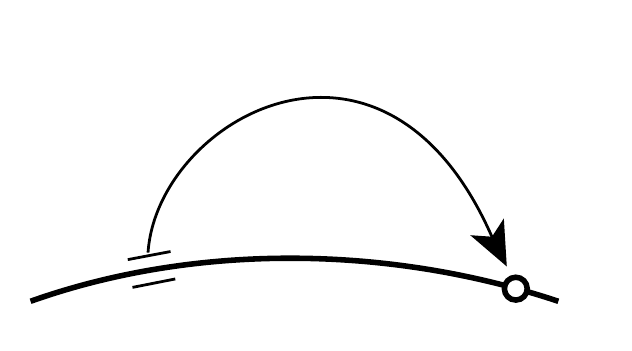
\hspace{5mm}
\def\svgwidth{0.35\columnwidth}
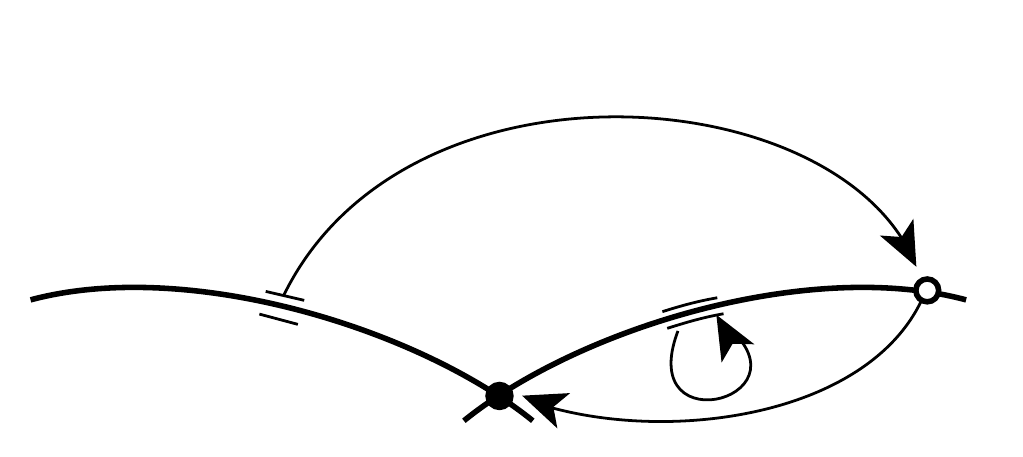
\hspace{5mm}
\def\svgwidth{0.35\columnwidth}
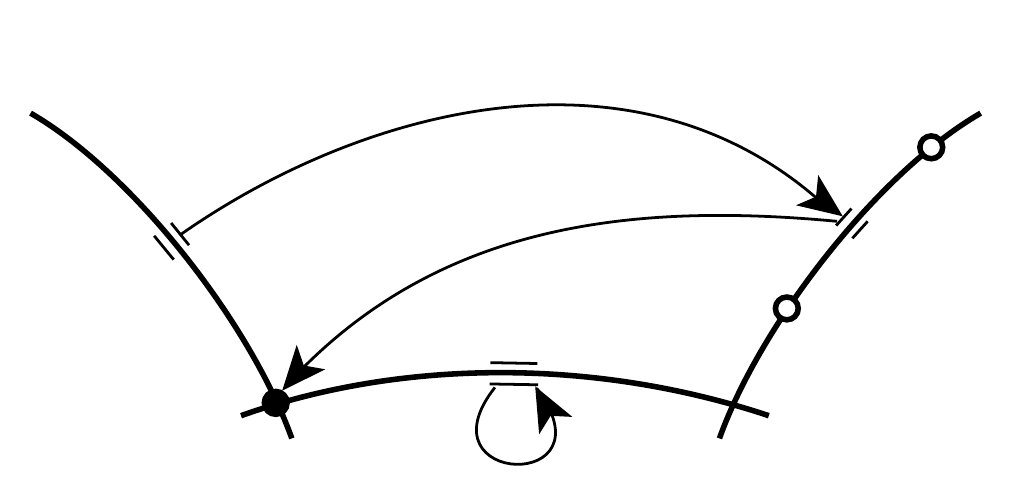
\caption{Construction of an algebraically stable model (\S\ref{ssec:example_smooth_finite}).}\label{fig:dynAalg}
\end{figure}

\subsection{A non-finite map at a smooth point}\label{ssec:example_smooth_nonfinite}

We now study the case $k=1$ of the previous section.
In this case, $\VC{f}=\{\nu_x\}$, and $f_\bullet^{-1}(\ord_0)=\{\nu_{y+x^3,4}\}$, so $\mc{T}_{c(f)} = [\nu_x,\nu_{y+x^3,4}]$.
Moreover,
$$
f_\bullet \nu_{x,t} = \nu_{y,\frac{2t+1}{t+1}},
\quad
f_\bullet \nu_{y,t} =
\begin{cases}
\nu_{y,\frac{t+2}{t+1}} & \text{if } t \in [1,3],\\
\nu_{y,\frac{t+2}{4}} & \text{if } t \in [3,+\infty],
\end{cases}
\quad
f_\bullet \nu_{y+x^3,t} =
\begin{cases}
\nu_{y,\frac{5}{t+1}} & \text{if } t \in [3,4],\\
\nu_{x,\frac{t+1}{5}} & \text{if } t \in [4,+\infty].
\end{cases}
$$
$$
c(f,\nu_{x,t}) = (t+1),
\quad
c(f,\nu_{y,t}) =
\begin{cases}
(t+1) & \text{if } t \in [1,3],\\
4 & \text{if } t \in [3,+\infty],
\end{cases}
\quad
c(f,\nu_{y+x^3,t}) =
\begin{cases}
(t+1) & \text{if } t \in [3,4],\\
5 & \text{if } t \in [4,+\infty].
\end{cases}
$$
In this case the unique eigenvaluation given by \refthm{unique_eigenval} is the irrational valuation $\nu_\star = \nu_{y,\sqrt{2}}$, and $c_\infty(f)=c(f,\nu_\star) = 1+\sqrt{2}$.
Notice also that $f_\bullet \nu_x = \nu_{y,2}$.
Consider the sequence of attraction rates $c_n=c(f^n,\ord_0)$. Set $t_n = \alpha(f_\bullet^n \ord_0)$. Then for any $n \in \nN$ we get
\begin{align*}
c_{n+2}&=c(f^{n+2},\ord_0)=c(f^n, \ord_0) c(f,f_\bullet^n \ord_0) c(f,f_\bullet^{n+1} \ord_0) \\
&= c_n (t_n+1)(t_{n+1}+1) = c_n (t_n+1)\left(\frac{t_n+2}{t_n+1}+1\right) = c_n (2t_n + 3) = 2c_n(t_n+1) + c_n\\
&= 2c_{n+1} + c_n.
\end{align*}

\begin{rmk}\label{rmk:multiplicity_nonfinite}
As in the previous example, we can compute the multiplicity of $f$ at any valuation on $[\nu_x, \nu_y] \cup [\nu_{y,3}, \nu_{y+x^3}]$, and we get
$$
m(f,\nu)=
\begin{cases}
5 & \text{if } \nu \in [\nu_{y,3}, \nu_{y+x^3}],\\
4 & \text{if } \nu \in ]\nu_{y,3}, \nu_{y}],\\
1 & \text{if } \nu \in ]\nu_{y,3}, \nu_{x}].
\end{cases}
$$
In this case, the map $M(f, \nu')=\displaystyle\sum_{f_\bullet \nu = \nu'}m(f,\nu)$ is locally constant on $\mc{V} \setminus \{\nu_{y,2}\}$.
In fact, it is equal to $5=e(f)$ in the connected component containing $\nu_{y,t}$ with $t<2$ (and on $\nu_{y,2}$ itself), while it is equal to $4$ on the other connected components.

If we focus on $\nu_E=\nu_{y,4}$, by direct computation we get $k_E=2$ and $e_E=2$.
One can compute in this case that on all branches at $\nu_E$ (but the ones associated to the directions towards $\nu_x$ and $\nu_y$), the multiplicity $m(f,\nu)=2$ for $\nu$ close enough to $\nu_E$.
\end{rmk}

We summarize in \reffig{valBdyn} the dynamics of $f_\bullet$. Here the square dot denotes irrational valuations.

\begin{figure}[ht]
\centering
\def\svgwidth{0.90\columnwidth}
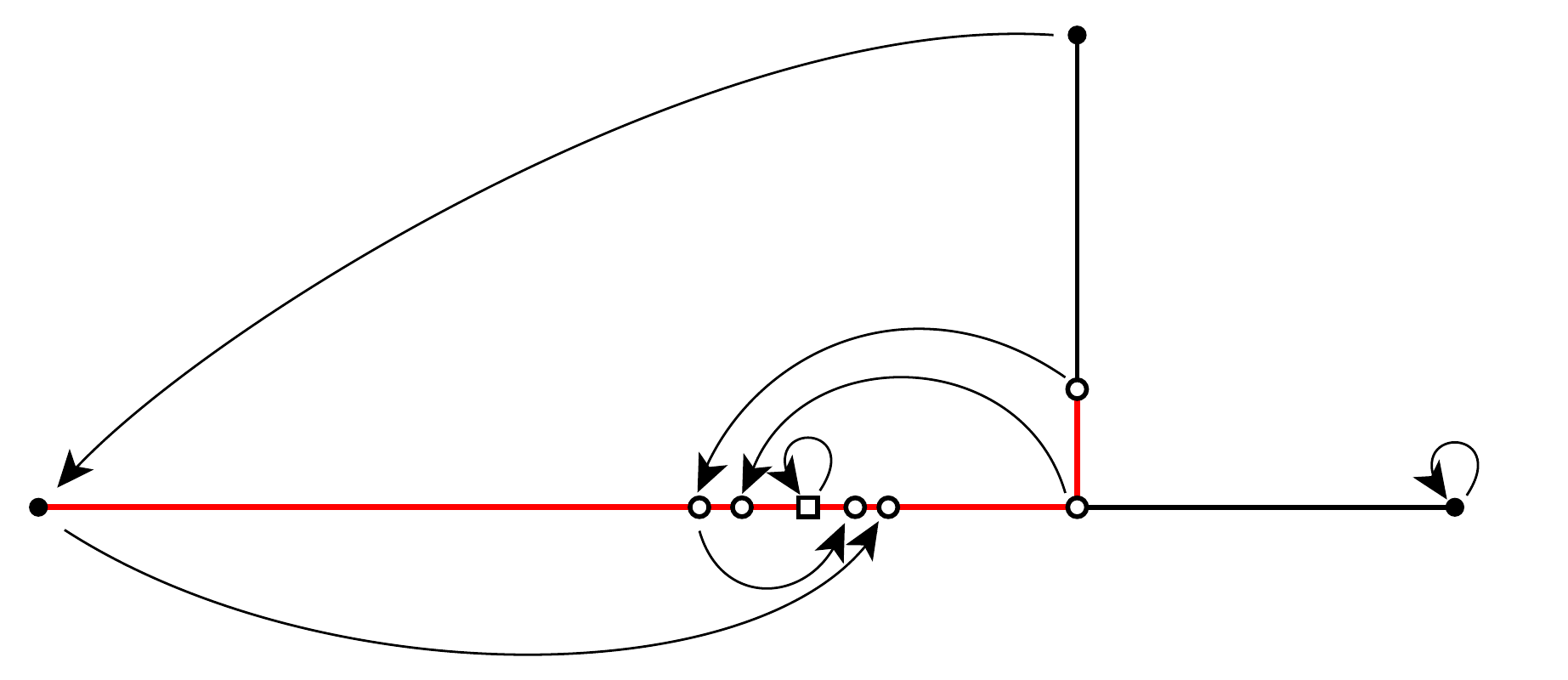
\caption{Action of $f_\bullet$ (\S\ref{ssec:example_smooth_nonfinite}).}
\label{fig:valBdyn}
\end{figure}

As in the previous example, the model $\pi_0$ is not geometrically stable for $f$, since again $f_{\pi_0}(E_0) = p_0 \in \on{Ind}(f_{\pi_0})$.
Notice that we also have $f_{\pi_0}(C_x)=p_0$, where $C_x$ is the strict transform of the curve $\{x=0\}$ to $X_{\pi_0}$.
In this case, it suffices to blow-up $p_0$, to get a geometrically stable model $\pi_1$. 
We get a new exceptional prime $E_1$, intersecting the strict transform of $E_0$ at a point $p_1$. We easily notice that $p_1$ is a fixed point for $f_{\pi_1}$, and the orbit of any exceptional prime through $f_{\pi_1}$ will eventually go to $p_1$.
Notice also that since $f_\bullet \nu_{x,t} \to \nu_{y,2}$ when $t \to +\infty$, from the tangent direction at $\nu_{y_2}$ associated to $p_1$, then $f_{\pi_1} (C_x,q) \to (E_1,p_1)$. 
See \reffig{dynBalg} for a description of the actions of $f_{\pi_0}$ and $f_{\pi_1}$.
\begin{figure}[h]
\centering
\def\svgwidth{0.35\columnwidth}
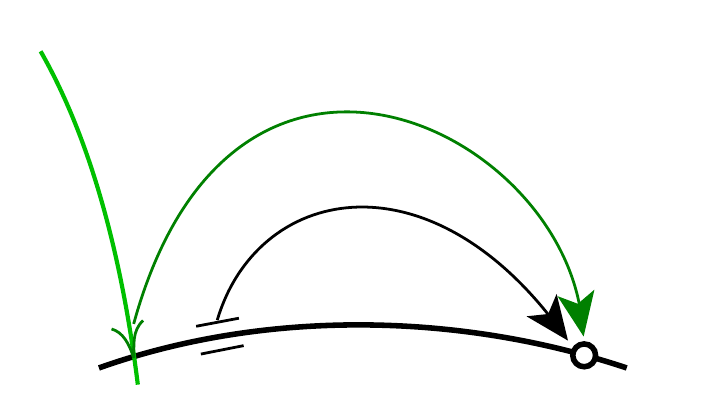
\hspace{5mm}
\def\svgwidth{0.55\columnwidth}
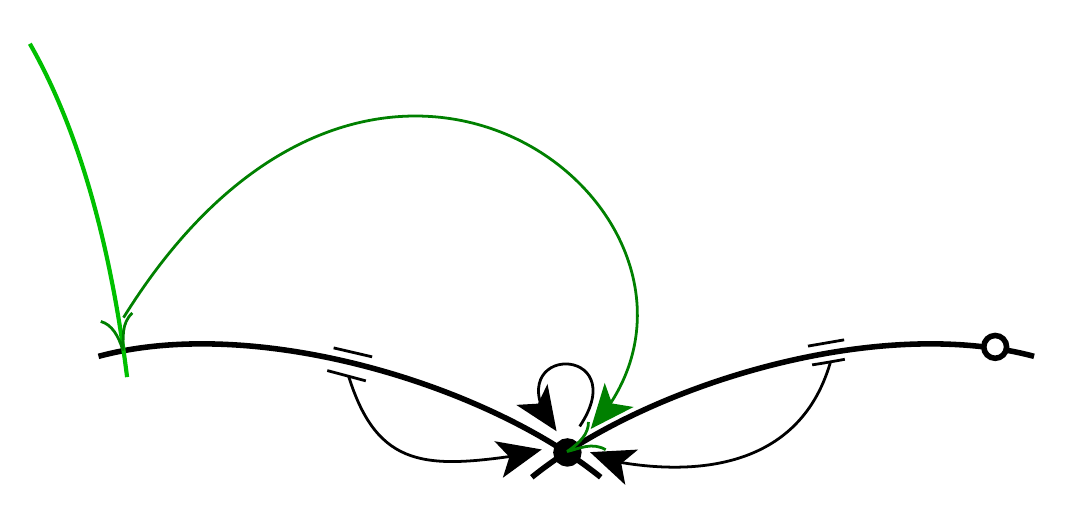
\caption{Construction of a geometrically stable model (\S\ref{ssec:example_smooth_nonfinite}).}
\label{fig:dynBalg}
\end{figure}

We refer to \cite{favre-jonsson:eigenval,gignac-ruggiero:attractionrates} for further examples on the smooth case.

\subsection{A quotient singularity}\label{ssec:example_quotient1}

Let $(X,x_0)$ be the singularity defined by $X=\{(x,y,z) \in \nC^3\ |\ xy=z^2\}$ at the origin $x_0=(0,0,0)$.
This is the simplest cyclic quotient singularity, usually denoted by $A_2$.
Its minimal good resolution, which is also a log resolution of the maximal ideal, has a unique rational exceptional prime $E_0$, and self-intersection $E_0^2=-2$.
This singularity can be obtained by quotient of $(\nC^2,0)$ by the action of $(u,v) \mapsto (-u,-v)$. Denote by $pr\colon(\nC^2,0) \to (X,x_0)$ the natural projection, given by $pr(u,v)=(u^2,v^2,uv)$.

Consider the map $f(x,y,z)=(x^2z,y^2z,xyz)$.
The maps $f$ leaves $X$ invariant, so it defines a germ at $(X,x_0)$.
Moreover, the curves $C_x = \{x=z=0\}$ and $C_y = \{y=z=0\}$ are contracted by $f$, which is hence non-finite.

Notice that $f \circ pr(u,v)=(u^5 v,u v^5, u^3v^3)$.
In particular there does not exist a holomorphic map $g\colon(\nC^2,0) \to (\nC^2,0)$ so that $f \circ pr=pr \circ g$.

An embedded resolution of $(X,x_0)$ is obtained by a single blow-up $\pi_0$ of the origin.
Pick coordinates so that $\pi_0(x_0,y_0,z_0)=(x_0,x_0y_0,x_0z_0)$. Then $X_{\pi_0}=\{y_0=z_0^2\}$, and $(x_0,z_0)$ are local parameters of $X_{\pi_0}$.
By direct computation, we find that $E_0=\{x_0=y_0-z_0^2=0\}$ is invariant by the action of $f_{\pi_0}$, and hence $\nu_{E_0}$ is the unique eigenvaluation given by \refthm{unique_eigenval}.
The map $f_{\pi_0}$ restricted to $E_0$ corresponds to the map $z_0 \mapsto z_0^2$. In particular, $\ee{E_0}{E_0} = 2$, while we can compute $\kk{E_0}{E_0}=c(f,\nu_{E_0})=c_\infty(f)=3$.
For any $t \in [0,+\infty]$ and $z_0 \in \nC$, denote by $\mu_{z_0,t}$ the monomial valuation at the point $(0,z_0^2,z_0) \in E_0$ with weights $1$ and $t$ with respect to the parameters $(x_0,z_0)$.
Notice that the valuation $\nu_{z_0,t} = (\pi_0)_* \mu_{z_0,t}$ is normalized.
By interchanging the coordinates $x$ and $y$, we recover the last point in $E_0$, associated to the parameter $z_0=\infty \in \nP^1$.
By direct computation, we get
$$
f_* \nu_{0,t} = (3+t)\nu_{0,\frac{2t}{3+t}},
\quad
f_* \nu_{z_0,t} = 3 \nu_{z_0^2,\frac{t}{3}},
\quad
f_* \nu_{\infty,t} = (3+t)\nu_{\infty,\frac{2t}{3+t}}.
$$
In particular we get $f_\bullet \nu_{0,\infty}=\nu_{0,2}$, and analogously $f_\bullet \nu_{\infty,\infty}=\nu_{\infty,2}$.
The Jacobian divisor can be computed directly, and we obtain $R_f = 2C_x+2C_y$.

\subsection{A simple elliptic singularity}\label{ssec:example_elliptic}

Let $(X,x_0)$ be the singularity defined by $X=\{(x,y,z) \in \nC^3\ |\ x^2+y^3+z^6=0\}$ at $x_0=(0,0,0)$.
One can show that $(X,x_0)$ is a simple elliptic singularity: its minimal good resolution $\pi_0 \colon X_{\pi_0} \to (X,x_0)$ has a unique exceptional prime $E_0$ of genus $1$, and self-intersection $E_0^2=-1$.

One can check that $\pi_0$ is not a log resolution of the maximal ideal $\mf{m}$. To obtain it, we need to blow up a point $p_0 \in E_0$, obtaining a new exceptional prime $E_1$ of genus $0$, that intersect transversely the strict transform of $E_0$.
In this model, the self intersection of $E_1$ is $-1$, and the self intersection of the strict transform of $E_0$ is $-2$.
One can also check that $b_{E_0}=1$ and $b_{E_1}=2$. It follows that $A(\nu_{E_0})=0$, and $A(\nu_{E_1})=\frac{1}{2}$.




Consider the map $f\colon(X,x_0) \to (X,x_0)$ given by
$$
f(x,y,z)=\left(xy^3z^3,y^3z^2,yz^2\right).
$$
The map $f$ defines a non-finite germ at $x_0$.
In fact, $f$ contracts three irreducible curves: $C_y^+ = \{y=x-\ui z^3=0\}$, $C_y^- = \{y=x+\ui z^3=0\}$, and $C_z = \{z=x^2+y^3=0\}$.
One can check by direct computation that the unique eigenvaluation given by \refthm{unique_eigenval} is the divisorial valuation associated to $E_0$.
The action on the tangent space attached to $\nu_{E_0}$ is the identity.
Since the critical points of $f$ in $\nC^3$ are contained in $\{yz=0\}$, the jacobian divisor takes the form $R_f=a_y^+ C_y^+ + a_y^- C_y^- + a_z C_z$.
A direct computation shows that $a_y^+ = a_y^- = a_z = 0$. In particular $R_f$ is trivial even though $f$ is non-finite. 

\subsection{Quasihomogeneous singularities}\label{ssec:example_quasihom}

The previous example can be easily generalized to \emph{quasihomogeneous} singularities (also called \emph{weighted homogeneous} singularities).
Up to isomorphisms, they are constructed as follows.
Let $\omega=(\omega_1, \ldots, \omega_n)$ be a vector of positive integers (we may assume without common factors).
Consider a finite family of polynomials $P_j \in \nC[x_1, \ldots, x_n]$ which are homogeneous with respect to the weight $\omega$, i.e., there exists $d_j \in \nN^*$ so that $P_j(\lambda^{\omega_1}x_1, \ldots, \lambda^{\omega_n} x_n) = \lambda^{d_j} P_j(x_1, \ldots, x_n)$ for all $\lambda \in \nC$.
The common zero locus of this family of polynomials defines a quasihomogeneous singularity at $x_0=(0, \ldots, 0)$.
Quasihomogeneous singularities can be also described as finite quotients of \emph{cone singularities}.
They are obtained by contracting the zero section of a negative degree line bundle $L \to E$ over a compact curve $E$.
We refer to \cite{wagreich:structurequasihomogensing} for further details on quasihomogeneous singularities.

For example, take any $p,q \geq 2$ coprime, we consider the surface $X=\{(x,y,z) \in \nC^3\ |\ x^p+y^q+z^{pq}=0\}$ at its singular point $x_0=(0,0,0)$.
This is a quasihomogeneous singularity with respect to the weight $(q,p,1)$.
By direct computation, $(X,x_0)$ is a cone singularity given by a line bundle of degree $-1$ over a curve $E_0$ of genus $(p-1)(q-1)/2$.
The minimal good resolution of $(X,x_0)$ is not a log resolution of the maximal ideal. The latter is obtained by blowing-up $p-1$ times the intersection between the exceptional divisor and the strict transform of the curve $\{z=0\}$.

Come back to the general case of a quasihomogeneous singularity $(X,0) \subset (\nC^n,0)$ with respect to some weight $\omega=(\omega_1, \ldots, \omega_n)$ on the coordinates $x=(x_1, \ldots, x_n)$.
Let $\phi \in \mc{O}_{X,0}$ be any holomorphic function at $0 \in X$, and $\sigma \colon (X,0) \to (X,0)$ any automorphism (see \cite{muller:liegroupsanalyticalgebras,favre-ruggiero:normsurfsingcontrauto} for constructions of automorphisms over quasihomogeneous singularities).
We may consider the germ $f=\phi^\omega \sigma$, defined in coordinates by
$$
f(x)=\big(\phi^{\omega_1} \sigma_1(x), \ldots, \phi^{\omega_n} \sigma_n(x)\big),
$$
where $(\sigma_1, \ldots, \sigma_n)$ are the coordinates of $\sigma$. Then $f$ defines an endomorphism of $(X,0)$, which is non-invertible as long as $\phi$ is not a unit.
Notice also that the curve $\{\phi=0\}$ is contracted to $0$, so $f$ is non-finite (when non-invertible).

In the cone case described above, the map $f$ leaves invariant the curve $E_0$ in the minimal good resolution, hence $\nu_{E_0}$ is the unique eigenvaluation given by \refthm{unique_eigenval}.

\subsection{A non-finite map on a cusp singularity}\label{ssec:example_cusp_322}

Consider the cusp singularity defined by $X=\{(x,y,z) \in \nC^3\ |\ x^2+y^3+z^9-xyz=0\}$ at $x_0=(0,0,0)$.
The exceptional divisor of its minimal good resolution $\pi_0\colon X_{\pi_0} \to (X,x_0)$ is a cycle of three rational curves $E_1, E_2, E_3$ of self-intersection $-2$, $-2$ and $-3$ respectively.
Moreover $b_{E_j}=1$ for $j=1,2,3$.
By direct computation, one can check that
\begin{align*}
Z_{\pi_0}(\nu_{E_1}) = -\left(\frac{5}{3}E_1+\frac{4}{3}E_2+E_3\right),\quad 
Z_{\pi_0}(\nu_{E_2}) = -\left(\frac{4}{3}E_1+\frac{5}{3}E_2+E_3\right),\quad
Z_{\pi_0}(\nu_{E_3}) = -(E_1+E_2+E_3).
\end{align*}
In particular by \refprop{order_effective}, $\nu_{E_3} \leq \nu_{E_i}$ for $i=1,2$, while $E_1$ and $E_2$ (and any two normalized monomial valuation at $E_1 \cap E_2$) are not comparable.

The minimal log resolution $\pi_1 \colon X_\pi \to (X,x_0)$ of $\mf{m}$ is obtained from $\pi_0$ by blowing up a free point in $E_3$. We denote by $E'_1, E'_2, E'_3$ the strict transforms of $E_1, E_2, E_3$, and by $E'_4$ the new exceptional prime.
One can check that $b_{E'_4}=2$, and $Z_{\pi_1}(\mf{m}_X)=\check{E}'_4=-(E'_1+E'_2+E'_3+2E'_4)$.

We now construct a non-finite germ, similar to the construction used in \refprop{nutomu}.

Set $p_0=E'_1 \cap E'_2$, $p_1=E'_1 \cap E'_3$, $p_2= E'_2 \cap E'_3$, and $p_3=E'_3 \cap E'_4$. For any $j=0, \ldots, 3$, we let $a_j < b_j$ so that $p_j=E'_{a_j} \cap E'_{b_j}$.
Computing directly $\pi_1$ as an embedded resolution of $(X,x_0) \subset (\nC^3,0)$, we may describe $\pi$ in coordinates at the points $p_0$, $p_1$, $p_2$, $p_3$.
For example, we may chose local coordinate $(x_0,y_0,z_0)$ at $p_0$, so that
$$
X_{\pi_1}=\{z_0(1+y_0^3)=x_0(y_0-x_0)\}, \quad \pi_1(x_0,y_0,z_0)=(x_0 z_0^4, y_0 z_0^3, z_0).
$$
$$
E_1 = \{z_0=x_0=0\}, \qquad E_2=\{z_0=y_0-x_0=0\},
$$
We now consider the map $f\colon (X,x_0) \to (X,x_0)$ obtained as the composition $f=\pi_1 \circ \sigma \circ g$, where $g\colon(X,x_0) \to (\nC^2,0)$ is the projection to the first two coordinates, $\sigma\colon (\nC^2,0) \to (X_{\pi_1},p_0)$ is the automorphism given by $\sigma(x,y)=(x,y,x(y-x)(1+y^3)^{-1})$ with respect to the coordinates $(x_0,y_0,z_0)$ at $p_0$.
In particular we can write $f$ in coordinates as
$$
f(x,y,z)=\left(\frac{x^5(y-x)^4}{(1+y^3)^4},\frac{x^3y(y-x)^3}{(1+y^3)^3},\frac{x(y-x)}{1+y^3}\right).
$$
Notice that $f$ is non-finite, since it contracts to $0$ the curve $X \cap \{x(y-x)=0\}$.

Denote by $\nu^j_{r,s} \in \hat{\mc{V}}_X$ be the monomial valuation at $p_j$ of weights $(r,s)$ with respect to coordinates adapted to $\pi_1^{-1}(x_0)$, so that $\mu_{1,0}^j = \ord_{E'_{a_j}}$ and $\mu_{0,1}^j = \ord_{E'_{b_j}}$.
Notice that $\nu^j_{r,s}$ is normalized as far as $r + s = 1$ when $j=0,1,2$, while $\nu^3_{r.s}$ is normalized as far as $r+2s=1$.
We first study the dynamics of $f_*$ on the circle $\hat{\mc{S}}_X$.
By direct computation, we get
$$
f_* \nu^0_{r,s} = \begin{cases}
                 \nu^0_{5r+4s, 4r+3s} & \text{if } r \leq s,\\
                 \nu^0_{5r+4s, 3r+4s} & \text{if } r \geq s,
                \end{cases}
\quad
f_* \nu^1_{r,s} = \nu^0_{5r+3s, 3r+2s}, 
\quad
f_* \nu^2_{r,s} = \nu^0_{4r+3s, 3r+2s}. 
$$
With respect to the monomial parameterization $w^j \colon [0,1] \to [\nu_{E_{a_j}}, \nu_{E_{b_j}}]_{p_j}$ given by $w(t)=\nu^j_{1-t,t}$, we get
$$
f_\bullet w^0(t)= \begin{cases}
                 w^0\left(\frac{4-t}{9-2t}\right) & \text{if } t \geq \frac{1}{2},\\
                 w^0\left(\frac{3+t}{8}\right) & \text{if } t \leq \frac{1}{2}.\\
                \end{cases}
\quad
f_\bullet w^1(t)= w^0\left(\frac{3-t}{8-3t}\right),
\quad
f_\bullet w^2(t)= w^0\left(\frac{3-t}{7-2t}\right).
$$

$$
c(f,w^0(t))= \begin{cases}
                 9-2t & \text{if } t \geq \frac{1}{2},\\
                 8 & \text{if } t \leq \frac{1}{2}.\\
                \end{cases}
\quad
c(f,w^1(t))= 8-3t,
\quad
c(f,w^2(t))= 7-2t.
$$

In particular, $f_\bullet \nu_{E_1} = w^0(3/8)$, $f_\bullet \nu_{E_2} = w(3/7)$, $f_\bullet \nu_{E_3}=w^0(2/5)$.
Notice also that $f_\bullet w^0(3/7)=w^0(3/7)=:\nu_\star$ is the unique eigenvaluation given by \refthm{unique_eigenval}, and $c_\infty(f)=c(f, \nu_\star)=8$.
Notice also that the M\"obius maps appearing in the expressions of $f_\bullet w^j(t)$ are all monotone, and $f_\bullet w^0(1/2)=w^0(7/16)$.
Similar computations show that the segment $[\nu_{E_3},\nu_{E'_4}]_{p_3}$ is sent to a segment $[w^0(2/5),\nu_{E''_5}]$, where $E''_5$ is the exceptional prime obtained by blowing up a suitable free point in the exceptional prime associated to $w^0(2/5)$. Finally, $c(f,w^3(t))=5-t$.
Notice that $c(f,\nu)$ is locally constant on a weak open neighborhood of $\nu_\star$. It follows that for any valuation $\nu_0 \in \mc{V}_X^\alpha$, the sequence $c_n=c(f^n,\nu_0)$ eventually satisfy the recursion relation $c_{n+1}=8c_n$. 

\begin{figure}[ht]
\centering
\def\svgwidth{0.80\columnwidth}
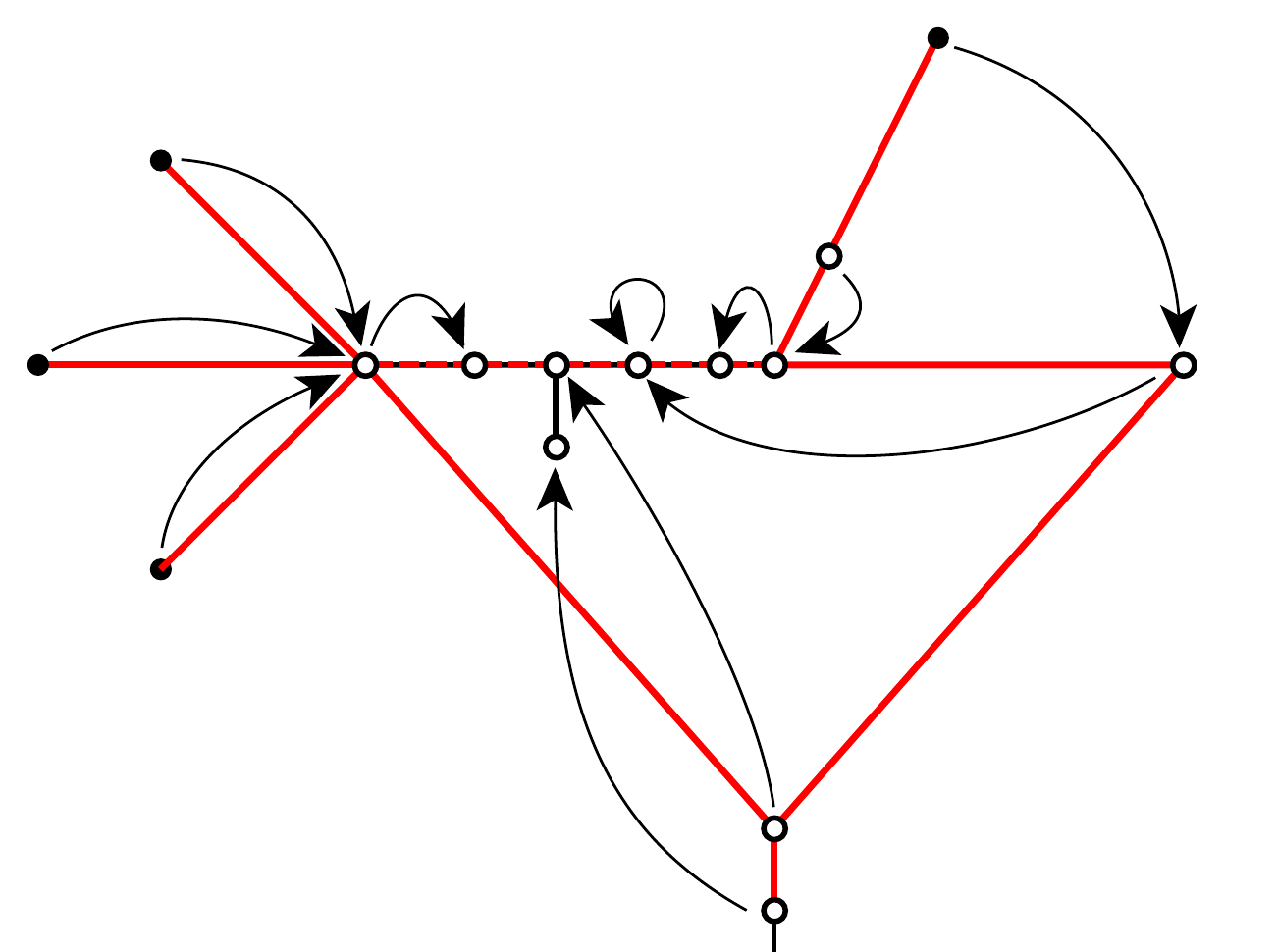
\caption{Action of $f_\bullet$ (\S\ref{ssec:example_cusp_322}).}
\label{fig:valFdyn}
\end{figure}

We now study the behavior of $f_\bullet$ towards contracted curve semivaluations in $\VC{f}$. It consists of $4$ curve semivaluations, associated to the irreducible curves $C_x^\eta=\{x=y-\eta z^3=0\}$ for any $\eta$ satisfying $\eta^3=-1$, and $C'=\{y-x=0\} \cap X$.
Notice that with respect to the coordinates $(x_0,y_0,z_0)$ at $p_0$, $C_x^\eta$ lifts to the curve $\tilde{C}_x^\eta=\{x_0=y_0-\eta = 0\}$. It intersects the exceptional prime $E_1$ transversely at the point $p_\eta=(0,\eta,0)$.
At $p_\eta \in X_{\pi_1}$ local parameters are given by $(y_0-\eta, z_0)$. The monomial valuation $\nu^\eta_t$ of weights $t$ and $1$ with respect to $y_0-\eta$ and $z_0$ is normalized for all $t \in [0,+\infty]$, and by direct computation we get
$$
f_* \nu^\eta_{t} = \nu^0_{t+5, 3} = (8+t) w^0\left(\frac{3}{8+t}\right).
$$
In particular $f_\bullet \nu_{C_x^\eta} = \nu_{E_1}$.

We turn our attention to the curve $C'$, which lifts to the curve $\{y_0=x_0z_0\} \cap X_{\pi_1}$. In particular, it intersects $\pi_1^{-1}(x_0)$ in $p_0$, transversely to $E'_1$ and $E'_2$. Denote by $\mu'_t$ the monomial valuation at $p_0$ of weights $1$ and $t$ with respect to the coordinates $x_0$ and $y_0-x_0z_0$. It is easy to check that
$$
\mu'_t=(1+t) w^0\left(\frac{t}{1+t}\right) \text{ for } t \leq 1, \qquad \mu'_t(\mf{m}) = 2 \text{ for }t \geq 1.
$$
Let $\nu'_t$ be the normalized valuation proportional to $\mu'_t$. By direct computations, we get
$$
f_* \mu'_t
=
\begin{cases}
\nu^0_{5+4t, 3+4t} & \text{if } t \leq 1,\\
\nu^0_{9, 6+t} & \text{if } t \geq 1,
\end{cases}
\quad \text{and we deduce }\quad
f_* \nu'_t
=
\begin{cases}
8w^0\left(\frac{3+4t}{8(1+t)}\right) & \text{if } t \leq 1,\\
\frac{15+t}{2} w^0\left(\frac{6+t}{15+t}\right) & \text{if } t \geq 1.
\end{cases}
$$
Notice in particular that $f_\bullet\nu'_3 = w^0(1/2)$.
\reffig{valFdyn} describes the action of $f_\bullet$ on $\mc{V}_X$, while \reffig{dynFalg} describes the action induced by $f$ on the minimal resolution $X_{\pi_0}$, the minimal log resolution $X_{\pi_1}$ of $\mf{m}$, and the minimal log resolution $X_{\pi_2}$ of $\mf{m}$ which also is an embedded resolution of $f^{-1}(x_0)$. Notice that all three models are geometrically stable in this case.
\begin{figure}[ht]
\centering
\begin{minipage}{0.3 \columnwidth}
\def\svgwidth{\columnwidth}
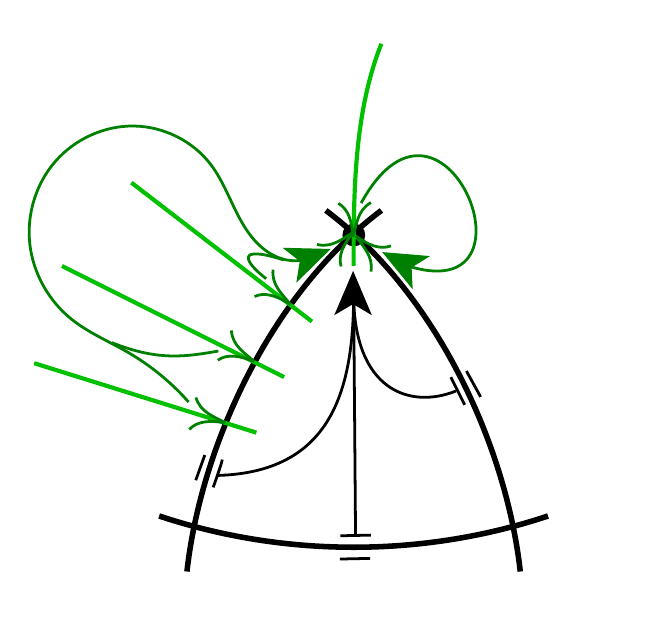
\end{minipage}
\hspace{3mm}
\begin{minipage}{0.3 \columnwidth}
\def\svgwidth{\columnwidth}
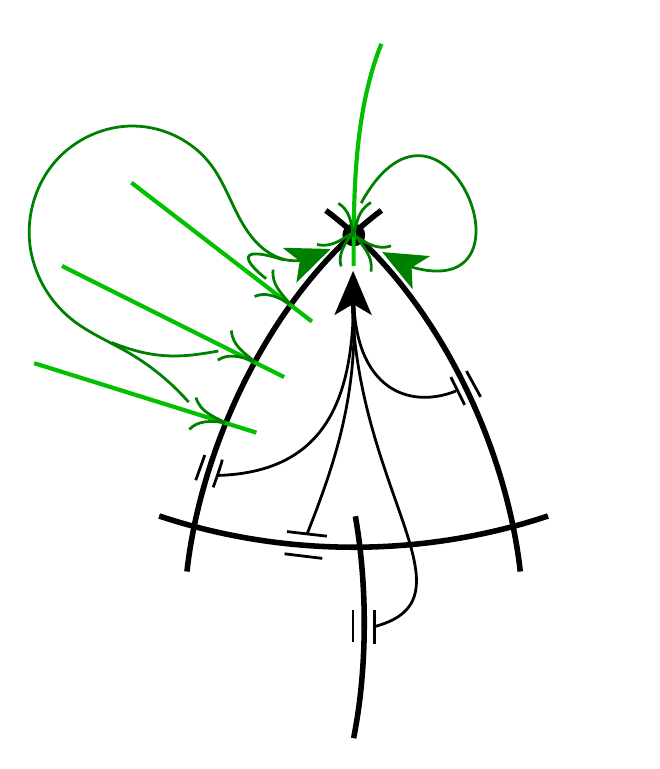
\end{minipage}
\hspace{3mm}
\begin{minipage}{0.34 \columnwidth}
\def\svgwidth{\columnwidth}
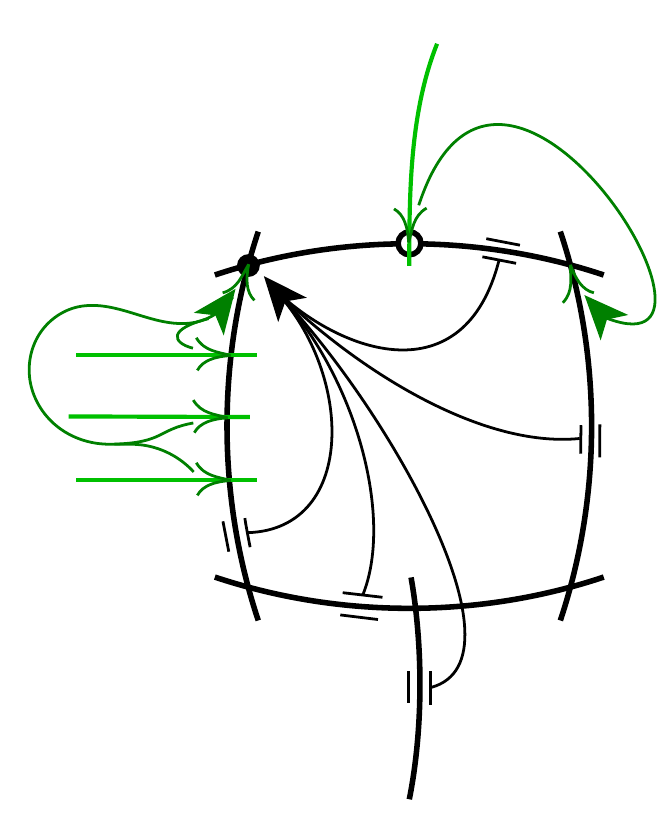
\end{minipage}
\caption{Examples of actions induced by $f$ on modified spaces (\S\ref{ssec:example_cusp_322}).}\label{fig:dynFalg}
\end{figure}

\subsection{A finite map on a cusp singularity}\label{ssec:example_cusp_42}

This example was proposed in \cite{favre:holoselfmapssingratsurf}.
Let $(X,x_0)$ be the cusp singularity whose minimal desingularization is a cycle of two rational curves, of self-intersections $-4$ and $-2$.
It can be obtained as the hypersurface singularity $X=\{(x,y,z) \in \nC^3\ |\ x^2+y^4+z^6=xyz\}$.
Here we consider the toric construction of this cusp, as described in \S\ref{ssec:arith_cusp}.
Set $\omega = [[\ol{4,2}]]$ as in \eqref{eqn:continued_fraction}. By direct computation, $\omega = 2+\sqrt{2}$.
We follow the notations of \S\ref{ssec:arith_cusp}, and set $N_\omega = \nZ \oplus \omega \nZ$.
The ring $\mf{o}^>$ of positive units is generated by $1+\sqrt{2}$, while the ring $\mf{o}^+$ of totally positive units is generated by its square $3+2\sqrt{2}$.
Notice that the lattice $N_\omega$ is preserved, and $\eps_\omega=3+2\sqrt{2}$ generates $\mf{o}_\omega^+$.
The cusp $(X,x_0)$ is then constructed by choosing $\eps=\eps_\omega$ in the construction given in \S\ref{ssec:arith_cusp}.
Recall that by \refrmk{arith_valuations}, the vectors $(1,0)$ and $(0,1)$ in the basis $\{1,\omega\}$ correspond to the exceptional primes $E_0$ and $E_1$ in the cycle, whose selfintersections are $-4$ and $-2$ respectively.
Notice that a basis of a lattice $N_\omega$ is given by $\{1,\sqrt{2}\}$. With respect to this basis, $E_0$ and $E_1$ correspond to the vectors $e_0=(1,0)$ and $e_1=(2,1)$.
The exceptional primes $E_n, n \in \nZ$ that appear in the universal covering $X_\infty$ of $(X,x_0)$ are associated to $e_n$, constructed inductively so that $e_{n+2}=Ae_n$ for all $n$, where $A$ is the matrix $A=\begin{pmatrix}3&4\\2&3\end{pmatrix}$ representing the multiplication by $\eps$.

Consider now the matrix $B=\begin{pmatrix}3&2\\1&3\end{pmatrix}$, representing the multiplication by $\alpha = 3+\sqrt{2}$.
The action of $B$ induces a finite endomorphism $f\colon (X,x_0) \to (X,x_0)$ on the cusp, of topological degree $e(f)=7=\det B$ and first dynamical degree $c_\infty(f)=\sqrt{7}$.
Using the correspondence described in \refrmk{arith_valuations}, we can study the action induced by $f$ on the cycle.

Any given vector $v \in \mc{C}$ belongs to at least one (and at most two) face of the fan defining $X_\infty$, generated by some $(e_n, e_{n+1})$.
So we can write $v=r e_n + s e_{n+1}$ for suitable $r,s \geq 0$ (not both $0$).
We may do the same for $w=Bv$, which will belong to some other face generated by $(e_m, e_{m+1})$. We set $w=r'e_m + r'e_{m+1}$.

We denote by $\nu = \nu_{r,s}$ the monomial valuation at $E_n \cap E_{n+1}$ corresponding to $v=re_n+se_{n+1}$ (modulo the action of $g_\eps$).
The normalization with respect to the maximal ideal corresponds here to $\nu(\mf{m})=r+s$. 
We denote by $\nu_t$ the normalized valuation corresponding to $(1-t)e_0 + t e_1$, and by $\mu_t$ the monomial valuation corresponding to $(1-t)e_1 + t e_2$.
Notice that $\nu_1=\mu_0$ and $\nu_0 = \mu_1$.
In fact, $t \mapsto \nu_t$ and $t \mapsto \mu_t$ are the monomial parameterizations of $[\nu_{E_0}, \nu_{E_1}]_{p_0}$ and $[\nu_{E_1}, \nu_{E_0}]_{p_1}$ respectively, where $p_0$ and $p_1$ are the two points of intersections between the two exceptional primes $E_0$ and $E_1$ in the minimal resolution of $(X,x_0)$.
By direct computation, we get:
\begin{align*}
&c(f, \nu_t)= t+2, 
&f_\bullet \nu_t =
\begin{cases}     
\nu_{\frac{1+4t}{2+t}} &\text{if } 0 \leq t \leq \frac{1}{3},\\
\mu_{\frac{3t-1}{2+t}} &\text{if } \frac{1}{3} \leq t \leq 1,
\end{cases}
\\
&c(f, \mu_t)=
\begin{cases}     
t+3 &\text{if } 0 \leq t \leq \frac{1}{2},\\
5-3t &\text{if } \frac{1}{2} \leq t \leq 1,
\end{cases}
&f_\bullet \mu_t =
\begin{cases}     
\mu_{\frac{2+3t}{3+t}} &\text{if } 0 \leq t \leq \frac{1}{2},\\
\nu_{\frac{2t-1}{5-3t}} &\text{if } \frac{1}{2} \leq t \leq 1.
\end{cases}
\end{align*}

The parameterization $\gamma$ of rays of $\mc{C}$ induces a parameterization $\gamma_X\colon \nR/\nZ \to \mc{S}_X$ (see \S\ref{ssec:arith_finitegerm}).
Explicitly, $\nu_t$ correspond to the vector $(1-t)e_0+ te_1$, which correspond to the real $a(t)=(1-t)+t(2+\sqrt{2})$. Notice that $Q(a(t))=a(t)a'(t)=(1+t)^2-2t^2$, where $a'$ denotes the extension of the conjugation in $K=\nQ(\sqrt{2})$ to $\nR$. Set $q=\sqrt{Q}$. Then $\nu_t=\gamma_X(u(t))$, where
$$
u(t)=\frac{\log(a(t))-\log(q(a(t)))}{2 \log \eps}.
$$
Analogous computations can be done for $\mu_t$.
With respect to this parameterization, $f_\bullet$ acts as an irrational rotation, by the value $\beta \in \nR/\nZ$ given by
$$
\beta=\frac{\log(\alpha)-\log(q(\alpha))}{2\log \eps} = \frac{\log(3+\sqrt{2})-\log(\sqrt{7})}{2\log (3+2\sqrt{2})}.
$$

\subsection{Different normalizations}\label{ssec:different_normalizations}

In the whole paper, to study the dynamics induced by a holomorphic germ $f\colon (X,x_0) \to (X,x_0)$ on the set of (finite) semivaluations $\hat{\mc{V}}_X^*$, we fixed a normalization, by considering the value they take on the maximal ideal $\mf{m}$.
While being a quite natural choice, this is only one of the possible normalizations.

The normalization with respect to the maximal ideal can be reformulated as follows.
For any valuation $\nu \in \hat{\mc{V}}_X^*$, we consider its ``norm''
\begin{equation}\label{eqn:norm_valuation}
\norm{\nu}_Z = - Z(\nu) \cdot Z, 
\end{equation}
where $Z=Z(\mf{m})$ is the b-divisor associated to $\mf{m}$.
In this case, the intersection \eqref{eqn:norm_valuation} is always well defined and finite, since $Z$ is a Cartier b-divisor, and this intersection coincides with $-Z_{\pi}(\nu) \cdot Z_\pi(\mf{m}_X)$, where $\pi$ is a log-resolution of $\mf{m}$.
But \eqref{eqn:norm_valuation} makes sense for any \emph{nef} b-divisor $Z$, for which we get $-Z(\nu)\cdot Z \in (0,+\infty]$ for all $\nu \in \hat{\mc{V}}_X^*$.
As meaningful examples, we can take $Z=Z(\mf{a})$ for any ($\mf{m}$-primary) ideal $\mf{a}$, or $Z=Z(\mu)$ for any semivaluation $\mu \in \hat{\mc{V}}_X^*$.

Notice that if $Z$ is not Cartier, the value $-Z(\nu) \cdot Z$ could be $+\infty$.
For example, consider the valuative space $\hat{\mc{V}}^*$ associated to a smooth point $(\nC^2,0)$, and its normalization with respect to the value at some coordinate $x$. In this case $\inte_x(x)=+\infty$, and the normalization is not well defined here. Nevertheless the normalization on quasimonomial valuations can be extended by continuity, and in this case it would give the monomial valuation of weights $(1,0)$ with respect to the coordinates $(x,y)$.
See \cite{favre-jonsson:valtree} for an extensive literature on different normalizations on the valuative tree.

In fact, one should think of $\mc{V}_X$ as the quotient of $\hat{\mc{V}}_X^*$ by the natural multiplicative action of $\nR_+^*=(0,+\infty)$.
The normalized space $\mc{V}_X[Z] = \{\nu \in \hat{\mc{V}}_X^*\ |\ \norm{\nu}_Z=1\}$ would then correspond to a section of the natural projection $\hat{\mc{V}}_X^*\to \mc{V}_X$.

All results obtained in this paper can be reformulated more generally for any choice of normalizations as above.
As we have seen, the skewness $\alpha$, the thinness $A$, and the attraction rate $c(f,-)$ depend on the normalizations chosen. Nevertheless, one can easily pass from one normalization to the other by homogeneity.
Notice finally that the angular distance does not depend on the normalization, and hence it defines a distance on the intrinsic space $\mc{V}_X$ more than on the normalized space $\mc{V}_X[\mf{m}]$.

\subsection{Automorphisms}

The valuative analysis described for non-invertible germs can be worked out for invertible germs as well.
For an automorphism $f\colon (X,x_0) \to (X,x_0)$, one can easily check that $f_\bullet = f_*$ acts as an isomorphism with respect to the angular distance, and preserves log discrepancies.

In particular, for smooth points $(X,x_0)=(\nC^2,0)$ this implies that $\ord_0$ is a totally invariant valuation.
In general the sets $\on{Fix}(f_*)$ and $\on{Per}(f_*)$ of fixed and periodic valuations is quite huge, and contains enough information to allow to distinguish different conjugacy classes.
As an example, for any $\lambda \neq 0$ and $u \in \nN^*$, consider
\begin{equation}\label{eqn:ex_resonant}
f(x,y)=(\lambda x, \lambda^u y + \eps x^u).
\end{equation}
When $\abs{\lambda} < 1$, this is the Poincar\'e-Dulac normal form of contracting invertible automorphisms which are resonant, see \cite{sternberg:localcontractions,rosay-rudin:holomorphicmaps,berteloot:methodeschangementechelles}.
Denote by $\nu_{y,t}$ the monomial valuation of weights $1$ and $t$ with respect to the coordinates $(x,y)$.
It is easy to check that $f_*\nu_{y,u}=\nu_{y,u}$, and the action of $f$ on the tangent space is given in suitable coordinates by $\zeta \mapsto \zeta +\eps \lambda^{-u}$. By further computations, one can notice that $\on{Fix}(f_*)=[\nu_x,\nu_{y,u}]$ as far as $\eps \neq 0$, while $\on{Fix}(f_*)$ contains all segments $[\ord_0, \nu_{y+\theta x^u}]$ for all $\theta \in \nC$ when $\eps = 0$.

For singular spaces, studying finite order automorphisms $g\colon (X,x_0) \to (X,x_0)$ (for which $\on{Fix}(g)=\{x_0\}$) and their action on the valuative space allows to relate the valuative spaces of a singularity and of its quotient by the action of $g$.
We have seen examples of this phenomenon in \S\ref{ssec:dynamics_quotientsing}.
%
%

\subsection{Positive characteristic}\label{ssec:positive_char}

All along the paper, we considered maps and singularities over the complex numbers. Here we comment on the more general situation where we replace $\nC$ by any field $\nK$.
First, remark that we can always work on the algebraic closure of a given field. We will hence assume that $\nK$ is algebraically closed.
As commented in the introduction, all of the results in this paper hold over any field $\nK$ \emph{of zero characteristic}, as we never use the archimedean structure of the complex numbers.

Consider now an algebraically closed field $\nK$ of characteristic $p > 0$.
The contents of \S\ref{sec:valuation_spaces} still hold in this setting, as well the contents of \S\ref{sec:dynamics_valspaces}, but for the Jacobian formula of \S\ref{ssec:Jacobian_formula}.
In particular, \refthm{weak_convergence}, and all its corollaries on the existence of geometrically stable models in \S\ref{sec:algebraic_stability} and recursion relations for the sequence of attraction rates in \S\ref{sec:attraction_rates}, still hold for \emph{non-finite} germs over fields $\nK$ of positive characteristic having a non-quasimonomial eigenvaluation.

While we believe that for non-finite germs \refthm{strong_convergence} should also hold in characteristic $p$, 
for finite germs new phenomena arise, due to the presence of \emph{wild ramifications}.
\begin{ex}\label{ex:Frobenius1}
Consider the map $F\colon (\nA^2_\nK,0)\to(\nA^2_\nK,0)$ given by
$$
F(x,y)=(x^p,y^p).
$$
Notice that the jacobian determinant $J_F$ of $F$ vanishes identically, even though the map $F$ is dominant.
It is easy to show that the set of eigenvaluations for $F$ is given by the set of valuations whose center in $X_\pi$ belongs to $X_\pi(\nF_p)$ for any modification $\pi \colon X_\pi \to (\nC^2,0)$.
Similarly, eigenvaluations for $F^k$ corresponds to valuations whose center belongs to $X_\pi(\nF_{p^k})$.
This is in sharp contrast with the situation described by \refthm{classification}.
This phenomenon is given by the fact that for any divisorial eigenvaluation $\nu_E$, the map $F$ acts on $E$ as the Frobenius map $\zeta \mapsto \zeta^p$.
\end{ex}

Notice that in the non-finite case, Frobenius maps may still appear, but not for the self-action at an exceptional prime associated to a (totally invariant) divisorial eigenvaluation, since it would contradict the uniqueness of the eigenvaluation established by \refthm{unique_eigenval}.
Ramification indices may increase with respect to the characteristic zero case, as the following example shows.


\begin{ex}\label{ex:Frobenius2}
Consider the map $f\colon (\nA^2_\nK,0)\to(\nA^2_\nK,0)$ given by
$$
f(x,y)=(x^p, x^{p-1}y).
$$
As in the previous example, the jacobian determinant $J_f$ vanishes identically.
In this case however, we still have a unique eigenvaluation $\ord_0$, and any other valuation $\nu \in \mc{V}^\alpha$ converges to $\ord_0$.
Denote by $\nu_{y,t}$ the monomial valuation at $0$ of weights $1$ and $t$ with respect to coordinates $(x,y)$.
We have $f_*\nu_{y,2}=p \nu_{y,1+1/p}$. If we denote by $E$ and $F$ the exceptional primes associated to $\nu_{y,2}$ and $\nu_{y,1+1/p}$ respectively, than $f$ induces the Frobenius map $\zeta \mapsto \zeta^p$.
For any $t \geq 2$ and $\zeta \in \nK^*$, denote by $\nu_{\zeta, t}$ the valuation in $[\ord_0, \nu_{y-\zeta x^2}]$ whose skewness is $t$.
Then $f_* \nu_{\zeta,t}$ is the valuations in $[\ord_0, \nu_{y^p - \zeta^p x^{p+1}}]$ whose skewness is $h(t)=(p-1 + t)/p$.
If $f$ is seen as an action on $(\nC^2,0)$, the same holds, but with $h_\nC(t)=((p+2)(p-1)+t)/p^2$ replacing $h(t)$.
\end{ex}

\refex{Frobenius1} can be easily generalized. For example, for any germ $\sigma\colon(\nA^2_\nK,0)\to (\nA^2_\nK,0)$ and any $k \in \nN^*$, we may consider maps of the form $f=\sigma \circ F^k$.
Notice that, unlike the case of dimension one, we cannot split any germ $f$ in such a way so that the Jacobian of $\sigma$ is not vanishing (see \refex{Frobenius2}).

\begin{ex}
Let $u \in \nN^*$, $\eps \in \nK$, and consider the map $f\colon (\nA^2_\nK,0)\to(\nA^2_\nK,0)$ given by
$$
f(x,y)=(x^p, y^p + \eps x^{pu}).
$$
This map is obtained as the composition $\sigma \circ F$, where $\sigma$ is of the form \eqref{eqn:ex_resonant}.
In this case, one can easily notice that $[\nu_x, \nu_{y,u}]$ is fixed by $f_\bullet$.
The action of $f$ on the exceptional prime associated to $\nu_{y,u}$ is given by $h \colon \zeta \mapsto \zeta^p + \eps$. Its properties depend on whether $\eps$ belongs to $\nF_p$ (or its algebraic closure $\ol{\nF_p}$) or not.
Notice that $\displaystyle h^n(\zeta)=\zeta^{p^n}+ \sum_{k=0}^{n-1} \eps^{p^k}$.
If $\eps=0$, we have again the Frobenius map described in \refex{Frobenius1}.
If $\eps \in \nF_p^*$, then $h^p$ is the $p$-th iterate of the Frobenius map.
Analogously, if $\eps \in \ol{\nF_p}$, then some iterate of $h$ and the Frobenius map coincide.
If $\eps \not \in \ol{\nF_p}$, then we have no periodic directions associated to points over $\ol{\nF_p}$.

In particular, the study of the structure of $\on{Fix}(f_\bullet)$ becomes quite intricate, and depends on the properties of the field $\nK$.
\end{ex}

Notice that even though the Jacobian formula \eqref{eqn:jacobian_formula} doesn't hold in positive characteristic, it remains as the inequality
$$
A(f_*\nu)\leq A(\nu) + \nu(R_f),
$$
even when the Jacobian determinant doesn't vanish identically.

\begin{ex}\label{ex:Frobenius4}
Let $u \in \nN^*$, $\eps \in \nK$, and consider the map $f\colon (\nA^2_\nK,0)\to(\nA^2_\nK,0)$ given by
$$
f(x,y)=(x^p(1+x),y^p(1+y)).
$$
In this case, the jacobian determinant $J_f=x^py^p$ does not vanish, but it has an higher multiplicity than expected.
In particular,
$$
2p=A(p\ord_0) = A(f_* \ord_0) < A(\ord_0)+\ord_0(J_f)= 2+2p.
$$
\end{ex}

The proof of \refthm{wahl} relies on the Jacobian formula, and hence it does not remain valid in positive characteristic.
In fact, any singularity defined over $\nF_{q}$ admits non-invertible finite self-maps.

\begin{ex}\label{ex:Frobenius5}
Let $(X,x_0)$ be a singularity over a field $\nK$ of characteristic $p > 0$.
Suppose that $(X,x_0)$ is embedded in an affine space $\nA^m_\nK$, and obtained as the vanishing locus of some polynomials (or more generally analytic functions) with coefficients in $\nF_{p^s}$ for some $s \in \nN^*$.
Consider the map $f\colon \nA^m_\nK \to \nA^m_\nK$ given by $(x_1, \ldots, x_m)=(x_1^p, \ldots, x_m^p)$.
Then $f^s$ induces a finite non-invertible self-map on $(X,x_0)$.
\end{ex}

The conclusions of \refthm{wahl} remain valid as far as the Jacobian formula holds, while they are clearly false when the Jacobian determinant identically vanishes. As showed by \refex{Frobenius4}, some mixed behavior may occur. We believe it could be interesting to investigate which classes of singularities admit non-invertible finite self-maps whose Jacobian determinant doesn't identically vanish. We refer to this case as maps admitting a Jacobian divisor, which can be constructed by considering the Jacobian determinant on $X \setminus \{x_0\}$, which is defined since $f$ is finite.

\begin{quest}
Let $(X,x_0)$ be a normal surface singularity over an algebraically closed field $\nK$ of positive characteristic. Assume there exists a \emph{finite} germ $f\colon (X,x_0) \to (X,x_0)$ which is non-invertible, and admits a Jacobian divisor. Does this give obstructions for the geometry of $(X,x_0)$ ?
\end{quest}

\appendix
\section{Cusp singularities}\label{sec:cusps}

\subsection{Arithmetic construction of cusp singularities}\label{ssec:arith_cusp}
We describe here the arithmetic construction of cusps singularities, see e.g. \cite[Section 4.1]{oda:convexbodies}.
Let $r \in \nN^*$ and $k_0, \ldots, k_{r-1}$ be a finite sequence of integer numbers $\geq 2$, not all equal to $2$.
Set $k_{nr+j} := k_j$ for all $n \in \nZ$, so that $(k_j)_{j \in \nZ}$ defines a periodic sequence of integers $\geq 2$ (not all equal to $2$), that we may assume of exact period $r$.
We denote by $\omega=[[k_0, k_1, \ldots]] = [[\overline{k_0, \ldots, k_{r-1}}]]$ the modified continued fraction
\begin{equation}\label{eqn:continued_fraction}
\omega = k_0 - \frac{1}{k_1 - \frac{1}{\ddots}}.
\end{equation}
Then $\omega$ is a quadratic irrational number, $\omega = a + b \sqrt{d}$, where $a,b \in \nQ_+^*$ are positive rational numbers and $d \in \nN$ is a positive and square-free integer. Moreover $\omega > 1 > \omega' > 0$, where $\omega'$ is the conjugate of $\omega$ in $\nQ(\sqrt{d})=:K$.
Set now
\begin{itemize}
\item $N=N_\omega := \nZ \oplus \omega \nZ$ the lattice of rank $2$ in $\nQ(\sqrt{d})$ generated by $1$ and $\omega$;
\item $\mf{o}^>$ the group of positive units of $K$, i.e., (algebraic) integers $u \in K$ which are invertible and satisfying $u>0$;
\item $\mf{o}^+$ the \emph{totally positive}, i.e., elements $u \in \mf{o}^>$ satisfying $u' > 0$;
\item $\mf{o}_\omega^>$ and $\mf{o}_\omega^+$ the group of positive and totally positive units $u$ satisfying $uN_\omega = N_\omega$.
\end{itemize}
By Dirichlet's unit theorem, $\mf{o}^>$ and $\mf{o}^+$ are infinite cyclic groups, and $\mf{o}^+$ has index either $1$ or $2$ in $\mf{o}^>$.
The same property holds for unities preserving the lattice $N_\omega$. Let $\eps_\omega$ be the generator of $\mf{o}_\omega^+$ satisfying $\eps_\omega > 1$.

The embedding $\Phi \colon K \to \nR^2$ given by $\Phi(\xi) = (\xi, \xi')$ induces a canonical isomorphism $K \otimes_\nQ \nR \to \nR^2$.
Set $e_0=(1,0)$, $e_1=(0,1)$, and recursively $e_n$ for all $n \in \nZ$ by imposing $e_{n-1} + e_{n+1} = k_n e_n$.
Let $L_\omega \colon \nQ^2 \to K$ be defined by $(x,y) \mapsto x+\omega y$, and set $\Phi_\omega = \Phi \circ L_\omega$.
One can check that $L_\omega(e_n)$ are totally positive, and hence for all $n \in \nZ$, we have $\Phi_\omega(e_n) \in (\nR_+^*)^2=:\mc{C}$.
Denote by $\Delta$ boundary of the convex hull of $\Phi(N_\omega) \cap \mc{C}$.
Then $\Phi(N_\omega) \cap \Delta=\{\Phi_\omega(e_n), n \in \nZ\}$.
The fan $\mc{F}$ consisting of the $0$-face ${(0,0)}$, the $1$-faces $\nR_+ \Phi_\omega(e_n)$ and the $2$-faces $\nR_+ \Phi_\omega(e_n) + \nR_+ \Phi_\omega(e_{n+1})$ for all $n \in \nZ$ is a regular fan of the open cone $\mc{C}_0=\{0\} \cup \mc{C}$.
We call $X_\infty$ the (non-compact) toric surface associated to $\mc{F}$.
It contains an infinite chain of compact rational curves $(E_n)_{n \in \nZ}$, with selfintersections $(E_n)^2 = -k_n$.

For any element $\eps \in \mf{o}_\omega^+$, consider the map $g_\eps \colon \mc{C}_0 \to \mc{C}_0$ defined by $g_\eps(x,y)=(\eps x, \eps' y)$.
Since $\eps N_\omega=N_\omega$, the map $g_\eps$ leaves $\Phi(N_\omega)$ invariant, and induces an automorphism $G_\eps \colon X_\infty \to X_\infty$ (the inverse given by $G_{\eps'}$) which (as far as $\eps \neq 1$), acts freely and properly discontinuously.
Hence the quotient $X_\eps=X_\infty / \langle G_\eps\rangle$ is a complex surface, which has a cycle of rational curves.
In fact, assume that $\eps=\eps_\omega^s$, $s \in \nN^*$.
Up to refining the fan $\mc{F}$ by blowing up points in $E_{n+rk} \cap E_{n+1+rk}$ for all $k \in \nZ$, we may assume that $rs \geq 2$.
Then the cycle of rational curves has $rs$ components, of self-intersections $-k_0, \ldots, -k_{rs-1}$, i.e., $s$ copies of the starting sequence $-k_0, \ldots, -k_{r-1}$.
We may contract this cycle in $X_\eps$, obtaining a cusp singularity $(X,x_0)$.

By a result of Laufer's \cite{laufer:taut2dimsing}, cusps singularities are taut, meaning that the dual graph, together with selfintersections of components, determine the analytical type of a cusp singularity.
In particular, all cusps singularities can be constructed as above, up to isomorphisms.

\begin{rmk}\label{rmk:arith_valuations}
Notice that $\Phi_\omega\colon \nQ^2 \to \nR^2$ is $\nQ$-linear, and it can be extended by continuity to a $\nR$-linear map $\Phi_\omega \colon \nR^2 \to \nR^2$.
Analogously, consider the quadratic form $Q(\alpha)=\alpha\alpha'$ on $K$.
The map $Q_\omega =L_\omega \circ Q \colon \nQ^2 \to \nR$ can be extended to a continuous map $Q_\omega\colon \nR^2 \to \nR$.
Notice that $Q_\omega(x,y) > 0$ as far as $\Phi_\omega(x,y) \in \mc{C}$.

If we denote by $\mc{S}_{X}$ the skeleton of $(X,x_0)$, and by $\hat{\mc{S}}_X^*$ the cylinder $\mc{S}_X\times \nR_+^*$, then there is a natural
isomorphism $\tau \colon \hat{\mc{S}}_X \to \mc{C}/\langle g_\eps\rangle=:\mc{C}_{\eps}$.
Denote by $[\cdot]$ the natural projection $\mc{C} \to \mc{C}_\eps$. Then $\tau$ has the following properties :
\begin{itemize}
\item For any exceptional prime $E_n$, $n=0, \ldots, rs-1$, $\tau(\ord_E) = [\Phi_\omega(e_n)]$.
\item For any valuation $\nu \in \hat{\mc{S}}_X^*$ and any $\lambda > 0$, we have $\tau(\lambda \nu) = \lambda \tau(\nu)$.
\item For any intersection point $p_n$ of two exceptional primes $E_n$ and $E_{n+1}$, the monomial valuation $\nu_{r,s}$ at $p_n$ satisfying $Z_{\pi_0}(\nu_{r,s})=r\check{E}_n+s \check{E}_{n+1}$ is sent by $\tau$ to $[\Phi_\omega(re_n + s e_{n+1})]$. 
\end{itemize}

Notice that two valuations $\nu, \mu \in \hat{\mc{S}}_X^*$ are proportional if and only if $\tau(\nu)$ and $\tau(\mu)$ belong to the same ray of $\mc{C}_0$.
A normalization of valuations in $\hat{\mc{S}}_X^*$ corresponds to taking a non-vanishing section of the set of rays of $\mc{C}_0$. We can take for example the section given by $Q \equiv 1$. Notice that since $Q(\eps)=1$, this section is $g_\eps$-invariant, and it induces a section of the rays in $\mc{C}_\eps$.
\end{rmk}

\subsection{Finite endomorphisms}\label{ssec:arith_finitegerm}

We now describe an arithmetic construction which produces finite endomorphisms of cusps $(X,x_0)$ constructed as above (see \cite[\S 2.5]{favre:holoselfmapssingratsurf}).

We denote by $\mf{c}_\omega^+$ the set of (totally) positive elements $\alpha \in K$ so that $\alpha N_\omega \subseteq N_\omega$.
For any $\alpha \in \mf{c}_\omega^+$, the map $g_\alpha \colon \mc{C}_0 \to \mc{C}_0$ induces a morphism $G_\alpha \colon X_\infty \to X_\infty$.
This map will have in general indeterminacy points, and it is not an automorphism as far as $\alpha$ is not a unit.
Since $g_\alpha$ commutes with $g_\eps$, $G_\alpha$ induces a map on the quotient $X_\eps=X_\infty/\langle G_\eps \rangle$.
By contracting the cycle of rational curves, we obtain a finite endomorphism $f_\alpha \colon (X,x_0) \to (X,x_0)$.

\begin{rmk}\label{rmk:Qinteger}
The fact that $\alpha N_\omega \subset N_\omega$ guarantees that $f_\alpha$ can be expressed as a formal (rational) map (with respect to suitable coordinates), while the fact that the cone $\mc{C}$ is invariant assures that $f_\alpha$ defines a holomorphic map at $x_0$. 
Notice also that $\alpha$ needs to be an integer in $K$. In fact, since $\alpha$ leaves $N_\omega$ invariant, we have in particular $\alpha^n \in N_\omega$ for all $n$. If $\alpha$ is not an integer, then $Q(\alpha^n)$ would have unbounded denominators, in contradiction with belonging to $N_\omega$.
Finally, we notice that $f_\alpha$ is finite, of topological degree $Q(\alpha)=\alpha \alpha' \in \nN^*$.
\end{rmk}

The map $\gamma(t)=(\eps^{2t},1)\cdot \nR_+$, where $t \in \nR$, gives a parameterization of the rays in $\mc{C}_0$ (see \cite[p. 413]{favre:holoselfmapssingratsurf}).
The action of $g_\eps$ on the rays of $\mc{C}_0$ corresponds to the translation by $1$ on $\nR$.
By \refrmk{arith_valuations} we deduce that the rays in $\mc{C}_0$, quotiented by the action of $g_\eps$, are in $1$-to-$1$ correspondence with the skeleton $\mc{S}_X$. In fact, the normalization of valuations, which corresponds to taking a section of $\hat{\mc{S}}_X^*$, here correspond to taking a non-vanishing section of the rays of $\mc{C}_0$ which is $g_\eps$-invariant.
We deduce that $\gamma$ induces a parameterization $\gamma_X \colon \nR/\nZ \to \mc{S}_X$.

The action of $g_\alpha$ corresponds, with respect to this parameterization, to a translation by the value
$$
\beta=\frac{\log \alpha - \log \alpha'}{2 \log \eps}.
$$

This construction of finite germs $f_\alpha$ on cusp singularities is quite general. In fact, for any finite germ $f$, there exists a $f_\alpha$ constructed as above so that the actions of $f_\bullet$ and $(f_\alpha)_\bullet$ coincide on the skeleton $\mc{S}_X$. To prove this result, we first need a lemma. 

\begin{lem}\label{lem:Haar_measures}
Let $(X,x_0)$ be a cusp singularity constructed as in \S\ref{ssec:arith_cusp}.
Then there exists a constant $K > 0$ so that, for all $t,u \in \nR/\nZ$, we have that
$$
\rho(\gamma_X(t),\gamma_X(u))=K d_\nZ(t,u),
$$
where $d_\nZ$ denotes the distance induced by the euclidean distance on $\nR/\nZ$.
\end{lem}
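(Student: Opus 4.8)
The statement asserts that the parameterization $\gamma_X \colon \nR/\nZ \to \mc{S}_X$ pushes forward (up to a constant) the Euclidean metric on $\nR/\nZ$ to the angular metric $\rho$ on the skeleton $\mc{S}_X$ of the cusp. The key structural fact is that $\gamma_X$ is obtained by quotienting the parameterization $\gamma(t) = (\eps^{2t}, 1)\cdot \nR_+$ of rays in $\mc{C}_0$ by the $\nZ$-action coming from $g_\eps$ (translation by $1$). So the plan is to first work upstairs in the universal cover $X_\infty$, where the skeleton is the infinite chain of curves $(E_n)_{n\in\nZ}$, compute $\rho$ on this chain in terms of the ray parameter $t$, and then descend.

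First I would use \refprop{log_alpha} together with \reflem{additiverho}: on any subtree $\mc{V}_{X_\infty, \hat\nu}$ the angular distance is induced by the parameterization $\log\alpha$, and between different subtrees it is additive along the retraction to $\mc{S}_{\pi_0}$. Restricting to two comparable points on the skeleton, $\rho(\nu,\mu) = |\log\alpha(\nu) - \log\alpha(\mu)|$ when one dominates the other; in general one splits the segment $[\nu,\mu]$ at the meeting point and adds. Thus it suffices to understand how $\log\alpha$ behaves along $\gamma$. By \refrmk{arith_valuations}, the valuation with divisor $r\check E_n + s\check E_{n+1}$ corresponds to the ray through $\Phi_\omega(re_n + se_{n+1})$, and $\alpha$ of such a monomial valuation is, by \refcor{skewness_formula}, a quadratic polynomial in the weights $(r,s)$; in fact I expect that along the ray parameterization it is essentially $\alpha \propto$ a fixed multiple of $\eps^{2t}$ on each linear piece — more precisely that $\log\alpha(\gamma_X(t))$, computed on a segment between two consecutive vertices, is an affine function of $t$ with a \emph{constant} slope independent of which vertices one is between. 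This is the crucial computation: the affinity of $\log\alpha$ in $t$ must hold uniformly, which is exactly what the self-similarity of the continued-fraction fan $\mc{F}$ under $g_\eps$ provides (translating $t$ by the period shifts the fan by the linear map $A$ representing multiplication by $\eps$, which scales $\alpha$ by $Q(\eps) = 1$, hence preserves $\log\alpha$ up to the additive period constant). Carrying this out on a single fundamental piece of the edge and then invoking $g_\eps$-equivariance pins down the slope $K$, and additivity of $\rho$ over the chain then gives $\rho(\gamma_X(t), \gamma_X(u)) = K|t-u|$ for $t,u$ in a fundamental domain, hence $= K d_\nZ(t,u)$ after passing to the quotient (here one must check that for $t,u$ within distance $1/2$ the geodesic in $\mc{S}_X$ between $\gamma_X(t)$ and $\gamma_X(u)$ is the image of the Euclidean geodesic, which holds because $\mc{S}_X$ is a circle and the two arcs have $\rho$-lengths summing to $K$).

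The main obstacle I anticipate is verifying that the slope of $\log\alpha$ with respect to the ray parameter $t$ is genuinely \emph{constant} across all edges of the chain, not merely piecewise affine with varying slopes. Naively, \refprop{skewness_parameter} only gives that $\alpha$ is a degree-$\leq 2$ polynomial in the monomial weight $t'\in[0,1]$ on each edge, and the generic multiplicities $b_{E_n}$ and self-intersection data vary from edge to edge, so one might fear the slope jumps at vertices. The resolution is precisely the $g_\eps$-equivariance: because $g_\eps$ acts by an integral linear automorphism of $N_\omega$ with $Q(\eps)=1$, it identifies the edge between $E_n, E_{n+1}$ with the edge between $E_{n+r}, E_{n+1+r}$ in a way compatible with $\gamma$ (shifting $t$ by the period) and with $\alpha$ (since $\alpha$ of the image valuation equals $Q(\eps)$ times $\alpha$ of the original up to normalization). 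Combined with \refprop{affine_linear} (or a direct check that consecutive curves in the cycle are adjacent vertices of a log resolution, making the monomial parameterization monotone so $\alpha$ is affine, not quadratic, on each edge), one gets that $\log\alpha$ is piecewise affine in $t$ with all pieces sharing the same slope, forcing global affinity on the cover. I would also need to be slightly careful that the identification of $\hat{\mc{S}}_X^*$ with $\mc{C}_\eps$ in \refrmk{arith_valuations} respects the \emph{angular} distance and not just the underlying graph structure — but since angular distance is intrinsic to $\mc{V}_X$ and independent of normalization (as noted in \S\ref{ssec:different_normalizations}), this compatibility is automatic once the ray/valuation correspondence is fixed.
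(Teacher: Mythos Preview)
Your approach has a genuine gap. The core of your plan is to reduce the computation of $\rho$ on the skeleton $\mc{S}_X$ to differences of $\log\alpha$, invoking \refprop{log_alpha} and \reflem{additiverho}. But those results only give $\rho(\nu,\mu)=|\log\alpha(\nu)-\log\alpha(\mu)|$ for \emph{comparable} valuations lying in a tree $\mc{V}_{X,\hat\nu}$ branching off the skeleton; they say nothing about two valuations \emph{on} the skeleton circle itself. Indeed, $\log\alpha$ is a continuous function on the compact circle $\mc{S}_X$, hence cannot be monotone, so $|\Delta\log\alpha|$ is not even a metric there. The paper's own Remark after \refprop{affine_linear} gives an explicit cusp where the equivalent conditions of that proposition fail on the skeleton and where adjacent vertices $\nu_{E_1},\nu_{E_2}$ (and all monomial valuations between them) are pairwise incomparable---so your parenthetical appeal to \refprop{affine_linear} to force $\alpha$ affine on each edge does not go through. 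Working ``upstairs in $X_\infty$'' does not rescue this: $X_\infty$ is a smooth non-compact surface, not a normal surface singularity germ, so the valuative space $\mc{V}_{X_\infty}$, skewness, and angular metric are not defined there in the paper's framework.

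The paper's proof is entirely different and avoids any direct computation of $\rho$ on the cycle. It uses \refprop{irrational_rotations} to produce a finite germ $f_\alpha$ whose action on $\mc{S}_X$ is, via $\gamma_X$, an irrational translation of $\nR/\nZ$; by \reflem{thinness_isometry} this action is a $\rho$-isometry. Thus the length measure of $\rho$ on $\mc{S}_X$ is invariant under an irrational rotation, hence is the Haar measure, and so is a constant multiple of Lebesgue measure $d_\nZ$ by uniqueness. A direct computational route along your lines could in principle be carried out by computing $\rho$ on each edge from the explicit relative-skewness formula \eqref{eqn:relative_skewness_formula} and then verifying constancy of the derivative in $t$ using the $g_\eps$-equivariance you mention, but this is considerably more work than your sketch indicates and the $\log\alpha$ shortcut is not available.
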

\begin{proof}
For any $\alpha \in N_\omega$, consider the finite germ $f_\alpha\colon (X,x_0) \to (X,x_0)$.
We just need to consider a $\alpha$ which gives an irrational rotation on $\mc{S}_X$, whose existence is guaranteed by \refprop{irrational_rotations}.
Since $f_\alpha$ acts as an isometry on $\mc{S}_X$, the measure induced by $\rho$ is the Haar measure of $\mc{S}_X$.
The action induced by $f_\alpha$ on $\nR/\nZ$ is a translation by an irrational number $\beta$, and $d_\nZ$ is the Haar measure of $\nR/\nZ$.
We conclude by uniqueness of the Haar measure up to multiplicative constants.
\end{proof}

\begin{prop}\label{prop:arith_action_on_cycle}
Let $f\colon(X,x_0) \to (X,x_0)$ be a finite germ on a cusp singularity $(X,x_0)$ constructed as in \S\ref{ssec:arith_cusp}.
Suppose that $f_\bullet$ preserves the orientation of the circle $\mc{S}_X$ (it may be always assumed by replacing $f$ by its second iterate).
Then there exists $\alpha \in N_\omega$ so that the action of $f_*$ on $\hat{\mc{S}}_X^*$ corresponds to the action of $g_\alpha$ on $\mc{C}$.
\end{prop}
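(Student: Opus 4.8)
The goal is to produce an arithmetically-defined finite endomorphism $f_\alpha$ of the cusp $(X,x_0)$ whose induced action on the cylinder $\hat{\mc{S}}_X^*$ agrees with that of $f$. The strategy is to exploit the identification $\tau\colon\hat{\mc{S}}_X^*\to\mc{C}_\eps=\mc{C}/\langle g_\eps\rangle$ from Remark~\ref{rmk:arith_valuations} in order to transport the action of $f_*$ on $\hat{\mc{S}}_X^*$ to a $g_\eps$-equivariant selfmap $\Lambda$ of the open cone $\mc{C}$, and then to recognize that $\Lambda$ must coincide with $g_\alpha$ for a suitable totally positive $\alpha\in K$ with $\alpha N_\omega\subseteq N_\omega$.

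\textbf{Key steps.} First I would lift the action of $f_*$ on $\hat{\mc{S}}_X^*$ along $\tau$. Because $f_\bullet$ preserves the orientation of the circle $\mc{S}_X$ and $f_*$ commutes with the $\nR_+^*$-scaling on $\hat{\mc{S}}_X^*$, the map $\tau\circ f_*\circ\tau^{-1}$ on $\mc{C}_\eps$ lifts (after passing to the universal cover $\mc{C}\to\mc{C}_\eps$) to a continuous map $\Lambda\colon\mc{C}\to\mc{C}$ commuting with $g_\eps$ and positively homogeneous of degree one; the lift is unique once we fix which lift of a base ray it sends where, using orientation-preservation to rule out the other choice. Next I would show that $\Lambda$ is in fact \emph{linear}. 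Here I would use the piecewise-linear structure: on each two-dimensional face $\nR_+\Phi_\omega(e_n)+\nR_+\Phi_\omega(e_{n+1})$ of the fan $\mc{F}$, the valuations are monomial valuations at a point $p_n\in E_n\cap E_{n+1}$, and \reflem{action_monomialweights} (or equivalently \refprop{divisorial_image} together with the b-divisor pushforward formula \eqref{eqn:pushforward_bdivisors}, which for finite germs has no contracted-curve correction term) shows that $f_*$ acts on the monomial weights $(r,s)$ by an integer matrix with non-negative entries; translating through $\tau$, this says $\Lambda$ is given by a fixed linear map on each such cone. Two linear maps that agree on the overlap of adjacent full-dimensional cones agree, so the piecewise-linear $\Lambda$ is globally linear, say $\Lambda=M$ for some $M\in GL_2(\nR)$ with non-negative entries (invertibility because $f$ is dominant, see \refprop{propertiesfbullet}(2)).

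\textbf{From linear map to arithmetic data.} Now I would argue that $M$ is multiplication by an element $\alpha\in K$. Since $M$ commutes with $g_\eps=\mathrm{diag}(\eps,\eps')$ and $\eps\ne\eps'$ (the two Galois conjugates of a unit $>1$ in a real quadratic field are distinct), $M$ must be diagonal in the same basis, i.e.\ $M=\mathrm{diag}(\alpha,\beta)$ for reals $\alpha,\beta>0$; moreover $M$ must preserve $\Phi(N_\omega)$ since $f_*$ sends divisorial valuations of the chain to divisorial valuations (Remark~\ref{rmk:arith_valuations}, first bullet, combined with \refprop{divisorial_image}), and $\Phi(N_\omega)$ spans and is Galois-stable, which forces $\beta=\alpha'$ with $\alpha\in K$ and $\alpha N_\omega\subseteq N_\omega$. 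Total positivity of $\alpha$ follows from $M(\mc{C})\subseteq\mc{C}$. At this point $g_\alpha$ induces an arithmetic finite germ $f_\alpha$ (Remark~\ref{rmk:Qinteger}), and by construction $\tau$ intertwines $f_*$ and $g_\alpha$ on $\hat{\mc{S}}_X^*$, which is the assertion. I would also invoke \reflem{Haar_measures} as a sanity/consistency check on normalizations, although the linearity argument already pins down $M$ exactly.

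\textbf{Main obstacle.} The delicate point is the passage from "$f_*$ acts by a non-negative integer matrix on monomial weights \emph{near each divisorial valuation in the chain}" to "a single linear map governs the action on the whole face, and these patch to a global linear map." The weight-matrix statement \reflem{action_monomialweights}/\refprop{skewness_parameter} is a priori only local (one may need to pass to a higher resolution, and the matrix could change when the center of the monomial valuation crosses from one chart to another); one must check that for a finite germ on a cusp the relevant matrix is genuinely constant along an entire edge $[\nu_{E_n},\nu_{E_{n+1}}]$ and that the matrices on consecutive edges are compatible. This is where finiteness (hence no contracted curves, hence the clean pushforward formula) and the taut/toric structure of the cusp are essential, and it is the step I would expect to require the most care.
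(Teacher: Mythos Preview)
Your overall architecture (lift $f_*$ to a $g_\eps$-equivariant, positively homogeneous self-map $\Lambda$ of $\mc{C}$, and then recognise it as $g_\alpha$) is the same as the paper's, but your key step ``two linear maps that agree on the overlap of adjacent full-dimensional cones agree'' is simply false: the overlap of two adjacent $2$-cones of $\mc{F}$ is a single ray $\nR_+\Phi_\omega(e_{n+1})$, and two $2\times 2$ matrices agreeing on a one-dimensional subspace need not coincide. So your patching argument does not force the piecewise linear map $\Lambda$ to be globally linear, and neither does commutation with $g_\eps$ alone (that only yields the conjugacy relation $M_{n+r}=g_\eps M_n g_\eps^{-1}$ between the pieces). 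You correctly flag this as the ``main obstacle'', but you do not resolve it; the commutation-with-$g_\eps$ step you invoke afterwards already \emph{presupposes} a single matrix $M$.

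The paper closes this gap with the lemma you relegate to a sanity check: \reflem{Haar_measures}. Since $f$ is finite on an lc non-lt singularity, $f_\bullet|_{\mc{S}_X}$ is a $\rho$-isometry (\reflem{thinness_isometry}); an orientation-preserving isometry of the circle is a rotation, and \reflem{Haar_measures} identifies $\rho$ with a multiple of the standard metric under $\gamma_X$, so $f_\bullet$ corresponds to a translation by some $\beta\in\nR/\nZ$. One then \emph{defines} $\alpha$ (up to a positive scalar) by $\eps^{2\beta}=\alpha^2/Q(\alpha)$, so that $g_\alpha$ induces the same translation by $\beta$ on rays. Now $f_*$ and $g_\alpha$ agree on rays; combined with the piecewise linearity of $f_*$ from \reflem{action_monomialweights}, they differ by a positive scalar that is constant on each face and hence, by continuity, globally constant, which can be absorbed into $\alpha$. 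The integrality of the piece matrices then gives $\alpha N_\omega\subseteq N_\omega$, and $\Lambda(\mc{C})\subseteq\mc{C}$ gives total positivity. In short: \reflem{Haar_measures} is not a consistency check but the ingredient that replaces your patching step.
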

\begin{proof}
By \reflem{Haar_measures}, the action of $f_\bullet$ corresponds, through the parameterization given by $\gamma_X$, to the translation by some $\beta \in \nR$. Let $\alpha \in \nR$ be so that
$
\eps^{2\beta}=\alpha^2/Q(\alpha).
$
Notice that this defines $\alpha$ up to positive multiplicative constants.

By \reflem{action_monomialweights}, we know that $f_*$ acts on $\mc{C}$ as a piecewise linear map with integer coefficients. This implies that, up to rescaling by a suitable multiplicative constant, $f_*$ acts as the linear map $g_\alpha$.
Since $f$ is holomorphic at $x_0$, we must have $g_\alpha N_\omega \subseteq N_\omega$, and $g_\alpha \mc{C} \subset \mc{C}$, i.e., $\alpha$ is totally positive.
\end{proof}

\refprop{arith_action_on_cycle} states that for any finite germ $f$ on a cusp singularity for which the action on the circle $\mc{S}_X$ is a rotation, then this action coincides with the one of a finite germ $f_\alpha$ constructed arithmetically as above.
Of course, in general $f$ and $f_\alpha$ do not coincide.
In fact, any singularity $(X,x_0)$ admits lots of non-commuting automorphisms, coming from the time-$1$ flow of vector fields tangent to the singularity (see \cite{muller:liegroupsanalyticalgebras}).
We may compose $f_\alpha$ by any such automorphism of $(X,x_0)$, obtaining another germ $f$. Notice that this operation does not change the action induced on $\mc{S}_X$. In particular there exists infinitely many different finite germs whose action on $\mc{S}_X$ coincide.
Nevertheless, we may wonder if all such germs are analytically conjugated one to another.

\begin{quest}
Is any finite germ $f\colon(X,x_0) \to (X,x_0)$ on a cusp singularity $(X,x_0)$, whose action preserves the orientation of the circle $\mc{S}_X$, analytically conjugated to a finite germ of the form $f_\alpha$, for some $\alpha \in N_\omega$ ?
\end{quest}

\newpage

\small
\bibliographystyle{abbrv}
\bibliography{biblio}

	\end{document}